\pgfplotsset{compat=1.10}
\numberwithin{equation}{section}
\def\lgem{\discretionary{l-}{l}{\hbox{l$\cdot$l}}}
\newcommand{\extp}{\@ifnextchar^\@extp{\@extp^{\,}}}
\def\extp^#1{\mathop{\bigwedge\nolimits^{\!#1}}}
\theoremstyle{plain}
\newtheorem{teo}{Theorem}[section]
\newtheorem{prop}{Proposition}[section]
\newtheorem{cor}{Corollary}[section]
\newtheorem{lema}{Lemma}[section]
\newtheorem{conj}{Conjecture}[section]
\newtheorem*{lema*}{Lemma}
\newtheorem{theorem}{Theorem}
\theoremstyle{definition}
\newtheorem{defi}{Definition}[section]
\newtheorem{nota}{Notation}[section]
\newtheorem{exe}{Example}[section]
\newtheorem{rem}{Remark}[section]
\renewcommand{\DOCH}{%
    \raggedleft
    \CNV\FmN{\@chapapp}\space \CNoV\thechapter
    \par\nobreak
    \vskip -18\p@}
\renewcommand{\DOTI}[1]{%
    \CTV\raggedleft\mghrulefill{\RW}\par\nobreak
    \vskip 5\p@
    \CTV\FmTi{#1}\par\nobreak
    \vskip -5\p@
    \mghrulefill{\RW}\par\nobreak
    \vskip 20\p@}
\renewcommand{\DOTIS}[1]{%
    \CTV\raggedleft\mghrulefill{\RW}\par\nobreak
    \vskip 5\p@
    \CTV\FmTi{#1}\par\nobreak
    \vskip -5\p@
    \mghrulefill{\RW}\par\nobreak
    \vskip 20\p@} 
\begin{document}

\selectlanguage{catalan}

\begin{titlepage}
\begin{center}
\begin{figure}[htb]
\begin{center}
\includegraphics[width=10cm]{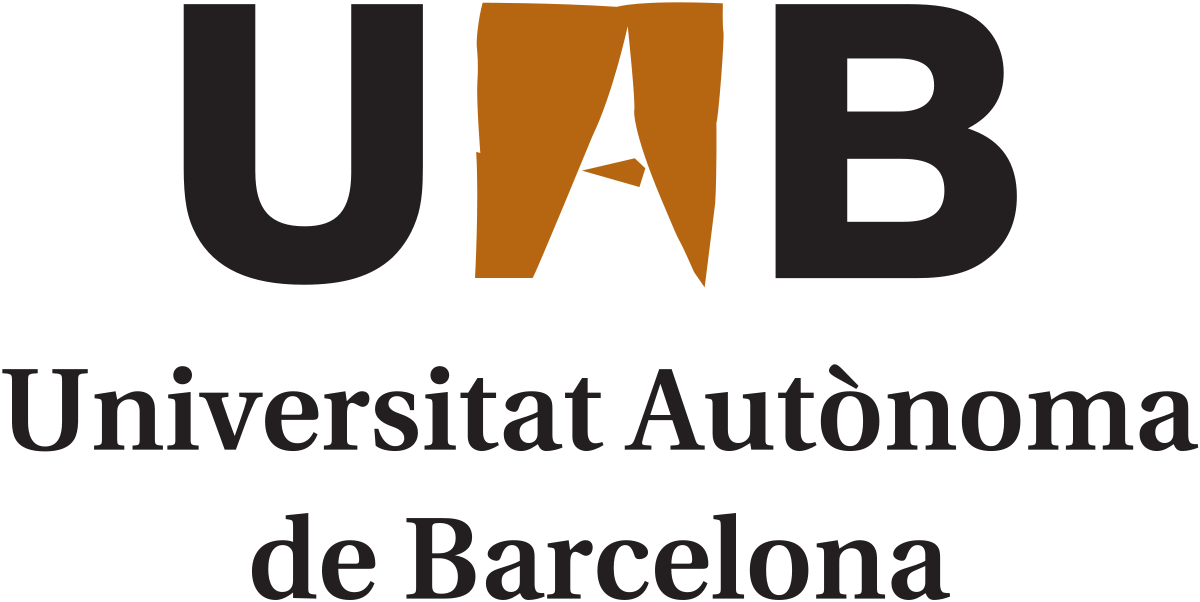}
\end{center}
\end{figure}
\textsc{\LARGE Departament de Matemàtiques} \\
{\Large Doctorat en Matemàtiques} \\
\today \\
\vspace*{0.5in}
\textsc{\Large Tesis Doctoral} \\
\begin{Huge}
\textbf{Trivial 2-cocycles for invariants of mod p homology spheres and Perron's conjecture} \\
\end{Huge}

\vspace*{5cm}

\begin{minipage}{.45\linewidth}
\begin{flushleft}
\emph{Candidat:} \\
Ricard Riba Garcia
\end{flushleft}
\end{minipage}
\hfill
\begin{minipage}{.45\linewidth}
\begin{flushright}
\emph{Director de la tesis:} \\
Dr. Wolfgang Pitsch
\end{flushright}
\end{minipage}

\end{center}
\end{titlepage}

\newpage
\mbox{}
\thispagestyle{empty}
\newpage

\pagenumbering{Roman}



\mbox{}
\vfill
\hfill
\begin{minipage}{.60\linewidth}

CERTIFICO que la present Memòria
ha estat realitzada per en Ricard Riba Garcia,
sota la meva supervisió i que constitueix la seva Tesi per a aspirar al grau de Doctor en Matemàtiques per la Universitat Autònoma de Barcelona.
\vspace*{0.3 cm}
Bellaterra, \today.

\vspace*{2.5 cm}
\begin{flushright}
{\Large \textit{Dr. Wolfgang Pitsch}}
\end{flushright}

\end{minipage}

\thispagestyle{empty}

\newpage
\mbox{}
\thispagestyle{empty}
\newpage


\chapter*{Agraïments}

Sempre he pensat que quan un escriu un text amb passió i i\lgem usió, d'alguna manera els diferents moments que ha viscut en el transcurs de la escriptura queden immortalitzats entre les linees d'aquell text.
Cada vegada que llegeixo aquesta memòria puc sentir tots els bons moments que vaig viure quan la vaig escriure.
Aquests agraïments només són un esbós de tota la gratitud que sento quan llegeixo aquesta memòria.

En primer lloc agraeixo a tots els membres del Grup de Topologia algebraica de la UAB el haver-me acollit com un membre més d'aquesta gran família. En especial, al meu director de tesis, el Dr. Wolfgang Pitsch,
per tota l'ajuda prestada durant el transcurs d'aquesta tesis, per encoratjar-me a seguir endavant en els moments més difícils de la tesis,
per ser crític i incentivar-me a millorar la meva recerca i
per aconsellar-me tant en l'àmbit de la tesis com a fora d'aquesta.
Ha estat un gran honor ser el teu alumne.

També vull agrair als meus companys de doctorat tots els bons moments que hem passat junts.

Carlos i Teresa, gràcies per haver-me escoltat totes les vegades que ho he necessitat, per totes les xerrades que hem tingut i per ajudar-me quan ho he necessitat.

Odi, Laura, Luca Matteo, Ales, Lui, Laurent, gràcies per tots els moments divertits que hem passat junts prenent cervesa després dels seminaris informals.

Leonardo, Murilo, Petr, Carles, Juan Luis, gràcies per tots els cafès que hem fet al despatx, per haver-me tret un somriure cada dia i per haver estat els meus fidels companys de viatge durant aquests quatre anys.

També vull agrair al Professor Louis Funar, de l'Institut Fourier de l'Universitat Grenoble I, al Professor David Marin, de la Universitat Autònoma de Barcelona, i al Professor Luis Paris, de la Université du Bourgogne, el haver acceptat ser els membres del tribunal de la meva defensa de tesis i els Professor Ignasi Mundet, de la Universitat de Barcelona, i al Professor Joackim Kock, de la Universitat Autònoma de Barcelona, el haver acceptat ser els suplents d'aquest tribunal.

Finalment, i de manera molt especial, també vull agrair a la meva família el haver estat sempre al meu costat. En particular, al meu pare Ricard per haver-me mostrat el món de les matemàtiques des de ben petit i incentivar-me a seguir descobrint aquest món sense fi, a la meva mare Isabel i la meva germana Meritxell per tot l'afecte que m'han mostrat cada dia i als meus avis Ramón i Cristina per ser la veu de l'experiència i haver-me ensenyat tot allò que no es pot aprendre en els llibres.

A tots vosaltres, gràcies per ser-hi.

\selectlanguage{english}
\newpage
\thispagestyle{empty}

\tableofcontents
\newpage
\mbox{}
\thispagestyle{empty}
\newpage
\mbox{}
\thispagestyle{empty}
\newpage
\pagenumbering{arabic}

\chapter*{Introduction}
\markboth{Introduction}{Introduction}
\addcontentsline{toc}{chapter}{Introduction}

At the begining of the 18th century, Leonhard Euler published a paper on the solution of the Königsberg bridge problem entitled: \textit{''Solutio problematis ad geometriam situs pertinentis'',} which translates into English as \textit{''The solution of a problem relating to the geometry of position''.} 
Perhaps, his work deserves to be considered as the beginnings of topology.
In that paper, he demonstrated that it was impossible to find a route through the town of Königsberg (now Kaliningrad) that would cross each of its seven bridges exactly once. This result did not depend on the lengths of the bridges, nor on their distance from one another, but only on connectivity properties: which bridges connect to which islands or riverbanks. This problem in introductory mathematics called Seven Bridges of Königsberg led to the branch of mathematics known as graph theory. 

 The paper not only shows that the problem of crossing the seven bridges in a single journey is impossible, but generalises the problem to show that, in today's notation,

\textit{A graph has a path traversing each edge exactly once if exactly two vertices have odd degree.}

The next step in freeing mathematics from being a subject about measurement was also due to Leonhard Euler. In 1750 he wrote a letter to Christian Goldbach which, as well as commenting on a dispute Goldbach was having with a bookseller, gives Euler's famous formula for a polyhedron:
$$v - e + f = 2,$$
where $v$ is the number of vertices of the polyhedron, $e$ is the number of edges and $f$ is the number of faces.
Two years later, Leonhard Euler published details of his formula in two papers, the first admits that Euler cannot prove the result but the second gives a proof based on dissecting solids into tetrahedral slices.
Following his ideas, at the begining of the 19th century, Antoine-Jean Lhuilier, who worked for most of his life on problems relating to Euler's formula, published an important work in which he noticed that Euler's formula didn't work for solids with holes in them. In particular, he proved that if a solid has $g$ holes then
$$v - e + f = 2 - 2g.$$
This was the first known result on a topological invariant.
In fact, this result was the first step to get the actual classification of compact connected surfaces.
Then a natural question that arise from the classification of surfaces is if there exists a similar result for $3$-manifolds.
In particular, we concern about how to decide whenever two $3$-manifolds are homeomorphic or not.

To deal with such problem, one tries to construct invariants of $3$-manifolds, that is, a function which sends each $3$-manifold to an element of an abelian group in such a way that if two $3$-manifolds are homeomorphic then this function takes the same value on these $3$-manifolds.

A first difficulty is to find a convenient way to describe $3$-manifolds. A natural way to do this is using the ''cutting and paste'' techniques.
These techniques consist on gluing several pieces to get a $3$-manifold.
When one uses these techniques one can either consider ''complicated'' pieces
and ''simple'' gluing maps, like in Thurston's decomposition, or ''simple'' pieces
and ''complicated'' gluing maps. In the last case one finds the theory of Heegaard splittings, in which the pieces are two handlebodies and the glueing map is an element of the mapping class group.
In this thesis we will use the Heegaard splittings theory.

Consider a standardly embedded surface $\Sigma_g$ in the $3$-sphere $\mathbf{S}^3.$ This surface separates $\mathbf{S}^3$ in to two handlebodies of the same genus $\mathcal{H}_g,$ $-\mathcal{H}_g.$ If we glue the boundaries of these handlebodies by an element $f$ of the mapping class group of $\Sigma_g$ we get another $3$-manifold $\mathcal{H}_g \cup_{f} -\mathcal{H}_g.$ In fact, every $3$-manifold can be obtained in this way. By technical reasons we consider $\Sigma_{g,1}$ the surface $\Sigma_g$ with a marked disk and $\mathcal{M}_{g,1}$ its mapping class group.
The embedding $\Sigma_g\hookrightarrow \mathbf{S}^3$ induces the following three natural subgroups of $\mathcal{M}_{g,1}:$
\begin{itemize}
\item $\mathcal{A}_{g,1}=$ subgroup of restrictions of diffeomorphisms of the outer handlebody $-\mathcal{H}_g,$
\item $\mathcal{B}_{g,1}=$ subgroup of restrictions of diffeomorphisms of the inner handlebody $\mathcal{H}_g,$
\item $\mathcal{AB}_{g,1}=$ subgroup of restriction of diffeomorphisms of the $3$-sphere $\mathbf{S}^3.$
\end{itemize}
Denote by $\mathcal{V}^3$ the set of oriented diffeomorphism classes of compact, closed and oriented smooth 3-manifolds.
The main advantage of using the Heegaard splittings theory instead of other theories, is the existence of the following bijection
\begin{align*}
\lim_{g\to \infty}\mathcal{A}_{g,1}\backslash\mathcal{M}_{g,1}/\mathcal{B}_{g,1} & \longrightarrow \mathcal{V}^3, \\
\phi & \longmapsto S^3_\phi=\mathcal{H}_g \cup_{\iota_g\phi} -\mathcal{H}_g,
\end{align*}
proved by J. Singer in 1933.
This bijection allows us to translate topological problems into algebraic problems on $\mathcal{M}_{g,1}.$
In particular, let $\mathcal{S}^3\subset\mathcal{V}^3$ be the subset of all integral homology $3$-spheres, if we restrict the previous bijection to the Torelli group $\mathcal{T}_{g,1},$ which is the subgroup formed by the elements that act trivially in the first homology group of the surface $\Sigma_{g,1},$ one has the following bijection:
$
\lim_{g\to \infty}\mathcal{A}_{g,1}\backslash\mathcal{T}_{g,1}/\mathcal{B}_{g,1} \simeq \mathcal{S}^3.
$
Moreover, in \cite{pitsch}, W. Pitsch proved that the induced equivalence relation on $\mathcal{T}_{g,1},$ which is given by:
$
\phi \sim \psi \quad\Leftrightarrow \quad \exists \zeta_a \in \mathcal{A}_{g,1}\;\exists \zeta_b \in \mathcal{B}_{g,1} \quad \text{such that} \quad \zeta_a \phi \zeta_b=\psi,$
can be rewritten as follows:
\begin{lema*}
Two maps $\phi, \psi \in \mathcal{M}_{g,1}[d]$ are equivalent if and only if there exists a map $\mu \in \mathcal{AB}_{g,1}$ and two maps $\xi_a\in \mathcal{A}_{g,1}[d]$ and $\xi_b\in\mathcal{B}_{g,1}[d]$ such that $\phi=\mu \xi_a\psi \xi_b\mu^{-1}.$
\end{lema*}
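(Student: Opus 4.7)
I would split the equivalence into its two implications and handle them separately.

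For the $(\Leftarrow)$ direction, assume $\phi = \mu\xi_a\psi\xi_b\mu^{-1}$. Since every diffeomorphism of $\mathbf{S}^3$ preserving $\Sigma_g$ restricts simultaneously to each handlebody, we have $\mathcal{AB}_{g,1}\subseteq\mathcal{A}_{g,1}\cap\mathcal{B}_{g,1}$, and of course $\mathcal{A}_{g,1}[d]\subseteq\mathcal{A}_{g,1}$ and $\mathcal{B}_{g,1}[d]\subseteq\mathcal{B}_{g,1}$. Therefore setting $\zeta_a := \xi_a^{-1}\mu^{-1}\in\mathcal{A}_{g,1}$ and $\zeta_b := \mu\xi_b^{-1}\in\mathcal{B}_{g,1}$, a direct substitution yields $\zeta_a\phi\zeta_b = \psi$, establishing the original equivalence.

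For the $(\Rightarrow)$ direction, suppose $\zeta_a\phi\zeta_b = \psi$ with $\phi,\psi\in\mathcal{M}_{g,1}[d]$. The plan is to split $\zeta_a$ and $\zeta_b$ into an \emph{ambient} part lying in $\mathcal{AB}_{g,1}$ and a \emph{filtered} part lying in the $d$th term of the respective filtration. Since $\mathcal{M}_{g,1}[d]$ is normal in $\mathcal{M}_{g,1}$, reducing the identity $\zeta_a\phi\zeta_b = \psi$ modulo $\mathcal{M}_{g,1}[d]$ gives $\overline{\zeta_a}\,\overline{\zeta_b}=1$ in $\mathcal{M}_{g,1}/\mathcal{M}_{g,1}[d]$, so the common class $\overline{\zeta_a}^{-1} = \overline{\zeta_b}$ belongs to the intersection of the images of $\mathcal{A}_{g,1}$ and $\mathcal{B}_{g,1}$ in this quotient.

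The crucial---and, I expect, most delicate---step is the identification
$$
\bigl(\mathcal{A}_{g,1}/\mathcal{A}_{g,1}[d]\bigr)\cap\bigl(\mathcal{B}_{g,1}/\mathcal{B}_{g,1}[d]\bigr) \;=\; \mathcal{AB}_{g,1}/\mathcal{AB}_{g,1}[d]
$$
inside $\mathcal{M}_{g,1}/\mathcal{M}_{g,1}[d]$. At the abelianization level this is the classical symplectic fact that the common stabilizer of the two Lagrangians coming from the Heegaard splitting coincides with the image of $\mathcal{AB}_{g,1}$; for general $d$ I would establish it by induction on the filtration degree, exploiting an explicit description of the successive quotients of $\mathcal{A}_{g,1}$, $\mathcal{B}_{g,1}$ and $\mathcal{AB}_{g,1}$ inherited from the Johnson-type filtration on $\mathcal{M}_{g,1}$.

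Granting this identification, lift $\overline{\zeta_a}^{-1}$ to an element $\mu\in\mathcal{AB}_{g,1}$ and define $\xi_a := \mu^{-1}\zeta_a^{-1}$ and $\xi_b := \zeta_b^{-1}\mu$. Each is a product of two elements of $\mathcal{A}_{g,1}$, respectively $\mathcal{B}_{g,1}$, and has trivial image in $\mathcal{M}_{g,1}/\mathcal{M}_{g,1}[d]$ by the very choice of $\mu$; hence $\xi_a\in\mathcal{A}_{g,1}\cap\mathcal{M}_{g,1}[d]=\mathcal{A}_{g,1}[d]$ and analogously $\xi_b\in\mathcal{B}_{g,1}[d]$. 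A final direct computation gives $\mu\xi_a\psi\xi_b\mu^{-1} = \zeta_a^{-1}\psi\zeta_b^{-1} = \phi$, which completes the proof.
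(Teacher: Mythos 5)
Your overall architecture is correct and is essentially the same as the paper's: both directions come down to the fact that in the quotient $\mathcal{M}_{g,1}/\mathcal{M}_{g,1}[d]\cong Sp_{2g}(\mathbb{Z}/d)$ the element $\overline{\zeta_a}^{-1}=\overline{\zeta_b}$ lies in the image of $\mathcal{AB}_{g,1}$, and once a lift $\mu\in\mathcal{AB}_{g,1}$ is fixed the algebra is just the bookkeeping you write out. The $(\Leftarrow)$ direction and the final substitution are fine.

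The issue is the key identification, which you correctly isolate but leave unproved, and whose proposed verification mischaracterizes the situation. You say you would proceed ``by induction on the filtration degree, exploiting an explicit description of the successive quotients\ldots inherited from the Johnson-type filtration.'' But $\mathcal{M}_{g,1}/\mathcal{M}_{g,1}[d]$ is a single layer, namely $Sp_{2g}(\mathbb{Z}/d)$: there is no tower of quotients to induct over. The paper proves the needed containment by a direct block-matrix computation: writing $\Psi(\zeta_b)=\left(\begin{smallmatrix}G & 0 \\ M & {}^tG^{-1}\end{smallmatrix}\right)$ and $\Psi(\zeta_a)=\left(\begin{smallmatrix}H & N \\ 0 & {}^tH^{-1}\end{smallmatrix}\right)$ with integral blocks, the identity $\Psi_d(\zeta_a)\Psi_d(\zeta_b)=\mathrm{Id}$ forces $M\equiv N\equiv 0$ and $G\equiv H^{-1}\pmod{d}$; thus the common image is block-diagonal with diagonal entry $\overline{H}=r'_d(H)$ for $H\in GL_g(\mathbb{Z})$, which lifts to $\mathcal{AB}_{g,1}$ by Lemma~\eqref{lem_AB}. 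The subtle point your sketch glosses over is precisely this: the intersection of the images of $\mathcal{A}_{g,1}$ and $\mathcal{B}_{g,1}$ in $Sp_{2g}(\mathbb{Z}/d)$ is a priori $\{$block-diagonal matrices whose diagonal block lies in the image of $GL_g(\mathbb{Z})\to GL_g(\mathbb{Z}/d)\}$, not all block-diagonal symplectic matrices (for non-prime $d$ that reduction map is not surjective), and one has to check that this agrees with the image of $\mathcal{AB}_{g,1}$ — which it does, because the diagonal block is the reduction of the integral block $H$ from $\zeta_a$. You should replace the filtration-induction gesture with this one-line matrix argument.
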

As a consequence, he obtained the following bijection:
$$
\lim_{g\to \infty}(\mathcal{TA}_{g,1}\backslash\mathcal{T}_{g,1}/\mathcal{TB}_{g,1})_{\mathcal{AB}_{g,1}} \simeq \mathcal{S}^3,
$$
where $\mathcal{TA}_{g,1}=\mathcal{T}_{g,1} \cap \mathcal{A}_{g,1},$
$\mathcal{TB}_{g,1}=\mathcal{T}_{g,1} \cap \mathcal{B}_{g,1}$ and $(\;)_{\mathcal{AB}_{g,1}}$ are the $\mathcal{AB}_{g,1}$-coinvariants.

This bijection allows us to define an invariant of integral homology $3$-spheres $F:\mathcal{S}^3\rightarrow A,$ as family of functions $\{F_g\}_g$ on the Torelli group with the following properties:

\begin{enumerate}[i)]
\item $F_{g+1}(x)=F_g(x) \quad \text{for every }x\in \mathcal{T}_{g,1},$
\item $F_g(\xi_a x\xi_b)=F_g(x) \quad \text{for every } x\in \mathcal{T}_{g,1},\;\xi_a\in \mathcal{TA}_{g,1},\;\xi_b\in \mathcal{TB}_{g,1},$
\item $F_g(\phi x \phi^{-1})=F_g(x)  \quad \text{for every }  x\in \mathcal{T}_{g,1}, \; \phi\in \mathcal{AB}_{g,1}.$
\end{enumerate}

In addition, if we consider the associated trivial $2$-cocycles $\{C_g\}_g,$ which measure the failure of the maps $\{F_g\}_g$ to be homomorphisms of groups, i.e.
\begin{align*}
C_g: \mathcal{T}_{g,1}\times \mathcal{T}_{g,1} & \longrightarrow A \\
 (\phi,\psi) & \longmapsto F_g(\phi)+F_g(\psi)-F_g(\phi\psi),
\end{align*}
then the family of $2$-cocycles $\{C_g\}_g$ inherits the following properties:
\begin{enumerate}[(1)]
\item The $2$-cocycles $\{C_g\}_g$ are compatible with the stabilization map, i.e. the following diagram of maps commutes:
$$
\xymatrix@C=15mm@R=13mm{ \mathcal{T}_{g,1}\times \mathcal{T}_{g,1} \ar@{->}[rd]_{C_g}\ar@{^{(}->}[r] & \mathcal{T}_{g+1,1}\times \mathcal{T}_{g+1,1} \ar@{->}[d]^{C_{g+1}} \\
 & A}
$$
\item The $2$-cocycles $\{C_g\}_g$ are invariant under conjugation by elements in $\mathcal{AB}_{g,1},$ i.e. for every $\phi \in \mathcal{AB}_{g,1},$ $$C_g(\phi - \phi^{-1},\phi - \phi^{-1})=C_g(-,-),$$
\item If $\phi\in \mathcal{TA}_{g,1}$ or $\psi \in \mathcal{TB}_{g,1}$ then $C_g(\phi, \psi)=0.$
\end{enumerate}

In 2008, W. Pitsch, using this setting, gave a new tool to get invariants with values in an abelian group without $2$-torsion of integral homology $3$-spheres as trivializations of $2$-cocyles on $\mathcal{T}_{g,1}.$ Moreover, using this tool, he gave a new purely algebraic construction of the Casson's invariant.

In this thesis we generalized this tool to invariants with values in an abelian group without restrictions getting the following result:

\begin{theorem}
\label{teo_ch2}
Let $A$ be an abelian group and $A_2$ the subgroup of elements of order $\leq 2.$
For each $x\in A_2,$ a family of cocycles $\{C_g\}_{g\geq 3}$ on the Torelli groups $\mathcal{T}_{g,1}, g\geq 3,$ satisfying conditions (1)-(3) provides a compatible familiy of trivializations $F_g+\mu_g^x: \mathcal{T}_{g,1}\rightarrow A$ that reassemble into an invariant of homology spheres
$$\lim_{g\to \infty}F_g+\mu_g^x:\mathcal{S}^3\rightarrow A$$
if and only if the following two conditions hold:
\begin{enumerate}[(i)]
\item The associated cohomology classes $[C_g]\in H^2(\mathcal{T}_{g,1};A)$ are trivial.
\item The associated torsors $\rho(C_g)\in H^1(\mathcal{AB}_{g,1},Hom(\mathcal{T}_{g,1},A))$ are trivial.
\end{enumerate}
\end{theorem}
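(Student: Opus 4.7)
The plan is to prove both directions of the biconditional. For necessity, suppose for some $x \in A_2$ the family $F_g + \mu_g^x$ trivializes $C_g$ and assembles into an invariant. Then $C_g = \delta(F_g+\mu_g^x)$ by construction, which immediately gives condition (i). For condition (ii), the torsor $\rho(C_g)$ is represented by the $1$-cocycle $\phi \mapsto \bigl(\gamma \mapsto (F_g+\mu_g^x)(\phi\gamma\phi^{-1}) - (F_g+\mu_g^x)(\gamma)\bigr)$ on $\mathcal{AB}_{g,1}$ with values in $\mathrm{Hom}(\mathcal{T}_{g,1},A)$; this vanishes identically by property iii) of an invariant together with the $\mathcal{AB}_{g,1}$-conjugation invariance built into the canonical summand $\mu_g^x$.

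For sufficiency, I construct the trivializations in stages, using at each step the freedom of modifying a set-theoretic trivialization by a homomorphism $\mathcal{T}_{g,1} \to A$. First, condition (i) supplies a map $F_g$ with $\delta F_g = C_g$; by property (3) of the cocycle family, $F_g$ restricts to an honest homomorphism on each of $\mathcal{TA}_{g,1}$ and $\mathcal{TB}_{g,1}$. Since property ii) of an invariant is easily seen to be equivalent to $F_g$ vanishing on both subgroups, the first task is to absorb these restrictions into a homomorphism correction. Second, once ii) has been arranged, property iii) is measured by the class $\rho(C_g)$, whose triviality in $H^1(\mathcal{AB}_{g,1},\mathrm{Hom}(\mathcal{T}_{g,1},A))$ is furnished by condition (ii) and permits a further homomorphism adjustment of $F_g$ that achieves $\mathcal{AB}_{g,1}$-conjugation invariance. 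Finally, property (1) of the cocycle family makes the stabilization compatibility i) a matter of matching inductive choices across genera, and the Lemma quoted above ensures that the resulting family descends to a well-defined invariant on $\mathcal{S}^3$.

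The main obstacle, and the explanation for the extra summand $\mu_g^x$ indexed by $x \in A_2$, is that the three homomorphism adjustments above must be carried out simultaneously and compatibly with stabilization. Any such adjustment factors through the abelianization $H_1(\mathcal{T}_{g,1};\mathbb{Z})$, which by Johnson's theorem decomposes (for $g \geq 3$) into a torsion-free piece arising from the Johnson homomorphism and a $2$-torsion piece detected by the Birman--Craggs--Johnson homomorphisms. The torsion-free summand provides enough room to kill the obstructions on $\mathcal{TA}_{g,1}$, $\mathcal{TB}_{g,1}$ and under $\mathcal{AB}_{g,1}$-conjugation in the $2$-torsion-free situation (which is Pitsch's original theorem), but for general $A$ a residual ambiguity valued in $A_2$ remains. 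Identifying this ambiguity canonically with the family $\{\mu_g^x\}_{x \in A_2}$ of conjugation-invariant, stably-compatible $A_2$-valued homomorphisms on $\mathcal{T}_{g,1}$, and verifying that it exhausts all freedom in the construction, is the technical heart of the argument.
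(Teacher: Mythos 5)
Your overall framing---check the cohomology class, then the torsor, then identify the residual $A_2$-indexed ambiguity with Johnson's $2$-torsion---is the right shape, but the order and mechanism of your two ``adjustment'' steps do not go through as stated, and you do not identify the key lemma the paper needs.

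In the sufficiency direction you propose to first ``absorb'' the restrictions $F_g|_{\mathcal{TA}_{g,1}}$ and $F_g|_{\mathcal{TB}_{g,1}}$ into a homomorphism correction on $\mathcal{T}_{g,1}$, and only afterwards use condition (ii) to arrange $\mathcal{AB}_{g,1}$-invariance. The first step is a genuine gap: you need a homomorphism $h\colon\mathcal{T}_{g,1}\to A$ whose restriction to $\mathcal{TA}_{g,1}$ agrees with $F_g|_{\mathcal{TA}_{g,1}}$ and whose restriction to $\mathcal{TB}_{g,1}$ agrees with $F_g|_{\mathcal{TB}_{g,1}}$; such a simultaneous extension from two subgroups is not automatic, and you give no reason it exists. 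The paper avoids this issue entirely by reversing the order. It first uses conditions (i) and (ii) to produce an $\mathcal{AB}_{g,1}$-invariant trivialization $q_g$ (the ``fixed point'' characterization of torsor triviality), and only then notes that by cocycle condition (3), $q_g+\mu_g^x$ restricts to a \emph{homomorphism} on each of $\mathcal{TA}_{g,1}$ and $\mathcal{TB}_{g,1}$. The crucial input is then Lemma~\eqref{lem_TB,TA}: every $\mathcal{AB}_{g,1}$-invariant homomorphism $\mathcal{TA}_{g,1}\to A$ or $\mathcal{TB}_{g,1}\to A$ is zero. This makes the vanishing on the double-coset subgroups automatic rather than something to be engineered. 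You do not mention this lemma, and it is the real technical heart of the chapter---its proof occupies most of the material on CBP-twists, the Luft group, lantern relations, and the Magnus description of $IA$.

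Your description of where the $A_2$ enters is also imprecise. The $\mu_g^x$ are not corrections used to kill obstructions; they are the full set of $\mathcal{AB}_{g,1}$-invariant homomorphisms $\mathcal{T}_{g,1}\to A$ compatible with stabilization, computed in Lemma~\eqref{lem_abinv} to be exactly $\{\varphi_g^x\circ\sigma : x\in A_2\}$ via the Birman--Craggs--Johnson homomorphism $\sigma$. They parametrize the set of $\mathcal{AB}_{g,1}$-invariant trivializations of a fixed cocycle, i.e.\ the candidate invariants, and the theorem asserts exactly that each candidate is in fact an invariant. The ``identifying this ambiguity canonically with $\{\mu_g^x\}$'' that you defer to is done in Lemma~\eqref{lem_abinv}, not in the proof of the theorem itself, and the argument there relies on the BCJ structure, not just on the existence of a $2$-torsion summand in the abelianization.
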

The main difference between this generalization and the tool given in \cite{pitsch} is the presence of the Rohlin invariant which accounts for the failure unicity in the construction of the invariants.

In the same year, B. Perron conjectured an invariant of $\mathbb{Q}$-homology spheres constructed from a Heegaard splitting with gluing map an element of the (mod $p$) Torelli group $\mathcal{M}_{g,1}[p]$ with values on $\mathbb{Z}/p.$
Here we call the (mod $p$) Torelli group the kernel of the canonical map $\mathcal{M}_{g,1}\rightarrow Sp_{2g}(\mathbb{Z}/p),$
which is product of the Torelli subgroup and the subgroup generated by $p$-powers of Dehn twists.

Such conjectured invariant consist on writing an element of the (mod $p$) Torelli group as a product of an element of the Torelli group and an  element of the subgroup generated by $p$-powers of Dehn twists and taking the Casson's invariant modulo $p$ of the part of the Torelli group.

The main purpose of this thesis is to study the Perron's conjecture.

In order to achieve our target, we first study the subclass of $\mathbb{Q}$-homology $3$-spheres, which are the $3$-manifolds whose first homology group is finite. It is well known that there is not a subgroup of $\mathcal{M}_{g,1}$ that parametrizes all the $\mathbb{Q}$-homology $3$-spheres. Nevertheless, the union of all (mod $p$) Torelli groups with $p$ prime parametrizes such manifolds. Moreover we get the following criterion to know whenever a $\mathbb{Q}$-homology $3$-sphere can be constructed as a Heegaard splitting with gluing map an element of (mod $d$) Torelli group.
\begin{theorem}
\label{teo_ch3}
Let $M$ be a $\mathbb{Q}$-homology $3$-sphere and $n=|H_1(M;\mathbb{Z})|.$
Then $M$ has a Heegaard splitting $\mathcal{H}_g\cup_{\iota_gf} -\mathcal{H}_g$ of some genus $g$ with gluing map $f\in \mathcal{M}_{g,1}[d]$ with $d\geq 2,$ if and only if $d$ divides $n-1$ or $n+1.$
\end{theorem}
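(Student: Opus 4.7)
For the necessary direction, I would work at the level of the induced symplectic action on homology. Choose a symplectic basis $(\alpha_1,\dots,\alpha_g,\beta_1,\dots,\beta_g)$ of $H_1(\Sigma_{g,1};\mathbb{Z})$ adapted to the Heegaard embedding, with the $\alpha_i$ bounding disks in $\mathcal{H}_g$ and the $\beta_i$ bounding disks in $-\mathcal{H}_g$. Writing the gluing map in block form $f_*=\begin{pmatrix}P & Q\\ R & S\end{pmatrix}$ and running Mayer--Vietoris on $M=\mathcal{H}_g\cup_{\iota_g f}(-\mathcal{H}_g)$ presents $H_1(M;\mathbb{Z})$ by a $2g\times 2g$ integer matrix whose determinant equals $\pm\det P$, so $n=|\det P|$. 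If $f\in\mathcal{M}_{g,1}[d]$ then $f_*\equiv I_{2g}\pmod d$, hence $P\equiv I_g\pmod d$ and $\det P\equiv 1\pmod d$; this forces $\pm n\equiv 1\pmod d$, i.e., $d\mid n-1$ or $d\mid n+1$.

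For the converse, I would start with an arbitrary Heegaard splitting $(g_0,f_0)$ of $M$ and stabilize until $g_0\geq 2$. Since left-multiplication by $\mathcal{A}_{g_0,1}$ and right-multiplication by $\mathcal{B}_{g_0,1}$ preserve the underlying 3-manifold, and these subgroups surject onto the parabolic stabilizers $P_A$ and $P_B$ of the Lagrangians $\langle\beta_i\rangle$ and $\langle\alpha_i\rangle$ in $Sp_{2g_0}(\mathbb{Z})$, the task reduces to modifying $f_{0,*}$ inside its $(P_A,P_B)$-double coset until it becomes congruent to $I\pmod d$. My plan is to clear the four blocks in three successive steps: (i) right-multiply by $\begin{pmatrix}P' & 0\\ 0 & (P')^{-T}\end{pmatrix}\in P_B$ with $P'\in GL_{g_0}(\mathbb{Z})$ chosen so that $P'\equiv P^{-1}\pmod d$, making the upper-left block $\equiv I\pmod d$; (ii) left-multiply by $\begin{pmatrix}I & 0\\ R' & I\end{pmatrix}\in P_A$ with $R'$ a symmetric integer lift of the current lower-left block (the symmetry of the class mod $d$ follows from the symplectic identity $P^TR=R^TP$) to kill that block; (iii) right-multiply by a unipotent $\begin{pmatrix}I & Q''\\ 0 & I\end{pmatrix}\in P_B$ with $Q''$ a symmetric lift of the remaining upper-right block, which the symplectic identities applied to the partially normalized matrix force to be symmetric while also forcing the lower-right block to be $\equiv I\pmod d$.

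The main obstacle is step (i): producing the integer lift $P'$, and this is precisely where the hypothesis $d\mid n\pm 1$ is used essentially. Since $\det P=\pm n$, after possibly absorbing a $\mathrm{diag}(-1,1,\dots,1)$-factor from $GL_{g_0}(\mathbb{Z})$ into the product I may assume $\det P\equiv 1\pmod d$, so that the class $P^{-1}\in GL_{g_0}(\mathbb{Z}/d)$ lies in $SL_{g_0}(\mathbb{Z}/d)$; strong approximation (the surjectivity of $SL_{g_0}(\mathbb{Z})\to SL_{g_0}(\mathbb{Z}/d)$ for $g_0\geq 2$) then supplies the desired lift. Combined with the classical surjectivity $\mathcal{A}_{g_0,1}\twoheadrightarrow P_A$ and $\mathcal{B}_{g_0,1}\twoheadrightarrow P_B$, the three modifications assemble into elements $\xi_a\in\mathcal{A}_{g_0,1}$ and $\xi_b\in\mathcal{B}_{g_0,1}$ so that $\xi_a f_0\xi_b\in\mathcal{M}_{g_0,1}[d]$ and still realizes the same manifold $M$, completing the converse.
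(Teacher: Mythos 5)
Your proof is correct, and it differs from the paper's in ways worth noting. For the necessary direction, the paper invokes the double-coset bijection and writes a general witness $f$ as $\xi_a\varphi\xi_b$ with $\varphi\in\mathcal{M}_{h,1}[d]$, then computes the determinant of the lower-right block of the product; you instead observe directly that if the gluing map itself lies in $\mathcal{M}_{g,1}[d]$ then the relevant $g\times g$ block of $f_*$ is already $\equiv I\pmod d$, which is cleaner and logically sufficient. (The paper's Lemma \eqref{lema_n_detH} identifies $n$ with $|\det H|$ for the lower-right block, while you use the upper-left block $P$; this is only a discrepancy of orientation/labelling conventions for $\iota_g$ and which Lagrangian bounds in which handlebody, and does not affect the argument.)

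For the converse, the two proofs perform essentially the same LU-type normalization of the symplectic matrix, clearing the off-diagonal blocks with symmetric unipotents from the handlebody parabolics and using the block-diagonal $GL_{g}$-factor to fix the $P$-block. The genuine divergence is how each proof realizes the $GL$-block modification inside the mapping class group. The paper works with the reduction mod $d$ and appeals to the \emph{stable} surjectivity $\mathcal{B}_{\infty,1}\to SL^\pm(\mathbb{Z}/d)\ltimes S(\mathbb{Z}/d)$ (Lemma \eqref{lem_B[d]_3_stab}), which forces a genus increase $g\leadsto h\geq g$ before the witnessing $\tilde\xi_a,\tilde\xi_b$ can be produced; the stable statement is what the paper can prove for composite $d$ because it relies on $SK_1(\mathbb{Z}/d)=0$. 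You instead lift integrally at fixed genus, using the classical surjectivity of $SL_{g_0}(\mathbb{Z})\to SL_{g_0}(\mathbb{Z}/d)$ for $g_0\geq 2$ and arbitrary $d$ (strong approximation), which is valid and eliminates any extra stabilization beyond getting $g_0\geq 2$. This is a real simplification: the paper's remark that the unstable surjectivity $SL_g(\mathbb{Z})\to SL_g(\mathbb{Z}/d)$ holds only for prime $d$ is what motivates its stabilization, but that surjectivity in fact holds for all $d$ once $g\geq 2$, so your shorter route is available. The only two minor slips in your write-up are sign issues: the symmetric correction matrices in steps (ii) and (iii) should be lifts of the \emph{negatives} of the current lower-left and upper-right blocks, respectively, but this does not affect the argument since symmetry is preserved under negation.
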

Denote by $\mathcal{S}^3[d]$ the set of $\mathbb{Q}$-homology $3$-spheres which are homeomorphic to $\mathcal{H}_g \cup_{\iota_g\phi} -\mathcal{H}_g$ for some $\phi\in \mathcal{M}_{g,1}[d].$
The above criterion tell us that, unlikely the case of integral homology $3$-spheres and the Torelli group, in general, $\mathcal{S}^3[d]$ does not coincide with the set of $\mathbb{Z}/d$-homology $3$-spheres.

Nevertheless, we still have the following bijection:
$$
\lim_{g\to \infty}(\mathcal{A}_{g,1}[d]\backslash\mathcal{M}_{g,1}[d]/\mathcal{B}_{g,1}[d])_{\mathcal{AB}_{g,1}} \simeq \mathcal{S}^3[d],
$$
where $\mathcal{A}_{g,1}[d]=\mathcal{M}_{g,1}[d]\cap \mathcal{A}_{g,1}$ and
$\mathcal{B}_{g,1}[d]=\mathcal{M}_{g,1}[d]\cap \mathcal{B}_{g,1}.$

Then using an analogous setting that we used to construct invariants of integral homology $3$-spheres as a trivialization of $2$-cocycles, for the $(\text{mod }d)$ Torelli group we get the following result:

\begin{theorem}
\label{teo_ch5}
Let $A$ an abelian group. For $g\geq 3,$ $d\geq 3$ an odd integer and for $g\geq 5,$ $d\geq 2$ an even integer such that $4 \nmid d.$
For each $x\in A_d,$ a family of 2-cocycles $\{C_g\}_{g\geq 3}$ on the $(\text{mod }d)$ Torelli groups $\mathcal{M}_{g,1}[d],$ with values in $A,$ satisfying conditions analogous to (1)-(3) provides a compatible family of trivializations $F_g+\varphi_g^x: \mathcal{M}_{g,1}[d]\rightarrow A$ that reassembles into an invariant of $\mathbb{Q}$-homology spheres $\mathcal{S}^3[d]$
$$\lim_{g\to \infty}F_g+\varphi_g^x: \mathcal{S}^3[d]\longrightarrow A$$
if and only if the following two conditions hold:
\begin{enumerate}[(i)]
\item The associated cohomology classes $[C_g]\in H^2(\mathcal{M}_{g,1}[d];A)$ are trivial.
\item The associated torsors $\rho(C_g)\in H^1(\mathcal{AB}_{g,1},Hom(\mathcal{M}_{g,1}[d],A))$ are trivial.
\end{enumerate}
\end{theorem}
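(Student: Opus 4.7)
The strategy is to mimic the proof of Theorem~\ref{teo_ch2} but based on the bijection
$$\lim_{g\to \infty}(\mathcal{A}_{g,1}[d]\backslash\mathcal{M}_{g,1}[d]/\mathcal{B}_{g,1}[d])_{\mathcal{AB}_{g,1}} \simeq \mathcal{S}^3[d]$$
stated above. Under this bijection, giving an invariant $\mathcal{S}^3[d]\rightarrow A$ is equivalent to giving a compatible family $\{F_g\}$ of functions on the $(\text{mod }d)$ Torelli groups satisfying the analogues of conditions (i)--(iii) in the introduction for the pair $(\mathcal{A}_{g,1}[d],\mathcal{B}_{g,1}[d])$; the defect of such an $F_g$ from being a homomorphism is exactly a 2-cocycle $C_g$ satisfying the analogues of (1)--(3). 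The plan is therefore to show that, given a family $\{C_g\}$ satisfying these conditions, conditions (i) and (ii) are precisely what is needed to solve the cohomological equation $C_g=\delta F_g$ with an $F_g$ that behaves correctly under stabilization, under $\mathcal{AB}_{g,1}$-conjugation, and under left/right multiplication by $\mathcal{A}_{g,1}[d]$ and $\mathcal{B}_{g,1}[d]$.

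For the ``only if'' direction, suppose the family of trivializations $F_g+\varphi_g^x$ exists. By construction $C_g$ is a coboundary, so (i) holds; and because $F_g+\varphi_g^x$ is invariant under conjugation by $\mathcal{AB}_{g,1}$, the standard computation of the torsor attached to a trivialized 2-cocycle under an outer group action shows that $\rho(C_g)=0$, giving (ii). This mirrors exactly the corresponding step in Theorem~\ref{teo_ch2}.

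For the ``if'' direction, condition (i) lets us fix, for each $g$, a cochain $F_g^0:\mathcal{M}_{g,1}[d]\to A$ with $\delta F_g^0=C_g$, unique up to a homomorphism $\mathcal{M}_{g,1}[d]\to A$. The stabilization axiom (1) for $C_g$, together with the fact that in the stable range $H^1(\mathcal{M}_{g,1}[d];A)$ stabilizes, allows us to adjust the $F_g^0$ by such homomorphisms so that they are compatible with the inclusions $\mathcal{M}_{g,1}[d]\hookrightarrow\mathcal{M}_{g+1,1}[d]$. Condition (ii) then says that the class of $F_g^0\circ\phi_*-F_g^0$ in $H^1(\mathcal{AB}_{g,1};\mathrm{Hom}(\mathcal{M}_{g,1}[d],A))$ vanishes, which, by an averaging/twisted-cocycle argument analogous to the one used in \cite{pitsch}, allows a further adjustment of $F_g^0$ so that it becomes $\mathcal{AB}_{g,1}$-conjugation invariant. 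Finally, the vanishing condition (3) on $C_g$ forces $F_g^0$ to be constant on the double cosets of $\mathcal{A}_{g,1}[d]$ and $\mathcal{B}_{g,1}[d]$, up to adding a homomorphism that vanishes on $\mathcal{A}_{g,1}[d]\cup\mathcal{B}_{g,1}[d]$ and is $\mathcal{AB}_{g,1}$-invariant.

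The main obstacle, and the reason for the restrictions on $g$ and $d$, lies in this last step: one must compute, in the stable range, the group
$$\mathrm{Hom}_{\mathcal{AB}_{g,1}}\bigl(\mathcal{M}_{g,1}[d]/\langle \mathcal{A}_{g,1}[d],\mathcal{B}_{g,1}[d]\rangle,\;A\bigr),$$
and show that it is isomorphic to $A_d$, yielding the parameter $x$ and the explicit family $\varphi_g^x$. This requires a careful study of the abelianization of $\mathcal{M}_{g,1}[d]$ quotiented by the images of $\mathcal{A}_{g,1}[d]$ and $\mathcal{B}_{g,1}[d]$; the hypotheses $g\geq 3$ and $d\geq 3$ odd, or $g\geq 5$ and $d\geq 2$ even with $4\nmid d$, are precisely the ranges in which this quotient is cyclic of order $d$ and stably detected by a Casson/Rohlin-type homomorphism with values in $\mathbb{Z}/d$. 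Everything else is a direct adaptation of the arguments used for Theorem~\ref{teo_ch2}, with $A_2$ replaced by $A_d$.
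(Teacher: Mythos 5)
Your overall strategy is the same as the paper's: use the double-coset bijection, treat the set $\mathcal{Q}_{C_g}$ of trivializations as an affine space over $Hom(\mathcal{M}_{g,1}[d],A)$, observe that condition (i) gives existence of a trivialization and condition (ii) (the torsor) gives existence of an $\mathcal{AB}_{g,1}$-fixed trivialization, and then reduce the remaining ambiguity to the computation of $\mathcal{AB}_{g,1}$-invariant homomorphisms that vanish on $\mathcal{A}_{g,1}[d]$ and $\mathcal{B}_{g,1}[d]$ and are compatible with stabilization. Your ``only if'' direction is exactly as in the paper, and your reduction of the ``if'' direction to a computation of invariant homomorphisms is the correct skeleton.

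The one genuine error is your characterization of the ambiguity. You claim that the parameter space $A_d$ is ``stably detected by a Casson/Rohlin-type homomorphism,'' but this is precisely what the paper shows does \emph{not} happen in the mod $d$ setting. In Theorem~\ref{teo_ch2} for the ordinary Torelli group the ambiguity is controlled by the Rohlin invariant, because the invariant homomorphisms live on the Boolean part of $H_1(\mathcal{T}_{g,1})$. In the mod $d$ case the relevant homomorphisms factor through the symplectic representation: one shows $Hom(\mathcal{M}_{g,1}[d],A)^{\mathcal{AB}_{g,1}}\cong Hom(Sp_{2g}(\mathbb{Z},d),A)^{GL_g(\mathbb{Z})}\cong Hom(\mathfrak{sp}_{2g}(\mathbb{Z}/d),A)^{GL_g(\mathbb{Z})}\cong A_d$, where the last isomorphism is realized by the trace of the upper-left $g\times g$ block $\alpha$ of a matrix $\left(\begin{smallmatrix}\alpha & \beta\\ \gamma & -\alpha^t\end{smallmatrix}\right)\in\mathfrak{sp}_{2g}(\mathbb{Z}/d)$. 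The resulting family $\varphi_g^x$ is a genuinely new invariant (computed on lens spaces $L(\mp 1+dk,q)$ as $\pm kx$), and the paper emphasizes that it is \emph{not} an extension of Rohlin. This matters because your proposed argument would look for the wrong object, and in particular the restriction $4\nmid d$ does not come from Rohlin/Boolean considerations but from Putman's analysis of the kernel of $H_1(\mathcal{T}_{g,1})_{Sp_{2g}(\mathbb{Z},d)}\to H_1(\mathcal{M}_{g,1}[d])$ (trivial for $d$ odd, $\mathbb{Z}/2$ for $d$ even) together with the Lee--Szczarba and Sato computations of $[Sp_{2g}(\mathbb{Z},d),Sp_{2g}(\mathbb{Z},d)]$, which are where the constraints $g\geq 3$ (odd $d$) and $g\geq 5$, $4\nmid d$ (even $d$) actually enter. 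You would need to replace your ``Casson/Rohlin detection'' step by this chain of abelianization computations for the proof to go through.
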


Unfortunately, in our attempt to use these tools to prove Perron's conjecture, we realised that the conjecture is actually false. What is more, we get the following result:

\begin{theorem}
\label{teo_ch6}
Perron's conjecture is false with an obstruction given by the non-vanishing first characteristic class of surface bundles reduced modulo $p.$
\end{theorem}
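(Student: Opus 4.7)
The strategy is to fit Perron's construction into the framework of Theorem \ref{teo_ch5} with $A=\mathbb{Z}/p$, compute the associated failure $2$-cocycle, and show that its cohomology class does not vanish; the obstruction will be precisely the mod $p$ reduction of the first Mumford-Morita-Miller class $e_1\in H^2(\mathcal{M}_{g,1};\mathbb{Z})$ restricted to $\mathcal{M}_{g,1}[p]$.

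I would fix an odd prime $p$ and write Perron's candidate invariant $P_g:\mathcal{M}_{g,1}[p]\to\mathbb{Z}/p$ explicitly as follows. For $f\in\mathcal{M}_{g,1}[p]$, decompose $f=\tau\cdot h$ with $\tau\in\mathcal{T}_{g,1}$ and $h$ a product of $p$-th powers of Dehn twists, and set $P_g(f):=\lambda(S^3_\tau)\bmod p$, where $\lambda$ denotes the Casson invariant. After verifying (following Perron) that $P_g$ is independent of the chosen factorisation and therefore well-defined on $\mathcal{M}_{g,1}[p]$, I introduce the failure $2$-cocycle
$$C_g^{P}(f_1,f_2):=P_g(f_1)+P_g(f_2)-P_g(f_1 f_2),$$
and check, using the standard multiplicative properties of $\lambda$ together with the behaviour of $p$-th powers of Dehn twists with respect to the handlebody subgroups $\mathcal{A}_{g,1}[p]$ and $\mathcal{B}_{g,1}[p]$, that $C_g^{P}$ satisfies conditions analogous to (1)-(3).

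The crucial step is to identify $[C_g^{P}]\in H^2(\mathcal{M}_{g,1}[p];\mathbb{Z}/p)$. Here I would invoke Morita's extension of the Casson homomorphism from the Torelli group to the full mapping class group, whose defect cocycle is known to represent a non-zero rational multiple of $e_1$. Restricting this identity to $\mathcal{M}_{g,1}[p]$ and reducing modulo $p$ identifies $[C_g^{P}]$ with a non-zero scalar multiple of the image of $e_1$ in $H^2(\mathcal{M}_{g,1}[p];\mathbb{Z}/p)$. To finish, I would exhibit an oriented $\Sigma_{g,1}$-bundle over a closed surface whose monodromy factors through $\mathcal{M}_{g,1}[p]$ and whose first characteristic number is a unit modulo $p$, obtained by pulling back a known non-trivial surface bundle along a suitable finite congruence cover. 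This forces $[C_g^{P}]\neq 0$, so Theorem \ref{teo_ch5}\,(i) is violated and $P_g$ cannot descend to an invariant on $\mathcal{S}^3[p]$.

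The main obstacle lies in the cohomological identification of the last paragraph: one must carefully track the rational constants appearing when comparing Morita's defect cocycle for the extended Casson invariant with $e_1$, verify that they remain units after reduction modulo $p$, and construct the explicit congruence surface bundle realising a non-trivial pairing with $e_1\bmod p$. The verification of conditions (1)-(3) for $C_g^{P}$ is comparatively routine, and the final conclusion follows immediately from Theorem \ref{teo_ch5}.
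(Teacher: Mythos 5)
Your high-level identification of the obstruction — that the failure cocycle of Perron's candidate should be a non-zero multiple of the mod~$p$ reduction of the first characteristic class $e_1$ restricted to $\mathcal{M}_{g,1}[p]$ — is the same target the paper aims at, and your reduction to Theorem~\ref{teo_ch5} and the equality $C_g^P=(\tau_1^Z)^*(-2J_g^t)$ via Pitsch's trivialization of $\tau_1^*(-2J_g^t)$ on $\mathcal{T}_{g,1}$ are exactly the paper's starting moves (Proposition~\ref{prop_obst_perron}). However, the central step of your plan rests on an object that does not exist. You invoke ``Morita's extension of the Casson homomorphism from the Torelli group to the full mapping class group, whose defect cocycle is known to represent a non-zero rational multiple of $e_1$,'' and then propose to restrict and reduce mod~$p$. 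But $e_1$ is a non-zero class in $H^2(\mathcal{M}_{g,1};\mathbb{Q})$; the defect cocycle of any genuine $\mathbb{Q}$-valued function on $\mathcal{M}_{g,1}$ is by definition a coboundary, so it cannot represent a non-zero rational multiple of $e_1$. What Morita actually supplies is a $2$-cocycle $\varsigma_g(\phi,\psi)=\omega((k\circ C)(\phi),(k\circ C)(\psi^{-1}))$ on $\mathcal{M}_{g,1}$ representing $e_1$, not a trivialization. Your ``restriction of an identity'' therefore has no identity to restrict.

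The consequence is that you have not accounted for the genuinely hard part: showing that the two cocycles $(\tau_1^Z)^*(J_g^t)$ and (a suitable scalar times) the restriction of $\varsigma_g$, which equals $-(\tau_1^Z)^*(Q_g)$, are cohomologous in $H^2(\mathcal{M}_{g,1}[p];\mathbb{Z}/p)$. These are not proportional as cocycles, only as cohomology classes, and establishing this is not a matter of ``carefully tracking rational constants.'' The paper does it by writing $(\tau_1^Z)^*(J_g^t)=(\tau_1^Z)^*(J_g^t+\tfrac{1}{48}Q_g)-\tfrac{1}{48}(\tau_1^Z)^*(Q_g)$ and proving $[(\tau_1^Z)^*(J_g^t+\tfrac{1}{48}Q_g)]=0$; that vanishing uses the central extension $0\to\mathrm{Im}(\tau_2^Z)\to\rho_3^Z(\mathcal{M}_{g,1}[p])\to\extp^3H_p\to1$, Morita's formula $\tau_2([\xi,\eta])=\pi(\chi(\tau_1\xi\wedge\tau_1\eta))$, a carefully chosen $\mathcal{AB}_{g,1}$-invariant homomorphism $f=-6d_3-2d_2-4d_1$ that factors through $\pi_p$, a Universal Coefficients argument to kill the potential $Hom(\extp^3H_p,\mathbb{Z}/p)$-ambiguity, and the pullback-vanishing lemma for central extensions (Lemma~\ref{lem-cocy-pull-back}). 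That is the real content of the proof, and your plan has no substitute for it. Separately, for the final non-vanishing you propose constructing an explicit $\Sigma_{g,1}$-bundle over a surface with monodromy in $\mathcal{M}_{g,1}[p]$ and $e_1$-pairing a unit mod~$p$; while plausible, this is considerably harder to carry out than the paper's argument, which simply proves that $res\colon H^2(\mathcal{M}_{g,1};\mathbb{Z}/p)\to H^2(\mathcal{M}_{g,1}[p];\mathbb{Z}/p)$ is injective via the seven-term exact sequence (Proposition~\ref{prop_inj_MGC}) and then uses $H^2(\mathcal{M}_{g,1};\mathbb{Z})\cong\mathbb{Z}$ with $e_1=12c_1$, so that $e_1\ne 0$ mod $p$ for $p>3$.
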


\newpage

\section*{Structure of the thesis}

As a starting point, in \textbf{Chapter \ref{chapter: Preliminaries}},
we give the background that we will use throughout this thesis.

In \textbf{Chapter \ref{chapter: trivocicles torelli}},
we extend the tools given in \cite{pitsch} to give a construction of invariants with values on an abelian group without restrictions, from a suitable family of $2$-cocycles on $\mathcal{T}_{g,1},$ proving Theorem \ref{teo_ch2}.
Moreover we give some interesting results about the Luft subgroup $\mathcal{L}_{g,1}$ and the handlebody subgroup $\mathcal{B}_{g,1}.$ 

In \textbf{Chapter \ref{chapter: RHS}}, we show that every Heegaard splitting with gluing map an element of the (mod $p$) Torelli group is a $\mathbb{Q}$-homology $3$-sphere and that every $\mathbb{Q}$-homology $3$-sphere has a Heegaard splitting with gluing map an element of the (mod $p$) Torelli group. In particular, we give a criterion to determine whenever a $\mathbb{Q}$-homology $3$-sphere has a Heegaard splitting with gluing map an element of the (mod $d$) Torelli group, proving Theorem \ref{teo_ch3}.

In \textbf{Chapter \ref{chapter: versal ext}}, we compare the automorphisms of $p$-nilpotent quotients of the free group given by the Stallings or Zassenhaus filtration of the free group. To be more precise,
given $\Gamma$ a free group of finite rank and $\{ \Gamma_k^\bullet\}_k,$ the Stallings or Zassenhaus $p$-central series and
$\mathcal{N}_{k+1}^\bullet=\Gamma/\Gamma_{k+1}^\bullet.$
We show that there is a well-defined homomorphism
$\psi_k^\bullet:Aut(\mathcal{N}_{k+1}^\bullet)\rightarrow Aut(\mathcal{N}_{k}^\bullet)$
that fits into a non central extension
\begin{equation*}
\xymatrix@C=7mm@R=10mm{0 \ar@{->}[r] & Hom(\mathcal{N}^\bullet_1, \mathcal{L}^\bullet_{k+1}) \ar@{->}[r]^-{i} & Aut\;\mathcal{N}^\bullet_{k+1} \ar@{->}[r]^-{\psi_k^\bullet} & Aut \;\mathcal{N}^\bullet_k \ar@{->}[r] & 1,}
\end{equation*}
and we study its splitability.

In \textbf{Chapter \ref{chapter: trivocicles torelli mod p}}, we extend the results obtained in Chapter \ref{chapter: trivocicles torelli} to the case of the (mod $p$) Torelli group,
getting a new tool to construct invariants of $\mathbb{Q}$-homology spheres as a trivializations of certain trivial $2$-cocycles on the (mod $p$) Torelli group, proving Theorem \ref{teo_ch5}.
Moreover, we get a new invariant of $\mathbb{Q}$-homology $3$-spheres as a family of homomorphisms on the (mod $p$) Torelli group.

Finally, in \textbf{Chapter \ref{chapter: obtruction}}, we give our obstruction to Perron's conjecture, proving Theorem \ref{teo_ch6}.

\chapter{Preliminaries}
\label{chapter: Preliminaries}

In this Chapter we introduce basic background material that we will use throughout all this thesis.

\section{Cohomology of groups and group extensions}
In this section we give some definitions and elementary results about cohomology of groups and group extensions. For further information on these topics see \cite{brown} and \cite{stam}.

\subsection{The group algebra}
\begin{defi}
The \textit{group algebra} $\mathbb{Z}G$ is the free $\mathbb{Z}$-module with basis $G$ and with multiplication induced by the multiplication in the group $G.$ Thus, elements are formal linear combinations
$$\sum_{g\in G}a_gg,$$
where $a_g\in \mathbb{Z},$ and where $a_g=0$ for all but finitely many $g\in G,$ and multiplication is given by
$$\sum_{g\in G}a_gg\sum_{h\in G}b_hh=\sum_{g,h\in G}(a_gb_h)g\cdot h.$$
With these definitions, the group algebra $\mathbb{Z}G$ becomes an associative ring with unit.
\end{defi}

\paragraph{Invariants and coinvariants}

Let $M$ be a left $G$-module. The \textit{invariants} of $M$ are the elements of the $\mathbb{Z}$-submodule
$$M^G=\{m\in M\;\mid \;gm=m \text{ for all } g\in G\}.$$
In fact, $M^G$ is the largest submodule of $M$ on which $G$ acts trivially.

The \textit{coinvariants} of $M$ are the elements of the quotient $\mathbb{Z}$-module
$$M_G=M/(gm-m\;\mid\; g\in G,m\in M). $$
Indeed, $M_G$ is the largest quotient of $M$ on which $G$ acts trivially.

\paragraph{Left and right $G$-modules}
Let $M$ be a non-trivial (left) $G$-module given by the action
\begin{align*}
G\times M & \rightarrow M \\
(g,m)&\mapsto gm,
\end{align*}
since $G$ is a group and in particular every element $g\in G$ has an inverse $g^{-1}\in G,$ then we can give to $M$ the structure of a non-trivial (right) $G$-module taking the action
\begin{align*}
M\times G & \rightarrow M \\
(m,g)&\mapsto g^{-1}m.
\end{align*}
Analogously, if $M$ is a non-trivial (right) $G$-module then $M$ admits the structure of a non-trivial (left) $G$-module.
Therefore we will not distinguish between left and right $G$-modules.

\subsection{Homology and Cohomology of groups}

We define the $n$-th cohomology group of $G$ with coefficients in the $G$-module $A$ by
$$H^n(G;A)=Ext^n_{\mathbb{Z}G}(\mathbb{Z};A),$$
where $\mathbb{Z}$ is to be regarded as a trivial $G$-module. In particular $H^0(G;A)=A_G.$

The $n$-th homology group of $G$ with coefficients in the $G$-module $B$ is defined by
$$H_n(G,B)=Tor^{\mathbb{Z}G}_n(B,\mathbb{Z}),$$
where again $\mathbb{Z}$ is to be regarded as trivial $G$-module.
In particular, $H_0(G;B)=B^G.$

The way to compute such groups is taking a $G$-projective resolution $\mathbf{P}$ of the trivial $G$-module $\mathbb{Z},$ form the complexes $Hom_{\mathbb{Z}G}(\mathbf{P},A)$ and $B\otimes_{\mathbb{Z}G} \mathbf{P},$ and compute their homology.

In the next section we will give a standard procedure of constructing such a resolution $\mathbf{P}$ from the group $G.$

\subsection{The bar resolution}

We now describe a particular resolution $B_\bullet\rightarrow \mathbb{Z}$ of the trivial $\mathbb{Z}G$-module $\mathbb{Z},$ called the \textit{bar resolution} or the \textit{standard resolution.} 

Let $B_n$ be the free $\mathbb{Z}$-module with basis all $(n+1)$-tuples $(g_0,\ldots, g_n)\in G^{n+1}.$ Then $B_n$ becomes a $\mathbb{Z}G$-module via the diagonal action
$$g\cdot (g_0,\ldots, g_n)=(gg_0,\ldots,gg_n)$$
for $g\in G.$ As a $\mathbb{Z}G$-module, $B_n$ is free with basis all elements of the form
$$[g_1|\ldots |g_n]=(1,g_1,g_1g_2,\ldots, g_1g_2\cdots g_n)$$
where $g_1,\ldots, g_n\in G.$ In particular, $B_0$ is free with basis $[\;]=(1),$ so we may identify $B_0$ with the $\mathbb{Z}G$-module $\mathbb{Z}G.$

For $0\leq i \leq n,$ define the $i^{th}$ face map $d_i:B_n\rightarrow B_{n-1}$ to be the homomorphism of $\mathbb{Z}$-modules determined by
$$d_i(g_0,\ldots, g_n)=(g_0, \ldots , \widehat{g_i}, \ldots , g_n)$$
where the symbol $\widehat{g_i}$ indicates that $g_i$ is to be omitted.
Clearly, $d_i$ is also a homomorphism of $\mathbb{Z}G$-modules. It is easy to verify that if $i<j$ then
$$d_i\circ d_j=d_{j-1}\circ d_i.$$
If we define $\partial_n: B_n\rightarrow B_{n-1}$ by
$$\partial_n=\sum^n_{i=0}(-1)^id_i,$$
then the above relation implies that $\partial_{n-1}\partial_n=0.$
Moreover $B_0/Im\; \partial_1=\mathbb{Z},$ by a direct computation.
Thus,
$$B_\bullet:\;\xymatrix@C=7mm@R=7mm{
 \cdots \ar@{->}[r] & B_n \ar@{->}[r]^-{\partial_n}
 & B_{n-1} \ar@{->}[r] & \cdots \ar@{->}[r] & B_1 \ar@{->}[r]^-{\partial_1} & B_0 \ar@{->}[r] & 0  }$$
is a free resolution of the (left) $G$-module $\mathbb{Z}.$

In terms of the basis $[g_1|\ldots |g_n]$ for $B_n,$ the face maps $d_i$ take the form:
\begin{align*}
d_0[g_1|\ldots |g_n] &=g_1[g_2|\ldots |g_n],\\
d_i[g_1|\ldots |g_n] &=[g_1|\ldots |g_{i-1}|g_ig_{i+1}|\ldots | g_n] \quad (0<i<n),\\
d_n[g_1|\ldots |g_n] &=[g_1|\ldots |g_{n-1}].
\end{align*}
Thus, for instance
\begin{align*}
\partial_1[g_1] & =g_1[\;]-[\;],\\
\partial_2[g_1|g_2] & =g_1[g_2]-[g_1g_2]+[g_2].
\end{align*}

For any group $G$ we can always take our resolution $F$ to be the bar resolution. In this case we write $C_*(G,M)$ for $F\otimes_G M$ and $C^*(G,M)$ for $Hom_G(F,M).$ Thus an element of $C_n(G,M)$ can be uniquely expressed as a finite sum of elements of the form $m\otimes [g_1|\cdots | g_n],$ i.e., as a formal linear combination with coefficients in $M$ of the symbols $[g_1|\cdots |g_n].$
The boundary operator $\partial: C_n(G,M)\rightarrow C_{n-1}(G,M)$ is given by
\begin{align*}
\partial(m\otimes [g_1|\cdots |g_n]) & =mg_1\otimes [g_2|\cdots | g_n] \\
& -m\otimes [g_1g_2|\cdots |g_n]+\cdots + (-1)^n m\otimes [g_1|\cdots |g_{n-1}].
\end{align*}
Similarly, an element of $C^n(G,M)$ can be regarded as a function $f:G^n\rightarrow M,$ i.e., as a function of $n$ variables from $G$ to $M.$ The coboundary operator $\delta:C^{n-1}(G,M)\rightarrow C^n(G,M)$ is given, up to sign by
\begin{align*}
(\delta f)(g_1,\ldots ,g_n) &= g_1f(g_2,\ldots ,g_n) \\
& -f(g_1g_2,\ldots , g_n)+\cdots + (-1)^n f(g_1,\ldots ,g_{n-1}).
\end{align*}

\subsection{Some results on Cohomology of Groups}

\begin{lema}[Center kills lemma (Lemma 5.4. of\cite{dupont})]
\label{lem_cen_kill}
Let $M$ be a (left) $R[G]$-module ($R$ any commutative ring) and let $\gamma\in G$ be a central element such that for some $r\in R,$ $\gamma x=rx$ for all $x\in M.$
Then $(r-1)$ annihilates $H_*(G,M).$
\end{lema}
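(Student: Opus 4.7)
The plan is to invoke the standard principle that inner automorphisms act trivially on group (co)homology, and then to specialize it to the central element $\gamma$.

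First I would recall the following general construction. Given any $g\in G$, one can define a chain self-map $\sigma_g$ on $F\otimes_{\mathbb{Z}G}M$, where $F\to\mathbb{Z}$ is a projective resolution of the trivial $\mathbb{Z}G$-module $\mathbb{Z}$ (e.g.\ the bar resolution), by combining the conjugation automorphism $c_g\colon G\to G$, $h\mapsto ghg^{-1}$, with the action of $g$ on $M$. Concretely, on the bar resolution this is
\[
\sigma_g\bigl(m\otimes[g_1|\cdots|g_n]\bigr)=gm\otimes[gg_1g^{-1}|\cdots|gg_ng^{-1}].
\]
This is well defined on the tensor product over $\mathbb{Z}G$ and commutes with $\partial$.

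Next I would show that $\sigma_g$ is chain homotopic to the identity, so that it induces the identity on $H_*(G,M)$. This is a classical fact proved by exhibiting an explicit chain homotopy on the bar resolution; alternatively, one notes that on the category of pairs $(G,M)$ the functor $H_*(G,-)$ is natural with respect to the pair map $(c_g, \cdot g)$ and that this pair map is equivalent to the identity pair, since $c_g$ is an inner automorphism. I would simply cite this standard result (see e.g.\ Brown's book, Chapter III).

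Now I would specialize to $g=\gamma$, a central element. Since $\gamma$ commutes with every element of $G$, the conjugation $c_\gamma$ is the identity on $G$, so $\sigma_\gamma$ is just the endomorphism
\[
m\otimes[g_1|\cdots|g_n]\longmapsto (\gamma m)\otimes[g_1|\cdots|g_n].
\]
Using the hypothesis $\gamma x=rx$ for every $x\in M$ and the $R$-linearity of the tensor product, this self-map equals multiplication by $r$ on the whole complex $C_*(G,M)$ and hence on $H_*(G,M)$. Combined with the previous step, multiplication by $r$ equals multiplication by $1$ on $H_*(G,M)$, so $(r-1)$ annihilates $H_*(G,M)$, as desired.

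The only non-routine ingredient is the standard lemma that inner automorphisms act trivially on $H_*(G,M)$; everything else is a direct computation on the bar resolution and an application of the centrality hypothesis.
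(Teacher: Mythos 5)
Your argument is correct, and it is the standard proof of the Center Kills Lemma. Note that the paper itself does not prove this statement: it is quoted as Lemma~5.4 of the cited reference (Dupont) and used as a black box, so there is no in-paper proof to compare against.

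Two small remarks on your write-up. First, the well-definedness of $\sigma_g$ on $M\otimes_{\mathbb{Z}G}F$ and its compatibility with $\partial$ is exactly the compatibility condition for the pair $(c_g,\;m\mapsto gm)$ in the sense of Brown III.8 --- the multiplication-by-$g$ map on $M$ is $c_g$-equivariant because $g(hm)=(ghg^{-1})(gm)$; it is worth stating this explicitly rather than just asserting it, since a mismatched sign or inverse is the one place the argument can silently break. Second, when you specialize to the central element $\gamma$, you should observe not only that $c_\gamma=\mathrm{id}_G$ but also that multiplication by $\gamma$ is then a genuine $G$-module endomorphism of $M$ (precisely because $\gamma$ is central), so that the induced map on $H_*(G,M)$ is honestly ``multiplication by $r$'' by functoriality in the coefficient module --- you do say this, but it is the hypothesis that makes the last step legal. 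With those points spelled out, the proof is complete and matches the standard reference argument.
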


\begin{defi}[\cite{brown}, VI.7.]
For $M$ an abelian group, define $M^*=Hom(M,\mathbb{Q}/\mathbb{Z}).$ Let $L\geq 2$ be an integer, observe that if $M$ is a module over $\mathbb{Z}/L,$ then $M^*\cong Hom(M,\mathbb{Z}/L).$
\end{defi}

\begin{lema}[\cite{brown}, VI. Proposition 7.1.]
\label{lema_duality_homology}
Let $G$ be a group and let $M$ be a $G$-module. Then there is a natural isomorphism $H^k(G;M^*)\cong (H_k(G;M))^*$ for every $k\geq 0.$
\end{lema}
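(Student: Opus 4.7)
The plan is to exploit the tensor--hom adjunction at the chain level and then invoke the injectivity of $\mathbb{Q}/\mathbb{Z}$ as an abelian group.

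First, I would fix a free $\mathbb{Z}G$-resolution $F_\bullet \twoheadrightarrow \mathbb{Z}$ (for definiteness, the bar resolution $B_\bullet$ introduced above), so that by definition
$$H_k(G;M) = H_k\bigl(F_\bullet \otimes_{\mathbb{Z}G} M\bigr), \qquad H^k(G;M^*) = H^k\bigl(\mathrm{Hom}_{\mathbb{Z}G}(F_\bullet,M^*)\bigr).$$
Writing $C_\bullet(G;M) := F_\bullet \otimes_{\mathbb{Z}G} M$ and $C^\bullet(G;M^*) := \mathrm{Hom}_{\mathbb{Z}G}(F_\bullet, M^*)$, the goal is to identify $H^k$ of the second complex with the $\mathbb{Z}$-dual of $H_k$ of the first.

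Second, I would establish a chain-level isomorphism of cochain complexes
$$C^\bullet(G; M^*) \;\cong\; \bigl(C_\bullet(G; M)\bigr)^*,$$
using the standard tensor--hom adjunction
$$\mathrm{Hom}_{\mathbb{Z}G}\bigl(F_k,\, \mathrm{Hom}_{\mathbb{Z}}(M,\mathbb{Q}/\mathbb{Z})\bigr) \;\cong\; \mathrm{Hom}_{\mathbb{Z}}\bigl(F_k \otimes_{\mathbb{Z}G} M,\, \mathbb{Q}/\mathbb{Z}\bigr),$$
which sends a $\mathbb{Z}G$-linear map $\varphi\colon F_k \to \mathrm{Hom}_{\mathbb{Z}}(M,\mathbb{Q}/\mathbb{Z})$ to the map $(x \otimes m)\mapsto \varphi(x)(m)$. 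This assignment is natural in $F_k$, so it commutes with the differentials induced by $\partial_\bullet$, giving the required isomorphism of complexes. Naturality in $G$ and $M$ is inherited from the adjunction.

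Third, I would observe that $\mathbb{Q}/\mathbb{Z}$ is a divisible abelian group, hence injective as a $\mathbb{Z}$-module, so the functor $\mathrm{Hom}_{\mathbb{Z}}(-,\mathbb{Q}/\mathbb{Z})$ is exact. Applied to the chain complex $C_\bullet(G;M)$ of abelian groups, exactness means that dualization commutes with taking (co)homology: for any $\mathbb{Z}$-complex $D_\bullet$ one has $H^k(D_\bullet^*) \cong H_k(D_\bullet)^*$. Combining this with the isomorphism of the previous step yields
$$H^k(G; M^*) \;\cong\; H^k\bigl((C_\bullet(G;M))^*\bigr) \;\cong\; \bigl(H_k(C_\bullet(G;M))\bigr)^* \;=\; \bigl(H_k(G;M)\bigr)^*,$$
as desired.

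The only technical step that needs checking is the third one: that duality by an injective module commutes with homology. This is a minor obstacle and follows quickly by dualizing the short exact sequences $0 \to Z_k \to C_k \to B_{k-1} \to 0$ and $0 \to B_k \to Z_k \to H_k \to 0$ of cycles and boundaries, since $\mathrm{Hom}_{\mathbb{Z}}(-,\mathbb{Q}/\mathbb{Z})$ preserves exactness. Naturality of all the isomorphisms involved in $G$ and $M$ then delivers the naturality statement of the lemma.
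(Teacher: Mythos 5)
The paper states this result with a citation to Brown's \emph{Cohomology of Groups}, VI.7.1, and does not reproduce a proof, so there is no internal argument to compare against. Your proof is correct and is, in both outline and detail, the standard argument found in the cited reference: a chain-level tensor--hom adjunction $\mathrm{Hom}_{\mathbb{Z}G}(F_k, \mathrm{Hom}_{\mathbb{Z}}(M,\mathbb{Q}/\mathbb{Z})) \cong \mathrm{Hom}_{\mathbb{Z}}(F_k \otimes_{\mathbb{Z}G} M, \mathbb{Q}/\mathbb{Z})$ (which does require, as you implicitly use, that $M^*$ carries the contragredient $G$-action), followed by the observation that $\mathrm{Hom}_{\mathbb{Z}}(-,\mathbb{Q}/\mathbb{Z})$ is exact by divisibility of $\mathbb{Q}/\mathbb{Z}$, so dualization commutes with passage to homology.
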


\begin{teo}[K. Dekimpe, M. Hartl, S. Wauters \cite{seven}]
\label{teo_seven_terms}
For any group extension $1\rightarrow N \rightarrow G \rightarrow Q \rightarrow 1$
and any $G$-module $M,$ the Lyndon-Hochschild-Serre spectral sequence gives rise to an exact sequence:
$$
\xymatrix@C=10mm@R=13mm{0 \ar@{->}[r] & H^1(Q;M^N) \ar@{->}[r]^{\text{inf}} & H^1(G;M) \ar@{->}[r]^{\text{res}} &H^1(N;M)^Q
\ar@{->}[r]^{\text{tr}} &  H^2(Q;M^N)}
$$
$$
\xymatrix@C=10mm@R=13mm{ \ar@{->}[r]^{\text{inf} \qquad} & H^2(G;M)_1 \ar@{->}[r]^{\rho \qquad} & H^1(Q;H^1(N;M)) \ar@{->}[r]^{\lambda} & H^3(Q;M^N),}
$$
where ''inf'' and ''res'' are respectively the inflation and restriction maps, and $H^2(G,M)_1$ is the kernel of the restriction map $res: H^2(G,M)\rightarrow H^2(N,M).$
\end{teo}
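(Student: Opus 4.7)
The plan is to derive this seven-term sequence directly from the Lyndon--Hochschild--Serre spectral sequence
$$E_2^{p,q}=H^p(Q;H^q(N;M))\Longrightarrow H^{p+q}(G;M),$$
associated with the extension $1\to N\to G\to Q\to 1$, by splicing the standard low-degree exact sequence with the next piece of the filtration on $H^2(G;M)$. I would begin by recalling the decreasing filtration $F^\bullet H^n(G;M)$ whose successive quotients are the $E_\infty$-terms, so that in total degree~$2$ one has
$$0\subseteq F^2H^2\subseteq F^1H^2\subseteq F^0H^2=H^2(G;M),$$
with $F^2H^2=E_\infty^{2,0}$, $F^1H^2/F^2H^2=E_\infty^{1,1}$ and $H^2/F^1H^2=E_\infty^{0,2}$.

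Next I would extract the standard five-term exact sequence from the $n=1$ row and the first $d_2$ differential, which immediately yields
$$0\to H^1(Q;M^N)\xrightarrow{\mathrm{inf}} H^1(G;M)\xrightarrow{\mathrm{res}} H^1(N;M)^Q\xrightarrow{\mathrm{tr}} H^2(Q;M^N),$$
where the transgression $\mathrm{tr}$ is exactly $d_2^{0,1}\colon E_2^{0,1}\to E_2^{2,0}$. Now the cokernel of this last map is, by definition, $E_\infty^{2,0}=F^2H^2(G;M)$, so I would continue the sequence with the injection $F^2H^2\hookrightarrow F^1H^2$ and then the short exact sequence
$$0\to E_\infty^{2,0}\to F^1H^2(G;M)\to E_\infty^{1,1}\to 0.$$
Since $E_\infty^{1,1}=\ker\!\bigl(d_2^{1,1}\colon H^1(Q;H^1(N;M))\to H^3(Q;M^N)\bigr)$, composing with the differential $\lambda:=d_2^{1,1}$ gives the desired tail
$$F^1H^2(G;M)\xrightarrow{\rho} H^1(Q;H^1(N;M))\xrightarrow{\lambda} H^3(Q;M^N),$$
with $\rho$ the composition of the surjection $F^1H^2\twoheadrightarrow E_\infty^{1,1}$ with the inclusion into $E_2^{1,1}$.

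The main obstacle, and really the only non-formal step, is to identify $F^1H^2(G;M)$ with the kernel $H^2(G;M)_1$ of $\mathrm{res}\colon H^2(G;M)\to H^2(N;M)$. For this I would use the edge homomorphism: the quotient map $H^2(G;M)\twoheadrightarrow E_\infty^{0,2}$ equals, up to the canonical inclusion $E_\infty^{0,2}\hookrightarrow E_2^{0,2}=H^2(N;M)^Q$, precisely the restriction to $N$. Consequently
$$\ker(\mathrm{res})=\ker\bigl(H^2(G;M)\twoheadrightarrow E_\infty^{0,2}\bigr)=F^1H^2(G;M),$$
which is exactly what is needed to splice the two pieces and obtain the claimed seven-term exact sequence. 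The remaining verifications (functoriality of the maps and that $\mathrm{inf}$, $\mathrm{res}$, $\mathrm{tr}$ here coincide with the classically named inflation, restriction and transgression) are standard edge-map computations for the LHS spectral sequence, so I would simply cite them rather than redo them.
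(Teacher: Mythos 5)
The paper does not prove this theorem itself; it is quoted as a known result with a citation to Dekimpe--Hartl--Wauters, so there is no in-text proof to compare against. Your spectral-sequence derivation is nevertheless correct: the filtration on $H^2(G;M)$ in total degree two gives $0\subseteq E_\infty^{2,0}\subseteq F^1H^2\subseteq H^2(G;M)$, the standard identifications $E_\infty^{2,0}=\operatorname{coker}(d_2^{0,1})$, $E_\infty^{1,1}=\ker(d_2^{1,1})$, and $H^2(G;M)/F^1H^2\cong E_\infty^{0,2}$ all hold because the only possibly nonzero differentials in these positions are the two $d_2$'s you name, and the identification of $F^1H^2$ with $\ker(\mathrm{res})$ via the degree-$2$ edge map is the standard computation. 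Splicing the five-term sequence with the short exact sequence $0\to E_\infty^{2,0}\to F^1H^2\to E_\infty^{1,1}\to 0$ and then extending by $\lambda=d_2^{1,1}$ gives exactness at every term, so the seven-term sequence follows.

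It is worth remarking that this is a somewhat different route from what the cited source actually does: the point of Dekimpe--Hartl--Wauters is to give a completely elementary, cocycle-level proof with explicit descriptions of all the maps (including $\rho$ and $\lambda$), precisely because earlier spectral-sequence treatments of the seven-term sequence in the literature were either incomplete or left the maps only implicitly defined. Your argument is shorter and more conceptual, but it defines $\rho$ and $\lambda$ only through the internal structure of the spectral sequence (the surjection onto $E_\infty^{1,1}$ and the differential $d_2^{1,1}$), whereas their approach buys an explicit, citation-free formula for each arrow. Both are valid; which one is preferable depends on whether one needs the explicit cocycle-level descriptions downstream.
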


\begin{teo}\label{teo_exact_coef}
Let $0\rightarrow L \rightarrow M \rightarrow N \rightarrow 0$ a short exact sequence of $G$-modules, then there is a long exact sequence:
$$0\rightarrow L^G \rightarrow M^G \rightarrow N^G \rightarrow H^1(G,L)\rightarrow H^1(G,M) \rightarrow H^1(G,N) \rightarrow H^2(G,L)$$
\end{teo}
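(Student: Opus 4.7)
The plan is to deduce this long exact sequence from the standard machinery of derived functors applied to the fixed-point functor $(-)^G = H^0(G;-) = \operatorname{Hom}_{\mathbb{Z}G}(\mathbb{Z},-)$. Concretely, I would use the bar resolution $B_\bullet \twoheadrightarrow \mathbb{Z}$ constructed in the previous subsection, which realises $H^n(G;-)$ as the cohomology of the cochain complex $\operatorname{Hom}_{\mathbb{Z}G}(B_\bullet,-)$.

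First, I would apply $\operatorname{Hom}_{\mathbb{Z}G}(B_\bullet,-)$ termwise to the given short exact sequence $0\to L\to M\to N\to 0$ of $G$-modules. Because each $B_n$ is a \emph{free} $\mathbb{Z}G$-module (with basis the symbols $[g_1|\cdots|g_n]$), the functor $\operatorname{Hom}_{\mathbb{Z}G}(B_n,-)$ is exact. Consequently, we obtain a short exact sequence of cochain complexes
\begin{equation*}
0 \longrightarrow \operatorname{Hom}_{\mathbb{Z}G}(B_\bullet,L) \longrightarrow \operatorname{Hom}_{\mathbb{Z}G}(B_\bullet,M) \longrightarrow \operatorname{Hom}_{\mathbb{Z}G}(B_\bullet,N) \longrightarrow 0,
\end{equation*}
where the maps are induced by post-composition with $L\hookrightarrow M$ and $M\twoheadrightarrow N$.

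Next, I would invoke the standard ``zig-zag'' construction producing the long exact sequence in cohomology from any short exact sequence of cochain complexes. This gives an infinite long exact sequence
\begin{equation*}
\cdots \longrightarrow H^{n-1}(G;N) \xrightarrow{\ \delta\ } H^n(G;L) \longrightarrow H^n(G;M) \longrightarrow H^n(G;N) \xrightarrow{\ \delta\ } H^{n+1}(G;L) \longrightarrow \cdots
\end{equation*}
Finally, identifying $H^0(G;-)$ with the invariants functor $(-)^G$ and truncating at degree $2$ yields exactly the displayed seven-term sequence. Since all steps rely only on the exactness of $\operatorname{Hom}_{\mathbb{Z}G}(B_n,-)$ on free modules and the elementary snake-lemma argument, there is no real obstacle here; the only subtle point worth mentioning is the explicit description of the connecting morphism $\delta: N^G \to H^1(G;L)$, which sends a $G$-invariant $n\in N^G$ to the class of the $1$-cocycle $g\mapsto g\cdot\widetilde n - \widetilde n$, where $\widetilde n\in M$ is any lift of $n$; this is automatic from the snake-lemma prescription but is the piece most often needed in subsequent applications.
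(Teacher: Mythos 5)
Your proof is correct and complete. The paper states this theorem without proof or citation (it is a standard fact about derived functors), so there is no argument in the text to compare against; the route you take — apply $\operatorname{Hom}_{\mathbb{Z}G}(B_\bullet,-)$ to the short exact sequence, use freeness of each $B_n$ to get a short exact sequence of cochain complexes, and invoke the zig-zag lemma, then identify $H^0$ with invariants and truncate — is exactly the standard argument one would expect, and your explicit description of the connecting map $N^G\to H^1(G;L)$ is accurate and a worthwhile addition.
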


\begin{lema}[Leedham \cite{leed}]
\label{lema_canvi_coef_coh}
If $R$ is a commutative ring, $A$ is an $RG$-module and $n\geq 0$ then there is a natural isomorphism
$$Ext^n_{RG}(R,A)\cong Ext^n_{\mathbb{Z}G}(\mathbb{Z},A).$$
\end{lema}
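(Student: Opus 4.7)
The plan is to exploit the bar resolution as a bridge between the two $\text{Ext}$ groups, using the fact that the bar resolution is split exact as a complex of abelian groups.

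First, let $B_\bullet \to \mathbb{Z}$ denote the bar resolution of $\mathbb{Z}$ as a $\mathbb{Z}G$-module constructed above. The key observation I would highlight is that this augmented complex admits a canonical contracting homotopy that is only $\mathbb{Z}$-linear (not $\mathbb{Z}G$-linear), given by $s[g_1|\cdots|g_n] = [1|g_1|\cdots|g_n]$ extended $\mathbb{Z}$-linearly. Hence $B_\bullet \to \mathbb{Z} \to 0$ is split exact as a sequence of abelian groups. This means that applying the functor $R \otimes_\mathbb{Z} -$ preserves exactness regardless of whether $R$ is flat over $\mathbb{Z}$, producing an exact complex $R \otimes_\mathbb{Z} B_\bullet \to R \to 0$.

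Next I would verify that $R \otimes_\mathbb{Z} B_\bullet$ is a free resolution of $R$ over $RG$. Since each $B_n$ is a free $\mathbb{Z}G$-module on a set $I_n$, we have
\begin{equation*}
R \otimes_\mathbb{Z} B_n \;\cong\; R \otimes_\mathbb{Z} \bigl(\textstyle\bigoplus_{I_n} \mathbb{Z}G\bigr) \;\cong\; \bigoplus_{I_n}(R \otimes_\mathbb{Z} \mathbb{Z}G) \;\cong\; \bigoplus_{I_n} RG,
\end{equation*}
which is a free $RG$-module. Combined with the previous paragraph, $R \otimes_\mathbb{Z} B_\bullet \to R$ is therefore a free $RG$-resolution of the trivial $RG$-module $R$, and can be used to compute $\text{Ext}^*_{RG}(R, A)$.

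The final step is the adjunction comparison of the two cochain complexes. For any $\mathbb{Z}G$-module $M$ and any $RG$-module $A$, extension of scalars along $\mathbb{Z}G \hookrightarrow RG$ is left adjoint to restriction of scalars, so one has a natural isomorphism
\begin{equation*}
\Phi_M : \text{Hom}_{RG}(R \otimes_\mathbb{Z} M,\, A) \;\xrightarrow{\;\cong\;}\; \text{Hom}_{\mathbb{Z}G}(M,\, A), \qquad \Phi_M(f)(m) = f(1 \otimes m),
\end{equation*}
with inverse $g \mapsto (r \otimes m \mapsto r \cdot g(m))$. Applying this to $M = B_n$ and letting $n$ vary yields an isomorphism of cochain complexes
\begin{equation*}
\text{Hom}_{RG}(R \otimes_\mathbb{Z} B_\bullet,\, A) \;\cong\; \text{Hom}_{\mathbb{Z}G}(B_\bullet,\, A),
\end{equation*}
and passing to cohomology in each degree produces the desired natural isomorphism $\text{Ext}^n_{RG}(R, A) \cong \text{Ext}^n_{\mathbb{Z}G}(\mathbb{Z}, A)$. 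The only point requiring any real care is the first one, that the bar resolution is split exact over $\mathbb{Z}$ so that base change along $\mathbb{Z} \to R$ is harmless; everything else is formal from the adjunction.
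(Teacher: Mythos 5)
Your argument is correct and follows the same overall strategy as the paper: tensor a $\mathbb{Z}G$-projective resolution of $\mathbb{Z}$ with $R$, argue that the result is still a projective $RG$-resolution of $R$, and then apply the natural isomorphism $Hom_{RG}(R\otimes_{\mathbb{Z}}P_n,A)\cong Hom_{\mathbb{Z}G}(P_n,A)$ degreewise. The one place you genuinely diverge is the exactness step. The paper takes an arbitrary $\mathbb{Z}G$-projective resolution $\mathbf{P}$ and argues that $\mathbf{P}\otimes R\to R$ is exact because its homology is $Tor_n^{\mathbb{Z}}(\mathbb{Z},R)=0$; implicitly this uses that $\mathbb{Z}G$-projectives are $\mathbb{Z}$-free, hence flat, so the Tor computation is valid. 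You instead specialize to the bar resolution and invoke its $\mathbb{Z}$-linear contracting homotopy, which makes $B_\bullet\to\mathbb{Z}\to 0$ split exact over $\mathbb{Z}$ and hence exact after any additive functor. Your version sidesteps the implicit flatness point and is somewhat cleaner, at the (mild) cost of pinning the proof to a specific resolution; both are perfectly standard and yield the same result.
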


\begin{proof}
If $P$ is a projective $\mathbb{Z}G$-module then $P\otimes R$ is a projective $RG$-module as this is true for free modules. Let $\mathbf{P} \rightarrow \mathbb{Z}$ be a projective resolution of $\mathbb{Z}$ as a $\mathbb{Z}G$-module. Then $\mathbf{P}\otimes R \rightarrow R$ is a projective resolution of $R$ as an $RG$-module since $\mathbf{P}\otimes R \rightarrow R$ is exact as its homology groups are $Tor^\mathbb{Z}_n(\mathbb{Z},R)=0.$
Since $Hom_{RG}(P_n\otimes R, A)$ is naturally isomorphic to $Hom_{\mathbb{Z}G}(P_n,A)$ it follows that $Ext^n_{\mathbb{Z}G}(\mathbb{Z},A)\cong Ext^n_{RG}(R,A).$
\end{proof}

\begin{teo}[Universal coefficients Theorem, Theorem 9.4.14 in \cite{leed}]
\label{uct}
Let $G$ be a group, $R$ a Dedekind domain considered as a trivial $G$-module, and $A$ an $RG$-module with trivial $G$-action, then there are natural exact sequences
$$
\xymatrix@C=7mm@R=10mm{ 0 \ar@{->}[r] &  Ext^1_R(H_{n-1}(G;R),A) \ar@{->}[r]^-{\alpha} & H^n(G;A)
\ar@{->}[r]^-{\beta} & Hom_R(H_n(G;R),A) \ar@{->}[r] & 0.}
$$
$$
\xymatrix@C=7mm@R=10mm{ 0 \ar@{->}[r] &  H_n(G;R)\otimes_R A \ar@{->}[r] & H_n(G;A)
\ar@{->}[r] & Tor_1^R (H_{n-1}(G;R),A)\ar@{->}[r] & 0.}
$$
which split, but not naturally.
\end{teo}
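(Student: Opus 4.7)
The plan is to reduce the statement to the classical Universal Coefficient Theorem for a chain complex of flat modules over a hereditary ring, exploiting that Dedekind domains are hereditary. First, by Lemma \ref{lema_canvi_coef_coh} we have
$$H^n(G;A)\cong Ext^n_{RG}(R,A)\quad\text{and}\quad H_n(G;A)\cong Tor^{RG}_n(R,A),$$
so the whole computation takes place in the category of $RG$-modules. I would then take a projective $\mathbb{Z}G$-resolution $\mathbf{P}\to\mathbb{Z}$ (for concreteness, the bar resolution just described) and, exactly as in the proof of Lemma \ref{lema_canvi_coef_coh}, tensor with $R$ over $\mathbb{Z}$ to produce a projective $RG$-resolution $Q_\bullet:=\mathbf{P}\otimes_\mathbb{Z} R\to R.$

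Next I would consider the chain complex $C_\bullet:=Q_\bullet\otimes_{RG} R$ of $R$-modules, whose homology is by definition $Tor^{RG}_\bullet(R,R)\cong H_\bullet(G;R).$ Since each $Q_n$ is a free $RG$-module and $R$ is commutative, each $C_n$ is a free $R$-module. Exploiting the hypothesis that the $G$-action on $A$ is trivial, standard Hom-tensor adjunctions yield natural chain-level isomorphisms
$$Hom_{RG}(Q_\bullet, A)\cong Hom_R(C_\bullet, A),\qquad Q_\bullet\otimes_{RG} A\cong C_\bullet\otimes_R A.$$
Taking cohomology and homology respectively expresses $H^n(G;A)$ and $H_n(G;A)$ as the (co)homology of a complex of free $R$-modules with coefficients in the arbitrary $R$-module $A.$

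At this point I would invoke the classical Universal Coefficient Theorem for a chain complex of projective modules over a hereditary ring. Since $R$ is a Dedekind domain, $R$ has global dimension at most one, so every submodule of a projective $R$-module is projective. In particular, for each $n$ the cycles and boundaries fit into a short exact sequence $0\to Z_n\to C_n\to B_{n-1}\to 0$ of projective $R$-modules, which therefore splits over $R.$ Feeding this splitting into the standard two-step argument (apply $Hom_R(-,A)$ and $-\otimes_R A$ separately, and analyse the resulting long exact sequences of $Ext^1_R$ and $Tor^R_1$) produces the two short exact sequences in the statement, and the splitting of the cycle/boundary sequence above yields the (non-natural) splittings of the UCT sequences.

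The main obstacle to watch is the passage from PIDs, where the classical UCT is most often stated, to Dedekind domains. The substantive point is that being hereditary is exactly what is needed so that $Ext^i_R$ and $Tor^R_i$ vanish for $i\geq 2$ and so that the relevant cycle and boundary subcomplexes are again projective over $R.$ Once these two facts are in hand, the Hilton-Stammbach proof of the UCT goes through verbatim, and naturality of the maps $\alpha$ and $\beta$ follows from the naturality of the Hom and tensor functors used to build them.
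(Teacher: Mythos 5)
Your proof is correct and takes the same route the paper itself relies on: the paper cites this result (Theorem 9.4.14 of \cite{leed}) without reproving it, but Remark \ref{rem_construction_uct} carries out exactly your reduction — change of rings via Lemma \ref{lema_canvi_coef_coh}, passage to the $RG$-projective resolution $C_*(G)\otimes R$, and the identification $Hom_{RG}(C_n(G)\otimes R,A)\cong Hom_R(C_n(G)\otimes_{\mathbb{Z}G}R,A)$ using triviality of the $G$-action on $A$. Your final appeal to the hereditary-ring version of the classical UCT (boundaries $B_{n-1}\subset C_{n-1}$ are projective, so the cycle sequence splits and higher $Ext_R$/$Tor^R$ vanish) is the standard way to close the argument, and the naturality and non-natural splitting fall out exactly as you say.
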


\begin{rem}
\label{rem_construction_uct}
The map $\beta: H^n(G;A) \rightarrow Hom_R(H_n(G;R),A)$ of Theorem \eqref{uct} is constructed as follows.
By definition of the cohomology of groups we know that $H^n(G;A)=Ext^n_{\mathbb{Z}G}(\mathbb{Z};A)$ and by Lemma \eqref{lema_canvi_coef_coh} have that $Ext^n_{RG}(R,A)\cong Ext^n_{\mathbb{Z}G}(\mathbb{Z},A).$

Since the bar resolution $C_*(G)\rightarrow \mathbb{Z}$ is a projective resolution of $\mathbb{Z}$ as $\mathbb{Z}G$-module. Then 
$C_*(G)\otimes R \rightarrow R$ is a projective resolution of $R$ as $RG$-module.
Applying the contravariant functor $Hom_{RG}(-,A)$ to the projective resolution $C_*(G)\otimes R,$ we obtain the following chain complex 
\begin{equation*}
\xymatrix@C=3.5mm@R=7mm{ \cdots & Hom_{RG}(C_2(G)\otimes R, A) \ar@{->}[l]_-{\delta} & Hom_{RG}(C_1(G)\otimes R, A) \ar@{->}[l]_-{\delta} & Hom_{RG}(C_0(G)\otimes R, A) \ar@{->}[l]_-{\delta} &  \ar@{->}[l] 0.}
\end{equation*}
Since $A$ is a trivial $RG$-module, we have the following canonic isomorphisms:
\begin{align*}
Hom_{RG}(C_n(G)\otimes R,A)= & Hom_R(C_n(G)\otimes R,A)^G \cong \\
\cong & Hom_R((C_n(G)\otimes R)_G,A)\cong \\
\cong & Hom_R(C_n(G)\otimes_{\mathbb{Z}G} R,A).
\end{align*}
Then the above chain complex becomes
$$\xymatrix@C=4mm@R=7mm{ \cdots & Hom_{R}(C_2(G,R) , A) \ar@{->}[l]_-{\delta} & Hom_{R}(C_1(G,R), A) \ar@{->}[l]_-{\delta} & Hom_{R}(C_0(G,R), A) \ar@{->}[l]_-{\delta} &  \ar@{->}[l] 0.}
$$
Thus we have that
\begin{equation}
\label{iso_explicacio_UCT}
H^n(G;A)\cong H_n(Hom_{R}(C_*(G,R), A)).
\end{equation}

Therefore $\beta: H^n(G;A) \rightarrow Hom_R(H_n(G;R),A)$ in Theorem \eqref{uct} is given as follows. Let $z\in H^n(G;A)$ and $\widetilde{z} \in H_n(Hom_{R}(C_*(G,R), A))$ the homology class that corresponds to $z$ by the isomorphism \eqref{iso_explicacio_UCT}. Take a $n$-cycle $c\in Hom_{RG}(C_n (G,R), A),$ with associated homology class $\widetilde{z}.$
By passage to subquotients, $c$ induces an element of $Hom_R(H_n(G;R),A).$
Then $\beta(z)$ is defined as the evaluation of such $n$-cycle $c,$  on $H_n(G;R).$
\end{rem}

\begin{teo}[Theorem $VI.8.1.$  in \cite{stam}]
\label{teo_5-term}
Let $1\rightarrow N \rightarrow G \rightarrow Q \rightarrow 1$ be an exact sequence of groups. For $Q$-modules $A,$ $B,$ the following sequences are exact and natural
\begin{align*}
& H_2(G;B)\rightarrow H_2(Q;B)\rightarrow B\otimes_Q N_{ab}\rightarrow H_1(G;B)\rightarrow H_1(Q;B)\rightarrow 0, \\
& 0\rightarrow H^1(Q;A)\rightarrow H^1(G;A)\rightarrow Hom_Q(N_{ab},A)\rightarrow H^2(Q;A)\rightarrow H^2(G;A).
\end{align*}
Where natural means :
\begin{enumerate}[i)]
\item A commutative diagram of short exact sequences
\begin{equation}
\xymatrix@C=7mm@R=10mm{0 \ar@{->}[r] & N \ar@{->}[r] \ar@{->}[d]&  G \ar@{->}[r] \ar@{->}[d]& Q \ar@{->}[d] \ar@{->}[r]& 1 \\
0 \ar@{->}[r] & N' \ar@{->}[r] & G' \ar@{->}[r] & Q' \ar@{->}[r] & 1 }
\end{equation}
induces the following commutative diagrams
\begin{equation}
\xymatrix@C=7mm@R=10mm{ H_2(G;B)\ar@{->}[r] \ar@{->}[d]& H_2(Q;B)\ar@{->}[r] \ar@{->}[d]& B\otimes_{Q} N_{ab}\ar@{->}[r] \ar@{->}[d]& H_1(G;B)\ar@{->}[r] \ar@{->}[d]& H_1(Q;B)\ar@{->}[d] \\
H_2(G';B)\ar@{->}[r] & H_2(Q';B)\ar@{->}[r] & B\otimes_{Q'} N'_{ab}\ar@{->}[r] & H_1(G';B)\ar@{->}[r] & H_1(Q';B) }
\end{equation}

\begin{equation}
\xymatrix@C=5mm@R=10mm{ H^1(Q';A)\ar@{->}[r] \ar@{->}[d]& H^1(G';A)\ar@{->}[r] \ar@{->}[d]& Hom_{Q'}(N'_{ab},A)\ar@{->}[r] \ar@{->}[d]& H^2(Q';A)\ar@{->}[r] \ar@{->}[d]& H^2(G';A) \ar@{->}[d] \\
H^1(Q;A)\ar@{->}[r] & H^1(G;A)\ar@{->}[r] & Hom_Q(N_{ab},A)\ar@{->}[r] & H^2(Q;A)\ar@{->}[r] & H^2(G;A) }
\end{equation}

\item A homomorphism of $Q$-modules $g:B\rightarrow B'$ induces the following commutative diagram
\begin{equation}
\xymatrix@C=7mm@R=10mm{ H_2(G;B)\ar@{->}[r] \ar@{->}[d]& H_2(Q;B)\ar@{->}[r] \ar@{->}[d]& B\otimes_{Q} N_{ab}\ar@{->}[r] \ar@{->}[d]& H_1(G;B)\ar@{->}[r] \ar@{->}[d]& H_1(Q;B)\ar@{->}[d] \\
H_2(G;B')\ar@{->}[r] & H_2(Q;B')\ar@{->}[r] & B'\otimes_Q N_{ab}\ar@{->}[r] & H_1(G;B')\ar@{->}[r] & H_1(Q;B') }
\end{equation}
Similarly a homomorphism of $Q$-modules $f:A\rightarrow A'$ induces the following commutative diagram
\begin{equation}
\xymatrix@C=5mm@R=10mm{ H^1(Q;A)\ar@{->}[r] \ar@{->}[d]& H^1(G;A)\ar@{->}[r] \ar@{->}[d]& Hom_Q(N_{ab},A)\ar@{->}[r] \ar@{->}[d]& H^2(Q;A)\ar@{->}[r] \ar@{->}[d]& H^2(G;A) \ar@{->}[d] \\
H^1(Q;A')\ar@{->}[r] & H^1(G;A')\ar@{->}[r] & Hom_Q(N_{ab},A')\ar@{->}[r] & H^2(Q;A')\ar@{->}[r] & H^2(G;A') }
\end{equation}
\end{enumerate}
\end{teo}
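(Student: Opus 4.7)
The plan is to derive both exact sequences as the edge (five-term) exact sequences attached to the Lyndon--Hochschild--Serre spectral sequence of the extension $1\to N \to G \to Q \to 1$, and then to deduce naturality from the functoriality of that spectral sequence in the two admissible arguments.

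First I would invoke the LHS spectral sequences, which in cohomology read
$$E_2^{p,q} = H^p(Q;H^q(N;M)) \Longrightarrow H^{p+q}(G;M),$$
and dually in homology $E^2_{p,q} = H_p(Q;H_q(N;M))\Rightarrow H_{p+q}(G;M)$, with the standard $Q$-action on $H^*(N;M)$ coming from the conjugation action of $G$ on $N$. Because $A$ and $B$ are given as $Q$-modules, $N$ acts trivially on them, so $A^N=A$ and $B_N=B$; moreover, the universal coefficient theorem gives $H^1(N;A)=\mathrm{Hom}(N_{ab},A)$ and $H_1(N;B)=B\otimes_{\mathbb{Z}} N_{ab}$ as $Q$-modules, with the $Q$-action on $N_{ab}$ induced by conjugation in $G$. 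Taking $Q$-invariants and $Q$-coinvariants yields the identifications $E_2^{0,1}=\mathrm{Hom}_Q(N_{ab},A)$ and $E^2_{0,1}=B\otimes_Q N_{ab}$.

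Second, I would read off the five-term sequence from the spectral sequence. For any first-quadrant cohomological spectral sequence $E_2^{p,q}\Rightarrow H^{p+q}$, the standard filtration argument produces the exact sequence
$$0\to E_2^{1,0}\to H^1\to E_2^{0,1}\xrightarrow{d_2}E_2^{2,0}\to H^2,$$
whose maps $E_2^{p,0}\to H^p$ are the inflations, whose map $H^1\to E_2^{0,1}$ is the restriction, and whose middle arrow is the transgression $d_2$. The homological five-term sequence is obtained dually. Substituting the identifications of the previous paragraph gives the two asserted exact sequences.

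Third, naturality is automatic from the fact that the LHS spectral sequence is functorial in both variables. A commutative diagram of short exact sequences of groups as in (i) induces a morphism of LHS spectral sequences, via the functoriality of the Grothendieck spectral sequence associated with the composition of the fixed-point functor $(-)^N$ followed by $(-)^Q$, and passing to the five-term sequences produces the claimed ladders. For (ii), a $Q$-equivariant morphism of coefficient modules induces a morphism of spectral sequences by functoriality in the coefficients, which yields the corresponding commutative ladders. The only genuinely fiddly point is to verify that the identifications $H_1(N;B)\cong B\otimes N_{ab}$ and $H^1(N;A)\cong\mathrm{Hom}(N_{ab},A)$, as well as the descriptions of $E_2^{p,0}$ and $E^2_{p,0}$ as $H^p(Q;A)$ and $H_p(Q;B)$, are natural in both the group data and the coefficients; this follows from the naturality of the universal coefficient theorem and of the abelianization. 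Since the statement appears as Theorem VI.8.1 of Stammbach, the detailed bookkeeping can be taken from there.
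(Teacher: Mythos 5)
The paper does not contain its own proof of this statement: it is cited verbatim as Theorem VI.8.1 of Hilton--Stammbach, so there is no in-paper argument to compare against. Your proposal should therefore be judged on its own terms, and as a derivation it is sound. The identification of the $E_2$-corner terms of the LHS spectral sequence for a $Q$-module coefficient — $E_2^{0,1}=H^1(N;A)^Q=\mathrm{Hom}_Q(N_{ab},A)$ and dually $E^2_{0,1}=(B\otimes N_{ab})_Q=B\otimes_{\mathbb{Z}Q}N_{ab}$ — is correct, the low-degree five-term sequence is exactly the standard edge sequence of a first-quadrant spectral sequence, and naturality in both the extension and the coefficients follows from functoriality of the Grothendieck spectral sequence for the composite $(-)^N$ then $(-)^Q$ (respectively $(-)_N$ then $(-)_Q$). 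One small point worth making explicit is the variance: a morphism of extensions as in the theorem's part (i) induces the cohomology ladder with arrows running from the primed row to the unprimed one and the homology ladder with arrows running the other way; your appeal to functoriality handles this automatically, but a reader will want to see the direction of the induced morphism of spectral sequences stated.

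Where your route genuinely diverges from the cited source is methodological. In Hilton--Stammbach's book the five-term sequences of Theorem VI.8.1 are established by elementary homological algebra in Chapter VI, before spectral sequences are developed; the spectral-sequence derivation is presented later as an alternative. Your argument is shorter and more conceptual but imports the full LHS machinery, which the original avoids. Both are perfectly valid; yours is arguably better suited to explaining why naturality holds, while the elementary proof makes the concrete description of the transgression map more transparent. The one thing you should not do is close with ``the detailed bookkeeping can be taken from Stammbach'' — that is the theorem you are proving, so deferring the final identifications to it is circular. Since you have already carried out the substantive identifications yourself, simply replace that sentence with the (routine) check that the universal coefficient isomorphisms $H_1(N;B)\cong B\otimes N_{ab}$ and $H^1(N;A)\cong\mathrm{Hom}(N_{ab},A)$ are natural in $N$ and in the coefficient module.
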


\paragraph{Hopf formula modulo $q$}
In \cite{graham} Graham J. Ellis find a formula to compute the homology of groups with coefficients in $\mathbb{Z}/q,$ with $q$ a positive integer.
In particular, following the Exercice II.5.4 in \cite{brown} one can gives an explicit isomorphism which induces the Hopf modulo $q$ formula as follows.
Let $q$ be a positive integer and $G$ be a group with a finite representation $\langle t_1,\ldots ,t_n | r_1,\ldots r_m\rangle.$ We denote by $F$ the free group generated by a family of elements $T=\{t_1,\ldots ,t_n\}$ and by $R$ the normal closure in $F$ of $\{r_1,\ldots, r_m\}.$

Consider the following exact sequence
\begin{equation*}
\xymatrix@C=7mm@R=7mm{1 \ar@{->}[r] &  R \ar@{->}[r]^-{i} & F \ar@{->}[r]^-{\overline{(\;)}}  & G \ar@{->}[r] & 1. }
\end{equation*}
where $\overline{(\;)}$ is the quotient map corresponding to $R.$

For each $g\in G$ choose an element $s(g)\in F$ such that $\overline{s(g)}=g.$
Given $g,h\in G,$ write $s(g)s(h)=s(gh)r(g,h)$ where $r(g,h)\in R.$
There is an abelian group homomorphism $C_2(G)\rightarrow R/[R,R]$ given by $[g|h]\mapsto r(g,h) \text{ mod } [R,R]$ and this induces an isomorphism
$$\phi:H_2(G;\mathbb{Z}/q)
\rightarrow \frac{R\cap ([F,F]F^q)}{[F,R]R^q}$$
by passage to subquotients.

\subsection{Group Extensions}
In this section, we recall some basic results about group extensions, we make a summary of chapter $IV$ in \cite{brown} and section $VI.10$ in \cite{stam}.

An extension of a group $G$ by a group $N$ is a short exact sequence of groups
\begin{equation}
\label{extension_groups}
1\rightarrow N \rightarrow E \rightarrow G \rightarrow 1
\end{equation}
A second extension $1\rightarrow N \rightarrow E' \rightarrow G \rightarrow 1$ of $G$ by $N$ is said to be \textit{equivalent} to \eqref{extension_groups} if there is a homomorphism $E\rightarrow E'$ making the diagram
\begin{equation*}
\xymatrix@C=10mm@R=10mm{
 1 \ar@{->}[r] & N \ar@{->}[r] \ar@{=}[d] & E \ar@{->}[r] \ar@{->}[d] & G \ar@{->}[r] \ar@{=}[d] & 1\\
1  \ar@{->}[r] & N \ar@{->}[r]
 & E' \ar@{->}[r] & G \ar@{->}[r] & 1 }
\end{equation*}
commute. Note that such a map is necessarily an isomorphism.

Consider the case where the kernel $N$ is an abelian group $A$ (written additively). A special feature of this case is that an extension
\begin{equation}
\label{cext}
\xymatrix@C=5mm@R=5mm{
0  \ar@{->}[r] & A \ar@{->}[r]^i
 & E \ar@{->}[r]^\pi & G \ar@{->}[r] & 1 }
\end{equation}
gives rise to an \textit{action} of $G$ on $A,$ making $A$ a $G$-module. For $E$ acts on $A$ by conjugation since $A$ is embedded as a normal subgroup of $E;$ and the conjugation action of $A$ on itself is trivial, so there is an induced action of $E/A=G$ on $A.$
Moreover observe that the $G$-action is trivial if and only if $i(A)$ is central in $E.$
In the case that $i(A)$ is central in $E,$ the extension \eqref{cext} is called a \textit{central extension.}

\paragraph{Split Extensions.}

Fix a $G$-module $A$ and let
\begin{equation}
\label{extension_central}
\xymatrix@C=5mm@R=5mm{
0  \ar@{->}[r] & A \ar@{->}[r]^i
 & E \ar@{->}[r]^\pi & G \ar@{->}[r] & 1 }
\end{equation}
be an extension which gives rise to the given action of $G$ on $A.$ We say that \eqref{extension_central} \textit{splits} if there is a homomorphism $s:G\rightarrow E$ such that $\pi s=id_G.$

\begin{prop}
\label{prop_eqiv_splittings}
The following two conditions on the extension \eqref{extension_central} are equivalent:
\begin{enumerate}[i)]
\item \eqref{extension_central} splits.
\item \eqref{extension_central} is equivalent to the extension
$
\xymatrix@C=5mm@R=5mm{
0  \ar@{->}[r] & A \ar@{->}[r]
 & A\rtimes G \ar@{->}[r] & G \ar@{->}[r] & 1,}
$
where $A \rtimes G$ is the semi-direct product of $G$ and $A$ relative to the given action.
\end{enumerate}
\end{prop}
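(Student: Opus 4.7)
The plan is to prove the two directions separately, with the implication (ii) $\Rightarrow$ (i) being essentially trivial and the implication (i) $\Rightarrow$ (ii) requiring an explicit construction.

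For (ii) $\Rightarrow$ (i), I would observe that the semi-direct product extension admits the canonical section $s_0: G \to A \rtimes G$ given by $s_0(g) = (0,g)$, which is a group homomorphism splitting the projection onto $G$. If the given extension \eqref{extension_central} is equivalent to the semi-direct product extension via an isomorphism $\phi: A \rtimes G \to E$ making the appropriate diagram commute, then $s := \phi \circ s_0$ is a section of $\pi$, since $\pi \circ \phi = \mathrm{pr}_G$ by commutativity of the diagram, and hence $\pi s = \mathrm{pr}_G \circ s_0 = \mathrm{id}_G$.

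For (i) $\Rightarrow$ (ii), suppose that $s: G \to E$ is a homomorphism with $\pi s = \mathrm{id}_G$. The plan is to define a map
\begin{equation*}
\phi: A \rtimes G \longrightarrow E, \qquad \phi(a,g) = i(a)\cdot s(g),
\end{equation*}
and verify that it is a group isomorphism making the equivalence diagram commute. The compatibility with the maps $A \hookrightarrow A \rtimes G$ and $A \rtimes G \twoheadrightarrow G$ follows directly from the definition ($\phi(a,1) = i(a)$ and $\pi\phi(a,g) = \pi s(g) = g$). The key step is checking that $\phi$ is a homomorphism: for $(a,g),(a',g') \in A \rtimes G$ one has
\begin{equation*}
\phi(a,g)\phi(a',g') = i(a)\,s(g)\,i(a')\,s(g')  = i(a)\cdot\bigl(s(g)\,i(a')\,s(g)^{-1}\bigr)\cdot s(g)s(g'),
\end{equation*}
which should equal $\phi((a,g)(a',g')) = \phi(a + g\cdot a',\, gg') = i(a + g\cdot a')\,s(gg')$.

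The main (and really only) obstacle is the identity
\begin{equation*}
s(g)\,i(a')\,s(g)^{-1} = i(g\cdot a'),
\end{equation*}
but this is essentially a tautology: by construction, the $G$-action on $A$ was defined by conjugating $i(A)$ inside $E$ by arbitrary lifts of $g$ through $\pi$, and $s(g)$ is one such lift. Combined with $s(g)s(g') = s(gg')$ (since $s$ is a homomorphism), this yields the desired equality, showing $\phi$ is a homomorphism. Finally, bijectivity of $\phi$ is automatic: it fits into a morphism of short exact sequences where the outer maps are the identity, so by the Five Lemma (or a direct elementary argument using that every element of $E$ can be written uniquely as $i(a)s(g)$) it is an isomorphism, giving the required equivalence of extensions.
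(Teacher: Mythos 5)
Your proof is correct and follows the standard textbook argument; the paper itself states this proposition in its preliminaries chapter as background material from Brown's \emph{Cohomology of Groups} (Chapter IV) and does not supply a proof. The only point worth being slightly more careful about is the identity $s(g)\,i(a')\,s(g)^{-1} = i(g\cdot a')$: you call it a tautology, and it is one, but the precise reason is that the $G$-action on $A$ was defined by conjugation in $E$ by \emph{any} lift of $g$ (this being independent of the choice of lift because $i(A)$ is abelian), and $s(g)$ is such a lift — your phrasing gets this right, and the rest of the verification, including the use of the Five Lemma for bijectivity, is complete.
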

Observe that Proposition \eqref{prop_eqiv_splittings} says that there
is only one split extension $G$ by $A$ (up to equivalence) associated to the given action of $G$ on $A.$ Nevertheless, there is an interesting ''classification'' problem involving split extensions:
Given that an extension \eqref{extension_central} splits, classify all possible splittings.

In case $G$ acts trivially on $A,$ the group $E$ is isomorphic to the direct product $A\times G$ and then the splittings are obviously in 1-1 correspondence with homomorphisms $G\rightarrow A.$ In the general case, splittings correspond to \textit{derivations} (also called \textit{crossed homomorphisms}). These are functions $d: G\rightarrow A$ satisfying
$$d(gh)=d(g)+g\cdot d(h)$$
for all $g,h\in G.$

Two splittings $s_1,$ $s_2$ will be said to be $A$-\textit{conjugate} if there is an element $a\in A$ such that $s_1(g)=i(a)s_2(g)i(a)^{-1}$ for all $g\in G.$ Since
$(a,1)(b,g)(a,1)^{-1}=(a+b,g\cdot a,g)$
in $A\rtimes G,$ this conjugancy relation becomes
$d_1(g)=a+d_2(g)- g\cdot a$
in terms of the derivations $d_1,$ $d_2$ corresponding $s_1,$ $s_2.$ Thus $d_1$ and $d_2$ correspond to $A$-conjugate splittings if and only if their difference $d_2-d_1$ is a function $G\rightarrow A$ of the form $g\mapsto ga-a$ for some fixed $a\in A.$ Such function is called a \textit{principal derivation.}

Summarizing, the $A$-conjugacy classes of splittings of a split extension of $G$ by $A$ correspond to the elements of the quotient group $Der(G,A)/P(G,A),$ where $Der(G,A)$ is the abelian group of derivations $G\rightarrow A$ and $P(G,A)$ is the group of principal derivations.
In addition, by Exercise 2 of III.1 in \cite{brown} one gets that the quotient $Der(G,A)/P(G,A)$ is canonically isomorphic to the first cohomology group $H^1(G,A).$
Therefore we have the following result:

\begin{prop}
For any $G$-module $A,$ the $A$-conjugancy classes of splittings of the split extension
$$0\rightarrow A \rightarrow A \rtimes G \rightarrow G \rightarrow 1$$
are in 1-1 correspondence with the elements of $H^1(G,A).$
\end{prop}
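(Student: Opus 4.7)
The plan is to assemble the observations already accumulated in the preceding discussion into a single bijective correspondence, then identify the resulting quotient with $H^1(G;A)$ by reading off the bar complex in degree one.

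First I would establish the bijection between splittings and derivations explicitly. A splitting $s:G\rightarrow A\rtimes G$ of the canonical projection must have the form $s(g)=(d(g),g)$ for some function $d:G\rightarrow A$. Using the multiplication rule $(a,g)(b,h)=(a+g\cdot b,gh)$ in the semidirect product, the requirement that $s$ be a group homomorphism becomes exactly the derivation identity $d(gh)=d(g)+g\cdot d(h)$. Hence $s\longleftrightarrow d$ is a bijection between splittings of the extension and elements of $Der(G,A)$.

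Next I would translate the $A$-conjugacy relation on splittings into an equivalence relation on derivations. From the computation $(a,1)(b,g)(a,1)^{-1}=(a+b-g\cdot a,g)$ already recorded above, two splittings $s_1,s_2$ are $A$-conjugate if and only if their associated derivations $d_1,d_2$ satisfy $d_1(g)-d_2(g)=a-g\cdot a$ for some fixed $a\in A$, i.e.\ if and only if $d_1-d_2\in P(G,A)$. Therefore the set of $A$-conjugacy classes of splittings is in canonical bijection with the quotient $Der(G,A)/P(G,A)$.

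The final step is the identification $Der(G,A)/P(G,A)\cong H^1(G;A)$, which I would obtain directly from the bar resolution recalled earlier. A normalized $1$-cochain in $C^1(G;A)$ is a function $f:G\rightarrow A$, and the explicit coboundary formula
$$(\delta f)(g_1,g_2)=g_1f(g_2)-f(g_1g_2)+f(g_1)$$
shows that the $1$-cocycles are precisely the derivations and the $1$-coboundaries are precisely the principal derivations, as cited from Exercise 2 of III.1 in \cite{brown}. Composing all three bijections yields the claimed correspondence, and naturality in $A$ is immediate from the construction. No step presents a real obstacle; the proposition essentially packages together the semidirect-product computations sketched in the paragraphs just above, with the only genuine input being the well-known interpretation of $H^1$ via the bar complex.
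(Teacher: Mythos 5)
Your proof assembles exactly the three observations the paper records in the paragraphs preceding the proposition: splittings correspond bijectively to derivations via $s(g)=(d(g),g)$, $A$-conjugacy of splittings translates to differing by a principal derivation, and $Der(G,A)/P(G,A)\cong H^1(G,A)$ by the bar complex (the paper simply cites Exercise 2 of III.1 in Brown for this last identification, while you spell out the coboundary formula). This is the same argument, just slightly more explicit at the final step.
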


\paragraph{Classification of extensions with abelian kernel.}

Let $A$ be a fixed $G$-module. All extensions of $G$ by $A$ to be considered in this section will be assumed to give rise to the given action of $G$ on $A.$ To analyse an extension
\begin{equation}
\label{extension_ab_ker}
\xymatrix@C=5mm@R=5mm{
0  \ar@{->}[r] & A \ar@{->}[r]^i
 & E \ar@{->}[r]^\pi & G \ar@{->}[r] & 1, }
\end{equation}
we choose a set-theoretic cross-section of $\pi,$ i.e., a function $s: G\rightarrow E$ such that $\pi s=id_G.$

If $s$ is a homomorphism, then the extension splits and by Proposition \eqref{prop_eqiv_splittings}, we know that \eqref{extension_ab_ker} is equivalent to $0\rightarrow A \rightarrow A \rtimes G \rightarrow G \rightarrow 1$ .
In the general case, however, there is a function $f: G\times G \rightarrow A$ which measures the failure of $s$ to be a homomorphism. Indeed, for any $g,h\in G,$ the elements $s(gh)$ and $s(g)s(h)$ of $E$ both map to $gh$ in $G,$ so they differ by an element of $i(A).$ Thus we can define $f$ by the equation:
\begin{equation}
\label{eq_factor set}
s(g)s(h)=i(f(g,h))s(gh).
\end{equation}
The function $f$ is called the \textit{factor set} associated to \eqref{extension_ab_ker} and $s.$

The $G$-module structure on $A$ and the factor set $f$ classifies all extensions in the following way.
Let $E_f$ be the set $A\times G$ with the group law:
$(a,g)(b,h)=(a+gb+f(g,h),gh),$
then the original extension \eqref{extension_ab_ker} is equivalent to the extension
$0\rightarrow A \rightarrow E_f \rightarrow G \rightarrow 1.$
In particular to be $E_f$ a group $f$ has to satisfy the following identity:
\begin{equation*}
gf(h,k)-f(gh,k)+f(g,hk)-f(g,h)=0
\end{equation*}
for all $g,h,k\in G.$
As a consequence, $f$ can be regarded as a $2$-cocycle of the standard complex $C^*(G,A)$ for computing $H^*(G;A).$

Notice that to get the factor set $f$ we have chosen an arbitrary theoretic section $s.$
However, if we take two sections $s_1,$ $s_2$ with associated factor sets $f_1,$ $f_2,$ then there exists a function $d:G\rightarrow A$ such that
$$f_2(g,h)=d(g)+gd(h)-d(gh)+f_1(g,h).$$
Hence, changing the choice of the section in \eqref{extension_ab_ker} corresponds precisely to modifying the cocycle $f$ in $C^*(G,A)$ by a coboundary.

Therefore we get the following result:
\begin{prop}
\label{prop_class_extensions}
Let $A$ be a $G$-module and let $\mathcal{E}(G,A)$ be the set of equivalence classes of extensions of $G$ by $A$ giving rise to the action of $G$ on $A.$ Then there is a bijection
$\mathcal{E}(G,A)\thickapprox H^2(G,A).$
\end{prop}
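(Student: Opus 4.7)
The plan is to package the constructions already sketched in the text as mutually inverse maps between $\mathcal{E}(G,A)$ and $H^2(G,A)$. Define the forward map $\Phi$ by sending an extension \eqref{extension_ab_ker} to the class of the factor set $f$ obtained from any set-theoretic section $s$ via \eqref{eq_factor set}. The cocycle identity $gf(h,k)-f(gh,k)+f(g,hk)-f(g,h)=0$ was verified above (it is equivalent to associativity of $E$), so $f\in Z^2(G,A)$. Two sections yield cocycles differing by a coboundary, as noted, so the class $[f]\in H^2(G;A)$ depends only on the extension; if $\phi:E\to E'$ is an equivalence and $s$ is a section of $\pi$, then $\phi\circ s$ is a section of $\pi'$ producing the same $f$, so equivalent extensions give the same class. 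Hence $\Phi$ is well defined.

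Next, define the inverse $\Psi$ by sending a cocycle $f$ to the class of $0\to A\to E_f\to G\to 1$, where $E_f=A\times G$ with product $(a,g)(b,h)=(a+gb+f(g,h),gh)$. One checks that the cocycle identity is precisely what is needed for associativity, the element $(-f(1,1),1)$ is the identity, and inverses exist; the inclusion $a\mapsto (a-f(1,1),1)$ and projection $(a,g)\mapsto g$ realize the extension, whose induced action on $A$ is the given one because $(0,g)(a,1)(0,g)^{-1}=(ga,1)$ up to the constant correction coming from $f$. To see $\Psi$ is well defined on cohomology classes, suppose $f_2=f_1+\delta d$ for some $d:G\to A$ with $d(1)=0$; then
\begin{equation*}
\varphi:E_{f_1}\longrightarrow E_{f_2},\qquad (a,g)\longmapsto (a-d(g),g),
\end{equation*}
is a bijection which, by direct substitution using $(\delta d)(g,h)=gd(h)-d(gh)+d(g)$, respects the two twisted products and is the identity on $A$ and on $G$; thus it is an equivalence of extensions.

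Finally, verify that $\Phi$ and $\Psi$ are mutually inverse. For $\Psi\circ\Phi$: starting from \eqref{extension_ab_ker} and a section $s$, the factor set $f$ is as in \eqref{eq_factor set}, and the map $E_f\to E$ given by $(a,g)\mapsto i(a)s(g)$ is bijective and, using \eqref{eq_factor set}, a group homomorphism fitting into a commutative ladder with the identity on $A$ and on $G$; hence the two extensions are equivalent. For $\Phi\circ\Psi$: take the canonical section $s(g)=(0,g)$ of $E_f\to G$; its factor set is exactly $f$, so the class recovered is $[f]$. This gives the bijection $\mathcal{E}(G,A)\approx H^2(G;A)$.

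The only genuinely nontrivial step is checking that cohomologous cocycles produce equivalent extensions, i.e.\ that the formula $\varphi(a,g)=(a-d(g),g)$ defines a homomorphism $E_{f_1}\to E_{f_2}$; this is a short manipulation using $f_2-f_1=\delta d$, and the remaining verifications are routine bookkeeping about sections, factor sets, and the semidirect-product-like group law on $A\times G$.
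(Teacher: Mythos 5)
Your proof is correct and follows essentially the same route the paper sketches: factor sets from sections, the $E_f$ construction, and the observation that changing the section perturbs $f$ by a coboundary. You simply make explicit the well-definedness and mutual-inverse verifications that the paper leaves as a remark before stating the proposition.
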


Next we show another description of the bijection
given in Proposition \eqref{prop_class_extensions} , due to U. Stammbach  \cite{stam}.

Denote by $[E_i^r],$ the class of $\mathcal{E}(G,A)$ containing the extension
\begin{equation}
\label{ext_stamb}
\xymatrix@C=10mm@R=10mm{
 0 \ar@{->}[r] & A \ar@{->}[r]^-{i} & E \ar@{->}[r]^-{r}  & G \ar@{->}[r] & 1.}
\end{equation}
We now will define a map $\varDelta: \mathcal{E}(G,A)\rightarrow H^2(G,A)$ as follows.
Given an extension \eqref{ext_stamb}, Theorem \eqref{teo_5-term} yields the exact sequence
\begin{equation}
\label{5-term_seq theta}
\xymatrix@C=6mm@R=5mm{0\ar@{->}[r] & H^1(G;A)\ar@{->}[r] & H^1(E;A)\ar@{->}[r] & Hom_G(A,A)\ar@{->}[r]^-{\theta_{E_i^r}} & H^2(G;A)\ar@{->}[r] & H^2(E;A)}.
\end{equation}
Associate with the extension $1\rightarrow A \rightarrow E \rightarrow G \rightarrow 1$ the element
$$\varDelta[E_i^r]=\theta_{E_i^r}(id_A)\in H^2(G;A)$$
The naturality of \eqref{5-term_seq theta} immediately shows that $\theta_{E_i^r}(id_A)\in H^2(G;A)$ does not depend on the extension but only on its equivalence class in $\mathcal{E}(G,A).$ Hence $\varDelta$ is well-defined,
$$\varDelta: \mathcal{E}(G,A)\rightarrow H^2(G,A).$$
In VI.10. of \cite{stam}, U. Stammbach showed moreover that $\varDelta$ is one-to-one.

\paragraph{Maps between extensions.} Next we give some results about maps between two extensions induced by push-outs and pull-backs.

\begin{prop}[Exercice 1a, page 94 in \cite{brown}]
\label{prop_pull-back}
Given an extension $0\rightarrow A \rightarrow E \rightarrow G \rightarrow 1$ and a group homomorphism $\alpha:G'\rightarrow G.$ There is an extension
$0\rightarrow A \rightarrow E' \rightarrow G' \rightarrow 1,$ characterized up to equivalence by the fact that it fits into a commutative diagram
\begin{equation*}
\xymatrix@C=10mm@R=10mm{
 0 \ar@{->}[r] & A \ar@{->}[r]^i & E \ar@{->}[r]^r  & G \ar@{->}[r] & 1\\
0  \ar@{->}[r] & A \ar@{=}[u] \ar@{->}[r]^{i'}
 & E' \ar@{->}[u]^e \ar@{->}[r]^{r'} &\ar@{->}[u]^\alpha G' \ar@{->}[r] & 1. }
\end{equation*}
In fact $(E';r',e)$ is given by the pull-back of $(\alpha,r).$

As a consequence $\alpha$ induces a map $\alpha^*:\mathcal{E}(G,A)\rightarrow \mathcal{E}(G',A),$ which corresponds under the bijection of Proposition \eqref{prop_class_extensions} to $\alpha^*: H^2(G;A)\rightarrow H^2(G';A).$
\end{prop}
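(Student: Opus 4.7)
The plan is to build $E'$ explicitly as a pull-back, check it satisfies all the required properties, and then compare factor sets (or, equivalently, apply naturality of the five-term sequence) to identify the induced map on $\mathcal{E}(G,A)$ with $\alpha^{*}$ on $H^{2}$.

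First I would define
\[
E' = \{(x,g')\in E\times G' \mid r(x)=\alpha(g')\},
\]
with $i'(a)=(i(a),1)$, $r'(x,g')=g'$ and $e(x,g')=x$. A direct check shows $i'$ is injective (because $i$ is), $r'$ is surjective (lift $\alpha(g')$ through $r$), and $\ker r' = \mathrm{im}\, i'$ (since $(x,1)\in E'$ forces $r(x)=1$, hence $x\in i(A)$). The diagram then commutes by construction. For uniqueness up to equivalence, suppose $0\to A\to E''\to G'\to 1$ also fits into such a diagram with connecting map $e'':E''\to E$. The universal property of the pull-back supplies a unique morphism $E''\to E'$ making all relevant triangles commute, and the five-lemma applied to the two exact sequences (with identities on $A$ and $G'$) promotes it to an isomorphism, proving equivalence.

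Second, to identify $\alpha^{*}$ on $\mathcal{E}$ with $\alpha^{*}$ on $H^{2}$, I would work with factor sets. Pick any set-theoretic section $s:G\to E$ of $r$, with associated factor set $f(g,h)\in A$ determined by $s(g)s(h)=i(f(g,h))\,s(gh)$. Define $s':G'\to E'$ by $s'(g')=(s(\alpha(g')),g')$, which is a section of $r'$ because $r(s(\alpha(g')))=\alpha(g')$. Computing in $E'$,
\[
s'(g'_{1})\,s'(g'_{2}) = \bigl(s(\alpha(g'_{1}))s(\alpha(g'_{2})),\, g'_{1}g'_{2}\bigr) = i'\!\bigl(f(\alpha(g'_{1}),\alpha(g'_{2}))\bigr)\,s'(g'_{1}g'_{2}),
\]
so the factor set of the pulled-back extension is exactly $\alpha^{*}f=f\circ(\alpha\times\alpha)$. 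Under the bijection $\mathcal{E}(G,A)\simeq H^{2}(G;A)$ of Proposition \ref{prop_class_extensions}, pullback of factor sets is the cohomological inflation map along $\alpha$, which is precisely $\alpha^{*}:H^{2}(G;A)\to H^{2}(G';A)$.

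Alternatively, and more conceptually, I could invoke Stammbach's description $\varDelta[E_{i}^{r}]=\theta_{E_{i}^{r}}(\mathrm{id}_{A})$. The commutative diagram of extensions produced by the pull-back is precisely the hypothesis of the naturality clause (i) in Theorem \ref{teo_5-term}, applied with the trivial morphism $\mathrm{id}_{A}$ on kernels. This yields a commutative square relating $\theta_{E}$ and $\theta_{E'}$ whose vertical arrows are $\alpha^{*}$ on $H^{2}$ and the identity on $\mathrm{Hom}(A,A)$, so $\alpha^{*}\varDelta[E]=\varDelta[E']$, establishing the claim.

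I do not expect a serious obstacle: the construction is the standard pull-back of group extensions, and both the factor-set computation and the naturality argument are essentially formal. The only point to be careful about is verifying that the $G'$-action on $A$ induced by the extension $E'$ coincides with the action obtained by pulling back the $G$-action along $\alpha$, which follows immediately because the conjugation action of $E'$ on $A$ factors through $e:E'\to E$.
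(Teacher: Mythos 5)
The paper does not actually prove this proposition; it is cited verbatim to Exercise 1a on page 94 of Brown's book, so there is no in-paper argument to compare against. Your proof is a correct and complete solution of that exercise: the explicit pull-back construction, the exactness checks, the uniqueness-up-to-equivalence via the universal property and the five-lemma, and the factor-set computation showing the pulled-back cocycle is $f\circ(\alpha\times\alpha)$ are all standard and sound, and your closing remark about the $G'$-action on $A$ factoring through $e$ closes the one point that is easy to overlook. The only cosmetic issue is terminology: you call the induced map $\alpha^{*}$ on $H^{2}$ ``inflation'', but that word is usually reserved for the case where $\alpha$ is a quotient homomorphism; for a general $\alpha:G'\to G$ one should simply say the induced (pull-back) map in cohomology. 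The alternative argument you sketch via Stammbach's $\varDelta$ and the naturality clause of the five-term sequence is also correct and is closer in spirit to the machinery the paper sets up around Theorem \eqref{teo_5-term}; either route is fine.
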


\begin{prop}[Exercice 1b, page 94 in \cite{brown}]
\label{prop_push-out}
Given an extension $0\rightarrow A \rightarrow E \rightarrow G \rightarrow 1$ and a homomorphism $f: A\rightarrow A'$ of $G$-modules, there is an extension
$0\rightarrow A' \rightarrow E' \rightarrow G \rightarrow 1,$ characterized up to equivalence by the fact that it fits into a commutative diagram:
\begin{equation*}
\xymatrix@C=10mm@R=10mm{
 0 \ar@{->}[r] & A \ar@{->}[r]^i \ar@{->}[d]^{f} & E \ar@{->}[r]^r \ar@{->}[d]^{e} & G \ar@{->}[r] \ar@{=}[d] & 1\\
0  \ar@{->}[r] & A' \ar@{->}[r]^{i'}
 & E' \ar@{->}[r]^{r'} & G \ar@{->}[r] & 1. }
\end{equation*}
In fact $(E';i',e)$ is given by the push-out of $(f,i).$

As a consequence $f$ induces a map $f_*:\mathcal{E}(G,A)\rightarrow \mathcal{E}(G,A'),$ which corresponds under the bijection of Proposition \eqref{prop_class_extensions} to $f_*:H^2(G,A)\rightarrow H^2(G,A').$
\end{prop}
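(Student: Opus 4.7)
The plan is to construct the extension $E'$ explicitly via factor sets, exhibit the map $e: E \to E'$ as the universal map of a push-out, and then deduce the cohomological statement from the fact that the factor set of $E'$ is simply $f$ composed pointwise with the factor set of $E$.

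First, I would choose a normalized set-theoretic section $s: G \to E$ of the given extension with associated factor set $c \in Z^2(G;A)$, so that $s(g)s(h) = i(c(g,h)) s(gh)$ for all $g, h \in G$. Since $f$ is a homomorphism of $G$-modules, the composition $f \circ c: G \times G \to A'$ is again a $2$-cocycle for the induced action of $G$ on $A'$. I would then set $E' = E_{f \circ c}$, i.e.\ the set $A' \times G$ with multiplication $(a', g)(b', h) = (a' + g \cdot b' + (f \circ c)(g, h), gh)$, together with $i': A' \to E'$ the inclusion $a' \mapsto (a', 1)$ and $r': E' \to G$ the projection onto the second factor. This yields an extension $0 \to A' \to E' \to G \to 1$ realizing the given $G$-action on $A'$. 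Define $e: E \to E'$ on the unique decomposition $x = i(a) s(g)$ by $e(x) = (f(a), g)$; a direct computation using $s(g)s(h) = i(c(g,h))s(gh)$ and the $G$-linearity of $f$ shows that $e$ is a homomorphism and that $e \circ i = i' \circ f$ and $r' \circ e = r$.

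Next I would verify the universal property of the push-out of $(f, i)$: given any group $F$ with homomorphisms $\alpha: A' \to F$, $\beta: E \to F$ satisfying $\alpha \circ f = \beta \circ i$, the formula $\gamma(a', g) = \alpha(a') \cdot \beta(s(g))$ defines the unique homomorphism $\gamma: E' \to F$ with $\gamma \circ i' = \alpha$ and $\gamma \circ e = \beta$; multiplicativity of $\gamma$ is a straightforward check using the factor set $c$ together with the compatibility hypothesis $\alpha \circ f = \beta \circ i$. The equivalence class of $E'$ is then independent of the choice of $s$, since replacing $s$ modifies $c$ by a coboundary in $B^2(G; A)$ and hence $f \circ c$ by a coboundary in $B^2(G; A')$; the corresponding extensions are equivalent in the sense of the diagram above.

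For the final assertion, the bijection of Proposition \ref{prop_class_extensions} sends the class of $E$ to $[c] \in H^2(G; A)$ and, by the construction above, the class of $E'$ to $[f \circ c] \in H^2(G; A')$. Since the coefficient change homomorphism $f_*: H^2(G; A) \to H^2(G; A')$ is induced at the cochain level by postcomposition with $f$, we have $f_*[c] = [f \circ c]$, so the bijection $\mathcal{E}(G, -) \cong H^2(G; -)$ intertwines the two maps named $f_*$. The only delicate point in the whole argument is keeping the factor-set bookkeeping straight; once $s$ is fixed and $c$ is written down, everything else reduces to the tautology $f(c(g, h)) = (f \circ c)(g, h)$ and the $G$-module hypothesis on $f$.
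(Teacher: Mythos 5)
Your factor-set construction is a valid alternative to what the paper does. The paper (following Brown) realizes $E'$ as the quotient $(A'\ltimes E)/\langle (f(a),i(a)^{-1})\mid a\in A\rangle$ and checks exactness and commutativity of the diagram directly; you instead fix a normalized section $s$, read off the factor set $c$, and define $E' = E_{f\circ c}$, which has the advantage of making the cohomological identification $f_*[c]=[f\circ c]$ transparent rather than requiring a separate argument. Both routes correctly establish existence, the diagram, and (after observing that changing $s$ changes $c$ by a coboundary) well-definedness of the resulting map $\mathcal{E}(G,A)\to\mathcal{E}(G,A')$.

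There is, however, a genuine gap in your verification of the ``push-out'' assertion. You claim that multiplicativity of $\gamma(a',g)=\alpha(a')\beta(s(g))$ is a straightforward check using $c$ and the hypothesis $\alpha\circ f=\beta\circ i$. Carrying out the computation, one finds that $\gamma$ is multiplicative if and only if $\alpha(g\cdot b')=\beta(s(g))\,\alpha(b')\,\beta(s(g))^{-1}$ for all $b'\in A'$ and $g\in G$. When $b'=f(b)$ this follows from $G$-equivariance of $f$, the hypothesis $\alpha f=\beta i$, and the conjugation formula $s(g)i(b)s(g)^{-1}=i(g\cdot b)$; but for $b'\notin f(A)$ there is no reason it should hold, and in general it does not. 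For example, with $A=0$, $E=G$, $A'=\mathbb{Z}$ (trivial action), the construction gives $E'=\mathbb{Z}\times G$, whereas the push-out of $(0\to\mathbb{Z},\ 0\hookrightarrow G)$ in the category of groups is the free product $\mathbb{Z}*G$. So $(E';i',e)$ is \emph{not} the categorical push-out of $(f,i)$ in $\mathrm{Grp}$; the word ``push-out'' in the statement refers to the explicit amalgamation construction (a quotient of $A'\ltimes E$ or, equivalently, the twisted product you built), not to a universal property against arbitrary groups $F$. Your attempt to prove the universal property cannot go through, and you should either drop that paragraph or replace it with the observation that the square becomes a push-out only after restricting the test objects appropriately. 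You should also note that your argument for uniqueness of $E'$ up to equivalence is slightly incomplete: you show independence of the choice of section, but the full claim (that \emph{any} extension fitting into the commutative diagram is equivalent to $E_{f\circ c}$) needs the additional observation that a compatible section of such an extension must produce the factor set $f\circ c$ up to coboundary.
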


\begin{cor}
\label{cor_push-pull}
Let $1\rightarrow A \rightarrow E \rightarrow G \rightarrow 1$ be a central extension with associated cohomology class $c,$ let $f: A \rightarrow A$ be a homomorphism of $G$-modules and $\phi :G \rightarrow G$ a group homomorphism,
such that $f_*(c)=\phi^*(c)$ in $H^2(G,A).$
Then there exists a homomorphism $\Phi: E\rightarrow E$ such that the following diagram commutes.
\begin{equation*}
\xymatrix@C=10mm@R=10mm{
 0 \ar@{->}[r] & A \ar@{->}[r]^i \ar@{->}[d]^-{f} & E \ar@{->}[r]^r \ar@{->}[d]^-{\Phi} & G \ar@{->}[r] \ar@{->}[d]^-{\phi} & 1\\
0  \ar@{->}[r] & A \ar@{->}[r]^{i}
 & E \ar@{->}[r]^{r} & G \ar@{->}[r] & 1. }
\end{equation*}
\end{cor}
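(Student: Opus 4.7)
The plan is to build $\Phi$ as a composition of three maps, each produced by one of the structural propositions recalled just before the corollary. Since the extension is central, $A$ carries the trivial $G$-action; in particular, pushing it out along the $G$-module map $f\colon A\to A$ preserves triviality of the $G$-action, and pulling back along $\phi\colon G\to G$ also preserves it, so all the extensions that will appear are central extensions of $G$ by $A$ and their classes live in the same group $H^2(G,A)$.

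First, by Proposition \ref{prop_push-out} applied to $f\colon A\to A$ there is an extension $0\to A\to E_f\to G\to 1$, of cohomology class $f_*(c)$, together with a morphism of extensions
\begin{equation*}
\xymatrix@C=10mm@R=10mm{
0 \ar@{->}[r] & A \ar@{->}[r] \ar@{->}[d]^{f} & E \ar@{->}[r] \ar@{->}[d]^{e_f} & G \ar@{->}[r] \ar@{=}[d] & 1 \\
0 \ar@{->}[r] & A \ar@{->}[r] & E_f \ar@{->}[r] & G \ar@{->}[r] & 1
}
\end{equation*}
Dually, by Proposition \ref{prop_pull-back} applied to $\phi\colon G\to G$ there is an extension $0\to A\to E^{\phi}\to G\to 1$, of cohomology class $\phi^*(c)$, together with a morphism of extensions
\begin{equation*}
\xymatrix@C=10mm@R=10mm{
0 \ar@{->}[r] & A \ar@{->}[r] \ar@{=}[d] & E^{\phi} \ar@{->}[r] \ar@{->}[d]^{e^{\phi}} & G \ar@{->}[r] \ar@{->}[d]^{\phi} & 1 \\
0 \ar@{->}[r] & A \ar@{->}[r] & E \ar@{->}[r] & G \ar@{->}[r] & 1
}
\end{equation*}

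Now the hypothesis $f_*(c)=\phi^*(c)$ says that the classes of $E_f$ and $E^{\phi}$ in $H^2(G,A)$ coincide, so by the bijection of Proposition \ref{prop_class_extensions} the two extensions are equivalent. Choose an equivalence $\eta\colon E_f\xrightarrow{\sim} E^{\phi}$; by the definition of equivalence of extensions, $\eta$ is the identity on the kernel $A$ and induces the identity on the quotient $G$. I then define
$$\Phi \;=\; e^{\phi}\circ \eta \circ e_f \colon E\longrightarrow E.$$
Restricted to $A$, the composition gives $\mathrm{id}\circ \mathrm{id}\circ f = f$; read on the quotient $G$, it gives $\phi\circ \mathrm{id}\circ\mathrm{id}=\phi$. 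Hence $\Phi$ makes the diagram of the statement commute.

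There is no real obstacle here: everything follows by assembling the two propositions and the classification of extensions. The only point one has to be slightly careful about is the well-definedness of $f_*$ and $\phi^*$ as endomorphisms of the same group $H^2(G,A)$, which is guaranteed by centrality, as noted at the beginning.
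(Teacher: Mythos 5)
Your proof is correct and takes essentially the same route as the paper: it builds $\Phi$ as the composite of the push-out morphism $E\to E_f$, an equivalence $E_f\xrightarrow{\sim}E^{\phi}$ supplied by Proposition~\eqref{prop_class_extensions} and the hypothesis $f_*(c)=\phi^*(c)$, and the pull-back morphism $E^{\phi}\to E$. Your remark that centrality of the extension guarantees all the auxiliary extensions live in the same group $H^2(G,A)$ is a small but worthwhile precision that the paper leaves implicit.
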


\begin{proof}
Given an extension $1\rightarrow A \rightarrow E \rightarrow G \rightarrow 1,$ $f: A \rightarrow A$ a homomorphism of $G$-modules and $\phi :G \rightarrow G$ a group homomorphism,
by Propositions \eqref{prop_pull-back}, \eqref{prop_push-out} we have the following commutative diagrams:
\begin{equation*}
\xymatrix@C=10mm@R=10mm{
 0 \ar@{->}[r] & A \ar@{->}[r]^i \ar@{->}[d]^-{f} & E \ar@{->}[r]^r \ar@{->}[d] & G \ar@{->}[r] \ar@{=}[d] & 1\\
0  \ar@{->}[r] & A \ar@{->}[r]
 & E' \ar@{->}[r] & G \ar@{->}[r] & 1. }
\qquad
\xymatrix@C=10mm@R=10mm{
 0 \ar@{->}[r] & A \ar@{->}[r] \ar@{=}[d] & E'' \ar@{->}[r] \ar@{->}[d] & G \ar@{->}[r] \ar@{->}[d]^-{\phi} & 1\\
0  \ar@{->}[r] & A \ar@{->}[r]^{i}
 & E \ar@{->}[r]^{r} & G \ar@{->}[r] & 1. }
\end{equation*}
In addition, since $f_*(c)=\phi^*(c)$ in $H^2(G,A),$ using the bijection between $\mathcal{E}(G,A)$ and $H^2(G;A)$ given by Proposition \eqref{prop_class_extensions}, and Propositions \eqref{prop_pull-back}, \eqref{prop_push-out} we obtain the following commutative diagram
\begin{equation*}
\xymatrix@C=10mm@R=10mm{
 0 \ar@{->}[r] & A \ar@{->}[r] \ar@{=}[d] & E' \ar@{->}[r] \ar@{->}[d] & G \ar@{->}[r] \ar@{=}[d] & 1\\
0  \ar@{->}[r] & A \ar@{->}[r]
 & E'' \ar@{->}[r] & G \ar@{->}[r] & 1. }
\end{equation*}
Finally reassembling the above three diagrams we get the result.
\end{proof}

\begin{lema}
\label{lem-cocy-pull-back}
Let $A$ be a fixed $G$-module,
$$\xymatrix@C=10mm@R=13mm{1 \ar@{->}[r] & A \ar@{->}[r]^{i} & E \ar@{->}[r]^{\pi} & G \ar@{->}[r]& 1 }$$
a central extension with associated cohomology class $[c]\in H^2(G;A),$ and $\pi^*:H^2(G;A) \rightarrow H^2(E;A)$ the induced morphism for $\pi.$ Then the cohomology class $[\pi^*(c)]$ is zero.
\end{lema}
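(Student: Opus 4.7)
The plan is to show that pulling back the given extension along its own quotient map $\pi$ produces a tautologically split central extension of $E$ by $A$, and then to invoke the classification of extensions by cohomology classes from Proposition \eqref{prop_class_extensions}.

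First I would apply Proposition \eqref{prop_pull-back} with the map $\alpha = \pi : E \to G$. This yields an extension
$$0 \to A \to E \times_G E \to E \to 1,$$
where $E \times_G E = \{(e_1, e_2) \in E \times E : \pi(e_1) = \pi(e_2)\}$, the right-hand map is the second projection $p_2$, and the left-hand map sends $a \mapsto (i(a), 1_E)$. The second half of Proposition \eqref{prop_pull-back} tells us exactly that this extension represents the class $\pi^*[c] \in H^2(E;A)$, so it suffices to prove that this top extension is trivial.

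Next I would exhibit the explicit splitting $\Delta : E \to E \times_G E$, $e \mapsto (e,e)$. This is a well-defined group homomorphism because the group law on $E \times_G E$ is coordinate-wise, and it obviously satisfies $p_2 \circ \Delta = \mathrm{id}_E$. Moreover, since the original extension is central, the copy of $A$ sitting inside $E \times_G E$ as $\{(i(a),1_E)\}$ remains central in $E \times_G E$ (conjugation is controlled by conjugation inside $E$, which is trivial on $i(A)$). Consequently the induced action of $E$ on $A$ coming from the pulled-back extension is trivial, and by Proposition \eqref{prop_eqiv_splittings} the pulled-back extension is equivalent to the split extension $0 \to A \to A \times E \to E \to 1$.

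Finally I would conclude via Proposition \eqref{prop_class_extensions}: taking the obvious section $e \mapsto (0,e)$ of the direct product gives a factor set that is identically zero, so the direct product extension corresponds to $0 \in H^2(E;A)$. Therefore $\pi^*[c] = 0$, as desired. I do not expect any real obstacle here; the heart of the argument is the one-line observation that $\pi \circ \mathrm{id}_E = \pi$, which forces the pullback of the extension along $\pi$ itself to split via the diagonal.
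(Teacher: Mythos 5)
Your proof is correct and is at heart the same approach the paper takes: both pull back the extension along $\pi$ to get $E\times_G E$, and both rely on the fact that this pullback splits over $E$. The paper argues via the associated $2$-cocycle $h([x|y])=i^{-1}(s(x)s(y)s(xy)^{-1})$ for a section $s$; you instead make the splitting explicit by exhibiting the diagonal $\Delta(e)=(e,e)$ as a group homomorphism section of $p_2$ and then invoking Propositions \eqref{prop_eqiv_splittings} and \eqref{prop_class_extensions}. Your phrasing is actually the cleaner of the two, since the paper's cocycle computation implicitly requires $s$ to be chosen so that the vertical projection also inverts it — which is exactly the diagonal you wrote down — whereas you name that choice up front.
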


\begin{proof}
Let $[c]\in H^2(G;A),$ the cohomology class associated to
$$\xymatrix@C=10mm@R=13mm{1 \ar@{->}[r] & A \ar@{->}[r]^{i} & E \ar@{->}[r]^{\pi} & G \ar@{->}[r]& 1 }$$
then we have the following commutative diagram
\begin{equation}
\label{diag_com_triv_pullback}
\xymatrix@C=10mm@R=13mm{1 \ar@{->}[r] & A\ar@{=}[d] \ar@{->}[r]^{i} & \tilde{E}\ar@{->}[d]^{\eta} \ar@{->}[r]^{\eta} & E \ar@{->}[d]^-{\pi}\ar@{->}[r]& 1 \\
1 \ar@{->}[r] & A \ar@{->}[r]^{j} & E \ar@{->}[r]^{\pi} & G \ar@{->}[r]& 1}
\end{equation}
where the top central extension is associated to the cohomology class $[\pi^*(c)].$
But such cohomology class is given as follows.
Let $s$ a set theoretic section of $\eta,$ i.e. $\eta\circ s=id,$ then  $[\pi^*(c)]$ is equal to the cohomology class of the $2$-cocycle $h$ defined by
$$h([x|y])=i^{-1}(s(x)s(y)s(xy)^{-1}).$$
Now since $j$ is injective, the diagram \eqref{diag_com_triv_pullback} is commutatvie and $s$ is a theoretic section of $\eta,$ we have that
\begin{align*}
0=h([x|y])\Leftrightarrow & j(i^{-1}(s(x)s(y)s(xy)^{-1}))=1\Leftrightarrow \\
\Leftrightarrow & (\eta\circ i)(i^{-1}(s(x)s(y)s(xy)^{-1}))=1\Leftrightarrow \\ \Leftrightarrow &\eta(s(x)s(y)s(xy)^{-1}))=0\Leftrightarrow xy(xy)^{-1}=1
\end{align*}
Therefore $[\pi^*(c)]\in H^2(E;A)$ is zero.
\end{proof}

\paragraph{Stabilizing automorphisms of an extension}
\begin{defi}[Section 9.1.3. in \cite{rot}]
An automorphism $\varphi$ of a group $E$ \textit{stabilizes} an extension
\begin{equation*}
\xymatrix@C=7mm@R=10mm{0 \ar@{->}[r] & A  \ar@{->}[r] &  E \ar@{->}[r] & G \ar@{->}[r]& 1  }
\end{equation*}
if the following diagram commutes:
\begin{equation*}
\xymatrix@C=7mm@R=10mm{0 \ar@{->}[r] & A \ar@{->}[r]^-{i} \ar@{=}[d]&  E \ar@{->}[r]^-{p} \ar@{->}[d]^-{\varphi} & G \ar@{=}[d] \ar@{->}[r]& 1 \\
0 \ar@{->}[r] & A \ar@{->}[r]^-{i} & E \ar@{->}[r]^-{p} & G \ar@{->}[r] & 1. }
\end{equation*}
The set of all stabilizing automorphisms of an extension of $A$ by $G,$ where $A$ is a $G$-module, is a group under composition and it is denoted by $Stab(G,A).$
In addition, by Corollary 9.16 in \cite{rot}, $Stab(G,A)$ is isomorphic to the group of derivations $Der(G,A)$ via the homomorphism
\begin{align*}
\sigma: Stab(G,A) & \rightarrow Der(G,A) \\
\varphi & \mapsto (d:\;G \rightarrow A),
\end{align*}
where $d(x)=\varphi(s(x))-s(x)$ with $s$ a section.
\end{defi}

\section{Geometric objects}

\subsection{Simple closed curves}

We refer by a \textit{closed curve} in a surface $\Sigma,$ a continuous map $S^1\rightarrow \Sigma.$ We will usually identify a closed curve with its image in $\Sigma.$ A closed curve is called \textit{essential} if it is not homotopic to a point, a puncture, or a boundary component.

A closed curve in $\Sigma$ is $\textit{simple}$ if it is embedded, that is, if the map $S^1\rightarrow \Sigma$ is injective.
Moreover, we say that $\gamma$ is a bounding simple closed curve if its homology class in $H_1(\Sigma)$ is zero, or equivalently, if $\gamma$ separates $\Sigma$ in two disjoint surfaces with boundary component $\gamma.$

Any closed curve $\alpha$ is homotopic to a smooth closed curve $\alpha'.$ What is more, if $\alpha$ is simple, then $\alpha'$ can be chosen to be simple.

\paragraph{Intersection number}

There are two natural ways to count the number of intersection points between two simple closed curves in an oriented surface: signed and unsigned. These correspond to the algebraic intersection number and geometric intersection number, respectively.

Let $\alpha$ and $\beta$ be a pair of transverse, oriented, simple closed curves in an oriented surface $\Sigma.$ The \textit{algebraic intersection number} $\widehat{i}(\alpha,\beta)$ is defined as the sum of the indices of the intersection points of $\alpha$ and $\beta,$ where an intersection point is of index $+1$ when the orientation of the intersection agrees with the orientation of $S$ and is $-1$ otherwise. This definition only depends on the homology classes of $\alpha$ and $\beta.$

The \textit{geometric intersection number} between free homotoy classes $a$ and $b$ of simple closed curves in a surface $S$ is defined to be the minimal number of intersection points between a representative curve in the class $a$ and a representative curve in the class $b:$

$$i(a,b)= \text{min}\{ |\alpha \cap \beta|\; : \; \alpha\in a, \beta \in b \}.$$

\paragraph{Isotopy for simple closed curves}
Two simple closed curves $\alpha$ and $\beta$ are \textit{isotopic} if there is a smooth homotopy
$$H:S^1\times [0,1]\rightarrow S$$
from $\alpha$ to $\beta$ with the property that the closed curve $H(S^1\times \{t\})$ is simple for each $t\in [0,1].$

\paragraph{Extension of isotopies}
An isotopy of a surface $S$ is a smooth homotopy $H:S\times I\rightarrow S$ so that, for each $t\in [0,1],$ the map $H(S,t):S\times \{t\}\rightarrow S$ is a homeomorphism. Given an isotopy between two simple closed curves in $S,$ it will often be useful to promote this to an isotopy of $S,$ which we call an \textit{ambient isotopy} of $S.$

\begin{prop}
Let $S$ be any surface. If $F: S^1\times I\rightarrow S$ is a smooth isotopy of simple closed curves, then there is an isotopy $H:S\times I\rightarrow S$ so that $H|_{F(S^1\times 0)\times I}=F.$
\end{prop}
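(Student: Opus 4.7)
My plan is to prove this as a special case of the Thom isotopy extension theorem, applied to the compact submanifold $S^1 \hookrightarrow S$. The strategy is to produce a time-dependent vector field on $S$, with compact support, whose flow realises $F$ on the starting curve.

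First I would reinterpret $F$ as a smooth one-parameter family of embeddings $f_t := F(\cdot, t) : S^1 \to S$, for $t \in I$. Differentiating in $t$ yields, for each $t$, a smooth section $X_t$ of the restricted tangent bundle $f_t^* TS$, that is, a vector field along the simple closed curve $C_t := f_t(S^1)$. This is the infinitesimal generator of the isotopy on $C_t$.

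Next I would extend each $X_t$ to a smooth time-dependent vector field $\widetilde{X}_t$ defined on all of $S$ and compactly supported in a neighbourhood of $\bigcup_t C_t$. To do this I would pick, for each $t_0 \in I$, a tubular neighbourhood $U$ of $C_{t_0}$ (which exists since $C_{t_0}$ is an embedded circle in the surface $S$) and use the tubular neighbourhood coordinates to spread $X_t$ to a vector field on $U$ for $t$ near $t_0$; then patch these local extensions together via a partition of unity subordinated to a finite cover of $\bigcup_t C_t \subset S \times I$ (the union is compact, being the continuous image of $S^1 \times I$). Multiplying by a smooth bump function equal to $1$ on an open neighbourhood of $\bigcup_t C_t$ and compactly supported in $S$ produces the desired $\widetilde{X}_t$.

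Finally I would define $H : S \times I \to S$ by integrating the time-dependent vector field $\widetilde{X}_t$ from time $0$: for each $x \in S$, let $t \mapsto H(x,t)$ be the unique integral curve of $\widetilde{X}_t$ with $H(x,0) = x$. Since $\widetilde{X}_t$ is compactly supported on the non-compact manifold $S$, the flow exists for all $t \in I$, so $H(\cdot, t)$ is a diffeomorphism of $S$ for each $t$, and $H(\cdot, 0) = \mathrm{id}_S$; thus $H$ is an ambient isotopy of $S$. By construction, along $C_0 = F(S^1 \times \{0\})$ the vector field $\widetilde{X}_t$ agrees with $X_t$, so the flow lines starting on $C_0$ are exactly the curves $t \mapsto f_t(\theta)$, giving $H|_{F(S^1 \times \{0\}) \times I} = F$. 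The main technical obstacle is the non-compactness of $S$: this is precisely what forces the cutoff step producing a compactly supported $\widetilde{X}_t$, so that the flow is globally defined on $I$; once that is in place the argument is routine.
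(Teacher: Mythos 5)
The paper states this proposition without proof (it is a standard result, stated in the preliminaries on simple closed curves), so there is no in-text argument of the paper's to compare yours against. Your proof is the standard isotopy extension theorem argument in the spirit of Thom and Palais, and it is correct in outline: differentiate the isotopy to get a vector field along the moving curve, extend it to a compactly supported time-dependent vector field on $S$ via tubular neighbourhoods and a partition of unity, and integrate to obtain the ambient isotopy. Two bookkeeping slips are worth fixing. First, in the last paragraph you say that $\widetilde{X}_t$ agrees with $X_t$ ``along $C_0$''; but $X_t$ is only defined along the moving curve $C_t$, and what you actually need --- and what your construction delivers --- is $\widetilde{X}_t\bigl(f_t(\theta)\bigr) = \partial_t F(\theta,t)$ for every $\theta\in S^1$ and $t\in I$. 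With that, uniqueness of solutions of ODEs shows that $t\mapsto f_t(\theta)$ is the integral curve of $\widetilde{X}_t$ starting at $f_0(\theta)$, giving $H|_{F(S^1\times\{0\})\times I}=F$. Second, the compact set to which your partition of unity is subordinate should be the graph $\{(F(\theta,t),t):\theta\in S^1,\,t\in I\}\subset S\times I$ rather than $\bigcup_t C_t\subset S$, so that the patching is carried out in space-time and the resulting time-dependent field is compactly supported uniformly in $t$. Neither point affects the validity of the argument; they are matters of precision, not gaps.
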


\subsection{Handlebodies}

\begin{defi}[Definition 1.7 in \cite{jes}]
Let $B_1,\ldots ,B_n$ be a collection of closed $3$-balls and let $D_1,\ldots , D_m,$ $D'_1,\ldots ,D'_m$ be a collection of pairwise disjoint disks in $\bigcup \partial B_i.$ For each $i\geq m,$ let $\phi_i:D_i\rightarrow D'_i$ be a homeomorphism. Let $H$ be the result of gluing along $\phi_1,$ then gluing along $\phi_2,$ and so on.
After the final gluing if $H$ is connected then $H$ is a \textit{handlebody.}
\end{defi}

\begin{defi}
The genus of a handlebody is the genus of its boundary $\partial H.$
\end{defi}

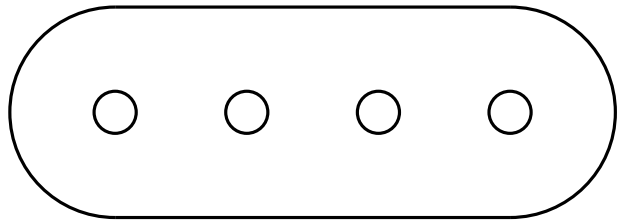
\begin{figure}[H]
\begin{center}
\begin{tikzpicture}[scale=.7]
\draw[very thick] (-2.5,-2) -- (5,-2);
\draw[very thick] (-2.5,2) -- (5,2);
\draw[very thick] (-2.5,2) arc [radius=2, start angle=90, end angle=270];
\draw[very thick] (5,2) arc [radius=2, start angle=90, end angle=-90];

\draw[very thick] (-2.5,0) circle [radius=.4];
\draw[very thick] (0,0) circle [radius=.4];
\draw[very thick] (2.5,0) circle [radius=.4];
\draw[very thick] (5,0) circle [radius=.4];
\end{tikzpicture}
\end{center}
\caption{A Handlebody of genus 4}
\end{figure}

\begin{defi}
A properly embedded disk $D\subset H$ is essential if its boundary does not bound a disk in $\partial H.$
\end{defi}

\begin{defi}[Definition 2.1 in \cite{jes}]
A collection $\{D_1,\ldots ,D_m \}$ of properly embedded, essential disks is called a \textit{system of disks} for $H$ if the complement of a regular neighbourhood of $\bigcup D_i$ is a collection of balls.
\end{defi}

\begin{figure}[H]
\begin{center}
\begin{tikzpicture}[scale=.7]
\draw[very thick] (-4.5,-2) -- (3,-2);
\draw[very thick] (-4.5,2) -- (3,2);
\draw[very thick] (-4.5,2) arc [radius=2, start angle=90, end angle=270];
\draw[very thick] (3,2) arc [radius=2, start angle=90, end angle=-90];

\draw[very thick] (-4.5,0) circle [radius=.4];
\draw[very thick] (-2,0) circle [radius=.4];
\draw[very thick] (0.5,0) circle [radius=.4];
\draw[very thick] (3,0) circle [radius=.4];

\draw[thick] (-4.1,0) to [out=30,in=180] (-3.25,0.3);
\draw[thick] (-3.25,0.3) to [out=0,in=150] (-2.4,0);
\draw[thick,dashed] (-4.1,0) to [out=-30,in=-150] (-2.4,0);

\draw[thick] (-1.6,0) to [out=30,in=180] (-0.75,0.3);
\draw[thick] (-0.75,0.3) to [out=0,in=150] (0.1,0);
\draw[thick, dashed] (-1.6,0) to [out=-30,in=-150] (0.1,0);

\draw[thick] (0.9,0) to [out=30,in=180] (1.75,0.3);
\draw[thick,dashed] (0.9,0) to [out=-30,in=180] (1.75,-0.3);

\draw[thick] (1.75,0.3) to [out=0,in=150] (2.6,0);
\draw[thick,dashed] (1.75,-0.3) to [out=0,in=210] (2.6,0);

\draw[thick] (-4.5,-0.4) to [out=180,in=90] (-5,-1.2);
\draw[thick] (-4.5,-2) to [out=180,in=-90] (-5,-1.2);
\draw[thick, dashed] (-4.5,-0.4) to [out=0,in=0] (-4.5,-2);
\draw[thick] (-2,-0.4) to [out=180,in=90] (-2.5,-1.2);
\draw[thick] (-2,-2) to [out=180,in=-90] (-2.5,-1.2);
\draw[thick, dashed] (-2,-0.4) to [out=0,in=0] (-2,-2);
\draw[thick] (0.5,-0.4) to [out=180,in=90] (0,-1.2);
\draw[thick] (0.5,-2) to [out=180,in=-90] (0,-1.2);
\draw[thick, dashed] (0.5,-0.4) to [out=0,in=0] (0.5,-2);
\draw[thick] (3,-0.4) to [out=180,in=90] (2.5,-1.2);
\draw[thick] (3,-2) to [out=180,in=-90] (2.5,-1.2);
\draw[thick, dashed] (3,-0.4) to [out=0,in=0] (3,-2);

\draw[thick] (-2,0.4) to [out=180,in=-90] (-2.5,1.2);
\draw[thick] (-2,2) to [out=180,in=90] (-2.5,1.2);
\draw[thick, dashed] (-2,0.4) to [out=0,in=0] (-2,2);
\draw[thick] (0.5,0.4) to [out=180,in=-90] (0,1.2);
\draw[thick] (0.5,2) to [out=180,in=90] (0,1.2);
\draw[thick, dashed] (0.5,0.4) to [out=0,in=0] (0.5,2);

\end{tikzpicture}
\end{center}
\caption{A system of disks of a handlebody}
\end{figure}
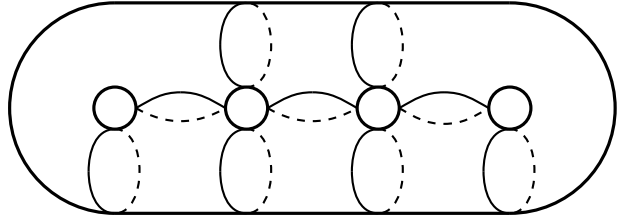

\begin{lema}[Lemma 2.2 in \cite{jes}]
Every handlebody has a system of disks.
\end{lema}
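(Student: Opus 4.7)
The plan is to take the disks produced directly by the gluing construction as our candidate system, verify that cutting along them recovers a disjoint union of balls, and then prune any inessential disks to obtain a system in the sense of the definition.

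Let $E_i \subset H$ be the image of the identified pair $D_i \sim_{\phi_i} D_i'$ under the quotient map defining $H$. Each $E_i$ is a properly embedded disk: the identified boundary circle $\partial D_i = \partial D_i'$ lies on $\partial H$, while the interior of $E_i$ lies in the interior of $H$. The collection $\{E_1,\ldots,E_m\}$ is pairwise disjoint because the original disks $D_j, D_j'$ were pairwise disjoint in $\bigsqcup_k \partial B_k$ by hypothesis. Cutting $H$ open along $\bigcup_i E_i$ reverses every gluing $\phi_i$ and recovers the disjoint union $B_1 \sqcup \cdots \sqcup B_n$; passing from cutting to an open regular neighborhood $N(\bigcup_i E_i)$ only removes a small open bi-collar, so $H \setminus N(\bigcup_i E_i)$ is a disjoint union of $3$-balls, since each $B_k$ with finitely many small open disks removed from its boundary is again a $3$-ball.

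To achieve essentiality, prune any $E_i$ whose boundary bounds a disk $D \subset \partial H$. By an innermost-disk argument applied inside $\partial H$, one may assume $D$ contains no other $\partial E_j$, which in the cut-up picture forces $D$ to lie inside a single $\partial B_k$; hence one side of $E_i$ in the cut-up picture is the entire ball $B_k$ and the gluing $\phi_i$ must involve two distinct balls. Removing such an $E_i$ from the collection amounts to re-gluing those two balls along a single disk on their boundaries, which produces a single $3$-ball, so the complementary condition is preserved. Iterating this pruning terminates (the number of disks strictly decreases) and yields a system of essential disks for $H$.

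The main obstacle is the essentiality step, since one must check that pruning an inessential disk preserves the property that the complement is a disjoint union of $3$-balls. The innermost-disk argument is what localizes the re-gluing so that only two pieces meet along a single disk, which is the one situation in which the merged region is automatically a $3$-ball. In particular, self-gluings with $a_i = b_i$ (where $\phi_i$ identifies two disks on the same ball) always produce essential $E_i$, since $\partial E_i$ is then the meridian of a newly created handle in $\partial H$ and cannot bound a disk there, so such disks are never subject to pruning and the ``complement is balls'' property is never endangered by the procedure.
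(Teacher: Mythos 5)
Your proof is correct and follows the standard line of reasoning: the images of the gluing disks form the candidate system, cutting along them recovers the original union of balls, and an innermost-disk pruning removes any inessential members while preserving the ``complement is balls'' condition (with the merged piece a boundary-connected sum of two balls, hence a ball). The paper gives no proof of its own (it cites Johnson's Lemma 2.2 directly), so there is no internal argument to compare against, but yours is exactly what one expects from the given ball-plus-gluings definition of a handlebody, and the closing observation that self-gluings always yield essential disks is a correct and helpful sanity check on the pruning step.
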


\begin{defi}[Definition 2.6 in \cite{jes}]
A collection of disks in a handlebody is \textit{minimal} if its complement is connected.
\end{defi}

\begin{teo}[Loop Theorem]
\label{cor_loop}
Let $M$ be an $3$-manifold with boundary, not necessarily compact or orientable,
and let $S\subset M$ be a $2$-sided surface. If the induced map $\pi_1 S\rightarrow \pi_1 M$ is not injective for some choice of base point in some component of $S,$ then there is a disk $D^2\subset M$ with $D^2\cap S=\partial D^2$ a nontrivial circle in $S.$
\end{teo}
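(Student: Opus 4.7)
The plan is to prove this via Papakyriakopoulos' \emph{tower construction}, which is the standard route to the Loop Theorem. Start by picking a non-trivial element $\alpha\in\ker(\pi_1(S,x_0)\to\pi_1(M,x_0))$. By hypothesis $\alpha$ bounds a continuous map $f_0\colon(D^2,\partial D^2)\to(M,S)$ with $f_0|_{\partial D^2}$ representing $\alpha$. After a small PL or smooth perturbation I can assume $f_0$ is in general position, so its singular set consists of isolated double points and a disjoint union of proper double arcs and circles in the interior of $D^2$; moreover $f_0^{-1}(S)$ is a 1-submanifold of $D^2$ containing $\partial D^2$ as one component. An innermost/outermost argument on $f_0^{-1}(S)$ then replaces $f_0$ by a (possibly new) singular disk whose interior is disjoint from $S$ and whose boundary is still a non-trivial loop in $\pi_1(S)$ (two-sidedness of $S$ is used here to trade an innermost intersection arc/circle for boundary compressions on one side of $S$, without changing the boundary homotopy class up to passing to a different non-trivial loop).

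The tower itself is built inductively. Let $N_0$ be a regular neighbourhood of $f_0(D^2)$ in the component of $M\setminus S$ on the chosen side. If $H_1(N_0;\mathbb{Z}/2)\neq 0$, pass to a non-trivial double cover $p_1\colon\widetilde{N_0}\to N_0$; since $D^2$ is simply connected, $f_0$ lifts to a singular disk $\widetilde f_0$, and I set $N_1$ to be a regular neighbourhood of $\widetilde f_0(D^2)$ inside $\widetilde{N_0}$. Iterating gives a tower
\[
N_0\xleftarrow{p_1} N_1\xleftarrow{p_2}\cdots\xleftarrow{p_k} N_k.
\]
A standard counting argument (each step reduces either the number of self-intersections of the lifted disk or the $\mathbb{Z}/2$-rank of $H_1$, or bounds some complexity of a triangulation of the neighbourhood) forces the tower to terminate at some finite stage $N_k$ with $H_1(N_k;\mathbb{Z}/2)=0$.

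At the top of the tower, an innermost disk/outermost arc argument applied to the self-intersections of $\widetilde f_k$, combined with the fact that no non-trivial double cover is available to obstruct simplification, produces an actual \emph{embedded} disk $D_k\subset N_k$ whose boundary lies in the preimage of $S$ and is non-trivial in the fundamental group of the lifted boundary surface. The final step is to descend $D_k$ back through the tower. Each projection $p_{i+1}(D_{i+1})\subset N_i$ may acquire double arcs/circles inherited from the deck transformations of $p_{i+1}$; cutting and pasting along innermost such curves, using the two-sidedness of the relevant surface to choose coherent sides, yields an embedded disk $D_i$ at each level with boundary still non-trivial. After $k$ projections I obtain the required embedded disk $D=D_0\subset M$ with $D\cap S=\partial D$ an essential circle on $S$.

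The main obstacle is controlling the \emph{descent}: a priori the projection of an embedded disk under a covering map is only immersed, and one needs to ensure that the surgeries used to remove the new singularities neither destroy embeddedness achieved at higher levels nor kill the non-triviality of the boundary loop. This is where the two-sidedness hypothesis pays off — it provides a well-defined transverse orientation along $\partial D$ that is preserved by every cut-and-paste move, so the boundary of the final disk is still a non-trivial simple closed curve in $S$. The remaining technical points (general position in non-compact or non-orientable $M$, and termination of the tower) are standard and handled by restricting attention from the start to a compact neighbourhood of $f_0(D^2)$ in $M$, where everything is finite.
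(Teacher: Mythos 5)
The paper does not prove this theorem; it is the Loop Theorem of Papakyriakopoulos (in its modern formulation following Stallings), quoted in Chapter 1 as background and then invoked in the proof of Proposition~\ref{prop_minsys}. So there is no in-paper proof to compare against; what I can do is assess your sketch on its own terms.

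Your outline correctly identifies the tower construction as the right strategy, and the overall architecture — general position cleanup, tower of double covers, embedded disk at the top, descent — is the standard one. But there is a genuine gap at the crux of the argument, the descent. You assert that after projecting the embedded disk $D_{i+1}$ down to $N_i$ and performing innermost-disk surgeries to remove the new self-intersections, the resulting embedded disk $D_i$ still has non-trivial boundary, and you justify this by saying two-sidedness gives a transverse orientation ``preserved by every cut-and-paste move, so the boundary of the final disk is still a non-trivial simple closed curve.'' This does not follow. A cut-and-paste move replaces the boundary curve $\partial D$ with a different curve (one of two pieces obtained from an innermost bigon/disc exchange), and it is entirely possible for a non-trivial loop to be replaced by a trivial one; a transverse orientation says nothing about homotopy classes. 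This is precisely the difficulty that makes the Loop Theorem hard, and it is why Stallings proves the stronger \emph{Loop Theorem with normal subgroup}: one fixes a normal subgroup $N \triangleleft \pi_1 S$ with $\ker(\pi_1 S \to \pi_1 M) \not\subset N$ and shows that the boundary of the final embedded disk can be arranged to lie \emph{outside} $N$. The point is that during each surgery at least one of the two resulting boundary curves must still lie outside $N$ (since their ``product'' does, and $N$ is normal), so non-triviality is preserved in the weaker sense of non-membership in $N$, which then suffices for the theorem by taking $N = 1$. Your sketch omits this bookkeeping device entirely, and without it the descent argument as written does not close. The termination argument is also stated vaguely — the standard quantity that strictly decreases is the number of simplices in a triangulation of $N_i$, since $N_{i+1}$ is a proper subcomplex of a double cover of $N_i$ — but that is a minor, repairable imprecision compared to the descent gap.
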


\begin{prop}
\label{prop_minsys} Let $\Sigma_g$ be a standardly embedded surface in $\mathbf{S}^3,$ and $\mathcal{H}_g$ the inner handlebody.
Let $D_\gamma$ be an essential proper embedded disk in $\mathcal{H}_g$ with boundary a bounding simple closed curve $\gamma.$ Then there exists a minimal system of disks $\{D_{\beta_l}\}_l$ that does not intersect $D_\gamma.$
\end{prop}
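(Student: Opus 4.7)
My plan is to cut $\mathcal{H}_g$ along $D_\gamma$, identify the two pieces as handlebodies of lower genus, and assemble the desired minimal system by taking minimal systems inside each piece and pushing their boundaries off $D_\gamma$. The step I expect to be the main obstacle is the classical fact that cutting a handlebody along an essential separating disk yields two handlebodies; once this is granted, the rest is routine.

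First I would show that $D_\gamma$ is a separating disk. From $H_2(\mathcal{H}_g;\mathbb{Z})=0$ the long exact sequence of the pair $(\mathcal{H}_g,\Sigma_g)$ yields an injection $\partial : H_2(\mathcal{H}_g,\Sigma_g;\mathbb{Z}) \hookrightarrow H_1(\Sigma_g;\mathbb{Z})$. The bounding hypothesis gives $[\gamma]=0$ in $H_1(\Sigma_g)$, forcing $[D_\gamma]=0$ in the torsion-free group $H_2(\mathcal{H}_g,\Sigma_g;\mathbb{Z})$; reducing modulo $2$, the connected properly embedded surface $D_\gamma$ must then separate $\mathcal{H}_g$ into two pieces $H_+, H_-$ with $\partial H_+ = A \cup_\gamma D_\gamma$ and $\partial H_- = B \cup_\gamma D_\gamma$, where $A,B$ are the two components of $\Sigma_g\setminus\gamma$, of respective genera $g_1,g_2$ satisfying $g_1+g_2=g$ (both $\geq 1$ because $D_\gamma$ is essential).

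Next I invoke the classical fact that cutting a handlebody along an essential separating disk realizes a boundary connect-sum decomposition $\mathcal{H}_g \cong \mathcal{H}_{g_1} \natural \mathcal{H}_{g_2}$, so $H_+$ and $H_-$ are themselves handlebodies. By Lemma 2.2 of \cite{jes} I obtain minimal systems $\{E^{(+)}_l\}_{l=1}^{g_1}\subset H_+$ and $\{E^{(-)}_l\}_{l=1}^{g_2}\subset H_-$. Because $D_\gamma$ is an embedded disk in $\partial H_\pm$, any essential simple closed curve on $\partial H_\pm$ can be isotoped entirely into $A$ or into $B$ by an outermost-arc argument on $D_\gamma$ (a simple closed curve contained in the disk $D_\gamma$ would be nullhomotopic, hence inessential), so I may assume $\partial E^{(+)}_l \subset A$ and $\partial E^{(-)}_l \subset B$. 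Since the interiors of these disks already lie in the interiors of $H_\pm$, they are then automatically disjoint from $D_\gamma$.

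Finally, the union $\{E^{(+)}_l\}_l \cup \{E^{(-)}_l\}_l$ is a collection of properly embedded essential disks in $\mathcal{H}_g$ disjoint from $D_\gamma$. The complement in $\mathcal{H}_g$ of its regular neighborhood is obtained by gluing along $D_\gamma$ the two ball-complements produced inside $H_+$ and $H_-$ by minimality of the respective systems; two $3$-balls glued along a $2$-disk in their boundaries form a $3$-ball, so the complement is connected and the system is therefore minimal, as required. The handlebody-decomposition fact used above can itself be justified, if desired, by extending $D_\gamma$ to a full system of disks for $\mathcal{H}_g$ via standard outermost-arc disk-surgery and reading off the factorization on each side.
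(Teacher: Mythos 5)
Your proposal is correct and reaches the conclusion by a genuinely different route from the paper. The paper does not invoke (or state) the boundary-connect-sum decomposition of a handlebody along an essential separating disk; it rebuilds it from scratch. Concretely, it sets up the surjection $\pi_1(S_h)\twoheadrightarrow\pi_1(V_r)$, uses the Loop Theorem to produce the disks $D_{\beta_1},D_{\beta_2},\ldots$ one at a time, tracks the drop in genus via Euler characteristic and the drop in free rank via Seifert--van Kampen as tubular neighborhoods are peeled off, and finally closes the induction by applying the Schoenflies theorem to the resulting embedded $2$-sphere in $\mathbf{S}^3$ --- which is the one point where the hypothesis that $\Sigma_g$ is \emph{standardly embedded in} $\mathbf{S}^3$ is actually used. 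Your argument instead concentrates the geometric content in the classical fact that cutting $\mathcal{H}_g$ along an essential separating disk realizes $\mathcal{H}_{g_1}\natural\mathcal{H}_{g_2}$ (which you propose to establish by outermost-arc surgery on a system of disks), after which the remainder is bookkeeping: pick minimal systems in the two factors, isotope their boundaries off $D_\gamma$, and note that gluing two ball-complements along the disk $D_\gamma$ again gives a ball, so the union is a minimal system. The tradeoffs are worth noticing. Your route is intrinsic to the handlebody, independent of the ambient $\mathbf{S}^3$, avoids Schoenflies entirely, and is shorter; the paper's route is self-contained given the tools it has already quoted, while yours outsources its key geometric step to a lemma the paper has not established. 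Your homological verification that $D_\gamma$ separates and your final minimality check are both correct.
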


\begin{proof} Observe that the disk $D_\gamma$ gives us, respectively, two compatible decompositions of $\Sigma_g$ and $\mathcal{H}_g$ as:
\begin{align}
& \Sigma_g= \Sigma_h \;\#_{\gamma} \;\Sigma_{g-h}, \label{eq_desc1}\\
& \mathcal{H}_g=V_r \;\#_{D_\gamma} \;V_{g-r}. \label{eq_desc2}
\end{align} 

We want to show that $V_r$ and $V_{g-r}$ are handlebodies, obviously with $\partial V_r= \Sigma_h \cup D_\gamma$ and $\partial V_{g-r}=\Sigma_{g-h}\cup D_\gamma.$ Take $S_h=\Sigma_h \cup D_\gamma$ and $S_{g-h}=\Sigma_{g-h}\cup D_\gamma.$

Observe that the decompositions \eqref{eq_desc1}, \eqref{eq_desc2} induce analog decompositions in homotopy.

Fixed a point $x_0$ of the curve $\gamma,$ using the Seifert van Kampen theorem we have that
$$\pi_1(\mathcal{H}_g,x_0)=\pi_1(V_r,x_0)\ast_{\pi_1(D_{\gamma},x_0)} \pi_1(V_{g-r},x_0)=\pi_1(V_r,x_0)\ast \pi_1(V_{g-r},x_0).$$
Since $\pi_1(\mathcal{H}_g,x_0)$ is free, by Corollary 2.9. in \cite{magnus1}, we have that $\pi_1(V_r,x_0)$ and $\pi_1(V_{g-r},x_0)$ are free subgroups of $\pi_1(\mathcal{H}_g,x_0).$
Computing the abelianization, we have that they are free groups of rank $r$ and $g-r$ respectively.

Assume that $r\geq h$ (if $r\leq h,$ an analogous proof holds switching $S_h$ and $S_{g-h}).$  

Now we prove that the inclusion $S_h\rightarrow V_r$ induces a surjective map $\pi_1(S_h)\rightarrow \pi_1(V_r)$ necessarily without trivial kernel.
Observe that we have the following commutative diagram:
$$
\xymatrix@C=10mm@R=13mm{\pi_1(\Sigma_g) \ar@{->}[r] \ar@{->}[d] & \pi_1(\mathcal{H}_g) \ar@{->}[d] \\
\pi_1(S_h) \ar@{->}[r] & \pi_1(V_r)
}
$$
where $\pi_1(\Sigma_g) \rightarrow \pi_1(\mathcal{H}_g)$ is clearly surjective, and $\pi_1(\mathcal{H}_g) \rightarrow \pi_1(V_r)$ is also surjective since every generator element of $\pi_1(V_r)$ is one of the generators of $\pi_1(\mathcal{H}_g).$
Then $\pi_1(S_h)\rightarrow \pi_1(V_r)$ is surjective.
Moreover this map can not be injective since $\pi_1(V_r)$ is free and $\pi_1(S_h)$ is not free and then it is not possible that $\pi_1(V_r) \cong \pi_1(S_h).$

By Theorem \eqref{cor_loop}, there exists a properly embedded disc $D_{\beta_1}$ in $V_r,$ with $\partial D_{\beta_1}=\beta_1$  and $\beta_1$ not nullhomotopic in $S_h$, so in particular $D_{\beta_1}$ is essential.

Notice that, without lose of generality, we can assume that  $\beta_1 \cap D_\gamma = \emptyset.$ Since if $\beta_1$ intersects $D_\gamma,$ then we can take an homotopy pushing $\beta_1$ out of $D_\gamma.$

Deleting a small open tubular neighbourhood $N(D_{\beta_1})$ of $D_{\beta_1}$ from $V_r,$ such that it does not intersect $D_\gamma,$ we obtain a $3$-manifold $V_{r-1}=V_r-N(D_{\beta_1})$ with boundary. The neighbourhood $N(D_{\beta_1})$ is an interval-bundle over $D_{\beta_1},$ and since $D_{\beta_1}$ is orientable, $N(D_{\beta_1})$ is a product $D_{\beta_1}\times (-\varepsilon,\varepsilon).$ Denote by $D^-_{\beta_1}, D^+_{\beta_1}$ the disks $D_{\beta_1}\times \{-\varepsilon\},D_{\beta_1}\times \{\varepsilon \}$ respectively. Then we have 3 embedded disks $D^-_{\beta_1}, D^+_{\beta_1},D_\gamma$ in
$$S_{h-1}=\partial(V_r-N(D_{\beta_1}))=(S_h\backslash Int(C))\cup D^+_{\beta_1} \cup D^-_{\beta_1},$$
where $C=\overline{N(D_{\beta_1})}.$

We prove that $S_{h-1}$ is a surface of genus $h-1.$

Recall that if $X$ is a topological space with $X=S'\cup S''$ where $S',S''$ are closed subsets then
$\chi(X)=\chi(S')+\chi(S'')-\chi(S'\cap S'')$ (where $\chi$ denotes the Euler's characteristic).

In our case we have that
\begin{align*}
\chi(S_h) & =\chi(S_h\backslash Int(C))+\chi(C)-\chi((S_h\backslash Int(C))\cap C) \\
 & =\chi(S_h\backslash Int(C))+0-\chi(\partial D^-_{\beta_1} \sqcup \partial D^+_{\beta_1}) \\
 & =\chi(S_h\backslash Int(C))-\chi(\partial D^-_{\beta_1})-\chi(D^+_{\beta_1}) \\
 & =\chi(S_h\backslash Int(C))-0-0=\chi(S_h\backslash Int(C)) \\
\end{align*}
and adding the disks $D^-_{\beta_1},$ $D^+_{\beta_1}$ to $S_h\backslash Int(C)$ we get that
\begin{align*}
\chi(S_{h-1}) & = \chi(S_h\backslash Int(C))+\chi(D^+_{\beta_1})+\chi(D^-_{\beta_1})-\chi(\partial D^+_{\beta_1})-\chi(\partial D^-_{\beta_1}) \\
 & = \chi(S_h\backslash Int(C))+1+1-0-0 \\
  & = 2-2h+2=2-2(h-1). 
\end{align*}
Then the new surface $S_{h-1}$ has genus $h-1.$ 

Moreover, let be $p^-,$ $p^+$  points of $D_{\beta_1}^-,$ $D_{\beta_1}^+$ respectively, and $\epsilon \in N(D_{\beta_1})$ an arc with end points $p^-,$ $p^+.$ We can do a deformation retract from $V_r$ to $(V_r-N(D_{\beta_1}))\cup \epsilon.$

Since $S_{h-1}$ is arc-connected there exists another arc $\epsilon'$ in $S_{h-1}$ with end points $p^-, p^+.$ So now we can do another deformation retract, from $(V_r-N(D_{\beta_1})) \cup \epsilon$ to 
$(V_r-N(D_{\beta_1})) \vee_{x_0} S^1$ sending $p^-$ to $p^+$ through $\epsilon'.$
Thus,
$$\pi_1(V_r-N(D_{\beta_1}))\cong \pi_1(((V_r-N(D_{\beta_1})) \bigvee_{p^+} S^1)-(S^1-\{p^+\})).$$
Now taking $V_{r-1}=(V_r-N(D_{\beta_1}))$
and using the Seifert van Kampen theorem we get that
$$\mathbb{F}_h=\pi_1(V_r,p^+)=\pi_1(V_{r-1} \vee_{p^+} S^1,p^+)=\pi_1(V_{r-1},p^+)\ast \pi_1(S^1,p^+)=\pi_1(V_{r-1},p^+)\ast \mathbb{F}_1.$$
Since $\pi_1(V_r,p^+)$ is free, by Corollary 2.9 in \cite{magnus1}, we have that $\pi_1(V_{r-1},p^+)$ is a free subgroup of $\pi_1(V_r,p^+).$
Computing the abelianization, we have that this subgroup is a free group of rank $h-1.$

Next, if we delete $N(D_{\beta_1})$ from $V_r$ we obtain an inclusion $S_{h-1}\hookrightarrow V_{r-1}.$

Repeating this argument $h$ times at the end we get a surface $S_0$ with genus zero, i.e. a smooth $2$-sphere $\mathbf{S}^2$ embedded in $\mathbf{S}^3.$
Applying Schöenflies theorem, we get that our embedded smooth $2$-sphere $\mathbf{S}^2$ is the boundary of two embedded smooth $3$-balls in $\mathbf{S}^3.$
Then $V_{r-h}$ is one of these $3$-balls, and so $\mathbb{F}_{r-h}=\pi_1(V_{r-h})=0,$ i.e. $h=r.$ Thus $V_h$ is a handlebody with system of disks given by $\{D_{\beta_i}\}_i=\{D_{\beta_1},\ldots, D_{\beta_h}\}$ that is minimal by construction.

Using the same argument for the subsurface $S_{g-h},$ we obtain a handlebody $V_{g-r}$ with boundary $S_{g-h}$, and a minimal system of discs $\{D_{{\beta}_j}\}_j$ for $V_{g-k}$, such that $\{D_{{\beta}_j}\}_j$ do not intersect $D_\gamma, D_\delta.$

Then we can view $\mathcal{H}_g$ as a boundary connected sum of $V_k$ and $V_{g-k}$ given by:

\begin{equation*}
\mathcal{H}_g= V_k\; \#_{D_\gamma} V_{g-k}:= \left(V_k\; \bigsqcup \;V_{g-k}\right)/D_\gamma
\end{equation*}

Hence the union of the two families $\{D_{{\beta}_i}\}_i$ and $\{D_{{\beta}_j}\}_j$ gives us a new system of disks $\{D_{{\beta''}_l}\}_l$ for $\mathcal{H}_g$ such that $\{D_{{\beta}_l}\}_l$ don't intersect $D_\gamma.$
And by construction $\{D_{{\beta}_l}\}_l,$ is a minimal disk system of $\mathcal{H}_g.$
\end{proof}

\section{Mapping class group}

Let $\Sigma_{g,b}$ denote an oriented, conneted surface of genus $g\geq 0$ with $b\geq 0$ disjoint open disks removed. Let $Homeo^+(\Sigma_{g,b},\partial \Sigma_{g,b})$ denote the group of orientation-preserving homeomorphisms of $\Sigma_{g,b}$ that restrict to the identity on $\partial \Sigma_{g,b}.$ We endow this group with the compact-open topology.

The \textit{mapping class group} of $\Sigma_{g,b},$ denoted $Mod(\Sigma_{g,b}),$ is the group
$$Mod(\Sigma_{g,b})=\pi_0(Homeo^+(\Sigma_{g,b},\partial \Sigma_{g,b})).$$
Equivalently, $Mod(\Sigma_{g,b})$ is the group of isotopy classes of elements of $Homeo^+(\Sigma_{g,b},\partial \Sigma_{g,b}),$ where isotopies are required to fix the boundary pointwise. If $Homeo_0(\Sigma_{g,b},\partial \Sigma_{g,b})$ denotes the connected component of the identity in $Homeo^+(\Sigma_{g,b},\partial \Sigma_{g,b}),$ then we can equivalently write
$$Mod(\Sigma_{g,b})=Homeo^+(\Sigma_{g,b},\partial \Sigma_{g,b})/Homeo_0(\Sigma_{g,b},\partial \Sigma_{g,b}).$$
The elements of $Mod(\Sigma_{g,b})$ are called \textit{mapping classes}.

There is not a unique definition of $Mod(\Sigma_{g,b}).$
For example, we could consider diffeomorphisms instead of homeomorphisms, or homotopy classes instead of isotopy classes. However, by Section 1.4 in \cite{farb}, these definitions would produce isomorphic groups, i.e. we have the following equivalent definitions of $Mod(\Sigma_{g,b}):$
\begin{align*}
Mod(\Sigma_{g,b})= & \pi_0(Homeo^+(\Sigma_{g,b},\partial \Sigma_{g,b})) \\
\approx & Homeo^+(\Sigma_{g,b},\partial \Sigma_{g,b})/\text{homotopy}\\
 \approx & \pi_0(\text{Diff}^+(\Sigma_{g,b},\partial \Sigma_{g,b})),
\end{align*}
where $\text{Diff}^+(\Sigma_{g,b},\partial \Sigma_{g,b})$ is the group of orientation-preserving diffeomorphisms of $\Sigma_{g,b}$ that are the identity on the boundary.

\subsection{Dehn twists}
There is a particular type of mapping class called a Dehn twist. Dehn twists are the simplest infinite-order mapping classes in the sense that the Dehn twists play the role for mapping class groups that elementary matrices play for linear groups.

\paragraph{Twist map in annulus.}
Consider the annulus $A=S^1\times [0,1].$ To orient $A$ we embed it in the $(\theta, r)$-plane via the map $(\theta, t)\mapsto (\theta, t+1)$ and take the orientation induced by the standard orientation of the plane.

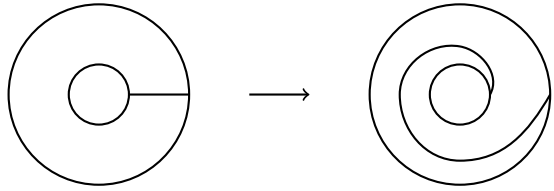
\begin{figure}[H]
\begin{center}
\begin{tikzpicture}[scale=.8]

\draw[thick] (0,0) circle [radius=.5];
\draw[thick] (0,0) circle [radius=1.5];
\draw[thick] (0.5,0) -- (1.5,0);

\draw[->,thick] (2.5,0) to [out=0,in=180] (3.5,0);

\draw[thick] (6,0) circle [radius=.5];
\draw[thick] (6,0) circle [radius=1.5];

\draw[thick] (6.5,0) to [out=60,in=-10] (6,0.8) to [out=170,in=90] (5,0) to [out=-90,in=180] (6,-1.1) to [out=0,in=-120] (7.5,0);

\end{tikzpicture}
\end{center}
\caption{Left twist}
\end{figure}

Let $T:A\rightarrow A$ be the twist map of $A$ given by the formula
$$T(\theta, t)=(\theta +2\pi t, t).$$
The map $T$ is an orientation-preserving homeomorphism that fixes $\partial A$ pointwise. Note that instead of using $\theta +2\pi t$ we could have used $\theta -2\pi t.$ Our choice is a left twist, while the other is a right twist.

\paragraph{Dehn twist in a general surface.}
Let $S$ be an arbitrary oriented surface and let $\alpha$ be a simple closed curve in $S.$ Let $N$ be a regular neighbourhood of $\alpha$ and choose an orientation preserving homeomorphism $\phi: A\rightarrow N.$ We obtain a homeomorphism $T_\alpha: S\rightarrow S,$ called a Dehn twist about $\alpha,$ as follows:
$$T_\alpha (x)=\left\{\begin{array}{ll}
\phi \circ T \circ \phi^{-1}(x) & \text{if } x\in N \\
x & \text{if } x\in S\backslash N.
\end{array}\right.$$
In other words, the instructions for $T_\alpha$ are ''perform the twist map $T$ on the annulus $N$ and fix every point outside of $N.$''

The Dehn twist $T_\alpha$ depends on the choice of $N$ and the homeomorphism $\phi.$ However, by the uniqueness of regular neighbourhoods, the isotopy class of $T_\alpha$ does not depend on either of these choices.

\paragraph{The action on simple closed curves via surgery}

We can understand $T_a$ by examining its action on the isotopy classes of simple closed curves on $S.$ If $b$ is an isotopy class with $i(a,b)=0,$ then $T_a(b)=b.$ In the case that $i(a,b)\neq 0$ the isotopy class $T_a(b)$ is determined by the following rule:
given particular representatives $\beta$ and $\alpha$ of $b$ and $a,$ respectiely, each segment of $\beta$ crossing $\alpha$ is replaced with a segment that turns left, follows $\alpha$ all the way around, and then turns right.
Analogously, we can understand $T_a^{-1}$ following the same rule described above switching ''left'' by ''right''.

The reason that we can distinguish left from right is that the map $\phi$ used in the definition of $T_a$ is taken to be orientation-preserving.

\begin{figure}[H]
\begin{center}
\begin{tikzpicture}[scale=.6]

\draw[thick] (-2,0) -- (2,0);
\draw[thick] (0,-2) -- (0,2);

\draw[->,thick] (3,0) to [out=0,in=180] (5,0);

\draw[thick] (7,0) arc [radius=1, start angle=-90, end angle=0];
\draw[thick] (8,-1) arc [radius=1, start angle=180, end angle=90];

\draw[thick] (6,0) -- (7,0);
\draw[thick] (10,0) -- (9,0);
\draw[thick] (8,2) -- (8,1);
\draw[thick] (8,-2) -- (8,-1);
\node [right] at (0.1,1.5) {$\alpha$};
\node [below] at (-2,-0.1) {$\beta$};
\end{tikzpicture}
\end{center}
\caption{Surgery}
\end{figure}

\paragraph{Properties of Dehn twists.}
Next we give some properties about Dehn twists. For the proofs of these properties see Section 3.3 in \cite{farb}.

Let $f\in Mod(S)$ and $a,$ $b$ isotopy classes of simple closed curves in $S$
\begin{enumerate}[i)]
\item $T_a=T_b \Longleftrightarrow a=b.$
\item $T_{f(a)}=fT_af^{-1}.$
\item $ f \text{ commutes with } T_a \Longleftrightarrow f(a)=a. $
\item $i(a,b)=0 \Longleftrightarrow T_a(b)=b \Longleftrightarrow T_aT_b=T_bT_a.$
\end{enumerate}

\paragraph{The Lantern relation}
The \textit{Lantern relation} is a relation in $Mod(S)$ between seven Dehn twists all lying on a subsurface of $S$ homeomorphic to $S_{0,4},$ a sphere with four boundary components.

\begin{prop}
Let $x,$ $y,$ $z,$ $b_1,$ $b_2,$ $b_3$ and $b_4$ be simple closed curves in a surface $S$ that are arranged as the curves shown in Figure \ref{lantern_rel}.

\begin{figure}[H]
\begin{center}
\begin{tikzpicture}[scale=.7]
\draw[very thick] (-2.8,0.8) to [out=20,in=-110] (-0.8,2.8);
\draw[very thick] (2.8,0.8) to [out=160,in=-70] (0.8,2.8);
\draw[very thick] (-2.8,-0.8) to [out=-20,in=110] (-0.8,-2.8);
\draw[very thick] (2.8,-0.8) to [out=-160,in=70] (0.8,-2.8);

\draw[thick] (-2.8,0.8) to [out=-60,in=60] (-2.8,-0.8) to [out=120,in=-120] (-2.8,0.8);
\draw[thick] (-0.8,2.8) to [out=30,in=150] (0.8,2.8) to [out=-150,in=-30] (-0.8,2.8);
\draw[thick] (2.8,0.8) to [out=-60,in=60] (2.8,-0.8);
\draw[thick, dashed] (2.8,-0.8) to [out=120,in=-120] (2.8,0.8);
\draw[thick, dashed] (-0.8,-2.8) to [out=30,in=150] (0.8,-2.8);
\draw[thick] (0.8,-2.8) to [out=-150,in=-30] (-0.8,-2.8);

\draw[thick, dashed] (-1.55,-1.55) to [out=60,in=210] (1.55,1.55);
\draw[thick] (-1.55,-1.55) to [out=30,in=240] (1.55,1.55);

\draw[thick] (1.55,-1.55) to [out=120,in=-30] (-1.55,1.55);
\draw[thick, dashed] (1.55,-1.55) to [out=150,in=-60] (-1.55,1.55);

\draw[thick] (-1.1,2.1) to [out=-80,in=80] (-1.1,-2.1);
\draw[thick] (1.1,2.1) to [out=-100,in=100] (1.1,-2.1);

\draw[thick, dashed] (-1.1,2.1) to [out=20,in=160] (1.1,2.1);
\draw[thick, dashed] (-1.1,-2.1) to [out=20,in=160] (1.1,-2.1);

\node [left] at (-3,0) {$b_3$};
\node [right] at (3,0) {$b_1$};
\node [above] at (0,3) {$b_2$};
\node [below] at (0,-3) {$b_4$};

\node [right] at (0.8,0) {$z$};
\node [below] at (-0.3,1.7) {$x$};
\node [above] at (-0.3,-1.7) {$y$};

\end{tikzpicture}
\end{center}
\caption{Lantern relation}
\label{lantern_rel}
\end{figure}
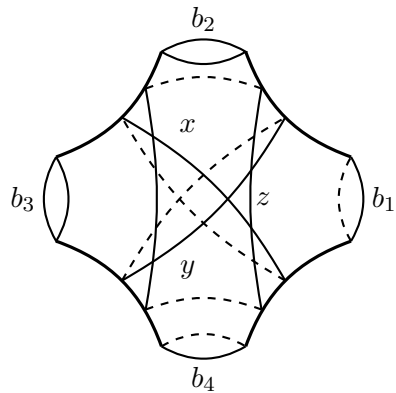

Precisely this means that there is an orientation-preserving embedding $S_{0,4}\hookrightarrow S$ and that each of the above seven curves is the image of the curve with the same name in Figure \ref{lantern_rel}.
In $Mod(S)$ we have the relation
$$T_xT_yT_z=T_{b_1}T_{b_2}T_{b_3}T_{b_4}.$$
\end{prop}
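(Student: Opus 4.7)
The plan is to reduce the relation to an identity in the mapping class group of the four-holed sphere and then verify it by the Alexander method.

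First, I would use naturality under inclusion of subsurfaces. Since the seven curves $x,y,z,b_1,b_2,b_3,b_4$ all lie in the image of an orientation-preserving embedding $\iota:S_{0,4}\hookrightarrow S$, the map $\iota_*:Mod(S_{0,4})\to Mod(S)$ sends the Dehn twist about a curve in $S_{0,4}$ to the Dehn twist about its image (this follows directly from the definition of the Dehn twist via a regular neighbourhood). Consequently, if the identity $T_xT_yT_z=T_{b_1}T_{b_2}T_{b_3}T_{b_4}$ holds in $Mod(S_{0,4})$, it pushes forward to the desired identity in $Mod(S)$. Thus we may assume from now on that the ambient surface is exactly $S_{0,4}$, in which case $b_1,b_2,b_3,b_4$ are precisely the four boundary components and $x,y,z$ are the three interior curves bounding a pair of pants with two of the $b_i$'s on each side.

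Next, I would fix a combinatorial model of $S_{0,4}$ together with a \emph{filling system of arcs} $\{\alpha_{ij}\}_{1\le i<j\le 4}$, where $\alpha_{ij}$ is an embedded arc joining $b_i$ to $b_j$ and the six arcs have been chosen to be pairwise disjoint in their interiors and to cut $S_{0,4}$ into disks. By the Alexander method (a standard fact; see Farb--Margalit, Section~2.3), a homeomorphism of $S_{0,4}$ that fixes $\partial S_{0,4}$ pointwise and that preserves the isotopy class of each $\alpha_{ij}$ rel endpoints must be isotopic to the identity. Applied to $\varphi=(T_{b_1}T_{b_2}T_{b_3}T_{b_4})^{-1}T_xT_yT_z$, it suffices to check that $\varphi(\alpha_{ij})$ is isotopic to $\alpha_{ij}$ for each pair $(i,j)$.

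The core step, which is also the main obstacle, is the explicit computation of the action of the two words on the arc system. Using the surgery rule recalled in the excerpt (each time an arc crosses the twisting curve $\alpha$, replace that crossing by a piece that turns left, runs all the way around $\alpha$, and then turns right), one computes $T_xT_yT_z(\alpha_{ij})$ on one hand, and $T_{b_1}T_{b_2}T_{b_3}T_{b_4}(\alpha_{ij})$ on the other. The twists $T_{b_k}$ act on $\alpha_{ij}$ only when $k\in\{i,j\}$, producing a full twist of the corresponding endpoint around $b_k$, while the interior twists $T_x,T_y,T_z$ create counteracting wraps that, after isotoping across the three pairs of pants in the decomposition cut out by $x,y,z$, leave each arc with precisely the same pair of boundary wraps. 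A separate small argument (or direct bookkeeping) shows the isotopy classes of the two resulting arcs coincide. Carrying out this verification for each of the six arcs $\alpha_{ij}$ is elementary but picture-heavy; once done, the Alexander method concludes $\varphi=1$, which is the Lantern relation.
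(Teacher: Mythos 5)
The paper itself records the Lantern relation as a known fact from the theory of mapping class groups (it appears in the Preliminaries chapter alongside other cited results from Farb--Margalit) and gives no proof. Your strategy --- reduce to $Mod(S_{0,4})$ by naturality of Dehn twists under inclusion of subsurfaces, then apply the Alexander method --- is exactly the standard textbook route, so there is no divergence of approach to discuss.

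Two caveats about the sketch as written. First, a small technical slip: you cannot choose the full set of six arcs $\alpha_{ij}$ ($1\le i<j\le 4$) to be pairwise disjoint in their interiors in $S_{0,4}$; the arcs $\alpha_{13}$ and $\alpha_{24}$, for instance, necessarily have odd geometric intersection number. The Alexander method does not require pairwise disjointness of the distinguishing family (Farb--Margalit's hypotheses allow intersections as long as condition (3) of their Proposition 2.8 holds and the family fills), so either cite the method with its actual hypotheses or use a smaller disjoint family together with an argument that it is distinguishing; as stated, the hypothesis you impose is vacuous. Second, the whole content of the proof lies in the arc computation that you describe as ``elementary but picture-heavy'' and then do not carry out; in a complete proof this bookkeeping must actually be done, because the relation fails for almost any other arrangement of seven curves of the same homotopy types, and it is precisely the surgery computation that detects the correct arrangement. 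These are the two points to tighten if the argument is to be written out in full.
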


\subsection{Generators of the Mapping class group}

\begin{teo}[Theorem 4.1 in \cite{farb}]
For $g\geq 0,$ the mapping class group $Mod(S_g)$ is generated by finitely many Dehn twists about nonseparating simple closed curves.
\end{teo}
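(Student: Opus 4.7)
My plan is to proceed by induction on the genus $g$, treating the two base cases by direct inspection. For $g=0$ the mapping class group of the sphere is trivial (by the Alexander trick), so the statement is vacuously true. For $g=1$ we have $\mathrm{Mod}(S_1)\cong SL_2(\mathbb{Z})$, which is generated by the two matrices $\bigl(\begin{smallmatrix}1 & 1 \\ 0 & 1\end{smallmatrix}\bigr)$ and $\bigl(\begin{smallmatrix}1 & 0 \\ -1 & 1\end{smallmatrix}\bigr)$; these correspond to Dehn twists about a meridian and a longitude on the torus, both of which are nonseparating. For the inductive step, I would assume the statement for all surfaces of smaller genus, including surfaces with boundary and punctures, and prove it for $S_g$ with $g\geq 2$.

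The engine of the inductive step is the \emph{change of coordinates principle}: $\mathrm{Mod}(S_g)$ acts transitively on isotopy classes of nonseparating simple closed curves. Fix a reference nonseparating curve $\alpha\subset S_g$ and consider the complex $\mathcal{N}(S_g)$ whose vertices are isotopy classes of nonseparating simple closed curves, with an edge between any two classes admitting disjoint representatives. I would then prove that $\mathcal{N}(S_g)$ is connected by an intersection-reduction argument: starting from two nonseparating curves $\gamma,\delta$ in general position, an innermost bigon/surgery argument produces a new nonseparating curve disjoint from one of them and intersecting the other in strictly fewer points, and iterating yields a finite edge path from $\gamma$ to $\delta$ in $\mathcal{N}(S_g)$.

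Given $f\in\mathrm{Mod}(S_g)$, the path in $\mathcal{N}(S_g)$ from $\alpha$ to $f(\alpha)$ can be realized by a product of Dehn twists about nonseparating curves: if $\gamma$ and $\delta$ are disjoint nonseparating curves in $S_g$, then using the conjugation formula $T_{h(c)}=hT_ch^{-1}$ and an explicit mapping class taking $\gamma$ to $\delta$ (constructed as a product of two twists about nonseparating curves), one can move $\alpha$ along each edge by a bounded number of twists. After this reduction $f=T\cdot h$ where $T$ is a product of finitely many twists about nonseparating curves and $h$ stabilizes $\alpha$. The stabilizer $\mathrm{Stab}([\alpha])$ fits into a short exact sequence with kernel the twist $T_\alpha$ and quotient mapping into $\mathrm{Mod}(S_g\setminus\nu(\alpha))=\mathrm{Mod}(S_{g-1,2})$. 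Applying the capping exact sequence (twice) and the Birman exact sequence, together with the fact that point-pushing maps are products of two Dehn twists about nonseparating curves, reduces the problem to $\mathrm{Mod}(S_{g-1})$, which by induction is generated by finitely many Dehn twists about nonseparating simple closed curves. Collecting the finitely many twists produced at each stage gives the desired finite set.

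The main obstacle I expect is the connectivity of $\mathcal{N}(S_g)$ and the quantitative control needed to convert each ``edge'' of the complex into an explicit product of finitely many twists about \emph{nonseparating} curves. The surgery argument must be performed carefully to guarantee that each auxiliary curve constructed along the way is itself nonseparating (otherwise one would only obtain generation by arbitrary Dehn twists), and the case analysis when $i(\gamma,\delta)$ is small must be handled by hand. Once this is controlled, the assembly of finite generating sets through the capping/Birman sequences is routine.
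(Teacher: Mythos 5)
The paper does not prove this theorem; it is imported verbatim as Theorem~4.1 of \cite{farb} as background, so there is no internal argument against which to check you. Judged on its own, your sketch follows the classical Lickorish/Farb--Margalit line of attack fairly faithfully: base cases $g=0,1$, connectivity of a complex built on nonseparating curves, reduction to the stabilizer of a nonseparating $\alpha$, and descent to lower genus via cutting, capping, and the Birman exact sequence, using that point-pushing maps factor as products of two twists about nonseparating curves.

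One substantive discrepancy worth flagging: the proof in \cite{farb} uses the complex whose edges join nonseparating curves $a,b$ with $i(a,b)=1$, because then $T_a T_b$ carries $b$ to $a$, so each edge is traversed by a literal product of two Dehn twists with no auxiliary curves. You instead take the disjointness complex. That complex is also connected, but traversing an edge $\{\gamma,\delta\}$ with $i(\gamma,\delta)=0$ is less immediate: when $\gamma\cup\delta$ is nonseparating you can insert a curve $c$ with $i(\gamma,c)=i(\delta,c)=1$ and use $(T_\delta T_c)(T_c T_\gamma)$, but when $\gamma\cup\delta$ separates $S_g$ there is no such single $c$ and you must insert a longer chain. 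Your phrase ``constructed as a product of two twists about nonseparating curves'' is therefore not literally correct in general, and the quantitative control you anticipate needing is genuinely needed here. Switching to the $i=1$ complex, as in \cite{farb}, removes this case analysis for free. A smaller point: the stabilizer of the unoriented isotopy class of $\alpha$ contains a half-twist reversing orientation on $\alpha$, so the cutting homomorphism is naturally defined on the index-two oriented stabilizer; you should account for a coset representative (which itself can be written as a product of nonseparating twists) before applying the cutting/capping machinery. With these two adjustments the argument is sound.
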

In particular we have the following result.

\begin{teo}[Theorem 4.14 in \cite{farb}]
Let $S$ be either a closed surface or a surface with one boundary component and genus $g\geq 3.$ Then the group $Mod(S)$ is generated by the Dehn twists about the $2g+1$ isotopy classes of nonseparating simple closed curves $c_0,\ldots c_{2g}$ of Figure \ref{Humphries_gen}.

\begin{figure}[H]
\begin{center}
\begin{tikzpicture}[scale=.7]
\draw[very thick] (-4.5,-2) -- (7,-2);
\draw[very thick] (-4.5,2) -- (7,2);
\draw[very thick] (-4.5,2) arc [radius=2, start angle=90, end angle=270];

\draw[very thick] (-4.5,0) circle [radius=.4];
\draw[very thick] (-2,0) circle [radius=.4];
\draw[very thick] (4.5,0) circle [radius=.4];
\draw[very thick] (0.5,0) circle [radius=.4];

\draw[->,thick] (-4.1,0) to [out=30,in=180] (-3.25,0.3);
\draw[thick] (-3.25,0.3) to [out=0,in=150] (-2.4,0);
\draw[thick,dashed] (-4.1,0) to [out=-30,in=-150] (-2.4,0);

\node [above] at (-3.25,0.3) {$c_3$};

\draw[->,thick] (-1.6,0) to [out=30,in=180] (-0.75,0.3);
\draw[thick] (-0.75,0.3) to [out=0,in=150] (0.1,0);
\draw[thick, dashed] (-1.6,0) to [out=-30,in=-150] (0.1,0);

\node [above] at (-0.75,0.3) {$c_5$};

\draw[->,thick] (0.9,0) to [out=30,in=180] (1.75,0.3);
\draw[thick,dashed] (0.9,0) to [out=-30,in=180] (1.75,-0.3);

\node [above] at (1.75,0.3) {$c_7$};

\draw[thick, dotted] (2,0) -- (3,0);

\draw[thick] (3.25,0.3) to [out=0,in=150] (4.1,0);
\draw[thick,dashed] (3.25,-0.3) to [out=0,in=210] (4.1,0);

\node [above] at (3.1,0.3) {$c_{2g-1}$};

\draw[->,thick] (-0.3,0) to [out=90,in=180] (0.5,0.8);
\draw[thick] (1.3,0) to [out=90,in=0] (0.5,0.8);
\draw[thick] (-0.3,0) to [out=-90,in=180] (0.5,-0.8) to [out=0,in=-90] (1.3,0);

\draw[->, thick] (-5.3,0) to [out=90,in=180] (-4.5,0.8);
\draw[thick] (-3.7,0) to [out=90,in=0] (-4.5,0.8);
\draw[thick] (-5.3,0) to [out=-90,in=180] (-4.5,-0.8) to [out=0,in=-90] (-3.7,0);

\draw[->,thick] (-2.8,0) to [out=90,in=180] (-2,0.8);
\draw[thick] (-1.2,0) to [out=90,in=0] (-2,0.8);
\draw[thick] (-2.8,0) to [out=-90,in=180] (-2,-0.8) to [out=0,in=-90] (-1.2,0);

\draw[thick] (5.3,0) to [out=90,in=0] (4.5,0.8);
\draw[->,thick] (3.7,0) to [out=90,in=180] (4.5,0.8);
\draw[thick] (5.3,0) to [out=-90,in=0] (4.5,-0.8) to [out=180,in=-90] (3.7,0);

\draw[thick] (-4.5,-0.4) to [out=180,in=90] (-5,-1.2);
\draw[->,thick] (-4.5,-2) to [out=180,in=-90] (-5,-1.2);
\draw[thick, dashed] (-4.5,-0.4) to [out=0,in=0] (-4.5,-2);
\draw[thick] (-2,-0.4) to [out=180,in=90] (-2.5,-1.2);
\draw[->,thick] (-2,-2) to [out=180,in=-90] (-2.5,-1.2);
\draw[thick, dashed] (-2,-0.4) to [out=0,in=0] (-2,-2);

\node [left] at (-5,-1.2) {$c_1$};
\node [above] at (-4.5,0.8) {$c_2$};
\node [left] at (-2.5,-1.2) {$c_0$};
\node [above] at (-2,0.8) {$c_4$};
\node [above] at (0.5,0.8) {$c_6$};
\node [above] at (4.5,0.8) {$c_{2g}$};

\draw[thick,pattern=north west lines] (7,-2) to [out=130,in=-130] (7,2) to [out=-50,in=50] (7,-2);

\end{tikzpicture}
\end{center}
\caption{Humphries generators}
\label{Humphries_gen}
\end{figure}
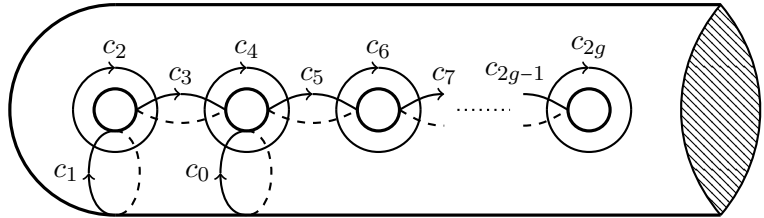

\end{teo}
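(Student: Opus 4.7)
The plan is to reduce to Lickorish's earlier generating theorem (Theorem 4.13 in \cite{farb}), which states that $\mathrm{Mod}(S)$ is generated by Dehn twists about $3g-1$ non-separating simple closed curves: the Humphries curves $c_0,c_1,\ldots ,c_{2g}$ together with $g-2$ additional curves, say $d_1,\ldots ,d_{g-2}$, each wrapping around a consecutive block of handles. Given Lickorish's theorem, it suffices to express each $T_{d_j}$ as a word in $T_{c_0},\ldots ,T_{c_{2g}}$. This is precisely where the hypothesis $g\geq 3$ enters: genus at least $3$ is needed so that a four-holed sphere of the correct topological type can be embedded in $S$ around the relevant handles.

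The principal tool will be the lantern relation together with the basic Dehn-twist identities listed in Section 3.3 above, especially $T_{f(a)}=f T_a f^{-1}$ and the commutation $T_aT_b=T_bT_a$ when $i(a,b)=0$. The lantern relation $T_xT_yT_z=T_{b_1}T_{b_2}T_{b_3}T_{b_4}$ holds among seven Dehn twists on an embedded $\Sigma_{0,4}\hookrightarrow S$; since the boundary curves $b_1,\ldots ,b_4$ are pairwise disjoint and hence their twists commute, one can rearrange and solve for a single twist, for example
\[
T_{b_1}=T_xT_yT_zT_{b_2}^{-1}T_{b_3}^{-1}T_{b_4}^{-1},
\]
expressing it as a word in the remaining six.

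The strategy is to inductively eliminate $d_1,d_2,\ldots ,d_{g-2}$. For each $j$, the plan is to exhibit a four-holed sphere $\Sigma_{0,4}\hookrightarrow S$ so that $d_j$ appears as one distinguished boundary component of the lantern while the other six curves are either themselves Humphries curves or are obtained from Humphries curves by applying a homeomorphism that is already a product of Dehn twists about $c_0,\ldots ,c_{2g}$ and the previously handled $d_1,\ldots ,d_{j-1}$. Using $T_{f(a)}=fT_af^{-1}$, each such twist is then a conjugate of $T_{c_i}$ by a word in Humphries twists, so the isolated $T_{d_j}$ becomes a word in Humphries twists, completing the inductive step.

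The main obstacle, and the only non-formal part of the argument, is the geometric construction of these lantern configurations: one must produce, for each $d_j$, an explicit $\Sigma_{0,4}\hookrightarrow S$ around the handles over which $d_j$ winds so that all six auxiliary curves can be traced back to $c_0,\ldots ,c_{2g}$ via twists already shown to lie in the Humphries subgroup. The base case is essentially the lantern on a genus-$2$ subsurface embedded near the first three handles, which expresses $d_1$ in Humphries twists; the inductive step proceeds by sliding this configuration one handle further using conjugation by $T_{c_{2j}}T_{c_{2j+1}}\cdots$, which permutes handles in the expected way. Once the configurations are in place, the rest is formal bookkeeping with the relations collected above.
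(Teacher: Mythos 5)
The paper does not prove this statement; it is quoted verbatim from Farb--Margalit (Theorem 4.14 in \cite{farb}), so there is no internal proof to compare against. Your outline follows the standard Farb--Margalit/Humphries argument: start from Lickorish's generating set of $3g-1$ twists about nonseparating curves, then inductively eliminate the $g-2$ extra twists via lantern relations, using the conjugation identity $T_{f(a)}=fT_af^{-1}$ and commutativity of twists about disjoint curves to solve for one boundary twist at a time.

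One small correction to your stated reason for the hypothesis $g\geq 3$: a four-holed sphere embeds perfectly well in a genus-$2$ surface, so embeddability of the lantern is not the obstruction. The actual point is that for $g=2$ one has $3g-1=2g+1=5$, so Lickorish's curves already coincide with the Humphries curves and there is nothing to eliminate; Farb--Margalit in fact state the theorem for $g\geq2$. The thesis restricts to $g\geq 3$ simply because that is the range in which the lantern reduction is doing nontrivial work and the range needed in what follows. Apart from this, your sketch has the right shape; the remaining content is precisely the explicit geometric construction of the lantern configurations and the verification that all six auxiliary curves are conjugates of Humphries curves by words already in the Humphries subgroup, which is exactly the bookkeeping Farb--Margalit carry out.
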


\subsection{The Symplectic representation}
We first give the definition of symplectic matrix as well as the Symplectic group.
\begin{defi}[Section 1 in \cite{newman}]
Consider the matrix $\Omega\in M_{2g\times 2g}(\mathbb{Z})$ given by
$$\Omega=\left(\begin{matrix}
0 & Id_g \\
-Id_g & 0
\end{matrix}\right).$$
If $M$ is a matrix satisfying $M\Omega M^t = \Omega,$ then $M$ will be said to be \textit{symplectic.}
We define the Symplectic group as
$$Sp_{2g}(\mathbb{Z})=\{M\in M_{2g\times 2g}(\mathbb{Z})\mid M\Omega M^t \equiv \Omega \}.$$
From the definitions, it is easy to verify that a matrix
$$M=\left(\begin{matrix}
A & B \\
C & D
\end{matrix}\right)$$
is symplectic if an only if
$$AD^t-BC^t=Id_g,\quad AB^t=BA^t,\quad CD^t=DC^t.$$
\end{defi}
Next we define the Symplectic representation.
Let $S_g$ be a closed surface of genus $g$ and $Mod(S_g)$ its mapping class group. Recall that by definition, $Mod(S_g)=\pi_0(Homeo^+(S_g)).$ Therefore we can view every mapping class of $\phi\in Mod(S_g)$ as an element of $Aut(\Gamma),$ where $\Gamma=\pi_1(S_g)$ (the fundamental group of $S_g).$
Moreover it is well known that the commutator subgroup $[\Gamma,\Gamma]\subset \Gamma$ is a characteristic subgroup, in other words, every element of $Aut(\Gamma)$ preserves $[\Gamma,\Gamma].$

Then taking $\phi\in Aut(\Gamma)$ composed with the abelianization of $\Gamma$ we get an element of $Aut(H_1(S_g))$ and as a consequence every element $\phi\in Mod(S_g)$ induces an automorphism
$$\phi_*:H_1(S_g;\mathbb{Z})\rightarrow H_1(S_g;\mathbb{Z}).$$
More precisely, since $H_1(S_g;\mathbb{Z})\cong \mathbb{Z}^{2g},$ we get that $\phi_*\in Aut(\mathbb{Z}^{2g})=GL_{2g}(\mathbb{Z}).$
Therefore we have a linear representation
$$\Psi:Mod(S_g)\rightarrow GL_{2g}(\mathbb{Z}).$$
Moreover, since the algebraic intersection number
$\widehat{i}:H_1(S_g;\mathbb{Z})\times H_1(S_g;\mathbb{Z})\rightarrow \mathbb{Z}$
on $H_1(S_g;\mathbb{Z})$ endows the vector space $H_1(S_g;\mathbb{Z})$ with a symplectic structure and this symplectic structure is preserved by the image of $\Psi,$ then $\Psi$ is a representation
$$\Psi: Mod(S_g)\rightarrow Sp_{2g}(\mathbb{Z}).$$
The homomorphism $\Psi$ is called the \textit{symplectic representation} of $Mod(S_g).$

\paragraph{The action of a Dehn twist on homology}
To understand $\Psi,$ first of all we need to understand  what it does to Dehn twists.
We have the following formula.

\begin{prop}
\label{prop_act_twist}
Let $a$ and $b$ be isotopy classes of oriented simple closed curves in $S_g.$ For any $k\geq 0,$ we have
$$\Psi(T_b^k)([a])=[a]+ k\cdot \widehat{i}(a,b)[b].$$
\end{prop}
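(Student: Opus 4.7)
The plan is to reduce the statement to the case $k=1$ and then to establish that case by the surgery description of Dehn twists acting on simple closed curves.

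First I would observe that $\Psi$ is a homomorphism, so $\Psi(T_b^k) = \Psi(T_b)^k$, and that $\Psi(T_b)$ fixes $[b]$. Indeed, the Dehn twist $T_b$ is supported in a regular neighbourhood of $b$, hence $T_b(b)$ is isotopic to $b$ as an oriented curve, so $\Psi(T_b)([b]) = [b]$. Equivalently, using $\widehat{i}(b,b)=0$, the formula for $k=1$ itself gives this. Granting the $k=1$ case $\Psi(T_b)([a]) = [a] + \widehat{i}(a,b)[b]$, an induction on $k$ finishes the proof: assuming the formula for $k-1$,
\begin{align*}
\Psi(T_b)^k([a]) &= \Psi(T_b)\bigl([a] + (k-1)\widehat{i}(a,b)[b]\bigr) \\
 &= [a] + \widehat{i}(a,b)[b] + (k-1)\widehat{i}(a,b)\cdot [b] \\
 &= [a] + k\cdot\widehat{i}(a,b)[b],
\end{align*}
using linearity of $\Psi(T_b)$ and $\Psi(T_b)([b])=[b]$.

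To settle the base case, I would choose representatives $\alpha\in a$ and $\beta\in b$ that are transverse and meet in exactly $|\widehat{i}(a,b)|$ points counted with the appropriate signs (up to cancellation in homology only the signed count matters, so it suffices to pick any transverse pair). Using the surgery description of $T_\beta(\alpha)$ described in the preceding subsection, at each intersection point $p\in \alpha\cap\beta$ the curve $\alpha$ is modified by cutting and splicing in a parallel copy of $\beta$ that turns left, runs along $\beta$, and turns right back onto $\alpha$. Concatenating across all intersections, $T_\beta(\alpha)$ is freely homotopic to the oriented $1$-cycle obtained from $\alpha$ by adding, at each intersection point $p$, one oriented copy of $\beta$ whose orientation agrees with $\beta$ precisely when the local intersection sign at $p$ is $+1$ (this is the place where the orientation-preserving choice of $\phi$ in the definition of $T_b$ is used — it is what lets us distinguish the two possible surgeries and select the ``left'' one consistently). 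Passing to homology in $H_1(S_g;\mathbb{Z})$, the contributions of the added arcs sum to $\sum_{p\in\alpha\cap\beta}\varepsilon_p[b] = \widehat{i}(a,b)[b]$, giving $\Psi(T_b)([a])=[a]+\widehat{i}(a,b)[b]$.

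The main obstacle is the bookkeeping of signs at the base case: one must verify that the ``turn left, run along $\beta$, turn right'' surgery rule inserts a copy of $\beta$ with orientation exactly matching the local intersection sign, independently of the global shape of $\alpha$. This is a local verification in an oriented disk neighbourhood of each intersection point, and once performed once it applies uniformly at all crossings. The inductive step and the auxiliary fact $\Psi(T_b)[b]=[b]$ are then routine.
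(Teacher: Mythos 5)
The paper does not prove Proposition \eqref{prop_act_twist}; it records it as a standard fact (it is Proposition 6.3 in Farb--Margalit, to which the surrounding preliminaries refer). Your argument is correct and is essentially the textbook one — the reduction to $k=1$ by induction using linearity of $\Psi(T_b)$ and $\Psi(T_b)[b]=[b]$, followed by the surgery description for $k=1$ — and you rightly identify the sign check at each crossing (that the left twist $(\theta,t)\mapsto(\theta+2\pi t,t)$ inserts $+[b]$ or $-[b]$ according as $\alpha$ crosses the annulus in the direction of increasing or decreasing $t$, which is exactly the local intersection sign) as the only step requiring a careful verification in an oriented chart.
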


In particular from Proposition \eqref{prop_act_twist} we get that
$\Psi(T_a)=\Psi(T_{a'})\Longleftrightarrow [a]=[a'],$ and also that if $[a]=0,$ then $\Psi(T_a)$ is the identity.

\begin{teo}[Theorem 6.4 in \cite{farb}]
\label{teo_rep_symplec}
The symplectic representation is surjective.
\end{teo}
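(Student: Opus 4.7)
The plan is to show surjectivity by exhibiting a generating set of $Sp_{2g}(\mathbb{Z})$ that lies entirely in the image of $\Psi$. The natural candidates, in view of Proposition \ref{prop_act_twist}, are the \emph{symplectic transvections}: for a primitive vector $v \in \mathbb{Z}^{2g}$, the map
$$T_v : H_1(S_g;\mathbb{Z}) \longrightarrow H_1(S_g;\mathbb{Z}), \qquad T_v(x) = x + \widehat{i}(x,v)\, v,$$
which is easily checked to lie in $Sp_{2g}(\mathbb{Z})$. If I can show that such transvections generate $Sp_{2g}(\mathbb{Z})$, and that every primitive $v$ is the homology class of some oriented simple closed curve $\alpha$, then by Proposition \ref{prop_act_twist} we have $\Psi(T_\alpha) = T_v$ and the result follows.

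First, I would establish the algebraic lemma that $Sp_{2g}(\mathbb{Z})$ is generated by transvections $T_v$ as $v$ ranges over primitive vectors. It suffices to produce finitely many; the standard choice is to fix a symplectic basis $e_1,\dots,e_g,f_1,\dots,f_g$ of $\mathbb{Z}^{2g}$ and use transvections along $e_i$, $f_i$, and $e_i+f_j$, $e_i+e_j$, etc. The proof proceeds by an inductive reduction: given $M \in Sp_{2g}(\mathbb{Z})$, use transvections to bring the first column of $M$ into the form $(1,0,\dots,0)^t$ (this uses a Bezout-style argument on the entries), then use the symplectic condition $M\Omega M^t = \Omega$ to force the corresponding row into standard form, and finally reduce to a symplectic matrix on a rank-$(2g-2)$ symplectic subspace, to which induction applies.

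Second, I would invoke the classical geometric fact that every primitive element $v \in H_1(S_g;\mathbb{Z})$ is represented by an oriented non-separating simple closed curve $\alpha$, so that $[\alpha] = v$. One way: fix a standard symplectic basis realized by the curves $c_1,c_2,\dots$ of Figure \ref{Humphries_gen} (so their homology classes form a symplectic basis of $H_1(S_g;\mathbb{Z})$), and then, by the change-of-coordinates principle for non-separating curves, any primitive $v$ can be mapped to one of these basis vectors by an element of $Sp_{2g}(\mathbb{Z})$; pulling back via a mapping class realizing that symplectic automorphism (which exists by what we are in the process of proving, so one must bootstrap the argument), or equivalently by directly exhibiting $\alpha$ via a primitive vector representation, yields the desired curve. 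To avoid circularity, it is cleaner to produce $\alpha$ directly: write $v = \sum a_i [c_{2i-1}] + \sum b_i [c_{2i}]$ with $\gcd(a_i,b_j)=1$, and explicitly exhibit a simple closed curve with that homology class, for example by the Meeks--Patrusky type construction.

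Third, combining the two steps: given any symplectic matrix $M$, decompose it as a product of transvections $T_{v_1}\cdots T_{v_k}$ by the first step; realize each $v_j$ as $[\alpha_j]$ with $\alpha_j$ a simple closed curve by the second step; and then $M = \Psi(T_{\alpha_1}\cdots T_{\alpha_k})$ by Proposition \ref{prop_act_twist}. The main obstacle is the algebraic generation lemma, whose inductive argument requires careful bookkeeping to ensure that at each stage the matrix entries can be manipulated by transvections without leaving $Sp_{2g}(\mathbb{Z})$; the geometric realization step is comparatively routine once the change-of-coordinates principle is in hand.
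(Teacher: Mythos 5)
Your proposal is correct and is essentially the proof given in Farb--Margalit (which the paper cites without reproducing): one uses that $Sp_{2g}(\mathbb{Z})$ is generated by symplectic transvections along primitive vectors, that every primitive class in $H_1(S_g;\mathbb{Z})$ is represented by a nonseparating simple closed curve, and that $\Psi(T_\alpha)$ is the transvection along $[\alpha]$ by Proposition \eqref{prop_act_twist}. One small remark: the circularity you worry about in the second step is not actually present in the standard treatment, because the statement ``every primitive vector in $H_1(S_g;\mathbb{Z})$ is represented by a nonseparating simple closed curve'' is established (Farb--Margalit, Proposition 6.2) by a direct surface-topology argument using Euclid's algorithm on subsurfaces, independently of the surjectivity of $\Psi$; your fallback of exhibiting the curve directly is fine but unnecessary.
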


\subsection{Heegaard splittings of $3$-manifolds}

Let $\Sigma_g$ an oriented surface of genus $g$ standardly embedded in the $3$-sphere $\mathbf{S}^3$. Denote by $\Sigma_{g,1}$ the complement of the interior of a small disc embedded in $\Sigma_g.$ We fix a base point $x_0$ on the boundary of $\Sigma_{g,1}.$ 

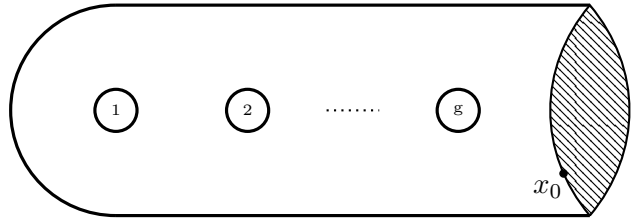
\begin{figure}[H]
\begin{center}
\begin{tikzpicture}[scale=.7]
\draw[very thick] (-2,-2) -- (7,-2);
\draw[very thick] (-2,2) -- (7,2);
\draw[very thick] (-2,2) arc [radius=2, start angle=90, end angle=270];

\draw[very thick] (-2,0) circle [radius=.4];
\draw[very thick] (4.5,0) circle [radius=.4];
\draw[very thick] (0.5,0) circle [radius=.4];

\draw[thick, dotted] (2,0) -- (3,0);

\node at (-2,0) {\tiny{1}};
\node at (0.5,0) {\tiny{2}};
\node at (4.5,0) {\tiny{g}};

\draw[thick,pattern=north west lines] (7,-2) to [out=130,in=-130] (7,2) to [out=-50,in=50] (7,-2);

\draw[fill] (6.5,-1.2) circle [radius=0.07];
\node at (6.2,-1.5) {$x_0$};

\end{tikzpicture}
\end{center}
\caption{Standardly embedded $\Sigma_{g,1}$ in $\mathbf{S}^3$}
\label{fig_stand_embded}
\end{figure}

Throughout this thesis we denote by $\mathcal{M}_{g,1}$ the following mapping class group
$$\mathcal{M}_{g,1}=\pi_0(\text{Diff}^+(\Sigma_{g,1}, \partial \Sigma_{g,1})).$$
\paragraph{Handlebody subgroups.} Recall that our surface is standarly embedded in the oriented $3$-sphere $\mathbf{S}^3.$ As such it determines two embedded handlebodies $\mathbf{S}^3=\mathcal{H}_g \cup -\mathcal{H}_g.$ By the inner handlebody $\mathcal{H}_g$ we will mean the one that is visible in Figure \ref{fig_stand_embded} and by the outer handelbody $-\mathcal{H}_{g}$ we will mean the complementary handlebody. They are naturally pointed by $x_0 \in \mathcal{H}_g \cap -\mathcal{H}_g.$
From these we get the following three natural subgroups of $\mathcal{M}_{g,1}:$

\begin{itemize}
\item $\mathcal{A}_{g,1}=$ subgroup of restrictions of diffeomorphisms of the outer handlebody $-\mathcal{H}_g,$
\item $\mathcal{B}_{g,1}=$ subgroup of restrictions of diffeomorphisms of the inner handlebody $\mathcal{H}_g,$
\item $\mathcal{AB}_{g,1}=$ subgroup of restriction of diffeomorphisms of the $3$-sphere $\mathbf{S}^3.$
\end{itemize}
Moreover, in \cite{wald}, F. Waldhausen proved that in fact, $\mathcal{AB}_{g,1}=\mathcal{A}_{g,1}\cap\mathcal{B}_{g,1}.$

\paragraph{The stabilization map on the mapping class group.}
Define the stabilization map on the mapping class group $\mathcal{M}_{g,1}$ as follows.
Glue one of the boundary components of a two-holed torus on the boundary of $\Sigma_{g,1}$ to get $\Sigma_{g+1,1}.$ Extending an element of $\mathcal{M}_{g,1}$ by the identity over the torus yields an injective homomorphism $\mathcal{M}_{g,1}\hookrightarrow \mathcal{M}_{g+1,1},$ this is the stabilization map. This map is compatible with the action on homology and is compatible with the definition of the subgroups $\mathcal{A}_{g,1}$ and $\mathcal{B}_{g,1}.$

\paragraph{Heggaard splittings of $3$-manifolds}
\begin{defi}
A \textit{Heegaard splitting} for a $3$-manifold $M$ is an ordered triple $(\Sigma, H_1,H_2)$ where $\Sigma$ is a closed surface embedded in $M$ and $H_1$ and $H_2$ are handlebodies embedded in $M$ such that $\partial H_1=\Sigma=\partial H_2=H_1\cap H_2$ and $H_1\cup H_2=M.$ The surface $\Sigma$ is called a \textit{Heegaard surface.}
\end{defi}

\begin{defi}
Let $H_1$ and $H_2$ be handlebodies of the same genus and $h:\partial H_1\rightarrow \partial H_2$ a homeomorphism. Then the quotient of the disjoint union of the two handlebodies by the relation $x\sim y$ if $x\in \partial H_1,$ $y\in \partial H_2$ and $h(x)=y,$ is a closed $3$-manifold. We denote by $M=H_1\cup_h H_2$ such manifold.
\end{defi}

Consider the following equivalence relation on $\mathcal{M}_{g,1}:$
\begin{equation}
\label{eq_rel1}
\phi \sim \psi \quad\Leftrightarrow \quad \exists \zeta_a \in \mathcal{A}_{g,1}\;\exists \zeta_b \in \mathcal{B}_{g,1} \quad \text{such that} \quad \zeta_a \phi \zeta_b=\psi
\end{equation}
The equivalence relation $\eqref{eq_rel1}$ is compatible with the stabilization map.

Choose a map $\iota_g\in \mathcal{M}_{g,1}$ such that $S^3=\mathcal{H}_g\cup_{\iota_g}-\mathcal{H}_g.$
It is also possible to choose the map $\iota_g$ to be compatible with the stabilization map ${\iota_{g+1}}_{\mid_{\sigma_g}}=\iota_g.$

Denote by $\mathcal{V}^3$ the set of oriented diffeomorphism classes of compact, closed and oriented smooth 3-manifolds.
\begin{teo}[J. Singer \cite{singer}]
\label{bij_MCG_3man}
The following map is well defined and is bijective:
\begin{align*}
\lim_{g\to \infty}\mathcal{A}_{g,1}\backslash \mathcal{M}_{g,1}/\mathcal{B}_{g,1} & \longrightarrow \mathcal{V}^3, \\
\phi & \longmapsto S^3_\phi=\mathcal{H}_g \cup_{\iota_g\phi} -\mathcal{H}_g.
\end{align*}
\end{teo}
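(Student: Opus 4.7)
The plan is to split the argument into three parts: well-definedness of the map on double cosets together with its compatibility with stabilization, surjectivity, and injectivity. For well-definedness I would proceed as follows. Suppose $\psi = \zeta_a\phi\zeta_b$ with $\zeta_a\in\mathcal{A}_{g,1}$ and $\zeta_b\in\mathcal{B}_{g,1}$. By definition of these subgroups, $\zeta_a$ and $\zeta_b$ extend to orientation-preserving diffeomorphisms $\tilde\zeta_a$ of $-\mathcal{H}_g$ and $\tilde\zeta_b$ of $\mathcal{H}_g$ respectively. A direct matching calculation on the common boundary $\Sigma_{g,1}$ shows that the disjoint union $\tilde\zeta_b\sqcup\tilde\zeta_a$ descends to an orientation-preserving diffeomorphism $S^3_\psi\to S^3_\phi$. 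Compatibility with the stabilization map $\mathcal{M}_{g,1}\hookrightarrow\mathcal{M}_{g+1,1}$ follows from the condition ${\iota_{g+1}}|_{\Sigma_{g,1}}=\iota_g$: stabilizing $\phi$ by the identity on the attached two-holed torus realises $S^3_\phi$ as a connected sum with the standard genus-one Heegaard splitting of $\mathbf{S}^3$, which does not change the diffeomorphism type.

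For surjectivity, I would invoke the classical fact that every $M\in\mathcal{V}^3$ admits a Heegaard splitting, say by triangulating $M$ and thickening the $1$-skeleton, or by taking a self-indexing Morse function with a unique minimum and a unique maximum. Given such a splitting $M=H_1\cup_h H_2$ of some genus $g$, any orientation-preserving identifications $H_1\cong\mathcal{H}_g$ and $H_2\cong -\mathcal{H}_g$ convert $h$ into an element of $\mathcal{M}_{g,1}$; writing it as $\iota_g\phi$ exhibits $M$ as $S^3_\phi$. This step is standard modulo the choice of basepoint and marked disk, which one handles by isotopy to the prescribed boundary configuration of $\Sigma_{g,1}$.

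The heart of the proof, and the main obstacle, is injectivity. Suppose $S^3_\phi\cong S^3_\psi$ as oriented $3$-manifolds, for some $\phi\in\mathcal{M}_{g,1}$ and $\psi\in\mathcal{M}_{h,1}$. I would invoke the Reidemeister--Singer stabilization theorem, which asserts that any two Heegaard splittings of a closed oriented $3$-manifold admit a common stabilization: after adding sufficiently many trivial handles to both splittings one obtains splittings of some common genus $g'$ that are ambient isotopic. Isotopy of Heegaard splittings translates into a self-diffeomorphism of the $3$-manifold carrying the first Heegaard surface to the second; combining this with the isotopy extension theorem and the Waldhausen identity $\mathcal{AB}_{g',1}=\mathcal{A}_{g',1}\cap\mathcal{B}_{g',1}$ recalled above, one decomposes this ambient diffeomorphism into pieces extending over each handlebody, yielding an identity $\psi_{g'}=\zeta_a\phi_{g'}\zeta_b$ in $\mathcal{M}_{g',1}$ with $\zeta_a\in\mathcal{A}_{g',1}$ and $\zeta_b\in\mathcal{B}_{g',1}$. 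The delicate point is the Reidemeister--Singer theorem itself, whose proof requires a careful normal-form analysis of Heegaard diagrams showing that any two diagrams of the same $3$-manifold differ by a finite sequence of elementary moves corresponding exactly to stabilization; granting this, the bijection $\varinjlim_g\mathcal{A}_{g,1}\backslash\mathcal{M}_{g,1}/\mathcal{B}_{g,1}\simeq\mathcal{V}^3$ follows.
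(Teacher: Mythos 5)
The paper states this theorem as a citation to Singer and does not supply a proof, so there is no in-text argument to compare against. Taken on its own terms your reconstruction is essentially the right one: well-definedness comes from extending $\zeta_a$ and $\zeta_b$ over the respective handlebodies, surjectivity from the existence of Heegaard splittings, and injectivity from the Reidemeister--Singer common stabilization theorem, which you correctly identify as the hard content. Two small points. First, your appeal to Waldhausen's identity $\mathcal{AB}_{g',1}=\mathcal{A}_{g',1}\cap\mathcal{B}_{g',1}$ in the injectivity step is a misattribution: the decomposition $\psi=\zeta_a\phi\zeta_b$ comes directly from restricting the ambient diffeomorphism (after common stabilization and ambient isotopy) to each of the two handlebodies and reading off the boundary maps from the gluing compatibility $F\circ(\iota_{g'}\phi)=(\iota_{g'}\psi)\circ F$; Waldhausen's result concerns which boundary maps extend over the \emph{whole} $\mathbf{S}^3$ and is not used here. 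Second, it is worth noting explicitly that the ambient isotopy provided by Reidemeister--Singer must be arranged to carry the inner handlebody to the inner handlebody (not to swap sides); this is what the orientation hypotheses on $\mathcal{V}^3$ and on the Heegaard surfaces ensure, and you should record it so that the resulting $\zeta_a$ lands in $\mathcal{A}_{g',1}$ and $\zeta_b$ in $\mathcal{B}_{g',1}$ rather than in the opposite subgroups. With those repairs the outline is sound.
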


\subsection{The Torelli group}
The Torelli group $\mathcal{T}_{g,1}$ is the normal subgroup of the mapping class group $\mathcal{M}_{g,1}$ of those elements of $\mathcal{M}_{g,1}$ that act trivially on $H_1(\Sigma_{g,1};\mathbb{Z}).$ In other words, $\mathcal{T}_{g,1}$ is characterized by the following short exact sequence:
$$
\xymatrix@C=10mm@R=13mm{1 \ar@{->}[r] & \mathcal{T}_{g,1} \ar@{->}[r] & \mathcal{M}_{g,1} \ar@{->}[r]^-{\Psi} & Sp_{2g}(\mathbb{Z}) \ar@{->}[r] & 1 }
$$

\paragraph{Example of elements of the Torelli group}
\begin{enumerate}[i)]
\item \textbf{Dehn twists about separating curves.}
Each Dehn twist about a separating simple closed curve $\gamma$ in $\Sigma_{g,1}$ is an element of $\mathcal{T}_{g,1}.$ This is because there  exists a basis for $H_1(\Sigma_{g,1};\mathbb{Z})$ where each element is represented by an oriented simple closed curve disjoint from $\gamma.$
Since $T_\gamma$ fixes each of these curves it fixes the corresponding homology classes and is hence an element of $\mathcal{T}_{g,1}.$

The group generated by Dehn twists about separating simple closed curves in $\Sigma_{g,1}$ is denoted by $\mathcal{K}_{g,1}$ and is known as the Johnson subgroup. In the following picture we give an example of a separating simple closed curve $\gamma.$

\begin{figure}[H]
\begin{center}
\begin{tikzpicture}[scale=.5]
\draw[very thick] (-4.5,-2) -- (0,-2);
\draw[very thick] (-4.5,2) -- (0,2);
\draw[very thick] (-4.5,2) arc [radius=2, start angle=90, end angle=270];
\draw[very thick] (-4.5,0) circle [radius=.4];
\draw[very thick] (-2,0) circle [radius=.4];

\draw[dashed,thick] (-3.25,2) to [out=-50,in=90] (-2.7,0) to [out=-90,in=50] (-3.25,-2);

\draw[thick] (-3.25,-2) to [out=130,in=-90] (-3.8,0);

\draw[->,thick] (-3.25,2) to [out=230,in=90] (-3.8,0);

\node [above] at (-4,-1.8) {$\gamma$};

\draw[thick,pattern=north west lines] (0,-2) to [out=130,in=-130] (0,2) to [out=-50,in=50] (0,-2);

\end{tikzpicture}
\end{center}
\caption{A separating simple closed curve}
\end{figure}
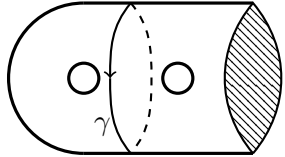

\item \textbf{Bounding pair maps.}
A \textit{bounding pair} in a surface is a pair of disjoint, homologous, nonseparating simple closed curves. A \textit{bounding pair map}, (abbreviated, BP-\textit{map}) is a mapping class of the form
$$T_aT_b^{-1}$$
where $a$ and $b$ form a bounding pair. Since $a$ and $b$ are homologous, by Proposition 6.3 in \cite{farb} the images of $T_a$ and $T_b$ in $Sp_{2g}(\mathbb{Z})$ are equal. Thus any bounding pair map is an element of $\mathcal{T}_{g,1}.$
In the following picture we give two examples of a bounding pairs $\{\eta,\eta'\}$ and $\{\beta,\beta'\}.$
\begin{figure}[H]
\begin{center}
\begin{tikzpicture}[scale=.5]
\draw[very thick] (-4.5,-2) -- (1,-2);
\draw[very thick] (-4.5,2) -- (1,2);
\draw[very thick] (-4.5,2) arc [radius=2, start angle=90, end angle=270];
\draw[very thick] (-4.5,0) circle [radius=.4];
\draw[very thick] (-2,0) circle [radius=.4];

\draw[->,thick] (-2.8,0) to [out=90,in=180] (-2,0.8);
\draw[thick] (-1.2,0) to [out=90,in=0] (-2,0.8);
\draw[thick] (-2.8,0) to [out=-90,in=180] (-2,-0.8) to [out=0,in=-90] (-1.2,0);

\draw[->,thick] (-2,-2) to [out=180,in=-90] (-3.5,0);
\draw[thick] (-3.5,0) to [out=90,in=180] (-2,2);
\draw[thick,dashed] (-2,2) to [out=0,in=0] (-2,-2);

\draw[thick,pattern=north west lines] (1,-2) to [out=130,in=-130] (1,2) to [out=-50,in=50] (1,-2);

\node [above] at (-2,0.8) {$\eta'$};
\node [left] at (-3.4,-0.6) {$\eta$};

\end{tikzpicture}
\qquad
\begin{tikzpicture}[scale=.5]
\draw[very thick] (-4.5,-2) -- (1,-2);
\draw[very thick] (-4.5,2) -- (1,2);
\draw[very thick] (-4.5,2) arc [radius=2, start angle=90, end angle=270];
\draw[very thick] (-4.5,0) circle [radius=.4];
\draw[very thick] (-2,0) circle [radius=.4];

\draw[thick, dashed] (-2,0.4) to [out=0,in=0] (-2,2);
\draw[thick, dashed] (-2,-0.4) to [out=0,in=0] (-2,-2);

\draw[thick] (-2,-0.4) to [out=180,in=90] (-2.5,-1.2);
\draw[<-,thick](-2.5,-1.2) to [out=-90,in=180] (-2,-2);
\draw[thick] (-2,0.4) to [out=180,in=-90] (-2.5,1.2);
\draw[<-,thick](-2.5,1.2) to [out=90,in=180] (-2,2);

\draw[thick,pattern=north west lines] (1,-2) to [out=130,in=-130] (1,2) to [out=-50,in=50] (1,-2);

\node [left] at (-2.5,1.2) {$\beta$};
\node [left] at (-2.5,-1.2) {$\beta'$};

\end{tikzpicture}
\end{center}
\caption{Boundig pairs}
\end{figure}

\end{enumerate}

\subsection{Heegaard splittings of homology $3$-spheres}

\begin{defi}
A $3$-manifold $X$ is an integral homology $3$-sphere if
$$H_*(X;\mathbb{Z}) \cong H_*(\mathbf{S}^3;\mathbb{Z}).$$
Throughout this thesis we will refer to such manifolds simply as homology $3$-spheres.
\end{defi}

\begin{exe}[Section 9.D. in \cite{rolf}]
The most important example of a homology $3$-sphere is the \textit{Poincaré sphere} which is the $3$-manifold given by a full dodecahedron with opposite faces identified with a twist of $\frac{\pi}{5}.$ The importance of this $3$-manifold comes from the fact that it was the first example of an homology $3$-sphere which is not an homotopy $3$-sphere, i.e. its homotopy groups do not coincide with the homotopy groups of the $3$-sphere.
\end{exe}

Denote by $\mathcal{S}^3\subset\mathcal{V}^3$ the subset of integral homology $3$-spheres.
\begin{teo}[S. Morita \cite{mor}]
The following map is well defined and is bijective:
\begin{align*}
\lim_{g\to \infty}\mathcal{A}_{g,1}\backslash\mathcal{T}_{g,1}/\mathcal{B}_{g,1} & \longrightarrow \mathcal{S}^3, \\
\phi & \longmapsto S^3_\phi=\mathcal{H}_g \cup_{\iota_g\phi} -\mathcal{H}_g.
\end{align*}
\end{teo}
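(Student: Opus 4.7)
The plan is to prove this by restricting Singer's bijection (Theorem \eqref{bij_MCG_3man}) to the Torelli subgroup and identifying the image as exactly $\mathcal{S}^3$. Injectivity is then automatic from Singer, so the work is concentrated in showing (a) the map is well defined, i.e.\ $\phi\in\mathcal{T}_{g,1}$ produces a homology $3$-sphere, and (b) every homology $3$-sphere has some representative of its double coset lying in $\mathcal{T}_{g,1}$.

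For well-definedness I would use the Mayer–Vietoris sequence for the decomposition $M_\phi=\mathcal{H}_g\cup_{\iota_g\phi}-\mathcal{H}_g$:
\begin{equation*}
H_1(\Sigma_g)\xrightarrow{\;(i_*,\,-j_*\circ(\iota_g\phi)_*)\;}H_1(\mathcal{H}_g)\oplus H_1(-\mathcal{H}_g)\longrightarrow H_1(M_\phi)\longrightarrow 0.
\end{equation*}
The inclusions $i_*$ and $j_*$ have as kernels the two Lagrangians $L_B$ and $L_A$ corresponding to the meridian systems of the two handlebodies; these are complementary in $H_1(\Sigma_g)$. Because $\phi\in\mathcal{T}_{g,1}$ acts as the identity on $H_1$, the map on the left coincides with the one for the standard splitting $\mathbf{S}^3=\mathcal{H}_g\cup_{\iota_g}-\mathcal{H}_g$, and that map is surjective since $H_1(\mathbf{S}^3)=0$. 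Thus $H_1(M_\phi)=0$; Poincaré duality plus the Universal Coefficients Theorem (Theorem \eqref{uct}) then give $H_2(M_\phi)=0$, so $M_\phi\in\mathcal{S}^3$.

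For surjectivity, let $M\in\mathcal{S}^3$ be arbitrary. By Singer there exist $g$ and $\phi\in\mathcal{M}_{g,1}$ with $M\cong\mathcal{H}_g\cup_{\iota_g\phi}-\mathcal{H}_g$. I need to adjust $\phi$ within its $\mathcal{A}_{g,1}\backslash\mathcal{M}_{g,1}/\mathcal{B}_{g,1}$ coset so that the new representative lies in the Torelli group. The key observation is that the symplectic images $\Psi(\mathcal{A}_{g,1})$ and $\Psi(\mathcal{B}_{g,1})$ are exactly the stabilizers in $Sp_{2g}(\mathbb{Z})$ of the Lagrangians $L_A$ and $L_B$. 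Writing $\Psi(\phi)=\bigl(\begin{smallmatrix}A&B\\C&D\end{smallmatrix}\bigr)$ in the splitting $H_1(\Sigma_g)=L_B\oplus L_A$, the same Mayer–Vietoris calculation as before identifies $H_1(M)$ with the cokernel of the off-diagonal block $B$ (up to sign), so $M$ being a homology sphere forces $|\det B|=1$. A standard argument — performing elementary row operations coming from $Sp$-matrices stabilizing $L_A$ and column operations from matrices stabilizing $L_B$ — shows that any such symplectic matrix can be written as a product $\alpha\cdot\beta$ with $\alpha\in\Psi(\mathcal{A}_{g,1})$, $\beta\in\Psi(\mathcal{B}_{g,1})$. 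Lifting $\alpha^{-1}$ and $\beta^{-1}$ to $\widetilde\alpha\in\mathcal{A}_{g,1}$, $\widetilde\beta\in\mathcal{B}_{g,1}$, the element $\widetilde\alpha\,\phi\,\widetilde\beta$ lies in $\ker\Psi=\mathcal{T}_{g,1}$ and defines the same point of $\mathcal{V}^3$.

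Injectivity of the restricted map follows immediately: if $\phi,\psi\in\mathcal{T}_{g,1}$ give oriented-diffeomorphic manifolds, then by Singer (after possibly stabilizing to a common genus) there exist $\zeta_a\in\mathcal{A}_{g,1}$ and $\zeta_b\in\mathcal{B}_{g,1}$ with $\zeta_a\phi\zeta_b=\psi$, which is exactly the equivalence relation in the source. The main obstacle I expect is the Lagrangian factorization claim $\Psi(\mathcal{A}_{g,1})\cdot\Psi(\mathcal{B}_{g,1})\supseteq\Psi(\phi)$ for every $\phi$ with $M_\phi$ a homology sphere; handling this cleanly requires the precise description of the two Lagrangian stabilizers in $Sp_{2g}(\mathbb{Z})$ and a block-matrix normal form, which in turn relies on knowing that both $\mathcal{A}_{g,1}$ and $\mathcal{B}_{g,1}$ surject onto their respective stabilizers (a fact that follows from explicit Dehn-twist generators and Theorem \eqref{teo_rep_symplec}).
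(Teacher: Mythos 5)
The paper cites Morita's theorem and supplies no proof of its own, so there is nothing internal to compare against directly. Your outline is, however, exactly the argument the paper carries out for the mod~$d$ analog, specialized back to the integral case: the Mayer--Vietoris calculation for well-definedness is Proposition \eqref{prop_homology_modp}, the observation that the homology-sphere condition is a unimodularity condition on one block of $\Psi(\phi)$ is Lemma \eqref{lema_n_detH}, the block factorization into $Sp_{2g}^{A}(\mathbb{Z})\cdot Sp_{2g}^{B}(\mathbb{Z})$ and the lifting step are the heart of the proof of Theorem \eqref{teo_rat_homology_gen}, and the identifications $\Psi(\mathcal{A}_{g,1})=Sp_{2g}^{A}(\mathbb{Z})$, $\Psi(\mathcal{B}_{g,1})=Sp_{2g}^{B}(\mathbb{Z})$ are Lemma \eqref{lem_B} and its $A$-analog. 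So the strategy is sound, and every ingredient you invoke really is available in the surrounding text.

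One bookkeeping point deserves care. In the paper's conventions Lemma \eqref{lema_n_detH} gives $H_1(M)\cong\operatorname{Coker}(H)$, where $H$ is the lower-right, \emph{diagonal} block of $\Psi(f)$ in the splitting $H=A\oplus B$ (the $L_B\to L_B$ component), not an off-diagonal one. Your statement that $H_1(M)$ is the cokernel of the off-diagonal block $B$ of $\Psi(\phi)$, in your basis $L_B\oplus L_A$, would correspond to the $L_A\to L_B$ component, which is a different block; the discrepancy presumably reflects where $\iota_g$ (which exchanges the two Lagrangians) has been absorbed, but you need to track it precisely because the factorization works only because that particular diagonal block is unimodular: clearing the upper-right block of $\Psi(\phi)$ by multiplying on the left with $X=\bigl(\begin{smallmatrix}Id&-FH^{-1}\\0&Id\end{smallmatrix}\bigr)\in Sp_{2g}^{A}(\mathbb{Z})$, exactly as in the proof of Theorem \eqref{teo_rat_homology_gen}, requires $H\in GL_g(\mathbb{Z})$. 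With that slip corrected, your proof goes through.
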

Then we can get any element of  $\mathcal{S}^3$ as $\mathcal{H}_g\cup_{\iota_g\phi}-\mathcal{H}_g$ for a suitable $g$ and $\phi\in \mathcal{T}_{g,1}.$

From the group-theoretical point of view, the induced equivalence relation on $\mathcal{T}_{g,1},$ which is given by:
\begin{equation}
\phi \sim \psi \quad\Leftrightarrow \quad \exists \zeta_a \in \mathcal{A}_{g,1}\;\exists \zeta_b \in \mathcal{B}_{g,1} \quad \text{such that} \quad \zeta_a \phi \zeta_b=\psi,
\end{equation}
is a quite unsatisfactory, since it looks like, but is not, a double coset relation on $\mathcal{T}_{g,1}.$ However, by Lemma \eqref{lema_eqiv_coset_torelli}, due to W. Pitsch \cite{pitsch}, we know that this relation is the composite of a double coset relation in $\mathcal{T}_{g,1}$ and a conjugancy-induced equivalence relation.

\begin{defi}
We define the following subgroups of $\mathcal{T}_{g,1}:$
$$\mathcal{TA}_{g,1}=\mathcal{T}_{g,1}\cap \mathcal{A}_{g,1},\quad\mathcal{TB}_{g,1}=\mathcal{T}_{g,1}\cap \mathcal{B}_{g,1},\quad\mathcal{TAB}_{g,1}=\mathcal{T}_{g,1}\cap \mathcal{AB}_{g,1}.$$
\end{defi}

\begin{lema}[W. Pitsch \cite{pitsch}]
\label{lema_eqiv_coset_torelli}
Two maps $\phi, \psi \in \mathcal{T}_{g,1}$ are equivalent if and only if there exists a map $\mu \in \mathcal{AB}_{g,1}$ and two maps $\xi_a\in \mathcal{TA}_{g,1}$ and $\xi_b\in\mathcal{TB}_{g,1}$ such that $\phi=\mu \xi_a\psi \xi_b\mu^{-1}.$
\end{lema}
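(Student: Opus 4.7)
The plan is to prove the two implications separately; the easy direction is a direct algebraic rearrangement, and the non-trivial work consists in exhibiting, in the forward direction, a lift $\mu \in \mathcal{AB}_{g,1}$ of a suitable element of $Sp_{2g}(\mathbb{Z})$.

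For the implication $(\Leftarrow)$, assume $\phi = \mu \xi_a \psi \xi_b \mu^{-1}$ with $\mu \in \mathcal{AB}_{g,1}$, $\xi_a \in \mathcal{TA}_{g,1}$, $\xi_b \in \mathcal{TB}_{g,1}$. Then
\[
\psi \;=\; (\xi_a^{-1}\mu^{-1})\,\phi\,(\mu\xi_b^{-1}).
\]
Since $\mathcal{AB}_{g,1} \subset \mathcal{A}_{g,1}\cap\mathcal{B}_{g,1}$, the element $\zeta_a := \xi_a^{-1}\mu^{-1}$ lies in $\mathcal{A}_{g,1}$ and $\zeta_b := \mu\xi_b^{-1}$ lies in $\mathcal{B}_{g,1}$, so $\phi \sim \psi$ in the sense of \eqref{eq_rel1}.

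For the implication $(\Rightarrow)$, assume $\zeta_a \phi \zeta_b = \psi$ with $\zeta_a \in \mathcal{A}_{g,1}$, $\zeta_b \in \mathcal{B}_{g,1}$, so that $\phi = \zeta_a^{-1}\psi\zeta_b^{-1}$. Apply the symplectic representation $\Psi:\mathcal{M}_{g,1}\to Sp_{2g}(\mathbb{Z})$. Since $\phi,\psi \in \mathcal{T}_{g,1}$ act trivially on homology, the relation $\psi = \zeta_a\phi\zeta_b$ forces
\[
\Psi(\zeta_a)\cdot\Psi(\zeta_b)=\mathrm{Id}, \qquad\text{i.e.,}\qquad \Psi(\zeta_a)=\Psi(\zeta_b)^{-1}.
\]
Now $\Psi(\zeta_a)$ preserves the Lagrangian $L_A=\ker\bigl(H_1(\Sigma_{g,1})\to H_1(-\mathcal{H}_g)\bigr)$ because $\zeta_a\in\mathcal{A}_{g,1}$, and $\Psi(\zeta_b)^{-1}=\Psi(\zeta_a)$ preserves the Lagrangian $L_B=\ker\bigl(H_1(\Sigma_{g,1})\to H_1(\mathcal{H}_g)\bigr)$ because $\zeta_b\in\mathcal{B}_{g,1}$. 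Hence $\Psi(\zeta_a)$ lies in $\mathrm{Stab}(L_A)\cap\mathrm{Stab}(L_B)\subset Sp_{2g}(\mathbb{Z})$.

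The key step is then to realize $\Psi(\zeta_a)$ as the symplectic class of some $\mu\in\mathcal{AB}_{g,1}$: in a symplectic basis adapted to the splitting $H_1(\Sigma_{g,1};\mathbb{Z})=L_A\oplus L_B$, the intersection $\mathrm{Stab}(L_A)\cap\mathrm{Stab}(L_B)$ consists of block-diagonal matrices $\begin{pmatrix}A&0\\0&(A^t)^{-1}\end{pmatrix}$ with $A\in GL_g(\mathbb{Z})$, and every such automorphism of $H_1$ is induced by a diffeomorphism of $\mathbf{S}^3$ fixing $\Sigma_{g,1}$ setwise — e.g.\ by choosing a representative automorphism of $\pi_1(-\mathcal{H}_g)$ and of $\pi_1(\mathcal{H}_g)$ that agree on $\pi_1(\Sigma_{g,1})_{\mathrm{ab}}$, or more concretely by combinations of handle slides and Dehn twists about the meridian curves $c_1,c_3,\dots$ of Figure \ref{Humphries_gen}, all of which extend over $\mathbf{S}^3$. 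This is the only step where any geometry intervenes, and it is the main obstacle in the argument.

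Granted this lift $\mu\in\mathcal{AB}_{g,1}$ with $\Psi(\mu)=\Psi(\zeta_a)^{-1}$, set
\[
\xi_a \;:=\; \mu^{-1}\zeta_a^{-1}, \qquad \xi_b \;:=\; \zeta_b^{-1}\mu.
\]
Both lie in $\mathcal{A}_{g,1}$ and $\mathcal{B}_{g,1}$ respectively (since $\mu,\mu^{-1}\in\mathcal{AB}_{g,1}$ belong to both), and a direct application of $\Psi$ shows $\Psi(\xi_a)=\Psi(\xi_b)=\mathrm{Id}$, so $\xi_a\in\mathcal{TA}_{g,1}$ and $\xi_b\in\mathcal{TB}_{g,1}$. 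Finally,
\[
\mu\,\xi_a\,\psi\,\xi_b\,\mu^{-1} \;=\; \mu(\mu^{-1}\zeta_a^{-1})\,\psi\,(\zeta_b^{-1}\mu)\mu^{-1} \;=\; \zeta_a^{-1}\psi\zeta_b^{-1} \;=\; \phi,
\]
which is the desired expression and completes the proof.
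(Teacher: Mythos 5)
Your argument follows the same route as the paper's (and Pitsch's Lemma~4, of which this is a restatement): push the relation $\zeta_a\phi\zeta_b=\psi$ down to $Sp_{2g}(\mathbb{Z})$ via $\Psi$, observe that since $\phi,\psi$ are in the Torelli group the common image $\Psi(\zeta_a)=\Psi(\zeta_b)^{-1}$ lands in the block-diagonal subgroup $Sp_{2g}^{AB}(\mathbb{Z})\cong GL_g(\mathbb{Z})$, lift it to $\mu\in\mathcal{AB}_{g,1}$, and absorb the error terms into $\xi_a,\xi_b$. This is exactly how the paper proves the mod~$d$ analogue (Lemma~\ref{lema_eqiv_coset[d]}), and the algebra in your verification is correct.

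One remark on your justification of the lift: the assertion that ``Dehn twists about the meridian curves $c_1,c_3,\dots$ of Figure~\ref{Humphries_gen} extend over $\mathbf{S}^3$'' is wrong. A Dehn twist about a curve bounding a disk in $\mathcal{H}_g$ extends over the inner handlebody but not over the outer one, so it lies in $\mathcal{B}_{g,1}$ rather than $\mathcal{AB}_{g,1}$, and its image under $\Psi$ is $\left(\begin{smallmatrix}Id & 0 \\ E_{ii} & Id\end{smallmatrix}\right)$, which is not block-diagonal. The surjectivity $\Psi\colon\mathcal{AB}_{g,1}\twoheadrightarrow GL_g(\mathbb{Z})$ you need is precisely Lemma~\ref{lem_AB}; it is realized by permutations, orientation-reversals, and slides of handles inside $\mathbf{S}^3$ (not by twists about essential disk-bounding curves). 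You should simply cite that lemma at this step rather than attempt to re-derive it.
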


To summarize, we have the following bijection:
\begin{equation}
\label{bij_ZHS_torelli}
\begin{aligned}
\lim_{g\to \infty}\left(\mathcal{TA}_{g,1}\backslash\mathcal{T}_{g,1}/\mathcal{TB}_{g,1}\right)_{\mathcal{AB}_{g,1}} & \longrightarrow \mathcal{S}^3, \\
\phi & \longmapsto S^3_\phi=\mathcal{H}_g \cup_{\iota_g\phi} -\mathcal{H}_g.
\end{aligned}
\end{equation}

\subsection{Homology actions}
\label{homol_homoto_actions}

The isotopy class of the curves $\alpha_i,\beta_i,$ for $1\geq i \geq g,$ are free generators of the free group $\pi_1(\Sigma_{g,1},x_0),$ (see figure below). The first homology group of the surface $H:=H_1(\Sigma_g,\mathbb{Z})$ is endowed via Poincaré duality with a natural symplectic intersection form $\omega: \bigwedge^2 H \rightarrow \mathbb{Z}.$ The homology classes $\{A_i,B_i\}_i$ of the above curves freely generate the abelian group $H \simeq \mathbb{Z}^{2g}$ and define two transverse Lagrangians $A$ and $B$ in $H.$
Throughout this thesis we fix a basis of $H$ given by $\langle A_1,\ldots A_g,B_1,\ldots B_g \rangle$ and a symplectic intersection form $\omega$ given by $\omega(B_i,A_i)=1=-\omega(A_i,B_i)$ for every $i$ and zero otherwise, i.e. $Sp \;\omega\cong Sp_{2g}(\mathbb{Z}).$
\[
\begin{tikzpicture}[scale=.8]
\draw[very thick] (-6.5,-2) -- (2.5,-2);
\draw[very thick] (-6.5,2) -- (2.5,2);
\draw[very thick] (-6.5,2) arc [radius=2, start angle=90, end angle=270];
\draw[very thick] (2.5,2) arc [radius=2, start angle=90, end angle=-90];
\draw[very thick] (-6.5,0) circle [radius=.4];
\draw[very thick] (-2,0) circle [radius=.4];
\draw[very thick] (2.5,0) circle [radius=.4];

\draw[thick] (-2,-0.4) to [out=180,in=160] (-2,-0.8);
\draw[->,thick](0.5,-1.2) to [out=180,in=-20] (-2,-0.8);
\draw[thick, dashed] (-2,-0.4) to [out=0,in=90] (-1.5,-1.3);
\draw[thick, dashed] (-1.5,-1.3) to [out=-90,in=0] (-2,-2);
\draw[thick] (-2,-2) to [out=180,in=200] (-2,-1.6)to [out=20,in=180] (0.5,-1.2);

\draw[->,thick] (0.5,-1.2) to [out=180,in=-20] (-2,-0.6) to [out=160,in=-90] (-2.6,0) to [out=90,in=180] (-2,0.6);
\draw[thick] (-2,0.6) to [out=-10,in=170] (0.5,-1.2);

\draw[thick,pattern=north east lines] (0.5,-1.5) circle [radius=.3];
\draw[fill] (0.5,-1.2) circle [radius=0.07];

\node [above] at (0.5,-1.2) {$x_0$};
\node [above] at (-2.3,0.6) {$\alpha_i$};
\node [above] at (-2.3,-1.7) {$\beta_i$};

\node [below] at (-6.5,-0.5) {$1$};
\node [below] at (2.5,-0.5) {$g$};
\end{tikzpicture}
\]

According to Griffith \cite{grif}, the subgroup $\mathcal{B}_{g,1}$ (resp. $\mathcal{A}_{g,1})$ is characterised by the fact that its action on $\pi_1(\Sigma_g,x_0)$ preserves the normal subgroup generated by the curves $\beta_1,\ldots ,\beta_g$ (resp. $\alpha_1,\ldots ,\alpha_g).$ As a consequence the action on homology of $\mathcal{B}_{g,1}$ (resp. $\mathcal{A}_{g,1})$ preserves the Lagrangian $B$ (resp. $A).$

\begin{nota}
Throughout this thesis, given two groups $N,$ $H$ and an action of $H$ on $N,$ we will denote by $H\ltimes N$ the semidirect product of $H$ and $N.$
\end{nota}

If one writes the matrices of the symplectic group $Sp_{2g}(\mathbb{Z})$ as blocks according to the decomposition $H=A\oplus B,$ then the image of $\mathcal{B}_{g,1}\rightarrow Sp_{2g}(\mathbb{Z})$ is contained in the subgroup $Sp_{2g}^B(\mathbb{Z})$ of matrices of the form:
$$\left(\begin{matrix}
G_1 & 0 \\
M & G_2
\end{matrix}\right).$$

Such matrices are symplectic if and only if $G_2= {}^tG^{-1}_1$ and ${}^tG^{-1}_1M$ is symmetric. As a consequence we have an isomorphism:
\begin{equation}
\label{sym_mat_B}
\begin{aligned}
\phi^B: Sp_{2g}^B(\mathbb{Z}) & \longrightarrow GL_g(\mathbb{Z}) \ltimes_B S_g(\mathbb{Z}), \\
\left(\begin{matrix}
G & 0 \\
M & {}^tG^{-1}
\end{matrix}\right) & \longmapsto (G,{}^tGM).
\end{aligned}
\end{equation}
Here $S_g(\mathbb{Z})$ denotes the symmetric group of $g\times g$ matrices over the integers; the composition on the semi-direct product is given by the rule
$$(G,S)(H,T)=(GH,{}^tHSH+T).$$

Analogously, the image of $\mathcal{A}_{g,1}\rightarrow Sp_{2g}(\mathbb{Z})$ is contained in the subgroup $Sp_{2g}^A(\mathbb{Z})$ of matrices of the form:
$$\left(\begin{matrix}
H_1 & N \\
0 & H_2
\end{matrix}\right).$$

Such matrices are symplectic if and only if $H_2={}^tH_1^{-1}$ and ${}^tH_2N$ is symmetric. Similarly, we have an isomorphism:
\begin{equation}
\label{sym_mat_A}
\begin{aligned}
\phi^A: Sp_{2g}^A(\mathbb{Z}) & \longrightarrow GL_g(\mathbb{Z}) \ltimes_A S_g(\mathbb{Z}), \\
\left(\begin{matrix}
H & N \\
0 & {}^tH^{-1}
\end{matrix}\right) & \longmapsto (H,H^{-1}N).
\end{aligned}
\end{equation}
Here the composition on the semi-direct product is given by the rule
$$(G,S)(H,T)=(GH,{}^tH^{-1}SH^{-1}+T).$$

Finally, the image of $\mathcal{AB}_{g,1}\rightarrow Sp_{2g}(\mathbb{Z})$ is contained in the subgroup $Sp_{2g}^{AB}(\mathbb{Z})$ of matrices of the form:
$$\left(\begin{matrix}
G_1 & 0 \\
0 & G_2
\end{matrix}\right).$$

Such matrices are symplectic if and only if $G_2= {}^tG^{-1}_1.$ As a consequence have an isomorphism:
\begin{equation}
\label{sym_mat_AB}
\begin{aligned}
\phi^{AB}: Sp_{2g}^{AB}(\mathbb{Z}) & \longrightarrow GL_g(\mathbb{Z}), \\
\left(\begin{matrix}
G & 0 \\
0 & {}^tG^{-1}
\end{matrix}\right) & \longmapsto G.
\end{aligned}
\end{equation}
Checking on generators of $\mathcal{B}_{g,1}$ (see \cite{suzuki}) we get:

\begin{lema}
\label{lem_B}
There is a short exact sequence of groups:
$$
\xymatrix@C=10mm@R=13mm{1 \ar@{->}[r] & \mathcal{TB}_{g,1} \ar@{->}[r] & \mathcal{B}_{g,1} \ar@{->}[r]^-{\phi^B \circ \Psi} & GL_g(\mathbb{Z})\ltimes S_g(\mathbb{Z}) \ar@{->}[r] & 1 }
$$
\end{lema}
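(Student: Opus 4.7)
My plan is to decompose the proof into exactness at $\mathcal{TB}_{g,1}$ (the kernel statement) and exactness at $GL_g(\mathbb{Z})\ltimes S_g(\mathbb{Z})$ (surjectivity); containment of the image in $Sp_{2g}^B(\mathbb{Z})$ is already recorded above via Griffith's theorem. For the kernel, since $\phi^B$ is an isomorphism, one has $\ker(\phi^B\circ \Psi|_{\mathcal{B}_{g,1}})=\ker\Psi\cap\mathcal{B}_{g,1}=\mathcal{T}_{g,1}\cap\mathcal{B}_{g,1}=\mathcal{TB}_{g,1}$, which is immediate from the defining sequence of the Torelli group recalled just before this lemma. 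This half of the statement is essentially tautological.

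The nontrivial content is surjectivity. I would argue that it suffices to hit a generating set of $GL_g(\mathbb{Z})\ltimes_B S_g(\mathbb{Z})$ of the form $(G,0)$ for $G$ a generator of $GL_g(\mathbb{Z})$ and $(I,S)$ for $S$ running over a generating family of $S_g(\mathbb{Z})$, since the multiplication rule $(G,0)(I,S)=(G,S)$ shows the subgroup generated by these elements is the whole semi-direct product. For the $GL_g(\mathbb{Z})$-factor one invokes the explicit diffeomorphisms of $\mathcal{H}_g$ appearing in Suzuki's generating set: handle slides realising the elementary matrices $I+E_{ij}$ ($i\neq j$) on $H_1(\mathcal{H}_g;\mathbb{Z})=\langle A_1,\ldots,A_g\rangle$, together with a handle flip realising $\mathrm{diag}(-1,1,\ldots,1)$. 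Restricting each of these to $\Sigma_{g,1}=\partial\mathcal{H}_g$ (and arranging that they fix the marked disk, which is possible since they may be supported in the interior of $\mathcal{H}_g$) produces elements of $\mathcal{B}_{g,1}$ whose images under $\phi^B\circ\Psi$ are exactly $(G,0)$ for the required $G$.

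For the symmetric-matrix factor, the tool is Dehn twists about meridians. The Dehn twist $T_{\beta_i}\in\mathcal{B}_{g,1}$ maps, by Proposition \ref{prop_act_twist}, to $(I,E_{ii})\in GL_g(\mathbb{Z})\ltimes_B S_g(\mathbb{Z})$. For the off-diagonal entries one uses meridional curves $\gamma_{ij}$ bounding disks in $\mathcal{H}_g$ and homologous to $B_i+B_j$; a direct computation with Proposition \ref{prop_act_twist} gives that $T_{\gamma_{ij}}$ has associated symmetric matrix $(\delta_i+\delta_j)(\delta_i+\delta_j)^{t}=E_{ii}+E_{jj}+E_{ij}+E_{ji}$, and consequently $T_{\gamma_{ij}}T_{\beta_i}^{-1}T_{\beta_j}^{-1}\in\mathcal{B}_{g,1}$ maps to $(I,E_{ij}+E_{ji})$. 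Together with the diagonal generators $T_{\beta_i}$ and their inverses this realises every $(I,S)$ with $S\in S_g(\mathbb{Z})$.

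The only delicate point—really the one place where one must be careful rather than cite—is to verify that the conventions for $\phi^B$ in \eqref{sym_mat_B} and for $\widehat{i}$ in Proposition \ref{prop_act_twist} conspire so that the above Dehn twists produce \emph{genuinely} symmetric rather than skew or transposed matrices, and so that the handle-slides produce the intended elementary matrices with the correct sign. Once the signs are pinned down, the surjectivity argument is a direct enumeration of generators on both sides. I expect no further obstacle; the lemma is in substance a bookkeeping translation, via $\phi^B$, of the well-known fact that $\mathcal{B}_{g,1}\to Sp_{2g}^B(\mathbb{Z})$ is surjective with kernel $\mathcal{TB}_{g,1}$.
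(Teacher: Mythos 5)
Your kernel argument is exactly right and, as you say, essentially tautological given the definitions of $\mathcal{T}_{g,1}$ and $\mathcal{TB}_{g,1}$; nothing more is needed there.

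For surjectivity, the paper itself gives no proof: it merely writes ``Checking on generators of $\mathcal{B}_{g,1}$ (see \cite{suzuki})'' and states the lemma, so you are supplying the argument the paper delegates to the reader. Your strategy of hitting a generating set of $GL_g(\mathbb{Z})\ltimes_B S_g(\mathbb{Z})$ directly, rather than running through Suzuki's generating set of $\mathcal{B}_{g,1}$ and recording its image, is the reverse direction but clearly equivalent. It is also exactly what the paper does for the symmetric factor in the proof of Proposition~\ref{prop_surj_Luft}: there the very same Dehn twists $T_{\beta_k}$ and $T_{\gamma_{ij}}$ are used, with the computation $\Psi(T_{\beta_k}^{-1}) = \bigl(\begin{smallmatrix}Id&0\\E_{kk}&Id\end{smallmatrix}\bigr)$ and $\Psi(T_{\gamma_{ij}}) = \bigl(\begin{smallmatrix}Id&0\\-E_{ii}-E_{jj}+E_{ij}+E_{ji}&Id\end{smallmatrix}\bigr)$, so that $T_{\beta_i}^{-1}T_{\gamma_{ij}}T_{\beta_j}^{-1}$ realises $(Id,E_{ij}+E_{ji})$. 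Your signs differ (you have $T_{\beta_i}\mapsto (I,E_{ii})$ and $[\gamma_{ij}]=B_i+B_j$ where the paper has $T_{\beta_k}^{-1}\mapsto(I,E_{kk})$ and $[\gamma_{ij}]=b_i-b_j$), but as you anticipate this is exactly the twist- and orientation-convention bookkeeping, and either set of signs lands on a generating family for $S_g(\mathbb{Z})$.

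One small imprecision worth correcting: you assert that the handle slides and the handle flip restrict to elements of $\mathcal{B}_{g,1}$ ``whose images under $\phi^B\circ\Psi$ are exactly $(G,0)$.'' You do not actually know the second component vanishes — a handle slide on $\mathcal{H}_g$ has no reason to act trivially on the lagrangian $B$, and its image could well be $(G,S)$ with $S\neq 0$. This does no harm to the argument, because the multiplication law $(G,S)(I,-S)=(G,{}^tI S I - S)=(G,0)$ lets you correct the second component once you know the image already contains $\{I\}\ltimes S_g(\mathbb{Z})$. Alternatively, and closer to how the paper is organised, you could appeal directly to Lemma~\ref{lem_AB}, which asserts that $\mathcal{AB}_{g,1}\twoheadrightarrow GL_g(\mathbb{Z})$, and note that any element of $\mathcal{AB}_{g,1}\subset\mathcal{B}_{g,1}$ already has image of the form $(G,0)$ by definition of $\phi^{AB}$; this avoids the sign/bookkeeping issue for the $GL_g$-factor entirely and isolates the real content in the $S_g$-factor, where your Dehn-twist computation does the work.
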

An analogous statement holds for $\mathcal{A}_{g,1}$ replacing the lagrangian $B$ by $A.$
Similarly, checking on generators of $\mathcal{AB}_{g,1}$ we have the following result
\begin{lema}
\label{lem_AB}
There is a short exact sequence of groups:
$$
\xymatrix@C=10mm@R=13mm{1 \ar@{->}[r] & \mathcal{TAB}_{g,1} \ar@{->}[r] & \mathcal{AB}_{g,1} \ar@{->}[r] & GL_g(\mathbb{Z}) \ar@{->}[r] & 1 }
$$
\end{lema}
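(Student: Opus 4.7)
The plan is to view the sequence as the restriction of the symplectic representation $\Psi$ to $\mathcal{AB}_{g,1}$, followed by the isomorphism $\phi^{AB}$ of \eqref{sym_mat_AB}, and to verify exactness termwise.

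For exactness at the first two positions, note that by definition $\mathcal{TAB}_{g,1} = \mathcal{T}_{g,1} \cap \mathcal{AB}_{g,1}$ is precisely the kernel of $\Psi|_{\mathcal{AB}_{g,1}}$. Since $\phi^{AB}$ is an isomorphism, this is also the kernel of $\phi^{AB}\circ \Psi|_{\mathcal{AB}_{g,1}}$, which simultaneously yields injectivity of $\mathcal{TAB}_{g,1}\hookrightarrow \mathcal{AB}_{g,1}$ and exactness at the middle term.

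For the containment $\Psi(\mathcal{AB}_{g,1}) \subseteq Sp_{2g}^{AB}(\mathbb{Z})$, any $\phi\in \mathcal{AB}_{g,1}$ extends to a diffeomorphism of $\mathbf{S}^3$ preserving both handlebodies, so by the Griffith criterion recalled above it belongs to $\mathcal{A}_{g,1}\cap \mathcal{B}_{g,1}$. Hence $\Psi(\phi)$ preserves both Lagrangians $A$ and $B$, which forces its matrix in the basis $\langle A_1,\ldots,A_g,B_1,\ldots,B_g\rangle$ to be block diagonal; the symplectic condition then pins it down to $\mathrm{diag}(G,{}^tG^{-1})$, so $\Psi(\phi)\in Sp_{2g}^{AB}(\mathbb{Z})$.

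The core of the argument, as the author indicates, is surjectivity onto $GL_g(\mathbb{Z})$ \emph{checked on generators}. I would fix the standard generating set of $GL_g(\mathbb{Z})$ consisting of elementary matrices $I+E_{ij}$ ($i\neq j$), transpositions of basis vectors, and diagonal sign changes, and realize each by an explicit diffeomorphism of $\mathbf{S}^3$ preserving the Heegaard splitting: handle slides (sliding the $i$-th handle along the $j$-th) realize the elementary matrices; ambient rotations of $\mathbf{S}^3$ that swap the $i$-th and $j$-th handles realize transpositions; and a $\pi$-rotation through a single handle, flipping the pair $(\alpha_i,\beta_i)$, realizes the sign changes. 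Each of these maps preserves both $\mathcal{H}_g$ and $-\mathcal{H}_g$, so lies in $\mathcal{AB}_{g,1}$, and a direct computation of its action on $H_1(\Sigma_g;\mathbb{Z})$ confirms the prescribed action via $\phi^{AB}$. This last step is the only real obstacle; everything else reduces to bookkeeping via the already-established isomorphism $\phi^{AB}$.
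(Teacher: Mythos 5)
Your proof is correct and fills in exactly what the paper leaves implicit with the remark ``checking on generators,'' by the same strategy: the kernel is identified as $\mathcal{TAB}_{g,1}$ from the definitions, the image lands in $Sp_{2g}^{AB}(\mathbb{Z})\cong GL_g(\mathbb{Z})$ because $\mathcal{AB}_{g,1}=\mathcal{A}_{g,1}\cap\mathcal{B}_{g,1}$ preserves both Lagrangians, and surjectivity is obtained by exhibiting elements of $\mathcal{AB}_{g,1}$ (handle slides, ambient rotations permuting knobs, and knob flips) whose images give the Nielsen-type generators of $GL_g(\mathbb{Z})$. This is the intended argument.
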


\subsection{The Johnson homomorphism}
Computing the action of the Torelli group on the second nilpotent quotient of $\pi_1(\Sigma_{g,1},x_0)$ Johnson defines a morphism of groups known as the first Johnson homomorphism:
$$\tau:\mathcal{T}_{g,1}\longrightarrow \bigwedge^3 H.$$
Notice that the mapping class group $\mathcal{M}_{g,1}$ acts naturally by conjugation on $\mathcal{T}_{g,1}$ and acts also on $\bigwedge^3 H$ via its natural action on homology. In \cite{jon_1}, \cite{jon_2}, \cite{jon_3} Johnson proves that

\begin{prop}
\label{prop_johnson}
The map $\tau$ is $\mathcal{M}_{g,1}$-equivariant with respect to the above actions. Up to a finite dimensional $\mathbb{Z}/2$-vector space $\bigwedge^3 H$ is the abelianization of the Torelli group: any homomorphism $\mathcal{T}_{g,1}\rightarrow A$ where $A$ is an abelian group without $2$-torsion factors uniquely through $\tau.$
\end{prop}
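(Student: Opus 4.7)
The plan is twofold: a direct calculation in the second nilpotent quotient of $\pi_1(\Sigma_{g,1}, x_0)$ handles equivariance, and for the abelianization statement I would invoke Johnson's computation of $H_1(\mathcal{T}_{g,1};\mathbb{Z})$, mediated by the Birman--Craggs--Johnson homomorphism, to show that the kernel of the map induced by $\tau$ is $\mathbb{Z}/2$-torsion.

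\textbf{Equivariance.} Set $\Gamma = \pi_1(\Sigma_{g,1},x_0)$ (a free group of rank $2g$) with lower central series $\{\Gamma_k\}$, so that canonically $\Gamma/\Gamma_2 = H$ and $\Gamma_2/\Gamma_3 \cong \bigwedge^2 H$. Recall the construction of $\tau$: for $\phi\in\mathcal{T}_{g,1}$ and $x\in\Gamma$ the element $\phi(x)x^{-1}$ lies in $\Gamma_2$, and $x\mapsto \phi(x)x^{-1}\bmod\Gamma_3$ descends to a homomorphism $H\to \bigwedge^2 H$; using Poincaré duality via $\omega$ to identify $H^*\cong H$ one obtains $\tau(\phi)\in H\otimes \bigwedge^2 H$, and this element actually lies in $\bigwedge^3 H$ because $\phi$ fixes the boundary class $\prod_i [\alpha_i,\beta_i]$, whose residue in $\bigwedge^2 H$ is $\omega$, forcing the induced derivation of the free Lie algebra to annihilate $\omega$. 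Equivariance is then immediate: for $\psi\in\mathcal{M}_{g,1}$, setting $y=\psi^{-1}(x)$ one has
\[
(\psi\phi\psi^{-1})(x)\,x^{-1} = \psi\bigl(\phi(y)y^{-1}\bigr),
\]
and reducing modulo $\Gamma_3$, using that $\psi$ acts on $\Gamma_2/\Gamma_3 \cong \bigwedge^2 H$ as $\bigwedge^2\Psi(\psi)$, identifies the right-hand side with the value at $[x]$ of $\Psi(\psi)\cdot \tau(\phi)$ under the diagonal symplectic action on $H\otimes \bigwedge^2 H$, which restricts to the usual action on $\bigwedge^3 H$.

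\textbf{Abelianization modulo $2$-torsion.} Surjectivity of $\tau$ is proved in \cite{jon_2} by exhibiting, for each basis element of $\bigwedge^3 H$, an explicit product of bounding pair maps realizing it; hence $\tau$ induces a surjection $\bar\tau\colon H_1(\mathcal{T}_{g,1};\mathbb{Z})\twoheadrightarrow \bigwedge^3 H$. To control $\ker\bar\tau$, couple $\bar\tau$ with the Birman--Craggs--Johnson homomorphism $\sigma\colon \mathcal{T}_{g,1}\to B$, where $B$ is a finite-dimensional $\mathbb{Z}/2$-vector space built from $\mathbb{Z}/2$-valued quadratic refinements of the mod-$2$ intersection form on $H\otimes\mathbb{Z}/2$. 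Johnson's main result in \cite{jon_3} is that $(\bar\tau,\sigma)$ captures the full abelianization, producing a short exact sequence
\[
0\longrightarrow K \longrightarrow H_1(\mathcal{T}_{g,1};\mathbb{Z}) \xrightarrow{\;\bar\tau\;} \bigwedge^3 H \longrightarrow 0,
\]
in which $K$ is a finite-dimensional $\mathbb{Z}/2$-vector space (a quotient of the image of $\sigma$). Granting this, if $A$ has no $2$-torsion, any homomorphism $f\colon \mathcal{T}_{g,1}\to A$ factors through $H_1(\mathcal{T}_{g,1};\mathbb{Z})$, necessarily vanishes on $K$, and hence descends through $\bar\tau$; the descent is unique because $\bar\tau$ is surjective.

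\textbf{Main difficulty.} The equivariance is essentially a one-line verification once the nilpotent definition of $\tau$ is unpacked, so the real weight of the proof lies in the second step. Johnson's identification of $K$ as a $\mathbb{Z}/2$-vector space is nontrivial: it requires explicit calculations with separating Dehn twists and bounding pair maps, together with the algebraic input from quadratic forms underlying the Birman--Craggs construction, and cannot really be shortened.
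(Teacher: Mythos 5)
The paper does not give a proof of this proposition; it states it as a recollection of Johnson's theorems and cites \cite{jon_1}, \cite{jon_2}, \cite{jon_3}. Your reconstruction — direct equivariance computation in the second nilpotent quotient together with Johnson's $H_1$ computation via the Birman--Craggs--Johnson homomorphism — is an accurate summary of the argument in those references, and the factoring step for $A$ without $2$-torsion is correctly deduced from the short exact sequence $0\to K\to H_1(\mathcal{T}_{g,1};\mathbb{Z})\to\bigwedge^3 H\to 0$ with $K$ a $\mathbb{Z}/2$-vector space.
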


\chapter{Trivial cocycles and invariants on the Torelli group}
\label{chapter: trivocicles torelli}
In \cite{pitsch}, W. Pitsch gave a tool to construct invariants of homology $3$-spheres, with values in any abelian group without $2$-torsion, from a family of trivial $2$-cocycles on $\mathcal{T}_{g,1}.$
In this chapter we generalize the results of \cite{pitsch}, to include any abelian group without restrictions.

The main difficulty to generalize such results comes from the fact that if we consider an abelian group with $2$-torsion,
then the abelianization of $\mathcal{T}_{g,1}$ is given by $\extp^3H\oplus T,$ where $T$ is a $2$-torsion group, and $Hom(\mathcal{T}_{g,1},A)^{\mathcal{AB}_{g,1}}$ is not zero because, unlikely the case of an abelian group without $2$-torsion, in this case $T$ has not to be sent to zero.

We show that the elements $Hom(\mathcal{T}_{g,1},A)^{\mathcal{AB}_{g,1}}$ are given by multiples of the Rohlin invariant.
As a consequence, any two invariants constructed from the same family of $2$-cocycles differ by an element of $Hom(\mathcal{T}_{g,1},A)^{\mathcal{AB}_{g,1}},$ and hence the Rohlin invariant explains the failure of unicity in this case.

In the first section we review some basic definitions and properties about the Boolean algebra and the Birman-Craggs-Johnson-homomoprhism.
In the second section we recall the definition of a contractible bounding pair twist, the Luft group and we exhibit some interesting results about the handlebody subgroup $\mathcal{B}_{g,1}$ and the Luft-Torelli group $\mathcal{LTB}_{g,1}$.
Finally, in the last two sections we give the aforementioned generalization.

\section{The Boolean algebra and the BCJ-homomorphism}
\begin{defi}[Boolean algebra]
The Boolean polynomial algebra $\mathfrak{B}=\mathfrak{B}_{g,1}$ is a $\mathbb{Z}/2$-algebra (with unit $1$) with a generator $\overline{x}$ for each $x\in H_1(\Sigma_{g,1},\mathbb{Z}/2)$ and subject to the relations:
\begin{enumerate}[(a)]
\item $\overline{x+y}=\overline{x}+\overline{y}+ x\cdot y,$ where $x\cdot y$ is the mod $2$ intersection number,
\item $\overline{x}^2=\overline{x}.$
\end{enumerate}
The relation (b) implies that $p^2=p$ for any $p\in \mathfrak{B}$ and also that if $\{e_i \mid i\in \{ 1,\ldots 2g\}\}$ is a basis for $H_1(\Sigma_{g,1},\mathbb{Z}/2)$ then the set of all monomials
$e_{i_1}e_{i_2}\ldots e_{i_r}$ with $0\leq r\leq 2g,$ $1\leq i_1< i_2< \ldots < i_r\leq 2g$ is a $\mathbb{Z}/2$-basis for $\mathfrak{B}.$ Denote by  $\mathfrak{B}_k=\mathfrak{B}^k_{g,1}$ the subspace generated by all monomials of ''degree'' $\leq k.$
\end{defi}

In \cite{jo1}, D. Johnson constructed a surjective homomorphism $\sigma: \mathcal{T}_{g,1}\rightarrow \mathfrak{B}_3,$ called the Birman-Craggs-Johnson homomorphism (abbreviated BCJ-homomorphism), which may be described as follows.

Consider the $\mathbb{Z}/2$-basis of $\mathfrak{B}_3$ given by
\begin{align*}
\left\{\overline{1},\overline{A}_i,\overline{B}_i,\overline{A}_i\overline{A}_j,\right. & \overline{B}_i\overline{B}_j,  \overline{A}_i\overline{A}_j\overline{A}_k,\overline{B}_i\overline{B}_j\overline{B}_k, \\
&  \left.\overline{A}_i\overline{B}_j, \overline{A}_i\overline{B}_i, \overline{A}_i\overline{B}_j\overline{B}_k,\overline{A}_i\overline{A}_j\overline{B}_k,\overline{A}_i\overline{B}_i\overline{B}_j, \overline{A}_i\overline{B}_i\overline{A}_j \right\},
\end{align*}
where $i,j,k\in \{1,\ldots, 2g\}$ are pairwise distinct,
and consider the curves depicted in the following figure:

\begin{figure}[H]
\begin{center}
\begin{tikzpicture}[scale=.7]
\draw[very thick] (-4.5,-2) -- (7,-2);
\draw[very thick] (-4.5,2) -- (7,2);
\draw[very thick] (-4.5,2) arc [radius=2, start angle=90, end angle=270];
\draw[very thick] (-4.5,0) circle [radius=.4];
\draw[thick, dotted] (-3.5,0) -- (-3,0);
\draw[very thick] (-2,0) circle [radius=.4];
\draw[very thick] (2,0) circle [radius=.4];
\draw[thick, dotted] (3.5,0) -- (3,0);
\draw[very thick] (4.5,0) circle [radius=.4];

\draw[thick, dashed] (0,2) to [out=-70,in=70] (0,-2);
\draw[thick ] (0,2) to [out=-110,in=110] (0,-2);

\draw[thick,dashed] (2,0.4) to [out=70,in=-70] (2,2);
\draw[thick] (2,0.4) to [out=110,in=-110] (2,2);

\draw[thick,dashed] (2,-0.4) to [out=-70,in=70] (2,-2);
\draw[thick] (2,-0.4) to [out=-110,in=110] (2,-2);

\node[above] at (2,2) {$\beta$};
\node[below] at (2,-2) {$\beta'$};
\node[below] at (0,-2) {$\gamma$};

\node at (-4.5,0) {\tiny{1}};
\node at (-2,0) {\tiny{k}};
\node at (2,0) {\tiny{k+1}};
\node at (4.5,0) {\tiny{g}};

\draw[thick,pattern=north west lines] (7,-2) to [out=130,in=-130] (7,2) to [out=-50,in=50] (7,-2);

\end{tikzpicture}
\end{center}
\end{figure}

The BCJ-homomorphism is given on a BP-map $T_\beta T_{\beta'}^{-1}$ by
\[\sigma(T_\beta T_{\beta'}^{-1})= \sum^k_{i=1}\overline{C_i}\overline{D_i}(\overline{E}+\overline{1}),\]
where $E$ is the homology class of $\beta,$ and $\{C_i,D_i\}_i$ is a symplectic basis of a subsurface $\Sigma_{k,1}$ of $\Sigma_{g,1}$ with boundary component $\gamma,$ such that $\gamma \cup \beta \cup \beta'$ is the boundary of a subsurface with genus zero in $\Sigma_{g,1}.$

The BCJ-homomorphism is given on a Dehn twist about a bounding simple closed curve $\gamma$ of genus $k,$ by:
$$\sigma(T_\gamma)= \sum^k_{i=1}\overline{C_i}\overline{D_i},$$
where $\{C_i,D_i\}_i$ is the symplectic basis of the subsurface of genus $k$ bounded by $\gamma.$

By Lemma 13 in \cite{jo1}, $\sigma$ is an $\mathcal{M}_{g,1}$-equivariant map, in other words, for $f\in \mathcal{M}_{g,1},$ $h\in \mathcal{T}_{g,1}$ we have that $\sigma(fhf^{-1})=\hat{f}\cdot \sigma(h),$ where $\hat{f}$ denotes the image of the map $f$ under the symplectic representation mod $2,$ and the action of $\hat{f}$ on $\mathfrak{B}_3$ is induced from the action of $\hat{f}$ on $H_1(\Sigma_{g,1},\mathbb{Z}/2),$ i.e.
$\hat{f}\cdot(\overline{Z_i}\overline{Z_j}\overline{Z_k})=\overline{\hat{f}Z_i}\overline{\hat{f}Z_j}\overline{\hat{f}Z_k},$ where $Z_i,Z_j,Z_k$ are any three elements of $H_1(\Sigma_{g,1},\mathbb{Z}/2).$

\section{The Luft group and CBP-twists}
\label{section_luft}
Denote by $\mathcal{L}_{g,1}$ the kernel of the map $\mathcal{B}_{g,1} \twoheadrightarrow Aut(\pi_1(\mathcal{H}_g))$. We call it the \textit{Luft group.} It was identified by Luft in \cite{luft} as the ''Twist group'' of the handlebody $\mathcal{H}_g$.
Denote $\mathcal{LTB}_{g,1}$ the intersection $\mathcal{L} \cap \mathcal{TB}_{g,1}$, and $IA$ the kernel of the natural map $Aut(\pi_1(\mathcal{H}_g))\rightarrow GL_g(\mathbb{Z})$.

\begin{prop}
\label{prop_surj_Luft}
There is a short exact sequence
\begin{equation}
\label{ses_L}
\xymatrix@C=7mm@R=10mm{1 \ar@{->}[r] & \mathcal{LTB}_{g,1} \ar@{->}[r] & \mathcal{L}_{g,1} \ar@{->}[r]^-{\phi^B\circ\Psi} & S_g(\mathbb{Z}) \ar@{->}[r] & 1 .}
\end{equation}
\end{prop}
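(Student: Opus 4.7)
The plan is to restrict the short exact sequence of Lemma \ref{lem_B} to the Luft subgroup. Concretely, I consider the composition $\phi^B\circ\Psi\colon \mathcal{B}_{g,1}\to GL_g(\mathbb{Z})\ltimes S_g(\mathbb{Z})$ and first check that on $\mathcal{L}_{g,1}$ it factors through $\{I\}\ltimes S_g(\mathbb{Z})\cong S_g(\mathbb{Z})$. Indeed, any $f\in\mathcal{L}_{g,1}$ acts trivially on $\pi_1(\mathcal{H}_g)$, hence trivially on $H_1(\mathcal{H}_g)$. Since the Lagrangian $A$ is canonically identified with $H_1(\mathcal{H}_g)$ via the inclusion $\Sigma_{g,1}\hookrightarrow \mathcal{H}_g$, the $GL_g(\mathbb{Z})$-component of $\phi^B\circ\Psi(f)$ is trivial. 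This justifies the target $S_g(\mathbb{Z})$ in \eqref{ses_L}.

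Next I would identify the kernel. Since $\ker(\phi^B\circ\Psi\colon \mathcal{B}_{g,1}\to GL_g(\mathbb{Z})\ltimes S_g(\mathbb{Z}))=\mathcal{TB}_{g,1}$ by Lemma \ref{lem_B}, the kernel of its restriction to $\mathcal{L}_{g,1}$ is precisely $\mathcal{L}_{g,1}\cap\mathcal{TB}_{g,1}=\mathcal{LTB}_{g,1}$, which gives the exactness on the left and in the middle.

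The remaining step, and the main obstacle, is surjectivity onto $S_g(\mathbb{Z})$. The abelian group $S_g(\mathbb{Z})$ is generated by the elementary symmetric matrices $\{E_{ii}\}_{1\leq i\leq g}$ and $\{E_{ij}+E_{ji}\}_{1\leq i<j\leq g}$, so it suffices to exhibit preimages of these generators in $\mathcal{L}_{g,1}$. The Dehn twist $T_{\beta_i}$ about the meridian curve $\beta_i$ bounds a disk in $\mathcal{H}_g$, so it lies in $\mathcal{L}_{g,1}$; using the formula of Proposition \ref{prop_act_twist} together with the intersection relations $\widehat{i}(A_j,B_i)=\pm\delta_{ij}$ and $\widehat{i}(B_j,B_i)=0$, its image under $\phi^B\circ\Psi$ is $\pm E_{ii}$. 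For the off-diagonal generators I would take, for each pair $i<j$, a simple closed curve $\beta_{ij}\subset \Sigma_{g,1}$ obtained as the band sum of $\beta_i$ and $\beta_j$ along an arc lying in $\mathcal{H}_g$. By construction $\beta_{ij}$ bounds an embedded disk in $\mathcal{H}_g$ (the two meridian disks connected by an annular band along the arc), so $T_{\beta_{ij}}\in\mathcal{L}_{g,1}$. Its homology class is $B_i+B_j$, and a direct application of Proposition \ref{prop_act_twist} shows that $\phi^B\circ\Psi(T_{\beta_{ij}})=\pm(E_{ii}+E_{jj}+E_{ij}+E_{ji})$; combining this with the images of $T_{\beta_i}$ and $T_{\beta_j}$ produces $\pm(E_{ij}+E_{ji})$.

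The delicate point throughout is the verification that the band-sum curves $\beta_{ij}$ are genuinely meridian (i.e.\ bound properly embedded disks in $\mathcal{H}_g$), which is essentially a standard cut-and-paste argument in the handlebody, together with the sign bookkeeping in the matrix $\phi^B\left(\begin{smallmatrix} I & 0\\ M & I\end{smallmatrix}\right)=(I,M)$ so that the concrete matrices computed above really coincide, up to the fixed sign conventions of Section \ref{homol_homoto_actions}, with the claimed elementary symmetric generators. Once this bookkeeping is in place the three steps assemble into the asserted short exact sequence.
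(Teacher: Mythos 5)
Your proposal is correct and follows essentially the same strategy as the paper: verify that the $GL_g$-component vanishes on $\mathcal{L}_{g,1}$, identify the kernel as $\mathcal{LTB}_{g,1}$ via Lemma~\ref{lem_B}, and establish surjectivity by producing Dehn twists about meridian curves hitting the elementary symmetric generators. The only cosmetic difference is your choice of band-sum curve with homology class $B_i+B_j$ (giving $-(E_{ij}+E_{ji})$ after correction by $T_{\beta_i}^{-1}T_{\beta_j}^{-1}$), whereas the paper uses a curve $\gamma_{ij}$ with class $B_i-B_j$; both bound disks in $\mathcal{H}_g$ for the same cut-and-paste reason, and both yield the off-diagonal generators.
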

 
\begin{proof}
We first prove that $\phi^B\circ\Psi:\mathcal{L}_{g,1} \rightarrow S_g(\mathbb{Z})$ is well defined, i.e. $\phi^B\circ\Psi(\mathcal{L}_{g,1})\subset S_g(\mathbb{Z}).$
Recall that $\mathcal{L}_{g,1}=Ker(\mathcal{B}_{g,1}\rightarrow Aut(\pi_1(\mathcal{H}_g))).$
As a consequence, if $x\in \mathcal{L}_{g,1},$ $\Psi(x)=\left(\begin{smallmatrix}
Id & 0 \\
M & Id
\end{smallmatrix}\right)$ and then $\phi^B\circ\Psi(\mathcal{L}_{g,1})\subset \{Id\}\ltimes S_g(\mathbb{Z})\cong S_g(\mathbb{Z}).$

Next we prove that $\phi^B\circ\Psi:\mathcal{L}_{g,1}\rightarrow S_g(\mathbb{Z})$ is surjective with kernel $\mathcal{LTB}_{g,1}.$ 

Recall that $S_g(\mathbb{Z})$ is generated by the following family of matrices:
$$\{E_{ii} \mid 1\leq i \leq g\}\cup\{E_{ij}+E_{ji} \mid 1\leq i<j \leq g\}.$$
where $E_{ij}$ denotes the matrix with $1$ at position $(i,j)$ and $0$'s elsewhere.
Thus it is enough to find a preimage for each $E_{ii}$ and $E_{ij}.$
Consider $\beta_i,$ $\beta_j$ and $\gamma_{ij}$ the curves depicted in the following figure:

\begin{figure}[H]
\begin{center}
\begin{tikzpicture}[scale=.7]
\draw[very thick] (-4.5,-2) -- (7,-2);
\draw[very thick] (-4.5,2) -- (7,2);
\draw[very thick] (-4.5,2) arc [radius=2, start angle=90, end angle=270];
\draw[very thick] (-4.5,0) circle [radius=.4];
\draw[thick, dotted] (-3.5,0) -- (-3,0);
\draw[very thick] (-2,0) circle [radius=.4];
\draw[thick, dotted] (-1.2,0) -- (-0.7,0);
\draw[thick, dotted] (0.7,0) -- (1.2,0);
\draw[very thick] (2,0) circle [radius=.4];
\draw[thick, dotted] (3.5,0) -- (3,0);
\draw[very thick] (4.5,0) circle [radius=.4];

\draw[very thick] (0,0) circle [radius=.4];

\draw[thick] (-1.7,-0.2) to [out=-30,in=210] (1.7,-0.2);
\draw[thick, dashed] (-1.7,-0.2) to [out=-40,in=220] (1.7,-0.2);
\node [below] at (0,-1) {$\gamma_{i,j}$};

\draw[thick,dashed] (-2,0.4) to [out=70,in=-70] (-2,2);
\draw[thick] (-2,0.4) to [out=110,in=-110] (-2,2);

\draw[thick,dashed] (2,0.4) to [out=70,in=-70] (2,2);
\draw[thick] (2,0.4) to [out=110,in=-110] (2,2);

\node[above] at (-2,2) {$\beta_i$};
\node[above] at (2,2) {$\beta_j$};

\node at (-4.5,0) {\tiny{1}};
\node at (-2,0) {\tiny{i}};
\node at (2,0) {\tiny{j}};
\node at (4.5,0) {\tiny{g}};

\draw[thick,pattern=north west lines] (7,-2) to [out=130,in=-130] (7,2) to [out=-50,in=50] (7,-2);

\end{tikzpicture}
\end{center}
\caption{Contractible simple closed curves}
\end{figure}
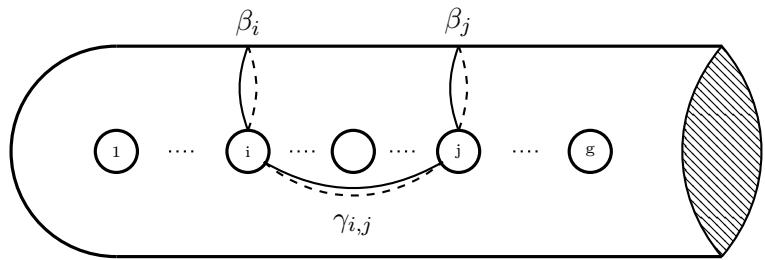

Notice that the curves $\beta_k,$ $\gamma_{ij}$ are contractible in $\mathcal{H}_g$ and as a consequence $T_{\beta_k},$ $T_{\gamma_{ij}}$ are elements of $\mathcal{L}_{g,1}.$
In addition, we have that
\begin{align*}
\Psi(T^{-1}_{\beta_k})(a_l)= & a_l- \widehat{i}(a_l,b_k)b_k=
\left\{\begin{array}{ll}
a_k+b_k & \text{if} \quad l=k \\
a_l & \text{if} \quad l\neq k,
\end{array}\right. \\
\Psi(T^{-1}_{\beta_k})(b_l)= & b_l,\\
\Psi(T_{\gamma_{ij}})(a_l)= & a_l+ \widehat{i}(a_l,[\gamma_{ij}])[\gamma_{ij}] = \\
= & a_l+ \widehat{i}(a_l,b_i-b_j)(b_i-b_j)=
\left\{\begin{array}{ll}
a_i-b_i+b_j & \text{if} \quad l=i \\
a_j+b_i-b_j & \text{if} \quad l=j \\
a_l & \text{if} \quad l\neq i,j,
\end{array}\right. \\
\Psi(T_{\gamma_{ij}})(b_l)= & b_l.
\end{align*}
Thus we get that
$$\Psi(T_{\beta_k}^{-1})=\left(\begin{matrix}
Id & 0 \\
E_{kk} & Id
\end{matrix}\right), \qquad
\Psi(T_{\gamma_{ij}})=\left(\begin{matrix}
Id & 0 \\
-E_{ii}-E_{jj}+E_{ij}+E_{ji} & Id
\end{matrix}\right).$$
As a consequence,
$$\Psi(T_{\beta_i}^{-1}T_{\gamma_{ij}}T_{\beta_j}^{-1})=\left(\begin{matrix}
Id & 0 \\
E_{ij}+E_{ji} & Id
\end{matrix}\right).$$
Therefore $\phi^B\circ \Psi:\mathcal{L}_{g,1}\rightarrow S_g(\mathbb{Z})$ is surjective.
In addition, notice that
\begin{align*}
Ker(\phi^B\circ \Psi:\mathcal{L}_{g,1}\rightarrow S_g(\mathbb{Z}))= & Ker(\mathcal{B}_{g,1}\rightarrow GL_g(\mathbb{Z}) \ltimes S_g(\mathbb{Z}))\cap \mathcal{L}_{g,1}= \\
= & \mathcal{TB}_{g,1}\cap \mathcal{L}_{g,1}=\mathcal{LTB}_{g,1}.
\end{align*}
Therefore we get the desired short exact sequence.
\end{proof}

\begin{prop}
\label{prop_prod_B}
For every $h\in \mathcal{B}_{g,1},$ there exist elements $l\in \mathcal{L}_{g,1},$ $f\in \mathcal{AB}_{g,1}$ and $\xi_b\in \mathcal{TB}_{g,1}$ such that
$h=\xi_b f l,$ i.e.
$$\mathcal{B}_{g,1}=\mathcal{TB}_{g,1}\cdot \mathcal{AB}_{g,1} \cdot\mathcal{L}_{g,1}.$$
\end{prop}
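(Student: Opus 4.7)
My plan is to exploit the short exact sequence from Lemma~\ref{lem_B},
$$1 \longrightarrow \mathcal{TB}_{g,1} \longrightarrow \mathcal{B}_{g,1} \xrightarrow{\phi^B \circ \Psi} GL_g(\mathbb{Z}) \ltimes S_g(\mathbb{Z}) \longrightarrow 1,$$
and to show that the image $(\phi^B \circ \Psi)(h)$ of any $h \in \mathcal{B}_{g,1}$ can be realized as a product of an element in $(\phi^B \circ \Psi)(\mathcal{AB}_{g,1})$ with an element in $(\phi^B \circ \Psi)(\mathcal{L}_{g,1})$. The desired factor in $\mathcal{TB}_{g,1}$ will then appear as an error term.

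The key observation is the decomposition inside the semi-direct product: using the composition rule $(G,S)(H,T) = (GH, {}^t H S H + T)$, any element $(G,S) \in GL_g(\mathbb{Z}) \ltimes S_g(\mathbb{Z})$ can be written as $(G,0)\cdot(Id,S)$. Now, by Lemma~\ref{lem_AB} the map $\mathcal{AB}_{g,1} \to GL_g(\mathbb{Z})$ is surjective, and unwinding the isomorphism $\phi^{AB}$ of \eqref{sym_mat_AB} together with the inclusion $Sp^{AB}_{2g}(\mathbb{Z})\hookrightarrow Sp^B_{2g}(\mathbb{Z})$ one sees that the composite $\phi^B \circ \Psi$ restricted to $\mathcal{AB}_{g,1}$ lands in the subgroup $GL_g(\mathbb{Z})\times \{0\}$ and is surjective onto it. Similarly, by Proposition~\ref{prop_surj_Luft} the map $\phi^B\circ\Psi:\mathcal{L}_{g,1}\to S_g(\mathbb{Z})$ is surjective onto $\{Id\}\times S_g(\mathbb{Z})$.

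Given $h\in \mathcal{B}_{g,1}$, write $(\phi^B \circ \Psi)(h) = (G,S)$. Using the previous paragraph, choose $f\in \mathcal{AB}_{g,1}$ with $(\phi^B\circ\Psi)(f) = (G,0)$ and $l\in \mathcal{L}_{g,1}$ with $(\phi^B\circ\Psi)(l) = (Id,S)$. Then
$$(\phi^B\circ\Psi)(fl) = (G,0)(Id,S) = (G,S) = (\phi^B\circ\Psi)(h),$$
so the element $\xi_b := h\,l^{-1}f^{-1}$ lies in the kernel of $\phi^B\circ\Psi$, which is $\mathcal{TB}_{g,1}$. By construction $h = \xi_b\, f\, l$, which gives the required factorization.

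There is no real obstacle here; the statement is essentially a diagram chase powered by (i)~the semi-direct product identity $(G,S)=(G,0)(Id,S)$, (ii)~the surjectivity of $\mathcal{AB}_{g,1}\to GL_g(\mathbb{Z})$ from Lemma~\ref{lem_AB}, and (iii)~the surjectivity of $\mathcal{L}_{g,1}\to S_g(\mathbb{Z})$ just established in Proposition~\ref{prop_surj_Luft}. The only point requiring a moment of care is to verify that the image of $\mathcal{AB}_{g,1}$ under $\phi^B\circ\Psi$ is really the ''diagonal'' copy $GL_g(\mathbb{Z})\times\{0\}$, i.e.\ that elements of $\mathcal{AB}_{g,1}$ act on homology preserving both Lagrangians $A$ and $B$; this follows immediately from the characterisation of $\mathcal{AB}_{g,1}$ as diffeomorphisms of $\mathbf{S}^3$, which preserve both handlebodies and hence both Lagrangian systems of curves.
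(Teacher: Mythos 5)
Your proof is correct and follows essentially the same route as the paper: both use the short exact sequence of Lemma~\ref{lem_B}, the surjectivity of $\mathcal{AB}_{g,1}\to GL_g(\mathbb{Z})$ (Lemma~\ref{lem_AB}) and of $\mathcal{L}_{g,1}\to S_g(\mathbb{Z})$ (Proposition~\ref{prop_surj_Luft}), and observe that $h\,l^{-1}f^{-1}$ lands in the kernel $\mathcal{TB}_{g,1}$. You merely make explicit the semi-direct product decomposition $(G,S)=(G,0)(Id,S)$ that the paper leaves implicit.
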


\begin{proof}
Recall that by Lemma \eqref{lem_B}, we have the following short exact sequence:
\begin{equation}
\label{ses_B}
\xymatrix@C=10mm@R=13mm{1 \ar@{->}[r] & \mathcal{TB}_{g,1} \ar@{->}[r] & \mathcal{B}_{g,1} \ar@{->}[r] & GL_g(\mathbb{Z})\ltimes S_g(\mathbb{Z}) \ar@{->}[r] & 1 }
.\end{equation}
Moreover, by Lemma \eqref{lem_AB} and Proposition \eqref{prop_surj_Luft} we know that $\phi^B\circ\Psi(\mathcal{AB}_{g,1})=GL_g(\mathbb{Z})$ and $\phi^B\circ\Psi(\mathcal{L}_{g,1})=S_g(\mathbb{Z}),$ respectively.

As a consequence, if $h\in \mathcal{B}_{g,1}$ then there exist $f\in \mathcal{AB}_{g,1},$ $l\in \mathcal{L}_{g,1}$ such that
$$\Psi(hl^{-1}f^{-1})=Id,$$
and by the short exact sequence \eqref{ses_B}, we get that there exists an element $\xi_b\in \mathcal{TB}_{g,1}$ such that
$\xi_b=hl^{-1}f^{-1},$ so $h=\xi_b f l.$

\end{proof}

\begin{defi}[Section 4 in \cite{pitsch}]
A contractible bounding pair map, abbreviated CBP-twist, is a map of the form $T_\beta T_{\beta'}^{-1},$ where $\beta$ and $\beta'$ are two homologous non-isotopic and disjoint simple closed curves on $\Sigma_{g,1}$ such that each one bounds a properly embedded disc in $\mathcal{H}_g.$
\end{defi}

\begin{prop}[Theorem 9 in \cite{pitsch}]
\label{prop_Luft_gen}
The Luft-Torelli group $\mathcal{LTB}_{g,1}$ is generated by CBP-twists.
\end{prop}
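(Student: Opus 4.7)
The plan is to combine Luft's identification from \cite{luft} (namely, that $\mathcal{L}_{g,1}$ is generated by Dehn twists about simple closed curves on $\partial\mathcal{H}_g$ that bound properly embedded discs in $\mathcal{H}_g$) with the short exact sequence of Proposition \ref{prop_surj_Luft}.

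First I would verify the easy inclusion: any CBP-twist $T_\beta T_{\beta'}^{-1}$ belongs to $\mathcal{LTB}_{g,1}$. Since $\beta$ and $\beta'$ are homologous, the product is in $\mathcal{T}_{g,1}$; and since each of $\beta,\beta'$ bounds a disc in $\mathcal{H}_g$, each Dehn twist is supported in a collar of its bounding disc and so extends to $\mathcal{H}_g$ as a diffeomorphism that is trivial on $\pi_1(\mathcal{H}_g,x_0)$. Therefore the product also lies in $\mathcal{L}_{g,1}$.

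For the converse, given $h\in\mathcal{LTB}_{g,1}$, Luft's theorem allows me to write $h=T_{\gamma_1}^{\epsilon_1}\cdots T_{\gamma_n}^{\epsilon_n}$ with each $\gamma_i$ disc-bounding. Each class $[\gamma_i]$ lies in the Lagrangian $B\subset H$, and a direct computation as in the proof of Proposition \ref{prop_surj_Luft} gives $\phi^B\circ\Psi(T_{\gamma_i}^{\epsilon_i})$ as (up to sign) the symmetric rank-one matrix $\epsilon_i\,[\gamma_i]\,[\gamma_i]^t$ in the $b$-basis. The hypothesis $h\in\mathcal{LTB}_{g,1}$ then translates to
\begin{equation*}
\sum_{i=1}^n \epsilon_i\,[\gamma_i]\,[\gamma_i]^t \;=\; 0 \quad\text{in } S_g(\mathbb{Z}).
\end{equation*}
The heart of the proof is then to use this numerical vanishing to pair up the $T_{\gamma_i}^{\epsilon_i}$'s into CBP-twists. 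My approach is: for each $i$, pick a ``reference'' disc-bounding simple closed curve $\delta_i$ with $[\delta_i]=[\gamma_i]$, disjoint from and non-isotopic to $\gamma_i$ (obtained by band-summing the bounding disc of $\gamma_i$ with a suitable null-homologous loop in $\mathcal{H}_g$); then $T_{\gamma_i}T_{\delta_i}^{-1}$ is a CBP-twist, so modulo the subgroup generated by CBP-twists we may replace each $T_{\gamma_i}$ by $T_{\delta_i}$. This reduces the problem to a product of reference-twists indexed by a controlled finite list of homology classes, where the vanishing of the symmetric sum yields an explicit pairing into CBP-factors.

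The main obstacle is the bookkeeping when commuting the CBP-correction factors past the remaining Dehn twists: each commutation produces a conjugate $\phi\,(T_\beta T_{\beta'}^{-1})\phi^{-1}=T_{\phi(\beta)}T_{\phi(\beta')}^{-1}$ with $\phi\in\mathcal{L}_{g,1}\subset\mathcal{B}_{g,1}$. Since $\phi$ extends to a diffeomorphism of $\mathcal{H}_g$ it sends disc-bounding curves to disc-bounding curves, and since $\phi_*$ acts as the identity on $B\supset\{[\beta]=[\beta']\}$ the classes $[\phi(\beta)]$ and $[\phi(\beta')]$ remain equal. Hence every conjugate is again a CBP-twist, so the subgroup generated by CBP-twists is stable under these commutations. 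An induction on the length $n$, together with the vanishing above, then completes the proof.
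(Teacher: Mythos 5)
The paper gives no proof of this statement; it cites Theorem 9 of \cite{pitsch}, so I assess your proposal on its own. Your easy inclusion, the use of Luft's generating set for $\mathcal{L}_{g,1}$, the identity $\sum_i\epsilon_i[\gamma_i][\gamma_i]^t=0$ in $S_g(\mathbb{Z})$, and the normality of the CBP-subgroup under conjugation by $\mathcal{L}_{g,1}$ are all correct, and you are right that the commutation bookkeeping is harmless.

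The genuine gap is the final sentence, ``the vanishing of the symmetric sum yields an explicit pairing into CBP-factors,'' and it is not a matter of bookkeeping. First, your $\delta_i$ is required to be disjoint from $\gamma_i$, so it depends on the curve $\gamma_i$ and not merely on the class $[\gamma_i]$; there is no canonical list of ``reference-twists indexed by homology classes,'' and two occurrences of the same class will in general be replaced by intersecting reference curves. Second, and more fundamentally, not every vanishing signed sum of rank-one symmetric integer matrices decomposes into cancelling pairs. With the primitive vectors $v_1=e_1$, $v_2=e_2$, $v_3=e_1+e_2$, $v_4=e_1-e_2$ one has
\begin{equation*}
v_3v_3^t + v_4v_4^t - v_1v_1^t - v_1v_1^t - v_2v_2^t - v_2v_2^t = 0
\end{equation*}
in $S_g(\mathbb{Z})$, yet the classes $v_1,\dots,v_4$ are pairwise distinct, so no grouping into pairs $T_\beta T_{\beta'}^{-1}$ with $[\beta]=[\beta']$ is possible. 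A single CBP-twist contributes only the trivial relation $vv^t-vv^t=0$ on symplectic images; realizing the nontrivial relations of $S_g(\mathbb{Z})$ by products of CBP-twists is exactly the hard content of the theorem. To close the argument you would need, at a minimum, a lemma that for \emph{any} two homologous disc-bounding curves $\gamma,\gamma'$ (disjoint or not) the element $T_\gamma T_{\gamma'}^{-1}$ lies in the CBP-subgroup --- for instance by interpolating through a chain of pairwise disjoint, disc-bounding, homologous representatives --- together with a separate argument, typically via lantern relations among disc-bounding curves, that the remaining $S_g(\mathbb{Z})$-relations lift to the CBP-subgroup. Neither step is present in your proposal.
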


\begin{lema}
\label{lem_CBP_discs}
Let $T_\beta T_{\beta'}^{-1}$ be a CBP-twist of genus $k,$ and let $D_\beta,$ $D_{\beta'}$ be two essential proper embedded discs in $\mathcal{H}_g$ with boundaries $\beta,$ $\beta'$ respectively. Then there exist $g-1$ essential proper embedded discs $D_{\beta_1},\ldots ,D_{\beta_{g-1}}$ in $\mathcal{H}_g,$ with boundaries $\beta_1,\ldots ,\beta_{g-1}$ respectively, such that
$$Int(\mathcal{H}_g)-N\left( D_\beta \cup D_{\beta'} \cup D_{\beta_1} \cup \ldots \cup D_{\beta_{g-1}}\right)$$
is the disjoint union of two open $3$-balls.
\end{lema}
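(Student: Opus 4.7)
The plan is to reduce the problem to producing minimal disc systems for the two subhandlebodies obtained by cutting $\mathcal{H}_g$ along $D_\beta\cup D_{\beta'}$. First I would analyse the complement $M=\mathcal{H}_g\setminus N(D_\beta\cup D_{\beta'})$. Since $\beta$ and $\beta'$ form a bounding pair, $\beta\cup\beta'$ separates $\Sigma_g$ into two subsurfaces $\Sigma_{k,2}$ and $\Sigma_{g-k-1,2}$ of genera $k$ and $g-k-1$, each with two boundary circles. Capping these circles off with the four parallel copies of $D_\beta$ and $D_{\beta'}$ produced by the cut yields $\partial M=\Sigma_k\sqcup\Sigma_{g-k-1}$, so $M$ decomposes as a disjoint union of two connected components $V_1$ and $V_2$.

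Next I would prove that $V_1\cong\mathcal{H}_k$ and $V_2\cong\mathcal{H}_{g-k-1}$. The key input is the standard fact that cutting a handlebody along an essential properly embedded disc yields a disjoint union of handlebodies, which can be established by the same Loop Theorem plus Euler characteristic reduction used in the proof of Proposition~\ref{prop_minsys}. Concretely, cutting $\mathcal{H}_g$ along $D_\beta$ (non-separating, since $\beta$ is non-separating on $\Sigma_g$) yields a handlebody of genus $g-1$; then cutting the result along $D_{\beta'}$, which now separates because $\beta\cup\beta'$ already separated $\Sigma_g$, splits it into two handlebodies whose boundaries have been identified as $\Sigma_k$ and $\Sigma_{g-k-1}$, which forces their genera.

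Then I would apply the earlier lemma stating that every handlebody admits a system of discs to $V_1$ and $V_2$ separately. This yields $k$ pairwise disjoint essential discs $D_{\beta_1},\ldots,D_{\beta_k}$ for $V_1$ and $g-k-1$ such discs $D_{\beta_{k+1}},\ldots,D_{\beta_{g-1}}$ for $V_2$, whose complement in each $V_j$ is an open $3$-ball. A small boundary isotopy pushes the boundary of each new disc off the disc caps of $\partial V_j$; this is legitimate because those caps are themselves discs, hence each boundary curve can be slid into the portion of $\partial V_j$ originating from $\partial\mathcal{H}_g$. In particular, the $g-1$ new discs are properly embedded in $\mathcal{H}_g$ and disjoint from $D_\beta\cup D_{\beta'}$.

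Putting everything together, the family $\{D_\beta,D_{\beta'},D_{\beta_1},\ldots,D_{\beta_{g-1}}\}$ consists of $g+1$ pairwise disjoint essential properly embedded discs in $\mathcal{H}_g$, and cutting $\mathcal{H}_g$ along the whole family is the same as first cutting along $D_\beta\cup D_{\beta'}$ to obtain $V_1\sqcup V_2$ and then cutting each $V_j$ along its minimal disc system, producing two open $3$-balls. The main obstacle is step two: one has to make rigorous that the two components of $M$ are in fact handlebodies of the predicted genera, which I would do by adapting inductively the Loop Theorem argument that appears in the proof of Proposition~\ref{prop_minsys}.
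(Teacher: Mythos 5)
Your proof is correct, but it takes a genuinely different route from the one in the paper. You cut $\mathcal{H}_g$ directly along the two discs $D_\beta$ and $D_{\beta'}$: the first cut is non-separating (since $\beta$ is), and in the resulting genus $g-1$ handlebody the curve $\beta'$ becomes separating (because $\beta\cup\beta'$ already separated $\Sigma_g$), so the second cut produces two handlebodies of genera $k$ and $g-k-1$; you then take a minimal disc system in each piece. The paper instead exploits the standard embedding of $\Sigma_{g,1}$ into $\mathbf{S}^3$: it picks a curve $\alpha$ bounding a disc $D_\alpha$ in the \emph{outer} handlebody $-\mathcal{H}_g$ meeting each of $\beta,\beta'$ exactly once, takes a regular neighbourhood of $D_\beta\cup D_\alpha\cup D_{\beta'}$ (a $3$-ball), observes that its frontier meets $\Sigma_{g,1}$ in two \emph{separating} simple closed curves, and then applies Proposition~\ref{prop_minsys} to those. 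In other words, the paper first converts the non-separating bounding pair into separating curves so that the already-established Proposition~\ref{prop_minsys} can be invoked, whereas you skip that conversion and argue directly in $\mathcal{H}_g$. Your version is cleaner in that it never leaves the inner handlebody and does not use the embedding; the cost is that you must re-establish (via the Loop Theorem and Euler characteristic computation you gesture at, essentially the same argument already written out in the proof of Proposition~\ref{prop_minsys}) that cutting a handlebody along an essential properly embedded disc again yields handlebodies. Both proofs implicitly assume $D_\beta$ and $D_{\beta'}$ are disjoint; this can always be arranged by an innermost-circle argument since $\mathcal{H}_g$ is irreducible and $\beta\cap\beta'=\emptyset$, and is worth stating explicitly.
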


\begin{proof}
Let $D_\beta,$ $D_{\beta'}$ be essential proper embedded discs with boundaries $\beta,$ $\beta'$ respectively.
Since our embedding of $\Sigma_{g,1}$ into $\mathbf{S}^3$ is standard, there exists a simple closed curve $\alpha$ in $\Sigma_{g,1}$ which bounds a properly embedded disc $D_\alpha$ on $-\mathcal{H}_g$ (the outer handlebody) and which intersects each of the curves $\beta$ and $\beta'$ in exactly one point. Consider a regular neighbourhood of the union $D_\beta\cup D_\alpha \cup D_{\beta'}.$ It is a $3$-ball,
whose intersection with $\Sigma_{g,1}$ is the disjoint union of two bounding simple closed curves, which are in $\mathcal{LTB}_{g,1}$ by construction.
Applying Proposition \eqref{prop_minsys} to these curves we get the result.
\end{proof}

\begin{lema}[Lema 2.9 in \cite{jes}]
\label{lem_ext_homeo}
Let $H,$ $H'$ be handlebodies and let $\{ D_1,\ldots ,D_m \},$ $\{D'_1,\ldots ,D'_m\}$ be systems of disks for $H,$ $H',$ respectively. Assume that there is a homeomorphism $\phi:\partial H \rightarrow \partial H'$ such that for each $i,$
$\phi(\partial D_i)=\partial D'_i.$ Then there is a homeomorphism $\psi: H\rightarrow H'$ such that $\psi\mid_{\partial H}=\phi.$
\end{lema}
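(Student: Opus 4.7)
The plan is to build $\psi$ in three stages, using the Alexander trick (coning) as the only nontrivial ingredient. First I extend $\phi$ from each boundary circle $\partial D_i$ to a homeomorphism of disks $\phi_i : D_i \to D'_i$; second, I use that cutting along a system of disks decomposes each handlebody as a disjoint union of $3$-balls; third, I apply the Alexander trick for $S^2 \to S^2$ extending to $B^3 \to B^3$ on each of these balls, then reassemble.

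More precisely, each $\phi|_{\partial D_i} : \partial D_i \to \partial D'_i$ is a homeomorphism of circles, which by coning (the $2$-dimensional Alexander trick) extends to a homeomorphism $\phi_i : D_i \to D'_i$. Together with $\phi$ these assemble into a homeomorphism
\[
\Phi : \partial H \cup \bigcup_i D_i \longrightarrow \partial H' \cup \bigcup_i D'_i .
\]
Let $N$ (resp. $N'$) be a small regular neighbourhood of $\bigcup_i D_i$ in $H$ (resp. of $\bigcup_i D'_i$ in $H'$). By the definition of a system of disks, the closure of $H \setminus N$ is a disjoint union of closed $3$-balls $B_1, \ldots , B_k$, and analogously the closure of $H' \setminus N'$ is a disjoint union $B'_1, \ldots , B'_{k'}$. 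Each boundary $\partial B_j$ is a $2$-sphere built from a piece of $\partial H$ together with two parallel copies of each disk $D_i$ that lies in the closure of $B_j$; since $\Phi$ is a homeomorphism that preserves this decomposition, it sends $\partial B_j$ homeomorphically onto the $2$-sphere obtained in the same fashion on the $H'$ side, and after relabelling we may assume $k'=k$ and $\Phi(\partial B_j) = \partial B'_j$.

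Finally, by the Alexander trick in dimension $3$, each restriction $\Phi|_{\partial B_j} : \partial B_j \to \partial B'_j$ extends to a homeomorphism $\psi_j : B_j \to B'_j$: fixing identifications with the standard ball, the cone formula $tx \mapsto t \cdot \Phi|_{\partial B_j}(x)$ for $t \in [0,1]$ and $x \in S^2$ gives a continuous bijection of the compact space $B^3$ onto itself, hence a homeomorphism. Gluing the $\psi_j$'s together (their definitions agree on the shared copies of the disks $D_i$ because they all restrict there to the same $\phi_i$) produces the required homeomorphism $\psi : H \to H'$ with $\psi|_{\partial H} = \phi$.

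The main point requiring care is the book-keeping in the cutting step: one must verify that the complementary balls of $H$ and $H'$ can be indexed so that $\Phi(\partial B_j) = \partial B'_j$. This is forced by the continuity of $\Phi$, which sends each connected component of $\partial H \cup \bigcup_i D_i$ regarded as the ``boundary surface'' of $B_j$ to the analogous component on the $H'$ side; in particular $\Phi$ realises a bijection between the sets of complementary balls. The Alexander extension itself is routine in the topological category thanks to the compactness of $B^3$.
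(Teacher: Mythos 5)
The paper states this lemma without proof, simply citing it as Lemma 2.9 of \cite{jes}; there is no in-paper argument to compare against. Your proof correctly reconstructs the standard cut-along-disks-and-apply-the-Alexander-trick argument that appears in that reference, and the reasoning is sound, though you could streamline the reassembly step by cutting directly along $\bigcup_i D_i$ (so the resulting balls glue back to give all of $H$) rather than passing through the closure of the complement of a regular neighbourhood $N$, which leaves the extension of $\psi$ over $N$ implicit.
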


\begin{prop}
\label{prop_trans_B}
$\mathcal{B}_{g,1}$ acts transitively on CBP-twists of a given genus.
\end{prop}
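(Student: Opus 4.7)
The plan is to construct an explicit conjugating element in $\mathcal{B}_{g,1}$ by matching up disc systems and invoking the change of coordinates principle on surfaces. Let $T_\beta T_{\beta'}^{-1}$ and $T_\gamma T_{\gamma'}^{-1}$ be two CBP-twists of the same genus $k$, and fix essential properly embedded discs $D_\beta, D_{\beta'}, D_\gamma, D_{\gamma'}$ in $\mathcal{H}_g$ with boundaries $\beta, \beta', \gamma, \gamma'$ respectively. Applying Lemma \ref{lem_CBP_discs} to each pair, I complete them to systems of $g+1$ discs
$$\mathcal{D}_1 = \{D_\beta, D_{\beta'}, D_{\beta_1}, \ldots, D_{\beta_{g-1}}\}, \qquad \mathcal{D}_2 = \{D_\gamma, D_{\gamma'}, D_{\gamma_1}, \ldots, D_{\gamma_{g-1}}\},$$
whose complements in $\mathcal{H}_g$ are each the disjoint union of two open $3$-balls.

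Let $\Gamma_i \subset \Sigma_{g,1}$ denote the boundary-curve configuration of $\mathcal{D}_i$. Cutting $\Sigma_{g,1}$ along $\Gamma_i$ produces two planar subsurfaces whose boundary components are in bijection with the discs in $\mathcal{D}_i$. Because both CBP-twists have the same genus $k$, the pairs $(\beta,\beta')$ and $(\gamma,\gamma')$ each cobound subsurfaces of genus $k$ in $\Sigma_{g,1}$, so the two cut configurations have identical combinatorial type. The change of coordinates principle for $\mathcal{M}_{g,1}$ then yields an orientation-preserving diffeomorphism $\phi \in \mathrm{Diff}^+(\Sigma_{g,1}, \partial \Sigma_{g,1})$, fixing the marked disk pointwise, that sends $\Gamma_1$ onto $\Gamma_2$ with the matching $\phi(\beta) = \gamma$, $\phi(\beta') = \gamma'$, and $\phi(\beta_i) = \gamma_i$.

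By Lemma \ref{lem_ext_homeo}, this boundary diffeomorphism extends to a homeomorphism $\psi: \mathcal{H}_g \to \mathcal{H}_g$, producing an element $h \in \mathcal{B}_{g,1}$ whose underlying surface map is $\phi$. Invoking the standard conjugation formula $h T_\alpha h^{-1} = T_{h(\alpha)}$ for Dehn twists, I conclude
$$h\,(T_\beta T_{\beta'}^{-1})\,h^{-1} = T_{h(\beta)} T_{h(\beta')}^{-1} = T_\gamma T_{\gamma'}^{-1},$$
which is the desired transitivity.

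The main obstacle will be the change of coordinates step: one must verify that, up to replacing the auxiliary discs $D_{\beta_i}$ and $D_{\gamma_i}$, the two configurations $\Gamma_1$ and $\Gamma_2$ lie in the same $\mathcal{M}_{g,1}$-orbit with a marked labelling of the curves and a marked orientation (so that positive twists match positive twists, not inverses). This reduces to classifying systems of $g+1$ essential curves on $\Sigma_{g,1}$ whose complement has the prescribed planar topological type, with $(\beta,\beta')$ bounding a genus-$k$ subsurface; the freedom afforded by Lemma \ref{lem_CBP_discs} in the choice of the extra $g-1$ curves should be enough to normalize both systems to a common model.
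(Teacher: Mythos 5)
Your proof follows essentially the same route as the paper's: apply Lemma \eqref{lem_CBP_discs} to complete the CBP discs to full disc systems, invoke the change of coordinates principle to find a surface diffeomorphism matching the two curve configurations, and use Lemma \eqref{lem_ext_homeo} to extend it to a homeomorphism of the handlebody, landing in $\mathcal{B}_{g,1}$. The closing paragraph, where you flag that the change of coordinates step requires verifying that the two labelled, oriented curve configurations actually lie in the same $\mathcal{M}_{g,1}$-orbit (and not merely have abstractly homeomorphic complements), is a point the paper passes over silently; your version is, if anything, more honest about where the work is concentrated.
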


\begin{proof}
Let $T_\zeta T^{-1}_{\zeta'},$ $T_\beta T_{\beta'}^{-1}$ be CBP-twists of genus $k$ on $\Sigma_{g,1}.$ We prove that there is an element $\psi\in\mathcal{B}_{g,1}$ such that $\psi(\beta)=\zeta,$ $\psi(\beta')=\zeta'$ and as a consequence we will get that the following equality: 
$\psi(T_\beta T_{\beta'}^{-1})\psi^{-1}=T_{\psi(\beta)} T_{\psi(\beta')}^{-1}=T_\zeta T_{\zeta'}^{-1}.$

Given $T_\beta T_{\beta'}^{-1},$ $T_\zeta T_{\zeta'}^{-1}$ CBP-twists of genus $k,$
by Lemma \eqref{lem_CBP_discs}, we know that there exist $g-1$ essential proper embedded discs $D_{\beta_1},\ldots ,D_{\beta_{g-1}}$ in $\mathcal{H}_g,$ with boundaries $\beta_1,\ldots ,\beta_{g-1}$ respectively, such that
$$Int(\mathcal{H}_g)-N\left( D_\beta \cup D_{\beta'} \cup D_{\beta_1} \cup \ldots \cup D_{\beta_{g-1}}\right)$$
is the disjoint union of two open $3$-balls,
and there exist $g-1$ essential proper embedded discs $D_{\zeta_1},\ldots ,D_{\zeta_{g-1}}$ in $\mathcal{H}_g,$ with boundaries $\zeta_1,\ldots ,\zeta_{g-1}$ respectively, such that
$$Int(\mathcal{H}_g)-N\left( D_\zeta \cup D_{\zeta'} \cup D_{\zeta_1} \cup \ldots \cup D_{\zeta_{g-1}}\right)$$
is the disjoint union of two open $3$-balls.

Observe that in particular, $\{D_\beta, D_{\beta'}, D_{\beta_1} , \ldots , D_{\beta_{g-1}}\}$ and $\{D_\zeta, D_{\zeta'}, D_{\zeta_1} , \ldots , D_{\zeta_{g-1}}\}$ are two system of disks for $\mathcal{H}_g.$
 
Since $T_\zeta T^{-1}_{\zeta'},$ $T_\beta T_{\beta'}^{-1}$ are BP-maps of the same genus, by the change of coordinates principle,
there exists a homeomorphism $\phi$ from $\Sigma_{g,1}$ to $\Sigma_{g,1}$ sending $\{\beta, \beta', \beta_1 , \ldots , \beta_{g-1}\}$ to $\{\zeta, \zeta', \zeta_1 , \ldots , \zeta_{g-1}\}$ respectively.

By Lemma \eqref{lem_ext_homeo} we get that there exists a homeomorphism $\psi$ from $\mathcal{H}_g$ to $\mathcal{H}_g$ such that $\psi\mid_{\partial \mathcal{H}_g}=\phi.$
Therefore $\phi$ extends to $\mathcal{H}_{g,1}$ i.e. $\phi\in \mathcal{B}_{g,1}.$
\end{proof}

\begin{prop}
\label{prop_CBP_1}
Every CBP-twist of genus $k$ is a product of $k$ CBP-twists of genus $1.$
\end{prop}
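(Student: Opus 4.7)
The plan is to reduce to a single standard model via Proposition \eqref{prop_trans_B} and then exhibit the desired factorisation as a formal telescoping product. Because $\mathcal{B}_{g,1}$ acts transitively on CBP-twists of fixed genus, and because conjugation by an element of $\mathcal{B}_{g,1}$ sends a CBP-twist of genus one to a CBP-twist of genus one, it suffices to exhibit a single CBP-twist $T_\beta T_{\beta'}^{-1}$ of genus $k$ which decomposes as a product of $k$ CBP-twists of genus one; the same decomposition then transfers to any other CBP-twist of genus $k$ by conjugation.

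The heart of the argument is a geometric subdivision of the cobounded subsurface. Let $F \subset \Sigma_{g,1}$ denote the genus-$k$ subsurface cobounded by $\beta$ and $\beta'$, and let $D_\beta, D_{\beta'} \subset \mathcal{H}_g$ be the essential bounding discs. I would construct $k-1$ pairwise disjoint simple closed curves $\beta_1, \ldots, \beta_{k-1}$ in the interior of $F$, each bounding a properly embedded disc $D_{\beta_i}$ in $\mathcal{H}_g$, such that on setting $\beta_0 := \beta$ and $\beta_k := \beta'$, consecutive pairs $(\beta_i, \beta_{i+1})$ cobound a subsurface of genus one. To produce $\beta_1$, one considers the submanifold of $\mathcal{H}_g$ cut off by $D_\beta \cup D_{\beta'}$ whose boundary meets $F$, and applies Proposition \eqref{prop_minsys} in the spirit of Lemma \eqref{lem_CBP_discs} to extract a properly embedded essential disc whose boundary is a non-separating curve in $F$ cutting off a genus-one piece; this boundary is $\beta_1$. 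Iterating inside the remaining portion of $F$ produces the whole sequence.

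Once the subdividing curves are in place, each pair $(\beta_i, \beta_{i+1})$ is a CBP-pair of genus one by construction, and the factorisation follows from the formal telescoping identity
\[
\prod_{i=0}^{k-1} \bigl( T_{\beta_i} T_{\beta_{i+1}}^{-1} \bigr) \;=\; T_{\beta_0} \left( \prod_{i=1}^{k-1} T_{\beta_i}^{-1} T_{\beta_i} \right) T_{\beta_k}^{-1} \;=\; T_\beta T_{\beta'}^{-1},
\]
which requires no commutativity of the individual twists.

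The main obstacle is the geometric construction of the intermediate bounding curves so that the family $\{\beta_i\}_{i=0}^{k}$ is simultaneously pairwise disjoint, homologous to $\beta$, and has every $\beta_i$ bounding a properly embedded disc in $\mathcal{H}_g$ that cuts off a genus-one piece of $F$ from its predecessor. This is transparent in the standard planar model where $\beta, \beta'$ enclose $k$ consecutive handles of $\mathcal{H}_g$, but a clean argument should be framed as an induction on $k$, invoking Proposition \eqref{prop_minsys} at each step to produce a new bounding disc disjoint from those already selected, so that the successive boundary curves remain disjoint throughout the construction.
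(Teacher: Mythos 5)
Your proposal is correct and follows essentially the same route as the paper: reduce to a single model via Proposition \eqref{prop_trans_B}, interpolate $\beta$ and $\beta'$ by intermediate CBP-bounding curves, and telescope. The only difference is that the paper sidesteps the inductive disc construction you flag as the main obstacle by simply exhibiting the model curves $\zeta_0,\ldots,\zeta_k$ explicitly in a figure for the standardly embedded surface, so no appeal to Proposition \eqref{prop_minsys} is needed at this step.
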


\begin{proof}
Let $T_\beta T_{\beta'}^{-1}$ be a CBP-twist of genus $k.$ Consider the following simple closed curves in the standarly embedded surface $\Sigma_{g,1}:$

\[
\begin{tikzpicture}[scale=.5]
\draw[very thick] (-12,-2) -- (1.5,-2);
\draw[very thick] (-12,2) -- (1.5,2);
\draw[thick, dotted] (-7.5,0) -- (-6.5,0);
\draw[very thick] (-12,2) arc [radius=2, start angle=90, end angle=270];

\draw[very thick] (-12,0) circle [radius=.4];
\draw[very thick] (-9.5,0) circle [radius=.4];
\draw[very thick] (-4.5,0) circle [radius=.4];
\draw[very thick] (-2,0) circle [radius=.4];

\draw[thick,dashed] (-2,0.4) to [out=70,in=-70] (-2,2);
\draw[thick] (-2,0.4) to [out=110,in=-110] (-2,2);

\draw[thick,dashed] (-2,-0.4) to [out=-70,in=70] (-2,-2);
\draw[thick] (-2,-0.4) to [out=-110,in=110] (-2,-2);

\draw[thick] (-2.4,0) to [out=90,in=0] (-4.5,1.6) to [out=180,in=0] (-9.5,1.6) to [out=180,in=90] (-11,0) to [out=-90,in=110] (-10.8,-2);
\draw[thick, dashed] (-2.4,0) to [out=90,in=0] (-4.5,1.2) to [out=180,in=0] (-9.5,1.2) to [out=180,in=90] (-10.6,0) to [out=-90,in=70] (-10.8,-2);

\draw[thick] (-2.4,0) to [out=90,in=0] (-4.5,1.3) to [out=180,in=0] (-7,1.3) to [out=180,in=90] (-8.5,0) to [out=-90,in=110] (-8.3,-2);
\draw[thick, dashed] (-2.4,0) to [out=90,in=0] (-4.5,0.9) to [out=180,in=0] (-7,0.9) to [out=180,in=90] (-8.1,0) to [out=-90,in=70] (-8.3,-2);

\draw[thick] (-2.4,0) to [out=90,in=0] (-4.5,1) to [out=180,in=90] (-6,0) to [out=-90,in=110] (-5.8,-2);
\draw[thick, dashed] (-2.4,0) to [out=90,in=0] (-4.5,0.6) to [out=180,in=90] (-5.6,0) to [out=-90,in=70] (-5.8,-2);

\node [above] at (-2,2) {$\zeta$};
\node [below] at (-2,-2) {$\zeta'$};
\node [below] at (-10.8,-2) {$\zeta_1$};
\node [below] at (-8.3,-2) {$\zeta_2$};
\node [below] at (-5.8,-2) {$\zeta_{k-1}$};

\draw[thick, dotted] (-7.5,-2.6) -- (-7,-2.6);

\draw[thick,pattern=north west lines] (1.5,-2) to [out=130,in=-130] (1.5,2) to [out=-50,in=50] (1.5,-2);
\end{tikzpicture}
\]
Observe that $T_\zeta T_{\zeta'}^{-1}$ is a CBP-twist of genus $k$ and for $i=0,\ldots k-1,$ $T_{\zeta_i} T_{\zeta_{i+1}}^{-1}$ are CBP-twists of genus $1,$  where $\zeta_0=\zeta,$ $\zeta_k=\zeta'.$
By Proposition \eqref{prop_trans_B}
we know that there is an element $h\in \mathcal{B}_{g,1}$ such that
$T_\beta T_{\beta'}^{-1}=hT_\zeta T_{\zeta'}^{-1}h^{-1}.$
Therefore,
\begin{align*}
T_\beta T_{\beta'}^{-1}=hT_\zeta T_{\zeta'}^{-1}h^{-1}= &
(hT_{\zeta_0} T_{\zeta_1}^{-1}h^{-1})(hT_{\zeta_1} T_{\zeta_2}^{-1}h^{-1})\cdots
(hT_{\zeta_{k-1}} T_{\zeta_k}^{-1}h^{-1})= \\
= &
(T_{h(\zeta_0)} T_{h(\zeta_1)}^{-1})(T_{h(\zeta_1)} T_{h(\zeta_2)}^{-1})\cdots
(T_{h(\zeta_{k-1})} T_{h(\zeta_k)}^{-1}).
\end{align*}
Since $\{T_{\zeta_i}T_{\zeta_{i+1}}^{-1}\}_i$ are CBP-twists of genus $1$ and $h\in \mathcal{B}_{g,1}$ then
$\{T_{h(\zeta_i)} T_{h(\zeta_{i+1})}^{-1}\}_i$ are also CBP-twists of genus $1,$ as desired.
\end{proof}

\begin{rem}
A posteriori we found that, in \cite{omori}, G. Omori had obtained independently Proposition \eqref{prop_CBP_1}.
\end{rem}

\section{From invariants to trivial cocycles}
Let $A$ be an abelian group. Denote by $A_2$ the subgroup of $2$-torsion elements.

Consider an $A$-valuated invariant of homology $3$-spheres
$$F: \mathcal{S}^3 \rightarrow A.$$
Precomposing with the canonical maps $\mathcal{T}_{g,1}\rightarrow \lim_{g\rightarrow \infty} \mathcal{T}_{g,1}/\sim \rightarrow  \mathcal{S}^3$ we get a family of maps $\{F_g\}_g$ with $F_g:\mathcal{T}_{g,1}\rightarrow A$ satisfying the following properties:
\begin{enumerate}[i)]
\item $F_{g+1}(x)=F_g(x) \quad \text{for every }x\in \mathcal{T}_{g,1},$
\item $F_g(\xi_a x\xi_b)=F_g(x) \quad \text{for every } x\in \mathcal{T}_{g,1},\;\xi_a\in \mathcal{TA}_{g,1},\;\xi_b\in \mathcal{TB}_{g,1},$
\item $F_g(\phi x \phi^{-1})=F_g(x)  \quad \text{for every }  x\in \mathcal{T}_{g,1}, \; \phi\in \mathcal{AB}_{g,1}.$
\end{enumerate}
Since the stabilization maps are injective, the map $F_g$ determines by restriction all maps $F_{g'}$ for $g'<g.$ We avoid the peculiarities of the first Torelli groups by restricting ourselves to $g\geq 3.$ We also consider the associated trivial $2$-cocycles $\{C_g\}_g,$ which measure the failure of the maps $\{F_g\}_g$ to be homomorphisms of groups, i.e.
\begin{align*}
C_g: \mathcal{T}_{g,1}\times \mathcal{T}_{g,1} & \longrightarrow A, \\
 (\phi,\psi) & \longmapsto F_g(\phi)+F_g(\psi)-F_g(\phi\psi).
\end{align*}
Since $F$ is an invariant, the family of maps $\{F_g\}_g$ satisfies the properties i), ii), iii) and as a consequence, the family of $2$-cocycles $\{C_g\}_g$ inherits the following properties:
\begin{enumerate}[(1)]
\item The $2$-cocycles $\{C_g\}_g$ are compatible with the stabilization map, i.e. the following diagram of maps commutes:
$$
\xymatrix@C=15mm@R=13mm{ \mathcal{T}_{g,1}\times \mathcal{T}_{g,1} \ar@{->}[rd]_{C_g}\ar@{^{(}->}[r] & \mathcal{T}_{g+1,1}\times \mathcal{T}_{g+1,1} \ar@{->}[d]^{C_{g+1}} \\
 & A}
$$
\item The $2$-cocycles $\{C_g\}_g$ are invariant under conjugation by elements in $\mathcal{AB}_{g,1},$ i.e. for every $\phi \in \mathcal{AB}_{g,1},$ $$C_g(\phi - \phi^{-1},\phi - \phi^{-1})=C_g(-,-),$$
\item If $\phi\in \mathcal{TA}_{g,1}$ or $\psi \in \mathcal{TB}_{g,1}$ then $C_g(\phi, \psi)=0.$
\end{enumerate}

In general there are many families of maps $\{F_g\}_g$ satisfying the properties i) - iii) that induce the same family of trivial $2$-cocycles $\{C_g\}_g.$

Recall that any two trivializations of a given trivial $2$-cocycle on $\mathcal{T}_{g,1}$ differ by an element of the group $Hom(\mathcal{T}_{g,1},A).$ Then, given two families of maps $\{F_g\}_g,$ $\{F_g'\}_g$ satisfying the properties i) - iii), we have that
$\{F_g-F_g'\}_g$ is a family of homomorphisms satisfying the same properties.
As a consequence, the number of families $\{F_g\}_g$ satisfying the properties i) - iii) that induce the same family of trivial $2$-cocycles $\{C_g\}_g.$, coincides with the number of homomorphisms in $Hom(\mathcal{T}_{g,1},A)^{\mathcal{AB}_{g,1}}$ compatible with the stabilization map, that are trivial over $\mathcal{TA}_{g,1}$ and $\mathcal{TB}_{g,1}.$
We devote the rest of this section to compute and study such homomorphisms. In order to achieve our target, we first give three algebraic lemmas.
Throughout this chapter, given an element $x\in A$ of order $2,$ set $\varepsilon^x:\mathbb{Z}/2\rightarrow A$ the homomorphism which sends $1$ to $x.$

\begin{lema}
\label{lem_B_3-inv}
For $g\geq 4,$ let $ \pi : \mathfrak{B}_3\rightarrow \mathfrak{B}_0=\mathbb{Z}/2$ be the canonical projection. Then we have the following isomorphism:
\begin{align*}
\Upsilon: A_2 &\longrightarrow Hom(\mathfrak{B}^3_{g,1},A)^{GL_g(\mathbb{Z})} \\
x & \longmapsto \varphi_g^x=\epsilon^x\circ \pi.
\end{align*}
\end{lema}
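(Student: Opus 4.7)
The plan is to identify both sides of $\Upsilon$ and reduce the problem to showing that, for $g \geq 4$, the coinvariants $(\mathfrak{B}_3)_{GL_g(\mathbb{Z}/2)}$ are generated over $\mathbb{Z}/2$ by the class of $\overline{1}$.

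Well-definedness of $\Upsilon$ and injectivity are immediate. The projection $\pi$ is $GL_g(\mathbb{Z})$-equivariant because the action of $\mathcal{AB}_{g,1}$ preserves each Lagrangian $A$, $B$, and hence also the ideal of positive-degree monomials: no $\overline{1}$-term can appear in $G\cdot\overline{A_i}$ or $G\cdot\overline{B_i}$ since $A_i\cdot A_j = B_i\cdot B_j = 0$. Evaluating at $\overline{1}$ shows that $\Upsilon(x) = 0$ forces $x = 0$. For surjectivity, note that $\mathfrak{B}_3$ is a $\mathbb{Z}/2$-vector space, so any group homomorphism $\varphi\colon \mathfrak{B}_3 \to A$ takes values in $A_2$ and the action factors through $GL_g(\mathbb{Z}/2)$. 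An invariant $\varphi$ therefore descends to $(\mathfrak{B}_3)_{GL_g(\mathbb{Z}/2)}$, and once this quotient is known to equal $(\mathbb{Z}/2)\cdot[\overline{1}]$, one obtains $\varphi = \epsilon^{\varphi(\overline{1})}\circ\pi = \Upsilon(\varphi(\overline{1}))$.

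The core of the argument is to kill every nonconstant basis monomial in the coinvariants, using elementary transvections $G = I + E_{ij}$ together with index permutations. The action sends $A_j \mapsto A_j + A_i$, $B_i \mapsto B_i + B_j$, and fixes the remaining basis vectors; combined with the relations $\overline{x+y} = \overline{x}+\overline{y}+x\cdot y$ and $\overline{x}^2 = \overline{x}$, a suitable transvection applied to a well-chosen monomial produces $G\cdot y - y$ equal, modulo already-vanishing terms, to each of $\overline{A_i}$, $\overline{B_i}$, $\overline{A_i}\overline{A_j}$, $\overline{B_i}\overline{B_j}$, $\overline{A_i}\overline{B_j}$ (with $i\neq j$), the pure cubic monomials, and the mixed cubic monomials $\overline{A_i}\overline{A_j}\overline{B_k}$, $\overline{A_i}\overline{B_j}\overline{B_k}$, $\overline{A_i}\overline{B_i}\overline{A_j}$, $\overline{A_i}\overline{B_i}\overline{B_j}$.

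The one delicate case, and the main obstacle, is $\overline{A_i}\overline{B_i}$, which is not killed by any transvection applied directly to it. The key step is to apply $G = I + E_{ji}$ to the cubic monomial $\overline{A_i}\overline{B_i}\overline{B_j}$; expanding and using $\overline{B_i}^2 = \overline{B_i}$ yields
\begin{equation*}
G\cdot(\overline{A_i}\overline{B_i}\overline{B_j}) - \overline{A_i}\overline{B_i}\overline{B_j} = \overline{A_i}\overline{B_i} + \overline{A_j}\overline{B_i}\overline{B_j} + \overline{A_j}\overline{B_i},
\end{equation*}
so in the coinvariants $\overline{A_i}\overline{B_i}$ equals a sum of two terms killed in the previous step. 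The hypothesis $g \geq 4$ enters through the need to choose auxiliary indices disjoint from those present when killing the mixed cubic monomials uniformly, and ensuring such indices exist throughout the case analysis is the main bookkeeping task. Once all nonconstant basis monomials vanish, $(\mathfrak{B}_3)_{GL_g(\mathbb{Z}/2)} = (\mathbb{Z}/2)\cdot[\overline{1}]$, which completes the proof.
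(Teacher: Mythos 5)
Your proof is correct and takes essentially the same approach as the paper: surjectivity is established by killing all nonconstant basis monomials via permutations and elementary transvections, with the transvection $G = I + E_{ji}$ applied to $\overline{A_i}\overline{B_i}\overline{B_j}$ providing exactly the paper's key relation (its equation for $\varphi_g(\overline{A_1}\overline{B_1}\overline{B_2})$) that handles the one monomial $\overline{A_i}\overline{B_i}$ that no single transvection reaches directly. Your coinvariants reformulation $(\mathfrak{B}_3)_{GL_g(\mathbb{Z}/2)} \cong \mathbb{Z}/2\cdot[\overline{1}]$ is a clean dual packaging of the paper's statement that an invariant homomorphism vanishes on all nonconstant monomials, but the underlying computation is the same.
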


\begin{proof}
First of all notice that $\Upsilon$ is well defined because $\epsilon^x$ and the canonical projection $ \pi : \mathfrak{B}_3\rightarrow \mathfrak{B}_0$ are $GL_g(\mathbb{Z})$-invariant.
Moreover, it is clear that $\Upsilon$ is injective.
Next we show that $\Upsilon$ is surjective.

Let $\varphi_g\in Hom(\mathfrak{B}^3_{g,1},A)^{GL_g(\mathbb{Z})},$ we prove that $\varphi_g=\varphi_g^x$ for some $x\in A_2.$

Consider $f$ the matrix
$\left(\begin{smallmatrix}
G & 0 \\
0 & {}^tG^{-1}
\end{smallmatrix}\right)$
with $G=(1i)(2j)(3k)\in \mathfrak{S}_g\subset GL_g(\mathbb{Z})$ for $i,j,k$ pairwise different.
Then, for $Z_l=A_l$ or $B_l$ for each $l$, we have the following equalities
\begin{align}
 & \varphi_g(\overline{Z_1})=\varphi_g(\overline{f(Z_1)})=\varphi_g(\overline{Z_i}), \label{eq:I} \\
 & \varphi_g(\overline{Z_1}\overline{Z_2})=\varphi_g(\overline{f(Z_1)}\overline{f(Z_2)})=\varphi_g(\overline{Z_i}\overline{Z_j}) ,\label{eq:II}\\
 & \varphi_g(\overline{A_1}\overline{B_1})=\varphi_g(\overline{f(A_1)}\overline{f(B_1)})=\varphi_g(\overline{A_i}\overline{B_i}),\label{eq:III}\\
 & \varphi_g(\overline{Z_1}\overline{Z_2}\overline{Z_3})=\varphi_g(\overline{f(Z_1)}\overline{f(Z_2)}\overline{f(Z_3)})=\varphi_g(\overline{Z_i}\overline{Z_j}\overline{Z_k}) ,\label{eq:IV}\\
 & \varphi_g(\overline{A_1}\overline{B_1}\overline{Z_2})=\varphi_g(\overline{f(A_1)}\overline{f(B_1)}\overline{f(Z_2)})=\varphi_g(\overline{A_i}\overline{B_i}\overline{Z_j}).\label{eq:V}
\end{align}

Next, consider $f$ the matrix
$\left(\begin{smallmatrix}
G & 0 \\
0 & {}^tG^{-1}
\end{smallmatrix}\right)$
with $G\in GL_g(\mathbb{Z})$ the matrix with $1$'s at the diagonal and position $(2,1)$ and $0$'s elsewhere.
Then we have the following equalities
\begin{equation}
\label{eq:1}
\begin{aligned}
 & \varphi_g(\overline{A_1})=\varphi_g(\overline{f(A_1)})=
 \varphi_g(\overline{A_1+A_2})=\varphi_g(\overline{A_1})+\varphi_g(\overline{A_2}), \\
 & \text{hence, } \varphi_g(\overline{A_2})=0.
 \end{aligned}
\end{equation}

\begin{equation}
\label{eq:2}
 \begin{aligned}
 & \varphi_g(\overline{B_2})=\varphi_g(\overline{f(B_2)})=
 \varphi_g(\overline{B_1+B_2})=\varphi_g(\overline{B_1})+\varphi_g(\overline{B_2}), \\
 & \text{hence, } \varphi_g(\overline{B_1})=0.
 \end{aligned}
\end{equation}
As a consequence of relations \eqref{eq:I}, \eqref{eq:1}, \eqref{eq:2}, we get that
\begin{equation}
\label{eq:Z}
\varphi_g(\overline{Z_i})=0 \text{ for all } i.
\end{equation}
In addition,
\begin{equation}
\label{eq:3}
\begin{aligned}
 & \varphi_g(\overline{A_1}\overline{Z_3})=\varphi_g(\overline{f(A_1)}\overline{f(Z_3)})=
 \varphi_g((\overline{A_1+A_2})\overline{Z_3})= \\
 &=\varphi_g(\overline{A_1}\overline{Z_3})+\varphi_g(\overline{A_2}\overline{Z_3}), \text{ hence, } \varphi_g(\overline{A_2}\overline{Z_3})=0. \end{aligned}
\end{equation}

 \begin{equation}
 \label{eq:4}
\begin{aligned}
 & \varphi_g(\overline{B_2}\overline{Z_3})=\varphi_g(\overline{f(B_2)}\overline{f(Z_3)})=
 \varphi_g((\overline{B_1+B_2})\overline{Z_3})= \\
 &=\varphi_g(\overline{B_1}\overline{Z_3})+\varphi_g(\overline{B_2}\overline{Z_3}), \text{ hence, } \varphi_g(\overline{B_1}\overline{Z_3})=0. \end{aligned}
\end{equation}

As a consequence of relations \eqref{eq:II},\eqref{eq:3},\eqref{eq:4}, we get that
\begin{equation}
\label{eq:ZZ}
\varphi_g(\overline{Z_i}\overline{Z_j})=0 \text{ for all } i,j \text{ with } i\neq j.
\end{equation}
Besides,
\begin{equation}
\label{eq:6}
\begin{aligned}
 & \varphi_g(\overline{A_3}\overline{B_3}\overline{A_1})=\varphi_g(\overline{f(A_3)}\overline{f(B_3)}\overline{f(A_1)})=
 \varphi_g(\overline{A_3}\overline{B_3}(\overline{A_1+A_2}))= \\
 & =\varphi_g(\overline{A_3}\overline{B_3}\overline{A_1})+\varphi_g(\overline{A_3}\overline{B_3}\overline{A_2}), \text{hence, } \varphi_g(\overline{A_3}\overline{B_3}\overline{A_2})=0.
 \end{aligned}
\end{equation}

\begin{equation}
\label{eq:7}
\begin{aligned}
 & \varphi_g(\overline{A_3}\overline{B_3}\overline{B_2})=\varphi_g(\overline{f(A_3)}\overline{f(B_3)}\overline{f(B_2)})=
 \varphi_g(\overline{A_3}\overline{B_3}(\overline{B_1+B_2}))= \\
 & =\varphi_g(\overline{A_3}\overline{B_3}\overline{B_1})+\varphi_g(\overline{A_3}\overline{B_3}\overline{B_2}), \text{hence, } \varphi_g(\overline{A_3}\overline{B_3}\overline{B_1})=0.
 \end{aligned}
\end{equation}
As a consequence of relations \eqref{eq:V},\eqref{eq:6},\eqref{eq:7} we get that
\begin{equation}
\label{eq:ABZ}
\varphi_g(\overline{A_i}\overline{B_i}\overline{Z_j})=0 \text{ for all } i,j \text{ with } i\neq j.
\end{equation}
Moreover,
\begin{equation}
\label{eq:9}
\begin{aligned} 
 & \varphi_g(\overline{A_1}\overline{Z_3}\overline{Z_4})=\varphi_g(\overline{f(A_1)}\overline{f(Z_3)}\overline{f(Z_4)})=
 \varphi_g((\overline{A_1+A_2})\overline{Z_3}\overline{Z_4})= \\
 &=\varphi_g(\overline{A_1}\overline{Z_3}\overline{Z_4})+\varphi_g(\overline{A_2}\overline{Z_3}\overline{Z_4}), \text{ hence, } \varphi_g(\overline{A_2}\overline{Z_3}\overline{Z_4})=0.
  \end{aligned}
\end{equation}

\begin{equation}
\label{eq:10}
\begin{aligned}
  & \varphi_g(\overline{B_2}\overline{Z_3}\overline{Z_4})=\varphi_g(\overline{f(B_2)}\overline{f(Z_3)}\overline{f(Z_4)})=
 \varphi_g((\overline{B_1+B_2})\overline{Z_3}\overline{Z_4})= \\
 &=\varphi_g(\overline{B_1}\overline{Z_3}\overline{Z_4})+\varphi_g(\overline{B_2}\overline{Z_3}\overline{Z_4}), \text{ hence, } \varphi_g(\overline{B_1}\overline{Z_3}\overline{Z_4})=0.
 \end{aligned}
\end{equation}
As a consequence of relations \eqref{eq:IV}, \eqref{eq:9}, \eqref{eq:10} we have that
$$\varphi_g(\overline{Z_i}\overline{Z_j}\overline{Z_k})=0 \text{ for all } i,j,k \text{ with } i\neq j\neq k.$$
Furthermore,
\begin{equation}
\label{eq:11}
\begin{aligned}
 & \varphi_g(\overline{A_1}\overline{B_1}\overline{B_2})=\varphi_g(\overline{f(A_1)}\overline{f(B_1)}\overline{f(B_2)})=
 \varphi_g((\overline{A_1+A_2})\overline{B_1}(\overline{B_1+B_2}))= \\
 & =\varphi_g(\overline{A_1}\overline{B_1})+\varphi_g(\overline{A_1}\overline{B_1}\overline{B_2})+\varphi_g(\overline{A_2}\overline{B_1})+\varphi_g(\overline{A_2}\overline{B_1}\overline{B_2}) \\
 & \text{Thus, } \varphi_g(\overline{A_1}\overline{B_1})=-\varphi_g(\overline{A_2}\overline{B_1})-\varphi_g(\overline{A_2}\overline{B_1}\overline{B_2}).
 \end{aligned}
\end{equation}
As a consequence of relations \eqref{eq:ZZ}, \eqref{eq:ABZ}, \eqref{eq:11}, we get that
$$\varphi_g(\overline{A_i}\overline{B_i})=0 \text{ for all } i.$$
Thus $\varphi_g$ is zero on all basis elements of $\mathfrak{B}_3$ except possibly on $\overline{1},$ which can only be sent to an element of order $\leq 2$ in $A.$
\end{proof}

\begin{lema}
\label{lem_B_2-inv}
For $g\geq 4$ there is an isomorphism
\begin{align*}
\Xi: A_2\times A_2 & \longrightarrow Hom(\mathfrak{B}^2_{g,1},A)^{GL_g(\mathbb{Z})} \\
(x_1,x_2) & \longmapsto \varphi_g^{x_1,x_2}:
\left\lbrace\begin{aligned}
\overline{1}\longmapsto x_1 \\
\overline{A_iB_i}\longmapsto x_2.
\end{aligned}\right. 
\end{align*}
\end{lema}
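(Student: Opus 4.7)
The plan is to mirror the argument of Lemma \eqref{lem_B_3-inv}, but now keeping track of the extra free parameter corresponding to the classes $\overline{A_iB_i}$ which in degree $\leq 2$ cannot be killed off by the methods of that proof. Injectivity of $\Xi$ is immediate: from $\varphi_g^{x_1,x_2}$ we recover $x_1 = \varphi_g^{x_1,x_2}(\overline{1})$ and $x_2 = \varphi_g^{x_1,x_2}(\overline{A_1B_1})$.

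For well-definedness I would extend the prescription by declaring that $\varphi_g^{x_1,x_2}$ vanishes on the remaining basis vectors $\overline{Z_i}$, $\overline{Z_iZ_j}$ (for $i\neq j$, $Z\in\{A,B\}$) and $\overline{A_iB_j}$ (for $i\neq j$), and check $GL_g(\mathbb{Z})$-invariance on a generating set (permutation matrices and elementary shears $Id+E_{ij}$). Permutations act by permuting the $\overline{A_iB_i}$'s among themselves, so invariance is automatic. For a shear, the only delicate point is that the image of an off-diagonal class $\overline{A_iB_j}$ (or $\overline{A_jB_i}$) develops cross-terms proportional to $\overline{A_iB_i}+\overline{A_jB_j}$, which the functional evaluates to $2x_2=0$; this is precisely where the assumption $x_2\in A_2$ is used. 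A similar check using $2x_1 = 0$ takes care of the scalar constant.

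For surjectivity, let $\varphi_g\in Hom(\mathfrak{B}^2_{g,1},A)^{GL_g(\mathbb{Z})}$. Since $\mathfrak{B}^2_{g,1}$ is a $\mathbb{Z}/2$-module, $\varphi_g$ automatically factors through $A_2$, so all its values lie in $A_2$. I would then transport verbatim the relations \eqref{eq:I}, \eqref{eq:II}, \eqref{eq:III}, \eqref{eq:1}, \eqref{eq:2}, \eqref{eq:3}, \eqref{eq:4} from the proof of Lemma \eqref{lem_B_3-inv}; none of them involves a degree-$3$ monomial, so they all make sense in $\mathfrak{B}^2_{g,1}$ and yield $\varphi_g(\overline{Z_i})=0$, $\varphi_g(\overline{Z_iZ_j})=0$ for $i\neq j$, and $\varphi_g(\overline{A_iB_i})$ independent of $i$ (via the permutation equalities \eqref{eq:III}). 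Setting $x_1=\varphi_g(\overline{1})$ and $x_2=\varphi_g(\overline{A_1B_1})$ we obtain $\varphi_g=\varphi_g^{x_1,x_2}$, proving surjectivity.

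The main obstacle, and the whole reason for the appearance of the second factor $A_2$, is that we no longer have the degree-$3$ relation \eqref{eq:11} of Lemma \eqref{lem_B_3-inv} (which used the monomial $\overline{A_1B_1B_2}$) to force $\varphi_g(\overline{A_iB_i})=0$. In degree $2$ this relation is simply unavailable, so the common value $\varphi_g(\overline{A_iB_i})$ survives as a free $2$-torsion parameter, accounting for the extra copy of $A_2$ in the statement compared to Lemma \eqref{lem_B_3-inv}.
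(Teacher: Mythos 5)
Your proof is correct and follows essentially the same structure as the paper's: injectivity by evaluation, surjectivity by transporting the degree-$\leq 2$ relations \eqref{eq:I}, \eqref{eq:II}, \eqref{eq:III}, \eqref{eq:1}--\eqref{eq:4} from Lemma \eqref{lem_B_3-inv}, and the correct identification that the degree-$3$ relation \eqref{eq:11} is exactly what kills $\varphi_g(\overline{A_i}\overline{B_i})$ in Lemma \eqref{lem_B_3-inv} and is simply unavailable here, so this value survives as the extra $A_2$ parameter. For well-definedness the paper argues a bit more conceptually: it decomposes $\varphi_g^{x_1,x_2}=\varphi_g^{x_1,0}+\varphi_g^{0,x_2}$, takes $\varphi_g^{x_1,0}$ to be the composite of the projection $\mathfrak{B}_2\to\mathfrak{B}_0=\mathbb{Z}/2$ with $\varepsilon^{x_1}$, and obtains invariance of $\varphi_g^{0,x_2}$ from the $Sp_{2g}$-invariance of the intersection form $\omega$; your direct verification on permutations and shears amounts to the same computation. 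One small imprecision worth flagging: the element $\overline{1}$ is fixed by the $GL_g(\mathbb{Z})$-action, so the shear computation imposes no constraint involving $x_1$ at all; the condition $x_1\in A_2$ is needed only for $\varphi_g^{x_1,x_2}$ to be a homomorphism out of the $\mathbb{F}_2$-vector space $\mathfrak{B}^2_{g,1}$, not as an invariance check, so there is no ``similar check using $2x_1=0$'' parallel to the one for $x_2$.
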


\begin{proof}
First of all we show that $\Xi$ is well-defined, i.e. $\varphi_g^{x_1,x_2}$ is $GL_g(\mathbb{Z})$-invariant.

Let $x_1$ be a $2$-torsion element of $A.$ Consider $\varphi_g^{x_1,0}$ the homomorphism given by the composition of the projection map $\mathfrak{B}_2\rightarrow \mathfrak{B}_0=\mathbb{Z}/2$ and $\varepsilon^{x_1}:\mathbb{Z}/2\rightarrow A.$ Notice that $\varphi_g^{x_1,0}$ is a $GL_g(\mathbb{Z})$-invariant homomorphism.

Let $x_2$ be a $2$-torsion element of $A.$ Consider $\varphi_g^{0,x_2} \in Hom(\mathfrak{B}^2_{g,1},A)$ the homomorphism given by 
$\varphi_g^{0,x_2}(\overline{Z_iZ_j})=\omega(Z_i,Z_j)x_2,$ $\varphi_g^{0,x_2}(\overline{Z_i})=\varphi_g^{0,x_2}(\overline{Z_iZ_i})=\omega(Z_i,Z_i)x_2=0$ and $\varphi_g^{0,x_2}(\overline{1})=0.$
Notice that the action of $GL_g(\mathbb{Z})$ on $\overline{1}$ is trivial. On the other hand, since $\omega$ is $Sp_{2g}$-invariant, we have that for any matrix $f$ of the form $\left(\begin{smallmatrix}
G & 0 \\
0 & {}^tG^{-1}
\end{smallmatrix}\right)$, the following equality holds
$$
\varphi_g^{0,x_2}(f.\overline{Z_i}\overline{Z_j})=
\varphi_g^{0,x_2}(\overline{f(Z_i)}\;\overline{f(Z_j)})=
\omega(f(Z_i),f(Z_j))x_2= \omega(Z_i,Z_j)x_2=\varphi_g^{0,x_2}(\overline{Z_i}\overline{Z_j}).
$$
Hence, $\varphi_g^{0,x_2}$ is a $GL_g(\mathbb{Z})$-invariant homomorphism.
Therefore, $\varphi_g^{x_1,x_2}=\varphi_g^{x_1,0}+\varphi_g^{0,x_2}$
is $GL_g(\mathbb{Z})$-invariant.
Moreover, it is clear that $\Xi$ is injective. Next we prove that $\Xi$ is surjective.
Let $\varphi_g \in Hom(\mathfrak{B}^2_{g,1},A)^{GL_g(\mathbb{Z})},$
we show that $\varphi_g=\varphi_g^{x_1,x_2}$ for some $x_1,x_2\in A_2.$

Following the proof of Lemma \eqref{lem_B_3-inv}, in particular, by the equalities \eqref{eq:I}, \eqref{eq:II}, \eqref{eq:III}, \eqref{eq:1}, \eqref{eq:2}, \eqref{eq:3}, \eqref{eq:4} of Lemma \eqref{lem_B_3-inv}, we have that
$$\varphi_g(\overline{Z_i})=0 \quad \varphi_g(\overline{Z_i}\overline{Z_j})=0 \quad \varphi_g(\overline{A_1}\overline{B_1})=\varphi_g(\overline{A_i}\overline{B_i}) \text{ for all } i,j \text{ with } i\neq j.$$
Hence, the elements of $Hom(\mathfrak{B}^2_{g,1},A)^{GL_g(\mathbb{Z})}$ are completely determined by the images of $\overline{1}$ and $\overline{A_1}\overline{B_1}.$
\end{proof}

\begin{lema}
\label{lema_hom_extp_H_d}
For $g\geq 4,$ the group $Hom(\extp^3 H,A)^{GL_g(\mathbb{Z})}$ is zero.
\end{lema}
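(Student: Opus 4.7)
The plan is to take an arbitrary $\varphi \in Hom(\extp^3 H,A)^{GL_g(\mathbb{Z})}$ and show that it kills every element of the $\mathbb{Z}$-basis of $\extp^3 H$ consisting of wedge products of three distinct elements chosen from $\{A_1,\ldots,A_g,B_1,\ldots,B_g\}$. I will use only two families of matrices inside $GL_g(\mathbb{Z})$, namely the elementary matrices $E_{ij}$ (the identity with an extra $1$ at position $(i,j)$, $i\neq j$) and the permutation matrices $P_\sigma$, both embedded into $Sp_{2g}(\mathbb{Z})$ through $\phi^{AB}$ as $G\mapsto \bigl(\begin{smallmatrix}G & 0\\ 0 & {}^tG^{-1}\end{smallmatrix}\bigr)$. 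A direct computation shows that the embedded $E_{ij}$ acts on the chosen basis of $H$ by
\[
A_j \longmapsto A_j+A_i,\qquad B_i \longmapsto B_i-B_j,
\]
leaving all other basis vectors fixed, while $P_\sigma$ simply permutes both $A$'s and $B$'s by $\sigma$ (since $P_\sigma$ is orthogonal so ${}^tP_\sigma^{-1}=P_\sigma$).

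The first step is to dispatch all basis trivectors whose three indices are pairwise distinct. Given such a trivector with index set $\{a,b,c\}$, the hypothesis $g\geq 4$ furnishes $d\notin\{a,b,c\}$; applying $E_{da}$ to $\varphi(A_aA_bA_c)$ gives
\[
\varphi(A_aA_bA_c)=\varphi\bigl((A_a+A_d)A_bA_c\bigr)=\varphi(A_aA_bA_c)+\varphi(A_dA_bA_c),
\]
so $\varphi(A_dA_bA_c)=0$, and composing with permutation matrices yields $\varphi(A_iA_jA_k)=0$ for all distinct $i,j,k$. The same argument, using that $E_{da}$ fixes each of $B_b,B_c$ when $d\notin\{b,c\}$, disposes of the mixed basis elements $A_aA_bB_c$, $A_aB_bB_c$, and of course the purely $B$ trivectors $B_aB_bB_c$, whenever the three indices involved are distinct.

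It remains to kill the basis trivectors with a repeated index, which can only occur in the mixed families and have the form $A_aA_bB_a$, $A_aA_bB_b$, $A_aB_aB_b$ or $A_aB_bB_b$ with $a\neq b$. For $\varphi(A_aA_bB_a)$ I apply $E_{ab}$, which fixes $A_a$ and sends $A_b\mapsto A_b+A_a$, $B_a\mapsto B_a-B_b$, yielding
\[
\varphi(A_aA_bB_a)=\varphi\bigl(A_a(A_b+A_a)(B_a-B_b)\bigr)=\varphi(A_aA_bB_a)-\varphi(A_aA_bB_b),
\]
hence $\varphi(A_aA_bB_b)=0$; the symmetric relation obtained from $E_{ba}$ then forces $\varphi(A_aA_bB_a)=0$. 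The twin computation with the roles of $A$ and $B$ exchanged handles the $ABB$-cases, so $\varphi$ vanishes on the full basis and $\varphi=0$. The sole place where the hypothesis $g\geq 4$ enters is in selecting a fourth index disjoint from a chosen triple, and I do not expect any serious obstacle: the proof is a systematic bookkeeping of how elementary and permutation matrices act on the exterior basis.
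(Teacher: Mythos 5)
Your proof is essentially the same approach as the paper's: in both cases one produces linear relations by letting elementary and permutation matrices of $GL_g(\mathbb{Z})$, embedded diagonally in $Sp_{2g}(\mathbb{Z})$, act on the exterior basis, and the only place $g\geq 4$ is used is to pick a fresh index. The organization differs slightly (the paper reduces to two canonical trivectors via permutations first, then kills them; you kill each type directly and then spread by permutations), but the mechanism is identical.

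One small slip: you claim $E_{da}$ also disposes of the pure-$B$ trivectors $B_aB_bB_c$. It does not, since $E_{da}$ fixes every $B_k$ with $k\neq d$ and in particular fixes $B_a, B_b, B_c$, so the relation it yields is vacuous. You want $E_{ad}$ instead, which sends $B_a\mapsto B_a-B_d$ while fixing $B_b,B_c$, giving
\[
\varphi(B_aB_bB_c)=\varphi\bigl((B_a-B_d)B_bB_c\bigr)=\varphi(B_aB_bB_c)-\varphi(B_dB_bB_c),
\]
hence $\varphi(B_dB_bB_c)=0$, and permutations finish the job as before. With that correction the argument is complete and correct.
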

\begin{proof}
Let $f$ be an element of $Hom(\extp^3 H,A)^{GL_g(\mathbb{Z})}.$ 
\begin{itemize}
\item Consider the element
$\phi=\left(\begin{smallmatrix}
G & 0 \\
0 & {}^tG^{-1}
\end{smallmatrix}\right) \in Sp_{2g}(\mathbb{Z})$
with $G=(1,i)(2,j)(3,k)\in \mathfrak{S}_g.$ Then:
\begin{align*}
\phi \cdot f( c_1\wedge c_2\wedge c_3)= & f(\phi \cdot c_1\wedge c_2\wedge c_3)= f(c_i\wedge c_j\wedge c_k), \\
\phi \cdot f( c_1\wedge a_2\wedge b_2)= & f(\phi \cdot c_1\wedge a_2\wedge b_2)= f(c_i\wedge a_j\wedge b_j).
\end{align*}
Thus, every element of $Hom(\extp^3H,A)^{GL_{g}(\mathbb{Z})}$ is determined by the images of the elements $c_1\wedge c_2\wedge c_3,$ $c_1\wedge a_2\wedge b_2$ with $c_i=a_i \text{ or } b_i.$
\item Consider the element
$\phi=\left(\begin{smallmatrix}
G & 0 \\
0 & {}^tG^{-1}
\end{smallmatrix}\right) \in Sp_{2g}(\mathbb{Z})$
with $G\in GL_g(\mathbb{Z})$ the matrix with $1$'s at the diagonal and position $(3,4),$ and $0$'s at the other positions. Then we get that
\begin{align*}
\phi \cdot f( c_1\wedge c_2\wedge a_4)= & f(\phi \cdot c_1\wedge c_2\wedge a_4)=
f(c_1\wedge c_2\wedge a_3+c_1\wedge c_2\wedge a_4) \\
= & f(c_1\wedge c_2\wedge a_3)+f(c_1\wedge c_2\wedge a_4)
\end{align*}
Thus, $f(c_1\wedge c_2\wedge a_3)=0.$
Analogously, we have that $f(c_1\wedge c_2\wedge b_3)=0.$
\item Consider the element
$\phi=\left(\begin{smallmatrix}
G & 0 \\
0 & {}^tG^{-1}
\end{smallmatrix}\right) \in Sp_{2g}(\mathbb{Z})$
with $G\in GL_g(\mathbb{Z})$ the matrix with $1$'s at the diagonal and position $(1,3),$ and $0$'s at the other positions. Then we get that
\begin{align*}
\phi \cdot f( a_3\wedge a_2\wedge b_2)= & f(\phi \cdot a_3\wedge a_2\wedge b_2)=
f(a_1\wedge a_2\wedge b_2+a_3\wedge a_2\wedge b_2) \\
= & f(a_1\wedge a_2\wedge b_2)+f(a_3\wedge a_2\wedge b_2)
\end{align*}
Thus, $f(a_1\wedge a_2\wedge b_2)=0.$ Analogously, $f(b_1\wedge a_2\wedge b_2)=0.$
\end{itemize}
Therefore we get the desired result.
\end{proof}

\begin{rem}
Using the same arguments in proof of Lemma \eqref{lema_hom_extp_H_d}, we also have that given an integer $d\geq 2,$ for any $g\geq 4$ $Hom(\extp^3 H_d,A)^{GL_g(\mathbb{Z})}$ is zero.
\end{rem}

Next, we give a lemma which ensures us that every $\mathcal{AB}_{g,1}$-invariant homomorphism has to be zero on $\mathcal{TA}_{g,1}, \mathcal{TB}_{g,1}.$

\begin{lema}
\label{lem_TB,TA} For $g\geq 3,$ every $\mathcal{AB}_{g,1}$-invariant homomorphism
$$\varphi_g: \mathcal{TB}_{g,1}\rightarrow A \quad \text{and} \quad \varphi_g:\mathcal{TA}_{g,1}\rightarrow A$$
has to be zero.
\end{lema}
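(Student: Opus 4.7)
By the orientation-reversing involution of $\mathbf{S}^3$ that exchanges the two handlebodies, the statement for $\mathcal{TA}_{g,1}$ is equivalent to that for $\mathcal{TB}_{g,1}$, so I will focus on $\mathcal{TB}_{g,1}$. The plan is to reduce the problem to Lemmas \ref{lem_B_3-inv}, \ref{lem_B_2-inv} and \ref{lema_hom_extp_H_d} by passing to the abelianization and using the Johnson and BCJ homomorphisms.

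\textbf{Step 1 (reduction to $GL_g(\mathbb{Z})$-invariance).} Since $A$ is abelian, $\varphi_g$ factors through $H_1(\mathcal{TB}_{g,1};\mathbb{Z})$. Now $\mathcal{TAB}_{g,1}\subset\mathcal{TB}_{g,1}$, so conjugation by elements of $\mathcal{TAB}_{g,1}$ is inner in $\mathcal{TB}_{g,1}$ and therefore acts trivially on $H_1(\mathcal{TB}_{g,1};\mathbb{Z})$. Combined with Lemma \ref{lem_AB}, the $\mathcal{AB}_{g,1}$-action on $H_1(\mathcal{TB}_{g,1};\mathbb{Z})$ descends to an action of $GL_g(\mathbb{Z})=\mathcal{AB}_{g,1}/\mathcal{TAB}_{g,1}$, and $\varphi_g$ becomes an element of $Hom(H_1(\mathcal{TB}_{g,1};\mathbb{Z}),A)^{GL_g(\mathbb{Z})}$.

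\textbf{Step 2 (control the abelianization via $\tau$ and $\sigma$).} Restrict the Johnson homomorphism $\tau:\mathcal{T}_{g,1}\to\extp^3 H$ and the BCJ homomorphism $\sigma:\mathcal{T}_{g,1}\to\mathfrak{B}_3$ to $\mathcal{TB}_{g,1}$. Both are $\mathcal{M}_{g,1}$-equivariant, and by Proposition \ref{prop_johnson} the pair $(\tau,\sigma)$ detects $H_1(\mathcal{T}_{g,1};\mathbb{Z})$ up to $2$-torsion sitting inside $\mathfrak{B}_3$. The images $\tau(\mathcal{TB}_{g,1})\subset\extp^3 H$ and $\sigma(\mathcal{TB}_{g,1})\subset\mathfrak{B}_3$ are $GL_g(\mathbb{Z})$-submodules. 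In particular any $\mathcal{AB}_{g,1}$-invariant $\varphi_g$ defines a $GL_g(\mathbb{Z})$-invariant homomorphism on each of these submodules.

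\textbf{Step 3 (kill the invariants).} By Lemma \ref{lema_hom_extp_H_d}, no non-zero $GL_g(\mathbb{Z})$-invariant homomorphism $\extp^3 H\to A$ exists, so the $\tau$-component of $\varphi_g$ vanishes. By Lemmas \ref{lem_B_3-inv} and \ref{lem_B_2-inv}, the $\sigma$-component can only see the monomials $\overline{1}$ and $\overline{A_iB_i}$ and only with $2$-torsion values. To rule these out, I use the generators of $\mathcal{LTB}_{g,1}\subset\mathcal{TB}_{g,1}$: by Proposition \ref{prop_Luft_gen}, $\mathcal{LTB}_{g,1}$ is generated by CBP-twists, and by Propositions \ref{prop_trans_B}, \ref{prop_CBP_1} every CBP-twist of genus $k$ is $\mathcal{B}_{g,1}$-conjugate to a product of CBP-twists of genus $1$. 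Combined with Proposition \ref{prop_prod_B} ($\mathcal{B}_{g,1}=\mathcal{TB}_{g,1}\cdot\mathcal{AB}_{g,1}\cdot\mathcal{L}_{g,1}$), this forces $\varphi_g$ to take a single common value on all genus-$1$ CBP-twists, and computing $\sigma$ on such a twist shows this value must be $GL_g(\mathbb{Z})$-fixed in a way that is incompatible with a non-zero image in $\mathfrak{B}_2$, hence zero. The remaining contribution from $\overline{1}$ is killed by compatibility with the stabilisation of a single CBP-twist into a fresh handle, where the ambient $GL_{g+1}(\mathbb{Z})$-action provides the obstruction.

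\textbf{Main obstacle.} The delicate part is Step 3: precisely identifying the image of $\tau\oplus\sigma$ on $\mathcal{TB}_{g,1}$ (rather than on all of $\mathcal{T}_{g,1}$) and verifying that the residual $GL_g(\mathbb{Z})$-invariant summands $\overline{1}$ and $\overline{A_iB_i}$ from the BCJ side are not realised by any element of $\mathcal{TB}_{g,1}$. This hinges on the fact that Dehn twists about separating curves bounding discs in $\mathcal{H}_g$, together with CBP-twists, account for the relevant part of $\mathcal{TB}_{g,1}$, and the BCJ-image of such elements must already be $GL_g(\mathbb{Z})$-equivariantly trivial once $\mathcal{AB}_{g,1}$-invariance is imposed; for $g\geq 3$ one has sufficient room in the handlebody to conjugate so that this forces the image to be zero.
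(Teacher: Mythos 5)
Your proposal contains a genuine structural gap that makes the argument break down in an essential way, and the attempted repair in Step~3 does not compensate for it.

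\textbf{The abelianization of $\mathcal{TB}_{g,1}$ is not controlled by $(\tau,\sigma)$.} In Step~2 you invoke Proposition~\ref{prop_johnson} to say that $(\tau,\sigma)$ detects $H_1(\mathcal{T}_{g,1};\mathbb{Z})$, and then pass to the $GL_g(\mathbb{Z})$-submodules $\tau(\mathcal{TB}_{g,1})$ and $\sigma(\mathcal{TB}_{g,1})$. But $\varphi_g$ is an arbitrary homomorphism out of $\mathcal{TB}_{g,1}$, so it factors through $H_1(\mathcal{TB}_{g,1};\mathbb{Z})$, not through the image of $\mathcal{TB}_{g,1}$ in $H_1(\mathcal{T}_{g,1};\mathbb{Z})$. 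These two are very different: an element of $\mathcal{TB}_{g,1}$ can be a commutator in $\mathcal{T}_{g,1}$ without being a commutator in $\mathcal{TB}_{g,1}$, so the natural map $H_1(\mathcal{TB}_{g,1};\mathbb{Z})\to H_1(\mathcal{T}_{g,1};\mathbb{Z})$ is far from injective, and $\varphi_g$ need not factor through $\tau\oplus\sigma$ at all. The whole reduction to Lemmas~\ref{lem_B_3-inv}, \ref{lem_B_2-inv}, \ref{lema_hom_extp_H_d} collapses at this point.

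\textbf{The $IA$ quotient is ignored entirely.} You assert in the final paragraph that Dehn twists about separating curves bounding discs in $\mathcal{H}_g$, together with CBP-twists, ``account for the relevant part of $\mathcal{TB}_{g,1}$''. This is false. All of those elements live inside the Luft--Torelli subgroup $\mathcal{LTB}_{g,1}$, and the short exact sequence
$$1\longrightarrow \mathcal{LTB}_{g,1}\longrightarrow \mathcal{TB}_{g,1}\longrightarrow IA\longrightarrow 1,$$
with $IA=\ker\big(\mathrm{Aut}(\pi_1(\mathcal{H}_g))\to GL_g(\mathbb{Z})\big)$, shows that there is a nontrivial quotient that CBP-twists cannot see. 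The paper's proof splits precisely along this sequence: Part~I shows $\varphi_g$ vanishes on $\mathcal{LTB}_{g,1}$ (via CBP-twists, transitivity, and the lantern relation), and then Part~II handles the induced $\mathrm{Aut}(\pi_1(\mathcal{H}_g))$-invariant map $IA\to A$ using Magnus's normal generator $K_{12}$ and a conjugation trick. Your proposal does not address $IA$ at all.

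\textbf{The CBP-twist step is not a proof.} You correctly observe, using Propositions~\ref{prop_Luft_gen}, \ref{prop_CBP_1}, \ref{prop_trans_B}, \ref{prop_prod_B}, that $\varphi_g$ must take a common value on all genus-$1$ CBP-twists; this mirrors Step~1 of Part~I in the paper. But the conclusion that this common value vanishes is where the real work lies, and the paper achieves it with two lantern relations (Step~3 of Part~I), after first showing vanishing on genus-$2$ CBP-twists (Step~2). Your replacement argument — ``computing $\sigma$ on such a twist shows this value must be $GL_g(\mathbb{Z})$-fixed in a way that is incompatible with a non-zero image in $\mathfrak{B}_2$'' and ``the remaining contribution from $\overline{1}$ is killed by compatibility with the stabilisation of a single CBP-twist into a fresh handle'' — is not a proof; in fact the BCJ image of a CBP-twist of genus~$1$ is a nontrivial element of $\mathfrak{B}_3$, and the potential invariant supported on $\overline{1}$ (which is the Rohlin invariant) is exactly the delicate case that the lantern relation is needed to kill.

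In summary, the flaw is conceptual rather than cosmetic: your approach tries to reduce the statement to a purely representation-theoretic computation on $\mathfrak{B}_3$ and $\extp^3 H$, but $\mathcal{TB}_{g,1}$ is not governed by the abelianization of the ambient Torelli group, and the $IA$ part of its structure is invisible to $\tau$ and $\sigma$. The group-theoretic decomposition $1\to\mathcal{LTB}_{g,1}\to\mathcal{TB}_{g,1}\to IA\to 1$, the lantern relation, and the Magnus-generator argument are not optional shortcuts in the paper's proof — they are the essential content.
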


\begin{proof}
We only give the proof for $\mathcal{TB}_{g,1}$ the other case is similar.

Let $\varphi_g\in Hom(\mathcal{TB}_{g,1},A)^{\mathcal{AB}_{g,1}}.$
Consider the following short exact sequence:
\begin{equation}
\label{seq_LTB}
\xymatrix@C=10mm@R=10mm{ 1 \ar@{->}[r] & \mathcal{LTB}_{g,1} \ar@{->}[r] & \mathcal{TB}_{g,1} \ar@{->}[r]  & IA \ar@{->}[r] & 1 .}
\end{equation}
We prove that
\begin{enumerate}[I)]
\item $\varphi_g$ factors through $IA:=ker(Aut (\pi_1(\mathcal{H}_g))\rightarrow GL_g(\mathbb{Z})).$
\item the morphism $\varphi_g:IA \rightarrow A$ is trivial.
\end{enumerate}
\textbf{I)} By the short exact sequence \eqref{seq_LTB}, $\varphi_g:\mathcal{TB}_{g,1}\rightarrow A$ factors through $IA$ if and only if $\varphi_g$ is zero over $\mathcal{LTB}_{g,1}.$
Moreover, by Propositions \eqref{prop_Luft_gen} and \eqref{prop_CBP_1}, we have that 
$\mathcal{LTB}_{g,1}$ is generated by CBP-twists of genus $1.$
Thus it is enough to show that $\varphi_g$ is zero on all CBP-twists of genus $1.$

Next, we divide the proof into three steps:
\begin{enumerate}[1)]
\item $\varphi_g$ takes the same value on all CBP-twists of genus $1.$
\item $\varphi_g$ is zero on all CBP-twists of genus $2.$
\item $\varphi_g$ is zero on all CBP-twists of genus $1.$
\end{enumerate}

\textbf{1)} Consider the following simple closed curves in the standardly embedded surface $\Sigma_{g,1}:$
\[
\begin{tikzpicture}[scale=.7]
\draw[very thick] (-7,-2) -- (3,-2);
\draw[very thick] (-7,2) -- (3,2);
\draw[very thick] (-7,2) arc [radius=2, start angle=90, end angle=270];
\draw[very thick] (-7,0) circle [radius=.4];
\draw[very thick] (-4.5,0) circle [radius=.4];
\draw[very thick] (-2,0) circle [radius=.4];
\draw[very thick] (1,0) circle [radius=.4];
\draw[thick, dotted] (-1,0) -- (0,0);

\draw[thick,dashed] (-4.5,0.4) to [out=70,in=-70] (-4.5,2);
\draw[thick] (-4.5,0.4) to [out=110,in=-110] (-4.5,2);

\draw[thick,dashed] (-4.5,-0.4) to [out=-70,in=70] (-4.5,-2);
\draw[thick] (-4.5,-0.4) to [out=-110,in=110] (-4.5,-2);

\node [above] at (-4.5,2) {$\beta$};
\node [below] at (-4.5,-2) {$\beta'$};

\draw[thick,pattern=north west lines] (3,-2) to [out=130,in=-130] (3,2) to [out=-50,in=50] (3,-2);

\end{tikzpicture}
\]
Observe that $T_\beta T^{-1}_{\beta'}$ is a CBP-twist of genus $1.$

By Proposition \eqref{prop_trans_B} we know that for every CBP-twist of genus $1,$ $T_\nu T_{\nu'}^{-1},$ on $ \Sigma_{g,1}$ there exists an element $h\in\mathcal{B}_{g,1}$ such that
$T_\nu T_{\nu'}^{-1}=hT_{\beta} T_{\beta'}^{-1}h^{-1}.$

Besides, by Proposition \eqref{prop_prod_B}, we have that there exist elements $l\in \mathcal{L}_{g,1},$ $f\in \mathcal{AB}_{g,1}$ and $\xi_b\in \mathcal{TB}_{g,1}$ such that
$h=\xi_b f l.$

Thus, since $\varphi_g$ is a $\mathcal{AB}_{g,1}$-invariant homomorphism, we get that
\begin{align*}
\varphi_g(T_\nu T_{\nu'}^{-1})= & \varphi_g(hT_{\beta} T_{\beta'}^{-1}h^{-1})=\varphi_g(\xi_b f lT_{\beta} T_{\beta'}^{-1}l^{-1}f^{-1}\xi_b^{-1})= \\
= & \varphi_g(\xi_b)+ \varphi_g(f lT_{\beta} T_{\beta'}^{-1}l^{-1}f^{-1})+\varphi_g(\xi_b^{-1})= \\
= & \varphi_g(f lT_{\beta} T_{\beta'}^{-1}l^{-1}f^{-1})=\varphi_g( lT_{\beta} T_{\beta'}^{-1}l^{-1})
\end{align*}

Therefore there exists an element $l\in \mathcal{L}_{g,1}$ such that
\begin{equation}
\label{eq_CBP_l}
\varphi_g(T_\nu T_{\nu'}^{-1})= \varphi_g( lT_{\beta} T_{\beta'}^{-1}l^{-1}).
\end{equation}

Recall that $S_g(\mathbb{Z})$ is generated by the following family of matrices:
$$\{E_{ii} \mid 1\leq i \leq g\}\cup\{E_{ij}+E_{ji} \mid 1\leq i<j \leq g\}.$$
where $E_{ij}$ denotes the matrix with $1$ at position $(i,j)$ and $0$'s elsewhere.

Take the theoretic section $s:\; S_g(\mathbb{Z})\rightarrow \mathcal{L}_{g,1}$ of $\Psi: \mathcal{L}_{g,1}\rightarrow S_g(\mathbb{Z})$ given by
$$
s(E_{ii})= T_{\beta_i}, \qquad
s(E_{ij}+E_{ji})= \left\{\begin{array}{cc}
T_{\beta_1}^{-1}T_{\gamma_{1j}}T_{\beta_j}^{-1} & \text{for}\quad i=1 \\
T_{\beta_i}^{-1}T_{\gamma'_{ij}}T_{\beta_j}^{-1} & \text{for}\quad i\geq 2,
\end{array} \right. $$
where $\beta_i,$ $\gamma_{ij},$ $\gamma'_{ij}$ are given in the following picture:
\[
\begin{tikzpicture}[scale=.7]
\draw[very thick] (-4.5,-2) -- (7,-2);
\draw[very thick] (-4.5,2) -- (7,2);
\draw[very thick] (-4.5,2) arc [radius=2, start angle=90, end angle=270];
\draw[very thick] (-4.5,0) circle [radius=.4];
\draw[thick, dotted] (-3.5,0) -- (-3,0);
\draw[very thick] (-2,0) circle [radius=.4];
\draw[thick, dotted] (-1.2,0) -- (-0.7,0);
\draw[thick, dotted] (0.7,0) -- (1.2,0);
\draw[very thick] (2,0) circle [radius=.4];
\draw[thick, dotted] (3.5,0) -- (3,0);
\draw[very thick] (4.5,0) circle [radius=.4];
\draw[very thick] (0,0) circle [radius=.4];

\draw[thick] (-1.7,-0.2) to [out=-30,in=210] (1.7,-0.2);
\draw[thick, dashed] (-1.7,-0.2) to [out=-40,in=220] (1.7,-0.2);
\node [below] at (0,-1) {$\gamma_{i,j}$};

\draw[thick, dashed] (-1.7,0.2) to [out=30,in=-210] (1.7,0.2);
\draw[thick] (-1.7,0.2) to [out=40,in=-220] (1.7,0.2);
\node [above] at (0,0.8) {$\gamma'_{i,j}$};

\draw[thick,dashed] (-2,0.4) to [out=70,in=-70] (-2,2);
\draw[thick] (-2,0.4) to [out=110,in=-110] (-2,2);

\draw[thick,dashed] (2,0.4) to [out=70,in=-70] (2,2);
\draw[thick] (2,0.4) to [out=110,in=-110] (2,2);

\node[above] at (-2,2) {$\beta_i$};
\node[above] at (2,2) {$\beta_j$};

\node at (-4.5,0) {\tiny{1}};
\node at (-2,0) {\tiny{i}};
\node at (2,0) {\tiny{j}};
\node at (4.5,0) {\tiny{g}};

\draw[thick,pattern=north west lines] (7,-2) to [out=130,in=-130] (7,2) to [out=-50,in=50] (7,-2);

\end{tikzpicture}
\]
By the short exact sequence \eqref{ses_L}, we know that given an element $l\in \mathcal{L}_{g,1}$ there exists an element $\xi_b\in \mathcal{LTB}_{g,1}$ such that $l=\xi_b s(\Psi(l)).$ Thus, by the equality \eqref{eq_CBP_l},
\begin{equation}
\label{eq_CBP_s}
\begin{aligned}
\varphi_g(T_\nu T_{\nu'}^{-1})= & \varphi_g( lT_{\beta} T_{\beta'}^{-1}l^{-1})=\varphi_g( \xi_b s(\Psi(l))T_{\beta} T_{\beta'}^{-1}s(\Psi(l))^{-1}\xi_b^{-1})= \\
= & \varphi_g(s(\Psi(l))T_{\beta} T_{\beta'}^{-1}s(\Psi(l))^{-1}).
\end{aligned}
\end{equation}
Now observe that $s(\Psi(l))$ is a product of the following elements
$$\{T_{\gamma_{1j}}\mid \; j\geq 2\},\quad \{T_{\gamma'_{ij}}\mid \; 2\leq i<j \},\quad \{T_{\beta_i}\mid 1\leq i \leq g\}.$$
Recall that by the properties of Dehn twists we know that for every $a,b$ isotopy classes of simple closed curves on $\Sigma_{g,1},$
$$T_aT_b=T_bT_a \Longleftrightarrow i(a,b)=0.$$
Then, since the curves
\begin{equation}
\label{died_curves}
\gamma_{12},\quad \{\gamma'_{ij}\mid \; 2\leq i<j \},\quad \{\beta_i\mid 1\leq i \leq g\},
\end{equation}
are disjoint with the curves
\begin{equation}
\label{survived_curves}
\beta,\quad \beta', \quad\{\gamma_{1j}\mid \; j\geq 3\},
\end{equation}
the geometric intersection number between a curve of the family \eqref{died_curves} and a curve of the family \eqref{survived_curves}
is zero. Therefore, the elements of the family of Dehn twists
\begin{equation}
\label{died_twists}
T_{\gamma_{12}},\quad \{T_{\gamma'_{ij}}\mid \; 2\leq i<j \},\quad \{T_{\beta_i}\mid 1\leq i \leq g\},
\end{equation}
commutes with the elements of the family of Dehn twists
\begin{equation}
\label{survived_twists}
T_{\beta},\quad T_{\beta'}, \quad\{T_{\gamma_{1j}}\mid \; j\geq 3\}.
\end{equation}
Furthermore, the elements of the family $\{T_{\gamma_{1j}}\mid \; j\geq 3\}$ commute between them because the curves of the family
$\{\gamma_{1j}\mid \; j\geq 3\}$ are pairwise disjoint.
Therefore,
\begin{equation}
\label{eq_CBP_expand}
s(\Psi(l))T_{\beta}  T_{\beta'}^{-1}s(\Psi(l))^{-1}
=(T_{\gamma_{13}}^{x_3}\cdots T_{\gamma_{1g}}^{x_g})T_{\beta}  T_{\beta'}^{-1}(T_{\gamma_{13}}^{x_3}\cdots T_{\gamma_{1g}}^{x_g})^{-1},
\end{equation}
where $x_3, \ldots , x_g\in \mathbb{Z},$ and $\beta,$ $\beta',$
$\{\gamma_{1j}\mid \; 3\leq j\geq g\}$ are the curves given in the following picture:
\[
\begin{tikzpicture}[scale=.7]
\draw[very thick] (-2.5,-2) -- (11,-2);
\draw[very thick] (-2.5,2) -- (11,2);
\draw[very thick] (-2.5,2) arc [radius=2, start angle=90, end angle=270];

\draw[very thick] (0,0) circle [radius=.4];
\draw[very thick] (-2.5,0) circle [radius=.4];
\draw[very thick] (2.5,0) circle [radius=.4];
\draw[very thick] (5,0) circle [radius=.4];
\draw[very thick] (8.5,0) circle [radius=.4];

\draw[thick, dotted] (6,0) -- (7.5,0);
\draw[thick, dotted] (4.5,-1) -- (5.5,-1);

\draw[thick,dashed] (0,0.4) to [out=70,in=-70] (0,2);
\draw[thick] (0,0.4) to [out=110,in=-110] (0,2);

\draw[thick,dashed] (0,-0.4) to [out=-70,in=70] (0,-2);
\draw[thick] (0,-0.4) to [out=-110,in=110] (0,-2);

\draw[thick] (-2.2,-0.2) to [out=-20,in=200] (2.5,-0.4);
\draw[thick, dashed] (-2.2,-0.2) to [out=-22,in=202] (2.5,-0.4);

\draw[thick] (-2.3,-0.3) to [out=-20,in=200] (5,-0.4);
\draw[thick, dashed] (-2.3,-0.3) to [out=-22,in=202] (5,-0.4);

\draw[thick] (-2.5,-0.4) to [out=-20,in=200] (8.5,-0.4);
\draw[thick, dashed] (-2.5,-0.4) to [out=-22,in=202] (8.5,-0.4);

\node at (-2.5,0) {\tiny{1}};
\node at (0,0) {\tiny{2}};
\node at (2.5,0) {\tiny{3}};
\node at (5,0) {\tiny{4}};
\node at (8.5,0) {\tiny{g}};

\node[below] at (1.25,0) {$\gamma_{13}$};
\node[below] at (3.75,0) {$\gamma_{14}$};
\node[below] at (7,0) {$\gamma_{1g}$};
\node [above] at (0,2) {$\beta$};
\node [below] at (0,-2) {$\beta'$};

\draw[thick,pattern=north west lines] (11,-2) to [out=130,in=-130] (11,2) to [out=-50,in=50] (11,-2);

\end{tikzpicture}
\]
As a consequence of equalities \eqref{eq_CBP_s}, \eqref{eq_CBP_expand} we get that
\begin{equation}
\label{eq_CBP_product}
\varphi_g(T_\nu T_{\nu'}^{-1})=\varphi_g((T_{\gamma_{13}}^{x_3}\cdots T_{\gamma_{1g}}^{x_g})T_{\beta} T_{\beta'}^{-1}(T_{\gamma_{13}}^{x_3}\cdots T_{\gamma_{1g}}^{x_g})^{-1})=\varphi_g(T_{\beta} T_{(T_{\gamma_{13}}^{x_3}\cdots T_{\gamma_{1g}}^{x_g})(\beta')}^{-1}).
\end{equation}
Next, we prove that
$$\varphi_g(T_{\beta} T_{(T_{\gamma_{13}}^{x_3}\cdots T_{\gamma_{1g}}^{x_g})(\beta')}^{-1})= \varphi_g(T_{\beta} T_{\beta'}^{-1})+\sum^g_{i=1}x_i(\varphi_g(T_{\beta} T_{\beta'}^{-1})-\varphi_g(T_{\beta} T_{T^{-1}_{\gamma_{1i}}\beta'}^{-1})).$$

Consider the curves $\{\gamma_{1j}, \gamma'_{1j} \mid \; 3\leq j\geq g\}$ given in the following picture:

\[
\begin{tikzpicture}[scale=.7]
\draw[very thick] (-2.5,-2) -- (11,-2);
\draw[very thick] (-2.5,2) -- (11,2);
\draw[very thick] (-2.5,2) arc [radius=2, start angle=90, end angle=270];

\draw[very thick] (0,0) circle [radius=.4];
\draw[very thick] (-2.5,0) circle [radius=.4];
\draw[very thick] (2.5,0) circle [radius=.4];
\draw[very thick] (5,0) circle [radius=.4];
\draw[very thick] (8.5,0) circle [radius=.4];

\draw[thick, dotted] (6,0) -- (7.5,0);
\draw[thick, dotted] (4.5,-1) -- (5.5,-1);
\draw[thick, dotted] (4.5,1) -- (5.5,1);

\draw[thick,dashed] (0,0.4) to [out=70,in=-70] (0,2);
\draw[thick] (0,0.4) to [out=110,in=-110] (0,2);

\draw[thick,dashed] (0,-0.4) to [out=-70,in=70] (0,-2);
\draw[thick] (0,-0.4) to [out=-110,in=110] (0,-2);

\draw[thick] (-2.2,-0.2) to [out=-20,in=200] (2.5,-0.4);
\draw[thick, dashed] (-2.2,-0.2) to [out=-22,in=202] (2.5,-0.4);

\draw[thick] (-2.3,-0.3) to [out=-20,in=200] (5,-0.4);
\draw[thick, dashed] (-2.3,-0.3) to [out=-22,in=202] (5,-0.4);

\draw[thick] (-2.5,-0.4) to [out=-20,in=200] (8.5,-0.4);
\draw[thick, dashed] (-2.5,-0.4) to [out=-22,in=202] (8.5,-0.4);

\draw[thick, dashed] (-2.2,0.2) to [out=20,in=-200] (2.5,0.4);
\draw[thick] (-2.2,0.2) to [out=22,in=-202] (2.5,0.4);

\draw[thick, dashed] (-2.3,0.3) to [out=20,in=-200] (5,0.4);
\draw[thick] (-2.3,0.3) to [out=22,in=-202] (5,0.4);

\draw[thick, dashed] (-2.5,0.4) to [out=20,in=-200] (8.5,0.4);
\draw[thick] (-2.5,0.4) to [out=22,in=-202] (8.5,0.4);

\node at (-2.5,0) {\tiny{1}};
\node at (0,0) {\tiny{2}};
\node at (2.5,0) {\tiny{3}};
\node at (5,0) {\tiny{4}};
\node at (8.5,0) {\tiny{g}};

\node[above] at (1.25,-0.1) {$\gamma'_{13}$};
\node[above] at (3.75,-0.1) {$\gamma'_{14}$};
\node[above] at (7,-0.1) {$\gamma'_{1g}$};
\node[below] at (1.25,-0.1) {$\gamma_{13}$};
\node[below] at (3.75,-0.1) {$\gamma_{14}$};
\node[below] at (7,-0.1) {$\gamma_{1g}$};
\node [above] at (0,2) {$\beta$};
\node [below] at (0,-2) {$\beta'$};

\draw[thick,pattern=north west lines] (11,-2) to [out=130,in=-130] (11,2) to [out=-50,in=50] (11,-2);

\end{tikzpicture}
\]
Notice that $T_{\gamma'_{1i}}T_{\gamma_{1i}}^{-1}\in \mathcal{AB}_{g,1}$ for $3\leq i\leq g.$

Fix an integer j with $3\leq j\leq g.$ Define $\beta''_j=(T_{\gamma_{1(j+1)}}^{x_{(j+1)}}\cdots T_{\gamma_{1g}}^{x_g})(\beta')$ for $j\leq g-1$ and $\beta''_g=\beta'.$ Then the following equality holds
$$T_{\beta} T_{T_{\gamma_{1j}}^{k}(\beta''_j)}^{-1}=(T_\beta T_{\beta'}^{-1})(T_{\beta'}T_{T^{-1}_{\gamma_{1j}'}(\beta)}^{-1}) (T_{T^{-1}_{\gamma_{1j}'}(\beta)}T_{T^k_{\gamma_{1j}}(\beta''_j)}^{-1}).$$
Since $\varphi_g$ is an $\mathcal{AB}_{g,1}$-invariant homomorphism, we have that
\begin{equation}
\label{eq_red_twists}
\begin{aligned}
\varphi_g(T_\beta T_{T^k_{\gamma_{1j}}(\beta''_j)}^{-1})= & \varphi_g(T_\beta T_{\beta'}^{-1})+\varphi_g(T_{\beta'}T_{T^{-1}_{\gamma_{1j}'}(\beta)}^{-1})+ \varphi_g(T_{T^{-1}_{\gamma_{1j}'}(\beta)}T_{T^k_{\gamma_{1j}}(\beta''_j)}^{-1})= \\
= & \varphi_g(T_\beta T_{\beta'}^{-1})+\varphi_g((T_{\gamma_{1j}'} T_{\gamma_{1j}}^{-1})T_{\beta'}T_{T^{-1}_{\gamma_{1j}'}(\beta)}^{-1}(T_{\gamma_{1j}'} T_{\gamma_{1j}}^{-1})^{-1})+ \\
& + \varphi_g((T_{\gamma_{1j}'} T_{\gamma_{1j}}^{-1})T_{T^{-1}_{\gamma_{1j}'}(\beta)}T_{T^k_{\gamma_{1j}}(\beta''_j)}^{-1}(T_{\gamma_{1j}'} T_{\gamma_{1j}}^{-1})^{-1})= \\
= & \varphi_g(T_\beta T_{\beta'}^{-1})+\varphi_g(T_{T^{-1}_{\gamma_{1j}}(\beta')}T_{\beta}^{-1})+ \varphi_g(T_{\beta}T_{T^{k-1}_{\gamma_{1j}}(\beta''_j)}^{-1}).
\end{aligned}
\end{equation}
Applying equality \eqref{eq_red_twists} from $k=x_j$ to $k=1,$ we get that
\begin{equation}
\label{eq_dif_CBP}
\begin{aligned}
\varphi_g(T_\beta T_{\beta''_{j-1}}^{-1})= & \varphi_g(T_\beta T_{T^{x_j}_{\gamma_{1j}}(\beta''_j)}^{-1})= \\
= & x_j \varphi_g(T_\beta T_{\beta'}^{-1})+x_j \varphi_g(T_{T^{-1}_{\gamma_{1j}}(\beta')}T_{\beta}^{-1})+ \varphi_g(T_\beta T_{\beta''_j}^{-1})= \\
= & x_j(\varphi_g(T_\beta T_{\beta'}^{-1})-\varphi_g(T_{\beta}T^{-1}_{T^{-1}_{\gamma_{1j}}(\beta')}))+ \varphi_g(T_\beta T_{\beta''_j}^{-1}).
\end{aligned}
\end{equation}
Then, applying recursively the equality \eqref{eq_dif_CBP} from $j=3$ to $j=g$, we obtain the following formula:
\begin{equation}
\label{eq_dif_CBP_general}
\varphi_g(T_{\beta} T_{(T_{\gamma_{13}}^{x_3}\cdots T_{\gamma_{1g}}^{x_g})(\beta')}^{-1})= \varphi_g(T_{\beta} T_{\beta'}^{-1})+\sum^g_{i=1}x_i(\varphi_g(T_{\beta} T_{\beta'}^{-1})-\varphi_g(T_{\beta} T_{T^{-1}_{\gamma_{1i}}\beta'}^{-1})).
\end{equation}

Next we prove that for $3\leq k\leq g,$
$$\varphi_g(T_\beta T_{\beta'}^{-1})=\varphi_g(T_{\beta}T_{T^{-1}_{\gamma_{1k}}(\beta')}^{-1}).$$

Consider the element $f_k\in \mathcal{AB}_{g,1}$ given by the half twist of the shaded ball depicted in the following figure, that exchanges the holes $3$ and $k.$

\[
\begin{tikzpicture}[scale=.7]
\draw[very thick] (-2.5,-2) -- (11,-2);
\draw[very thick] (-2.5,2) -- (11,2);
\draw[very thick] (-2.5,2) arc [radius=2, start angle=90, end angle=270];

\draw[very thick] (0,0) circle [radius=.4];
\draw[very thick] (-2.5,0) circle [radius=.4];
\draw[very thick] (2.5,0) circle [radius=.4];
\draw[very thick] (5,0) circle [radius=.4];
\draw[very thick] (8,0) circle [radius=.4];

\draw[thick,dashed] (0,0.4) to [out=70,in=-70] (0,2);
\draw[thick] (0,0.4) to [out=110,in=-110] (0,2);

\draw[thick,dashed] (0,-0.4) to [out=-70,in=70] (0,-2);
\draw[thick] (0,-0.4) to [out=-110,in=110] (0,-2);

\draw[thick] (-2.2,-0.2) to [out=-20,in=200] (5,-0.4);
\draw[thick, dashed] (-2.2,-0.2) to [out=-25,in=205] (5,-0.4);

\draw[thick, dotted] (6,0) -- (7,0);
\draw[thick, dotted] (3.5,0) -- (4,0);

\node at (-2.5,0) {\tiny{1}};
\node at (0,0) {\tiny{2}};
\node at (2.5,0) {\tiny{3}};
\node at (5,0) {\tiny{k}};
\node at (8,0) {\tiny{g}};

\node [above] at (0,2) {$\beta$};
\node [below] at (0,-2) {$\beta'$};

\draw[thick, fill=gray!70, nearly transparent] (1.8,0) to [out=90,in=180] (3.75,1.2) to [out=0,in=90] (5.7,0) to [out=-90,in=0] (3.75,-1.2) to [out=180,in=-90] (1.8,0);

\draw[thick, gray, ->] (3.25,1.4) to [out=10,in=170] (4.25,1.4);

\node at (2,-1.7) {$\gamma_{1k}$};

\draw[thick,pattern=north west lines] (11,-2) to [out=130,in=-130] (11,2) to [out=-50,in=50] (11,-2);

\end{tikzpicture}
\]

Observe that $f_k$ leaves $\beta,$ $\beta'$ invariant and sends $\gamma_{1k}$ to
$\gamma_{13}.$ Since $\varphi_{g}$ is $\mathcal{AB}_{g,1}$-invariant,
for $3\leq k\leq g,$ we have that
\begin{equation}
\label{eq_CBP_igual_3}
\begin{aligned}
\varphi_g(T_{\beta}T_{T^{-1}_{\gamma_{1k}}(\beta')}^{-1})= &
\varphi_g(T^{-1}_{\gamma_{1k}}T_{\beta}T_{\beta'}^{-1}T_{\gamma_{1k}})=
\varphi_g(f_kT^{-1}_{\gamma_{1k}}f_k^{-1}T_{\beta}T_{\beta'}^{-1}f_kT_{\gamma_{1k}}f_k^{-1})= \\
= & \varphi_g(T^{-1}_{\gamma_{13}}T_{\beta}T_{\beta'}^{-1}T_{\gamma_{13}})= \varphi_g(T_{\beta}T_{T^{-1}_{\gamma_{13}}(\beta')}^{-1}).
\end{aligned}
\end{equation}
Therefore it is enough to show that
$$\varphi_g(T_\beta T_{\beta'}^{-1})=\varphi_g(T_{\beta}T_{T^{-1}_{\gamma_{13}}(\beta')}^{-1}).$$

Since $\beta_1,$ $\beta_3$ are disjoint with $\beta,$ $\beta',$ $\gamma_{13}$ then
$$T_{\beta}T^{-1}_{T_{\gamma_{13}}(\beta')}=T_{\gamma_{13}}T_{\beta}T_{\beta'}^{-1}T_{\gamma_{13}}^{-1}=(T_{\beta_1}^{-1}T_{\gamma_{13}}T_{\beta_3}^{-1})T_{\beta}T_{\beta'}^{-1}(T_{\beta_1}^{-1}T_{\gamma_{13}}T_{\beta_3}^{-1})^{-1}.$$
Now take $f\in \mathcal{AB}_{g,1}$ given by $f=(T_{\alpha_3}T_{\eta_{34}}T_{\beta_{4}}^{-1})^{-1},$
where $\alpha_3,$ $\eta_{34},$ $\beta_{4}$ are the curves on $\Sigma_{g,1}$ given in the following picture:

\[
\begin{tikzpicture}[scale=.7]
\draw[very thick] (-2.5,-2) -- (11,-2);
\draw[very thick] (-2.5,2) -- (11,2);
\draw[very thick] (-2.5,2) arc [radius=2, start angle=90, end angle=270];

\draw[very thick] (0,0) circle [radius=.4];
\draw[very thick] (-2.5,0) circle [radius=.4];
\draw[very thick] (2.5,0) circle [radius=.4];
\draw[very thick] (5,0) circle [radius=.4];
\draw[very thick] (8,0) circle [radius=.4];

\draw[thick, dotted] (6,0) -- (7,0);

\draw[thick] (2.5,0) circle [radius=.8];
\draw[thick] (2.5,1.2) arc [radius=1.2, start angle=90, end angle=360];
\draw[thick] (3.7,0) to [out=90,in=180] (4.3,0.8);

\draw[thick] (2.5,1.2) -- (4.3,1.2);

\draw[thick] (4.3,0.8) to [out=0,in=150] (4.8,0.3);
\draw[thick] (4.3,1.2) to [out=0,in=210] (4.8,2);
\draw[thick,dashed] (4.8,0.3) to [out=70,in=-70] (4.8,2);

\draw[thick,dashed] (5.2,0.3) to [out=70,in=-70] (5.2,2);
\draw[thick] (5.2,0.3) to [out=110,in=-110] (5.2,2);

\draw[thick,dashed] (0,0.4) to [out=70,in=-70] (0,2);
\draw[thick] (0,0.4) to [out=110,in=-110] (0,2);

\draw[thick,dashed] (0,-0.4) to [out=-70,in=70] (0,-2);
\draw[thick] (0,-0.4) to [out=-110,in=110] (0,-2);

\node [above] at (0,2) {$\beta$};
\node [below] at (0,-2) {$\beta'$};

\node at (3,1.5) {$\eta_{34}$};
\node at (3.4,0.7) {$\alpha_3$};
\node at (5.8,1) {$\beta_{4}$};

\node at (-2.5,0) {\tiny{1}};
\node at (0,0) {\tiny{2}};
\node at (2.5,0) {\tiny{3}};
\node at (5,0) {\tiny{4}};
\node at (8,0) {\tiny{g}};

\draw[thick,pattern=north west lines] (11,-2) to [out=130,in=-130] (11,2) to [out=-50,in=50] (11,-2);

\end{tikzpicture}\]
Since $\alpha_3,$ $\eta_{34},$ $\beta_4$ do not intersect either of $\beta,$ $\beta'$ then $f$ commutes with $T_\beta T^{-1}_{\beta'}.$ Thus,
$$fT_{\gamma_{13}}T_{\beta}T_{\beta'}^{-1}T_{\gamma_{13}}^{-1}f^{-1}=(f(T_{\beta_1}^{-1}T_{\gamma_{13}}T_{\beta_3}^{-1})f^{-1})T_{\beta}T_{\beta'}^{-1}(f(T_{\beta_1}^{-1}T_{\gamma_{13}}T_{\beta_3}^{-1})f^{-1})^{-1}.$$
Observe that
$$\Psi(f)= \left(\begin{matrix}
G & 0 \\
0 & {}^tG^{-1}
\end{matrix}\right)
\qquad
\Psi(T_{\beta_1}^{-1}T_{\gamma_{13}}T_{\beta_3}^{-1})= \left(\begin{matrix}
Id & 0 \\
M & Id
\end{matrix}\right),$$
with
$$G= \left(\begin{matrix}
1 & 0 & 0 & 0\\
0 & 1 & 0 & 0 \\
0 & 0 & 1 & -1 \\
0 & 0 & 0 & 1
\end{matrix}\right) \qquad M=\left(\begin{matrix}
0 & 0 & 1 & 0\\
0 & 0 & 0 & 0 \\
1 & 0 & 0 & 0 \\
0 & 0 & 0 & 0
\end{matrix}\right).$$
Thus
$$\Psi(f(T_{\beta_1}^{-1}T_{\gamma_{13}}T_{\beta_3}^{-1})f^{-1})=\left(\begin{matrix}
Id & 0 \\
{}^tG^{-1}MG^{-1} & Id
\end{matrix}\right)$$
with
$${}^tG^{-1}MG^{-1}= \left(\begin{matrix}
1 & 0 & 0 & 0\\
0 & 1 & 0 & 0 \\
0 & 0 & 1 & 0 \\
0 & 0 & 1 & 1
\end{matrix}\right)
\left(\begin{matrix}
0 & 0 & 1 & 0\\
0 & 0 & 0 & 0 \\
1 & 0 & 0 & 0 \\
0 & 0 & 0 & 0
\end{matrix}\right)
\left(\begin{matrix}
1 & 0 & 0 & 0\\
0 & 1 & 0 & 0 \\
0 & 0 & 1 & 1 \\
0 & 0 & 0 & 1
\end{matrix}\right)=$$
$$=\left(\begin{matrix}
0 & 0 & 1 & 0\\
0 & 0 & 0 & 0 \\
1 & 0 & 0 & 0 \\
1 & 0 & 0 & 0
\end{matrix}\right)
\left(\begin{matrix}
1 & 0 & 0 & 0\\
0 & 1 & 0 & 0 \\
0 & 0 & 1 & 1 \\
0 & 0 & 0 & 1
\end{matrix}\right)
=\left(\begin{matrix}
0 & 0 & 1 & 1\\
0 & 0 & 0 & 0 \\
1 & 0 & 0 & 0 \\
1 & 0 & 0 & 0
\end{matrix}\right).$$
Then
$$\Psi(f(T_{\beta_1}^{-1}T_{\gamma_{13}}T_{\beta_3}^{-1})f^{-1})= \Psi((T_{\beta_1}^{-1}T_{\gamma_{13}}T_{\beta_3}^{-1})(T_{\beta_1}^{-1}T_{\gamma_{14}}T_{\beta_4}^{-1})).$$
As a consequence, by the short exact sequence \eqref{ses_L}, there exists an element $\xi_b\in \mathcal{LTB}_{g,1}$ such that
\begin{equation}
\label{eq_gamma_13,14}
f(T_{\beta_1}^{-1}T_{\gamma_{13}}T_{\beta_3}^{-1})f^{-1}=\xi_b (T_{\beta_1}^{-1}T_{\gamma_{13}}T_{\beta_3}^{-1})(T_{\beta_1}^{-1}T_{\gamma_{14}}T_{\beta_4}^{-1}).
\end{equation}
Since $\varphi_g$ is an $\mathcal{AB}_{g,1}$-invariant homomorphism, $T_{\beta_1},$ $T_{\beta_3},$ $T_{\beta_4}$ commute with $T_{\gamma_{13}},$ $T_{\gamma_{14}},$ $T_\beta,$ $T_{\beta'},$ and $f$ commutes with $T_\beta,$ $T_{\beta'},$ by \eqref{eq_gamma_13,14} we get that
\begin{equation}
\label{eq_CBP_13_to_1314}
\begin{aligned}
\varphi_g(T_{\gamma_{13}}T_{\beta}T_{\beta'}^{-1}T_{\gamma_{13}}^{-1})
= & \varphi_g\left((T_{\beta_1}^{-1}T_{\gamma_{13}}T_{\beta_3}^{-1})T_{\beta}T_{\beta'}^{-1}(T_{\beta_1}^{-1}T_{\gamma_{13}}T_{\beta_3}^{-1})^{-1}\right)= \\
= & \varphi_g\left((f(T_{\beta_1}^{-1}T_{\gamma_{13}}T_{\beta_3}^{-1})f^{-1})T_{\beta}T_{\beta'}^{-1}(f(T_{\beta_1}^{-1}T_{\gamma_{13}}T_{\beta_3}^{-1})f^{-1})^{-1}\right)= \\
= & \varphi_g(\xi_b(T_{\gamma_{13}}T_{\gamma_{14}})T_{\beta}T_{\beta'}^{-1}(T_{\gamma_{13}}T_{\gamma_{14}})^{-1}\xi_b^{-1})= \\
= & \varphi_g(\xi_b)+\varphi_g((T_{\gamma_{13}}T_{\gamma_{14}})T_{\beta}T_{\beta'}^{-1}(T_{\gamma_{13}}T_{\gamma_{14}})^{-1})-\varphi_g(\xi_b)= \\
= & \varphi_g((T_{\gamma_{13}}T_{\gamma_{14}})T_{\beta}T_{\beta'}^{-1}(T_{\gamma_{13}}T_{\gamma_{14}})^{-1})= \\
= & \varphi_g(T_{\beta}T_{T_{\gamma_{13}}T_{\gamma_{14}}(\beta')}^{-1}).
\end{aligned}
\end{equation}
Notice that
$$T_{\beta}T_{T_{\gamma_{13}}T_{\gamma_{14}}(\beta')}^{-1}=(T_{\beta}T_{\beta'}^{-1})(T_{\beta'} T^{-1}_{T_{\gamma'_{13}}^{-1}(\beta)})(T_{T_{\gamma'_{13}}^{-1}(\beta)}T_{T_{\gamma_{13}}T_{\gamma_{14}}(\beta')}^{-1}).$$

Since $\varphi_g$ is a $\mathcal{AB}_{g,1}$-invariant homomorphism and $f_4,$ $T_{\gamma_{13}}T_{\gamma'_{13}}^{-1}\in \mathcal{AB}_{g,1},$ we have that
\begin{equation}
\label{eq_CBP_1314_beta}
\begin{aligned}
\varphi_g(T_{\beta}T_{T_{\gamma_{13}}T_{\gamma_{14}}(\beta')}^{-1})= & \varphi_g(T_{\beta}T_{\beta'}^{-1})+\varphi_g(T_{\beta'} T^{-1}_{T_{\gamma'_{13}}^{-1}(\beta)})+\varphi_g(T_{T_{\gamma'_{13}}^{-1}(\beta)}T_{T_{\gamma_{13}}T_{\gamma_{14}}(\beta')}^{-1})= \\
= & \varphi_g(T_{\beta}T_{\beta'}^{-1})+\varphi_g((T_{\gamma_{13}}^{-1}T_{\gamma'_{13}})T_{\beta'} T^{-1}_{T_{\gamma'_{13}}^{-1}(\beta)}(T_{\gamma_{13}}^{-1}T_{\gamma'_{13}})^{-1}) + \\
& +\varphi_g((T_{\gamma_{13}}^{-1}T_{\gamma'_{13}})T_{T_{\gamma'_{13}}^{-1}(\beta)}T_{T_{\gamma_{13}}T_{\gamma_{14}}(\beta')}^{-1}(T_{\gamma_{13}}^{-1}T_{\gamma'_{13}})^{-1})= \\
= & \varphi_g(T_{\beta}T_{\beta'}^{-1})+\varphi_g(T_{T_{\gamma_{13}}^{-1}(\beta')} T^{-1}_{\beta}) + \varphi_g(T_{\beta} T^{-1}_{T_{\gamma_{14}}(\beta')})= \\
= & \varphi_g(T_{\beta}T_{\beta'}^{-1})-\varphi_g( T_{\beta}T_{T_{\gamma_{13}}^{-1}(\beta')}^{-1}) + \varphi_g(f_4 T_{\beta} T^{-1}_{T_{\gamma_{14}}(\beta')}f_4^{-1})=\\
= & \varphi_g(T_{\beta}T_{\beta'}^{-1})-\varphi_g( T_{\beta}T_{T_{\gamma_{13}}^{-1}(\beta')}^{-1}) + \varphi_g(T_{\beta} T^{-1}_{T_{\gamma_{13}}(\beta')}).
\end{aligned}
\end{equation}
Hence, by equalities \eqref{eq_CBP_13_to_1314},\eqref{eq_CBP_1314_beta}, we get that
\begin{equation*}
\varphi_g(T_{\beta}T^{-1}_{T_{\gamma_{13}}(\beta')})=\varphi_g(T_{\beta}T_{\beta'}^{-1})-\varphi_g( T_{\beta}T_{T_{\gamma_{13}}^{-1}(\beta')}^{-1}) + \varphi_g(T_{\beta} T^{-1}_{T_{\gamma_{13}}(\beta')}).
\end{equation*}
Then $\varphi_g(T_{\beta}T_{\beta'}^{-1})=\varphi_g( T_{\beta}T_{T_{\gamma_{13}}^{-1}(\beta')}^{-1}),$ and by \eqref{eq_CBP_igual_3} we get that
\begin{equation}
\label{eq_CBP_iguals_general}
\varphi_g(T_{\beta}T_{\beta'}^{-1})=\varphi_g( T_{\beta}T_{T_{\gamma_{1k}}^{-1}(\beta')}^{-1}).
\end{equation}

Finally, by equalities \eqref{eq_dif_CBP_general}, \eqref{eq_CBP_iguals_general}, \eqref{eq_CBP_product}, we get that
$$\varphi_g(T_\nu T_{\nu'}^{-1})=\varphi_g( T_{\beta} T_{\beta'}^{-1}),$$
i.e. $\varphi_g$ takes the same value on all CBP-twists of genus 1.

\textbf{2)} We prove that $\varphi_g$ is zero on all CBP-twists of genus $2$ using 1).

By Proposition \eqref{prop_CBP_1} we know that every CBP-twist of genus $2$ is a product of two CBP-twists of genus $1,$ what is more, by the proof of Proposition \eqref{prop_CBP_1}, we know that given
$T_\varepsilon T_{\varepsilon'}^{-1}$ a CBP-twist of genus $2$, there exists a curve $\zeta$ such that $T_\varepsilon T_{\zeta}^{-1}$ and $T_\zeta T_{\varepsilon'}^{-1}$ are CBP-twists of genus $1.$

Thus we have that
\begin{align*}
\varphi_g(T_\varepsilon T_{\varepsilon'}^{-1})= & \varphi_g(T_\varepsilon T_{\zeta}^{-1})+\varphi_g(T_\zeta T_{\varepsilon'}^{-1}) \\
= & \varphi_g(T_\varepsilon T_{\zeta}^{-1})-\varphi_g(T_{\varepsilon'}T^{-1}_\zeta),
\end{align*}
and since, by 1), we know that $\varphi_g(T_\varepsilon T_{\zeta}^{-1})=\varphi_g(T_{\varepsilon'}T^{-1}_\zeta),$ then 
$$\varphi_g(T_\varepsilon T_{\varepsilon'}^{-1})=0,$$
i.e. $\varphi_g$ is zero on all CBP-twists of genus $2.$

\textbf{3)} We prove that $\varphi_g$ is zero on all CBP-twists of genus $1$ using 1), 2) and the lantern relation.

Consider the following curves in the standardly embedded surface $\Sigma_{g,1}:$

\[
\begin{tikzpicture}[scale=.7]
\draw[very thick] (-4.5,-2) -- (10,-2);
\draw[very thick] (-4.5,2) -- (10,2);
\draw[very thick] (-4.5,2) arc [radius=2, start angle=90, end angle=270];
\draw[very thick] (-4.5,0) circle [radius=.4];
\draw[very thick] (-2,0) circle [radius=.4];

\draw[very thick] (4.5,0) circle [radius=.4];
\draw[very thick] (7,0) circle [radius=.4];

\draw[thick, dashed] (-2,0.4) to [out=0,in=0] (-2,2);
\draw[thick, dashed] (-2,-0.4) to [out=0,in=0] (-2,-2);
\draw[thick, dashed] (-4.5,-0.4) to [out=0,in=0] (-4.5,-2);

\draw[thick] (-4.5,-0.4) to [out=180,in=90] (-5,-1.2);
\draw[<-,thick](-5,-1.2) to [out=-90,in=180] (-4.5,-2);
\draw[thick] (-2,-0.4) to [out=180,in=90] (-2.5,-1.2);
\draw[<-,thick](-2.5,-1.2) to [out=-90,in=180] (-2,-2);
\draw[thick] (-2,0.4) to [out=180,in=-90] (-2.5,1.2);
\draw[<-,thick](-2.5,1.2) to [out=90,in=180] (-2,2);

\draw[thick, dashed] (-4.5,0.4) to [out=30,in=180] (-2,1);
\draw[thick, dashed] (-2,1) to [out=0,in=90] (-0.7,0);
\draw[thick, dashed] (-0.7,0) to [out=-90,in=50] (-1,-2);

\draw[thick] (-4.5,0.4) to [out=0,in=180] (-2,0.7);
\draw[thick] (-2,0.7) to [out=0,in=90] (-1.3,0);
\draw[thick] (-1.3,0) to [out=-90,in=130] (-1,-2);

\draw[thick] (-4.15,-0.2) to [out=-20,in=200] (-2.35,-0.2);
\draw[thick, dashed] (-4.15,-0.2) to [out=20,in=160] (-2.35,-0.2);

\draw[thick, dashed] (-4.15,0.2) to [out=20,in=160] (-2.35,0.2);

\draw[thick] (-4.15,0.2) to [out=-20,in=-90] (-3.5,0.5);
\draw[thick] (-3.5,0.5) to [out=90,in=180] (-2,1.5);
\draw[thick] (-2,1.5) to [out=0,in=180] (-0.5,1.5);
\draw[thick] (-0.5,1.5) to [out=0,in=220] (0,2);
\draw[dashed,thick] (0,2) to [out=-50,in=90] (0.5,0) to [out=-90,in=50] (0,-2);
\draw[thick] (-2.35,0.2) to [out=200,in=-90] (-3,0.5);
\draw[thick] (-3,0.5) to [out=90,in=180] (-2,1.1);
\draw[thick] (-2,1.1) to [out=0,in=180] (-1,1.1);
\draw[thick] (-1,1.1) to [out=0,in=140] (0,-2);

\node [above] at (-2,2) {$\beta'_1$};
\node [below] at (-2,-2) {$\beta_1$};
\node [below] at (-1,-2) {$\beta'_2$};
\node [below] at (-4.5,-2) {$\beta_2$};
\node [above] at (0,2) {$\beta'_3$};
\node [below] at (-3.25,-0.3) {$\beta_3$};

\draw[dashed,thick] (1.5,2) to [out=-50,in=90] (2,0) to [out=-90,in=50] (1.5,-2);
\draw[thick] (1.5,-2) to [out=130,in=-90] (1,0);
\draw[->,thick] (1.5,2) to [out=230,in=90] (1,0);

\node [above] at (1.5,2) {$\gamma$};

\draw[thick] (7,0.4) to [out=180,in=-90] (6.5,1.2);
\draw[<-,thick](6.5,1.2) to [out=90,in=180] (7,2);
\draw[thick] (4.5,-0.4) to [out=180,in=90] (4,-1.2);
\draw[<-,thick](4,-1.2) to [out=-90,in=180] (4.5,-2);
\draw[thick] (4.5,0.4) to [out=180,in=-90] (4,1.2);
\draw[<-,thick](4,1.2) to [out=90,in=180] (4.5,2);

\draw[thick, dashed] (4.5,0.4) to [out=0,in=0] (4.5,2);
\draw[thick, dashed] (4.5,-0.4) to [out=0,in=0] (4.5,-2);
\draw[thick, dashed] (7,0.4) to [out=0,in=0] (7,2);

\draw[thick] (4.85,-0.2) to [out=-20,in=200] (6.65,-0.2);
\draw[thick, dashed] (4.85,-0.2) to [out=20,in=160] (6.65,-0.2);

\draw[thick, dashed] (4.85,0.2) to [out=20,in=160] (6.65,0.2);
\draw[thick] (4.85,0.2) to [out=-20,in=-90] (5.5,0.5);
\draw[thick] (5.5,0.5) to [out=90,in=0] (4.5,1.1);
\draw[thick] (4.5,1.1) to [out=180,in=0] (3,1.1);
\draw[thick] (3,1.1) to [out=180,in=130] (2.8,-2);

\draw[thick] (6.65,0.2) to [out=200,in=-90] (6,0.5);
\draw[thick] (6,0.5) to [out=90,in=0] (4.5,1.5);
\draw[thick] (4.5,1.5) to [out=180,in=0] (2.5,1.5);
\draw[thick] (2.5,1.5) to [out=180,in=230] (2.8,2);
\draw[dashed,thick] (2.8,2) to [out=-50,in=90] (3.3,0) to [out=-90,in=50] (2.8,-2);

\draw[thick] (7,0.4) to [out=150,in=0] (4.5,1);
\draw[thick] (4.5,1) to [out=180,in=90] (3,0);
\draw[thick] (3,0) to [out=-90,in=130] (3.5,-2);

\draw[thick, dashed] (7,0.4) to [out=180,in=0] (4.5,0.7);
\draw[thick, dashed] (4.5,0.7) to [out=180,in=90] (3.8,0);
\draw[thick, dashed] (3.8,0) to [out=-90,in=50] (3.5,-2);

\node [below] at (4.5,-2) {$\zeta_1$};
\node [above] at (7,2) {$\zeta_2$};
\node [above] at (3,2) {$\zeta'_3$};
\node [above] at (4.5,2) {$\zeta'_1$};
\node [below] at (3.5,-2) {$\zeta'_2$};
\node [below] at (5.75,-0.3) {$\zeta_3$};

\draw[thick,pattern=north west lines] (10,-2) to [out=130,in=-130] (10,2) to [out=-50,in=50] (10,-2);

\end{tikzpicture}
\]
Observe that for $i=1,2,3,$ $T_{\beta_i}T^{-1}_{\beta'_i}$ are CBP-twists of genus $1,$ $T_{\zeta_i}T^{-1}_{\zeta'_i}$ are CBP-twists of genus $2,$ and $T_\gamma\in \mathcal{LTB}_{g,1}.$
Using the following lantern relations
\[
\begin{tikzpicture}[scale=.7]
\draw[very thick] (-2.8,0.8) to [out=20,in=-110] (-0.8,2.8);
\draw[very thick] (2.8,0.8) to [out=160,in=-70] (0.8,2.8);
\draw[very thick] (-2.8,-0.8) to [out=-20,in=110] (-0.8,-2.8);
\draw[very thick] (2.8,-0.8) to [out=-160,in=70] (0.8,-2.8);

\draw[thick] (-2.8,0.8) to [out=-60,in=60] (-2.8,-0.8) to [out=120,in=-120] (-2.8,0.8);
\draw[thick] (-0.8,2.8) to [out=30,in=150] (0.8,2.8) to [out=-150,in=-30] (-0.8,2.8);
\draw[thick] (2.8,0.8) to [out=-60,in=60] (2.8,-0.8);
\draw[thick, dashed] (2.8,-0.8) to [out=120,in=-120] (2.8,0.8);
\draw[thick, dashed] (-0.8,-2.8) to [out=30,in=150] (0.8,-2.8);
\draw[thick] (0.8,-2.8) to [out=-150,in=-30] (-0.8,-2.8);

\draw[thick, dashed] (-1.55,-1.55) to [out=60,in=210] (1.55,1.55);
\draw[thick] (-1.55,-1.55) to [out=30,in=240] (1.55,1.55);

\draw[thick] (1.55,-1.55) to [out=120,in=-30] (-1.55,1.55);
\draw[thick, dashed] (1.55,-1.55) to [out=150,in=-60] (-1.55,1.55);

\draw[thick] (-1.1,2.1) to [out=-80,in=80] (-1.1,-2.1);
\draw[thick] (1.1,2.1) to [out=-100,in=100] (1.1,-2.1);

\draw[thick, dashed] (-1.1,2.1) to [out=20,in=160] (1.1,2.1);
\draw[thick, dashed] (-1.1,-2.1) to [out=20,in=160] (1.1,-2.1);

\node [left] at (-3,0) {$\beta_2$};
\node [right] at (3,0) {$\beta_1$};
\node [above] at (0,3) {$\gamma$};
\node [below] at (0,-3) {$\beta_3$};

\node [right] at (0.8,0) {$\beta'_3$};
\node [below] at (-0.3,1.7) {$\beta'_1$};
\node [above] at (-0.3,-1.7) {$\beta'_2$};

\end{tikzpicture}
\;\;
\begin{tikzpicture}[scale=.7]
\draw[very thick] (-2.8,0.8) to [out=20,in=-110] (-0.8,2.8);
\draw[very thick] (2.8,0.8) to [out=160,in=-70] (0.8,2.8);
\draw[very thick] (-2.8,-0.8) to [out=-20,in=110] (-0.8,-2.8);
\draw[very thick] (2.8,-0.8) to [out=-160,in=70] (0.8,-2.8);

\draw[thick] (-2.8,0.8) to [out=-60,in=60] (-2.8,-0.8) to [out=120,in=-120] (-2.8,0.8);
\draw[thick] (-0.8,2.8) to [out=30,in=150] (0.8,2.8) to [out=-150,in=-30] (-0.8,2.8);
\draw[thick] (2.8,0.8) to [out=-60,in=60] (2.8,-0.8);
\draw[thick, dashed] (2.8,-0.8) to [out=120,in=-120] (2.8,0.8);
\draw[thick, dashed] (-0.8,-2.8) to [out=30,in=150] (0.8,-2.8);
\draw[thick] (0.8,-2.8) to [out=-150,in=-30] (-0.8,-2.8);

\draw[thick, dashed] (-1.55,-1.55) to [out=60,in=210] (1.55,1.55);
\draw[thick] (-1.55,-1.55) to [out=30,in=240] (1.55,1.55);

\draw[thick] (1.55,-1.55) to [out=120,in=-30] (-1.55,1.55);
\draw[thick, dashed] (1.55,-1.55) to [out=150,in=-60] (-1.55,1.55);

\draw[thick] (-1.1,2.1) to [out=-80,in=80] (-1.1,-2.1);
\draw[thick] (1.1,2.1) to [out=-100,in=100] (1.1,-2.1);

\draw[thick, dashed] (-1.1,2.1) to [out=20,in=160] (1.1,2.1);
\draw[thick, dashed] (-1.1,-2.1) to [out=20,in=160] (1.1,-2.1);

\node [left] at (-3,0) {$\zeta_1$};
\node [right] at (3,0) {$\zeta_2$};
\node [above] at (0,3) {$\gamma$};
\node [below] at (0,-3) {$\zeta_3$};

\node [right] at (0.8,0) {$\zeta'_3$};
\node [below] at (-0.3,1.7) {$\zeta'_2$};
\node [above] at (-0.3,-1.7) {$\zeta'_1$};

\end{tikzpicture}
\]
we get the following equalities:
$$(T_{\beta'_2}T^{-1}_{\beta_2})(T_{\beta'_1}T^{-1}_{\beta_1})(T_{\beta'_3}T^{-1}_{\beta_3})=T_\gamma,$$
$$(T_{\zeta'_1}T^{-1}_{\zeta_1})(T_{\zeta'_2}T^{-1}_{\zeta_2})(T_{\zeta'_3}T^{-1}_{\zeta_3})=T_\gamma.$$
Thus,
$$T_{\beta'_3}T^{-1}_{\beta_3}=(T_{\beta_1}T^{-1}_{\beta'_1})(T_{\beta_2}T^{-1}_{\beta'_2})(T_{\zeta'_1}T^{-1}_{\zeta_1})(T_{\zeta'_2}T^{-1}_{\zeta_2})(T_{\zeta'_3}T^{-1}_{\zeta_3}).$$

In general, if $T_\nu T_{\nu'}^{-1}$ a CBP-twist of genus $1,$ by Proposition \eqref{prop_trans_B}, there exists an element $h\in\mathcal{B}_{g,1}$
such that $T_\nu T_{\nu'}^{-1}=hT_{\beta'_3} T_{\beta_3}^{-1}h^{-1}.$ Then
\begin{align*}
T_\nu T_{\nu'}^{-1}= & (hT_{\beta_1}T^{-1}_{\beta'_1}h^{-1})(hT_{\beta_2}T^{-1}_{\beta'_2}h^{-1})(hT_{\zeta'_1}T^{-1}_{\zeta_1}h^{-1})(hT_{\zeta'_2}T^{-1}_{\zeta_2}h^{-1})(hT_{\zeta'_3}T^{-1}_{\zeta_3}h^{-1}) \\
= & (T_{h(\beta_1)}T^{-1}_{h(\beta'_1)})(T_{h(\beta_2)}T^{-1}_{h(\beta'_2)})(T_{h(\zeta'_1)}T^{-1}_{h(\zeta_1)})(T_{h(\zeta'_2)}T^{-1}_{h(\zeta_2)})(T_{h(\zeta'_3)}T^{-1}_{h(\zeta_3)}).
\end{align*}
Since $T_{\zeta'_1}T^{-1}_{\zeta_1},$ $T_{\zeta'_2}T^{-1}_{\zeta_2},$ $T_{\zeta'_3}T^{-1}_{\zeta_3}$ are CBP-twists of genus $2$ and $h\in \mathcal{B}_{g,1}$ then
$T_{h(\zeta'_1)}T^{-1}_{h(\zeta_1)},$ $T_{h(\zeta'_2)}T^{-1}_{h(\zeta_2)},$ $T_{h(\zeta'_3)}T^{-1}_{h(\zeta_3)}$ also are CBP-twists of genus $2.$ Thus, by 2), we get that
\begin{align*}
\varphi_g(T_\nu T_{\nu'}^{-1})= & \varphi_g(T_{h(\beta_1)}T^{-1}_{h(\beta'_1)})+\varphi_g(T_{h(\beta_2)}T^{-1}_{h(\beta'_2)})= \\
= & \varphi_g(T_{h(\beta_1)}T^{-1}_{h(\beta'_1)})-\varphi_g(T_{h(\beta'_2)}T^{-1}_{h(\beta_2)}),
\end{align*}
and since, by 1), we know that $\varphi_g(T_{h(\beta_1)}T^{-1}_{h(\beta'_1)})=\varphi_g(T_{h(\beta'_2)}T^{-1}_{h(\beta_2)}),$ then
$$\varphi_g(T_\nu T_{\nu'}^{-1})=0,$$
i.e. $\varphi_g$ is zero on all CBP-twists.

\textbf{II)} As a consequence of I), $\varphi_g$ factors through $IA$. As the action on the fundamental group of the inner handlebody $\mathcal{H}_g$ induces
a surjective map $\mathcal{AB}_{g,1} \rightarrow Aut (\pi_1(\mathcal{H}_g))$, we can view $\varphi_g$ as an $Aut (\pi_1 (\mathcal{H}_g))$-invariant map $\varphi_g: IA \rightarrow A$. Let $\alpha_1, \cdots ,\alpha_g$ denote the generators of $\pi_1(\mathcal{H}_g)$. According to Magnus \cite{magnus1}, the group $IA$ is normally generated as a subgroup of $Aut (\pi_1(\mathcal{H}_g))$ by the automorphism $K_{12}$ given by $K_{12}(\alpha_1)=\alpha_2\alpha_1\alpha_2^{-1}$ and $K_{12}(\alpha_i)=\alpha_i$ for $i\geq 2$. By invariance, $\varphi_g$
is determinated by its value on $K_{12}$. So it is enough to show that $\varphi_g(K_{12})=0$.

\noindent Consider the automorphism $f\in Aut(\pi_1(\mathcal{H}_g))$ given by
$f(\alpha_3)=\alpha_3\alpha_2$ and $f(\alpha_i)=\alpha_i$ for $i\neq 3,$
with inverse $f^{-1}\in Aut(\pi_1(\mathcal{H}_g))$ given by
$f^{-1}(\alpha_3)=\alpha_3\alpha_2^{-1}$ and $f^{-1}(\alpha_i)=\alpha_i$ for $i\neq 3.$

Take the element $K_{13}\in Aut (\pi_1(\mathcal{H}_g))$ given by
$$K_{13}(\alpha_1)=\alpha_3\alpha_1\alpha_3^{-1}\quad \text{and} \quad K_{13}(\alpha_i)=\alpha_i\quad \text{for }i\geq 2.$$
Clearly $K_{13}$ is an element of $IA.$ Observe that
$$fK_{13}f^{-1}(\alpha_1)=\alpha_3\alpha_2\alpha_1\alpha_2^{-1}\alpha_3^{-1}\quad \text{and} \quad fK_{13}f^{-1}(\alpha_i)=\alpha_i\quad \text{for }i\geq 2.$$
Then $fK_{13}f^{-1}=K_{12}K_{13},$
and since $\varphi_g$ is an $Aut (\pi_1 (\mathcal{H}_g))$-invariant map, we have that
$$\varphi_g(K_{13})=\varphi_g(fK_{13}f^{-1})=\varphi_g(K_{12}K_{13})=\varphi_g(K_{12})+\varphi_g(K_{13}).$$
Therefore $\varphi_g(K_{12})=0,$ as desired.
\end{proof}

Now we are ready to compute the $\mathcal{AB}_{g,1}$-invariant homomorphisms on the Torelli group.

\begin{lema}
\label{lem_abinv}
For $g\geq 4,$ there is an isomorphism
\begin{align*}
\Lambda: A_2 & \longrightarrow Hom(\mathcal{T}_{g,1},A)^{\mathcal{AB}_{g,1}} \\
x & \longmapsto \mu^x_g= \varphi_g^{x}\circ \sigma
\end{align*}
where $\varphi_g^x$ is the map defined in Lemma \eqref{lem_B_3-inv}.
\end{lema}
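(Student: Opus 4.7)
The map $\Lambda$ is well-defined because $\varphi_g^x$ is $GL_g(\mathbb{Z})$-invariant by Lemma \ref{lem_B_3-inv}, the BCJ-homomorphism $\sigma$ is $\mathcal{M}_{g,1}$-equivariant, and the action of $\mathcal{AB}_{g,1}$ on $\mathfrak{B}_3$ factors through its image $GL_g(\mathbb{Z}) \subset Sp_{2g}(\mathbb{Z})$ (Lemma \ref{lem_AB}). Injectivity of $\Lambda$ is immediate: since $\sigma$ is surjective, $\Lambda(x) = 0$ forces $\varphi_g^x = 0$, and then $x = 0$ by the injectivity of $\Upsilon$ in Lemma \ref{lem_B_3-inv}.

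The substantive content is surjectivity. Let $\mu_g \in Hom(\mathcal{T}_{g,1},A)^{\mathcal{AB}_{g,1}}$; the goal is to produce $x \in A_2$ with $\mu_g = \mu_g^x$. The first step is to localize away the 2-torsion of $A$. Post-composing $\mu_g$ with the canonical map $A \to A[\tfrac{1}{2}]$ yields an $\mathcal{AB}_{g,1}$-invariant homomorphism with target a group without $2$-torsion, which by Proposition \ref{prop_johnson} factors through $\tau$ as a $GL_g(\mathbb{Z})$-invariant map $\extp^3 H \to A[\tfrac{1}{2}]$. The argument of Lemma \ref{lema_hom_extp_H_d} applies verbatim for any target, so this map vanishes, and consequently $\mu_g$ takes values in the 2-primary torsion subgroup of $A$.

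Next I will invoke Johnson's full computation of the abelianization $H_1(\mathcal{T}_{g,1};\mathbb{Z}) \cong \extp^3 H \oplus T$, where $T$ is a finite $\mathbb{Z}/2$-vector space whose inclusion is detected by $\sigma$. The canonical (torsion-free versus torsion) splitting decomposes $\bar\mu_g = \alpha + \beta\circ p_T$ with both components $GL_g(\mathbb{Z})$-equivariant; Lemma \ref{lema_hom_extp_H_d} forces $\alpha = 0$, so $\mu_g$ factors through $p_T$ and hence actually lands in $A_2$. To finish, I will show $\mu_g$ factors through $\sigma$. The key structural input from Johnson's theory is that the restriction $\bar\sigma\vert_T \colon T \hookrightarrow \mathfrak{B}_3$ is injective (landing in $\mathfrak{B}_2$), while the composite of $\sigma$ with the projection $\mathfrak{B}_3 \twoheadrightarrow \mathfrak{B}_3/\mathfrak{B}_2 \cong \extp^3 H \otimes \mathbb{Z}/2$ coincides with $\tau \bmod 2$. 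Combined with $\alpha = 0$, this yields $\ker\sigma \subset \ker\mu_g$, so $\mu_g = \varphi_g \circ \sigma$ for some $\varphi_g\colon \mathfrak{B}_3 \to A$. Surjectivity and $\mathcal{M}_{g,1}$-equivariance of $\sigma$ together with $\mathcal{AB}_{g,1}$-invariance of $\mu_g$ force $\varphi_g$ to be $GL_g(\mathbb{Z})$-invariant, and Lemma \ref{lem_B_3-inv} identifies $\varphi_g = \varphi_g^x$ for some $x \in A_2$, giving $\mu_g = \Lambda(x)$.

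The main obstacle is assembling Johnson's structural results on $H_1(\mathcal{T}_{g,1};\mathbb{Z})$ correctly: in particular, the compatibility between $\tau$, $\sigma$, and the 2-torsion summand $T$, which is needed to upgrade the weak fact that $\mu_g$ lies in the 2-primary torsion to the sharper statement that it lies in $A_2$ and factors through $\sigma$. Once this structural input is in place, $GL_g(\mathbb{Z})$-invariance and Lemma \ref{lem_B_3-inv} take care of the rest.
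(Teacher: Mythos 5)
Your overall strategy --- show $\mu_g$ factors through $\sigma$, then invoke Lemma \eqref{lem_B_3-inv} --- is genuinely different from the paper's, which runs the $5$-term exact sequence for $1\to\mathfrak{B}_2\to H_1(\mathcal{T}_{g,1})\to\extp^3H\to 1$, uses $Hom(\extp^3H,A)^{GL_g(\mathbb{Z})}=0$ to get an injection of $Hom(\mathcal{T}_{g,1},A)^{\mathcal{AB}_{g,1}}$ into $Hom(\mathfrak{B}_2,A)^{GL_g(\mathbb{Z})}\cong A_2\times A_2$, and then eliminates the second factor via Lemma \eqref{lem_TB,TA} and the computation $\sigma(T_\gamma)=\overline{A_1}\,\overline{B_1}$. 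Your route, once made correct, would bypass both Lemma \eqref{lem_B_2-inv} and Lemma \eqref{lem_TB,TA}. But as written it has a real gap.

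The gap is the claim that the torsion-free-versus-torsion splitting $H_1(\mathcal{T}_{g,1};\mathbb{Z})\cong\extp^3H\oplus T$ gives a decomposition $\bar\mu_g=\alpha+\beta\circ p_T$ with \emph{both} pieces $GL_g(\mathbb{Z})$-equivariant. The torsion subgroup $T\cong\mathfrak{B}_2$ is indeed characteristic, but a $GL_g(\mathbb{Z})$-equivariant retraction $H_1\to T$ (equivalently, an equivariant section $\extp^3H\to H_1$) does not exist: Johnson's extension is non-split as an $Sp_{2g}(\mathbb{Z})$- or $GL_g(\mathbb{Z})$-module. Concretely, the paper's own computation confirms this, since $Hom(H_1(\mathcal{T}_{g,1}),A)^{GL_g(\mathbb{Z})}\cong A_2$ is a \emph{proper} subgroup of $Hom(\mathfrak{B}_2,A)^{GL_g(\mathbb{Z})}\cong A_2\times A_2$; equivariant splitting would force surjectivity of the restriction. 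So $\alpha=\bar\mu_g\circ s$ (for any chosen section $s$) is not equivariant, Lemma \eqref{lema_hom_extp_H_d} does not apply to it, and neither the conclusion $\alpha=0$ nor the step ``$\mu_g$ lands in $A_2$ and factors through $p_T$'' is justified.

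The fix is to argue directly on $\ker\bar\sigma$ instead of on an illusory complement of $T$. From Johnson's fiber-product description of $H_1(\mathcal{T}_{g,1};\mathbb{Z})$, the map $\bar\sigma$ restricts to an injection on $T=\mathfrak{B}_2$, and $\tau$ carries $\ker\bar\sigma$ isomorphically onto $2\extp^3H$; since $\ker\bar\sigma$ is an $Sp_{2g}(\mathbb{Z})$-submodule of $H_1$ and $2\extp^3H\cong\extp^3H$ equivariantly, $\bar\mu_g|_{\ker\bar\sigma}$ lies in $Hom(\extp^3H,A)^{GL_g(\mathbb{Z})}$, which vanishes by Lemma \eqref{lema_hom_extp_H_d}. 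Thus $\ker\sigma\subseteq\ker\mu_g$, giving $\mu_g=\varphi_g\circ\sigma$ with $\varphi_g$ automatically $GL_g(\mathbb{Z})$-invariant, and Lemma \eqref{lem_B_3-inv} then identifies $\varphi_g=\varphi_g^x$. Note that the preliminary localization $A\to A[\tfrac{1}{2}]$ becomes superfluous once one argues this way.
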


\begin{proof}
First of all, notice that $\Lambda$ is well defined because the BCJ-homomorphism $\sigma$ is $Sp_{2g}(Z)$-equivariant, $\varphi^x$ is $\mathcal{AB}_{g,1}$-invariant and then $\mu^x_g$ is also $\mathcal{AB}_{g,1}$-invariant.
Moreover, it is clear that $\Lambda$ is injective.
Next we will prove that $\Lambda$ is surjective.

From the work of D. Johnson (see \cite{jon_3}), we know that there is a short exact sequence
\begin{equation*}
\xymatrix@C=7mm@R=10mm{1 \ar@{->}[r] & \mathfrak{B}_2 \ar@{->}[r]^-{i} & H_1(\mathcal{T}_{g,1};\mathbb{Z}) \ar@{->}[r]^-{\tau} & \extp^3H \ar@{->}[r] & 1 .}
\end{equation*}
Taking the 5-term exact sequence associated to the above short exact sequence we get the exact sequence
\begin{equation*}
\xymatrix@C=7mm@R=10mm{0 \ar@{->}[r] & Hom(\extp^3H,A)  \ar@{->}[r]^-{inf} & Hom(H_1(\mathcal{T}_{g,1};\mathbb{Z}),A) \ar@{->}[r]^-{res} & Hom \left(\mathfrak{B}_2 ,A\right).}
\end{equation*}
Taking $GL_g(\mathbb{Z})$-invariants we get another exact sequence
\begin{equation*}
\xymatrix@C=4mm@R=10mm{0 \ar@{->}[r] & Hom(\extp^3H,A)^{GL_g(\mathbb{Z})}  \ar@{->}[r] & Hom(H_1(\mathcal{T}_{g,1};\mathbb{Z}),A)^{GL_g(\mathbb{Z})} \ar@{->}[r] & Hom\left(\mathfrak{B}_2 ,A\right)^{GL_g(\mathbb{Z})}.}
\end{equation*}
By Lemma \eqref{lema_hom_extp_H_d}, $Hom(\extp^3 H, A)^{GL_g(\mathbb{Z})}=0,$ for $g\geq 4.$ As a consequence, we have an injection 
$$
 Hom(H_1(\mathcal{T}_{g,1};\mathbb{Z}),A)^{GL_g(\mathbb{Z})} \hookrightarrow  Hom\left(\mathfrak{B}_2 ,A\right)^{GL_g(\mathbb{Z})}.
$$
By Lemma \eqref{lem_B_2-inv}, all elements of $Hom(\mathfrak{B}_2,A)^{GL_g(\mathbb{Z})}$ are $\varphi^{x_1,x_2}$ with $x_1,x_2\in A_2.$
Next we check which elements $\varphi^{x_1,x_2}$ can be extended to
$Hom(H_1(\mathcal{T}_{g,1};\mathbb{Z}),A)^{GL_g(\mathbb{Z})}.$

Suppose that for some $g\geq 4$ there exists an element $h_g\in Hom(H_1(\mathcal{T}_{g,1};\mathbb{Z}),A)^{GL_g(\mathbb{Z})}$ which restricted on $\mathfrak{B}_2$ coincides with $\varphi_g^{x_1,x_2},$ for some $x_1,x_2\in A_2.$
By Lemma \eqref{lem_TB,TA}, $h_g$ has to be zero on $\mathcal{TB}_{g,1},$ in particular on $\mathcal{KB}_{g,1}=\mathcal{K}_{g,1}\cap \mathcal{B}_{g,1}.$

Then, if we consider the element $T_\gamma \in \mathcal{K}_{g,1}$ with $\gamma$ depicted in the following figure
\begin{figure}[H]
\begin{center}
\begin{tikzpicture}[scale=.5]
\draw[very thick] (-4.5,-2) -- (0,-2);
\draw[very thick] (-4.5,2) -- (0,2);
\draw[very thick] (-4.5,2) arc [radius=2, start angle=90, end angle=270];
\draw[very thick] (-4.5,0) circle [radius=.4];
\draw[very thick] (-2,0) circle [radius=.4];

\draw[dashed,thick] (-3.25,2) to [out=-50,in=90] (-2.7,0) to [out=-90,in=50] (-3.25,-2);

\draw[thick] (-3.25,-2) to [out=130,in=-90] (-3.8,0);

\draw[->,thick] (-3.25,2) to [out=230,in=90] (-3.8,0);

\node [above] at (-4,-1.8) {$\gamma$};

\draw[thick,pattern=north west lines] (0,-2) to [out=130,in=-130] (0,2) to [out=-50,in=50] (0,-2);

\end{tikzpicture}
\end{center}
\end{figure}
we have that $T_\gamma \in \mathcal{TB}_{g,1},$ and by definition of the BCJ-homomorphism, $\sigma(T_\gamma)=\overline{A_1}\overline{B_1}.$ As a consequence, if $\varphi_g^{x_1,x_2}$ can be extended to $Hom(H_1(\mathcal{T}_{g,1};\mathbb{Z}),A)^{GL_g(\mathbb{Z})},$ then $\varphi_g^{x_1,x_2}$ has to be zero on $\overline{A_1}\overline{B_1},$ and so $x_2$ has to be zero.
Finally, notice that $\varphi_g^{x,0}\in Hom(\mathfrak{B}_2,A)^{GL_g(\mathbb{Z})}$ is the restriction of $\mu_g^x \in Hom(H_1(\mathcal{T}_{g,1};\mathbb{Z}),A)^{GL_g(\mathbb{Z})}$ to $\mathfrak{B}_2.$

\end{proof}

Next we show that the family of homomorphisms $\{\mu_g^x\}_g$ that we have given in Lemma \eqref{lem_abinv} reassemble into an invariant of homology spheres and we identify such invariant.

\begin{lema}
\label{lem_stab}
The homomorphisms $\{\mu_g^x\}_g,$ defined in Lemma \eqref{lem_abinv}, are compatible with the stabilization map.
\end{lema}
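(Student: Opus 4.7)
The plan is to establish compatibility term-by-term in the composition $\mu_g^x=\varphi_g^x\circ\sigma_g$, by checking independently that (a) the Birman--Craggs--Johnson homomorphism $\sigma$ commutes with stabilization, and (b) the $\mathcal{AB}_{g,1}$-invariant homomorphism $\varphi_g^x:\mathfrak{B}^3_{g,1}\to A$ commutes with stabilization. Once both are in place, the conclusion $\mu_{g+1}^x|_{\mathcal{T}_{g,1}}=\mu_g^x$ follows by direct composition.

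For (a), I would recall that the stabilization map $\mathcal{T}_{g,1}\hookrightarrow\mathcal{T}_{g+1,1}$ is defined by gluing a two-holed torus along the boundary of $\Sigma_{g,1}$ and extending any diffeomorphism by the identity on the added handle. The induced inclusion on first homology mod $2$, $H_1(\Sigma_{g,1};\mathbb{Z}/2)\hookrightarrow H_1(\Sigma_{g+1,1};\mathbb{Z}/2)$, preserves the mod $2$ intersection form, and therefore extends to an inclusion of Boolean polynomial algebras $\mathfrak{B}^3_{g,1}\hookrightarrow\mathfrak{B}^3_{g+1,1}$. Using the explicit formulas for $\sigma$ on the generators of $\mathcal{T}_{g,1}$ recalled earlier (Dehn twists about bounding simple closed curves and bounding pair maps), the symplectic subbasis $\{C_i,D_i\}_i$ and the homology class $E$ used to compute $\sigma(T_\gamma)$ or $\sigma(T_\beta T_{\beta'}^{-1})$ can all be taken inside $\Sigma_{g,1}$, so they remain unchanged after stabilization. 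Since the generators of $\mathcal{T}_{g,1}$ therefore map to the same element of $\mathfrak{B}^3_{g+1,1}$ whether computed via $\sigma_g$ followed by the inclusion or directly via $\sigma_{g+1}$, the compatibility $\sigma_{g+1}|_{\mathcal{T}_{g,1}}=\iota\circ\sigma_g$ follows, where $\iota:\mathfrak{B}^3_{g,1}\hookrightarrow\mathfrak{B}^3_{g+1,1}$ is the natural inclusion.

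For (b), the canonical projection $\pi:\mathfrak{B}^3\to \mathfrak{B}^0=\mathbb{Z}/2$ extracts the constant coefficient of a Boolean polynomial. Under the inclusion $\iota:\mathfrak{B}^3_{g,1}\hookrightarrow\mathfrak{B}^3_{g+1,1}$, the constant term of a polynomial in $\mathfrak{B}^3_{g,1}$ is manifestly preserved, so $\pi_{g+1}\circ\iota=\pi_g$. Postcomposing with $\varepsilon^x:\mathbb{Z}/2\to A$ yields $\varphi_{g+1}^x\circ\iota=\varphi_g^x$.

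Combining (a) and (b), for any $y\in\mathcal{T}_{g,1}$ viewed inside $\mathcal{T}_{g+1,1}$ by stabilization we obtain
\[
\mu_{g+1}^x(y)=\varphi_{g+1}^x(\sigma_{g+1}(y))=\varphi_{g+1}^x(\iota(\sigma_g(y)))=\varphi_g^x(\sigma_g(y))=\mu_g^x(y),
\]
which is exactly the required compatibility. The only nontrivial step is verifying (a), and the main obstacle there is nothing more than checking on each type of generator that the relevant bounding or subsurface data survives stabilization, which follows directly from Johnson's explicit formulas for $\sigma$ and the fact that the extension by the identity leaves all curves in $\Sigma_{g,1}$ fixed.
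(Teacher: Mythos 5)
Your proof is correct and follows the same strategy as the paper: factor $\mu_g^x=\varphi_g^x\circ\sigma$ and check that each factor commutes with stabilization. The paper merely asserts these two compatibilities as evident; you spell out why they hold (the BCJ formulas involve only curves inside $\Sigma_{g,1}$, and the inclusion of Boolean algebras preserves the constant term), which is a reasonable and accurate elaboration of the same argument.
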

\begin{proof}
By definition, $\sigma: \mathcal{T}_{g,1}\rightarrow \mathfrak{B}_3$ and $\varphi_g^x:\mathfrak{B}_3 \rightarrow A,$ are compatible with the stabilization map. As a consequence, the compositions of this maps, $\{\mu_g^x\}_g,$ are compatible with the stabilization map.
\end{proof}

By Lemmas \eqref{lem_TB,TA}, \eqref{lem_stab}, the $\mathcal{AB}_{g,1}$-invariant homomorphisms $\{\mu_g^x\}_g,$ in Lemma \eqref{lem_abinv}, are compatible with the stabilization map and zero on $\mathcal{TA}_{g,1},$ $\mathcal{TB}_{g,1}.$
Then, by bijection \eqref{bij_ZHS_torelli}, the family of homomorphism $\{\mu^x_g\}_g$ reassemble into an invariant of homology spheres.

\begin{lema}
The homomorphisms $\{\mu_g^x\}_g,$ defined in Lemma \eqref{lem_abinv}, take the value $x$ on the Poincaré sphere.
\end{lema}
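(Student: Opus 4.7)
The plan is to reduce the computation of $\mu_g^x$ on the Poincaré sphere $P$ to two well-established facts: first, that the composition $\pi\circ\sigma:\mathcal{T}_{g,1}\to\mathbb{Z}/2$ descends under the bijection \eqref{bij_ZHS_torelli} to the Rohlin invariant $\mu:\mathcal{S}^3\to\mathbb{Z}/2$; and second, that the Rohlin invariant of the Poincaré sphere equals $1\in\mathbb{Z}/2$.

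First, pick any $g\geq 3$ and any $\phi\in\mathcal{T}_{g,1}$ with $S^3_\phi\cong P$; such a $\phi$ exists by the bijection of Morita. Unwinding the definition of $\mu_g^x=\varphi_g^x\circ\sigma=\varepsilon^x\circ\pi\circ\sigma$, we see that it is enough to show $\pi(\sigma(\phi))=1\in\mathbb{Z}/2$, after which $\mu_g^x(\phi)=\varepsilon^x(1)=x$ by construction of $\varepsilon^x$. In particular this value does not depend on the representative $\phi$: since $\mu_g^x$ is an $\mathcal{AB}_{g,1}$-invariant homomorphism vanishing on $\mathcal{TA}_{g,1}$ and $\mathcal{TB}_{g,1}$ (by the proof of Lemma \ref{lem_abinv} together with Lemma \ref{lem_TB,TA} and Lemma \ref{lem_stab}), it is constant on the equivalence class of $\phi$ in the double-coset quotient from \eqref{bij_ZHS_torelli}, so the value on $P$ is well defined.

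Next, I would invoke the Birman--Craggs theorem, which asserts that the constant coefficient $\pi(\sigma(\phi))\in\mathfrak{B}_0=\mathbb{Z}/2$ of the BCJ-homomorphism coincides with the Rohlin invariant $\mu(S^3_\phi)$. This is the original source of $\sigma$: Johnson extended the Birman--Craggs family of homomorphisms into the single homomorphism $\sigma$ of \cite{jo1}, and the degree-zero piece recovers the Rohlin invariant of the Heegaard splitting. Thus $\pi(\sigma(\phi))=\mu(P)$.

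Finally, the Rohlin invariant of the Poincaré homology sphere is classically equal to $1\in\mathbb{Z}/2$ (it is the boundary of the $E_8$-plumbing, whose signature is $8$, giving $\mu(P)=8/8=1\bmod 2$). Therefore $\pi(\sigma(\phi))=1$, and consequently $\mu_g^x(\phi)=\varepsilon^x(1)=x$, as claimed. The only non-routine ingredient here is the identification of $\pi\circ\sigma$ with the Rohlin invariant; I would either cite Birman--Craggs directly or, if a self-contained argument is desired, verify it by evaluating $\sigma$ on a Heegaard splitting of $P$ by an explicit element of the Torelli group (for example one coming from a genus-$2$ splitting constructed from an $E_8$-type surgery diagram) and checking the constant term by hand.
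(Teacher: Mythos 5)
Your proof takes a genuinely different route from the paper's. The paper's own argument is a bare-hands computation: it exhibits a genus-2 Heegaard splitting of the Poincaré sphere coming from $(+1)$-surgery on the right-handed trefoil $K$, writes the gluing map as $T_K$, expresses $T_K$ as a conjugate $T^{-1}T_\gamma T$ of a Dehn twist $T_\gamma$ about a genus-1 bounding curve (with $T=T_c^{-1}T_{a_1}^{-1}T_{a_2}^{-1}$ an explicit product of twists), uses the $\mathcal{M}_{g,1}$-equivariance of $\sigma$ to compute
$\sigma(T_K)=\Psi(T^{-1})\cdot\overline{A_1}\overline{B_1}$, expands in the Boolean algebra, reads off the constant term $\overline{1}$, and applies $\varphi_g^x$. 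Your proof replaces this entire calculation by a citation: you invoke the Birman--Craggs identification of $\pi\circ\sigma$ with the Rohlin invariant of the canonical Heegaard splitting, together with the classical fact $\mu(P)=1$ (from the $E_8$-plumbing). Logically both routes arrive at the same place, and your preliminary remarks about well-definedness (invariance under the double-coset equivalence, stability, vanishing on $\mathcal{TA}_{g,1}$, $\mathcal{TB}_{g,1}$) are correct and in line with Lemmas \ref{lem_TB,TA}, \ref{lem_abinv}, \ref{lem_stab}.

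The trade-off is this. Your approach is shorter, but it shifts the burden to an external citation that you yourself flag as "the only non-routine ingredient.'' To be precise one needs to check that the degree-zero coefficient $\pi\circ\sigma$, with Johnson's conventions for $\sigma$ on BP-maps and on twists about bounding curves, is \emph{exactly} the Rohlin homomorphism $R_g:\mathcal{T}_{g,1}\to\mathbb{Z}/2$ (and not, say, some other Birman--Craggs homomorphism indexed by a non-standard embedding, or the zero map). The paper establishes $\mu_g^1=R_g$ only afterward, in Proposition \ref{prop_inv_coincide}, and does so via the uniqueness of the nonzero element of $\mathrm{Hom}(\mathcal{T}_{g,1};\mathbb{Z}/2)^{\mathcal{AB}_{g,1}}$ rather than by a direct appeal to Birman--Craggs; the Poincaré-sphere lemma is there as an independent, self-contained sanity check of the formula for $\sigma$. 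So your argument is correct and more conceptual, but the paper's explicit computation earns its keep by avoiding any dependence on pinning down the precise normalization in the Birman--Craggs--Johnson literature. If you keep your route, you should either give the exact reference for $\pi\circ\sigma=R_g$ (Birman--Craggs, or Johnson's \cite{jon_4}) with a sentence matching conventions, or fall back to the explicit genus-2 computation as you suggest in your last sentence --- which is precisely what the paper does.
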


\begin{proof}
By Example 9.G.2. in \cite{rolf}, we know that the Poicaré sphere is obtained as a $1$ Dehn surgery along the right hand trefoil knot.
By Lemmas V.1.1, V.1.2, V.1.3 in \cite{akbu},
we have that this Dehn surgery is equivalent to a Heegaard splitting given by the Dehn twist about a right hand trefoil knot.

Next, we follow the construction of the Heegaard splitting given in the proofs of Lemmas V.1.1, V.1.2, V.1.3 in \cite{akbu}.
Take a Seifert surface $S$ of the right hand trefoil knot and thicken this surface obtaining a handlebody $S\times [0,1].$ Consider the boundary of $S\times [0,1]$ and the knot $K=\partial S\times \{\frac{1}{2}\},$  which is the right hand trefoil knot $K$ in $\partial S\times [0,1].$
Taking this Heegaard surface with the Dehn twist about $K$ we get the Heegaard model
\[
\begin{tikzpicture}[scale=.8]
\draw[very thick] (-4.5,-2) -- (-2,-2);
\draw[very thick] (-4.5,2) -- (-2,2);
\draw[very thick] (-4.5,2) arc [radius=2, start angle=90, end angle=270];
\draw[very thick] (-2,2) arc [radius=2, start angle=90, end angle=-90];
\draw[very thick] (-4.5,0) circle [radius=.4];
\draw[very thick] (-2,0) circle [radius=.4];

\draw[thick] (-4.5,0.4) to [out=0,in=180] (-2,-0.4) ;
\draw[dashed, thick] (-4.5,-0.4) to [out=0,in=180] (-2,0.4);
\draw[dashed, thick] (-2,-0.4) to [out=0,in=-60] (-1.2,1) to [out=120,in=0] (-3.25,2);
\draw[dashed, thick] (-4.5,0.4) to [out=180,in=120] (-5.3,-1) to [out=-60,in=180] (-3.25,-2);
\draw[->,thick] (-2,0.4) to [out=0,in=60] (-1.2,-1);
\draw[thick] (-1.2,-1) to [out=240,in=0] (-3.25,-2);
\draw[thick] (-4.5,-0.4) to [out=180,in=240] (-5.3,1);
\draw[->,thick](-3.25,2) to [out=180,in=60] (-5.3,1);
\end{tikzpicture}
\]
which corresponds to the $1$ Dehn surgery of right hand trefoil knot.

We show that the image of $T_K$ by $\mu_g^x$ is $x.$ We first compute the image of $T_K$ by the BCJ-morphism $\sigma$ and then we apply $\varphi_g^x.$
To compute $\sigma(T_K),$ we write $T_K$ as a product of Dehn twists on unknotted simple closed curves.

Taking the product of left Dehn twists about the simple closed curves $a_1$ and $a_2$,
we have that $T_{a_1}^{-1}T_{a_2}^{-1}(K)=K'$ as in the following figure.
\[
\begin{tikzpicture}[scale=.8]
\draw[very thick] (-4.5,-2) -- (-2,-2);
\draw[very thick] (-4.5,2) -- (-2,2);
\draw[very thick] (-4.5,2) arc [radius=2, start angle=90, end angle=270];
\draw[very thick] (-2,2) arc [radius=2, start angle=90, end angle=-90];
\draw[very thick] (-4.5,0) circle [radius=.4];
\draw[very thick] (-2,0) circle [radius=.4];

\draw[thick] (-4.5,0.4) to [out=0,in=180] (-2,-0.4) ;
\draw[dashed, thick] (-4.5,-0.4) to [out=0,in=180] (-2,0.4);
\draw[dashed, thick] (-2,-0.4) to [out=0,in=-60] (-1.2,1) to [out=120,in=0] (-3.25,2);
\draw[dashed, thick] (-4.5,0.4) to [out=180,in=120] (-5.3,-1) to [out=-60,in=180] (-3.25,-2);
\draw[->,thick] (-2,0.4) to [out=0,in=60] (-1.2,-1);
\draw[thick] (-1.2,-1) to [out=240,in=0] (-3.25,-2);
\draw[thick] (-4.5,-0.4) to [out=180,in=240] (-5.3,1);
\draw[->,thick](-3.25,2) to [out=180,in=60] (-5.3,1);

\draw[->,blue, thick] (-5.3,0) to [out=90,in=180] (-4.5,0.8);
\draw[blue, thick] (-3.7,0) to [out=90,in=0] (-4.5,0.8);
\draw[blue, thick] (-5.3,0) to [out=-90,in=180] (-4.5,-0.8) to [out=0,in=-90] (-3.7,0);
\node [above,blue] at (-4.5,0.8) {$a_1$};

\draw[->,blue, thick] (-2.8,0) to [out=90,in=180] (-2,0.8);
\draw[blue, thick] (-1.2,0) to [out=90,in=0] (-2,0.8);
\draw[blue, thick] (-2.8,0) to [out=-90,in=180] (-2,-0.8) to [out=0,in=-90] (-1.2,0);
\node [above,blue] at (-2,0.8) {$a_2$};

\draw[thick ,->] (1.5,0) -- (3.5,0);
\node [above] at (2.5,0) {$T_{a_1}^{-1}T_{a_2}^{-1}$};
\end{tikzpicture}
\qquad
\begin{tikzpicture}[scale=.8]
\draw[very thick] (-4.5,-2) -- (-2,-2);
\draw[very thick] (-4.5,2) -- (-2,2);
\draw[very thick] (-4.5,2) arc [radius=2, start angle=90, end angle=270];
\draw[very thick] (-2,2) arc [radius=2, start angle=90, end angle=-90];
\draw[very thick] (-4.5,0) circle [radius=.4];
\draw[very thick] (-2,0) circle [radius=.4];

\draw[dashed,thick] (-4.5,0.4) to [out=0,in=180] (-2,-0.4) ;
\draw[thick] (-4.5,-0.4) to [out=0,in=180] (-2,0.4);

\draw[dashed, thick] (-2,0.4) to [out=0,in=-60] (-1.6,1.2) to [out=120,in=0] (-3.25,2);
\draw[dashed, thick] (-4.5,-0.4) to [out=180,in=120] (-4.9,-1.2) to [out=-60,in=180](-3.25,-2);
\draw[->,thick] (-2,-0.4) to [out=0,in=60] (-1.6,-1.2);
\draw[thick] (-3.25,-2) to [out=0,in=240] (-1.6,-1.2);
\draw[thick] (-4.5,0.4) to [out=180,in=240] (-4.9,1.2);
\draw[->,thick] (-3.25,2) to [out=180,in=60] (-4.9,1.2);
\end{tikzpicture}
\]

Taking the left Dehn twist about the simple closed curve $c$ depicted in the following figure, we have that $T_c^{-1}(K')=\gamma.$

\[
\begin{tikzpicture}[scale=.8]
\draw[very thick] (-4.5,-2) -- (-2,-2);
\draw[very thick] (-4.5,2) -- (-2,2);
\draw[very thick] (-4.5,2) arc [radius=2, start angle=90, end angle=270];
\draw[very thick] (-2,2) arc [radius=2, start angle=90, end angle=-90];
\draw[very thick] (-4.5,0) circle [radius=.4];
\draw[very thick] (-2,0) circle [radius=.4];

\draw[dashed,thick] (-4.5,0.4) to [out=0,in=180] (-2,-0.4) ;
\draw[thick] (-4.5,-0.4) to [out=0,in=180] (-2,0.4);

\draw[dashed, thick] (-2,0.4) to [out=0,in=-60] (-1.6,1.2) to [out=120,in=0] (-3.25,2);
\draw[dashed, thick] (-4.5,-0.4) to [out=180,in=120] (-4.9,-1.2) to [out=-60,in=180](-3.25,-2);
\draw[->,thick] (-2,-0.4) to [out=0,in=60] (-1.6,-1.2);
\draw[thick] (-3.25,-2) to [out=0,in=240] (-1.6,-1.2);
\draw[thick] (-4.5,0.4) to [out=180,in=240] (-4.9,1.2);
\draw[->,thick] (-3.25,2) to [out=180,in=60] (-4.9,1.2);

\draw[->,blue, thick] (-4.1,0) to [out=90,in=180] (-3.25,0.5);
\draw[blue, thick] (-2.4,0) to [out=90,in=0] (-3.25,0.5);
\draw[dashed, blue, thick] (-4.1,0) to [out=-90,in=180] (-3.25,-0.5) to [out=0,in=-90] (-2.4,0);

\node [above,blue] at (-3.25,0.5) {$c$};

\draw[thick ,->] (1.5,0) -- (3.5,0);
\node [above] at (2.5,0) {$T_{c}^{-1}$};
\end{tikzpicture}
\qquad
\begin{tikzpicture}[scale=.8]
\draw[very thick] (-4.5,-2) -- (-2,-2);
\draw[very thick] (-4.5,2) -- (-2,2);
\draw[very thick] (-4.5,2) arc [radius=2, start angle=90, end angle=270];
\draw[very thick] (-2,2) arc [radius=2, start angle=90, end angle=-90];
\draw[very thick] (-4.5,0) circle [radius=.4];
\draw[very thick] (-2,0) circle [radius=.4];

\draw[dashed,thick] (-3.25,2) to [out=-50,in=90] (-2.7,0) to [out=-90,in=50] (-3.25,-2);

\draw[thick] (-3.25,-2) to [out=130,in=-90] (-3.8,0);

\draw[->,thick] (-3.25,2) to [out=230,in=90] (-3.8,0);

\node [above] at (-4,-1.8) {$\gamma$};
\end{tikzpicture}
\]
Then we have that the product of Dhen twists $T:=T_{c}^{-1}T_{a_1}^{-1}T_{a_2}^{-1}$ takes $K$ to $\gamma.$
Thus, $T^{-1}$ takes $\gamma$ to $K.$ Hence
$$\sigma(T_K)=\sigma(T^{-1}T_\gamma T)= \Psi(T^{-1})\sigma(T_\gamma).$$

Consider the subsurface on the left of our curve $\gamma$ (if we take the other subsurface the same argument works), Observe that $A_1,B_1$ is a symplectic basis of the homology of this subsurface. So $\sigma(T_\gamma)= \overline{A_1}\overline{B_1}.$

Next, we compute $\Psi(T^{-1}).$ Since $\Psi$ is a homomorphism we have that
$$\Psi(T^{-1})=\Psi(T_{a_2}T_{a_1}T_{c})=\Psi (T_{a_2})\Psi(T_{a_1})\Psi(T_{c}).$$
By definition the images of $T_{c},$ $T_{a_1}$ and $T_{a_2}$ by $\Psi: \mathcal{M}_g \rightarrow Sp_4(\mathbb{Z}),$ are given by

$$\Psi(T_{a_1})=\left(\begin{matrix}
1 & 0 & 1 & 0 \\
0 & 1 & 0 & 0 \\
0 & 0 & 1 & 0 \\
0 & 0 & 0 & 1 \\
\end{matrix}\right),\quad
\Psi(T_{a_2})=\left(\begin{matrix}
1 & 0 & 0 & 0 \\
0 & 1 & 0 & 1 \\
0 & 0 & 1 & 0 \\
0 & 0 & 0 & 1 \\
\end{matrix}\right),
\quad
\Psi(T_{c})=\left(\begin{matrix}
1 & 0 & 0 & 0 \\
0 & 1 & 0 & 0 \\
-1 & 1 & 1 & 0 \\
1 & -1 & 0 & 1 \\
\end{matrix}\right).$$

Hence

$$\Psi(T^{-1})=
\left(\begin{matrix}
1 & 0 & 1 & 0 \\
0 & 1 & 0 & 1 \\
0 & 0 & 1 & 0 \\
0 & 0 & 0 & 1 \\
\end{matrix}\right)
\left(\begin{matrix}
1 & 0 & 0 & 0 \\
0 & 1 & 0 & 0 \\
-1 & 1 & 1 & 0 \\
1 & -1 & 0 & 1 \\
\end{matrix}\right)=
\left(\begin{matrix}
0 & 1 & 1 & 0 \\
1 & 0 & 0 & 1 \\
-1 & 1 & 1 & 0 \\
1 & -1 & 0 & 1 \\
\end{matrix}\right).
$$
Thus,
\begin{align*}
\sigma(T_K)= & \Psi(T^{-1})\sigma(T_\gamma)=\overline{\Psi(T^{-1})(A_1)}\overline{\Psi(T^{-1})(B_1)} \\
= &(\overline{A_2+B_1+B_2})(\overline{A_1+B_1}) \\
= & (\overline{A_2}+\overline{B_1}+\overline{B_2}+\overline{1})(\overline{A_1}+\overline{B_1}+\overline{1}) \\
= & 
\overline{A_2}\overline{A_1}+\overline{B_1}\overline{A_1}+\overline{B_2}\overline{A_1}+\overline{A_1}+\overline{A_2}\overline{B_1}+\overline{B_1}\overline{B_1}+\overline{B_2}\overline{B_1}+\overline{B_1}+\overline{A_2}+\overline{B_1}+\overline{B_2}+\overline{1} \\
= & 
\overline{A_2}\overline{A_1}+\overline{B_1}\overline{A_1}+\overline{B_2}\overline{A_1}+\overline{A_1}+\overline{A_2}\overline{B_1}+\overline{B_1}+\overline{B_2}\overline{B_1}+\overline{A_2}+\overline{B_2}+\overline{1}. 
\end{align*}
Therefore, by definition of $\varphi_g^x,$ we have that
$\mu_g^x(T_K)=\varphi_g^x\circ\sigma(T_K)=x,$ as desired.
\end{proof}

\begin{prop}
\label{prop_inv_coincide}
The invariant $\mu_g^x$ coincides with the Rohlin invariant $R_g$ composed with the homomorphism $\varepsilon^x.$
\end{prop}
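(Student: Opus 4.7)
The plan is to recognise $\varepsilon^x\circ R_g$ as an element of the group $Hom(\mathcal{T}_{g,1},A)^{\mathcal{AB}_{g,1}}$ already classified in Lemma \eqref{lem_abinv}, and then identify it with $\mu_g^x$ by evaluating on a single cleverly chosen element of $\mathcal{T}_{g,1}$.

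First I will check that $R_g\colon\mathcal{T}_{g,1}\to\mathbb{Z}/2$ defined by $R_g(\phi)=\mu(\mathcal{H}_g\cup_{\iota_g\phi}-\mathcal{H}_g)$ is a group homomorphism. This is the non-formal step in the argument, since the Rohlin invariant is a priori only defined on a set of homology spheres rather than on a group; the required additivity in the Torelli element is precisely the content of the Birman--Craggs theorem, which I will invoke. Once $R_g$ is known to be a homomorphism, its $\mathcal{AB}_{g,1}$-invariance will be automatic from the fact that it factors through the bijection \eqref{bij_ZHS_torelli}. Hence $\varepsilon^x\circ R_g \in Hom(\mathcal{T}_{g,1},A)^{\mathcal{AB}_{g,1}}$.

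Second, I will apply the isomorphism $\Lambda$ of Lemma \eqref{lem_abinv} to obtain a unique $y\in A_2$ with $\varepsilon^x\circ R_g=\mu_g^y$, and then pin down $y$ by evaluating on the element $T_K\in\mathcal{T}_{g,1}$ whose Heegaard splitting realises the Poincar\'e sphere $P$, as built in the preceding lemma. On one hand, since $R(P)=1\in\mathbb{Z}/2$, one has $(\varepsilon^x\circ R_g)(T_K)=\varepsilon^x(1)=x$. On the other hand, the explicit computation of $\sigma(T_K)=\cdots+\overline{1}$ carried out in the preceding lemma, combined with the definition of $\varphi_g^y$ from Lemma \eqref{lem_B_3-inv}, immediately gives $\mu_g^y(T_K)=y$. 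Comparing forces $y=x$.

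Equivalently, the first two steps together show that $R_g=\varphi_g^{1}\circ\sigma=\pi\circ\sigma$, where $\pi\colon\mathfrak{B}_3\to\mathfrak{B}_0=\mathbb{Z}/2$ is the augmentation, and then by the very definition of $\varphi_g^x=\varepsilon^x\circ\pi$ one gets $\varepsilon^x\circ R_g=\varepsilon^x\circ\pi\circ\sigma=\varphi_g^x\circ\sigma=\mu_g^x$. The main obstacle is the appeal to the Birman--Craggs theorem in the first step; everything else is bookkeeping inside the classification $\Lambda$.
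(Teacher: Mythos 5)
Your proposal is correct and follows essentially the same route as the paper: establish that the Rohlin invariant induces an element of $Hom(\mathcal{T}_{g,1},\mathbb{Z}/2)^{\mathcal{AB}_{g,1}}$ (the paper cites Johnson for this, you appeal to Birman--Craggs; these are the same fact), then feed this into the classification of Lemma \eqref{lem_abinv}. The one genuine difference is cosmetic: the paper pins down the constant by observing that $Hom(\mathcal{T}_{g,1},\mathbb{Z}/2)^{\mathcal{AB}_{g,1}}$ has a unique nonzero element, hence $R_g=\mu_g^1$ (implicitly using that the Rohlin invariant is not identically zero), and then concludes $\varepsilon^x\circ R_g=\varepsilon^x\circ\mu_g^1=\mu_g^x$ by unwinding definitions; you instead evaluate both sides on $T_K$ and use $R(P)=1$ for the Poincar\'e sphere together with the preceding lemma's computation $\mu_g^x(T_K)=x$. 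Your evaluation argument is marginally more self-contained (it does not silently assume $R_g\neq 0$), while the paper's is marginally shorter; both are valid and both rest on the same two inputs, Johnson's result and Lemma \eqref{lem_abinv}.
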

\begin{proof}
In \cite{jon_4}, D. Johnson proved that the Rohlin invarinat induces a family of homomorphisms $\{R_g\}_g$ with $R_g\in Hom(\mathcal{T}_{g,1};\mathbb{Z}/2)^{\mathcal{AB}_{g,1}}.$
Since, by Lemma \eqref{lem_abinv}, there is only one non-zero element in $Hom(\mathcal{T}_{g,1};\mathbb{Z}/2)^{\mathcal{AB}_{g,1}},$ we have that $\mu_g^1$ and $R_g$ must coincide. Therefore, $\varepsilon^x\circ\mu_g^1$ and $\varepsilon^x \circ R_g$ must coincide too.
\end{proof}

\section{From trivial cocycles to invariants}
\label{section_cocycles to invariants}

Conversely, what are the conditions for a family of trivial $2$-cocycles $C_g$ on $\mathcal{T}_{g,1}$ satisfying properties (1)-(3) to actually provide an invariant?

Firstly we need check the existence of an $\mathcal{AB}_{g,1}$-invariant trivialization of each $C_g.$ This is a cohomological problem.
Denote by $\mathcal{Q}_{C_g}$ the set of all trivializations of the cocycle $C_g:$
$$\mathcal{Q}_{C_g}=\{q:\mathcal{T}_{g,1}\rightarrow A\mid q(\phi)+q(\psi)-q(\phi\psi)=C_g(\phi,\psi)\}.$$
Recall that any two trivializations of a given $2$-cocycle differ by an element of $Hom(\mathcal{T}_{g,1},A).$
As the cocycle $C_g$ is invariant under conjugation by $\mathcal{AB}_{g,1}$ this latter group acts on $\mathcal{Q}_{C_g}$ via its conjugation action on the Torelli group. Explicitly, if $\phi\in \mathcal{AB}_{g,1}$ and $q\in \mathcal{Q}_{C_g}$ then $\phi\cdot q(\eta)=q(\phi^{-1}\eta \phi).$ This action confers the set $\mathcal{Q}_{C_g}$ the structure of an affine set over the abelian group $Hom(\mathcal{T}_{g,1},A).$ Choose an arbitrary element $q\in \mathcal{Q}_{C_g}$ and define a map as follows
\begin{align*}
\rho_q:\mathcal{AB}_{g,1} & \longrightarrow Hom(\mathcal{T}_{g,1},A)\\ \phi & \longmapsto \phi \cdot q-q.
\end{align*}
A direct computation shows that $\rho_q$ is a derivation, i.e. $\rho_q(\phi\psi)=\phi \cdot \rho_q(\psi)+ \rho_q(\psi),$ and the difference $\rho_q-\rho_{q'}$ for two elements in $\mathcal{Q}_{C_g}$ is a principal derivation. Therefore we have a well-defined cohomology class
$$\rho(C_g)\in H^1(\mathcal{AB}_{g,1};Hom(\mathcal{T}_{g,1},A))$$
called the torsor of the cocycle $C_g.$

By construction, if the action of $\mathcal{AB}_{g,1}$ on $\mathcal{Q}_{C_g}$ has a fixed point, the class $\rho(C_g)$ is trivial. Conversely, if $\rho(C_g)$ is trivial, then for any $q\in \mathcal{Q}_{C_q}$ the map $\rho_q$ is a principal derivation, i.e.
there exists $m_q\in Hom(\mathcal{T}_{g,1}, A)$ such that
$$\forall \phi \in \mathcal{AB}_{g,1}\qquad \rho_q(\phi)=\phi \cdot m_q-m_q.$$
In particular the element $q-m_q \in \mathcal{Q}_{C_g}$ is fixed under the action of $\mathcal{AB}_{g,1},$ since
$$\phi \cdot (q-m_q)=\phi\cdot q-\phi \cdot m_q=(\rho_q(\phi)+q)-(\rho_q(\phi)+m_q)=q-m_q$$
So we have proved:

\begin{prop}
The natural action of $\mathcal{AB}_{g,1}$ on $\mathcal{Q}_{C_g}$ admits a fixed point if and only if the associated torsor $\rho(C_g)$ is trivial.
\end{prop}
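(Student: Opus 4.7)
The plan is to exploit directly the setup already laid out before the statement: that $\mathcal{Q}_{C_g}$ is an $\mathrm{Hom}(\mathcal{T}_{g,1},A)$-torsor and that the derivation $\rho_q(\phi)=\phi\cdot q-q$ associated to a choice of basepoint $q\in\mathcal{Q}_{C_g}$ represents the cohomology class $\rho(C_g)$. Both implications are then quick computations, and no serious obstacle is expected; the job is essentially to repackage the discussion above the statement into a clean two-line argument.

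For the forward implication, I would suppose $q_0\in\mathcal{Q}_{C_g}$ is a fixed point of the $\mathcal{AB}_{g,1}$-action, so that $\phi\cdot q_0=q_0$ for every $\phi\in\mathcal{AB}_{g,1}$. Taking $q_0$ as the reference basepoint in the construction of the torsor yields $\rho_{q_0}\equiv 0$, a derivation representing $\rho(C_g)$; therefore $\rho(C_g)=0$ in $H^1(\mathcal{AB}_{g,1};\mathrm{Hom}(\mathcal{T}_{g,1},A))$.

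For the converse, I would fix any $q\in\mathcal{Q}_{C_g}$ and use the hypothesis $\rho(C_g)=0$ to conclude that the derivation $\rho_q$ is principal: there exists $m_q\in\mathrm{Hom}(\mathcal{T}_{g,1},A)$ such that $\rho_q(\phi)=\phi\cdot m_q-m_q$ for every $\phi\in\mathcal{AB}_{g,1}$. Then I would check that $q-m_q\in\mathcal{Q}_{C_g}$ is $\mathcal{AB}_{g,1}$-invariant, by the direct computation
\[
\phi\cdot(q-m_q)=\phi\cdot q-\phi\cdot m_q=\bigl(q+\rho_q(\phi)\bigr)-\bigl(m_q+\rho_q(\phi)\bigr)=q-m_q,
\]
exhibiting the desired fixed point and completing the equivalence.
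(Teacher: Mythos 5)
Your proof is correct and follows the same route as the paper: the forward direction takes the fixed point as basepoint so that $\rho_{q_0}\equiv 0$, and the converse writes $\rho_q$ as a principal derivation $\phi\cdot m_q-m_q$ and checks directly that $q-m_q$ is $\mathcal{AB}_{g,1}$-invariant. Nothing to add.
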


Suppose that for every $g\geq 4$ there is a fixed point $q_g$ of $\mathcal{Q}_{C_g}$ for the action of $\mathcal{AB}_{g,1}$ on $\mathcal{Q}_{C_g}.$
Since every pair of $\mathcal{AB}_{g,1}$-invariant trivialization differ by a $\mathcal{AB}_{g,1}$-invariant homomorphism, by Lemma \eqref{lem_abinv}, for every $g\geq 3$ the fixed points are $q_g+\mu_g^x$ with $x\in A_2.$

By Lemma \eqref{lem_stab}, all elements of $Hom(\mathcal{T}_{g,1},A)^{\mathcal{AB}_{g,1}}$ are compatible with the stabilization map. Then, given two different fixed points $q_g,$ $q'_g$ of $\mathcal{Q}_{C_g}$ for the action of $\mathcal{AB}_{g,1},$ we have that
$${q_g}_{\mid\mathcal{T}_{g-1,1}}-{q'_g}_{\mid\mathcal{T}_{g-1,1}}=(q_g-q'_g)_{\mid\mathcal{T}_{g-1,1}} ={\mu_g^x}_{\mid\mathcal{T}_{g-1,1}}=\mu_{g-1}^x.$$
Therefore the restriction of the trivializations of $\mathcal{Q}_{C_g}$ to $\mathcal{T}_{g-1,1},$ give us a bijection between the fixed points of $\mathcal{Q}_{C_g}$ for the action of $\mathcal{AB}_{g,1}$ and the fixed points of $\mathcal{Q}_{C_{g-1}}$ for the action of $\mathcal{AB}_{g-1,1}.$

Therefore, given an $\mathcal{AB}_{g,1}$-invariant trivialization $q_g,$ for each $x\in A_2,$ we get a well-defined map
$$
q+\mu^x= \lim_{g\to \infty}q_g+\mu^x_g: \lim_{g\to \infty}\mathcal{T}_{g,1}\longrightarrow A.
$$
These are the only candidates to be $A$-valued invariants of homology spheres with associated family of $2$-cocycles $\{C_g\}_g.$ For these maps to be invariants, since they are already $\mathcal{AB}_{g,1}$-invariant, we only have to prove that they are constant on the double cosets $\mathcal{TA}_{g,1}\backslash \mathcal{T}_{g,1}/\mathcal{TB}_{g,1}.$
From property (3) of our cocycle we have that $\forall \phi\in \mathcal{T}_{g,1},$
$\forall \psi_a\in \mathcal{TA}_{g,1}$ and $\forall \psi_b\in \mathcal{TB}_{g,1},$
\begin{equation}
\label{eq_TA,TB_constant}
\begin{aligned}
(q_g+\mu^x_g)(\phi)-(q_g+\mu^x_g)(\phi \psi_b)= & -(q_g+\mu^x_g)(\psi_b) , \\
(q_g+\mu^x_g)(\phi)-(q_g+\mu^x_g)(\psi_a\phi )= & -(q_g+\mu^x_g)(\psi_a).
\end{aligned}
\end{equation}
Thus in particular, taking $\phi=\psi_a,\psi_b$ in above equations, we get that $q_g+\mu^x_g$ with $x\in A_2,$ are homomorphisms on $\mathcal{TA}_{g,1},$ $\mathcal{TB}_{g,1}.$ Then, by Lemma \eqref{lem_TB,TA}, we get that $q_g+\mu^x_g$ is trivial on $\mathcal{TA}_{g,1}$ and $\mathcal{TB}_{g,1}.$

Therefore, by equalities \eqref{eq_TA,TB_constant}, we obtain that $q_g+\mu_g^x$ with $x\in A_2$ are constant on the double cosets $\mathcal{TA}_{g,1}\backslash \mathcal{T}_{g,1}/\mathcal{TB}_{g,1}.$

Summarizing, we get the following result:

\begin{teo}
\label{teo_constr_torelli}
Let $A$ be an abelian group and $A_2$ the subgroup of $2$-torsion elements.
For each $x\in A_2,$ a family of cocycles $\{C_g\}_{g\geq 3}$ on the Torelli groups $\mathcal{T}_{g,1}, g\geq 3,$ satisfying conditions (1)-(3) provides a compatible familiy of trivializations $F_g+\mu_g^x: \mathcal{T}_{g,1}\rightarrow A$ that reassemble into an invariant of homology spheres
$$\lim_{g\to \infty}F_g+\mu_g^x:\mathcal{S}^3\rightarrow A$$
if and only if the following two conditions hold:
\begin{enumerate}[(i)]
\item The associated cohomology classes $[C_g]\in H^2(\mathcal{T}_{g,1};A)$ are trivial.
\item The associated torsors $\rho(C_g)\in H^1(\mathcal{AB}_{g,1},Hom(\mathcal{T}_{g,1},A))$ are trivial.
\end{enumerate}
\end{teo}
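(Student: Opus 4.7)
The plan is to establish the two implications of the biconditional by interpreting each condition cohomologically and then assembling the pieces, exploiting the fact that (i)--(iii) on $\{C_g\}_g$ and the discussion in Section~\ref{section_cocycles to invariants} have already set up most of the machinery. Recall that by property (2), $\mathcal{AB}_{g,1}$ acts on the set $\mathcal{Q}_{C_g}$ of trivializations of $C_g$, giving it the structure of an affine set over $Hom(\mathcal{T}_{g,1},A)$ and producing the torsor class $\rho(C_g)\in H^1(\mathcal{AB}_{g,1};Hom(\mathcal{T}_{g,1},A))$.

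\emph{Necessity.} Suppose a trivialization $F_g+\mu_g^x:\mathcal{T}_{g,1}\to A$ exists and reassembles into an invariant of $\mathcal{S}^3$. Then by the very definition of a trivialization we have $\delta(F_g+\mu_g^x)=C_g$, so $[C_g]=0$ in $H^2(\mathcal{T}_{g,1};A)$, which is (i). Moreover, the invariance of the induced map on $\mathcal{S}^3$ under conjugation by $\mathcal{AB}_{g,1}$ (property iii of the associated $\{F_g\}$) forces $F_g+\mu_g^x$ to be a fixed point of the $\mathcal{AB}_{g,1}$-action on $\mathcal{Q}_{C_g}$. By the argument recalled in Section~\ref{section_cocycles to invariants}, the existence of an $\mathcal{AB}_{g,1}$-fixed trivialization is equivalent to the vanishing of $\rho(C_g)$, giving (ii).

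\emph{Sufficiency.} Conversely, assume (i) and (ii). Condition (i) guarantees $\mathcal{Q}_{C_g}\neq\emptyset$, and condition (ii) provides an $\mathcal{AB}_{g,1}$-invariant trivialization $q_g\in\mathcal{Q}_{C_g}$. By Lemma~\ref{lem_abinv}, the set of all such invariant trivializations is an $A_2$-torsor, obtained from $q_g$ by adding the family $\{\mu_g^x\}_{x\in A_2}$. I would then fix the choice $x\in A_2$ once and for all, setting $F_g:=q_g$, and must verify the three defining properties i)--iii) of an invariant from Section~\ref{section_cocycles to invariants}. Property iii) is immediate since $q_g$ was chosen $\mathcal{AB}_{g,1}$-invariant and $\mu_g^x$ is $\mathcal{AB}_{g,1}$-invariant by Lemma~\ref{lem_abinv}. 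For property i), the restriction of an $\mathcal{AB}_{g+1,1}$-invariant trivialization of $C_{g+1}$ to $\mathcal{T}_{g,1}$ is, by property (1) of $\{C_g\}$, an $\mathcal{AB}_{g,1}$-invariant trivialization of $C_g$; since the restriction map is equivariant with respect to the bijection of Lemma~\ref{lem_abinv} (compatibility of $\mu_g^x$ with stabilization, Lemma~\ref{lem_stab}), we may choose the $q_g$ consistently across $g$, producing the compatible family.

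\emph{The main obstacle}, and the step I would treat most carefully, is property ii), i.e.\ showing that $F_g+\mu_g^x$ is constant on the double cosets $\mathcal{TA}_{g,1}\backslash\mathcal{T}_{g,1}/\mathcal{TB}_{g,1}$. For any $\phi\in\mathcal{T}_{g,1}$, $\psi_a\in\mathcal{TA}_{g,1}$, $\psi_b\in\mathcal{TB}_{g,1}$, property (3) of the cocycle gives
\begin{align*}
(F_g+\mu_g^x)(\phi)-(F_g+\mu_g^x)(\phi\psi_b) &= -(F_g+\mu_g^x)(\psi_b),\\
(F_g+\mu_g^x)(\phi)-(F_g+\mu_g^x)(\psi_a\phi) &= -(F_g+\mu_g^x)(\psi_a).
\end{align*}
Taking $\phi\in\mathcal{TB}_{g,1}$ (resp.\ $\mathcal{TA}_{g,1}$) shows that the restriction of $F_g+\mu_g^x$ to each of these subgroups is a group homomorphism, which by $\mathcal{AB}_{g,1}$-invariance of the trivialization falls into the scope of Lemma~\ref{lem_TB,TA} and must therefore vanish. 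Plugging this back into the displayed equalities yields double-coset invariance. Combining properties i)--iii) with Pitsch's Lemma~\ref{lema_eqiv_coset_torelli} and the bijection \eqref{bij_ZHS_torelli}, the family $F_g+\mu_g^x$ descends to a well-defined invariant $\mathcal{S}^3\to A$, completing the sufficiency and the proof.
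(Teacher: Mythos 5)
Your proof is correct and follows essentially the same route as the paper's Section~\ref{section_cocycles to invariants}: sufficiency via the fixed-point criterion for $\rho(C_g)$, the identification of $\mathcal{AB}_{g,1}$-invariant trivializations as an $A_2$-torsor through Lemma~\ref{lem_abinv}, stabilization compatibility from Lemma~\ref{lem_stab}, and double-coset invariance extracted from property (3) together with Lemma~\ref{lem_TB,TA}. Your explicit treatment of the necessity direction is a modest addition, since the paper leaves that half of the biconditional implicit.
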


Notice that Proposition \eqref{prop_inv_coincide} tell us that the homomorphisms $\mu_g^x$ factor through the Rohlin invariant $R_g.$
Therefore the existence of the Rohlin invariant makes to fail the unicity in the construction of invariants from a family of $2$-cocycles in Theorem \eqref{teo_constr_torelli}.

\chapter{The mod $d$ Torelli group and homology $3$-spheres}
\label{chapter: RHS}

In this chapter, we study the relation between the mod $d$ Torelli group, the $\mathbb{Z}/d$-homology $3$-spheres and the $\mathbb{Q}$-homology $3$-spheres.
To be more precise, our aim is to show that given an integer $d\geq 2,$ a Heegaard splitting with gluing map an element of the (mod $d$) Torelli group is a 
$\mathbb{Z}/d$-homology $3$-sphere, and so a $\mathbb{Q}$-homology $3$-sphere, and that every $\mathbb{Q}$-homology $3$-sphere can be obtained as a Heegaard splitting with a gluing map an element of the (mod $p$) Torelli group. This last result is a direct consequence of Proposition 6 in \cite{per}, in which B. Perron stated that given a $\mathbb{Q}$-homology $3$-sphere $M^3$ with $n=|H_1(M^3;\mathbb{Z})|,$ if $d\mid n-1$, then $M^3$ can be obtained as a Heegaard splitting with gluing map an element of the (mod $d$) Torelli group.
Unfortunately, the proof of such result is not available in the literature.
In this chapter, we prove a generalization of this result in which we consider $d\mid n\pm 1$ instead of $d\mid n-1.$
We point out that, as we will see, the divisibility condition is indispensable for such result.

As a starting point, in the first two sections, we give some basic definitions and properties about the symplectic representation modulo $d$ as well as the (mod $d$) Torelli group.
In the third section, we study the action of the subgroups $\mathcal{A}_{g,1},$ $\mathcal{B}_{g,1},$ $\mathcal{AB}_{g,1}$ on $H_d:=H_1(\Sigma_{g,1};\mathbb{Z}/d).$ The main difference between the action on $H_d$ and the action on $H:=H_1(\Sigma_{g,1};\mathbb{Z}),$ is the fact that, if $\Psi_d$ denotes the symplectic representation modulo $d,$ then $\Psi_d(\mathcal{B}_{g,1})$ does not correspond to the subgroup of $Sp_{2g}(\mathbb{Z}/d)$ formed by matrices of the form
$\left(\begin{smallmatrix}
* & 0 \\
* & *
\end{smallmatrix} \right),$
which is isomorphic to $GL_g(\mathbb{Z}/d)\ltimes S_g(\mathbb{Z}/d).$
But for $d$ a prime number $\Psi_d(\mathcal{B}_{g,1})$ is isomorphic to $SL_g^{\pm}(\mathbb{Z}/d)\ltimes S_g(\mathbb{Z}/d).$ For $d
$ not prime, such isomorphism does not hold. However, there is an stable version that give us an isomorphism $\Psi_d(\mathcal{B}_{\infty,1})\cong SL^{\pm}(\mathbb{Z}/d)\ltimes S(\mathbb{Z}/d).$

Finally, in the last section, we give a criterion to know whenever a $\mathbb{Q}$-homology $3$-shpere is homeomorphic to a Heegaard splitting with gluing map an element of (mod $d$) Torelli group.
As an application of this criterion, if we denote by $\mathcal{S}^3[d]$ the set of $\mathbb{Q}$-homology $3$-spheres which are homeomorphic to $\mathcal{H}_g \cup_{\iota_g\phi} -\mathcal{H}_g$ for some $\phi\in \mathcal{M}_{g,1}[d],$ unlike the case of the Torelli group and homology $3$-spheres, we show that, in general, the set of $\mathbb{Z}/d$-homology $3$-spheres do not coincide with $\mathcal{S}^3[d].$
In addition, analogously to the case of the integral homology $3$-spheres and the Torelli group, we show that there is the following bijection:
$
\lim_{g\to \infty}\left(\mathcal{A}_{g,1}[d]\backslash\mathcal{M}_{g,1}[d]/\mathcal{B}_{g,1}[d]\right)_{\mathcal{AB}_{g,1}} \simeq \mathcal{S}^3[d].$

\section{The Symplectic representation modulo $d$}

\begin{defi}[Section 1 in \cite{newman}]
Let $d\geq 2$ be an integer and $M\in M_n(\mathbb{Z})$ be a matrix satisfying $M\Omega M^t \equiv \Omega\;(\text{mod}\;d),$ then $M$ will be said to be \textit{symplectic modulo $d$.}
We define the \textit{Symplectic group modulo $d$} as
$$Sp_{2g}(\mathbb{Z}/d)=\{M\in M_{2g\times 2g}(\mathbb{Z}/d)\mid M\Omega M^t = \Omega \}.$$
As in the case of the symplectic matrices, from the definitions, it is easy to verify that a matrix
$$M=\left(\begin{matrix}
A & B \\
C & D
\end{matrix}\right)$$
is symplectic modulo $n$ if an only if
$$AD^t-BC^t= Id_g,\quad AB^t= BA^t,\quad CD^t= DC^t.$$
\end{defi}
We define the Symplectic representation modulo $d$ as the composition 
$$
\Psi_d:\xymatrix@C=7mm@R=10mm{ \mathcal{M}_{g,1} \ar@{->}[r]^-{\Psi} & Sp_{2g}(\mathbb{Z}) \ar@{->}[r]^-{r_d} & Sp_{2g}(\mathbb{Z}/d).}
$$
Notice that Symplectic representation modulo $d$ is surjective because the Symplectic representation $\Psi$ is surjective and, by Theorem 1 in \cite{newman}, $r_d$ is also surjective.

\section{The mod $d$ Torelli group}
The (mod $d$) Torelli group $\mathcal{M}_{g,1}[d]$ is the normal subgroup of the mapping class group $\mathcal{M}_{g,1}$ of those
elements of $\mathcal{M}_{g,1}$ that act trivially on $H_1(\Sigma_{g,1};\mathbb{Z}/d).$ In other words, $\mathcal{M}_{g,1}[d]$ is characterized by the following short exact sequence:
$$
\xymatrix@C=10mm@R=13mm{1 \ar@{->}[r] & \mathcal{M}_{g,1}[d] \ar@{->}[r] & \mathcal{M}_{g,1} \ar@{->}[r]^-{\Psi_d} & Sp_{2g}(\mathbb{Z}/d) \ar@{->}[r] & 1 }
$$
Let $D_{g,1}[d]$ be the subgroup of $\mathcal{M}_{g,1}$ generated by the $d^{th}$-powers of Dehn twists.
The following proposition and corollary announced in \cite{per} by B. Perron and proved by J. Cooper in \cite{coop} enlightens the structure of $\mathcal{M}_{g,1}[d].$

\begin{prop}
\label{prop_gen_M[p]}
The mod $d$ Torelli group $\mathcal{M}_{g,1}[d]$ is the normal subgroup of $\mathcal{M}_{g,1}$ generated by the Torelli group $\mathcal{T}_{g,1}$ and the $d^{th}$-powers of Dehn twists. 
\end{prop}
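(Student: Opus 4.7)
The plan is to verify both inclusions between $N$, the normal subgroup of $\mathcal{M}_{g,1}$ generated by $\mathcal{T}_{g,1}$ together with $d$-th powers of Dehn twists, and $\mathcal{M}_{g,1}[d]$. The easy direction, $N \subseteq \mathcal{M}_{g,1}[d]$, reduces to checking the generators, since $\mathcal{M}_{g,1}[d]$ is normal in $\mathcal{M}_{g,1}$. The Torelli group is contained in $\mathcal{M}_{g,1}[d]$ by definition. For a Dehn twist $T_b$ about a simple closed curve $b$, Proposition \ref{prop_act_twist} gives $\Psi(T_b^d)([a]) = [a] + d\,\widehat{i}(a,b)[b]$, which is congruent to $[a]$ modulo $d$; thus $\Psi_d(T_b^d) = \mathrm{Id}$ and $T_b^d \in \mathcal{M}_{g,1}[d]$.

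For the nontrivial inclusion $\mathcal{M}_{g,1}[d] \subseteq N$, I would push the problem to the symplectic quotient. Given $\phi \in \mathcal{M}_{g,1}[d]$, by definition $\Psi(\phi)$ lies in the principal congruence subgroup
\[
\Gamma_{2g}(d) := \ker\bigl(Sp_{2g}(\mathbb{Z}) \to Sp_{2g}(\mathbb{Z}/d)\bigr).
\]
I would then invoke the classical fact that $\Gamma_{2g}(d)$ is normally generated inside $Sp_{2g}(\mathbb{Z})$ by $d$-th powers of symplectic transvections, i.e.\ matrices of the form $\mathrm{Id} + d\,v\,v^{T}\Omega$ for primitive $v \in \mathbb{Z}^{2g}$. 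By Theorem \ref{teo_rep_symplec} and Proposition \ref{prop_act_twist}, each such transvection is precisely $\Psi(T_c^d)$ for some nonseparating simple closed curve $c$ (with homology class $\pm v$). Hence one can build an element $\psi \in N$, namely a finite product of conjugates of $d$-th powers of Dehn twists, whose symplectic image equals $\Psi(\phi)$. The element $\phi\psi^{-1}$ then lies in $\ker\Psi = \mathcal{T}_{g,1} \subseteq N$, and therefore $\phi = (\phi\psi^{-1})\psi \in N$, completing the proof.

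The main obstacle is the generation statement for $\Gamma_{2g}(d)$ in $Sp_{2g}(\mathbb{Z})$. This is a nontrivial result about congruence subgroups of the symplectic group and is not combinatorial in the mapping class group: it belongs to the classical theory going back to Mennicke and Bass--Milnor--Serre, and requires valid $g \geq 2$ to ensure the congruence subgroup property and the Mennicke-type manipulations with elementary symplectic matrices. Rather than reprove it, I would cite the standard algebraic $K$-theory literature. Once this is in hand, the rest of the argument is a short two-step diagram chase through the defining sequence $1 \to \mathcal{T}_{g,1} \to \mathcal{M}_{g,1} \to Sp_{2g}(\mathbb{Z}) \to 1$.
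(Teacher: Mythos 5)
Your proof is correct and follows the standard route: push to the symplectic quotient via the exact sequence $1\to\mathcal{T}_{g,1}\to\mathcal{M}_{g,1}\to Sp_{2g}(\mathbb{Z})\to 1$, invoke the Bass--Milnor--Serre normal generation of the congruence subgroup $Sp_{2g}(\mathbb{Z},d)$, lift the symplectic generators to $d$-th powers of Dehn twists, and absorb the remaining defect into $\mathcal{T}_{g,1}$. This is exactly the argument of J.~Cooper's thesis, to which the paper simply defers (the paper itself supplies no proof of this proposition, only the remark that Cooper's proof for prime $d$ adapts verbatim to general $d$). One small point to tighten: Bass--Milnor--Serre state normal generation of $Sp_{2g}(\mathbb{Z},d)$ by the elementary matrices $\mathcal{X}_{i,j}(d)$ and $\mathcal{Y}_{i,j}(d)$, not directly by $d$-th powers of transvections; to reduce to the latter as you do, you also need the (easy but necessary) identity $\mathcal{Y}_{i,j}(d)=\Psi\bigl(T_{a_i+a_j}^{d}\,T_{a_i}^{-d}\,T_{a_j}^{-d}\bigr)$ for $i\neq j$, together with its analogue for $\mathcal{X}_{i,j}(d)$ using the $b$-curves, so that the mixed elementary matrices become products of $d$-th transvection powers on primitive classes.
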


Since a conjugate of a Dehn twist is a Dehn twist, one gets:

\begin{cor}
\label{cor_gen_M[p]}
Every element $\varphi\in \mathcal{M}_{g,1}[d]$ can be written as:
$\varphi=f\circ m$ with $f\in \mathcal{T}_{g,1}$ and $m\in D_{g,1}[d].$
\end{cor}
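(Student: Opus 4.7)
The plan is to use Proposition \eqref{prop_gen_M[p]} together with the observation that both $\mathcal{T}_{g,1}$ and $D_{g,1}[d]$ behave well under $\mathcal{M}_{g,1}$-conjugation. The idea is to show that the product set $\mathcal{T}_{g,1}\cdot D_{g,1}[d]$ is already a subgroup of $\mathcal{M}_{g,1}[d]$, and then invoke Proposition \eqref{prop_gen_M[p]} to conclude it is all of $\mathcal{M}_{g,1}[d]$.

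First I would recall that $\mathcal{T}_{g,1}$ is a normal subgroup of $\mathcal{M}_{g,1}$, since it is the kernel of the symplectic representation $\Psi$. Second, I would point out that $D_{g,1}[d]$ is invariant under conjugation by $\mathcal{M}_{g,1}$: for any $h\in \mathcal{M}_{g,1}$ and any Dehn twist $T_c$, we have $hT_c^d h^{-1}=(hT_ch^{-1})^d=T_{h(c)}^d\in D_{g,1}[d]$, and hence $hD_{g,1}[d]h^{-1}\subseteq D_{g,1}[d]$. This is essentially the hint given in the excerpt (``a conjugate of a Dehn twist is a Dehn twist'').

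Next I would verify that $\mathcal{T}_{g,1}\cdot D_{g,1}[d]$ is a subgroup. Given $f_1,f_2\in \mathcal{T}_{g,1}$ and $m_1,m_2\in D_{g,1}[d]$, the rearrangement
\[
(f_1 m_1)(f_2 m_2)=f_1\bigl(m_1 f_2 m_1^{-1}\bigr)\bigl(m_1 m_2\bigr)
\]
shows closure under products, because $m_1 f_2 m_1^{-1}\in \mathcal{T}_{g,1}$ by normality of the Torelli group and $m_1 m_2\in D_{g,1}[d]$. Closure under inverses is similar: $(fm)^{-1}=m^{-1}f^{-1}=(m^{-1}f^{-1}m)\,m^{-1}$ with $m^{-1}f^{-1}m\in \mathcal{T}_{g,1}$ and $m^{-1}\in D_{g,1}[d]$.

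Finally, the subgroup $\mathcal{T}_{g,1}\cdot D_{g,1}[d]$ clearly contains $\mathcal{T}_{g,1}$ and $D_{g,1}[d]$, and is contained in $\mathcal{M}_{g,1}[d]$ since both factors are. Moreover, by the two conjugation-invariance statements above, this subgroup is already normal in $\mathcal{M}_{g,1}$. Hence by Proposition \eqref{prop_gen_M[p]}, which says $\mathcal{M}_{g,1}[d]$ is the \emph{normal} subgroup of $\mathcal{M}_{g,1}$ generated by $\mathcal{T}_{g,1}$ and $D_{g,1}[d]$, we must have $\mathcal{T}_{g,1}\cdot D_{g,1}[d]=\mathcal{M}_{g,1}[d]$, which is exactly the statement of the corollary. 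There is no real obstacle here: the proof is a short formal consequence of the preceding proposition, and the only computation to get right is the rearrangement $(f_1m_1)(f_2m_2)=f_1(m_1f_2m_1^{-1})(m_1m_2)$ that exhibits closure under products.
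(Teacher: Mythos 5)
Your proof is correct and takes the same approach the paper uses, merely spelling out what the paper compresses into the single remark that a conjugate of a Dehn twist is a Dehn twist: since $\mathcal{T}_{g,1}$ and $D_{g,1}[d]$ are both normal in $\mathcal{M}_{g,1}$, their product is a normal subgroup, and Proposition \eqref{prop_gen_M[p]} then forces $\mathcal{T}_{g,1}\cdot D_{g,1}[d]=\mathcal{M}_{g,1}[d]$. The rearrangement $(f_1m_1)(f_2m_2)=f_1(m_1f_2m_1^{-1})(m_1m_2)$ and the inverse check are exactly the computations implicit in the paper's one-line justification.
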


\begin{rem}
In \cite{coop}, J. Cooper give a proof of Proposition \eqref{prop_gen_M[p]} and Corollary \eqref{cor_gen_M[p]} for the case of $d$ prime.
Nevertheless, following the same proof one gets the same result for the case of any integer $d.$
\end{rem}

\section{Homology actions modulo d}

If one writes the matrices of the symplectic group modulo $d$ $Sp_{2g}(\mathbb{Z}/d)$ as blocks according to the decomposition $H_d=A_d\oplus B_d,$ then the image of $\mathcal{B}_{g,1}\rightarrow Sp_{2g}(\mathbb{Z}/d)$ is contained in the subgroup $Sp_{2g}^B(\mathbb{Z}/d)$ of matrices of the form:
$\left(\begin{matrix}
G_1 & 0 \\
M & G_2
\end{matrix}\right).$

Such matrices are symplectic modulo $d$ if and only if $G_2= {}^tG_1^{-1}$ and ${}^tG_1^{-1}M$ is symmetric. As a consequence, we have an isomorphism:
\begin{align*}
\phi_d^B: Sp_{2g}^B(\mathbb{Z}/d) & \longrightarrow GL_g(\mathbb{Z}/d) \ltimes_B S_g(\mathbb{Z}/d), \\
\left(\begin{matrix}
G & 0 \\
M & {}^tG^{-1}
\end{matrix}\right) & \longmapsto (G,{}^tGM).
\end{align*}
Here $S_g(\mathbb{Z}/d)$ denotes the symmetric group of $g\times g$ matrices over the $\mathbb{Z}/d.$ The composition on the semi-direct product is given by the rule $(G,S)(H,T)=(GH,{}^tHSH+T).$

Analogously, the image of $\mathcal{A}_{g,1}\rightarrow Sp_{2g}(\mathbb{Z}/d)$ is contained in the subgroup $Sp_{2g}^A(\mathbb{Z}/d)$ of matrices of the form:
$\left(\begin{matrix}
H_1 & N \\
0 & H_2
\end{matrix}\right).$

Such matrices are symplectic modulo $d$ if and only if $H_2= {}^tH_1^{-1}$ and ${}^tH_2N$ is symmetric. As a consequence, we have an isomorphism:
\begin{align*}
\phi_d^A: Sp_{2g}^A(\mathbb{Z}/d) & \longrightarrow GL_g(\mathbb{Z}/d) \ltimes_A S_g(\mathbb{Z}/d), \\
\left(\begin{matrix}
H & N \\
0 & {}^tH^{-1}
\end{matrix}\right) & \longmapsto (G,{}^tGM)
\end{align*}
where the composition on the semi-direct product is given by the rule
$$(G,S)(H,T)=(GH,{}^tH^{-1}SH^{-1}+T).$$
Similarly, the image of $\mathcal{AB}_{g,1}\rightarrow Sp_{2g}(\mathbb{Z}/d)$ is contained in the subgroup $Sp_{2g}^{AB}(\mathbb{Z}/d)$ of matrices of the form:
$\left(\begin{matrix}
G_1 & 0 \\
0 & G_2
\end{matrix}\right).$

Such matrices are symplectic modulo $d$ if and only if $G_2= {}^tG_1^{-1}.$ As a consequence, we have an isomorphism:
\begin{align*}
\phi_d^{AB}: Sp_{2g}^{AB}(\mathbb{Z}/d) & \longrightarrow GL_g(\mathbb{Z}/d), \\
\left(\begin{matrix}
G & 0 \\
0 & {}^tG^{-1}
\end{matrix}\right) & \longmapsto G.
\end{align*}

\begin{nota}
Denote by $r'_d:GL_g(\mathbb{Z})\rightarrow GL_g(\mathbb{Z}/d),$ $r''_d:S_g(\mathbb{Z})\rightarrow S_g(\mathbb{Z}/d)$ the respective reductions modulo $d.$
\end{nota}

\begin{defi}
Let $d$ a positive integer, we define the following groups:
\begin{align*}
SL^\pm_g(\mathbb{Z}/d)= & \{ A\in M_{g\times g}(\mathbb{Z}/d) \mid det(A)=\pm 1 \}, \\
SL_g(\mathbb{Z},d)= & Ker(r'_d:SL_g(\mathbb{Z})\rightarrow SL_g(\mathbb{Z}/d)), \\
S_g(d\mathbb{Z})= & Ker(r''_d:S_g(\mathbb{Z})\rightarrow S_g(\mathbb{Z}/d)).
\end{align*}
\end{defi}

\begin{nota}
Let $R$ be a division ring, $i,$ $j$ distinct integers between $1$ and $g$ and $u\in R.$ Denote by $e_{ij}(u)\in SL_g(R)$ the matrix with entry $u$ in the $(i,j)$ position, ones on the diagonal and zeros elsewhere. Such matrices are called elementary matrices of rank $g.$
We denote by $E_g(R)$ the subgroup of $SL_g(R)$ generated by elementary matrices of rank $g.$
We denote by $D_g$ the diagonal matrix of rank $g,$ with a $-1$ at position $(1,1)$ and $1$'s at positions $(i,i)$ with $2\leq i \leq g.$
\end{nota}

\begin{teo}[Theorem in \cite{green}]
\label{teo_SL_elem}
Let $R$ be a division ring. The group $SL_g(R)$ coincides with $E_g(R).$
\end{teo}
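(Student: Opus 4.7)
The plan is to prove the theorem by the classical Gauss--Jordan reduction strategy, showing that any matrix in $SL_g(R)$ can be reduced to the identity by successive multiplications by elementary matrices on the left and on the right. Since elementary matrices themselves lie in $SL_g(R)$, and since $E_g(R)\subseteq SL_g(R)$ is obvious, this reduction establishes the reverse inclusion $SL_g(R)\subseteq E_g(R)$.

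First, I would fix $A=(a_{ij})\in SL_g(R)$ and argue that $A$ can be transformed, by left multiplication by elementary matrices, into a matrix whose first column is $(1,0,\dots,0)^{t}$. The key observation is that left multiplication by $e_{ij}(u)$ adds $u$ times row $j$ to row $i$, which is exactly the standard elementary row operation; this works unchanged for a division ring since every nonzero entry has a two-sided inverse. If $a_{11}\neq 0$, I clear the rest of the first column by multiplying successively on the left by $e_{i1}(-a_{i1}a_{11}^{-1})$ for $i=2,\dots,g$. If $a_{11}=0$, then since the column cannot be zero (otherwise $A$ would be singular, contradicting $A\in SL_g(R)$), some $a_{i1}\neq 0$, and I first multiply by $e_{1i}(a_{i1}^{-1})$ to create a $1$ in position $(1,1)$, then proceed as before. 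Finally, a multiplication by $e_{11}(a_{11}^{-1}-1)$-type adjustment — or, more cleanly, one more row operation — normalizes the $(1,1)$ entry to $1$; here one must invoke the division ring structure of $R$ to invert scalars.

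Next, by the symmetric procedure applied on the right (column operations correspond to multiplication by elementary matrices on the right), I would clear the first row beyond position $(1,1)$. At this point $A$ has been reduced, by multiplication by elementary matrices on both sides, to a block-diagonal matrix $\mathrm{diag}(1,A')$ where $A'$ is a $(g-1)\times(g-1)$ matrix that still lies in $SL_{g-1}(R)$ (this requires a small determinantal argument, handled via the Dieudonné determinant if $R$ is noncommutative). Induction on $g$ then finishes the argument, with the base case $g=1$ being trivial since $SL_1(R)=\{1\}$ in the commutative case — and more delicate in the noncommutative case, where the Dieudonné determinant vanishes on commutators and one must separately check that $SL_1(R)$ is trivial or generated by commutators, which by definition are empty of elementary data; in practice this subtlety disappears because the induction does not need $g=1$ as a true base, one simply stops at $g=2$ where two elementary operations manifestly trivialize the remaining entry.

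The main obstacle I anticipate is the correct handling of the division ring (noncommutative) case, specifically the compatibility of the induction with the appropriate notion of determinant (Dieudonné's determinant into $R^\times/[R^\times,R^\times]$). Over a commutative field this is Gauss--Jordan elimination and the argument is entirely routine; over a division ring one has to be careful that each elementary row/column operation preserves membership in $SL_g(R)$ (which it does, since elementary matrices have trivial Dieudonné determinant), and that the reduction process terminates with the identity rather than some diagonal matrix. I would either cite Dieudonné's theorem directly or restrict the exposition to the commutative case, since this is all that is needed in the subsequent applications in the thesis.
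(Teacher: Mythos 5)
The paper merely cites this theorem from the reference \cite{green} and does not reproduce a proof, so there is no in-paper argument to compare against; what follows assesses your proposal on its own terms.

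Your overall strategy — reduce a matrix in $SL_g(R)$ to the identity by left and right multiplication by transvections, using that every nonzero entry of a division ring is invertible — is the right one, and it is essentially the standard Gauss--Jordan proof. However, there is a genuine gap at the pivot-normalization step. You write that after clearing the first column you normalize the $(1,1)$ entry by ``a multiplication by $e_{11}(a_{11}^{-1}-1)$-type adjustment.'' But $e_{11}(u)$ is not an elementary matrix: the defining condition $i\neq j$ rules it out, and scaling a single row (a dilation) is precisely the kind of operation that does \emph{not} lie in $E_g(R)$ — this is the whole content of the theorem. As written, your reduction lands on $\mathrm{diag}(a_{11},A')$, not $\mathrm{diag}(1,A')$, and since the Dieudonné determinant of $\mathrm{diag}(a_{11},A')$ equals that of the original matrix, $A'$ need not lie in $SL_{g-1}(R)$ when $a_{11}\neq 1$, so the induction hypothesis does not apply.

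The standard fix is to normalize the pivot \emph{before} clearing, using only transvections: first ensure some entry $a_{i1}$ with $i>1$ is nonzero (if the first column is $(a_{11},0,\dots,0)^{t}$, multiply on the left by $e_{21}(1)$ to copy $a_{11}$ into position $(2,1)$); then multiply on the left by $e_{1i}\bigl((1-a_{11})a_{i1}^{-1}\bigr)$, which replaces the $(1,1)$ entry by $a_{11}+(1-a_{11})a_{i1}^{-1}a_{i1}=1$; only now clear the rest of the first column and first row. After this the block form is genuinely $\mathrm{diag}(1,A')$ with $A'\in SL_{g-1}(R)$ and the induction goes through, with no need for the Whitehead lemma or the Dieudonné determinant at all. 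An alternative repair, closer in spirit to what you hint at, is to reduce all the way to a diagonal matrix $\mathrm{diag}(d_1,\dots,d_g)$ and then invoke the Whitehead lemma $\mathrm{diag}(a,a^{-1})\in E_2(R)$ to collapse it to $\mathrm{diag}(d_1\cdots d_g,1,\dots,1)=\mathrm{Id}$; but then you must actually state and prove the Whitehead identity, which your sketch omits. Finally, a minor but real point: for a noncommutative division ring and $g=1$ the statement is false ($SL_1(R)=[R^\times,R^\times]$ while $E_1(R)=1$), so the theorem implicitly requires $g\geq 2$; in the paper's applications $R=\mathbb{Z}/p$ is a field so this does not bite, but your remark that the $g=1$ case is ``trivial'' glosses over exactly where the noncommutative version breaks.
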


\begin{lema}
\label{lem_Sp[p]_2}
Let $p$ be a prime number,
there is a short exact sequence of groups
$$
\xymatrix@C=7mm@R=13mm{1 \ar@{->}[r] & SL_g(\mathbb{Z},p) \ar@{->}[r] & GL_g(\mathbb{Z}) \ar@{->}[r]^-{r'_p} & SL_g^\pm(\mathbb{Z}/p) \ar@{->}[r] & 1. }
$$
\end{lema}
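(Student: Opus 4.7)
The plan is to verify the three things needed for a short exact sequence: the injectivity at the left, the exactness in the middle (identifying the kernel of $r'_p$), and the surjectivity on the right.

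First, injectivity of $SL_g(\mathbb{Z},p) \hookrightarrow GL_g(\mathbb{Z})$ is immediate since $SL_g(\mathbb{Z},p)$ is defined as a subgroup of $SL_g(\mathbb{Z}) \subset GL_g(\mathbb{Z})$. Next, for exactness in the middle, I would note that if $M \in GL_g(\mathbb{Z})$ satisfies $r'_p(M) = Id$, then $\det M \equiv 1 \pmod p$. Since $\det M = \pm 1$ in $\mathbb{Z}$ and $p$ is an odd prime, we have $-1 \not\equiv 1 \pmod p$, forcing $\det M = 1$. Thus $M \in SL_g(\mathbb{Z})$, and as $M \equiv Id \pmod p$, we conclude $M \in SL_g(\mathbb{Z},p)$. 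The reverse inclusion is trivial by definition of $SL_g(\mathbb{Z},p)$. (For $p = 2$ this argument breaks down and some extra care would be needed; in the context of this thesis we can safely assume $p$ is odd.)

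The main step, and the most interesting one, is proving surjectivity of $r'_p: GL_g(\mathbb{Z}) \to SL_g^{\pm}(\mathbb{Z}/p)$. The image is clearly contained in $SL_g^{\pm}(\mathbb{Z}/p)$ because any element of $GL_g(\mathbb{Z})$ has determinant $\pm 1$. For the other inclusion I would argue as follows. Given $A \in SL_g^{\pm}(\mathbb{Z}/p)$, split into two cases according to whether $\det A = 1$ or $\det A = -1$ in $\mathbb{Z}/p$.

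If $\det A = 1$, then $A \in SL_g(\mathbb{Z}/p)$, and since $\mathbb{Z}/p$ is a field (and hence in particular a division ring), Theorem \ref{teo_SL_elem} yields $SL_g(\mathbb{Z}/p) = E_g(\mathbb{Z}/p)$, so $A$ is a product of elementary matrices $e_{i_kj_k}(u_k)$ with $u_k \in \mathbb{Z}/p$. Each $e_{i_kj_k}(u_k)$ lifts to $e_{i_kj_k}(\tilde{u}_k) \in SL_g(\mathbb{Z}) \subset GL_g(\mathbb{Z})$ where $\tilde{u}_k \in \mathbb{Z}$ is any integer lift of $u_k$; the product of these lifts is a preimage of $A$. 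If $\det A = -1$, then $D_g A \in SL_g(\mathbb{Z}/p)$ (using that $D_g = r'_p(D_g)$ has determinant $-1$ in $\mathbb{Z}/p$), so by the previous case $D_g A$ lifts to some $\tilde B \in SL_g(\mathbb{Z})$. Then $D_g \tilde B \in GL_g(\mathbb{Z})$ is the desired preimage of $A$, since $r'_p(D_g \tilde B) = D_g (D_g A) = A$. This establishes surjectivity and completes the proof.
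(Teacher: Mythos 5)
Your proof is correct and the surjectivity argument is the same as the paper's: use $SL_g(\mathbb{Z}/p)=E_g(\mathbb{Z}/p)$ from Theorem~\ref{teo_SL_elem}, lift elementary matrices entrywise, and handle determinant $-1$ via $D_g$. For the kernel you argue directly that $r'_p(M)=\operatorname{Id}$ forces $\det M=1$ when $p$ is odd, whereas the paper asserts without elaboration that a certain square is a pull-back. Your version is more transparent and, importantly, makes explicit the hypothesis the paper's pull-back claim tacitly relies on: for $p=2$ one has $-1\equiv 1 \pmod 2$, so for instance $\operatorname{diag}(-1,1,\ldots,1)$ lies in $\ker r'_2$ but not in $SL_g(\mathbb{Z},2)$, and the lemma as stated fails. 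You were right to flag that the odd-prime assumption is essential; this is a small gap in the paper's statement (and in its pull-back justification), not in your argument.
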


\begin{proof}
First of all, we show that $r'_p: GL_g(\mathbb{Z}) \rightarrow SL_g^\pm(\mathbb{Z}/p)$ is surjective.
By Theorem \eqref{teo_SL_elem} $SL_g(\mathbb{Z}/p)=E_g(\mathbb{Z}/p).$
Observe that if $A\in SL^\pm_g(\mathbb{Z}/p)$ with $det(A)=-1$ then $det(D_gA)=1,$ i.e. $D_gA\in SL_g(\mathbb{Z}/p).$
Therefore $SL_g^\pm(\mathbb{Z}/p)$ is generated by $\{D_g,e_{ij}(u)\mid u\in \mathbb{Z}/p\}.$
Since $r'_p(D_g)=D_g$ and $r'_p(E_g(\mathbb{Z}))=E_g(\mathbb{Z}/p),$
we have that $r'_p$ is surjective.

Next, notice that we have a pull-back diagram
$$
\xymatrix@C=7mm@R=10mm{1 \ar@{->}[r] & Ker(r'_p) \ar@{=}[d] \ar@{->}[r] & SL_g(\mathbb{Z}) \ar@{^{(}->}[d] \ar@{->}[r]^-{r'_p} & SL_g(\mathbb{Z}/p) \ar@{^{(}->}[d] \ar@{->}[r] & 1 \\
1 \ar@{->}[r] & Ker(r'_p) \ar@{->}[r] & GL_g(\mathbb{Z}) \ar@{->}[r]^-{r'_p} & SL_g^\pm(\mathbb{Z}/p) \ar@{->}[r] & 1.}
$$
As a consequence $Ker(r'_p)\cong SL_g(\mathbb{Z},p),$
as desired.
\end{proof}

\begin{lema}[Lemma 1 in \cite{newman}]
\label{lem_S(p)}
Let $d$ be an integer, there is a short exact sequence of groups
$$
\xymatrix@C=7mm@R=13mm{1 \ar@{->}[r] & S_g(d\mathbb{Z}) \ar@{->}[r] & S_g(\mathbb{Z}) \ar@{->}[r]^-{r''_d} & S_g(\mathbb{Z}/d) \ar@{->}[r] & 1 .}
$$
\end{lema}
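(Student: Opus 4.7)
The plan is to verify the exact sequence position by position, noting that all three are additive abelian groups and $r''_d$ is just entry-wise reduction modulo $d$, which is clearly a group homomorphism. Exactness at $S_g(d\mathbb{Z})$ is immediate: by the very definition of $S_g(d\mathbb{Z})$ as the subgroup of symmetric matrices with entries in $d\mathbb{Z}$, the inclusion into $S_g(\mathbb{Z})$ is injective. Exactness at $S_g(\mathbb{Z})$ is also immediate: by definition $S_g(d\mathbb{Z})$ is precisely the kernel of $r''_d$, since a symmetric integer matrix reduces to zero modulo $d$ if and only if every entry is a multiple of $d$.

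Hence the only real content is exactness at $S_g(\mathbb{Z}/d)$, i.e. surjectivity of $r''_d$. For this, I would construct an explicit (non-canonical) set-theoretic section. Given $\overline{M} \in S_g(\mathbb{Z}/d)$, for each pair $1 \leq i \leq j \leq g$ pick an arbitrary integer lift $M_{ij} \in \mathbb{Z}$ of the entry $\overline{M}_{ij}$, and then define the remaining entries by $M_{ji} := M_{ij}$ for $i < j$. The resulting matrix $M \in M_{g\times g}(\mathbb{Z})$ is symmetric by construction, so $M \in S_g(\mathbb{Z})$, and $r''_d(M) = \overline{M}$ since the construction respects reduction on and above the diagonal, while symmetry below is forced in both $M$ and $\overline{M}$.

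There is no real obstacle here: the statement is essentially a restatement of the fact that the symmetric-matrix functor preserves surjections of abelian groups, applied to the canonical surjection $\mathbb{Z} \twoheadrightarrow \mathbb{Z}/d$. One could alternatively phrase the proof as the observation that $S_g(-)$ is a product of $\binom{g+1}{2}$ copies of the coefficient ring (one for each entry on or above the diagonal), and products of short exact sequences of abelian groups are exact. Either formulation yields the result in a few lines.
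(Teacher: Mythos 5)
Your proof is correct, and in fact the paper does not give its own argument here: it simply cites Lemma~1 of Newman--Smart. So there is no in-text proof to compare against, and your argument fills the gap cleanly. Two small remarks. First, note that the paper literally \emph{defines} $S_g(d\mathbb{Z})$ as $\mathrm{Ker}(r''_d\colon S_g(\mathbb{Z})\to S_g(\mathbb{Z}/d))$, so exactness in the middle and injectivity on the left are both immediate from that definition (your alternative description of $S_g(d\mathbb{Z})$ as symmetric matrices with entries in $d\mathbb{Z}$ agrees with it, but invoking the definition directly makes the first two steps tautological). Second, your closing observation is the cleanest way to see the whole thing: the assignment $R\mapsto S_g(R)$, viewed on additive groups, is naturally isomorphic to $R\mapsto R^{\binom{g+1}{2}}$ via the upper-triangular entries, and this isomorphism commutes with reduction mod $d$; applying it to the split short exact sequence $0\to d\mathbb{Z}\to\mathbb{Z}\to\mathbb{Z}/d\to 0$ and taking the $\binom{g+1}{2}$-fold product gives the lemma in one line. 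Either version of your argument is complete and correct.
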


\begin{lema}
\label{lem_Sp[p]_3}
Let $p$ be a prime number,
there is a short exact sequence of groups
$$
\xymatrix@C=8mm@R=8mm{1 \ar@{->}[r] & SL_g(\mathbb{Z},p) \ltimes S_g(p\mathbb{Z}) \ar@{->}[r] & GL_g(\mathbb{Z})\ltimes S_g(\mathbb{Z}) \ar@{->}[r]^-{r'_p\times r''_p} & SL^\pm_g(\mathbb{Z}/p)\ltimes S_g(\mathbb{Z}/p) \ar@{->}[r] & 1.}
$$
\end{lema}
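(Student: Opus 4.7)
The plan is to deduce this short exact sequence by combining Lemmas \ref{lem_Sp[p]_2} and \ref{lem_S(p)}, which already provide the analogous short exact sequences on each of the two factors of the semidirect product. The key observation is that the reduction map $r'_p \times r''_p$ respects the semidirect product structure: the action of $GL_g(\mathbb{Z})$ on $S_g(\mathbb{Z})$ given by $(G,S) \mapsto {}^tG^{-1} S G^{-1}$ is compatible with reduction modulo $p$, so $r'_p \times r''_p$ is a well-defined group homomorphism into $SL^\pm_g(\mathbb{Z}/p) \ltimes S_g(\mathbb{Z}/p)$.

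First I would verify surjectivity. Given an element $(\bar G, \bar S) \in SL^\pm_g(\mathbb{Z}/p) \ltimes S_g(\mathbb{Z}/p)$, by Lemma \ref{lem_Sp[p]_2} there exists $G \in GL_g(\mathbb{Z})$ with $r'_p(G) = \bar G$, and by Lemma \ref{lem_S(p)} there exists $S \in S_g(\mathbb{Z})$ with $r''_p(S) = \bar S$, so $(G,S)$ is a preimage of $(\bar G, \bar S)$.

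Next I would identify the kernel. An element $(G,S)$ lies in the kernel if and only if $r'_p(G) = Id$ and $r''_p(S) = 0$, i.e. $G \in SL_g(\mathbb{Z},p)$ and $S \in S_g(p\mathbb{Z})$. To see that this kernel is itself a semidirect product $SL_g(\mathbb{Z},p) \ltimes S_g(p\mathbb{Z})$, one checks that the action of $GL_g(\mathbb{Z})$ on $S_g(\mathbb{Z})$ restricts to an action of $SL_g(\mathbb{Z},p)$ on $S_g(p\mathbb{Z})$: if $G \in SL_g(\mathbb{Z},p)$ and $S \in S_g(p\mathbb{Z})$, then ${}^tG^{-1}SG^{-1} \in S_g(p\mathbb{Z})$ since $S \equiv 0 \pmod{p}$. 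The multiplication on the kernel, inherited from the semidirect product structure of $GL_g(\mathbb{Z}) \ltimes S_g(\mathbb{Z})$, coincides then with the semidirect product multiplication on $SL_g(\mathbb{Z},p) \ltimes S_g(p\mathbb{Z})$.

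None of the steps present a substantial obstacle; the content is essentially bookkeeping using the two preceding lemmas. The only point requiring minor care is checking that the action restricts properly to the kernel, but this is immediate from the fact that $S_g(p\mathbb{Z})$ is an ideal-like subgroup stable under conjugation by any matrix in $GL_g(\mathbb{Z})$, hence in particular by matrices in $SL_g(\mathbb{Z},p)$.
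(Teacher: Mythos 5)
Your proof is correct and essentially identical to the paper's: both check that $r'_p \times r''_p$ is a homomorphism, identify the kernel componentwise via Lemmas \eqref{lem_Sp[p]_2} and \eqref{lem_S(p)}, and deduce surjectivity from the surjectivity of each factor. Your additional remark that the congruence action stabilizes $S_g(p\mathbb{Z})$, so that the kernel genuinely carries the semidirect product structure, is a useful point of care that the paper leaves implicit, but it is not a different route.
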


\begin{proof}
First of all we show that $r'_d \times r''_d$ is a homomrophism.

Let $G,H\in GL_g(\mathbb{Z}),$ $S,T\in S_g(\mathbb{Z}),$ then we have that the following equality holds
\begin{align*}
(r'_p \times r''_p)((G,S)(H,T))= & (r'_d \times r''_d)((GH, HSH^t+T)) \\
= & (r'_p(GH), r''_p(HSH^t+T)) \\
= & (\overline{G}\overline{H},\overline{H}\overline{S}\overline{H^t}+\overline{T}) \\
= & (\overline{G},\overline{S})(\overline{H},\overline{T}) \\
= & (r'_p \times r''_p)(G,S)(r'_p \times r''_p)(H,T).
\end{align*}
Notice that, by Lemmas \eqref{lem_Sp[p]_2}, \eqref{lem_S(p)},
$ker(r'_p)=SL_g(\mathbb{Z},p)$, $Ker(r''_p)=S_g(p\mathbb{Z}).$
Then
$$Ker(r'_p \times r''_p)=Ker(r'_p)\ltimes Ker(r''_p)=SL_g(\mathbb{Z},p)\ltimes S_g(p\mathbb{Z}).$$
Finally, we have that $r'_p \times r''_p$ is surjective because $r'_p,r''_p$ are surjective too.
\end{proof}

\begin{defi}
We define the following subgroups of $\mathcal{M}_{g,1}[d]:$
$$\mathcal{A}_{g,1}[d]=\mathcal{M}_{g,1}[d]\cap \mathcal{A}_{g,1},\quad\mathcal{B}_{g,1}[d]=\mathcal{M}_{g,1}[d]\cap \mathcal{B}_{g,1},\quad\mathcal{AB}_{g,1}[d]=\mathcal{M}_{g,1}[d]\cap \mathcal{AB}_{g,1}.$$
\end{defi}

\begin{lema}
\label{lem_B[p]}
Let $p$ be a prime number,
there are short exact sequence of groups
\begin{align*}
& \xymatrix@C=10mm@R=13mm{1 \ar@{->}[r] & \mathcal{B}_{g,1}[p] \ar@{->}[r] & \mathcal{B}_{g,1} \ar@{->}[r]^-{\phi^B_p\circ\Psi_p} & SL_g^\pm(\mathbb{Z}/p)\ltimes_B S_g(\mathbb{Z}/p) \ar@{->}[r] & 1,} \\
& \xymatrix@C=10mm@R=13mm{1 \ar@{->}[r] & \mathcal{A}_{g,1}[p] \ar@{->}[r] & \mathcal{A}_{g,1} \ar@{->}[r]^-{\phi^A_p\circ\Psi_p} & SL_g^\pm(\mathbb{Z}/p)\ltimes_A S_g(\mathbb{Z}/p) \ar@{->}[r] & 1,} \\
& \xymatrix@C=10mm@R=13mm{1 \ar@{->}[r] & \mathcal{AB}_{g,1}[p] \ar@{->}[r] & \mathcal{AB}_{g,1} \ar@{->}[r]^-{\phi^{AB}_p\circ \Psi_p} & SL_g^\pm(\mathbb{Z}/p) \ar@{->}[r] & 1. }
\end{align*}
\end{lema}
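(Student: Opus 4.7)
The plan is to deduce the three short exact sequences uniformly from the integral versions (Lemmas \ref{lem_B} and \ref{lem_AB}) by post-composing with the reductions mod $p$ of Lemmas \ref{lem_Sp[p]_2} and \ref{lem_Sp[p]_3}. I will spell out the $\mathcal{B}_{g,1}$ case; the $\mathcal{A}_{g,1}$ and $\mathcal{AB}_{g,1}$ cases are entirely analogous, replacing the Lagrangian $B$ by $A$ and by dropping the symmetric summand respectively.

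First I would verify that $\phi^B_p\circ\Psi_p$ actually lands in $SL_g^\pm(\mathbb{Z}/p)\ltimes_B S_g(\mathbb{Z}/p)$ rather than in $GL_g(\mathbb{Z}/p)\ltimes_B S_g(\mathbb{Z}/p)$. This is where primality enters: by Lemma \ref{lem_B} the image of the integral map $\phi^B\circ\Psi:\mathcal{B}_{g,1}\to GL_g(\mathbb{Z})\ltimes_B S_g(\mathbb{Z})$ consists of pairs whose first coordinate has determinant $\pm 1$, and the mod $p$ reduction of such a matrix still has determinant $\pm 1$ in $\mathbb{Z}/p$. A straightforward comparison of the block expressions defining $\phi^B$ and $\phi^B_p$ with entrywise reduction then shows that the diagram
\begin{equation*}
\xymatrix@C=8mm@R=10mm{
\mathcal{B}_{g,1} \ar[r]^-{\phi^B\circ\Psi} \ar[dr]_-{\phi^B_p\circ\Psi_p} & GL_g(\mathbb{Z})\ltimes_B S_g(\mathbb{Z}) \ar[d]^-{r'_p\times r''_p} \\
 & SL_g^\pm(\mathbb{Z}/p)\ltimes_B S_g(\mathbb{Z}/p) }
\end{equation*}
commutes.

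Surjectivity of $\phi^B_p\circ\Psi_p$ onto $SL_g^\pm(\mathbb{Z}/p)\ltimes_B S_g(\mathbb{Z}/p)$ then follows for free: the top arrow is surjective by Lemma \ref{lem_B}, and the vertical arrow is surjective by Lemma \ref{lem_Sp[p]_3}. The composition is therefore surjective.

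Finally I would identify the kernel. The isomorphism $\phi^B_p:Sp^B_{2g}(\mathbb{Z}/p)\xrightarrow{\sim} GL_g(\mathbb{Z}/p)\ltimes_B S_g(\mathbb{Z}/p)$ sends the identity matrix to the neutral element $(Id,0)$, so an element $f\in\mathcal{B}_{g,1}$ is killed by $\phi^B_p\circ\Psi_p$ if and only if $\Psi_p(f)=Id$, i.e.\ if and only if $f\in\mathcal{B}_{g,1}\cap\mathcal{M}_{g,1}[p]=\mathcal{B}_{g,1}[p]$. This gives the first sequence. Repeating the argument verbatim with $\phi^A_p$ in place of $\phi^B_p$ and with Lemma \ref{lem_AB} and Lemma \ref{lem_Sp[p]_2} in place of Lemma \ref{lem_B} and Lemma \ref{lem_Sp[p]_3} produces the other two sequences. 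I do not foresee a genuine obstacle; the only mildly delicate ingredient is the containment of the image in $SL_g^\pm$ rather than in the whole of $GL_g$ mod $p$, which is nothing more than the observation $GL_g(\mathbb{Z})=SL_g^\pm(\mathbb{Z})$ reduced mod $p$.
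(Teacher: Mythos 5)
Your argument is correct and is essentially the same as the paper's: both proofs identify the kernel as $\mathcal{B}_{g,1}\cap\mathcal{M}_{g,1}[p]=\mathcal{B}_{g,1}[p]$ directly from the definition, and establish surjectivity by factoring $\phi^B_p\circ\Psi_p$ as $(r'_p\times r''_p)\circ(\phi^B\circ\Psi)$ and applying Lemmas \ref{lem_B} and \ref{lem_Sp[p]_3} (and their $A$- and $AB$-analogues). The only difference is that you spell out the codomain containment in $SL_g^\pm$ a little more explicitly, but that is already built into the statement of Lemma \ref{lem_Sp[p]_3}.
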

\begin{proof}
We only give the proof for the first short exact sequence (the proof for the other two short exact sequences is analogous).
Observe that by definition of $\mathcal{B}_{g,1}[p],$ we have that
$$Ker( \phi^B_p\circ\Psi_p:\mathcal{B}_{g,1} \rightarrow SL_g^\pm(\mathbb{Z}/p)\ltimes S_g(\mathbb{Z}/p))=\mathcal{B}_{g,1}\cap \mathcal{M}_{g,1}[p]=\mathcal{B}_{g,1}[p].$$
On the other hand, by Lemmas \eqref{lem_B}, \eqref{lem_Sp[p]_3} we have that
\begin{align*}
(\phi^B_p\circ\Psi_p)(\mathcal{B}_{g,1})= & (\phi^B_p\circ r_p\circ \Psi)(\mathcal{B}_{g,1})=((r'_p\times r''_p)\circ \phi^B \circ\Psi)(\mathcal{B}_{g,1})= \\
= & (r'_p\times r''_p)(GL_g(\mathbb{Z})\ltimes S_g(\mathbb{Z}))=SL_g^\pm(\mathbb{Z}/p)\ltimes S_g(\mathbb{Z}/p),
\end{align*}
as desired.
\end{proof}

\begin{nota}
Throughout this thesis we set:
\begin{align*}
Sp_{2g}^{A\pm}(\mathbb{Z}/d)= & (\phi^A)^{-1} (SL_g^\pm(\mathbb{Z}/d)\ltimes_A S_g(\mathbb{Z}/d)),\\
Sp_{2g}^{B\pm}(\mathbb{Z}/d)= & (\phi^B)^{-1} (SL_g^\pm(\mathbb{Z}/d)\ltimes_B S_g(\mathbb{Z}/d)).
\end{align*}
\end{nota}

We point out that
Lemmas \eqref{lem_Sp[p]_2}, \eqref{lem_Sp[p]_3}, \eqref{lem_B[p]} are not true switching the prime $p$ for a no prime integer $d.$
This comes from the fact that $SL_g(\mathbb{Z})\rightarrow SL_g(\mathbb{Z}/d)$ is surjective if and only if $d$ is prime, which is based on the fact that $E_g(\mathbb{Z}/d)=SL_g(\mathbb{Z}/d)$ if and only if $d$ is prime.
Nevertheless, as we will show next, taking the direct limit of above families of groups one gets the analogous results for the stable case with an integer $d.$

\paragraph{Direct systems and direct limits}
Let $\langle I,\leq \rangle$ be a directed set. Let $\{A_i \mid i\in I\}$ be a family of objects indexed by $I$ and $f_{ij}:A_i\rightarrow A_j$ be a homomorphism for all $i\leq j$ with the following properties:
\begin{enumerate}[i)]
\item $f_{ii}$ is the identity of $A_i,$
\item $f_{ik}= f_{jk}\circ f_{ij}$ for all $i\leq j\leq k.$
\end{enumerate}
Then the pair $\langle A_i, f_{ij} \rangle$ is called a direct system over $I.$

The direct limit of the direct system $\langle A_i, f_{ij} \rangle$ is denoted by $\varinjlim A_i$ and is defined as follows. Its underlying set is the disjoint union of the $A_i$'s modulo a certain equivalence relation $\sim :$
$$\varinjlim A_i= \bigsqcup_i A_i \Big/ \sim.$$
Here, if $x_i\in A_i$ and $x_j\in A_j,$ then $x_i\sim x_j$ if and only if there is some $k\in I$ with $i\leq k$ and $j\leq k$ and such that $f_{ik}(x_i)=f_{jk}(x_j).$

\paragraph{The stable General linear group and symmetric group.}
Let $R$ be a commutative ring with unite. Consider the General linear groups $\{GL_n(R)\}_n$ and the homomorphism:
$$i_{n,m}:GL_n(R)\longrightarrow GL_m(R),$$
which sends $A\in GL_n(R)$ to
$$\left(\begin{matrix}
A & 0 \\
0 & Id_{m-n}
\end{matrix}\right).$$
Then $\langle GL_n(R), i_{n,m}\rangle$ is a direct system

Similarly, we can consider the symmetric groups $\{S_n(R)\}_n$ and the homomorphism:
$$j_{n,m}:S_n(R)\longrightarrow S_m(R),$$
which sends $A\in S_n(R)$ to
$$\left(\begin{matrix}
A & 0 \\
0 & 0
\end{matrix}\right).$$
Then $\langle S_n(R), j_{n,m}\rangle$ is a direct system.

Therefore, taking the direct limit of the above direct systems, we obtain the following groups:
$$GL(R)=\varinjlim GL_g(R),\qquad SL(R)=\varinjlim SL_g(R), \qquad E(R)=\varinjlim E_g(R),$$
$$SL^\pm(R)=\varinjlim SL_g^\pm(R), \qquad S(R)=\varinjlim S_g(R).$$
Throughout this section we denote by $R^\times$ the group of units of $R,$ i.e. the set of invertible elements of $R,$ and we set:

Observe that we have the following short exact sequence of groups:
$$
\xymatrix@C=7mm@R=13mm{1 \ar@{->}[r] & SL(R) \ar@{->}[r] & GL(R) \ar@{->}[r] & R^\times \ar@{->}[r] & 1.}
$$
Taking the quotient by $E(R),$ the above short exact sequence becomes
$$
\xymatrix@C=7mm@R=13mm{1 \ar@{->}[r] & SK_1(R) \ar@{->}[r] & K_1(R) \ar@{->}[r] & R^\times \ar@{->}[r] & 1,}
$$
where
$$SK_1(R)=\frac{SL(R)}{E(R)}\qquad \text{and} \qquad K_1(R)= \frac{GL(R)}{E(R)}.$$
These groups are well known in $K$-theory (see for instance \cite{Weib}, \cite{Magurn}).

\begin{defi}[Sections 19, 20 in \cite{Lam}]
A commutative ring $R$ is a local ring if it has a unique maximal ideal. A commutative ring $R$ is a semilocal ring if it only has a finite number of maximal ideals.
In particular every commutative local ring is a commutative semilocal ring.
\end{defi}

\begin{prop}[Lemma III.1.4 in \cite{Weib}]
\label{prop_k1_locring}
Let $R$ be a commutative semilocal ring, then
$$K_1(R)\cong R^\times\qquad\text{and}\qquad SK_1(R)=1.$$
\end{prop}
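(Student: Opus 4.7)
The plan is to prove both statements in tandem by showing that $SK_1(R)=1$; the isomorphism $K_1(R)\cong R^\times$ will then follow for free from the split short exact sequence induced by the determinant.

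First I would set up the determinant sequence. The homomorphism $\det: GL(R)\rightarrow R^\times$ has kernel $SL(R)$ and admits a section $u\mapsto \mathrm{diag}(u,1,1,\ldots)$, giving a split short exact sequence. Since $E(R)\subset SL(R)$ and $E(R)$ is normal in $GL(R)$ (by the Whitehead lemma, which I would recall in one sentence), passing to the quotient yields the split sequence
\begin{equation*}
1\longrightarrow SK_1(R)\longrightarrow K_1(R)\xrightarrow{\ \det\ } R^\times\longrightarrow 1,
\end{equation*}
and hence $K_1(R)\cong R^\times\times SK_1(R)$. Thus the entire task reduces to verifying $SK_1(R)=1$, i.e. that every element of $SL(R)$ is stably a product of elementary matrices.

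Next I would exploit the semilocal hypothesis. Let $J$ denote the Jacobson radical of $R$; since $R$ has finitely many maximal ideals $\mathfrak{m}_1,\ldots,\mathfrak{m}_r$, the Chinese Remainder Theorem gives $R/J\cong\prod_{i=1}^r R/\mathfrak{m}_i$, a finite product of fields $k_i$. Over each field $k_i$, Theorem \ref{teo_SL_elem} tells us $SL_g(k_i)=E_g(k_i)$ for all $g$, and taking products gives $SL_g(R/J)=E_g(R/J)$. The surjection $R\twoheadrightarrow R/J$ induces a surjection $SL_g(R)\to SL_g(R/J)$ for $g\geq 2$ (a standard lifting argument: every elementary matrix lifts since we can pick any preimage of the off-diagonal entry).

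Given $A\in SL_g(R)$, writing its reduction as a product $\bar A=\prod_k \bar e_{i_kj_k}(\bar u_k)$ and lifting each factor to $e_{i_kj_k}(u_k)\in E_g(R)$, the product $B:=A\cdot\prod_k e_{i_kj_k}(u_k)^{-1}$ lies in the congruence kernel $SL_g(R,J)=\ker(SL_g(R)\to SL_g(R/J))$. The main obstacle is the final step: showing $B\in E_g(R)$ stably. Here one uses that every element of $1+J$ is a unit (a defining property of the Jacobson radical) together with the classical row-reduction for congruence matrices. Concretely, after stabilizing to size $g+1$, the first column of $B$ is of the form $(1+j_{11},j_{21},\ldots,j_{g1},0)^t$ with all $j_{k1}\in J$; using that $1+j_{11}$ is a unit, elementary column operations clear the off-diagonal entries of the first column, and then elementary row operations clear the first row. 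Iterating this procedure on the lower right $(g-1)\times(g-1)$ block reduces $B$ to the identity via a product of elementary matrices. Passing to the direct limit over $g$, we conclude $SK_1(R)=0$, and combining with the split sequence above yields $K_1(R)\cong R^\times$.
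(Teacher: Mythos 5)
The paper cites this result to Weibel without reproducing a proof, so there is no in-text argument to compare against; I will assess your proof on its own. The overall strategy—split the determinant sequence, reduce to $SK_1(R)=1$, use the Chinese Remainder Theorem to see $R/J$ is a finite product of fields, lift an elementary factorization of $\bar A\in SL_g(R/J)$ to reduce to the congruence subgroup $SL_g(R,J)$, and then row/column-reduce using that $1+J\subset R^\times$—is the right one and is essentially the standard argument.

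There is, however, a genuine gap in the last paragraph. The row/column reduction you describe does not reduce $B$ to the identity: after clearing the off-diagonal entries of the first column and the first row, the $(1,1)$ entry remains the unit $1+j_{11}$, not $1$. Iterating therefore produces a \emph{diagonal} matrix $\mathrm{diag}(d_1,\ldots,d_g,1)$ with $d_i\in 1+J$ and $\prod_i d_i=\det B=1$, and you still need to show this diagonal matrix lies in $E(R)$. That is precisely the content of the Whitehead lemma $\mathrm{diag}(u,u^{-1})\in E_2(R)$, applied repeatedly: $\mathrm{diag}(d_1,\ldots,d_g,1)=\mathrm{diag}(d_1,d_1^{-1},1,\ldots)\cdot\mathrm{diag}(1,d_1d_2,d_3,\ldots,d_g,1)=\cdots$, terminating because $\prod d_i=1$. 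You invoke the Whitehead lemma only to justify normality of $E(R)$ in $GL(R)$; you need to invoke it again here, otherwise the claim that the reduction ``reduces $B$ to the identity'' is simply false. (A separate minor slip: to clear the off-diagonal entries of the first \emph{column} you multiply on the \emph{left} by elementary matrices, i.e.\ perform row operations, and to clear the first \emph{row} you multiply on the \emph{right}, i.e.\ perform column operations; you have the two swapped.) With the Whitehead step inserted and the operation labels corrected, the proof is complete.
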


Notice that, for every $d\geq 2,$ $\mathbb{Z}/d$ is a semilocal ring because $\mathbb{Z}/d$ is finite.
Therefore, by Proposition \eqref{prop_k1_locring}, we get the following result:
\begin{cor}
\label{cor_SK1_d}
Let $d\geq 2$ be an integer, $SK_1(\mathbb{Z}/d)=1,$ i.e. $SL(\mathbb{Z}/d)=E(\mathbb{Z}/d).$
\end{cor}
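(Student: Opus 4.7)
The plan is to deduce this corollary as a direct application of the previously stated Proposition on $K_1$ of commutative semilocal rings. The work reduces to verifying the semilocal hypothesis for $\mathbb{Z}/d$ and then unpacking the definition of $SK_1$.

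First I would check that $\mathbb{Z}/d$ is a commutative semilocal ring. Since $\mathbb{Z}/d$ is finite as a set, it has only finitely many subsets, hence only finitely many ideals, and in particular only finitely many maximal ideals. This places $\mathbb{Z}/d$ within the hypothesis of the Proposition on semilocal rings cited from \cite{Weib}. (One can be more precise and note that the maximal ideals of $\mathbb{Z}/d$ are exactly the ideals $(p)/d\mathbb{Z}$ for $p$ a prime divisor of $d$, but this extra precision is not needed.)

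Next I would invoke the Proposition directly: since $\mathbb{Z}/d$ is commutative and semilocal, it yields $SK_1(\mathbb{Z}/d) = 1$. By the very definition $SK_1(R) = SL(R)/E(R)$ used in the previous paragraph, this equality translates into $SL(\mathbb{Z}/d) = E(\mathbb{Z}/d)$, which is the second assertion of the corollary.

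There is no real obstacle here; the only point requiring a sentence of justification is the semilocality of $\mathbb{Z}/d$, and all the heavy lifting is performed by the Proposition imported from \cite{Weib}. The role of this corollary in the thesis is essentially to record, in the form needed for later chapters, that the stable elementary subgroup fills out the whole of $SL(\mathbb{Z}/d)$, repairing the failure of the unstable statement $E_g(\mathbb{Z}/d)=SL_g(\mathbb{Z}/d)$ for non-prime $d$ that was observed just before.
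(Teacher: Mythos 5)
Your proof is correct and follows exactly the same route as the paper: observe that $\mathbb{Z}/d$ is finite and hence a commutative semilocal ring, then apply Proposition \eqref{prop_k1_locring} to conclude $SK_1(\mathbb{Z}/d)=1$, i.e. $SL(\mathbb{Z}/d)=E(\mathbb{Z}/d)$. No differences worth noting.
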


\begin{rem}[Remark III.1.2.3 in \cite{Weib}]
\label{rem_E_onto}
If $I$ is an ideal of $R,$ each homomorphism $E_n(R)\rightarrow E_n(R/I)$ is onto, because the generators $e_{ij}(r)$ of $E_n(R)$ map onto the generators $e_{ij}(\overline{r})$ of $E_n(R/I).$
\end{rem}

\begin{lema}
\label{lem_SL[d]_1_stab}
Let $d\geq 2$ be an integer,
we have a short exact sequence of groups
$$
\xymatrix@C=7mm@R=13mm{1 \ar@{->}[r] & SL(\mathbb{Z},d) \ar@{->}[r] & GL(\mathbb{Z}) \ar@{->}[r]^-{r'_d} & SL^\pm(\mathbb{Z}/d) \ar@{->}[r] & 1,}
$$
where $r'_d$ is given by modulo $d$ reduction.
\end{lema}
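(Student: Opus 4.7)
The plan is to mimic the structure of Lemma 3.3.2 (the prime case), with one essential replacement: surjectivity of $r'_d$ no longer rests on the unstable identity $SL_g(\mathbb{Z}/d) = E_g(\mathbb{Z}/d)$, which fails for composite $d$, but on the stable identity $SL(\mathbb{Z}/d) = E(\mathbb{Z}/d)$ supplied by Corollary \ref{cor_SK1_d} (i.e.\ $SK_1(\mathbb{Z}/d) = 1$). Once surjectivity is in hand, the kernel is read off from a pull-back square exactly as in the prime case.

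First I would prove that $r'_d : GL(\mathbb{Z}) \to SL^\pm(\mathbb{Z}/d)$ is surjective. Given $B \in SL^\pm(\mathbb{Z}/d)$, if $\det B = -1$ I multiply on the left by the stable matrix $D = \mathrm{diag}(-1,1,1,\ldots)$, which lifts tautologically to $GL(\mathbb{Z})$, to reduce to the case $\det B = 1$. In that case Corollary \ref{cor_SK1_d} expresses $B$ as a finite product $e_{i_1j_1}(u_1)\cdots e_{i_kj_k}(u_k)$ of elementary matrices with $u_s \in \mathbb{Z}/d$. By Remark \ref{rem_E_onto}, choosing any integer lift $\widetilde{u}_s \in \mathbb{Z}$ of each $u_s$, the product $e_{i_1j_1}(\widetilde{u}_1)\cdots e_{i_kj_k}(\widetilde{u}_k) \in E(\mathbb{Z}) \subseteq SL(\mathbb{Z}) \subseteq GL(\mathbb{Z})$ is a preimage of $B$.

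Second, I would identify the kernel. Since $SL(\mathbb{Z}/d) \hookrightarrow SL^\pm(\mathbb{Z}/d)$ and $SL(\mathbb{Z}) \hookrightarrow GL(\mathbb{Z})$, a matrix $A \in GL(\mathbb{Z})$ with $r'_d(A) = I$ necessarily has $\det(A) \equiv 1 \pmod{d}$ and hence lies in $SL(\mathbb{Z})$; conversely any element of $SL(\mathbb{Z},d)$ is in the kernel. This is packaged in the pull-back diagram
\begin{equation*}
\xymatrix@C=7mm@R=8mm{1 \ar@{->}[r] & SL(\mathbb{Z},d) \ar@{=}[d] \ar@{->}[r] & SL(\mathbb{Z}) \ar@{^{(}->}[d] \ar@{->}[r]^-{r'_d} & SL(\mathbb{Z}/d) \ar@{^{(}->}[d] \ar@{->}[r] & 1 \\
1 \ar@{->}[r] & SL(\mathbb{Z},d) \ar@{->}[r] & GL(\mathbb{Z}) \ar@{->}[r]^-{r'_d} & SL^\pm(\mathbb{Z}/d) \ar@{->}[r] & 1,}
\end{equation*}
whose top row is exact by definition of $SL(\mathbb{Z},d)$, and whose bottom row inherits exactness from the surjectivity just established.

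The main obstacle is surjectivity of $r'_d$, which is exactly the point where the argument from the prime-case Lemma \ref{lem_Sp[p]_2} breaks down: for composite $d$, not every element of $SL_g(\mathbb{Z}/d)$ is a product of elementary matrices at a fixed $g$. Passing to the direct limit is essential, and the vanishing $SK_1(\mathbb{Z}/d) = 1$ from Corollary \ref{cor_SK1_d} is precisely what makes this obstruction disappear stably.
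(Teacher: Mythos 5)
Your proof is essentially the paper's: reduce to the determinant-one case by multiplying by $D$, express the stable representative as a product of elementary matrices using $SK_1(\mathbb{Z}/d)=1$ from Corollary \eqref{cor_SK1_d}, and lift these through $E(\mathbb{Z})\twoheadrightarrow E(\mathbb{Z}/d)$ as in Remark \eqref{rem_E_onto}. The paper records only surjectivity; your pull-back square is exactly the implicit kernel argument made explicit.

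One caution is worth recording precisely because you wrote the kernel step out. The inference ``$\det A\equiv 1\pmod d$ and $\det A=\pm 1$ force $\det A=1$, hence $A\in SL(\mathbb{Z})$'' needs $d\geq 3$. For $d=2$ the matrix $\mathrm{diag}(-1,1,1,\dots)\in GL(\mathbb{Z})$ reduces to the identity mod $2$, yet has determinant $-1$ and so is not in $SL(\mathbb{Z},2)$; thus the square is not a pull-back and the sequence fails exactness at $GL(\mathbb{Z})$ when $d=2$. This is a defect of the lemma as stated (and of the prime-case Lemma \eqref{lem_Sp[p]_2} at $p=2$), inherited rather than introduced by your write-up; for $d\geq 3$ your argument is correct as given.
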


\begin{proof}
Fix an integer $g.$ Consider $A\in SL^\pm_g(\mathbb{Z}/d).$
Then $D_gA\in SL_g(\mathbb{Z}/d).$ By Corollary \eqref{cor_SK1_d} we have that there exists an integer $h$ and elementary matrices $e_k\in E_h(\mathbb{Z}/d)$ such that the following equality holds.
$$D_gA=i_{h,g}\left( \prod^r_{k=1} e_k \right).$$
Since $i_{h,g}(D_h)=D_g$ and $D_h=D_h^{-1},$ we have that 
$$A=i_{h,g}\left( D_h \prod^r_{k=1} e_k \right).$$
Notice that $r'_d(D_h)=D_h.$ Moreover, by Remark \eqref{rem_E_onto},
we have that $r'_d(E_h(\mathbb{Z}))=E_h(\mathbb{Z}/d).$
Therefore $r'_p$ is surjective.
\end{proof}

\begin{cor}
\label{cor_S_dZ_stab}
Let $d\geq 2$ be an integer,
there is a short exact sequence of groups
$$
\xymatrix@C=7mm@R=13mm{1 \ar@{->}[r] & S(d\mathbb{Z}) \ar@{->}[r] & S(\mathbb{Z}) \ar@{->}[r]^-{r''_d} & S(\mathbb{Z}/d) \ar@{->}[r] & 1,}
$$
where $r''_d$ is given by the reduction modulo $d.$
\end{cor}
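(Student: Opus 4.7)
The plan is to deduce this stable statement directly from the finite-genus version, Lemma 3.3.2, by taking the direct limit over $g$. Recall that for each $g \geq 1$, Newman's Lemma 3.3.2 gives a short exact sequence
\begin{equation*}
1 \longrightarrow S_g(d\mathbb{Z}) \longrightarrow S_g(\mathbb{Z}) \xrightarrow{\;r''_d\;} S_g(\mathbb{Z}/d) \longrightarrow 1,
\end{equation*}
and the groups $S(d\mathbb{Z})$, $S(\mathbb{Z})$, $S(\mathbb{Z}/d)$ are by definition the direct limits of the corresponding direct systems $\langle S_g(\cdot), j_{n,m}\rangle$ introduced in the preceding paragraphs.

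First I would check that the three short exact sequences above assemble into a short exact sequence of direct systems indexed by $g$. Concretely, this amounts to verifying that for every $n \leq m$ the stabilization map $j_{n,m}: S_n(R) \to S_m(R)$, sending $A \mapsto \left(\begin{smallmatrix} A & 0 \\ 0 & 0 \end{smallmatrix}\right)$, is compatible both with the inclusion $S_g(d\mathbb{Z}) \hookrightarrow S_g(\mathbb{Z})$ (obvious, since padding with zeros preserves the condition that all entries lie in $d\mathbb{Z}$) and with the reduction modulo $d$, i.e. $j_{n,m} \circ r''_d = r''_d \circ j_{n,m}$ (clear, as $r''_d$ is applied entry-wise and $j_{n,m}$ only adds zero entries). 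Both compatibilities are immediate.

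Then I would invoke the fact that the direct limit functor over the directed set $\mathbb{N}$ is exact on the category of abelian groups (filtered colimits are exact), so applying $\varinjlim$ to the short exact sequence of direct systems yields
\begin{equation*}
1 \longrightarrow \varinjlim S_g(d\mathbb{Z}) \longrightarrow \varinjlim S_g(\mathbb{Z}) \xrightarrow{\;r''_d\;} \varinjlim S_g(\mathbb{Z}/d) \longrightarrow 1,
\end{equation*}
which by definition is the desired short exact sequence $1 \to S(d\mathbb{Z}) \to S(\mathbb{Z}) \to S(\mathbb{Z}/d) \to 1$. There is no real obstacle here: the content of the corollary is entirely contained in Lemma 3.3.2, and the stable statement is purely a formal consequence of the exactness of filtered colimits. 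The only thing worth noting is that, unlike the analogous statement for $SL$ (where surjectivity fails unstably when $d$ is not prime and required the $K$-theoretic input $SK_1(\mathbb{Z}/d)=1$ to recover it stably), here surjectivity already holds for each $g$, so no extra argument is needed.
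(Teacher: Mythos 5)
Your proof is correct, and since the paper leaves this corollary without an explicit proof, your argument supplies exactly the intended justification: the finite-$g$ exact sequences of Lemma \eqref{lem_S(p)} form a short exact sequence of direct systems (the compatibility of the stabilization maps $j_{n,m}$ with both the inclusion $S_g(d\mathbb{Z})\hookrightarrow S_g(\mathbb{Z})$ and the entrywise reduction $r''_d$ is immediate), and exactness of filtered colimits of abelian groups then yields the stable sequence. Your closing remark correctly identifies the asymmetry with the $SL$ case, where the unstable surjectivity genuinely fails for composite $d$ and the $K_1$-computation $SK_1(\mathbb{Z}/d)=1$ is needed to recover it in the limit.
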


\begin{lema}
\label{lem_SL[d]_2_stab}
Let $d\geq 2$ an integer,
there is a short exact sequence of groups:
$$
\xymatrix@C=8mm@R=13mm{1 \ar@{->}[r] & SL(\mathbb{Z},d) \ltimes S(d\mathbb{Z}) \ar@{->}[r] & GL(\mathbb{Z})\ltimes S(\mathbb{Z}) \ar@{->}[r]^-{r'_d\times r''_d} & SL^\pm(\mathbb{Z}/d)\ltimes S(\mathbb{Z}/d) \ar@{->}[r] & 1,}
$$
where $r'_d:GL(\mathbb{Z})\rightarrow SL^\pm(\mathbb{Z}/d)$, $r''_d:S(\mathbb{Z})\rightarrow S(\mathbb{Z}/d)$ are the reduction modulo $d.$
\end{lema}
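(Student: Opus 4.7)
The plan is to mimic verbatim the proof of Lemma \ref{lem_Sp[p]_3} (the prime case), replacing the inputs Lemmas \ref{lem_Sp[p]_2} and \ref{lem_S(p)} by their stable counterparts Lemma \ref{lem_SL[d]_1_stab} and Corollary \ref{cor_S_dZ_stab}. So the argument breaks naturally into three steps.

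First, I would check that $r'_d\times r''_d$ is a group homomorphism. Using the composition rule $(G,S)(H,T)=(GH,\,HSH^t+T)$ in the semidirect product, and the fact that both $r'_d$ and $r''_d$ are ring/group homomorphisms (they are reductions modulo $d$), a direct computation gives
\begin{align*}
(r'_d\times r''_d)\bigl((G,S)(H,T)\bigr) &= (r'_d\times r''_d)(GH,\,HSH^t+T)\\
&= (\overline{G}\overline{H},\,\overline{H}\overline{S}\overline{H}^{\,t}+\overline{T})\\
&= (\overline{G},\overline{S})(\overline{H},\overline{T})\\
&=(r'_d\times r''_d)(G,S)\cdot(r'_d\times r''_d)(H,T).
\end{align*}

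Second, I would identify the kernel. Since the projections onto the two semidirect factors are homomorphisms, an element $(G,S)$ lies in $\ker(r'_d\times r''_d)$ if and only if $G\in\ker r'_d$ and $S\in\ker r''_d$. By Lemma \ref{lem_SL[d]_1_stab}, $\ker(r'_d\colon GL(\mathbb{Z})\to SL^\pm(\mathbb{Z}/d))=SL(\mathbb{Z},d)$, and by Corollary \ref{cor_S_dZ_stab}, $\ker(r''_d\colon S(\mathbb{Z})\to S(\mathbb{Z}/d))=S(d\mathbb{Z})$. Hence the kernel equals $SL(\mathbb{Z},d)\ltimes S(d\mathbb{Z})$, and this is indeed a subgroup of $GL(\mathbb{Z})\ltimes S(\mathbb{Z})$ because the conjugation action of $SL(\mathbb{Z},d)$ on $S(d\mathbb{Z})$ by $S\mapsto HSH^t$ preserves $S(d\mathbb{Z})$ (entrywise reduction mod $d$).

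Finally, surjectivity of $r'_d\times r''_d$ is immediate: given $(\overline G,\overline S)\in SL^\pm(\mathbb{Z}/d)\ltimes S(\mathbb{Z}/d)$, Lemma \ref{lem_SL[d]_1_stab} produces $G\in GL(\mathbb{Z})$ with $r'_d(G)=\overline{G}$ and Corollary \ref{cor_S_dZ_stab} produces $S\in S(\mathbb{Z})$ with $r''_d(S)=\overline{S}$, so $(r'_d\times r''_d)(G,S)=(\overline G,\overline S)$. The only non-trivial ingredient is really the surjectivity of $r'_d$ in the stable range, which is exactly the point where passing to $d$ arbitrary (rather than prime) forces the use of the $K$-theoretic input $SK_1(\mathbb{Z}/d)=1$ via Corollary \ref{cor_SK1_d}; this step is already absorbed in Lemma \ref{lem_SL[d]_1_stab}. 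With that in hand, no further obstacle arises, and the three steps assemble into the short exact sequence claimed.
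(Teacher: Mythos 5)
Your proposal is correct and is essentially the paper's argument: the paper dispatches this lemma in one line as a direct consequence of Lemma \eqref{lem_SL[d]_1_stab} and Corollary \eqref{cor_S_dZ_stab}, and you have simply written out the details by mirroring the three-step proof of Lemma \eqref{lem_Sp[p]_3} (homomorphism check, kernel computation coordinatewise, surjectivity coordinatewise), which is exactly what that one-line justification means.
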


\begin{proof}
This result is a direct consequence of Lemma \eqref{lem_SL[d]_1_stab} and Corollary \eqref{cor_S_dZ_stab}.
\end{proof}

\paragraph{The stable Mapping class group.}

Consider the Mapping class groups $\{\mathcal{M}_{g,1}\}_g$ and the stabilitzation map $i_{g,h}:\mathcal{M}_{g,1}\rightarrow \mathcal{M}_{h,1}.$ Then we have a direct system $\langle \mathcal{M}_{g,1},i_{g,h} \rangle.$ Define
$$\mathcal{M}_{\infty ,1}=\varinjlim \mathcal{M}_{g,1}.$$
Since $\mathcal{A}_{g,1},$ $\mathcal{B}_{g,1},$ $\mathcal{AB}_{g,1}$ are subgroups of $\mathcal{M}_{g,1},$ then we have the following natural subgroups of $\mathcal{M}_{\infty,1}:$
\[
\begin{array}{ccc}
\mathcal{A}_{\infty, 1}=\varinjlim\mathcal{A}_{g, 1}, &
\mathcal{B}_{\infty, 1}=\varinjlim\mathcal{B}_{g, 1}, &
\mathcal{AB}_{\infty, 1}=\varinjlim\mathcal{AB}_{g, 1}, \\
\mathcal{A}_{\infty, 1}[d]=\varinjlim\mathcal{A}_{g, 1}[d], &
\mathcal{B}_{\infty, 1}[d]=\varinjlim\mathcal{B}_{g, 1}[d], &
\mathcal{AB}_{\infty, 1}[d]=\varinjlim\mathcal{AB}_{g, 1}[d].
\end{array}
\]
Moreover, if we consider the direct limit of the short exact sequences in Lemmas \eqref{lem_B} and \eqref{lem_AB} we get the following result.
\begin{lema}
\label{lem_A,B,AB_infty}
There are short exact sequences of groups
\begin{equation*}
\begin{aligned}
& \xymatrix@C=7mm@R=7mm{1 \ar@{->}[r] & \mathcal{TB}_{\infty ,1} \ar@{->}[r] & \mathcal{B}_{\infty ,1} \ar@{->}[r]^-{\Psi} & GL(\mathbb{Z})\ltimes S(\mathbb{Z}) \ar@{->}[r] & 1,}
\\
& \xymatrix@C=7mm@R=7mm{1 \ar@{->}[r] & \mathcal{TA}_{\infty ,1} \ar@{->}[r] & \mathcal{A}_{\infty ,1} \ar@{->}[r]^-{\Psi} & GL(\mathbb{Z})\ltimes S(\mathbb{Z}) \ar@{->}[r] & 1,}
\\
& \xymatrix@C=7mm@R=7mm{1 \ar@{->}[r] & \mathcal{TAB}_{\infty ,1} \ar@{->}[r] & \mathcal{AB}_{\infty ,1} \ar@{->}[r]^-{\Psi} & GL(\mathbb{Z}) \ar@{->}[r] & 1.}
\end{aligned}
\end{equation*}

\end{lema}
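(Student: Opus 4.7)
The plan is to obtain each of the three short exact sequences as the direct limit of the corresponding short exact sequences at finite genus given in Lemmas \ref{lem_B} and \ref{lem_AB}, exploiting exactness of the direct limit functor on a directed system of abelian-by-whatever groups.

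First I would verify that the stabilization map $i_{g,g+1}\colon \mathcal{M}_{g,1}\hookrightarrow \mathcal{M}_{g+1,1}$ restricts to stabilization maps on the subgroups $\mathcal{A}_{g,1},$ $\mathcal{B}_{g,1},$ $\mathcal{AB}_{g,1},$ $\mathcal{TA}_{g,1},$ $\mathcal{TB}_{g,1},$ $\mathcal{TAB}_{g,1}$; this is essentially built into the definition of the stabilization (extend by the identity over the added handle) and was already noted in the Preliminaries. Next, I would check that under the homomorphism $\phi^B\circ\Psi\colon \mathcal{B}_{g,1}\twoheadrightarrow GL_g(\mathbb{Z})\ltimes S_g(\mathbb{Z})$ the stabilization $\mathcal{B}_{g,1}\hookrightarrow\mathcal{B}_{g+1,1}$ corresponds to the block-inclusions $i_{g,g+1}\colon GL_g(\mathbb{Z})\hookrightarrow GL_{g+1}(\mathbb{Z})$ and $j_{g,g+1}\colon S_g(\mathbb{Z})\hookrightarrow S_{g+1}(\mathbb{Z})$ used to define $GL(\mathbb{Z})$ and $S(\mathbb{Z})$; this is a direct check on the matrix blocks, since the extra handle added to the surface adds a Lagrangian summand on which an element of $\mathcal{B}_{g,1}$ acts as the identity. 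The same verifications apply verbatim to $\mathcal{A}_{g,1}$ and $\mathcal{AB}_{g,1}.$

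Once compatibility is in place, I would view the finite-genus sequences in Lemmas \ref{lem_B} and \ref{lem_AB} as a directed system of short exact sequences of groups indexed by $g.$ Passing to the direct limit and using that the directed colimit is an exact functor (in groups, filtered colimits commute with finite limits as well as with quotients, so short exactness is preserved), one obtains
\begin{equation*}
\xymatrix@C=6mm@R=6mm{1 \ar[r] & \varinjlim \mathcal{TB}_{g,1} \ar[r] & \varinjlim \mathcal{B}_{g,1} \ar[r] & \varinjlim \bigl(GL_g(\mathbb{Z})\ltimes S_g(\mathbb{Z})\bigr) \ar[r] & 1,}
\end{equation*}
and likewise for the $\mathcal{A}$ and $\mathcal{AB}$ sequences. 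By definition the left and middle terms are $\mathcal{TB}_{\infty,1}$ and $\mathcal{B}_{\infty,1}$; for the right term I would note that the colimit of the semidirect products is canonically $\varinjlim GL_g(\mathbb{Z})\ltimes \varinjlim S_g(\mathbb{Z})=GL(\mathbb{Z})\ltimes S(\mathbb{Z})$ (the semidirect product structure is preserved because the actions $GL_g(\mathbb{Z})\curvearrowright S_g(\mathbb{Z})$ are compatible with the stabilization inclusions). The analogous identification for $\mathcal{AB}_{\infty,1}$ gives the quotient $GL(\mathbb{Z}).$

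The main (and really only) point of care is the aforementioned verification that the stabilization on the mapping class group is compatible with the block-stabilization on $GL_g(\mathbb{Z})\ltimes S_g(\mathbb{Z})$; all the rest is formal, using nothing beyond exactness of filtered colimits and the concrete descriptions of $\phi^A,$ $\phi^B,$ $\phi^{AB}$ in the isomorphisms \eqref{sym_mat_B}, \eqref{sym_mat_A}, \eqref{sym_mat_AB}. Note that, unlike the mod~$d$ situation, no surjectivity obstruction appears: $GL_g(\mathbb{Z})\hookrightarrow GL_{g+1}(\mathbb{Z})$ and $S_g(\mathbb{Z})\hookrightarrow S_{g+1}(\mathbb{Z})$ are genuine inclusions, so the colimits compute the stable groups $GL(\mathbb{Z})$ and $S(\mathbb{Z})$ directly.
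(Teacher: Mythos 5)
Your proposal takes exactly the same route the paper indicates: the lemma is stated there as a direct consequence of passing to the direct limit in Lemmas \eqref{lem_B} and \eqref{lem_AB}, and you have simply (and correctly) spelled out the details that the paper leaves implicit — compatibility of the stabilization maps with the block inclusions, exactness of filtered colimits of groups, and the identification $\varinjlim\bigl(GL_g(\mathbb{Z})\ltimes S_g(\mathbb{Z})\bigr)\cong GL(\mathbb{Z})\ltimes S(\mathbb{Z})$.
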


\begin{lema}
\label{lem_B[d]_3_stab}
Let $d\geq 2$ an integer,
there are short exact sequences of groups
\begin{equation*}
\begin{aligned}
& \xymatrix@C=7mm@R=7mm{1 \ar@{->}[r] & \mathcal{B}_{\infty ,1}[d] \ar@{->}[r] & \mathcal{B}_{\infty ,1} \ar@{->}[r]^-{\Psi_d} & SL^\pm(\mathbb{Z}/d)\ltimes S(\mathbb{Z}/d) \ar@{->}[r] & 1,} \\
& \xymatrix@C=7mm@R=7mm{1 \ar@{->}[r] & \mathcal{A}_{\infty ,1}[d] \ar@{->}[r] & \mathcal{A}_{\infty ,1} \ar@{->}[r]^-{\Psi_d} & SL^\pm(\mathbb{Z}/d)\ltimes S(\mathbb{Z}/d) \ar@{->}[r] & 1,} \\
& \xymatrix@C=7mm@R=7mm{1 \ar@{->}[r] & \mathcal{AB}_{\infty ,1}[d] \ar@{->}[r] & \mathcal{AB}_{\infty,1} \ar@{->}[r]^-{\Psi_d} & SL^\pm(\mathbb{Z}/d)  \ar@{->}[r] & 1.}
\end{aligned}
\end{equation*}

\end{lema}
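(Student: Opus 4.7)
The plan is to deduce these stable exact sequences from the non-stable results of Lemma \ref{lem_A,B,AB_infty} by composing with the reduction maps treated in Lemma \ref{lem_SL[d]_2_stab}, the essential gain being that the surjectivity which fails at finite genus for composite $d$ is restored in the stable range.

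First, I would invoke Lemma \ref{lem_A,B,AB_infty}, which furnishes, for each of the three subgroups $\mathcal{B}_{\infty,1}$, $\mathcal{A}_{\infty,1}$, $\mathcal{AB}_{\infty,1}$, a short exact sequence describing its image under the integral symplectic representation $\Psi$ as $GL(\mathbb{Z})\ltimes S(\mathbb{Z})$ (respectively $GL(\mathbb{Z})$ for $\mathcal{AB}_{\infty,1}$), with the stable Torelli intersection as kernel. I would then compose these surjections with the reduction map $r'_d\times r''_d\colon GL(\mathbb{Z})\ltimes S(\mathbb{Z})\to SL^{\pm}(\mathbb{Z}/d)\ltimes S(\mathbb{Z}/d)$ (respectively $r'_d\colon GL(\mathbb{Z})\to SL^{\pm}(\mathbb{Z}/d)$). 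By Lemmas \ref{lem_SL[d]_1_stab} and \ref{lem_SL[d]_2_stab} these reductions are surjective, and it is here that the hypothesis of primality used in Lemma \ref{lem_B[p]} is replaced by the stable K-theoretic identity $SK_1(\mathbb{Z}/d)=1$ of Corollary \ref{cor_SK1_d}.

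Second, I would identify the composition with $\Psi_d$. Under the block-matrix isomorphisms $\phi^B,\phi^A,\phi^{AB}$ and their mod $d$ counterparts $\phi^B_d,\phi^A_d,\phi^{AB}_d$, the reductions $r'_d\times r''_d$ and $r'_d$ precisely correspond, by the commutativity of the relevant diagrams (already used in the proof of Lemma \ref{lem_B[p]}), to the canonical reduction $r_d\colon Sp(\mathbb{Z})\to Sp(\mathbb{Z}/d)$ restricted to $Sp^{B}$, $Sp^{A}$, $Sp^{AB}$. Hence the composite agrees with the restriction of $\Psi_d$ to each of $\mathcal{B}_{\infty,1}$, $\mathcal{A}_{\infty,1}$, $\mathcal{AB}_{\infty,1}$, and surjectivity is immediate from the two surjectivities just assembled.

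Finally, I would identify the kernel. By definition, an element $\phi\in\mathcal{B}_{\infty,1}$ (resp.\ $\mathcal{A}_{\infty,1}$, $\mathcal{AB}_{\infty,1}$) lies in the kernel of the composite if and only if $\Psi_d(\phi)=\mathrm{Id}$, i.e.\ if and only if $\phi\in\mathcal{M}_{\infty,1}[d]$, which means exactly $\phi\in\mathcal{B}_{\infty,1}[d]$ (resp.\ $\mathcal{A}_{\infty,1}[d]$, $\mathcal{AB}_{\infty,1}[d]$). This yields the three short exact sequences claimed. There is no serious obstacle in this argument: the whole content of the lemma is already concentrated in the stable K-theoretic Lemmas \ref{lem_SL[d]_1_stab} and \ref{lem_SL[d]_2_stab}; the rest is a diagram chase assembling them with Lemma \ref{lem_A,B,AB_infty}.
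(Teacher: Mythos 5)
Your proof is correct and follows essentially the same route as the paper: exactness at $\mathcal{B}_{\infty,1}$ is definitional, and surjectivity is obtained by composing the stable integral surjection of Lemma \ref{lem_A,B,AB_infty} with the stable reduction surjections of Lemmas \ref{lem_SL[d]_1_stab} and \ref{lem_SL[d]_2_stab}. The paper phrases the first two steps via a direct limit of the finite-genus exact sequences, but this is only a cosmetic difference from your presentation.
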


\begin{proof}
We only give the proof for the first short exact sequence (the proof for the other two short exact sequences is analogous). By definition of $\mathcal{B}_{g,1}[d],$ we have an exact sequence
$$
\xymatrix@C=7mm@R=7mm{1 \ar@{->}[r] & \mathcal{B}_{g ,1}[d] \ar@{->}[r] & \mathcal{B}_{g ,1} \ar@{->}[r]^-{\Psi_d} & SL^\pm_g(\mathbb{Z}/d)\ltimes S_g(\mathbb{Z}/d).}
$$
Then taking the direct limit, the above exact sequences becomes
$$
\xymatrix@C=7mm@R=7mm{1 \ar@{->}[r] & \mathcal{B}_{\infty ,1}[d] \ar@{->}[r] & \mathcal{B}_{\infty ,1} \ar@{->}[r]^-{\Psi_d} & SL^\pm(\mathbb{Z}/d)\ltimes S(\mathbb{Z}/d),}
$$
and by Lemmas \eqref{lem_A,B,AB_infty}, \eqref{lem_SL[d]_2_stab} we have that $\Psi_d:\;\mathcal{B}_{\infty ,1} \longrightarrow SL^\pm(\mathbb{Z}/d)\ltimes S(\mathbb{Z}/d)$ is surjective, as desired.
\end{proof}

\section{Heegaard splittings of homology $3$-spheres}

In this section we show what are the relation between (mod $d$)-Torelli groups, $\mathbb{Q}$-homology $3$-spheres and $\mathbb{Z}/d$-homology $3$-spheres.

\begin{defi}
Let $R=\mathbb{Q} \text{ or } \mathbb{Z}/d.$ A $3$-manifold $X$ is a $R$-homology $3$-sphere if
$$H_*(X;R) \cong H_*(\mathbf{S}^3;R).$$
\end{defi}

\begin{rem}
By Universal coefficients Theorem and the Poincaré duality, an orientable compact connected $3$-manifold $M$ is a $\mathbb{Q}$-homology $3$-sphere if and only if $H_1(M;\mathbb{Z})$ is finite.
As consequence, every $\mathbb{Z}/d$-homology $3$-sphere $M$ is also a $\mathbb{Q}$-homology $3$-sphere, because $H_1(M;\mathbb{Z}/d)=0$ implies that $H_1(M;\mathbb{Z})$ has no free part, i.e. $H_1(M;\mathbb{Z})$ is finite.
\end{rem}

\begin{exe}[Section 9.B in \cite{rolf}] Examples of $\mathbb{Q}$-homology $3$-sphere are the \textit{lens spaces}, which are defined as follows.
Consider two solid tori $V_1$ and $V_2.$
Fix longitude and meridian generators $a_1$ and $b_1$ for $\pi_1(\partial V_1),$ for instance

\[
\begin{tikzpicture}[scale=.7]
\draw[very thick] (-4.5,0) circle [radius=2];
\draw[very thick] (-4.5,0) circle [radius=.4];

\draw[->, thick] (-5.3,0) to [out=90,in=180] (-4.5,0.8);
\draw[thick] (-3.7,0) to [out=90,in=0] (-4.5,0.8);
\draw[thick] (-5.3,0) to [out=-90,in=180] (-4.5,-0.8) to [out=0,in=-90] (-3.7,0);

\draw[thick] (-4.5,-0.4) to [out=180,in=90] (-5,-1.2);
\draw[->,thick] (-4.5,-2) to [out=180,in=-90] (-5,-1.2);
\draw[thick, dashed] (-4.5,-0.4) to [out=0,in=0] (-4.5,-2);

\node [left] at (-5,-1.2) {$b_1$};
\node [above] at (-4.5,0.8) {$a_1$};

\end{tikzpicture}
\]
Consider an element $h\in Homeo^+(\partial V_1)$ such that $h_*(b_2)=pa_1+qb_1,$
where $p$ and $q$ are coprime integers.

The $3$-manifold obtained gluing the boundaries of $V_1$ and $V_2$ by the homeomorphism $h$ is called the lens space of type $(p,q)$ and denoted traditionally by $L(p,q).$
In other words, a $3$-manifold is a lens space if and only if it contains a solid torus, the closure of whose complement is also a solid torus.
Moreover we have the following classification result:
\begin{teo}[Remark 9.B.7 in \cite{rolf}] 
\label{teo_lens_classification}
Two lens spaces $L(p_1,q_1),$ $L(p_2,q_2)$ are homeomorphic if and only if
$$p_1=\pm p_2 \quad\text{and}\quad q_1\equiv \pm q_2^{\pm 1} (\text{mod } p_1).$$
\end{teo}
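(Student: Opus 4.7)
The plan is to prove the two implications separately, since the "if" direction is an exercise in explicit constructions while the "only if" direction requires a non-trivial invariant. For the "if" direction, I would produce homeomorphisms realizing each of the elementary moves $(p,q)\mapsto (-p,-q)$, $(p,q)\mapsto(p,q+kp)$, $(p,q)\mapsto (p,-q)$, and $(p,q)\mapsto(p,q^{-1}\bmod p)$, from which the full equivalence $q_1\equiv\pm q_2^{\pm 1}\pmod{p_1}$, $p_1=\pm p_2$ follows by composition. The first two are immediate: globally reversing orientations of both solid tori gives the same gluing data, and altering $q$ by a multiple of $p$ only reparametrizes the meridian by a Dehn twist. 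The inversion $q\mapsto -q$ is realized by a reflection of $V_1$ that simultaneously reverses $a_1$ and $b_1$. For $q\mapsto q^{-1}$ I would swap the roles of $V_1$ and $V_2$: after the swap, the meridian of the \emph{new} $V_1$ is glued to a curve whose coefficient of the new $a_1$ is an integer $q^*$ with $qq^*\equiv 1\pmod p$, up to the previous symmetries.

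For the "only if" direction, I would first peel off the sign of $p$. Applying Seifert--van Kampen to the Heegaard decomposition $V_1\cup_h V_2$ gives $\pi_1(L(p,q))\cong \mathbb{Z}/p\mathbb{Z}$, so any homeomorphism $L(p_1,q_1)\cong L(p_2,q_2)$ already forces $|p_1|=|p_2|$, and up to the symmetry $(p,q)\sim(-p,-q)$ already handled, one may assume $p_1=p_2=p>0$. The genuine work is then to distinguish lens spaces with the same fundamental group, and for this I would invoke Reidemeister torsion. For a non-trivial character $\chi\colon \mathbb{Z}/p\to\mathbb{C}^\times$ sending a generator to a primitive $p$-th root of unity $\zeta$, the Reidemeister torsion $\tau_\chi(L(p,q))$ computed from the natural CW-structure associated to the Heegaard splitting equals $(\zeta-1)(\zeta^{q^*}-1)$ up to units, where $qq^*\equiv 1\pmod p$. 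Being a simple-homotopy invariant (in particular a homeomorphism invariant), any homeomorphism $L(p,q_1)\cong L(p,q_2)$ must match these torsions in the quotient of $\mathbb{Q}(\zeta)^\times$ by the indeterminacies, and a direct identification shows that those indeterminacies amount exactly to the two involutions $q\mapsto -q$ (from changing the sign of the chosen generator of $\pi_1$) and $q\mapsto q^*$ (from complex conjugation/inversion of $\zeta$), yielding the required congruence $q_1\equiv \pm q_2^{\pm 1}\pmod p$.

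The main obstacle will be the Reidemeister torsion computation. One has to lift the Heegaard CW-structure of $L(p,q)$ to the universal cover, write the resulting free $\mathbb{Z}[\mathbb{Z}/p]$-chain complex explicitly, twist by $\chi$, and compute the alternating product of determinants of the boundary operators, all while carefully tracking the unit ambiguity $\pm \zeta^{\mathbb{Z}}$ inherent to the torsion. A secondary, more conceptual difficulty is to verify that the two independent involutions arising from the indeterminacy really do correspond exactly to the geometric moves of the "if" direction; this should follow once the torsion is written in the symmetric form $(\zeta-1)(\zeta^{q^*}-1)$ and one observes that $\{q,-q,q^*,-q^*\}$ is precisely the orbit of $q$ under these involutions modulo $p$. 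With these two computational ingredients in hand, both implications close up and give the classification.
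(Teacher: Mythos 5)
The paper does not prove this theorem; it is stated as Theorem \ref{teo_lens_classification} with a bare citation to Remark 9.B.7 of Rolfsen and is used as a black box, so there is no ``paper's own proof'' to line your argument up against. What you have written is the classical Reidemeister--Franz--de~Rham torsion proof, which is indeed the standard (and essentially the only known) route to the ``only if'' half, and your reduction of the ``if'' half to the four elementary moves is correct.

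Two ingredients in your outline are being treated as routine when they are in fact the real content, and you should flag them explicitly if you write this up. First, the parenthetical ``simple-homotopy invariant (in particular a homeomorphism invariant)'' is not a tautology: simple-homotopy invariance of Reidemeister torsion does not by itself say anything about topological homeomorphisms. To get homeomorphism invariance you must either invoke Chapman's theorem on the topological invariance of Whitehead torsion, or, in the $3$-dimensional setting, invoke Moise's theorem to replace a homeomorphism by a PL one; either is fine, but some such step is mandatory. Second, the sentence ``a direct identification shows that those indeterminacies amount exactly to the two involutions'' hides the number-theoretic heart of the argument. The torsion is only well defined up to units $\pm\zeta^{k}$ in $\mathbb{Z}[\zeta]$, and to conclude from $(\zeta-1)(\zeta^{q_1^{*}}-1)=\pm\zeta^{k}(\zeta-1)(\zeta^{q_2^{*}}-1)$ (after possibly replacing $\zeta$ by $\zeta^{-1}$) that $q_1\equiv\pm q_2^{\pm1}\pmod p$ one needs the Franz independence lemma, i.e.\ the linear independence over $\mathbb{Q}$ of the quantities $\log|1-\zeta^{a}|$ as $a$ ranges over a half-system of residues mod $p$. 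Without that lemma the ``direct identification'' does not close. With those two points made explicit, your outline is a correct proof, matching the classical one that Rolfsen refers to.
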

\end{exe}

Next we show that a Heegaard splitting with gluing map an element of (mod $d$) Torelli group is 
$\mathbb{Z}/d$-homology $3$-sphere and so a $\mathbb{Q}$-homology $3$-sphere.

\begin{prop}
\label{prop_homology_modp}
Let $S^3= \mathcal{H}_g \cup_{\iota_g} -\mathcal{H}_g.$
If we twist this glueing by an arbitrary map $\phi\in\mathcal{M}_{g,1}[d]$ we get a $\mathbb{Z}/d$-homology sphere $S_\phi^3= \mathcal{H}_g \cup_{\iota_g \phi} -\mathcal{H}_g.$
\end{prop}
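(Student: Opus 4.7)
The plan is to compute $H_*(S^3_\phi;\mathbb{Z}/d)$ via the Mayer--Vietoris sequence associated to the Heegaard decomposition $S^3_\phi=\mathcal{H}_g\cup_{\iota_g\phi}-\mathcal{H}_g$, with $\mathcal{H}_g\cap-\mathcal{H}_g=\Sigma_g$, and to compare it with the same sequence for the untwisted splitting $S^3=\mathcal{H}_g\cup_{\iota_g}-\mathcal{H}_g$. The point of the whole setup is that being an element of $\mathcal{M}_{g,1}[d]$ means precisely that $\phi$ acts as the identity on $H_1(\Sigma_g;\mathbb{Z}/d)$, so the two sequences will coincide with $\mathbb{Z}/d$-coefficients.

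First I would write down the relevant part of Mayer--Vietoris with $\mathbb{Z}/d$-coefficients:
\begin{equation*}
\xymatrix@C=5mm{
0 \ar[r] & H_2(S^3_\phi;\mathbb{Z}/d) \ar[r] & H_1(\Sigma_g;\mathbb{Z}/d) \ar[r]^-{\Delta_\phi} & H_1(\mathcal{H}_g;\mathbb{Z}/d)\oplus H_1(-\mathcal{H}_g;\mathbb{Z}/d) \ar[r] & H_1(S^3_\phi;\mathbb{Z}/d) \ar[r] & 0,
}
\end{equation*}
where $\Delta_\phi(x)=(i_*(x),\,-j_*((\iota_g\phi)_*(x)))$ and $i,j$ denote the inclusions of $\Sigma_g$ into the two handlebodies. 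Here I use that $H_2$ of a handlebody vanishes and that $H_0$-terms cancel, which gives the vanishing on the ends. Next I would identify $H_1(\Sigma_g;\mathbb{Z}/d)=A_d\oplus B_d\cong(\mathbb{Z}/d)^{2g}$ and record the standard fact that $\ker i_*=B_d$ and $\ker j_*=A_d$ (the latter after the identification induced by $\iota_g$), so that in the untwisted case $\Delta_{\mathrm{id}}$ is an isomorphism and $S^3$ is recovered as a $\mathbb{Z}/d$-homology sphere.

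The key step is then to observe that since $\phi\in\mathcal{M}_{g,1}[d]$ we have $\phi_*=\mathrm{id}$ on $H_1(\Sigma_g;\mathbb{Z}/d)$, and hence $(\iota_g\phi)_*=\iota_{g*}$ on $H_1(\Sigma_g;\mathbb{Z}/d)$. Therefore $\Delta_\phi=\Delta_{\mathrm{id}}$ as homomorphisms of $\mathbb{Z}/d$-modules, so $\Delta_\phi$ is still an isomorphism. Plugging this into Mayer--Vietoris yields
\begin{equation*}
H_1(S^3_\phi;\mathbb{Z}/d)=0=H_2(S^3_\phi;\mathbb{Z}/d),
\end{equation*}
and the remaining degrees are handled by $H_0(S^3_\phi;\mathbb{Z}/d)=\mathbb{Z}/d$ (it is connected) and $H_3(S^3_\phi;\mathbb{Z}/d)=\mathbb{Z}/d$ (closed, oriented). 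This proves that $S^3_\phi$ is a $\mathbb{Z}/d$-homology sphere, and, as already noted in the preceding remark, being a $\mathbb{Z}/d$-homology sphere forces $H_1(S^3_\phi;\mathbb{Z})$ to be finite, so $S^3_\phi$ is also a $\mathbb{Q}$-homology sphere.

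There is no real obstacle here; the only point that deserves a careful check is the identification of $\ker i_*$ and $\ker j_*$ modulo $d$ (to make sure nothing subtle happens passing from $\mathbb{Z}$ to $\mathbb{Z}/d$), but this follows immediately from the $\mathbb{Z}$-statement by tensoring the short exact sequence $0\to B\to H\to H_1(\mathcal{H}_g;\mathbb{Z})\to 0$ with $\mathbb{Z}/d$ and using that $H_1(\mathcal{H}_g;\mathbb{Z})\cong\mathbb{Z}^g$ is free, so no $\mathrm{Tor}$ term appears.
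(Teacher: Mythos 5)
Your proof is correct and follows essentially the same route as the paper: a Mayer--Vietoris computation with $\mathbb{Z}/d$-coefficients, using the defining property of $\mathcal{M}_{g,1}[d]$ to identify the connecting map with the one in the untwisted case. The paper writes the map $\varphi$ in matrix form and reads off directly that it is the identity, while you phrase the same fact as $\Delta_\phi=\Delta_{\mathrm{id}}$; this is a presentational difference only.
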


\begin{proof}
In order to prove this statement we compute the homology groups of $\mathcal{H}_g \cup_{\iota_g \phi} -\mathcal{H}_g.$
using a Mayer-Vietoris sequence.

Consider the following Mayer-Vietoris sequence:
\begin{equation}
\label{les-MV1_Q}
\begin{aligned}
 H_3(\mathcal{H}_g;\mathbb{Z}/d)\oplus  H_3(-\mathcal{H}_g;\mathbb{Z}/d) \longrightarrow & H_3(M;\mathbb{Z}/d) \longrightarrow \\
 \longrightarrow & H_2(\Sigma_g;\mathbb{Z}/d) \longrightarrow H_2(\mathcal{H}_g;\mathbb{Z}/d)\oplus  H_2(-\mathcal{H}_g;\mathbb{Z}/d). 
\end{aligned}
\end{equation}
Observe that a handlebody $\mathcal{H}_g$ is homotopy equivalent to a wedge of circles $\bigvee_g S^1,$ so
$$H_3(\mathcal{H}_g;\mathbb{Z}/d)=H_3(-\mathcal{H}_g;\mathbb{Z}/d)=0 \quad \text{and} \quad H_2(\mathcal{H}_g;\mathbb{Z}/d)=H_2(-\mathcal{H}_g;\mathbb{Z}/d)=0.$$
Thus the exact sequence \eqref{les-MV1_Q} becomes
$$
\xymatrix@C=7mm@R=7mm{\cdots \ar@{->}[r] & 0 \ar@{->}[r] & H_3(M;\mathbb{Z}/d) \ar@{->}[r] & H_2(\Sigma_g;\mathbb{Z}/d) \ar@{->}[r] & 0 \ar@{->}[r] & \cdots .}
$$
Because $\Sigma_g$ is orientable $H_2(\Sigma_g;\mathbb{Z}/d)=\mathbb{Z}/d.$
Hence $H_3(M;\mathbb{Z}/d) \cong H_2(\Sigma_g;\mathbb{Z}/d)=\mathbb{Z}/d.$

Writing more terms of the Mayer-Vietoris sequence we get
\begin{equation}
\label{les-MV2_Q}
\begin{aligned}
 H_2(\mathcal{H}_g;\mathbb{Z}/d)\oplus  H_2(-\mathcal{H}_g;\mathbb{Z}/d) \longrightarrow & H_2(M;\mathbb{Z}/d) \longrightarrow \\
 \longrightarrow & H_1(\Sigma_g;\mathbb{Z}/d) \longrightarrow H_1(\mathcal{H}_g;\mathbb{Z}/d)\oplus  H_1(-\mathcal{H}_g;\mathbb{Z}/d). 
\end{aligned}
\end{equation}
Since $H_2(\mathcal{H}_g;\mathbb{Z}/d)=H_2(-\mathcal{H}_g;\mathbb{Z}/d)=0,$ the exact sequence \eqref{les-MV2_Q} becomes
$$
\xymatrix@C=7mm@R=7mm{ 0 \ar@{->}[r] & H_2(M;\mathbb{Z}/d) \ar@{->}[r] & H_1(\Sigma_g;\mathbb{Z}/d) \ar@{->}[r] & H_1(\mathcal{H}_g;\mathbb{Z}/d)\oplus  H_1(-\mathcal{H}_g;\mathbb{Z}/d) .}
$$
Thus $H_2(M;\mathbb{Z}/d)=ker\left(\varphi: H_1(\Sigma_g;\mathbb{Z}/d) \longrightarrow H_1(\mathcal{H}_g;\mathbb{Z}/d)\oplus  H_1(-\mathcal{H}_g;\mathbb{Z}/d)\right).$

Recall that the basis of $H_1(\Sigma_g)$ is given by the homology classes $\{a_1,\ldots, a_g,b_1, \ldots , b_g\},$ the basis of $H_1(\mathcal{H}_g)$ is given by the homology classes $\{a_1,\ldots, a_g\}$ and the basis of $H_1(-\mathcal{H}_g)$ is given by the homology classes $\{-a_1,\ldots, -a_g\}.$

The map $\varphi$ is given by $\varphi=(incl.,incl. \circ \iota_g \circ \phi),$ where $\iota_g$ denotes the map that switch $a_i$ and $-b_i$ for all i, and $incl.$ denotes the map induced by the natural inclusion of $\Sigma_g$ in $\mathcal{H}_g,$ i.e. the map that sends all $b_i$'s to zero.
Recall that $\phi\in \mathcal{M}_{g,1}[d],$ i.e. $\Psi_d(\phi)=Id.$
So the induced map $\varphi$ in homology with coefficients $\mathbb{Z}/d$ has the following matrix form:
$$\varphi=\left(\begin{matrix}
Id_g & 0 \\
0 & Id_g
\end{matrix}\right)\in M_{2g\times 2g}(\mathbb{Z}/d).$$
Thus $ker(\varphi)=0,$ i.e. $H_2(M;\mathbb{Z}/d)=0.$

Now, using the Mayer-Mietoris for reduced homology, since $\varphi= Id_{2g}$ i.e. $\varphi$ is an isomorphism, we get the following exact sequence:
$$
\xymatrix@C=7mm@R=7mm{ 0 \ar@{->}[r] & H_1(M;\mathbb{Z}/d) \ar@{->}[r] & \tilde{H}_0(\Sigma_g;\mathbb{Z}/d)  \ar@{->}[r] & }$$
$$\xymatrix@C=7mm@R=7mm{\ar@{->}[r] &
\tilde{H}_0(\mathcal{H}_g;\mathbb{Z}/d)\oplus  \tilde{H}_0(-\mathcal{H}_g;\mathbb{Z}/d) \ar@{->}[r] & \tilde{H}_0(M;\mathbb{Z}/d) \ar@{->}[r] & 0}
.$$
Since $H_0(\Sigma_g;\mathbb{Z}/d)=H_0(\mathcal{H}_g;\mathbb{Z}/d)=  H_0(-\mathcal{H}_g;\mathbb{Z}/d)=\mathbb{Z}/d,$ then
$$\tilde{H}_0(\Sigma_g;\mathbb{Z}/d)=\tilde{H}_0(\mathcal{H}_g;\mathbb{Z}/d)=  \tilde{H}_0(-\mathcal{H}_g;\mathbb{Z}/d)=0.$$
Thus, by above exact sequence,
$$H_1(M;\mathbb{Z}/d)=0 \quad \text{and} \quad H_0(M;\mathbb{Z}/d)= \tilde{H}_0(M;\mathbb{Z}/d)\oplus \mathbb{Z}/d=\mathbb{Z}/d.$$

Therefore $M$ is a $\mathbb{Z}/d$-homology $3$-sphere.
\end{proof}

\begin{rem}
As a direct consequence of Proposition \eqref{prop_homology_modp} we get that the Heegaard splitting $S_\phi^3= \mathcal{H}_g \cup_{\iota_g \phi} -\mathcal{H}_g$ with $\phi\in\mathcal{M}_{g,1}[d]$ is a $\mathbb{Q}$-homology $3$-sphere
\end{rem}

We now look for the converse:
when is a $\mathbb{Q}$-homology $3$-sphere constructed from a Heegaard splitting with gluing map an element of $\mathcal{M}_{g,1}[d]$ ?

Denote by $\mathcal{S}^3(d)\subset\mathcal{V}^3$ the subset of $\mathbb{Z}/d$-homology $3$-spheres
and by $\mathcal{S}^3[d]\subset \mathcal{S}^3(d)$ the set of $\mathbb{Z}/d$-homology $3$-spheres which are homeomorphic to $\mathcal{H}_g \cup_{\iota_g\phi} -\mathcal{H}_g$ for some $\phi\in \mathcal{M}_{g,1}[d].$

\begin{teo}[Theorem 368 in \cite{Gock}]
\label{teo_smith}
Let $A\in  M_{m\times n}(\mathbb{Z})$ be given. There exist matrices $U\in GL_m(\mathbb{Z}),$ $V\in GL_n(\mathbb{Z})$ and a diagonal matrix $S\in M_{m\times n}(\mathbb{Z})$ such that
$A=USV,$
the diagonal entries of $S$ are $d_1,d_2,\ldots , d_r,0,\ldots , 0,$ each $d_i$ is a positive integer, and $d_i|d_{i+1}$ for $i=1,2,\ldots , r-1.$

The diagonal matrix $S$ is called the \textit{Smith normal form} of $A.$
The diagonal entries $d_1,d_2,\ldots, d_r,$ of the Smith normal form of $A$ are called the elementary divisors (or sometimes the \textit{invariant factors}) of $A.$
\end{teo}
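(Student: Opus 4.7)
The plan is to prove the theorem by a reduction algorithm that mimics Gaussian elimination over $\mathbb{Z}$, using the Euclidean algorithm in place of ordinary division. First I would observe that left-multiplication by an element of $GL_m(\mathbb{Z})$ realizes an integer-invertible row operation on $A$ (row swaps, sign flips, and adding an integer multiple of one row to another), and right-multiplication by an element of $GL_n(\mathbb{Z})$ realizes the analogous column operations; these generate, respectively, $GL_m(\mathbb{Z})$ and $GL_n(\mathbb{Z})$. So it suffices to show that by a finite sequence of such operations $A$ can be brought to a diagonal form $S$ satisfying the stated divisibility property, since then $S=U^{-1}AV^{-1}$ for appropriate invertible integer matrices $U,V$.

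The core subroutine is a pivot-reduction step. If $A$ is nonzero, choose a nonzero entry of minimum absolute value and bring it to position $(1,1)$ by a row and a column swap; call it $d$. For each entry $a_{1j}$ in the first row, apply the division algorithm to write $a_{1j}=q_j d+r_j$ with $0\le r_j<|d|$ and replace column $j$ by column $j$ minus $q_j$ times column $1$. If some $r_j\ne 0$, the new entry has strictly smaller absolute value than $d$; swap it into position $(1,1)$ and repeat. Well-ordering of $\mathbb{N}$ guarantees termination, after which $d\mid a_{1j}$ for all $j$, so the remainder of row $1$ can be cleared by column operations. The symmetric procedure clears the first column.

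At this point $A$ has the block form $\mathrm{diag}(d,A')$ with $A'$ of size $(m-1)\times(n-1)$. I would then enforce the divisibility condition: if some entry $a'_{ij}$ of $A'$ is not divisible by $d$, add the corresponding row of the full matrix to row $1$ (creating $a'_{ij}$ in position $(1,j+1)$) and rerun the pivot reduction; this produces a new pivot of strictly smaller absolute value, so by well-ordering the outer loop terminates with a pivot $d_1$ dividing every entry of the resulting $A'$. Applying the induction hypothesis to $A'$, whose dimensions are strictly smaller, yields a factorization $A'=U'S'V'$ with $S'=\mathrm{diag}(d_2,\dots,d_r,0,\dots,0)$ and $d_2\mid d_3\mid\cdots\mid d_r$; since every entry of $A'$ is divisible by $d_1$, so is $d_2$, giving the required chain $d_1\mid d_2\mid\cdots\mid d_r$. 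Reassembling into block-diagonal form completes the induction, with the base case $\min(m,n)=1$ being the Euclidean algorithm itself.

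The main obstacle is the divisibility requirement $d_i\mid d_{i+1}$: a naive diagonalization gives diagonal entries that need not form a divisibility chain. The resolution is the row-addition trick described above, which converts any obstruction to divisibility into a strictly smaller pivot and thereby forces another round of pivot reduction. Termination of this outer loop is the only nontrivial finiteness statement and again follows from well-ordering of $\mathbb{N}$ applied to the decreasing sequence of pivot sizes.
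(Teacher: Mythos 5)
Your proof is correct and is the standard Euclidean-algorithm proof of Smith normal form; the paper itself does not prove this result but cites it directly from an external reference, so there is no in-paper argument to compare against. The only cosmetic gap is that you never explicitly apply the sign-flip operations you mention at the outset, so after the induction the diagonal entries could be negative; a final pass multiplying rows or columns by $-1$ where needed is required to match the theorem's requirement that each $d_i$ be a positive integer.
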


\begin{lema}
\label{lema_n_detH}
Let $M=\mathcal{H}_g\cup_{\iota_gf} -\mathcal{H}_g$ with $f\in \mathcal{M}_{g,1}$ be a $\mathbb{Q}$-homology $3$-sphere, $n=|H_1(M;\mathbb{Z})|$ and
$\Psi(f)=\left(\begin{smallmatrix}
E & F \\
G & H
\end{smallmatrix}\right).$ Then $n=|det(H)|.$
\end{lema}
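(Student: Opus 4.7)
The plan is to compute $|H_1(M;\mathbb{Z})|$ directly from the Mayer--Vietoris long exact sequence associated to the decomposition $M = \mathcal{H}_g \cup_{\iota_g f} -\mathcal{H}_g$, by writing the relevant connecting map as a $2g\times 2g$ integral matrix and computing its determinant.

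First I would mirror the argument used in Proposition \eqref{prop_homology_modp}, but with $\mathbb{Z}$-coefficients. Since each handlebody $\pm\mathcal{H}_g$ deformation retracts onto a wedge of $g$ circles, $H_k(\pm\mathcal{H}_g;\mathbb{Z}) = 0$ for $k \geq 2$ and $\tilde{H}_0(\pm\mathcal{H}_g;\mathbb{Z}) = 0$; combined with $\tilde{H}_0(\Sigma_g;\mathbb{Z}) = 0$, the Mayer--Vietoris sequence collapses to a short exact sequence
\begin{equation*}
0 \longrightarrow H_1(\Sigma_g;\mathbb{Z}) \xrightarrow{\varphi} H_1(\mathcal{H}_g;\mathbb{Z}) \oplus H_1(-\mathcal{H}_g;\mathbb{Z}) \longrightarrow H_1(M;\mathbb{Z}) \longrightarrow 0,
\end{equation*}
so $H_1(M;\mathbb{Z}) \cong \mathrm{coker}(\varphi)$. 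Because $M$ is a $\mathbb{Q}$-homology $3$-sphere, tensoring with $\mathbb{Q}$ turns $\varphi$ into an isomorphism of rank $2g$ $\mathbb{Q}$-vector spaces; in particular $\varphi$ is injective with finite cokernel, and invoking the Smith normal form (Theorem \eqref{teo_smith}) one reads off $n = |H_1(M;\mathbb{Z})| = |\det \varphi|$ as the product of the elementary divisors.

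Next I would compute $\varphi$ as a block matrix. In the symplectic basis $\{A_1,\ldots,A_g,B_1,\ldots,B_g\}$ of $H_1(\Sigma_g;\mathbb{Z})$, the inclusion $\Sigma_g \hookrightarrow \mathcal{H}_g$ induces the projection $(I_g \mid 0)$ onto $H_1(\mathcal{H}_g;\mathbb{Z}) = \mathbb{Z}\langle A_1,\ldots,A_g\rangle$, because the $B_i$'s bound meridian discs of $\mathcal{H}_g$. Dually, the inclusion $\Sigma_g \hookrightarrow -\mathcal{H}_g$ is the projection $(0 \mid I_g)$ onto $H_1(-\mathcal{H}_g;\mathbb{Z}) = \mathbb{Z}\langle B_1,\ldots,B_g\rangle$. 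Choosing $\iota_g$ so that $\Psi(\iota_g) = I$ (an assumption we may make, since the statement only involves $|\det H|$ and $\iota_g$ can be absorbed into the identification $\partial(-\mathcal{H}_g)\cong\Sigma_g$), the second component of $\varphi$ becomes $(0\mid I_g)\cdot \Psi(f)$, which picks out the bottom block row of $\Psi(f) = \begin{pmatrix} E & F \\ G & H \end{pmatrix}$. Thus
\begin{equation*}
[\varphi] = \begin{pmatrix} I_g & 0 \\ G & H \end{pmatrix},
\end{equation*}
and expanding along the top block gives $\det \varphi = \det H$, whence $n = |\det H|$.

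The main difficulty is bookkeeping: keeping track of orientations and conventions for $\iota_g$ and the boundary identifications so that the block of $\Psi(f)$ appearing in $\varphi$ is indeed $H$ and not another block. The substantive content is the Mayer--Vietoris computation together with the invariance of $|\det|$ under the natural actions of $GL_g(\mathbb{Z})$ induced by different valid choices of $\iota_g$, which ensures that the answer $|\det H|$ is well defined independently of the precise choice of $\iota_g$ among the maps producing $S^3$.
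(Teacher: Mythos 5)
Your argument is correct and follows essentially the same route as the paper's: both use the reduced Mayer--Vietoris sequence to identify $H_1(M;\mathbb{Z})$ with $\mathrm{coker}(\varphi)$, both compute $\varphi$ as the block matrix $\left(\begin{smallmatrix} I_g & 0 \\ G & H\end{smallmatrix}\right)$, and both invoke the Smith normal form together with the $\mathbb{Q}$-homology-sphere hypothesis. The only cosmetic difference is that the paper first passes from $\mathrm{coker}(\varphi)$ to $\mathrm{coker}(H)$ and then computes $\prod d_i = |\det H|$, while you take $|\mathrm{coker}(\varphi)| = |\det\varphi|$ directly and read off $\det\varphi = \det H$ from the block-triangular form.
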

\begin{proof}
Consider the Mayer-Vietoris sequence for the reduced homology of $\mathcal{H}_g\cup_{\iota_gf} -\mathcal{H}_g:$
\begin{equation}
\label{ses_MV_rat}
\xymatrix@C=7mm@R=7mm{ H_1(\Sigma_g;\mathbb{Z}) \ar@{->}[r]^-{\varphi} & H_1(\mathcal{H}_g;\mathbb{Z})\oplus  H_1(-\mathcal{H}_g;\mathbb{Z}) \ar@{->}[r] & H_1(M;\mathbb{Z}) \ar@{->}[r] & 0.}
\end{equation}
Then $H_1(M;\mathbb{Z})\cong Coker(\varphi).$
A direct inspection shows that if $\Psi(f)=\left(\begin{smallmatrix}
E & F \\
G & H
\end{smallmatrix}\right),$ we have that $\varphi=\left(\begin{smallmatrix}
Id & 0 \\
G & H
\end{smallmatrix}\right).$
Hence,
\begin{equation*}
H_1(M;\mathbb{Z})\cong Coker(H).
\end{equation*}
Next, by the Theory of invariant factors (see Theorem \eqref{teo_smith}), we know that there exist matrices $U,V\in GL_{2g}(\mathbb{Z})$ such that
$$UHV=\left(\begin{smallmatrix}
d_1 & & & & & \\
 & \ddots & & & & \\
 & & d_k & & & \\
 & & & 0 & & \\
 & & & & \ddots & \\
 & & & & & 0
 \end{smallmatrix}\right).$$
As a consequence,
\begin{equation*}
Coker(H)\cong \mathbb{Z}/d_1\times \mathbb{Z}/d_2\times \cdots \mathbb{Z}/d_k\times \mathbb{Z}^{g-k}.
\end{equation*}
Since $M$ is a $\mathbb{Q}$-homology sphere, by Universal coefficients Theorem, we have that
$0=H_1(M;\mathbb{Q})\cong H_1(M;\mathbb{Z})\otimes \mathbb{Q}\cong \mathbb{Q}^{g-k}.$ Then $k=g.$

Hence,
\begin{equation*}
det(H)=\pm det(UHV)=\pm \prod_{i=1}^{2g}d_i\neq 0.
\end{equation*}
Moreover, we have that
\begin{equation*}
n=|H_1(M;\mathbb{Z})|=\prod_{i=1}^{2g}d_i\neq 0.
\end{equation*}
Therefore we get that
$n=|det(H)|,$ as desired.
\end{proof}

Now we are ready to study whenever a $\mathbb{Q}$-homology $3$-sphere is homeomorphic to a Heegaard splitting with gluing map an element of (mod $d$) Torelli group.
The following result is inspired in Proposition 6 in \cite{per} due to B. Perron.

\begin{teo}
\label{teo_rat_homology_gen}
Let $M$ be a $\mathbb{Q}$-homology $3$-sphere and $n=|H_1(M;\mathbb{Z})|.$
Then $M\in \mathcal{S}^3[d],$ i.e. $M$ has a Heegaard splitting $\mathcal{H}_g\cup_{\iota_gf} -\mathcal{H}_g$ of some genus $g$ with gluing map $f\in \mathcal{M}_{g,1}[d]$ with $d\geq 2,$ if and only if $d$ divides $n-1$ or $n+1.$
\end{teo}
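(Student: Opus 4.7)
The plan is to prove the two implications separately, both controlled by the determinant of the lower right block of the symplectic representation of the gluing map.

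For the necessary direction, suppose $M=\mathcal{H}_g\cup_{\iota_g f}-\mathcal{H}_g$ with $f\in\mathcal{M}_{g,1}[d]$, and write $\Psi(f)=\left(\begin{smallmatrix}E&F\\ G&H\end{smallmatrix}\right)$. Since $f\in\mathcal{M}_{g,1}[d]$, the whole matrix $\Psi_d(f)$ equals the identity, so in particular $H\equiv Id_g\pmod d$ and $\det H\equiv 1\pmod d$. Lemma \ref{lema_n_detH} gives $n=|\det H|$, and therefore $n\equiv\pm 1\pmod d$, i.e.\ $d\mid n-1$ or $d\mid n+1$.

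For the sufficient direction, start from an arbitrary Heegaard splitting $M=\mathcal{H}_g\cup_{\iota_g f_0}-\mathcal{H}_g$ with $f_0\in\mathcal{M}_{g,1}$; by Theorem \ref{bij_MCG_3man}, the manifold $M$ only depends on the double coset of $f_0$ under $\mathcal{A}_{g,1}\times\mathcal{B}_{g,1}$, and it is moreover stable under $g\mapsto g+1$. The strategy is to show that the hypothesis $d\mid n\pm 1$ allows me to replace $f_0$, up to stabilization and the above double coset action, by a representative landing in $\mathcal{M}_{g,1}[d]$. Write $T=\Psi_d(f_0)=\left(\begin{smallmatrix}A&B\\ C&D\end{smallmatrix}\right)\in Sp_{2g}(\mathbb{Z}/d)$; by Lemma \ref{lema_n_detH} combined with the hypothesis, $\det D\equiv\pm n\equiv\pm 1\pmod d$, so $D\in GL_g(\mathbb{Z}/d)$. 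At the level of the symplectic representation, what I need is a decomposition $T=\alpha\beta$ with $\alpha\in Sp^{A\pm}_{2g}(\mathbb{Z}/d)$ and $\beta\in Sp^{B\pm}_{2g}(\mathbb{Z}/d)$.

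The key algebraic step is the explicit formula
\[
\begin{pmatrix}A&B\\ C&D\end{pmatrix}=\begin{pmatrix}D^{-t}&B\\ 0&D\end{pmatrix}\begin{pmatrix}Id&0\\ D^{-1}C&Id\end{pmatrix},
\]
valid whenever $D$ is invertible. Expanding the right hand side yields $\left(\begin{smallmatrix}D^{-t}+BD^{-1}C & B\\ C & D\end{smallmatrix}\right)$, and the equality with the top-left entry of $T$ follows from the symplectic identity $A^tD=Id+C^tB$ after transposition, using $B^tD=D^tB$ to rewrite $D^{-t}B^t=BD^{-1}$. The same symplectic identities $B^tD=D^tB$ and $CD^t=DC^t$ show that $D^tB$ and $D^{-1}C$ are symmetric, so the two factors genuinely belong to $Sp^A_{2g}(\mathbb{Z}/d)$ and $Sp^B_{2g}(\mathbb{Z}/d)$. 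Since $\det(D^{-t})=\pm 1$ and $\det Id=1$, they actually lie in the $\pm$-subgroups $Sp^{A\pm}$ and $Sp^{B\pm}$.

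To lift this to the mapping class group, I stabilize to a sufficiently large genus and invoke Lemma \ref{lem_B[d]_3_stab}, which gives surjections $\Psi_d:\mathcal{A}_{g,1}\twoheadrightarrow Sp^{A\pm}_{2g}(\mathbb{Z}/d)$ and $\Psi_d:\mathcal{B}_{g,1}\twoheadrightarrow Sp^{B\pm}_{2g}(\mathbb{Z}/d)$. Choosing preimages $\widetilde{\alpha},\widetilde{\beta}$ of $\alpha,\beta$ and setting $f=\widetilde{\alpha}^{-1}f_0\widetilde{\beta}^{-1}$ produces an element of $\mathcal{M}_{g,1}[d]$ defining the same manifold $M$. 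The main obstacle is verifying the algebraic factorization and checking that both factors satisfy the symmetry constraints built into $Sp^A$ and $Sp^B$; a secondary issue is that, when $d$ is not prime, the surjection of $\mathcal{B}_{g,1}$ onto $Sp^{B\pm}$ is only available stably, which is precisely why the statement of the theorem allows $g$ to be arbitrary.
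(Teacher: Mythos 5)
Your proof is correct and shares the paper's overall strategy, but with two worthwhile differences. In the ``only if'' direction you apply Lemma \ref{lema_n_detH} directly to the gluing map $\phi\in\mathcal{M}_{g,1}[d]$ witnessing $M\in\mathcal{S}^3[d]$, so that $\det H\equiv 1\pmod d$ is immediate; the paper instead starts from an arbitrary $f\in\mathcal{M}_{g,1}$, stabilizes, and passes through the double-coset decomposition $\tilde f=\xi_a\varphi\xi_b$ before extracting the determinant of the $(2,2)$-block --- your route is shorter and reads the definition of $\mathcal{S}^3[d]$ more directly. In the ``if'' direction you put the $GL$-part of the Gauss factorization on the $A$-side,
\[
\Psi_d(f_0)=\left(\begin{smallmatrix}D^{-t}&B\\0&D\end{smallmatrix}\right)\left(\begin{smallmatrix}Id&0\\D^{-1}C&Id\end{smallmatrix}\right),
\]
whereas the paper first clears the upper-right block with $X=\left(\begin{smallmatrix}Id&-FH^{-1}\\0&Id\end{smallmatrix}\right)\in Sp^{A\pm}_{2g}(\mathbb{Z}/d)$ and lets the $GL$-part land in the $Sp^{B\pm}$-factor $X\Psi_d(f)$; the two choices differ by an element of $Sp^{AB\pm}_{2g}(\mathbb{Z}/d)$ and are therefore equivalent, and your verification that $A=D^{-t}+BD^{-1}C$ and that $D^tB$ and $D^{-1}C$ are symmetric via the symplectic relations is correct. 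One caution: Lemma \ref{lem_B[d]_3_stab} asserts surjectivity of $\Psi_d$ only on the stable groups $\mathcal{A}_{\infty,1}$, $\mathcal{B}_{\infty,1}$ (for non-prime $d$ it can fail at any fixed genus), so the preimages $\widetilde\alpha,\widetilde\beta$ are only guaranteed after a further stabilization; you do flag this, but the displayed ``surjections $\Psi_d:\mathcal{A}_{g,1}\twoheadrightarrow Sp^{A\pm}_{2g}(\mathbb{Z}/d)$'' should be read in that stable sense.
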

\begin{proof}
We first prove that if $M$ is a $\mathbb{Q}$-homology $3$-sphere with $n=|H_1(M;\mathbb{Z})|$ and $d\geq 2$ divides $n-1$ or $n+1,$ then $M\in \mathcal{S}^3[d].$

By Theorem \eqref{bij_MCG_3man}, there exists an element $f\in\mathcal{M}_{g,1}$ such that $M$ is homeomorphic to $\mathcal{H}_g\cup_{\iota_gf} -\mathcal{H}_g.$
By definition of $\mathcal{M}_{g,1}[d]$ we have the short exact sequence
\begin{equation}
\label{ses_def_MCGmodp}
\xymatrix@C=10mm@R=13mm{1 \ar@{->}[r] & \mathcal{M}_{g,1}[d] \ar@{->}[r] & \mathcal{M}_{g,1} \ar@{->}[r]^-{\Psi_d} & Sp_{2g}(\mathbb{Z}/d) \ar@{->}[r] & 1 .}
\end{equation}
Consider the image of $f$ by $\Psi :$
\begin{equation*}
\Psi(f)=\left(\begin{matrix}
E & F \\
G & H
\end{matrix}\right).
\end{equation*}
We show that there exist matrices $X\in Sp_{2g}^{A\pm}(\mathbb{Z}/d),$ $Y\in Sp_{2g}^{B\pm}(\mathbb{Z}/d),$
such that
\begin{equation}
\label{eq_XY}
X\Psi_d(f)Y=Id.
\end{equation}
Then, by Lemma \eqref{lem_B[d]_3_stab}, we will get that there exist an integer $h\geq g$ and elements $\tilde{\xi_a}\in \mathcal{A}_{h,1},$ $\tilde{\xi_b}\in\mathcal{B}_{h,1}$, $\tilde{f}\in \mathcal{M}_{h,1}$ such that $\Psi_d(\tilde{\xi_a})=i_{g,h}(X),$ $\Psi_d(\tilde{\xi_b})=i_{g,h}(Y),$ $\tilde{f}=i_{g,h}(f)$ and
$$\Psi_d(\tilde{\xi_a} \tilde{f}\tilde{\xi_b})=Id.$$
As a consequence, by the short exact sequence \eqref{ses_def_MCGmodp} and Theorem \eqref{bij_MCG_3man}, we will get that $f$ is equivalent to an element of $\mathcal{M}_{h,1}[d].$

In order to construct such matrices $X,Y,$ we first show that $H\in SL_g^\pm(\mathbb{Z}/d).$

By hypothesis, $d\mid n-1$ or $d\mid n+1,$
i.e. $n \equiv \pm 1 \; (\text{mod }d).$
Moreover, by Lemma \eqref{lema_n_detH}, we know that $n=|det(H)|.$
Therefore, $det(H)\equiv \pm 1 \; (\text{mod }d),$ i.e. $H\in SL_g^\pm(\mathbb{Z}/d).$

Next we proceed to construct the aforementioned matrices $X,Y.$

Consider the matrix
$$X=\left(\begin{matrix}
Id & A \\
0 & Id
\end{matrix}\right),$$
with $A=-FH^{-1}\in M_{g\times g}(\mathbb{Z}/d).$

We prove that $X$ is an element of $Sp_{2g}^{A\pm}(\mathbb{Z}/d).$

Since $\Psi_d(f)\in Sp_{2g}(\mathbb{Z}/d),$ by definition, $H^tF$ is symmetric and as a consequence the following equality holds
$$
A= -FH^{-1}=-(H^t)^{-1}H^tFH^{-1}
= -(H^t)^{-1}F^t=-(H^{-1})^tF^t=A^t.
$$
Then $A\in S_g(\mathbb{Z}/d),$ and therefore $X\in Sp_{2g}^{A\pm}(\mathbb{Z}/d).$

Next, observe that we have that
$$\left(\begin{matrix}
Id & A \\
0 & Id
\end{matrix}\right)
\left(\begin{matrix}
E & F \\
G & H
\end{matrix}\right)=
\left(\begin{matrix}
E+AG & F+AH \\
G & H
\end{matrix}\right)=
\left(\begin{matrix}
E+AG & 0 \\
G & H
\end{matrix}\right),
$$
which is an element of $Sp_{2g}^B(\mathbb{Z}/d).$

In fact, since $H\in SL^\pm_g(\mathbb{Z}/d),$ we know that
$\left(\begin{smallmatrix}
E+AG & 0 \\
G & H
\end{smallmatrix}\right)\in Sp_{2g}^{B\pm}(\mathbb{Z}/d).$

Therefore, setting
$$Y=\left(\begin{matrix}
E+AG & 0 \\
G & H
\end{matrix}\right)^{-1}\in Sp_{2g}^{B\pm}(\mathbb{Z}/d),$$
we get the result.

Next we show that the condition $d$ divides $n+1$ or $n-1$ is a necessary condition for $M$ to be in $\mathcal{S}^3[d].$
Let $M\in \mathcal{S}^3[d].$ By Theorem \eqref{bij_MCG_3man}, there exists an element $f\in\mathcal{M}_{g,1}$ such that $M$ is homeomorphic to $\mathcal{H}_g\cup_{\iota_gf} -\mathcal{H}_g.$
In addition, since $M\in \mathcal{S}^3[d],$ there exists $h\geq g$ for which the class $\tilde{f}=i_{g,h}(f)$ in $\mathcal{A}_{h,1}\backslash \mathcal{M}_{h,1}/\mathcal{B}_{h,1}$ contains an element in $\mathcal{M}_{h,1}[d].$ Then there exist $\xi_a\in \mathcal{A}_{h,1}, \xi_b\in \mathcal{B}_{h,1}$ and $\varphi\in \mathcal{M}_{h,1}[d]$ such that
$$\xi_a\varphi \xi_b=\tilde{f}.$$
Thus,
$$\Psi_d(\tilde{f})=\Psi_d(\xi_a)\Psi_d(\varphi)\Psi_d( \xi_b)=\Psi_d(\xi_a)\Psi_d( \xi_b).$$
Set 
$$\Psi_d(\xi_a)=\left(\begin{matrix}
E & A \\
0 & {}^tE^{-1}
\end{matrix}\right)\in Sp_{2h}^{A\pm}(\mathbb{Z}/d),
\quad
\Psi_d( \xi_b)=\left(\begin{matrix}
F & 0 \\
B & {}^tF^{-1}
\end{matrix}\right)\in Sp_{2h}^{B\pm}(\mathbb{Z}/d).$$
Then
$$\Psi_d(\tilde{f})=\Psi_d(\xi_a)\Psi_d( \xi_b)=
\left(\begin{matrix}
E & A \\
0 & {}^tE^{-1}
\end{matrix}\right)
\left(\begin{matrix}
F & 0 \\
B & {}^tF^{-1}
\end{matrix}\right)=
\left(\begin{matrix}
EF+AB & A{}^tF^{-1} \\
{}^tE^{-1}B & {}^tE^{-1}{}^tF^{-1}
\end{matrix}\right).$$
Since $E,F\in SL_h^\pm(\mathbb{Z}/d)$ and by Lemma \eqref{lema_n_detH} $n=|det({}^tE^{-1}{}^tF^{-1})|,$ then we have that
$$n\equiv \pm det({}^tE^{-1}{}^tF^{-1})\equiv \pm 1 \;(\text{mod }d),$$
as desired.
\end{proof}

As a consequence of Theorem \eqref{teo_rat_homology_gen}, we get that the family of groups $\{\mathcal{M}_{g,1}[d]\}_{g,d}$ is enough to produce all $\mathbb{Q}$-homology spheres.

Notice that Theorem \eqref{teo_rat_homology_gen} give us a criterion to know whenever a $\mathbb{Q}$-homology sphere can be formed as a Heegaard splitting with gluing map an element of the (mod $d$) Torelli group.
Next, using this criterion, we study whenever the set of $\mathbb{Z}/d$-homology $3$-spheres, $\mathcal{S}^3(d),$ and the set of $\mathbb{Z}/d$-homology $3$-spheres which are constructed as a Heegaard splitting with gluing map an element of the mod $d$ Torelli group, $\mathcal{S}^3[d],$ coincide.
As we will see, unlike the integral case, these two sets do not coincide in general.

\begin{lema}
\label{lem_inv_mod_d}
Let $d$ be a positive integer, the invertible elements of $\mathbb{Z}/d$ are contained in $\{1,-1\}$ if and only if $d=2,3,4,6.$
\end{lema}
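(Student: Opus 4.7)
The claim amounts to the statement that $(\mathbb{Z}/d)^\times \subseteq \{1,-1\}$, equivalently (for $d\geq 3$, where $1\neq -1$) that the Euler totient $\varphi(d)\leq 2$. The plan is thus to translate the condition into an inequality on $\varphi(d)$ and then classify the integers satisfying it by a short case analysis using the standard formulas $\varphi(p^k)=p^{k-1}(p-1)$ and the multiplicativity of $\varphi$.

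For the ``if'' direction I would simply compute $(\mathbb{Z}/d)^\times$ directly for the four listed values:
\begin{align*}
(\mathbb{Z}/2)^\times &= \{1\}, &
(\mathbb{Z}/3)^\times &= \{1,2\} = \{1,-1\}, \\
(\mathbb{Z}/4)^\times &= \{1,3\} = \{1,-1\}, &
(\mathbb{Z}/6)^\times &= \{1,5\} = \{1,-1\}.
\end{align*}

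For the ``only if'' direction, assume $(\mathbb{Z}/d)^\times\subseteq\{1,-1\}$ and note that this forces $\varphi(d)\leq 2$ (if $d=2$ then $\varphi(d)=1$; if $d\geq 3$ then $1\neq -1$ and $\varphi(d)\leq |\{1,-1\}|=2$). I would then split according to the prime factorisation of $d$. If $d=p^k$ is a prime power, then $\varphi(d)=p^{k-1}(p-1)\leq 2$ forces $(p,k)\in\{(2,1),(2,2),(3,1)\}$, giving $d\in\{2,3,4\}$. If $d$ has at least two distinct prime factors, write $d=2^a m$ with $m$ odd and $>1$; by multiplicativity $\varphi(d)=\varphi(2^a)\varphi(m)$, and $\varphi(m)\geq 2$ whenever $m>1$, with equality only for $m=3$. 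Moreover $\varphi(2^a)\geq 2$ as soon as $a\geq 2$, and $\varphi(2^a)\varphi(m)\geq 4$ as soon as $m$ has two distinct odd prime factors (since then $\varphi(m)\geq (3-1)(5-1)=8$) or $m=p^k$ with $p^{k-1}(p-1)\geq 4$. The only surviving case is $a\in\{0,1\}$ and $m=3$, which (with $d$ having $\geq 2$ distinct prime factors) leaves $d=6$.

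There is no real obstacle here: the argument is purely a bookkeeping exercise with $\varphi$, and the only thing to be careful about is the degenerate case $d=2$ where $1=-1$, which must be handled separately so that it is not missed (and so that $d=1$ is correctly excluded from the statement).
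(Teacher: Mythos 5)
Your argument is correct and essentially matches the paper's: both reduce the claim to $|(\mathbb{Z}/d)^\times|=\varphi(d)\leq 2$ and then use the prime-power formula $\varphi(p^k)=p^{k-1}(p-1)$ together with multiplicativity to run the case analysis, with the forward direction checked by direct computation. The only difference is presentational — the paper enumerates the conditions that force $\varphi(d)\geq 3$ (the contrapositive), while you enumerate the surviving cases with $\varphi(d)\leq 2$ — but the content is the same.
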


\begin{proof}
Let $d\geq 2$ be an integer. Recall that an element of $\mathbb{Z}/d$ is invertible if and only if $gcd(x,d)=1.$
As a consequence, given a prime number $p$ and $n\in \mathbb{N},$
$|(\mathbb{Z}/p^n)^\times |=(p-1)p^{n-1}.$
Let $d=p_1^{n_1}\cdots p_r^{n_r},$ we know that
$(\mathbb{Z}/d)^\times =(\mathbb{Z}/p_1^{n_1})^\times \times \cdots \times (\mathbb{Z}/p_r^{n_r})^\times,$ so
$$|(\mathbb{Z}/d)^\times|=\prod_{i=1}^{r}(p_i-1)p_i^{n_i-1}.$$
Observe that if for some $i,j$ with $j\neq i,$
$p_i\geq 5,$ or $p_i=3$ and $n_i\geq 2,$ or $p_i=2$ and $n_i\geq 3,$
or $p_i=2,$ $n_i=2$ and $p_j=3,$ then $|(\mathbb{Z}/d)^\times |\geq 3.$ On the other hand, we directly check that $|(\mathbb{Z}/d)^\times |\leq 2$ for $d=2,3,4,6.$
\end{proof}

\begin{prop}
\label{prop_counterexample_M[d]}
Given an integer $d=5$ or $d\geq 7.$ The set of $\mathbb{Z}/d$-homology $3$-spheres, $\mathcal{S}^3(d),$ and the set of $\mathbb{Z}/d$-homology $3$-spheres which are constructed as a Heegaard splitting with gluing map an element of the mod $d$ Torelli group, $\mathcal{S}^3[d],$ do not coincide.
\end{prop}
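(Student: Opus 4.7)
The plan is to exhibit, for each integer $d=5$ or $d\ge 7$, an explicit $\mathbb{Z}/d$-homology $3$-sphere $M$ that fails the divisibility criterion of Theorem \ref{teo_rat_homology_gen}, hence cannot belong to $\mathcal{S}^3[d]$. The natural source of examples is the family of lens spaces, since their first homology is cyclic and easy to control.

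First, I would invoke Lemma \ref{lem_inv_mod_d}: because $d\ne 2,3,4,6$, the group of units $(\mathbb{Z}/d)^\times$ contains an element other than $\pm 1$. Choose an integer $n\ge 2$ whose class in $\mathbb{Z}/d$ is such a unit; thus $\gcd(n,d)=1$ and $n\not\equiv \pm 1 \pmod{d}$ (for instance $n=2$ works for every odd $d\ge 5$, and $n=3$ works for $d=8$). Then set $M=L(n,1)$.

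Next, I would verify that $M\in\mathcal{S}^3(d)$. Since $H_1(L(n,1);\mathbb{Z})\cong\mathbb{Z}/n$ and $\gcd(n,d)=1$, the universal coefficient theorem gives
\[
H_1(M;\mathbb{Z}/d)\cong \mathbb{Z}/n\otimes\mathbb{Z}/d \;\oplus\; \mathrm{Tor}(\mathbb{Z},\mathbb{Z}/d)=0,
\]
and Poincaré duality with $\mathbb{Z}/d$ coefficients then forces $H_2(M;\mathbb{Z}/d)=0$ as well, while $H_0$ and $H_3$ are $\mathbb{Z}/d$. So $M$ is indeed a $\mathbb{Z}/d$-homology $3$-sphere.

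Finally, I would conclude using Theorem \ref{teo_rat_homology_gen}: since $|H_1(M;\mathbb{Z})|=n$ and $n\not\equiv\pm 1 \pmod d$, the divisibility condition $d\mid n\pm 1$ fails, so $M\notin\mathcal{S}^3[d]$. This shows $\mathcal{S}^3[d]\subsetneq \mathcal{S}^3(d)$, as claimed. There is no real obstacle here: the work is already carried by Theorem \ref{teo_rat_homology_gen} and Lemma \ref{lem_inv_mod_d}; the proof reduces to choosing the right lens space, so the argument should be quite short.
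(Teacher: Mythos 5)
Your proof is correct. It takes a slightly different but arguably more concrete route than the paper. The paper, after invoking Lemma \eqref{lem_inv_mod_d} to get a unit $u\in(\mathbb{Z}/d)^\times$ with $u\neq\pm 1$, uses the surjectivity of $\Psi_d$ to lift the block matrix $\left(\begin{smallmatrix}G & 0\\ 0 & {}^tG^{-1}\end{smallmatrix}\right)$ with $G=\mathrm{diag}(u,1,\dots,1)$ to some $f\in\mathcal{M}_{g,1}$, then takes $M=\mathcal{H}_g\cup_{\iota_g f}-\mathcal{H}_g$; it checks $M\in\mathcal{S}^3(d)$ by the Mayer--Vietoris computation of Proposition \eqref{prop_homology_modp} and that $M\notin\mathcal{S}^3[d]$ by combining Lemma \eqref{lema_n_detH} with Theorem \eqref{teo_rat_homology_gen}. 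You instead pick an integer $n$ coprime to $d$ with $n\not\equiv\pm 1\pmod d$ and take the lens space $L(n,1)$, verifying $H_1(L(n,1);\mathbb{Z}/d)=0$ directly by universal coefficients and Poincar\'e duality. Your approach buys explicitness (a named genus-one manifold rather than an unspecified lift of a symplectic matrix) and avoids any appeal to the surjectivity of $\Psi_d$; the paper's version instead stays entirely within the Heegaard-splitting/mapping-class framework already developed, which keeps the proof uniform with the surrounding material. Notably, the paper does prove the lens-space criterion you are implicitly using, but only later in the chapter on the mod~$d$ Torelli group, so your argument would require moving that observation forward. Both approaches hinge on the same two inputs: Lemma \eqref{lem_inv_mod_d} and the criterion of Theorem \eqref{teo_rat_homology_gen}.
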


\begin{proof}
We construct a $\mathbb{Q}$-homology $3$-sphere $M\in \mathcal{S}^3(d),$ which does not belong to $\mathcal{S}^3[d].$
By Lemma \eqref{lem_inv_mod_d} we know that, for $d=5$ and for every $d\geq 7,$ there exists an element $u\in (\mathbb{Z}/d)^\times$ with $u$ different from $\pm 1.$
Fix such an element $u\in (\mathbb{Z}/d)^\times.$

Consider the matrix
$\left(\begin{smallmatrix}
G & 0 \\
0 & {}^tG^{-1}
\end{smallmatrix}\right)\in Sp_{2g}(\mathbb{Z}/d),$
with $G\in GL_g(\mathbb{Z}/d)$ given by
$$G=\left(\begin{matrix}
u & 0 \\
0 & Id
\end{matrix}\right).$$
Since $\Psi_d$ is surjective, we know that there is an element $f\in \mathcal{M}_{g,1}$ satisfying that
$$\Psi_d(f)=\left(\begin{matrix}
G & 0 \\
0 & {}^tG^{-1}
\end{matrix}\right).$$
Consider the $3$-manifold $M$ given by the Heegaard splitting $\mathcal{H}_g\cup_{\iota_gf} -\mathcal{H}_g.$

Following the argument of the proof of Proposition \eqref{prop_homology_modp} with $f$ defined as before, one gets that $M$ is a $\mathbb{Z}/d$-homology $3$-sphere, i.e. $M\in \mathcal{S}^3(d).$

On the other hand, by Lemma \eqref{lema_n_detH}, we have that $\pm n=\pm |H_1(M;\mathbb{Z})|=det({}^tG^{-1})=u^{-1}\neq \pm 1.$
Then, by Theorem \eqref{teo_rat_homology_gen}, $M$ is not in $\mathcal{S}^3[d].$
\end{proof}

\begin{prop}
Let $d=2,3,4,6.$ If $M$ is a $\mathbb{Z}/d$-homology $3$-sphere, then $M$ has a Heegaard splitting $\mathcal{H}_g\cup_{\iota_gf} -\mathcal{H}_g$ of some genus $g$ with gluing map $f\in \mathcal{M}_{g,1}[d].$
\end{prop}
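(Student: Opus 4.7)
The plan is to reduce the statement to Theorem \ref{teo_rat_homology_gen} via an arithmetic condition on $n=|H_1(M;\mathbb{Z})|$. By that theorem, $M$ lies in $\mathcal{S}^3[d]$ if and only if $n\equiv \pm 1 \pmod d$, so it suffices to prove that whenever $M$ is a $\mathbb{Z}/d$-homology $3$-sphere and $d\in\{2,3,4,6\}$, one has $n\equiv \pm 1 \pmod d$.

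First I would observe that a $\mathbb{Z}/d$-homology $3$-sphere is in particular a $\mathbb{Q}$-homology $3$-sphere: from the universal coefficients theorem, $H_1(M;\mathbb{Z}/d)\cong H_1(M;\mathbb{Z})\otimes \mathbb{Z}/d$ (the $\operatorname{Tor}$ term with $H_0(M;\mathbb{Z})=\mathbb{Z}$ vanishes), so vanishing of $H_1(M;\mathbb{Z}/d)$ forces $H_1(M;\mathbb{Z})$ to have no free summand, hence to be finite. Thus $n=|H_1(M;\mathbb{Z})|$ is defined and finite.

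Next, since $H_1(M;\mathbb{Z})\otimes\mathbb{Z}/d=0$, multiplication by $d$ is surjective on the finite abelian group $H_1(M;\mathbb{Z})$, hence bijective, which forces $\gcd(d,n)=1$. In particular $n$ represents a unit of $\mathbb{Z}/d$. Now invoke Lemma \ref{lem_inv_mod_d}: for $d\in\{2,3,4,6\}$ the group of units of $\mathbb{Z}/d$ is contained in $\{1,-1\}$. Therefore $n\equiv 1 \pmod d$ or $n\equiv -1 \pmod d$, i.e.\ $d\mid n-1$ or $d\mid n+1$.

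The conclusion follows immediately by applying Theorem \ref{teo_rat_homology_gen}: $M$ admits a Heegaard splitting of some genus $g$ whose gluing map lies in $\mathcal{M}_{g,1}[d]$. No genuine obstacle is expected here, since all the work has been front-loaded into Theorem \ref{teo_rat_homology_gen} and Lemma \ref{lem_inv_mod_d}; the only point to watch is the correct application of the universal coefficients theorem to extract the coprimality condition $\gcd(d,n)=1$ from the hypothesis $H_1(M;\mathbb{Z}/d)=0$.
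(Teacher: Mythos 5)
Your proposal is correct and follows the same route as the paper: both reduce to showing $\gcd(d,n)=1$ so that $n$ is a unit of $\mathbb{Z}/d$, then invoke Lemma \eqref{lem_inv_mod_d} to get $n\equiv\pm1\pmod d$ and conclude via Theorem \eqref{teo_rat_homology_gen}. The only cosmetic difference is that you justify $\gcd(d,n)=1$ by noting that multiplication by $d$ is bijective on the finite group $H_1(M;\mathbb{Z})$, while the paper simply asserts it and then uses Bézout to place $n$ in $(\mathbb{Z}/d)^\times$.
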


\begin{proof}
Let $d=2,3,4,6,$ $M$ a $\mathbb{Z}/d$-homology $3$-sphere and $n=|H_1(M;\mathbb{Z})|.$ We show that $n\equiv \pm 1 \;(\text{mod }d)$ and by Theorem \eqref{teo_rat_homology_gen} we will get the result.

Since $M$ is a $\mathbb{Z}/d$-homology $3$-sphere, we have that $gcd(d,n)=1.$
Then, by Bezout's identity, there exist integers $\lambda_1,\lambda_2$ such that $\lambda_1d+\lambda_2n=1.$
As a consequence, $\lambda_2 n\equiv 1 \;(\text{mod }d).$
Hence, $n\in (\mathbb{Z}/d)^\times.$
Finally, by Lemma \eqref{lem_inv_mod_d}, one gets that $|(\mathbb{Z}/d)^\times|\leq 2$ for $d=2,3,4,6,$ as desired.
\end{proof}

As a consequence of Proposition \eqref{prop_homology_modp} and Theorem
\eqref{bij_MCG_3man} we get the following result:

\begin{teo}
\label{teo_coresp_Zd-HS}
The following map is well defined and bijective:
\begin{align*}
\lim_{g\to \infty}\mathcal{A}_{g,1}\backslash\mathcal{M}_{g,1}[d]/\mathcal{B}_{g,1} & \longrightarrow \mathcal{S}^3[d], \\
\phi & \longmapsto S^3_\phi=\mathcal{H}_g \cup_{\iota_g\phi} -\mathcal{H}_g.
\end{align*}
\end{teo}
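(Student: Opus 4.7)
The plan is to deduce this statement as a direct restriction of Singer's theorem (Theorem \ref{bij_MCG_3man}) to elements coming from $\mathcal{M}_{g,1}[d]$, using Proposition \ref{prop_homology_modp} to certify that the image lands inside $\mathcal{S}^3[d]$. There are three items to verify: well-definedness on the stable double coset, surjectivity onto $\mathcal{S}^3[d]$, and injectivity. None of these should present any real obstacle, because the hard work is already packaged in Theorem \ref{bij_MCG_3man} and Proposition \ref{prop_homology_modp}; the only mild subtlety is that $\mathcal{A}_{g,1}$ and $\mathcal{B}_{g,1}$ are not subgroups of $\mathcal{M}_{g,1}[d]$, so one must be careful about how to interpret the notation $\mathcal{A}_{g,1}\backslash\mathcal{M}_{g,1}[d]/\mathcal{B}_{g,1}$.

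For well-definedness, I would first fix the meaning of the quotient: it denotes $\mathcal{M}_{g,1}[d]$ modulo the equivalence relation $\phi\sim\psi$ iff $\psi=\xi_a\phi\xi_b$ for some $\xi_a\in\mathcal{A}_{g,1}$, $\xi_b\in\mathcal{B}_{g,1}$. A short check (using that $\mathcal{A}_{g,1}$ and $\mathcal{B}_{g,1}$ are subgroups of $\mathcal{M}_{g,1}$) shows this is indeed an equivalence relation on $\mathcal{M}_{g,1}[d]$, and it is compatible with the stabilization since $\iota_g$, $\mathcal{A}_{g,1}$, $\mathcal{B}_{g,1}$ and $\mathcal{M}_{g,1}[d]$ are all stable under $i_{g,g+1}$. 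That the assignment $\phi\mapsto S^3_\phi$ is constant on each equivalence class follows immediately from Singer's theorem applied inside the full mapping class group $\mathcal{M}_{g,1}$. Finally, Proposition \ref{prop_homology_modp} tells us that $S^3_\phi\in\mathcal{S}^3[d]$ whenever $\phi\in\mathcal{M}_{g,1}[d]$.

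Surjectivity is built into the definition of $\mathcal{S}^3[d]$. For injectivity, suppose $\phi,\psi\in\mathcal{M}_{g,1}[d]$ satisfy $S^3_\phi\cong S^3_\psi$. Applying Singer's theorem to $\phi,\psi$ regarded as elements of $\mathcal{M}_{g,1}$ yields a common genus $h\geq g$ and elements $\xi_a\in\mathcal{A}_{h,1}$, $\xi_b\in\mathcal{B}_{h,1}$ with
\[
\xi_a\cdot i_{g,h}(\phi)\cdot\xi_b = i_{g,h}(\psi) \qquad \text{in } \mathcal{M}_{h,1}.
\]
Since $i_{g,h}(\phi)$ and $i_{g,h}(\psi)$ both belong to $\mathcal{M}_{h,1}[d]$, this same identity exhibits them as equivalent inside $\mathcal{A}_{h,1}\backslash\mathcal{M}_{h,1}[d]/\mathcal{B}_{h,1}$, and therefore in the direct limit. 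Combining these three verifications produces the claimed bijection.
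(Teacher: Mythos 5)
Your proof is correct and follows exactly the route the paper takes: the paper states the theorem as an immediate consequence of Proposition \eqref{prop_homology_modp} (which guarantees the image lands in $\mathcal{S}^3[d]$) and Singer's Theorem \eqref{bij_MCG_3man} (which supplies well-definedness and injectivity by restriction). You have simply spelled out the standard verification that the paper leaves implicit.
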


From the group-theoretical point of view, the induced equivalence relation on $\mathcal{M}_{g,1}[d],$ which is given by:
\begin{equation}
\phi \sim \psi \quad\Leftrightarrow \quad \exists \zeta_a \in \mathcal{A}_{g,1}\;\exists \zeta_b \in \mathcal{B}_{g,1} \quad \text{such that} \quad \zeta_a \phi \zeta_b=\psi,
\end{equation}
is quite unsatisfactory. However,
as in the integral case, Lemma 4 in \cite{pitsch}, we can rewrite this equivalence relation as follows:
\begin{lema}
\label{lema_eqiv_coset[d]}
Two maps $\phi, \psi \in \mathcal{M}_{g,1}[d]$ are equivalent if and only if there exists a map $\mu \in \mathcal{AB}_{g,1}$ and two maps $\xi_a\in \mathcal{A}_{g,1}[d]$ and $\xi_b\in\mathcal{B}_{g,1}[d]$ such that $\phi=\mu \xi_a\psi \xi_b\mu^{-1}.$
\end{lema}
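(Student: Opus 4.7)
The plan is to prove the two implications separately. The \emph{``if''} direction is formal: given $\phi = \mu \xi_a \psi \xi_b \mu^{-1}$ with $\mu \in \mathcal{AB}_{g,1}$, $\xi_a \in \mathcal{A}_{g,1}[d]$, $\xi_b \in \mathcal{B}_{g,1}[d]$, the inclusions $\mathcal{AB}_{g,1} \subseteq \mathcal{A}_{g,1} \cap \mathcal{B}_{g,1}$ (which follows from $\mathcal{AB}_{g,1}=\mathcal{A}_{g,1}\cap\mathcal{B}_{g,1}$) and $\mathcal{A}_{g,1}[d] \subseteq \mathcal{A}_{g,1}$, $\mathcal{B}_{g,1}[d] \subseteq \mathcal{B}_{g,1}$ give $\mu\xi_a \in \mathcal{A}_{g,1}$ and $\xi_b\mu^{-1} \in \mathcal{B}_{g,1}$, which is precisely the definition of $\phi \sim \psi$.

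For the \emph{``only if''} direction, I would start from an equality $\zeta_a \phi \zeta_b = \psi$ with $\zeta_a \in \mathcal{A}_{g,1}$ and $\zeta_b \in \mathcal{B}_{g,1}$ and aim to factor $\zeta_a^{-1} = \mu \xi_a$ and $\zeta_b^{-1} = \xi_b \mu^{-1}$. Applying $\Psi_d$ and using $\Psi_d(\phi) = \Psi_d(\psi) = \mathrm{Id}$ yields the key constraint
\[
\Psi_d(\zeta_a)\,\Psi_d(\zeta_b) = \mathrm{Id},
\]
so $\Psi_d(\zeta_a^{-1}) = \Psi_d(\zeta_b)$ lies in $Sp^{A\pm}_{2g}(\mathbb{Z}/d) \cap Sp^{B\pm}_{2g}(\mathbb{Z}/d)$. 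Comparing the explicit block forms
\[
\begin{pmatrix} H & N \\ 0 & {}^tH^{-1}\end{pmatrix} = \begin{pmatrix} G & 0 \\ M & {}^tG^{-1}\end{pmatrix}
\]
forces $N=M=0$ and $H=G$, so this intersection is exactly the block-diagonal subgroup $Sp^{AB\pm}_{2g}(\mathbb{Z}/d)$, identified via $\phi^{AB}$ with $SL^{\pm}_g(\mathbb{Z}/d)$.

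The next step is to realize this common image by an element of $\mathcal{AB}_{g,1}$. By the stable surjectivity $\Psi_d \colon \mathcal{AB}_{\infty,1} \twoheadrightarrow SL^{\pm}(\mathbb{Z}/d)$ of Lemma \eqref{lem_B[d]_3_stab} (available already at genus $g$ when $d$ is prime by Lemma \eqref{lem_B[p]}), after stabilizing to a sufficiently large genus $h \geq g$ I may pick $\mu \in \mathcal{AB}_{h,1}$ with $\Psi_d(\mu) = \Psi_d(\zeta_a^{-1})$. Setting
\[
\xi_a := \mu^{-1}\zeta_a^{-1} \in \mathcal{A}_{h,1}, \qquad \xi_b := \zeta_b^{-1}\mu \in \mathcal{B}_{h,1},
\]
a direct check gives $\Psi_d(\xi_a) = \Psi_d(\mu)^{-1}\Psi_d(\zeta_a)^{-1} = \Psi_d(\zeta_a)\Psi_d(\zeta_a)^{-1} = \mathrm{Id}$ and $\Psi_d(\xi_b) = \Psi_d(\zeta_b)^{-1}\Psi_d(\mu) = \Psi_d(\zeta_a)\Psi_d(\zeta_a)^{-1} = \mathrm{Id}$, so $\xi_a \in \mathcal{A}_{h,1}[d]$ and $\xi_b \in \mathcal{B}_{h,1}[d]$. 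Substituting back yields $\phi = \zeta_a^{-1}\psi \zeta_b^{-1} = \mu \xi_a \psi \xi_b \mu^{-1}$, as required.

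The main obstacle is this lifting step: at fixed finite genus with $d$ non-prime a finite-genus analogue of Lemma \eqref{lem_B[p]} is not available (the surjectivity of $\Psi_d$ on $\mathcal{AB}_{g,1}$ depends on the stable $K$-theoretic input used in the proof of Lemma \eqref{lem_SL[d]_1_stab} via Corollary \eqref{cor_SK1_d}), so one is forced to pass to the stable range. This is consistent with the lemma's intended use in support of the stable bijection of Theorem \eqref{teo_coresp_Zd-HS}, where the direct limit $g \to \infty$ is taken.
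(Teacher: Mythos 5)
Your ``if'' direction is fine, and your computation with block forms is correct as far as it goes, but your ``only if'' direction takes a wrong turn that creates the very obstacle you lament at the end, and as a consequence your argument only proves a stabilized version of the lemma, not the lemma as stated at fixed genus $g.$

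The misstep is that you reduce modulo $d$ \emph{before} trying to lift to $\mathcal{AB}_{g,1}.$ Having passed to $\Psi_d(\zeta_a^{-1}) \in SL^\pm_g(\mathbb{Z}/d),$ you are forced to invoke surjectivity of $\Psi_d\colon \mathcal{AB}_{g,1}\to SL^\pm_g(\mathbb{Z}/d),$ which for non-prime $d$ is only available stably (Lemma \eqref{lem_B[d]_3_stab}), so you must enlarge the genus. But this is unnecessary. Since $\zeta_a \in \mathcal{A}_{g,1},$ the \emph{integral} symplectic representation already gives $\Psi(\zeta_a) = \left(\begin{smallmatrix} H & N \\ 0 & {}^tH^{-1}\end{smallmatrix}\right)$ with $H \in GL_g(\mathbb{Z})$ an honest integer matrix. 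Lemma \eqref{lem_AB} gives the short exact sequence $1 \to \mathcal{TAB}_{g,1} \to \mathcal{AB}_{g,1} \to GL_g(\mathbb{Z}) \to 1$ at fixed genus $g,$ so one can choose $\mu \in \mathcal{AB}_{g,1}$ with $\Psi(\mu) = \left(\begin{smallmatrix} H & 0 \\ 0 & {}^tH^{-1}\end{smallmatrix}\right)$ \emph{exactly} over $\mathbb{Z}$ --- no lifting from $\mathbb{Z}/d$ and no stabilization required. The mod-$d$ constraints you derived ($N\equiv M\equiv 0$ and $G\equiv H^{-1}$ modulo $d$) then show directly that $\mu^{-1}\xi_a \in \mathcal{A}_{g,1}[d]$ and $\xi_b\mu \in \mathcal{B}_{g,1}[d],$ which completes the proof at genus $g.$ In short: the mod-$d$ reduction should be used only as a constraint on the integer matrices, not as the thing to be lifted.
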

\begin{proof}
The proof of this Lemma is analogous to the proof of Lemma $4$ in \cite{pitsch} due to W. Pitsch.
The ''if'' part of the Lemma is trivial. Conversely, assume that $\psi=\xi_a \phi \xi_b,$ where $\psi, \phi \in \mathcal{M}_{g,1}[d].$ Applying the symplectic representation modulo $d$ $\Psi_d$ to this equality we get
$$Id=\Psi_d(\xi_a)\Psi_d(\xi_b) \; (\text{mod } d).$$
By section \eqref{homol_homoto_actions} , $\Psi(\xi_b)$ is of the form
\begin{equation*}
\left(\begin{matrix}
G & 0 \\
M & {^t}G^{-1}
\end{matrix}\right)\qquad \text{with}\; G\in GL_g(\mathbb{Z})\; \text{and}\; {}^tGM\in S_g(\mathbb{Z}).
\end{equation*}
Similarly, $\Psi(\xi_a)$ is of the form
\begin{equation*}
\left(\begin{matrix}
H & N \\
0 & {^t}H^{-1}
\end{matrix}\right) \qquad \text{with}\; H\in GL_g(\mathbb{Z})\; \text{and}\; H^{-1}N\in S_g(\mathbb{Z}).
\end{equation*}
Therefore,
\begin{equation*}
\left(\begin{matrix}
Id & 0 \\
0 & Id
\end{matrix}\right)\equiv \left(\begin{matrix}
H & N \\
0 & {^t}H^{-1}
\end{matrix}\right)
\left(\begin{matrix}
G & 0 \\
M & {^t}G^{-1}
\end{matrix}\right)=
\left(\begin{matrix}
HG+NM & N^tG^{-1} \\
{}^tH^{-1}M & {^t}(HG)^{-1}
\end{matrix}\right) \; (\text{mod } d).
\end{equation*}
Thus $N\equiv 0 \equiv M\; (\text{mod } d)$ and $G\equiv H^{-1}\; (\text{mod } d).$

By Lemma \eqref{lem_AB}, we can choose a map $\mu \in \mathcal{AB}_{g,1}$ such that
$$\Psi(\mu)=\left(\begin{matrix}
H & 0 \\
0 & {^t}H^{-1}
\end{matrix}\right)$$

Since $N\equiv 0 \equiv M\; (\text{mod } d)$ and $G\equiv H^{-1}\; (\text{mod } d),$ we have that
\begin{align*}
\Psi_d(\mu^{-1}\xi_a)= & \left(\begin{matrix}
H^{-1} & 0 \\
0 & {^t}H
\end{matrix}\right)
\left(\begin{matrix}
H & N \\
0 & {^t}H^{-1}
\end{matrix}\right)=\left(\begin{matrix}
Id & H^{-1}N \\
0 & Id
\end{matrix}\right)\equiv Id \; (\text{mod } d). \\
\Psi_d(\xi_b\mu)= &
\left(\begin{matrix}
G & 0 \\
M & {^t}G^{-1}
\end{matrix}\right)
\left(\begin{matrix}
H & 0 \\
0 & {^t}H^{-1}
\end{matrix}\right)=\left(\begin{matrix}
GH & 0 \\
MH & {^t}(GH)^{-1}
\end{matrix}\right)\equiv Id \; (\text{mod } d).
\end{align*}
Therefore we get
$$\psi=\mu \circ (\mu^{-1}\xi_a)\phi(\xi_b \mu)\circ \mu^{-1},$$
where $(\mu^{-1}\xi_a)\in \mathcal{A}_{g,1}[d],$ $(\xi_b\mu)\in \mathcal{B}_{g,1}[d]$ and $\mu\in \mathcal{AB}_{g,1}$ as desired.
\end{proof}

To summarize, we get the following bijective map:
\begin{equation}
\label{bij_M[p]}
\begin{aligned}
\lim_{g\to \infty}\left(\mathcal{A}_{g,1}[d]\backslash\mathcal{M}_{g,1}[d]/\mathcal{B}_{g,1}[d]\right)_{\mathcal{AB}_{g,1}} & \longrightarrow \mathcal{S}^3[d], \\
\phi & \longmapsto S^3_\phi=\mathcal{H}_g \cup_{\iota_g\phi} -\mathcal{H}_g.
\end{aligned}
\end{equation}
The bijection \eqref{bij_M[p]} will play an important role in this thesis.
As we will see in the following sections, it allows us to relate the structure of invariants of $\mathcal{S}^3[d],$ with the algebraic structure of the group $\mathcal{M}_{g,1}[d].$

As we have seen in this chapter, for $d\neq 2,3,4,6$ the set of $\mathbb{Z}/d$-homology $3$-spheres, $\mathcal{S}^3(d),$ and the set of $\mathbb{Z}/d$-homology $3$-spheres which are constructed as a Heegaard splitting with gluing map an element of the mod $d$ Torelli group, $\mathcal{S}^3[d],$ do not coincide.
Then a natural question which arise from this chapter is:

What is the difference between $\mathcal{S}^3(d)$ and $\mathcal{S}^3[d]$ for $d\neq 2,3,4,6$ ?


\chapter{Automorphisms of descending mod-p central series}
\label{chapter: versal ext}

Let $\Gamma$ be a free group of finite rank and $\{ \Gamma_k^\bullet\}_k$ be the Stallings or Zassenhaus $p$-central series and
$\mathcal{N}_{k}^\bullet=\Gamma/\Gamma_{k+1}^\bullet.$ In this chapter, we compare the automorphisms of the group $\mathcal{N}^\bullet_k.$ To be more precise, we show that there is a well-defined homomorphism
$\psi_k^\bullet:Aut(\mathcal{N}_{k+1}^\bullet)\rightarrow Aut(\mathcal{N}_{k}^\bullet)$
that, using the theory of versal extensions mod $p,$ which is a mod $p$ version of the versal extensions given in \cite{pitsch2}, fits into a non central extension
\begin{equation}
\label{vers_ext_modp_intro}
\xymatrix@C=7mm@R=10mm{0 \ar@{->}[r] & Hom(\mathcal{N}^\bullet_1, \mathcal{L}^\bullet_{k+1}) \ar@{->}[r]^-{i} & Aut\;\mathcal{N}^\bullet_{k+1} \ar@{->}[r]^-{\psi_k^\bullet} & Aut \;\mathcal{N}^\bullet_k \ar@{->}[r] & 1.}
\end{equation}
In addition, we show that such extension does not split for $k\geq 2$ and we study its splitability for $k=1$ using the computations of $H^2(SL_{n}(\mathbb{Z}/p);\mathfrak{sl}_n(\mathbb{Z}/p)).$

As a starting point, in the first section, we recall some definitions about central series and mod $p$ central series with some examples. Moreover we give the notion of $p$-nilpotent group.
In the second section, we give the notion of a versal extension modulo $p$ and we develop some theory about it.
In the third section, we give some technical results that we will need in the last section.
Finally, in the last section, we study the extensions \eqref{vers_ext_modp_intro}.

\section{On $p$-central series and $p$-nilpotent groups}
\label{section: mod-p series}

\begin{defi}
A central series is a sequence of subgroups
$$ \{1\} \triangleleft A_n \triangleleft \ldots \triangleleft A_2
\triangleleft A_1 =G$$
such that $[A_j,G]\subseteq  A_{j+1}.$ Equivalently, $A_j/A_{j+1}$ is central in $G/A_{j+1}.$
\end{defi}

\begin{defi}
We say that a central series $\{G_k\}_k$ is the fastest descending series respect to a property $\mathcal{P}$ if for every central series $\{H_k\}_k$ satisfying $\mathcal{P},$ we have that $G_k\subseteq H_k$ for every $k.$
\end{defi}

\begin{defi}
Two series $\{G_i\}$ and $\{H_i\}$ are cofinal if for every natural number $n$ there exists a natural number $m$ such that $G_m<H_n,$ and for every natural number $l$ there is some natural number $k$ such that $H_k< G_l.$
\end{defi}

\begin{defi}[Lower central series]
The lower central series $\{G_k\}_k$ of a group $G$ is defined recursively by
$G_1=G$ and $G_k=[G,G_{k-1}].$
\end{defi}

Next we give the ''mod $p$ version'' of central series.

\begin{defi}
A $p$-central series is a central series whose successive quotients $A_i /A_{i+1}$ are $p$-elementary abelian groups, i.e. abelian groups in which every nontrivial element has order $p.$
\end{defi}

There are many $p$-central series, but the most important are the Zassenhaus mod-$p$ central series and the Sallings mod-$p$ central series which are defined as follows:

\begin{defi}[Zassenhaus]
The Zassenhaus mod-$p$ central series $\{G_k^Z\}$ for $G$ is defined by the rule:
$$G_k^Z=\prod_{ip^j\geq k} (G_i)^{p^j},$$
where $G_i$ denotes the $i^{th}$ term of the lower central series of $G.$

This series is the fastest descending series with respect to the properties:
\begin{itemize}
\item $[G^Z_k,G^Z_l]< G^Z_{k+l},$
\item $(G_k^Z)^p < G^Z_{pk}.$
\end{itemize}
\end{defi}

\begin{defi}[Stallings]
The Stallings mod-$p$ central series (also known as the lower $p$-central series) $\{G^S_k\}$ for $G$ is defined recursively by setting
$$G^S_1=G \quad \text{ and }\quad G^S_k=[G,G_{k-1}^S](G_{k-1}^S)^p.$$
This series is the fastest descending series with respect to the properties:
\begin{itemize}
\item $[G_k^S, G^S_l]< G^S_{k+l},$
\item $(G^S_k)^p < G^S_{k+1}.$
\end{itemize}
\end{defi}

Even though the aforementioned mod-$p$ central series are distinct, we have the following result, which will allow us to compare them.

\begin{prop}[Proposition 2.6 in \cite{coop}]
\label{prop_cofinal}
Given a prime number $p.$ The Stallings and Zassenhaus mod-$p$ central series are cofinal.
More precisely, for every positive integer $l,$
$$G^Z_{p^l}<G^S_l<G^Z_l.$$
\end{prop}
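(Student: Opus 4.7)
The plan is to prove the two inclusions $G^S_l < G^Z_l$ and $G^Z_{p^l} < G^S_l$ separately, in each case arguing from the defining closure properties of the respective series. No deep input is required; only the fastest-descending characterizations of $\{G^S_k\}$ and $\{G^Z_k\}$, together with one elementary numerical inequality.

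For $G^S_l < G^Z_l$ I would induct on $l$. The case $l=1$ is trivial since both sides equal $G$. For the inductive step, the definition
\[
G^S_l = [G, G^S_{l-1}]\,(G^S_{l-1})^p
\]
combined with $G^S_{l-1} < G^Z_{l-1}$ reduces the problem to showing $[G^Z_1, G^Z_{l-1}] < G^Z_l$ and $(G^Z_{l-1})^p < G^Z_l$. The first is the commutator closure property of the Zassenhaus series. The second follows from $(G^Z_{l-1})^p < G^Z_{p(l-1)}$ together with the monotonicity $G^Z_{p(l-1)} \subset G^Z_l$, which holds for $l\geq 2$.

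For $G^Z_{p^l} < G^S_l$ I would argue directly on generators rather than by induction on $l$. Since $G^Z_{p^l}$ is generated by the subgroups $(G_i)^{p^j}$ with $ip^j \geq p^l$, it suffices to place each such generator inside $G^S_l$. First I would record two auxiliary inclusions: $G_i \subset G^S_i$, by induction on $i$ from $G_i = [G, G_{i-1}] \subset [G, G^S_{i-1}] \subset G^S_i$; and $(G^S_i)^{p^j} \subset G^S_{i+j}$, by induction on $j$ from the defining property $(G^S_k)^p \subset G^S_{k+1}$. Combining them gives $(G_i)^{p^j} \subset G^S_{i+j}$, so the inclusion in $G^S_l$ follows from the elementary bound $ip^j \geq p^l \Rightarrow i+j \geq l$, which one checks by splitting into the cases $j\geq l$ (trivial) and $j < l$ (where $i\geq p^{l-j} \geq l-j+1$ for every prime $p\geq 2$, by a one-line induction on $l-j$).

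\textbf{Main obstacle.} There is no serious technical obstacle; the only delicate point is purely arithmetic, namely to reconcile the multiplicative weight $ip^j$ used to define $G^Z_k$ with the additive weight $i+j$ implicit in the two closure properties $(G^S_k)^p < G^S_{k+1}$ and $[G^S_k, G^S_m] < G^S_{k+m}$ of the Stallings series. Once that numerical bound is in place, the two inclusions combine to give the cofinality statement $G^Z_{p^l} < G^S_l < G^Z_l$, and the explicit form of the indices makes the precise comparison between the two filtrations entirely transparent.
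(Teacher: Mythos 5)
Your argument is correct, and it is the natural direct argument from the two defining closure properties of each filtration; the thesis itself does not reprove this statement but cites Cooper, and your derivation is exactly the kind of elementary verification that remark is pointing to. The one place worth double-checking, and where you did get it right, is the chain $(G_i)^{p^j}\subset(G^S_i)^{p^j}\subset G^S_{i+j}$ combined with the arithmetic implication $ip^j\geq p^l\Rightarrow i+j\geq l$ (your estimate even gives $i+j\geq l+1$, which is harmless); everything else follows mechanically.
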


\begin{rem}
In \cite{coop} J. Cooper proved Proposition \eqref{prop_cofinal} for $G$ a free group. The same proof gives the result for any group $G.$
\end{rem}

Throughout this chapter we denote by $\Gamma$ a free group of finite rank and we set:
$$\mathcal{N}^\bullet_k=\Gamma/\Gamma^\bullet_{k+1},\qquad \mathcal{L}^\bullet_k=\Gamma^\bullet_k/\Gamma^\bullet_{k+1},$$
$$\widetilde{\mathcal{N}_{k}^\bullet}=\frac{\Gamma}{[\Gamma,\Gamma^\bullet_{k}](\Gamma^\bullet_{k})^p}, \qquad \widetilde{\mathcal{L}_{k}^\bullet}=\frac{\Gamma^\bullet_{k}}{[\Gamma,\Gamma^\bullet_{k}](\Gamma^\bullet_{k})^p},$$
where $\bullet=S \text{ or } Z.$

\begin{defi}
A group $G$ is \textit{$p$-nilpotent} if the corresponding
lower $p$-central series has finite length.
Notice that the groups $\mathcal{N}^\bullet_k$ with $\bullet=Z,S$ and $k$ a positive integer, are $p$-nilpotent groups and, in particular, $p$-groups.
\end{defi}

\begin{defi}
Let $G$ be a group and $d(G)$ the minimal cardinality of a generating set of $G.$ \textit{The Frattini subgroup $\Phi(G)$} is defined as the intersection of the set of all maximal subgroups of $G.$
\end{defi}

\begin{prop}[Proposition 1.2.4 in \cite{leed}]
Let $G$ be a finite $p$-group. Then the Frattini subgroup
$\Phi(G)=[G,G]G^p$ and $d(G)$ coincides with the rank of the abelian $p$-elementary group $G/\Phi(G).$
\end{prop}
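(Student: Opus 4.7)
The plan is to split the statement into the two independent assertions and attack them in sequence, the first being a structural identification of $\Phi(G)$ and the second a counting statement (essentially Burnside's basis theorem) built on top of it. Throughout I would exploit the fact that $G$ is a finite $p$-group in two crucial ways: its maximal subgroups are normal of index $p$, and its $p$-elementary abelian quotients are finite-dimensional $\mathbb{F}_p$-vector spaces.

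First I would prove the auxiliary fact that every maximal subgroup $M$ of a finite $p$-group $G$ is normal of index $p$. The standard argument is: the normalizer $N_G(M)$ contains $M$ strictly, because in a nilpotent (in particular $p$-nilpotent in the classical sense) group the normalizer of a proper subgroup always grows, so by maximality $N_G(M) = G$, i.e.\ $M \triangleleft G$. Then $G/M$ is a simple $p$-group, hence cyclic of order $p$, so $G/M$ is abelian of exponent $p$. This forces $[G,G] \subseteq M$ and $G^p \subseteq M$, so $[G,G]G^p \subseteq M$ for every maximal $M$, giving the inclusion $[G,G]G^p \subseteq \Phi(G)$.

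For the reverse inclusion I would pass to the quotient $V := G/[G,G]G^p$, which is a finite-dimensional $\mathbb{F}_p$-vector space. Under the correspondence theorem the maximal subgroups of $G$ containing $[G,G]G^p$ correspond exactly to the hyperplanes of $V$, and by the previous paragraph every maximal subgroup of $G$ contains $[G,G]G^p$. Since the intersection of all hyperplanes in a finite-dimensional vector space is zero, I deduce $\Phi(G)/[G,G]G^p = 0$, so $\Phi(G) = [G,G]G^p$.

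For the second assertion I would establish the non-generator lemma: a subset $S \subseteq G$ generates $G$ if and only if its image in $G/\Phi(G)$ generates $G/\Phi(G)$. One direction is immediate; for the other, if $\langle S \rangle \Phi(G) = G$ but $\langle S \rangle \neq G$, then $\langle S \rangle$ would be contained in some maximal subgroup $M$, which also contains $\Phi(G)$, giving $\langle S\rangle \Phi(G) \subseteq M \subsetneq G$, a contradiction. Since $G/\Phi(G)$ is an elementary abelian $p$-group, its minimal number of generators coincides with its dimension over $\mathbb{F}_p$, so $d(G) = \dim_{\mathbb{F}_p}(G/\Phi(G))$. The main obstacle, and really the only nontrivial step, is the fact that maximal subgroups of a finite $p$-group are normal of index $p$; once this is in hand the rest is a linear-algebraic translation and a one-line application of the correspondence theorem.
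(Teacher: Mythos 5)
Your argument is correct and is the standard proof of Burnside's basis theorem; the paper itself states this proposition purely as a citation to Leedham-Green and McKay (Proposition 1.2.4) and gives no proof, so there is nothing in the paper to compare against. The chain you use — normalizer-growth in nilpotent groups forces maximal subgroups to be normal of index $p$, hence $[G,G]G^p$ lies in every maximal subgroup; the correspondence theorem identifies maximal subgroups above $[G,G]G^p$ with hyperplanes of the $\mathbb{F}_p$-vector space $G/[G,G]G^p$, whose common intersection is zero; then the non-generator property of $\Phi(G)$ reduces $d(G)$ to the dimension of $G/\Phi(G)$ — is exactly the textbook route and is complete. The only blemish is terminological: the parenthetical ``$p$-nilpotent in the classical sense'' is a misnomer, since ``$p$-nilpotent'' conventionally means possessing a normal $p$-complement, which is not the property you are invoking; you simply want that finite $p$-groups are nilpotent, and that nilpotent groups have the normalizer-growth property. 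Dropping that parenthetical makes the proof clean.
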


\section{Versal extensions modulo $p$}
Fix a prime $p.$ Let $G$ be a group such that $H_1(G;\mathbb{Z}/p)$ is a free $\mathbb{Z}/p$-module.
Applying the Universal coefficients Theorem to the group $G$ and the $\mathbb{Z}/p$-module $H_2(G;\mathbb{Z}/p)$ with trivial $G$-action, since $H_1(G;\mathbb{Z}/p)$ is a free $\mathbb{Z}/p$-module, we have an induced natural isomorphism:
\begin{equation}
\label{iso_uct_h2}
\eta: \xymatrix@C=7mm@R=10mm{ H^2(G;H_2(G;\mathbb{Z}/p))
\ar@{->}[r]^-{\sim} & Hom_{\mathbb{Z}/p}(H_2(G;\mathbb{Z}/p),H_2(G;\mathbb{Z}/p)).}
\end{equation}

\begin{defi}[Section 9.5 in \cite{leed}]
Let $G$ be a group. A \textit{$p$-covering of $G$} is an extension of the form
$$\xymatrix@C=7mm@R=10mm{ 0 \ar@{->}[r] &  H_2(G;\mathbb{Z}/p) \ar@{->}[r] & E \ar@{->}[r] & G \ar@{->}[r] & 0}.$$
In this case we say that $E$ is a $p$-covering group of $G.$
\end{defi}

Denote by $v_p\in H^2(G;H_2(G;\mathbb{Z}/p))$ the preimage of the identity by the isomorphism \eqref{iso_uct_h2} and by $$V_p \quad \xymatrix@C=7mm@R=10mm{ 0 \ar@{->}[r] &  H_2(G;\mathbb{Z}/p) \ar@{->}[r] & E \ar@{->}[r] & G \ar@{->}[r] & 0,}$$
a central extension with associated cohomology class $v_p.$

\begin{defi}
A central extension $\xymatrix@C=7mm@R=10mm{ 0 \ar@{->}[r] &  H_2(G;\mathbb{Z}/p) \ar@{->}[r] & E' \ar@{->}[r] & G \ar@{->}[r] & 0}$ is called a \textit{versal extension modulo $p$} if it is equivalent to $V_p.$
\end{defi}

\begin{lema}
\label{lema_eq_cocy}
Let $v_p\in H^2(G;H_2(G;\mathbb{Z}/p))$ denote the cohomology class given by the preimage of the identity by the isomorphism $\eta$ of \eqref{iso_uct_h2}. Then for each $\phi\in Aut(G),$ the following equality holds $(H_2(\phi; \mathbb{Z}/p))_*(v_p)=\phi^*(v_p).$

Here $H_2(\phi; \mathbb{Z}/p)$ denotes the element of $Aut(H_2(\phi; \mathbb{Z}/p))$ induced by $\phi.$
\end{lema}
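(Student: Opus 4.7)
The strategy is to exploit the naturality of the Universal Coefficients isomorphism $\eta$ in both the group variable and the coefficient variable, and then invoke its injectivity. Concretely, I will show that the map
\[
\beta: H^2(G;H_2(G;\mathbb{Z}/p)) \longrightarrow Hom_{\mathbb{Z}/p}(H_2(G;\mathbb{Z}/p),H_2(G;\mathbb{Z}/p))
\]
of Theorem \ref{uct}, which coincides with $\eta$ under our freeness hypothesis (so that the $Ext$ term of \ref{uct} vanishes), sends both $\phi^*(v_p)$ and $(H_2(\phi;\mathbb{Z}/p))_*(v_p)$ to the same homomorphism, namely $H_2(\phi;\mathbb{Z}/p)$ itself.

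First, recall from Remark \ref{rem_construction_uct} that $\beta(c)$ is computed by picking a cocycle representative $f$ of $c$ in $Hom_{\mathbb{Z}/p}(C_*(G,\mathbb{Z}/p),H_2(G;\mathbb{Z}/p))$ and passing to the induced map on $H_2$. The defining property $\eta(v_p)=id$ thus reads $\beta(v_p)=id_{H_2(G;\mathbb{Z}/p)}$. Naturality of $\beta$ with respect to coefficient module homomorphisms $g:M\to M'$ gives $\beta(g_*(c))=g\circ\beta(c)$, since $g_*$ is induced by post-composition at the cochain level. Applied to $g=H_2(\phi;\mathbb{Z}/p)$ and $c=v_p$, this yields
\[
\beta\bigl((H_2(\phi;\mathbb{Z}/p))_*(v_p)\bigr)=H_2(\phi;\mathbb{Z}/p)\circ id=H_2(\phi;\mathbb{Z}/p).
\]

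Second, naturality of $\beta$ with respect to group homomorphisms $\phi:G\to G$ gives $\beta(\phi^*(c))=\beta(c)\circ H_2(\phi;\mathbb{Z}/p)$. Indeed, $\phi^*$ at the cochain level is precomposition with the chain map induced by $\phi$, and this translates precisely to precomposition with $H_2(\phi;\mathbb{Z}/p)$ after passing to homology. Applied to $c=v_p$ this yields
\[
\beta(\phi^*(v_p))=id\circ H_2(\phi;\mathbb{Z}/p)=H_2(\phi;\mathbb{Z}/p).
\]

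Comparing the two displays, $\beta((H_2(\phi;\mathbb{Z}/p))_*(v_p))=\beta(\phi^*(v_p))$, and since $\beta=\eta$ is an isomorphism under the assumption that $H_1(G;\mathbb{Z}/p)$ is a free $\mathbb{Z}/p$-module, we conclude $(H_2(\phi;\mathbb{Z}/p))_*(v_p)=\phi^*(v_p)$. The only delicate step is to record carefully the two naturality statements for the map $\beta$ of Theorem \ref{uct}: once cochains are unwound as in Remark \ref{rem_construction_uct}, both reduce to straightforward functoriality of $Hom$ and of the homology functor, so no essential obstacle remains.
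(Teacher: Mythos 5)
Your argument is correct and follows the same route as the paper's proof: both sides are sent by the natural isomorphism $\eta$ to $H_2(\phi;\mathbb{Z}/p)$, once via naturality in the coefficient variable and once via naturality in the group variable, and injectivity of $\eta$ finishes the proof. Your version simply spells out the two naturality statements in more detail by tracing them back to the construction of $\beta$ in Remark \eqref{rem_construction_uct}.
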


\begin{proof}
Fix an element $\phi\in Aut(G),$ by naturality of the isomorphism $\eta,$
we have that
\begin{align*}
\eta(\phi^*(v_p))= & (H_2(\phi; \mathbb{Z}/p))^*(id)= H_2(\phi; \mathbb{Z}/p), \\
\eta((H_2(\phi; \mathbb{Z}/p))_*(v_p))= & (H_2(\phi; \mathbb{Z}/p))_*(id)=H_2(\phi; \mathbb{Z}/p).
\end{align*}
\end{proof}

As a consequence we get the following result:

\begin{cor}
\label{cor_ver_ext_mod_p}
Let $G$ be a group and $0  \rightarrow H_2(G;\mathbb{Z}/p) \rightarrow E \rightarrow G \rightarrow 1$ a versal extension modulo $p.$
For every element $\phi\in Aut(G),$ there exists an element $\Phi\in Aut(E)$ such that the following diagram commutes
\begin{equation}
\xymatrix@C=10mm@R=10mm{
 0 \ar@{->}[r] & H_2(G;\mathbb{Z}/p) \ar@{->}[r] \ar@{->}[d]^-{H_2(\phi; \mathbb{Z}/p)} & E \ar@{->}[r] \ar@{->}[d]^-{\Phi} & G \ar@{->}[r] \ar@{->}[d]^-{\phi} & 1\\
0  \ar@{->}[r] & H_2(G;\mathbb{Z}/p) \ar@{->}[r]
 & E \ar@{->}[r] & G \ar@{->}[r] & 1. }
\end{equation}
Therefore every element $\phi\in Aut(G)$ is induced by an element $\Phi\in Aut(E).$
\end{cor}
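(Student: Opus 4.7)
The plan is to reduce this statement to Corollary \eqref{cor_push-pull} (the push-pull corollary) via Lemma \eqref{lema_eq_cocy}, and then upgrade the resulting endomorphism to an automorphism using the five lemma.

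First, I would observe that since the versal extension $0\to H_2(G;\mathbb{Z}/p)\to E\to G\to 1$ is central by hypothesis, the $G$-action on $H_2(G;\mathbb{Z}/p)$ is trivial. Consequently, \emph{any} abelian group homomorphism $H_2(G;\mathbb{Z}/p)\to H_2(G;\mathbb{Z}/p)$ is automatically a $G$-module homomorphism. In particular, for a given $\phi\in Aut(G)$, the functorially induced isomorphism $H_2(\phi;\mathbb{Z}/p)$ qualifies as a $G$-module homomorphism, which is the kind of map required as input for Corollary \eqref{cor_push-pull}.

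Next, by Lemma \eqref{lema_eq_cocy} applied to $\phi$, we have the key identity
\begin{equation*}
\bigl(H_2(\phi;\mathbb{Z}/p)\bigr)_*(v_p)=\phi^*(v_p)\quad\text{in }H^2(G;H_2(G;\mathbb{Z}/p)).
\end{equation*}
Since the given versal extension has associated cohomology class $v_p$, this is precisely the hypothesis needed to invoke Corollary \eqref{cor_push-pull} with $A=H_2(G;\mathbb{Z}/p)$, $f=H_2(\phi;\mathbb{Z}/p)$, group homomorphism $\phi$, and class $c=v_p$. The corollary then directly yields a group homomorphism $\Phi:E\to E$ that fits into the commutative diagram displayed in the statement.

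It remains to upgrade $\Phi$ from a group homomorphism to an automorphism. For this I would simply apply the five lemma to the commutative diagram of extensions: both outer vertical maps are isomorphisms ($\phi\in Aut(G)$ by assumption, and $H_2(\phi;\mathbb{Z}/p)$ is an isomorphism because $H_2(-;\mathbb{Z}/p)$ is a functor, so it sends automorphisms to automorphisms), hence the middle map $\Phi$ is an isomorphism as well, i.e.\ $\Phi\in Aut(E)$. There is no real obstacle here; the only subtle point to flag is the initial observation that centrality of the extension is what makes $H_2(\phi;\mathbb{Z}/p)$ automatically $G$-equivariant, so that Corollary \eqref{cor_push-pull} actually applies.
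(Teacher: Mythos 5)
Your proof is correct and follows exactly the same route as the paper: Lemma \eqref{lema_eq_cocy} gives the identity $(H_2(\phi;\mathbb{Z}/p))_*(v_p)=\phi^*(v_p)$, which feeds into Corollary \eqref{cor_push-pull} to produce $\Phi\in Hom(E,E)$, and the five lemma upgrades it to an automorphism. Your remark that centrality of the extension is what makes $H_2(\phi;\mathbb{Z}/p)$ automatically a $G$-module homomorphism is a worthwhile clarification that the paper leaves implicit.
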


\begin{proof}
Consider a versal extension modulo $p$
$$\xymatrix@C=10mm@R=10mm{
 0 \ar@{->}[r] & H_2(G;\mathbb{Z}/p) \ar@{->}[r] & E \ar@{->}[r] & G \ar@{->}[r] & 1. }$$
with associated cohomology class $v_p.$
By Lemma \eqref{lema_eq_cocy} we know that for every $\phi\in Aut(G),$ the following equality holds $(H_2(\phi; \mathbb{Z}/p))_*(v_p)=\phi^*(v_p).$
Then, by Corollary \eqref{cor_push-pull}, there exists an element $\Phi\in Hom(E,E)$ making the following diagram commutative
\begin{equation*}
\xymatrix@C=10mm@R=10mm{
 0 \ar@{->}[r] & H_2(G;\mathbb{Z}/p) \ar@{->}[r] \ar@{->}[d]^-{H_2(\phi; \mathbb{Z}/p)} & E \ar@{->}[r] \ar@{->}[d]^-{\Phi} & G \ar@{->}[r] \ar@{->}[d]^-{\phi} & 1\\
0  \ar@{->}[r] & H_2(G;\mathbb{Z}/p) \ar@{->}[r]
 & E \ar@{->}[r] & G \ar@{->}[r] & 1. }
\end{equation*}
Finally, the $5$-lemma implies that $\Phi$ is an automorphism of $E$ that lifts $\phi$ as desired.
\end{proof}
\subsection{Versal extensions modulo $p$ for $p$-groups}
If $G$ is a $p$-group, the modulo $p$ Hopf formula becomes simpler.
In this case, let $1\rightarrow R \rightarrow F \rightarrow G\rightarrow 1$ be a presentation of $G$ with $d(F)=d(G).$
Then $R\leq [F,F]F^p,$ so $H_2(G;\mathbb{Z}/p)\cong R/[R,F]R^p$ and $F/[R,F]R^p$ is a $p$-covering group of $G.$

\begin{defi}[Definition 9.5.12 in \cite{leed}]
We say that a $p$-covering of $G$
$$\xymatrix@C=7mm@R=10mm{ 0 \ar@{->}[r] &  H_2(G;\mathbb{Z}/p) \ar@{->}[r] & P \ar@{->}[r] & G \ar@{->}[r] & 0}$$
is \textit{the universal $p$-covering of $G$} and $P$ is \textit{the universal $p$-covering group of $G$} (up to canonical isomorphisms)
if for any other central extension
$$\xymatrix@C=7mm@R=10mm{ 0 \ar@{->}[r] &  A \ar@{->}[r] & E \ar@{->}[r] & G \ar@{->}[r] & 0}$$
with $A$ a $p$-elementary abelian group satisfying that $A\leq \Phi(E),$ there is a surjection of $P$ onto $E$ that induces a surjection of $H_2(G;\mathbb{Z}/p)$ onto $A,$ and the natural isomorphism from $P/H_2(G;\mathbb{Z}/p)$ onto $E/A.$
\end{defi}

\begin{prop}[Proposition 9.5.13 in \cite{leed}]
\label{prop_univ_p-cov}
The group $P=F/[R,F]R^p$ is \textit{the universal $p$-covering group of $G.$}
\end{prop}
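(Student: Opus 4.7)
The plan is to build the required surjection $P\twoheadrightarrow E$ by first lifting the presentation map $F\twoheadrightarrow G$ to a surjection $F\twoheadrightarrow E$, and then showing this lift annihilates $[R,F]R^p$ automatically because $A$ is central and $p$-elementary. The universal property then follows by inspection of the resulting commutative diagram.

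First I would use the hypothesis $A\leq \Phi(E)$ to show that $E$ is again a finite $p$-group with $d(E)=d(G)=d(F)$. Since $A$ is a $p$-elementary central subgroup of $E$ contained in $\Phi(E)$ and $E/A\cong G$ is a $p$-group, $E$ itself is a $p$-group; moreover
\[
E/\Phi(E)\;\cong\;(E/A)\big/(\Phi(E)/A)\;\cong\;G/\Phi(G),
\]
so $d(E)=d(G)=d(F)$. Picking free generators $x_1,\dots,x_n$ of $F$ and lifts $y_1,\dots,y_n\in E$ of their images in $G$, the elements $\{y_i\}$ span $E/\Phi(E)$, hence by the Burnside basis theorem generate $E$. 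Mapping $x_i\mapsto y_i$ yields a surjection $\tilde{\pi}:F\twoheadrightarrow E$ lifting $F\twoheadrightarrow G$.

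Next I would check that $\tilde{\pi}$ factors through $P=F/[R,F]R^p$. By construction $\tilde{\pi}(R)\subseteq A$, because $R$ is the kernel of $F\to G$ and $A$ the kernel of $E\to G$. Since $A$ is central in $E$, $\tilde{\pi}([R,F])=[\tilde{\pi}(R),E]=1$, and since $A$ is $p$-elementary abelian, $\tilde{\pi}(R^p)=\tilde{\pi}(R)^p=1$. Hence $\tilde{\pi}$ descends to a surjection $\bar{\pi}:P\twoheadrightarrow E$ that lifts the identity on $G$, giving the commutative diagram
\[
\xymatrix@C=7mm@R=7mm{
0 \ar[r] & H_2(G;\mathbb{Z}/p) \ar[r]\ar[d] & P \ar[r]\ar@{->>}[d]^-{\bar{\pi}} & G \ar[r]\ar@{=}[d] & 0 \\
0 \ar[r] & A \ar[r] & E \ar[r] & G \ar[r] & 0.
}
\]
Comparing kernels in this diagram (or invoking the snake lemma) shows that the induced map $H_2(G;\mathbb{Z}/p)\to A$ is surjective, because $\bar{\pi}$ is surjective and the right-hand square has identity on the quotient. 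The naturality of the Hopf identification $H_2(G;\mathbb{Z}/p)\cong R/[R,F]R^p$ then gives that the induced map on the kernels is exactly the one coming from $\tilde{\pi}|_R:R\to A$.

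The only real subtlety is Step~1: guaranteeing that lifts of generators of $G$ actually generate $E$. This is where the hypothesis $A\leq\Phi(E)$ is essential, since without it the lifts need only generate $E$ modulo $A$, and Burnside's basis theorem would not apply. Everything else is formal: centrality of $A$ kills commutators with $R$, the exponent-$p$ condition kills $p$-th powers, and the diagram chase gives both the factorization through $P$ and the surjectivity of the induced map on the kernels. Since the whole construction uses only the canonical identifications $P/H_2(G;\mathbb{Z}/p)\cong G\cong E/A$, the resulting maps are canonical in the sense required by the definition.
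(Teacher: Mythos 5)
The paper only cites this result from Leedham-Green \cite{leed} and gives no proof of its own, so there is nothing internal to compare against; your argument must therefore be judged on its own merits, and it is correct. Your structure is the standard textbook one: the hypothesis $A\leq\Phi(E)$ is used exactly where it must be, namely to promote a generating set of $E$ modulo $A$ to a genuine generating set of $E$, so that the lift $\tilde{\pi}:F\to E$ of the presentation map is surjective; then centrality of $A$ kills $[R,F]$, the exponent-$p$ condition kills $R^p$, and the diagram chase (a direct one, since the snake lemma is stated for abelian categories) gives surjectivity of the induced map on kernels. One minor stylistic remark: you invoke the Burnside basis theorem, which presupposes $E$ is a finite $p$-group. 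That is implicit in the setting of \cite{leed}, but if one wants to avoid the finiteness assumption the same conclusion follows directly from the characterisation of $\Phi(E)$ as the set of non-generators of $E$: since $\{y_1,\dots,y_n\}\cup A$ generates $E$ and $A\subseteq\Phi(E)$, already $\{y_1,\dots,y_n\}$ generates $E$. This replaces the computation $E/\Phi(E)\cong G/\Phi(G)$ (and the verification $\Phi(E/A)=\Phi(E)/A$ hidden inside it) with a one-line appeal to the definition of the Frattini subgroup.
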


\begin{prop}
\label{prop_versal_ext_general}
The central extension
\begin{equation}
\label{versal_ext_general}
\xymatrix@C=7mm@R=7mm{
0  \ar@{->}[r] & H_2(G;\mathbb{Z}/p) \ar@{->}[r]
 & P \ar@{->}[r] & G \ar@{->}[r] & 1, }
\end{equation}
where $P$ is the universal $p$-covering group of $G$,
is a versal extension mod $p.$
\end{prop}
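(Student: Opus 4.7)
The statement is equivalent to showing that the cohomology class $c\in H^2(G;H_2(G;\mathbb{Z}/p))$ of the given extension coincides with the versal class $v_p$; equivalently, that $\eta(c)=\mathrm{id}_{H_2(G;\mathbb{Z}/p)}$, where $\eta$ is the UCT isomorphism (\ref{iso_uct_h2}). I would compute $\eta(c)$ explicitly from a factor set, comparing it with the Hopf--mod-$p$ formula recalled before Section $1.1.5$.

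First, choose a finite presentation $1\to R\to F\to G\to 1$ with $d(F)=d(G)$. Since $G$ is a $p$-group, $R\subseteq[F,F]F^p$, so the Hopf mod-$p$ formula specialises to an explicit isomorphism
$$\phi:H_2(G;\mathbb{Z}/p)\xrightarrow{\ \sim\ } R/[R,F]R^p,\qquad [g|h]\longmapsto r(g,h)\bmod[R,F]R^p,$$
where $s:G\to F$ is any set-theoretic section with $s(1)=1$ and $r(g,h)\in R$ is defined by $s(g)s(h)=s(gh)\,r(g,h)$. Via Proposition \ref{prop_univ_p-cov} and this $\phi$, I would identify the given central extension with
$$0\to H_2(G;\mathbb{Z}/p)\xrightarrow{\ \phi\ } P=F/[R,F]R^p\longrightarrow G\to 1.$$

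Next, the section $s$ descends to a set-theoretic section $\bar s:G\to P$, and its associated factor set
$$f(g,h)=\bar s(g)\bar s(h)\bar s(gh)^{-1}\in H_2(G;\mathbb{Z}/p)$$
as in (\ref{eq_factor set}) equals $\phi([g|h])$ by the very construction of $\phi$. By Proposition \ref{prop_class_extensions}, $f$ represents $c$ in $C^*(G;H_2(G;\mathbb{Z}/p))$. By the explicit description of $\beta$ in Remark \ref{rem_construction_uct}, the homomorphism $\eta(c)=\beta(c):H_2(G;\mathbb{Z}/p)\to H_2(G;\mathbb{Z}/p)$ is obtained by regarding $f$ as a $2$-cochain on $C_*(G,\mathbb{Z}/p)$ and evaluating on $2$-cycles $[g|h]$; under the identification $\phi$ of source and target, this is precisely the identity. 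Hence $\eta(c)=\mathrm{id}$, so $c=v_p$ and the extension is versal modulo $p$.

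The only delicate point I anticipate is a bookkeeping check: verifying that the Hopf/Ellis isomorphism $\phi$ really is (and not, say, its inverse or its negative) the map induced by UCT-evaluation on the bar complex. I would settle this by unwinding $\partial_2[g|h]=g[h]-[gh]+[h]$ in the bar resolution of $\mathbb{Z}/p$ against the defining identity $s(g)s(h)=s(gh)r(g,h)$; this is routine but must be done carefully to pin down conventions and signs.
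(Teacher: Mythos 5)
Your proposal is correct and takes essentially the same route as the paper's proof: both identify the UCT evaluation of the factor set with the explicit Hopf-mod-$p$ isomorphism and then transport the kernel along that isomorphism to obtain the identity. The paper phrases this via $\tilde{\eta}$ at the level of the $R/[F,R]R^p$-valued class followed by $h^{-1}_*$; you instead apply $\phi$ once at the outset and read off the factor set, but the computation and naturality argument are the same.
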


\begin{proof}
Let $c \in H^2(G ;R/[F,R]R^p)$ the cohomology class associated to the central extension
\begin{equation}
\label{prop_ses_tilde-surj}
\xymatrix@C=7mm@R=10mm{
0 \ar@{->}[r] & \frac{R}{[R,F]R^p} \ar@{->}[r] & \frac{F}{[F,R]R^p} \ar@{->}[r] & G \ar@{->}[r] & 1. }
\end{equation}
Using the Universal coefficients Theorem we get the natural isomorphism
\begin{equation}
\label{uct_iso_eta_tilde}
\tilde{\eta}:\; H^2\left(G; \frac{R}{[R,F]R^p}\right)\longrightarrow Hom\left(H_2(G;\mathbb{Z}/p);\frac{R}{[R,F]R^p}\right),
\end{equation}
that takes $c$ to the homomorphism $h$ given by the evaluation on
$H_2(G;\mathbb{Z}/p)$ of the $2$-cocycle $r([g|h])=s(g)s(h)s(gh)^{-1}$  where $s$ is a theoretic section of \eqref{prop_ses_tilde-surj}.
Notice that, in fact, the homomorphism $h$ is the modulo $p$ Hopf isomorphism.

Since $h$ is an isomorphism and $\tilde{\eta}$ is natural, applying $h^{-1}_*$ to $\eqref{uct_iso_eta_tilde}$ we get a natural isomorphism 
$$\eta: H^2(G; H_2(G;\mathbb{Z}/p))\longrightarrow Hom(H_2(G;\mathbb{Z}/p); H_2(G;\mathbb{Z}/p)),$$
which takes $h^{-1}_*(c)$ to the identity as desired.
\end{proof}

\section{On central extensions and characteristic subgroups}

Consider the following group extensions
\begin{equation}
\label{p-central_exts}
\begin{aligned}
& \xymatrix@C=7mm@R=10mm{
0 \ar@{->}[r] &\mathcal{L}_{k+1}^\bullet \ar@{->}[r]^-{i} &\mathcal{N}_{k+1}^\bullet \ar@{->}[r]^-{\pi^\bullet_k} & \mathcal{N}^\bullet_k \ar@{->}[r] & 1 },
\\
& \xymatrix@C=7mm@R=10mm{
0 \ar@{->}[r] & \widetilde{\mathcal{L}_{k+1}^\bullet} \ar@{->}[r]^-{i} &\widetilde{\mathcal{N}_{k+1}^\bullet} \ar@{->}[r]^-{\widetilde{\pi^\bullet_k}} & \mathcal{N}^\bullet_k \ar@{->}[r] & 1 },
\end{aligned}
\end{equation}

where $i$ is the natural inclusion and $\pi^\bullet_k$, $\widetilde{\pi^\bullet_k}$ are the quotient map corresponding to $\Gamma^\bullet_{k+1}.$

\begin{prop}
The above group extensions \eqref{p-central_exts} are central.
\end{prop}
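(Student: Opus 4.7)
The plan is to verify centrality in each of the two extensions by directly checking the defining commutator condition $[\Gamma/\Gamma^\bullet_{k+2},\,\mathcal{L}^\bullet_{k+1}]=0$ (resp.\ the analogous condition for the tilded version), reducing everything to the basic closure properties of the Stallings and Zassenhaus $p$-central series recalled in Section~\ref{section: mod-p series}.

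First I would handle the top extension. Centrality of
$0\to\mathcal{L}^\bullet_{k+1}\to\mathcal{N}^\bullet_{k+1}\to\mathcal{N}^\bullet_k\to 1$
is equivalent to the inclusion $[\Gamma,\Gamma^\bullet_{k+1}]\subseteq\Gamma^\bullet_{k+2}$ upstairs in $\Gamma$. For $\bullet=S$ this is immediate from the recursive definition
\[
\Gamma^S_{k+2}=[\Gamma,\Gamma^S_{k+1}]\,(\Gamma^S_{k+1})^p.
\]
For $\bullet=Z$ it follows from the characterizing property $[\Gamma^Z_k,\Gamma^Z_l]\subseteq\Gamma^Z_{k+l}$ specialised to $k=1$, $l=k+1$, together with $\Gamma^Z_1=\Gamma$. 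In both cases the image of $[\Gamma,\Gamma^\bullet_{k+1}]$ in $\mathcal{N}^\bullet_{k+1}=\Gamma/\Gamma^\bullet_{k+2}$ is trivial, which is exactly the statement that $\mathcal{L}^\bullet_{k+1}$ is central in $\mathcal{N}^\bullet_{k+1}$.

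Next I would treat the second extension. Here the relevant denominator is by design the subgroup $[\Gamma,\Gamma^\bullet_{k+1}](\Gamma^\bullet_{k+1})^p$, so the inclusion $[\Gamma,\Gamma^\bullet_{k+1}]\subseteq[\Gamma,\Gamma^\bullet_{k+1}](\Gamma^\bullet_{k+1})^p$ holds tautologically. Passing to the quotient, every commutator of the form $[x,y]$ with $x\in\Gamma$ and $y\in\Gamma^\bullet_{k+1}$ becomes trivial in $\widetilde{\mathcal{N}^\bullet_{k+1}}$, i.e.\ $\widetilde{\mathcal{L}^\bullet_{k+1}}$ is central in $\widetilde{\mathcal{N}^\bullet_{k+1}}$.

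There is no substantive obstacle here: both statements are formal consequences of the definitions. I expect the proof to consist of at most a few lines, essentially a citation of the closure property $[\Gamma^\bullet_1,\Gamma^\bullet_{k+1}]\subseteq\Gamma^\bullet_{k+2}$ for the first extension and a tautological inclusion for the second. The work done in setting up these two central extensions will be exploited in the next section, where the associated cohomology classes $v_p$ and Corollary~\ref{cor_ver_ext_mod_p} are used to lift automorphisms from $\mathcal{N}^\bullet_k$ to $\widetilde{\mathcal{N}^\bullet_{k+1}}$; verifying centrality now is what makes those later cohomological arguments available.
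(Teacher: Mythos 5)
Your argument is correct and matches the paper's approach: both proofs reduce centrality to the single inclusion $[\Gamma,\Gamma^\bullet_{k+1}]\subseteq N$ for $N$ equal to $\Gamma^\bullet_{k+2}$ or $[\Gamma,\Gamma^\bullet_{k+1}](\Gamma^\bullet_{k+1})^p$, deduced from the defining closure properties of the $p$-central series (resp.\ tautologically). The only cosmetic difference is that the paper treats the two choices of $N$ in one stroke while you handle them separately.
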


\begin{proof}
Let $N$ be either $\Gamma^\bullet_{k+2}$ or $[\Gamma,\Gamma^\bullet_{k+1}](\Gamma^\bullet_{k+1})^p.$
By construction, $N\triangleleft \Gamma^\bullet_{k+1} \triangleleft \Gamma,$ moreover $[\Gamma,\Gamma^\bullet_{k+1}]\subset N.$
Then a direct computation, using Hall identities shows that
$$\left[\frac{\Gamma}{N},\frac{\Gamma^\bullet_{k+1}}{N}\right]\subset [\Gamma, \Gamma_{k+1}] \;\text{mod } N.$$
\end{proof}

\begin{defi}
A subgroup $H$ of a group $G$ is a characteristic subgroup of $G$ if every automorphism of $G$ maps the subgroup $H$ to itself.
\end{defi}

\begin{prop}
\label{prop_charSZ}
Let $G$ be a group. The groups $G^\bullet_k$ are characteristic subgroups of $G.$
\end{prop}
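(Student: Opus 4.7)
The plan is to argue separately for the two cases $\bullet = S$ and $\bullet = Z$, in each instance by induction on $k$, using the fact that the image under any automorphism of a group-theoretic expression built from characteristic data is again that same expression. The key observation throughout is that for any $\phi \in \mathrm{Aut}(G)$ and any subgroups $H, K \leq G$ one has $\phi([H,K]) = [\phi(H), \phi(K)]$ and $\phi(H^p) = \phi(H)^p$, and that a product or intersection of characteristic subgroups is characteristic.

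\textbf{Stallings case.} I would proceed by induction on $k$. For $k = 1$, $G^S_1 = G$ is obviously characteristic. Assume $G^S_{k-1}$ is characteristic, i.e.\ $\phi(G^S_{k-1}) = G^S_{k-1}$ for every $\phi \in \mathrm{Aut}(G)$. Then
\begin{equation*}
\phi(G^S_k) = \phi\bigl([G, G^S_{k-1}](G^S_{k-1})^p\bigr) = [\phi(G), \phi(G^S_{k-1})]\,\phi(G^S_{k-1})^p = [G, G^S_{k-1}](G^S_{k-1})^p = G^S_k,
\end{equation*}
so $G^S_k$ is characteristic.

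\textbf{Zassenhaus case.} I would first recall the classical fact that each term $G_i$ of the lower central series is characteristic in $G$, which is proved by the same inductive argument as above using $G_i = [G, G_{i-1}]$. Given this, the $p^j$-th power subgroup $(G_i)^{p^j}$ is characteristic as well, because for any $\phi \in \mathrm{Aut}(G)$,
\begin{equation*}
\phi\bigl((G_i)^{p^j}\bigr) = \phi(G_i)^{p^j} = (G_i)^{p^j}.
\end{equation*}
Finally, since the product of any family of characteristic subgroups is characteristic,
\begin{equation*}
\phi(G^Z_k) = \phi\!\left(\,\prod_{ip^j \geq k}(G_i)^{p^j}\right) = \prod_{ip^j \geq k}\phi\bigl((G_i)^{p^j}\bigr) = \prod_{ip^j \geq k}(G_i)^{p^j} = G^Z_k,
\end{equation*}
so $G^Z_k$ is characteristic.

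There is no real obstacle here: the result is formal once one records that commutator brackets, $p$-th powers, and products are all respected by automorphisms, and that the lower central series is characteristic. The only thing to be a bit careful about in the Zassenhaus case is interpreting the product $\prod_{ip^j \geq k}(G_i)^{p^j}$ as the subgroup generated by all the $(G_i)^{p^j}$ with $ip^j \geq k$; this is a (possibly infinite) join of characteristic subgroups, and any such join is itself characteristic by the same direct check.
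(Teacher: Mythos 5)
Your proof is correct and follows essentially the same strategy as the paper: induction on $k$ using $G^S_k = [G, G^S_{k-1}](G^S_{k-1})^p$ for the Stallings case, and the fact that each $G_i$ is characteristic (hence so is each $(G_i)^{p^j}$ and their join) for the Zassenhaus case. Your Zassenhaus step is in fact slightly more direct than the paper's, which makes an unnecessary detour through the inclusions $G_i \leq G_i^Z$ and $(G_i^Z)^{p^j} \leq G^Z_{ip^j}$ rather than just noting $(G_i)^{p^j}$ is itself characteristic.
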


\begin{proof}
We begin with the case of the Stallings series. We proceed by induction on $k.$ The base case $k=1$ holds because by definition, $G^S_1=G.$
Now assume that the Proposition holds for $k$ and we prove it for $k+1.$

Let $f\in Aut(G)$ and $x\in G^S_{k+1}.$ We prove that $f(x)\in G^S_{k+1}.$ Recall that by definition, $G^S_{k+1}=[G,G^S_k](G_k^S)^p.$
Thus $x$ is a product of elements of $[G,G^S_k](G_k^S)^p.$ Since $f$ is a homomorphism, it is enough to prove that $[G,G^S_k]$ and $(G_k^S)^p$ are characteristic subgroups of $G.$ 

Let $y\in G,$ $z\in G_k^S.$ By induction hypothesis we have that $f(y)\in G, f(z)\in G_k^S.$ Therefore
$$
f([y,z])=[f(y),f(z)]\in [G,G^S_k]  \quad \text{and} \quad f(z^p)=f(z)^p\in(G_k^S)^p .$$

For the case of the Zassenhaus series, recall that by definition $G^Z_k=\prod_{ip^j\geq k}(G_i)^{p^j}.$ Thus every element of $G^Z_k$ is a product of elements of the form $x_i^{p^j}\in G^Z_k$ with $x_i\in G_i$ and $ip^j\geq k.$
Since $f$ is a homomorphism, it is enough to prove that $f(x_i^{p^j})\in G^Z_k$ with $x_i\in G_i$ and $ip^j\geq k.$

It is well known that $G_i$ is a characteristic subgroup of $G.$
Moreover, by the properties of the Zassenhauss series we know that $G_i\leq G_i^Z$ and $(G_i^Z)^{p^j} \leq G_{ip^j}^Z \leq G_k^Z.$
Therefore we have that
$$f(x_i^{p^j})=f(x_i)^{p^j}\in (G_i^Z)^{p^j} \leq G_k^Z.$$
\end{proof}

\begin{prop}
\label{prop_ident_L_versal}
For every $k\geq 1,$ there are isomorphisms
\begin{equation}
\label{eq_ident_L_versal}
\mathcal{L}_k^S\cong(\mathcal{N}_k^S)^S_k, \quad \mathcal{L}_k^Z\cong(\mathcal{N}_k^Z)^Z_k, \quad \widetilde{\mathcal{L}_k^Z}\cong(\widetilde{\mathcal{N}_k^Z})^Z_k, \quad\frac{\Gamma^Z_{k+1}}{[\Gamma, \Gamma^Z_{k}](\Gamma^Z_{k})^p}\cong(\widetilde{\mathcal{N}_{k}^Z})^Z_{k+1}.
\end{equation}
\end{prop}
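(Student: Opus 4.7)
The plan is to reduce all four identifications to a single, more general fact: for either of the $p$-central series $\bullet = S, Z$, and for any normal subgroup $H \triangleleft G$, we have
\begin{equation*}
(G/H)^{\bullet}_{i} \;=\; G^{\bullet}_{i}\,H\,/\,H .
\end{equation*}
Once this is in hand, each of the four isomorphisms is obtained by plugging in the appropriate $G$, $H$, and index, and then noting that the relevant $H$ already lies inside the $G^{\bullet}_{i}$ whose image we are computing, so $G^{\bullet}_{i}\,H = G^{\bullet}_{i}$ and the quotient collapses to the expected $\mathcal{L}$-type group.

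First I would establish the general fact by induction on $i$. For the Stallings case, the key computation is that in a quotient $G/H$ one has $[G/H,\,G^{S}_{i}H/H]=[G,G^{S}_{i}]H/H$ and $(G^{S}_{i}H/H)^{p}=(G^{S}_{i})^{p}H/H$; the first is immediate from $[gH,aH]=[g,a]H$, while the second uses that $(ahH)^{p}=(aH)^{p}=a^{p}H$ for $a\in G^{S}_{i}$, $h\in H$. Combining these at step $i+1$ gives $(G/H)^{S}_{i+1}=G^{S}_{i+1}H/H$. For the Zassenhaus series, the analogous argument applies term by term in the product $G^{Z}_{k}=\prod_{ip^{j}\geq k}(G_{i})^{p^{j}}$, using that the lower central series and $p$-th power operations both push down to quotients in the same way (the lower central series is well known to satisfy $(G/H)_{i}=G_{i}H/H$, and $p^{j}$-th powers commute with the projection modulo $H$).

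Next I would apply this with the four choices below, in each case verifying that $H \subseteq \Gamma^{\bullet}_{k}$ (or $\Gamma^{Z}_{k+1}$) so the product $G^{\bullet}_{k}H$ is just $G^{\bullet}_{k}$:
\begin{itemize}
\item $G=\Gamma$, $H=\Gamma^{S}_{k+1}$, index $k$: yields $(\mathcal{N}^{S}_{k})^{S}_{k}=\Gamma^{S}_{k}/\Gamma^{S}_{k+1}=\mathcal{L}^{S}_{k}$, since $\Gamma^{S}_{k+1}\subseteq \Gamma^{S}_{k}$.
\item $G=\Gamma$, $H=\Gamma^{Z}_{k+1}$, index $k$: yields $\mathcal{L}^{Z}_{k}$ similarly.
\item $G=\Gamma$, $H=[\Gamma,\Gamma^{Z}_{k}](\Gamma^{Z}_{k})^{p}$, index $k$: here both $[\Gamma,\Gamma^{Z}_{k}]\subseteq \Gamma^{Z}_{k+1}\subseteq \Gamma^{Z}_{k}$ and $(\Gamma^{Z}_{k})^{p}\subseteq \Gamma^{Z}_{kp}\subseteq \Gamma^{Z}_{k}$ by the defining properties of the Zassenhaus series, so $H\subseteq \Gamma^{Z}_{k}$ and the quotient equals $\widetilde{\mathcal{L}^{Z}_{k}}$.
\item $G=\Gamma$, $H=[\Gamma,\Gamma^{Z}_{k}](\Gamma^{Z}_{k})^{p}$, index $k+1$: the same two inclusions, together with $kp\geq k+1$ for $p\geq 2$ and $k\geq 1$, show $H\subseteq \Gamma^{Z}_{k+1}$, giving the last identification.
\end{itemize}

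The only delicate point is the stability of the defining operations under quotients, in particular the equality $(G^{\bullet}_{i}H/H)^{p}=(G^{\bullet}_{i})^{p}H/H$ for the Stallings case and the analogous statement for $p^{j}$-th powers in the Zassenhaus case. Both reduce to the identity $(xH)^{n}=x^{n}H$ in any quotient group, so no real obstacle arises; the bookkeeping of the Zassenhaus product formula is what will occupy the most space, but it is routine. Once the general quotient lemma is proven, the four isomorphisms follow in a single line each.
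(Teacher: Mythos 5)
Your proof is correct and follows essentially the same idea as the paper's: both rest on the fact that the $p$-central series passes to quotients, i.e. $(G/H)^\bullet_i = G^\bullet_i H/H$, together with the first isomorphism theorem, with the Stallings case handled by induction on the filtration index and the Zassenhaus case by pushing the product formula through the quotient. The only difference is organizational: you abstract the key fact into a single general lemma applied four times, whereas the paper proves it in-line for each choice of $H$ (treating the three Zassenhaus quotients together and the Stallings one separately).
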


\begin{proof}
First we prove the last three isomorphisms of \eqref{eq_ident_L_versal}.

Let $N$ be either $\Gamma^Z_{k+1}$ or $[\Gamma, \Gamma^Z_{k}](\Gamma^Z_{k})^p$ and $l=k \text{ or } k+1.$ We have the following epimorphism:
$$f:\prod_{ip^j\geq l}\left(\Gamma_i\right)^{p^j}\rightarrow \prod_{ip^j\geq l}\left(\left(\frac{\Gamma}{N}\right)_i\right)^{p^j}.$$
The kernel of $f$ is $N \cap \prod_{ip^j\geq l}(\Gamma_i)^{p^j}=N\cap \Gamma^Z_{l}.$
Moreover, by the properties of Zassenhauss and Stallings series, we know that $N\subset \Gamma^Z_{l}.$
Therefore, the first isomorphism Theorem of groups give us an isomorphism
$$\frac{\Gamma^Z_{l}}{N} \cong \prod_{ip^j\geq l}\left(\left(\frac{\Gamma}{N}\right)_i\right)^{p^j}.$$
Then we get the last three isomorphism of \eqref{eq_ident_L_versal}.

Next we prove that $\mathcal{L}_k^S\cong(\mathcal{N}_k^S)^S_k.$
For every $l\leq k,$ consider the following epimorphism:
$$f_l:\Gamma^S_l\rightarrow \left( \frac{\Gamma}{\Gamma^S_{k+1}}\right)^S_{l}.$$
We show that $f_l$ induces an isomorphism
\begin{equation}
\label{eq_iso_stallings_l}
\frac{\Gamma^S_{l}}{\Gamma^S_{k+1}}\cong \left( \frac{\Gamma}{\Gamma^S_{k+1}}\right)^S_{l},
\end{equation}
for $l\leq k.$ We proceed by induction on $l.$

For $l=1,$ the result is clear by definition of the Stallings series.
Assume that $f_{l-1}$ induces an isomorphism
$$\frac{\Gamma^S_{l-1}}{\Gamma^S_{k+1}}\cong \left( \frac{\Gamma}{\Gamma^S_{k+1}}\right)^S_{l-1}.$$
We prove that $f_l$ induces an isomorphism \eqref{eq_iso_stallings_l}.
Observe that $f_l$ is given by an epimorphism
$$f_l:[\Gamma,\Gamma^S_{l-1}](\Gamma^S_{l-1})^p\rightarrow \left[\frac{\Gamma}{\Gamma^S_{k+1}},\frac{\Gamma^S_{l-1}}{\Gamma^S_{k+1}}\right]\left(\frac{\Gamma^S_{l-1}}{\Gamma^S_{k+1}}\right)^p$$
with kernel $\Gamma^S_{k+1}\cap [\Gamma,\Gamma^S_{l-1}](\Gamma^S_{l-1})^p=\Gamma^S_{k+1}\cap \Gamma^S_{l}= \Gamma^S_{k+1},$ because $l\leq k+1.$
Therefore, the first isomorphism Theorem of groups give us the desired isomorphism.
\end{proof}
As a consequence of Propositions \eqref{prop_ident_L_versal}, applying Proposition \eqref{prop_charSZ} to $\mathcal{N}_{k}^\bullet,$ $\widetilde{\mathcal{N}_{k}^Z}$ we get the following result.
\begin{cor}
\label{cor_charsub}
The groups $\mathcal{L}_{k}^\bullet,$ $\widetilde{\mathcal{L}_{k}^Z},$ $\frac{\Gamma^Z_{k+1}}{[\Gamma, \Gamma^Z_{k}](\Gamma^Z_{k})^p}$ are respectively characteristic subgroups of $\mathcal{N}_{k}^\bullet,$ $\widetilde{\mathcal{N}_{k}^Z},$ $\widetilde{\mathcal{N}_{k}^Z}.$
\end{cor}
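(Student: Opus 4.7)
The plan is to show that this is a direct combination of the two immediately preceding results. By Proposition \ref{prop_ident_L_versal} we have the four isomorphisms
\[
\mathcal{L}_k^S \cong (\mathcal{N}_k^S)^S_k, \quad \mathcal{L}_k^Z \cong (\mathcal{N}_k^Z)^Z_k, \quad \widetilde{\mathcal{L}_k^Z} \cong (\widetilde{\mathcal{N}_k^Z})^Z_k, \quad \frac{\Gamma^Z_{k+1}}{[\Gamma, \Gamma^Z_{k}](\Gamma^Z_{k})^p} \cong (\widetilde{\mathcal{N}_k^Z})^Z_{k+1},
\]
each of which identifies the subgroup in question with a term of a Stallings or Zassenhaus $p$-central series of the ambient group $\mathcal{N}_k^\bullet$ or $\widetilde{\mathcal{N}_k^Z}$. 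I would emphasize that, although these isomorphisms were produced as abstract isomorphisms in the proof of Proposition \ref{prop_ident_L_versal}, by construction they are the canonical ones induced by the quotient maps $\Gamma\twoheadrightarrow \mathcal{N}_k^\bullet$ and $\Gamma\twoheadrightarrow \widetilde{\mathcal{N}_k^Z}$, so they commute with any automorphism and reduce the claim to showing that the images are characteristic inside the ambient $p$-nilpotent quotient.

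Next I would invoke Proposition \ref{prop_charSZ}, which asserts that for every group $G$ and every $k\geq 1$, the subgroups $G^S_k$ and $G^Z_k$ are characteristic subgroups of $G$. Applying this with $G=\mathcal{N}_k^\bullet$ yields that $(\mathcal{N}_k^\bullet)^\bullet_k$ is characteristic in $\mathcal{N}_k^\bullet$, giving that $\mathcal{L}_k^\bullet$ is characteristic in $\mathcal{N}_k^\bullet$. Applying it twice with $G=\widetilde{\mathcal{N}_k^Z}$, once for the $k$-th term and once for the $(k+1)$-th term of the Zassenhaus series, respectively identifies $\widetilde{\mathcal{L}_k^Z}$ and $\Gamma^Z_{k+1}/[\Gamma,\Gamma^Z_k](\Gamma^Z_k)^p$ as characteristic subgroups of $\widetilde{\mathcal{N}_k^Z}$. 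This completes the proof.

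There is essentially no obstacle here: the statement is packaged as a corollary precisely because the work has already been done in Propositions \ref{prop_ident_L_versal} and \ref{prop_charSZ}. The only point worth checking carefully while writing it up is that the identifications in \eqref{eq_ident_L_versal} are natural with respect to automorphisms of $\Gamma$ (hence with respect to all automorphisms of the ambient quotient, via Corollary \ref{cor_ver_ext_mod_p}-style lifting arguments if needed), so that ''characteristic in the $p$-central quotient'' transports across the isomorphism. Once that is noted, the proof is a one-line application of the previous two results.
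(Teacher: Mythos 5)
Your proof is correct and follows exactly the same route as the paper, which states Corollary \eqref{cor_charsub} as an immediate consequence of Propositions \eqref{prop_ident_L_versal} and \eqref{prop_charSZ}. The naturality point you flag is sound but can be phrased even more simply: the isomorphisms in \eqref{eq_ident_L_versal} are induced by the quotient map, so they are actually equalities of subgroups of the ambient quotient, and the claim then transports without further comment.
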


\begin{prop}
\label{prop_Hopf_iso_modp}
For any prime number $p,$
$$H_2(\mathcal{N}^\bullet_{k};\mathbb{Z}/p)\cong \frac{\Gamma^\bullet_{k+1}}{[\Gamma,\Gamma^\bullet_{k+1}](\Gamma^\bullet_{k+1})^p}=\widetilde{\mathcal{L}_{k+1}^\bullet}.$$
\end{prop}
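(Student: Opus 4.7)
The plan is to apply the mod $p$ Hopf formula recalled earlier in the excerpt (the paragraph on the Hopf formula modulo $q$) to a presentation of $\mathcal{N}^\bullet_k$ whose free group is $\Gamma$ itself. Since $\Gamma$ is free of finite rank, the short exact sequence
\[
1 \longrightarrow \Gamma^\bullet_{k+1} \longrightarrow \Gamma \longrightarrow \mathcal{N}^\bullet_k \longrightarrow 1
\]
realises $\mathcal{N}^\bullet_k$ as $F/R$ with $F = \Gamma$ free and $R = \Gamma^\bullet_{k+1}$. Plugging this presentation into the Hopf formula with $q = p$ yields the natural isomorphism
\[
H_2(\mathcal{N}^\bullet_k;\mathbb{Z}/p)\;\cong\;\frac{\Gamma^\bullet_{k+1}\cap\bigl([\Gamma,\Gamma]\Gamma^p\bigr)}{[\Gamma,\Gamma^\bullet_{k+1}]\,(\Gamma^\bullet_{k+1})^p}.
\]

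The only substantive step is therefore to identify the numerator with $\Gamma^\bullet_{k+1}$ itself; that is, to verify the inclusion $\Gamma^\bullet_{k+1}\subseteq [\Gamma,\Gamma]\Gamma^p$ for both choices of series. For the Stallings series this is immediate from the recursive definition: $\Gamma^S_2 = [\Gamma,\Gamma^S_1](\Gamma^S_1)^p = [\Gamma,\Gamma]\Gamma^p$, and since the series is descending, $\Gamma^S_{k+1}\subseteq \Gamma^S_2$ for all $k\geq 1$. For the Zassenhaus series one uses $\Gamma^Z_2=\prod_{ip^j\geq 2}(\Gamma_i)^{p^j}=[\Gamma,\Gamma]\Gamma^p$ and again the inclusion $\Gamma^Z_{k+1}\subseteq \Gamma^Z_2$. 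I expect no real obstacle here: this is essentially a bookkeeping check on the definitions of the two $p$-central series already reviewed in Section~\ref{section: mod-p series}.

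Once this inclusion is in hand the intersection $\Gamma^\bullet_{k+1}\cap [\Gamma,\Gamma]\Gamma^p$ collapses to $\Gamma^\bullet_{k+1}$ and the Hopf formula yields
\[
H_2(\mathcal{N}^\bullet_k;\mathbb{Z}/p)\;\cong\;\frac{\Gamma^\bullet_{k+1}}{[\Gamma,\Gamma^\bullet_{k+1}]\,(\Gamma^\bullet_{k+1})^p}\;=\;\widetilde{\mathcal{L}_{k+1}^\bullet},
\]
which is the announced isomorphism. The isomorphism produced this way is the one induced by sending the $2$-cycle $r(g,h)$ associated to a set-theoretic section of $\Gamma\twoheadrightarrow\mathcal{N}^\bullet_k$ to its class in $\widetilde{\mathcal{L}_{k+1}^\bullet}$, exactly as described in the exercise of Brown cited in the preliminaries; this will matter later, because it is precisely the isomorphism needed to make the identification compatible with the versal extension modulo $p$ machinery built in the previous section and with Corollary~\ref{cor_charsub}.
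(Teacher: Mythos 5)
Your proposal is correct and follows essentially the same route as the paper: apply the mod $p$ Hopf formula to the presentation $1\to\Gamma^\bullet_{k+1}\to\Gamma\to\mathcal{N}^\bullet_k\to 1$ and use the inclusion $\Gamma^\bullet_{k+1}\subseteq[\Gamma,\Gamma]\Gamma^p$ to collapse the numerator. The only difference is that you spell out the verification of that inclusion for each series (which the paper leaves implicit), and that is a welcome bit of extra rigour rather than a deviation.
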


\begin{proof}

Applying the Hopf formula modulo $p$ to the presentation of $\mathcal{N}_k^\bullet$ given by
$$1\rightarrow \Gamma^\bullet_{k+1}\rightarrow \Gamma \rightarrow \mathcal{N}^\bullet_{k}\rightarrow 1,$$
since $\Gamma^\bullet_{k+1} \subset [\Gamma,\Gamma]\Gamma^p$ we get that
$H_2(\mathcal{N}^\bullet_{k};\mathbb{Z}/p)\cong \frac{\Gamma^\bullet_{k+1}}{[\Gamma,\Gamma^\bullet_{k+1}](\Gamma^\bullet_{k+1})^p}.$
\end{proof}

\section{Extensions of Automorphims}

Let $\Gamma$ be a free group of finite rank and $\{ \Gamma_k^\bullet\}_k$ be the Stallings or Zassenhaus $p$-central series.
Since $\mathcal{L}_{k+1}^\bullet$ is a characteristic subgroup of $\mathcal{N}_{k+1}^\bullet,$ there is a well defined homomorphism
$\psi_k^\bullet:Aut(\mathcal{N}_{k+1}^\bullet)\rightarrow Aut(\mathcal{N}_{k}^\bullet).$
The main goal of this section is to show that there is a short exact sequence
\begin{equation}
\label{ses_aut_p}
\xymatrix@C=7mm@R=10mm{0 \ar@{->}[r] & Hom(\mathcal{N}^\bullet_1, \mathcal{L}^\bullet_{k+1}) \ar@{->}[r]^-{i} & Aut\;\mathcal{N}^\bullet_{k+1} \ar@{->}[r]^-{\psi_k^\bullet} & Aut \;\mathcal{N}^\bullet_k \ar@{->}[r] & 1 .}
\end{equation}
To achieve our goal, we first prove that there is a short exact sequence
\begin{equation}
\label{ses_aut_p_tilde}
\xymatrix@C=7mm@R=10mm{0 \ar@{->}[r] & Hom(\mathcal{N}^\bullet_1,H_2(\mathcal{N}^\bullet_k;\mathbb{Z}/p)) \ar@{->}[r]^-{i} & Aut\;\widetilde{\mathcal{N}^\bullet_{k+1}} \ar@{->}[r]^-{\widetilde{\psi^\bullet_k}} & Aut \;\mathcal{N}^\bullet_k \ar@{->}[r] & 1. }
\end{equation}
Then we get the short exact sequence \eqref{ses_aut_p} as a push-out of this short exact sequence.

\begin{rem}
Notice that the short exact sequences \eqref{ses_aut_p}, \eqref{ses_aut_p_tilde} coincide for $\bullet=S$ and differ for $\bullet=Z.$
\end{rem}

Consider the group $\widetilde{\mathcal{N}_{k+1}^\bullet},$ which, by Proposition \eqref{prop_univ_p-cov}, is the universal p-covering group of $\mathcal{N}^\bullet_k.$
Since $\widetilde{\mathcal{L}_{k+1}^\bullet}$ is a characteristic subgroup of $\widetilde{\mathcal{N}_{k+1}^\bullet},$ there is a well defined homomorphism
$\widetilde{\psi^\bullet_k}: Aut (\widetilde{\mathcal{N}_{k+1}^\bullet}) \rightarrow Aut(\mathcal{N}^\bullet_k).$
Next we prove that $\widetilde{\psi^\bullet_k}$ is an epimorphism and
that $Ker(\widetilde{\psi^\bullet_k})$ is isomorphic to $Hom(\mathcal{N}^\bullet_1,H_2(\mathcal{N}^\bullet_k;\mathbb{Z}/p)).$

\paragraph{The map $\widetilde{\psi_k^\bullet}$ is an epimorphism.}
Applying Corollary \eqref{cor_ver_ext_mod_p} to the universal $p$-covering group $\widetilde{\mathcal{N}_{k+1}^\bullet}$ of $\mathcal{N}_{k}^\bullet,$ we get that $\widetilde{\psi^\bullet_k}: Aut (\widetilde{\mathcal{N}_{k+1}^\bullet}) \rightarrow Aut(\mathcal{N}^\bullet_k)$ is an epimorphism.

\paragraph{The Kernel of $\widetilde{\psi_k^\bullet}$ is isomorphic to
$Hom(\mathcal{N}^\bullet_1,H_2(\mathcal{N}^\bullet_k;\mathbb{Z}/p)).$}

\begin{prop} The kernel of the homomorphism
$\widetilde{\psi_k^\bullet}: Aut(\widetilde{\mathcal{N}_{k+1}^\bullet})
\rightarrow Aut(\mathcal{N}_{k}^\bullet)$
is isomorphic to the group $Stab(\mathcal{N}_{k}^\bullet,H_2(\mathcal{N}_{k}^\bullet;\mathbb{Z}/p)),$ i.e. the group of stabilizing automorphisms of the extension
$0 \rightarrow H_2(\mathcal{N}_{k}^\bullet;\mathbb{Z}/p) \rightarrow \widetilde{\mathcal{N}_{k+1}^\bullet} \rightarrow  \mathcal{N}^\bullet_k \rightarrow 1.$
\end{prop}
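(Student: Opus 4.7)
The inclusion $Stab(\mathcal{N}_{k}^\bullet,H_2(\mathcal{N}_{k}^\bullet;\mathbb{Z}/p))\subseteq \mathrm{Ker}\,\widetilde{\psi_k^\bullet}$ is immediate from the definition of a stabilising automorphism, since such a $\Phi$ by construction projects to the identity on the quotient $\mathcal{N}^\bullet_k$. The content of the statement is the reverse inclusion, so this is where I would focus.

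First I would fix an arbitrary $\Phi\in \mathrm{Ker}\,\widetilde{\psi_k^\bullet}$. Because $\widetilde{\mathcal{L}^\bullet_{k+1}}$ is a characteristic subgroup of $\widetilde{\mathcal{N}_{k+1}^\bullet}$ (Corollary \ref{cor_charsub}), $\Phi$ restricts to an automorphism $\alpha:=\Phi|_{\widetilde{\mathcal{L}^\bullet_{k+1}}}$, and by hypothesis the induced map on $\mathcal{N}^\bullet_k$ is the identity. Thus $\Phi$ fits into a commutative diagram of the form required by the definition of $Stab$, except possibly that $\alpha\neq \mathrm{id}$; the job is to show $\alpha=\mathrm{id}$ under the identification $\widetilde{\mathcal{L}^\bullet_{k+1}}\cong H_2(\mathcal{N}^\bullet_k;\mathbb{Z}/p)$ given by Proposition \ref{prop_Hopf_iso_modp}.

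The key step is to exploit that the extension
\[
0\longrightarrow \widetilde{\mathcal{L}^\bullet_{k+1}}\longrightarrow \widetilde{\mathcal{N}^\bullet_{k+1}}\longrightarrow \mathcal{N}^\bullet_k\longrightarrow 1
\]
is a versal extension mod $p$ (Proposition \ref{prop_versal_ext_general}), so its class $v_p\in H^2(\mathcal{N}^\bullet_k;\widetilde{\mathcal{L}^\bullet_{k+1}})$ corresponds under the universal coefficients isomorphism $\tilde{\eta}$ of Proposition \ref{prop_versal_ext_general} to the Hopf identification $h$. The commutative diagram realised by $\Phi$ translates, via Propositions \ref{prop_pull-back} and \ref{prop_push-out}, into the cocycle equality $\alpha_*(v_p)=\mathrm{id}^*(v_p)=v_p$ in $H^2(\mathcal{N}^\bullet_k;\widetilde{\mathcal{L}^\bullet_{k+1}})$. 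Applying the natural isomorphism $\tilde{\eta}$ then gives $\alpha\circ h=h$, whence $\alpha=\mathrm{id}$; this is the same naturality argument used in Lemma \ref{lema_eq_cocy}, only applied to the lift of the identity instead of to a lift produced by Corollary \ref{cor_ver_ext_mod_p}.

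The main obstacle I expect is this last step: one has to be careful that $\alpha$ is intrinsically determined by $\Phi$, whereas Corollary \ref{cor_ver_ext_mod_p} only guarantees the \emph{existence} of a distinguished lift of a given automorphism of $\mathcal{N}^\bullet_k$. The point is that the equality $\alpha_*(v_p)=\phi^*(v_p)$ does not depend on the choice of lift but only on the commutativity of the square induced by $\Phi$, so it applies to \emph{any} $\Phi\in \mathrm{Ker}\,\widetilde{\psi_k^\bullet}$. With $\alpha=\mathrm{id}$ in hand, $\Phi$ satisfies all three conditions defining a stabilising automorphism, and the two inclusions give the desired isomorphism $\mathrm{Ker}\,\widetilde{\psi_k^\bullet}\cong Stab(\mathcal{N}_{k}^\bullet,H_2(\mathcal{N}_{k}^\bullet;\mathbb{Z}/p))$.
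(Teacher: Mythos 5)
Your proof is correct and follows essentially the same approach as the paper: both reduce the problem, via the push-out description of the induced square, to the equality $\psi_*(v_p)=v_p$ and then apply the naturality of the universal-coefficients isomorphism to conclude $\psi=\mathrm{id}$. The only cosmetic difference is that you phrase the final step via $\tilde{\eta}$ and the Hopf isomorphism $h$, whereas the paper phrases it via $\eta$ and the identity map after the Hopf identification; once $\widetilde{\mathcal{L}^\bullet_{k+1}}\cong H_2(\mathcal{N}^\bullet_k;\mathbb{Z}/p)$ is made, these are the same computation.
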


\begin{proof}

Consider the versal extension modulo $p$
\begin{equation}
\label{ext_tilde_stab}
\xymatrix@C=7mm@R=10mm{
0 \ar@{->}[r] &H_2(\mathcal{N}_{k}^\bullet;\mathbb{Z}/p) \ar@{->}[r] &\widetilde{\mathcal{N}_{k+1}^\bullet} \ar@{->}[r] & \mathcal{N}^\bullet_k \ar@{->}[r] & 1 .}
\end{equation}
Denote $v_p\in H^2(\mathcal{N}_{k}^\bullet;H_2(\mathcal{N}_{k}^\bullet;\mathbb{Z}/p))$ its associated cohomology class.
Let $\Phi\in Ker(\widetilde{\psi_k^\bullet}).$ Then $\Phi$ induces a commutative diagram
\begin{equation}
\label{com_diag_push-out_tilde}
\xymatrix@C=7mm@R=10mm{0 \ar@{->}[r] &H_2(\mathcal{N}_{k}^\bullet;\mathbb{Z}/p) \ar@{->}[r] \ar@{->}[d]^{\psi}&  \widetilde{\mathcal{N}_{k+1}^\bullet} \ar@{->}[r] \ar@{->}[d]^{\Phi}& \mathcal{N}^\bullet_k \ar@{=}[d] \ar@{->}[r]& 1 \\
0 \ar@{->}[r] & H_2(\mathcal{N}_{k}^\bullet;\mathbb{Z}/p) \ar@{->}[r] &\widetilde{\mathcal{N}_{k+1}^\bullet} \ar@{->}[r] & \mathcal{N}^\bullet_k \ar@{->}[r] & 1. }
\end{equation}
and this implies that
$\psi_*(v_p)=v_p.$
Applying the natural isomorphism $\eta$ of \eqref{iso_uct_h2} to the above equality, we obtain that
$id=\eta(v_p)=\eta \psi_*(v_p)=\psi_*\eta(v_p)=\psi_*(id)=\psi.$
Therefore $\Phi$ stabilize the extension \eqref{ext_tilde_stab}.
Hence, $Ker(\widetilde{\psi^\bullet_k})\cong Stab(\mathcal{N}_{k}^\bullet,H_2(\mathcal{N}_{k}^\bullet;\mathbb{Z}/p)).$
\end{proof}

Moreover, we have the following isomorphisms:
\begin{align*}
Stab(\mathcal{N}^\bullet_{k},H_2(\mathcal{N}^\bullet_k;\mathbb{Z}/p))\cong  & Der(\mathcal{N}^\bullet_{k},H_2(\mathcal{N}^\bullet_k;\mathbb{Z}/p))=Hom(\mathcal{N}^\bullet_{k},H_2(\mathcal{N}^\bullet_k;\mathbb{Z}/p))= \\
= & Hom(\mathcal{N}^\bullet_1,H_2(\mathcal{N}^\bullet_k;\mathbb{Z}/p)).
\end{align*}
Therefore we get the following result:
\begin{prop}
\label{prop_exact_seq_modp}
We have an exact sequence of groups
\begin{equation*}
\xymatrix@C=7mm@R=10mm{0 \ar@{->}[r] & Hom(\mathcal{N}^\bullet_1,H_2(\mathcal{N}^\bullet_k;\mathbb{Z}/p)) \ar@{->}[r] & Aut\;\widetilde{\mathcal{N}^\bullet_{k+1}} \ar@{->}[r]^-{\widetilde{\psi^\bullet_k}} & Aut \;\mathcal{N}^\bullet_k \ar@{->}[r] & 1. }
\end{equation*}
\end{prop}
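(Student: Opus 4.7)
My plan is to observe that essentially all the ingredients for this proposition have already been assembled in the paragraphs preceding the statement; what remains is to package them cleanly in three steps: surjectivity of $\widetilde{\psi^\bullet_k}$, identification of its kernel with the stabilizing automorphisms, and identification of those stabilizers with $Hom(\mathcal{N}^\bullet_1, H_2(\mathcal{N}^\bullet_k;\mathbb{Z}/p))$.

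First I would note that by Proposition \eqref{prop_univ_p-cov}, combined with the modulo $p$ Hopf isomorphism of Proposition \eqref{prop_Hopf_iso_modp}, the extension
\begin{equation*}
\xymatrix@C=7mm@R=7mm{0 \ar@{->}[r] & H_2(\mathcal{N}^\bullet_k;\mathbb{Z}/p) \ar@{->}[r] & \widetilde{\mathcal{N}^\bullet_{k+1}} \ar@{->}[r] & \mathcal{N}^\bullet_k \ar@{->}[r] & 1}
\end{equation*}
realises the universal $p$-covering group of $\mathcal{N}^\bullet_k$, so by Proposition \eqref{prop_versal_ext_general} it is a versal extension mod $p$. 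Corollary \eqref{cor_ver_ext_mod_p} then gives, for every $\phi\in Aut(\mathcal{N}^\bullet_k)$, an automorphism $\Phi\in Aut(\widetilde{\mathcal{N}^\bullet_{k+1}})$ covering $\phi$, which immediately yields surjectivity of $\widetilde{\psi^\bullet_k}$.

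For the kernel, I would take $\Phi\in Ker(\widetilde{\psi^\bullet_k})$ and observe that it induces a commutative diagram with identity map at the bottom right, so the induced map $\psi$ on $H_2(\mathcal{N}^\bullet_k;\mathbb{Z}/p)$ satisfies $\psi_*(v_p)=v_p$, where $v_p$ is the class of the versal extension. Applying the natural isomorphism $\eta$ of \eqref{iso_uct_h2} yields $\psi=id$, so $\Phi$ is a stabilizing automorphism of the above extension; the converse inclusion is immediate. Thus $Ker(\widetilde{\psi^\bullet_k})\cong Stab(\mathcal{N}^\bullet_k,H_2(\mathcal{N}^\bullet_k;\mathbb{Z}/p))$, which by the isomorphism of Corollary 9.16 of \cite{rot} (quoted in the Preliminaries) equals $Der(\mathcal{N}^\bullet_k,H_2(\mathcal{N}^\bullet_k;\mathbb{Z}/p))$. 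Since the extension is central, the action of $\mathcal{N}^\bullet_k$ on the kernel is trivial, so derivations are ordinary homomorphisms. Finally, as $H_2(\mathcal{N}^\bullet_k;\mathbb{Z}/p)$ is a $\mathbb{Z}/p$-vector space, any such homomorphism factors through $\mathcal{N}^\bullet_k/[\mathcal{N}^\bullet_k,\mathcal{N}^\bullet_k](\mathcal{N}^\bullet_k)^p = \Gamma/\Gamma^\bullet_2 = \mathcal{N}^\bullet_1$.

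The main obstacle is not any single deep step but making sure the identifications line up — in particular, verifying that the correct target of $Hom$ is $\mathcal{N}^\bullet_1$ rather than a subtler quotient of $\mathcal{N}^\bullet_k$, and keeping clear the distinction between $\mathcal{N}^\bullet_{k+1}$ and the universal $p$-cover $\widetilde{\mathcal{N}^\bullet_{k+1}}$ (which coincide in the Stallings case but not in the Zassenhaus case). Once these identifications are in place, the short exact sequence \eqref{ses_aut_p} stated in the introduction of the section will follow from \eqref{ses_aut_p_tilde} by a push-out along the natural surjection $H_2(\mathcal{N}^\bullet_k;\mathbb{Z}/p)=\widetilde{\mathcal{L}^\bullet_{k+1}}\twoheadrightarrow \mathcal{L}^\bullet_{k+1}$, which is the content of Corollary \eqref{cor_push-pull}.
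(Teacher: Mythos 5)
Your proposal is correct and follows essentially the same line of argument as the paper: surjectivity via Corollary~\eqref{cor_ver_ext_mod_p} applied to the universal $p$-covering group, identification of the kernel with $Stab(\mathcal{N}^\bullet_k, H_2(\mathcal{N}^\bullet_k;\mathbb{Z}/p))$ by the $\psi_*(v_p)=v_p$ computation and the natural isomorphism $\eta$, and then $Stab\cong Der\cong Hom$ via centrality. One small improvement over the paper's own write-up is that you explicitly justify the last equality $Hom(\mathcal{N}^\bullet_k, -)=Hom(\mathcal{N}^\bullet_1, -)$ by factoring through the Frattini quotient $\mathcal{N}^\bullet_k/[\mathcal{N}^\bullet_k,\mathcal{N}^\bullet_k](\mathcal{N}^\bullet_k)^p = \Gamma/\Gamma^\bullet_2$, a step the paper leaves silent; only nit outside the scope of this proposition: the push-out from $\widetilde{\mathcal{L}^\bullet_{k+1}}$ to $\mathcal{L}^\bullet_{k+1}$ that you mention at the end is the content of Proposition~\eqref{prop_push-out}, not Corollary~\eqref{cor_push-pull}.
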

As a consequence of Proposition \eqref{prop_Hopf_iso_modp}, using the modulo $p$ Hopf isomorphism, we get the following result:

\begin{cor}
\label{cor_exact_seq_modp}
We have an exact sequence of groups
\begin{equation*}
\xymatrix@C=7mm@R=10mm{0 \ar@{->}[r] & Hom(\mathcal{N}^\bullet_1, \widetilde{\mathcal{L}^\bullet_{k+1}}) \ar@{->}[r]^-{i} & Aut\;\widetilde{\mathcal{N}^\bullet_{k+1}} \ar@{->}[r]^-{\widetilde{\psi^\bullet_k}} & Aut \;\mathcal{N}^\bullet_k \ar@{->}[r] & 1, }
\end{equation*}
where $i$ is defined as $i(f)= (\gamma\mapsto f([\gamma])\gamma).$
\end{cor}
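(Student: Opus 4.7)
The plan is to derive this corollary from Proposition \ref{prop_exact_seq_modp} by two successive identifications. First, Proposition \ref{prop_Hopf_iso_modp} supplies the mod $p$ Hopf isomorphism $H_2(\mathcal{N}^\bullet_k;\mathbb{Z}/p) \cong \widetilde{\mathcal{L}^\bullet_{k+1}}$; since this isomorphism is natural, applying $Hom(\mathcal{N}^\bullet_1, -)$ to it replaces the left-hand term of the sequence in Proposition \ref{prop_exact_seq_modp} by $Hom(\mathcal{N}^\bullet_1, \widetilde{\mathcal{L}^\bullet_{k+1}})$, exactly as required.

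Second, I would unwind the explicit form of $i$ by retracing the chain
\[
Ker(\widetilde{\psi^\bullet_k}) \;\cong\; Stab(\mathcal{N}^\bullet_k, H_2(\mathcal{N}^\bullet_k;\mathbb{Z}/p)) \;\cong\; Der(\mathcal{N}^\bullet_k, H_2(\mathcal{N}^\bullet_k;\mathbb{Z}/p)) \;=\; Hom(\mathcal{N}^\bullet_k, H_2(\mathcal{N}^\bullet_k;\mathbb{Z}/p))
\]
used in the proof of Proposition \ref{prop_exact_seq_modp}. Under the isomorphism $\sigma: Stab \to Der$ recalled earlier in Chapter 1, namely $\varphi \mapsto (x \mapsto \varphi(s(x))s(x)^{-1})$ for a set-theoretic section $s$, a derivation $d$ corresponds to the stabilizing automorphism $\Phi$ determined by $\Phi(s(x)) = d(x)\,s(x)$. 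The extension $0 \to \widetilde{\mathcal{L}^\bullet_{k+1}} \to \widetilde{\mathcal{N}^\bullet_{k+1}} \to \mathcal{N}^\bullet_k \to 1$ is central, so the $\mathcal{N}^\bullet_k$-action on $\widetilde{\mathcal{L}^\bullet_{k+1}}$ is trivial and derivations coincide with homomorphisms. Moreover, because $\widetilde{\mathcal{L}^\bullet_{k+1}}$ is an abelian $p$-elementary group, any homomorphism $\mathcal{N}^\bullet_k \to \widetilde{\mathcal{L}^\bullet_{k+1}}$ factors through the maximal $p$-elementary abelian quotient of $\mathcal{N}^\bullet_k$, which is precisely $\mathcal{N}^\bullet_1 = \Gamma/\Gamma^\bullet_2$; this gives the natural identification $Hom(\mathcal{N}^\bullet_k, \widetilde{\mathcal{L}^\bullet_{k+1}}) = Hom(\mathcal{N}^\bullet_1, \widetilde{\mathcal{L}^\bullet_{k+1}})$.

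Taking the tautological lift $\gamma \mapsto \gamma$ as the section (so $x = [\gamma]$), the composite of these identifications sends $f \in Hom(\mathcal{N}^\bullet_1, \widetilde{\mathcal{L}^\bullet_{k+1}})$ to the automorphism $\Phi(\gamma) = f([\gamma])\,\gamma$, which is the formula for $i(f)$ in the statement. The only point that truly needs checking is that this $\Phi$ is a well-defined homomorphism of $\widetilde{\mathcal{N}^\bullet_{k+1}}$, and this is an immediate consequence of centrality: for $\gamma, \delta \in \widetilde{\mathcal{N}^\bullet_{k+1}}$ one computes $\Phi(\gamma\delta) = f([\gamma\delta])\gamma\delta = f([\gamma])f([\delta])\gamma\delta = f([\gamma])\gamma \cdot f([\delta])\delta = \Phi(\gamma)\Phi(\delta)$, where the central element $f([\delta])$ is commuted past $\gamma$. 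I do not foresee any genuine obstacle here; the content of the corollary is essentially a transcription of Proposition \ref{prop_exact_seq_modp} through two naturally compatible isomorphisms, and the main work is simply to verify that the explicit section-based formula matches the abstract kernel description.
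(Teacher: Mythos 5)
Your proof is correct and follows the same route as the paper: the corollary is obtained from Proposition~\eqref{prop_exact_seq_modp} by substituting the modulo~$p$ Hopf isomorphism of Proposition~\eqref{prop_Hopf_iso_modp}, and your unwinding of the $Stab \cong Der \cong Hom$ chain to produce the explicit formula $i(f) = (\gamma \mapsto f([\gamma])\gamma)$ and check it is a homomorphism via centrality is exactly the content left implicit in the paper's one-line derivation.
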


As a consequence of Corollary \eqref{cor_exact_seq_modp}, we get the following result:
\begin{prop}
\label{prop_exact_seq_modp_L}
We have an exact sequence of groups
\begin{equation*}
\xymatrix@C=7mm@R=10mm{0 \ar@{->}[r] & Hom(\mathcal{N}^\bullet_1,\mathcal{L}^\bullet_{k+1}) \ar@{->}[r]^-{i} & Aut\;\mathcal{N}^\bullet_{k+1} \ar@{->}[r]^-{\psi^\bullet_k} & Aut \;\mathcal{N}^\bullet_k \ar@{->}[r] & 1, }
\end{equation*}
where $i$ is defined as $i(f)= (\gamma\mapsto f([\gamma])\gamma).$
\end{prop}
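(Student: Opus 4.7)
The plan is to reduce Proposition 5.4 to Corollary 5.3, which handles the ``tilde'' version. First I would note that by Corollary 4.7, the subgroup $\mathcal{L}^\bullet_{k+1}$ is characteristic in $\mathcal{N}^\bullet_{k+1}$, so the map $\psi^\bullet_k$ is well defined, and the short exact sequence
\begin{equation*}
\xymatrix@C=7mm@R=10mm{0 \ar@{->}[r] & \mathcal{L}^\bullet_{k+1} \ar@{->}[r] & \mathcal{N}^\bullet_{k+1} \ar@{->}[r] & \mathcal{N}^\bullet_k \ar@{->}[r] & 1}
\end{equation*}
is central (as established just before Proposition 4.5). It therefore suffices to prove surjectivity of $\psi^\bullet_k$ and to identify its kernel.

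For surjectivity, the Stallings case is immediate because $\widetilde{\mathcal{N}^S_{k+1}}=\mathcal{N}^S_{k+1}$ by definition of the Stallings series, so Corollary 5.3 is already the desired statement. For the Zassenhaus case, the natural quotient $\widetilde{\mathcal{N}^Z_{k+1}}\twoheadrightarrow \mathcal{N}^Z_{k+1}$ has kernel $\Gamma^Z_{k+2}/[\Gamma,\Gamma^Z_{k+1}](\Gamma^Z_{k+1})^p$, which by Proposition 4.6 is canonically identified with $(\widetilde{\mathcal{N}^Z_{k+1}})^Z_{k+2}$ and is therefore characteristic in $\widetilde{\mathcal{N}^Z_{k+1}}$ by Proposition 4.5. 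Hence every $\tilde\Phi\in Aut(\widetilde{\mathcal{N}^Z_{k+1}})$ descends to a well-defined element of $Aut(\mathcal{N}^Z_{k+1})$, giving a commutative square
\begin{equation*}
\xymatrix@C=10mm@R=10mm{Aut\;\widetilde{\mathcal{N}^Z_{k+1}} \ar@{->}[r]^-{\widetilde{\psi^Z_k}}\ar@{->>}[d] & Aut\;\mathcal{N}^Z_k \ar@{=}[d]\\ Aut\;\mathcal{N}^Z_{k+1} \ar@{->}[r]^-{\psi^Z_k} & Aut\;\mathcal{N}^Z_k}
\end{equation*}
from which surjectivity of $\psi^Z_k$ follows from that of $\widetilde{\psi^Z_k}$ (Corollary 5.3).

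For the kernel, any $\Phi\in\ker\psi^\bullet_k$ stabilizes the above central extension in the sense of Section 2.1, so Corollary 9.16 of \cite{rot} yields an isomorphism $\ker\psi^\bullet_k\cong Stab(\mathcal{N}^\bullet_k,\mathcal{L}^\bullet_{k+1})\cong Der(\mathcal{N}^\bullet_k,\mathcal{L}^\bullet_{k+1})$. Because the extension is central, $\mathcal{N}^\bullet_k$ acts trivially on $\mathcal{L}^\bullet_{k+1}$, so the cocycle identity for a derivation collapses to that of a homomorphism, giving $Der(\mathcal{N}^\bullet_k,\mathcal{L}^\bullet_{k+1})=Hom(\mathcal{N}^\bullet_k,\mathcal{L}^\bullet_{k+1})$. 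Since $\mathcal{L}^\bullet_{k+1}$ is $p$-elementary abelian, any such homomorphism factors through the maximal $p$-elementary abelian quotient of $\mathcal{N}^\bullet_k$, which is precisely $\mathcal{N}^\bullet_1$, yielding the identification $Hom(\mathcal{N}^\bullet_k,\mathcal{L}^\bullet_{k+1})=Hom(\mathcal{N}^\bullet_1,\mathcal{L}^\bullet_{k+1})$. A quick check on the explicit form of the stabilizing automorphism associated to $f$ shows it is precisely $\gamma\mapsto f([\gamma])\gamma$, matching the map $i$ in the statement. The main obstacle is the Zassenhaus surjectivity step, which really does require the characteristic nature of the kernel of $\widetilde{\mathcal{N}^Z_{k+1}}\twoheadrightarrow\mathcal{N}^Z_{k+1}$, and thus the subtle identifications of Propositions 4.5 and 4.6; everything else is a formal consequence of the tilde version together with the standard theory of stabilizing automorphisms of a central extension.
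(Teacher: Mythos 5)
Your proof is correct and reaches the statement by the same overall route as the paper: reduction to the tilde version (Corollary \eqref{cor_exact_seq_modp}), with the Stallings case being literally identical since $\widetilde{\mathcal{N}^S_{k+1}}=\mathcal{N}^S_{k+1}$ (a point the paper records in the remark preceding the proposition). The paper's own proof is only the terse assertion of a push-out diagram; you unpack it into two independent verifications. For surjectivity you make explicit what makes the paper's middle vertical map $Aut\;\widetilde{\mathcal{N}^Z_{k+1}}\to Aut\;\mathcal{N}^Z_{k+1}$ well-defined, namely that $\Gamma^Z_{k+2}/[\Gamma,\Gamma^Z_{k+1}](\Gamma^Z_{k+1})^p$ is characteristic by Propositions \eqref{prop_ident_L_versal} and \eqref{prop_charSZ}. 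For the kernel you argue directly through stabilizing automorphisms rather than transporting it from the tilde sequence along the push-out; this is a genuinely more elementary sub-argument and explains internally why the kernel is $Hom(\mathcal{N}^\bullet_1,\mathcal{L}^\bullet_{k+1})$. You should, however, justify the unstated step that every element $\Phi$ of $\ker\psi^\bullet_k$ actually \emph{stabilizes} the extension: one needs $\Phi$ to restrict to the identity on $\mathcal{L}^\bullet_{k+1}$, not merely to preserve it. This is true because, by centrality, $\gamma\mapsto\Phi(\gamma)\gamma^{-1}$ defines a homomorphism into the $p$-elementary abelian group $\mathcal{L}^\bullet_{k+1}$, which therefore factors through $\mathcal{N}^\bullet_1$ and so vanishes on $\mathcal{L}^\bullet_{k+1}$, since the latter sits inside the Frattini subgroup of $\mathcal{N}^\bullet_{k+1}$ for $k\geq 1$. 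The paper obtains the analogous fact for $\widetilde{\psi^\bullet_k}$ through the versal-extension isomorphism $\eta$ of \eqref{iso_uct_h2}, so your elementary argument at this step, once spelled out, is a genuine simplification.
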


\begin{proof}
Notice that we have a push-out diagram
\begin{equation*}
\xymatrix@C=7mm@R=10mm{0 \ar@{->}[r] & Hom(\mathcal{N}^\bullet_1, \widetilde{\mathcal{L}^\bullet_{k+1}}) \ar@{->}[d]^-{q} \ar@{->}[r]^-{i} & Aut\;\widetilde{\mathcal{N}^\bullet_{k+1}} \ar@{->}[d] \ar@{->}[r]^-{\widetilde{\psi^\bullet_k}} & Aut \;\mathcal{N}^\bullet_k \ar@{=}[d] \ar@{->}[r] & 1 \\
0 \ar@{->}[r] & Hom(\mathcal{N}^\bullet_1,\mathcal{L}^\bullet_{k+1}) \ar@{->}[r]^-{i} & Aut\;\mathcal{N}^\bullet_{k+1} \ar@{->}[r]^-{\psi^\bullet_k} & Aut \;\mathcal{N}^\bullet_k \ar@{->}[r] & 1, }
\end{equation*}
where $q$ is the quotient respect $\frac{\Gamma^\bullet_{k+2}}{[\Gamma,\Gamma^\bullet_{k+1}](\Gamma^\bullet_{k+1})^p}.$
\end{proof}

\begin{rem}
Despite the fact that the extensions of Proposition \eqref{prop_exact_seq_modp_L} for $\bullet=S$ and $\bullet=Z$ are diferent, notice that $\Gamma^Z_{p^l}<\Gamma^S_l<\Gamma^Z_l.$ 
Then, one can show that there is a well defined epimorphism $Aut (\mathcal{N}_{k}^S) \rightarrow Aut(\mathcal{N}^Z_k).$
\end{rem}

\subsection{On centrality}

In this section we prove that the extension of Proposition \eqref{prop_exact_seq_modp_L} is not central.
In particular, we prove that the action of $Aut(\mathcal{N}^\bullet_{k+1})$ on $Hom(\mathcal{N}_1^\bullet, \mathcal{L}^\bullet_{k+1})$ factors through $Aut(\mathcal{N}^\bullet_{1}).$

Denote by $IA^p(\mathcal{N}^\bullet_k)$ formed by the elements which act trivially on the first homology (with coefficients in $\mathbb{Z}/p$) of $\mathcal{N}^\bullet_{k}.$ In other words, $IA^p(\mathcal{N}^\bullet_k)$ is the kernel of the surjection $Aut\;\mathcal{N}^\bullet_k \rightarrow Aut\;\mathcal{N}^\bullet_1.$ i.e. we have the following short exact sequence
\begin{equation}
\label{ses_IAmodp}
\xymatrix@C=7mm@R=10mm{1 \ar@{->}[r] & IA^p(\mathcal{N}^\bullet_k) \ar@{->}[r] & Aut\;\mathcal{N}^\bullet_{k} \ar@{->}[r] & Aut \;\mathcal{N}^\bullet_1 \ar@{->}[r] & 1 .}
\end{equation}
If we restrict the short exact sequence of Proposition \eqref{prop_exact_seq_modp} to $IA^p(\mathcal{N}^\bullet_{k+1})$ we get the following short exact sequence
\begin{equation}
\label{ses_ZS}
\xymatrix@C=7mm@R=10mm{0 \ar@{->}[r] & Hom(H_p, \mathcal{L}^\bullet_{k+1}) \ar@{->}[r]^{i} & IA^p(\mathcal{N}^\bullet_{k+1}) \ar@{->}[r]^{\psi^\bullet_k} & IA^p(\mathcal{N}^\bullet_k) \ar@{->}[r] & 1 .}
\end{equation}

Next we prove that the short exact sequence \eqref{ses_ZS} is a central extension for $\bullet=S,Z.$
We first give some preliminary results.

\begin{defi}[complex commutator \cite{hall2}]
Let $P_1,P_2,\ldots, P_r$ be any $r$ elements of a group $G.$ We shall define by induction what we mean by a \textit{complex commutator of weight $w$ in the components $P_1,P_2,\ldots, P_r.$} The complex commutators of weight $1$ are the elements $P_1,P_2,\ldots, P_r$ themselves.
Supposing the complex commutators of all weights less than $w$ have already been defined, then those of weight $w$ consist of all the expressions of the form $[S,T],$ where $S$ and $T$ are any complex commutators of weights $w_1$ and $w_2$ in the components $P_1,P_2,\ldots P_r$ respectively, such that $w_1+w_2=w.$

The weight of a complex commutator is, of course, always relative to the choice of components; these must be specified before the weight can be determined.
For example, $[[P,Q],R]$ is of weight $3$ in the three components $P,$ $Q$ and $R;$ but of weight $2$ in the two components $[P,Q]$ and $R.$

\end{defi}

\begin{teo}[P. Hall, Theorem 3.2. in \cite{hall2}]
\label{teo_hall}
If $p$ is a prime, $\alpha$ is a positive integer, $P$ and $Q$ are any two elements of a group $G,$ and
$$R_1,R_2,\ldots ,R_i,\ldots \qquad (R_1=P,\quad R_2=Q)$$
are the various formally distinct complex commutators of $P$ and $Q$ arranged in order of increasing weights, then integers $n_1,n_2,\ldots ,n_i,\ldots$ can be found $(n_1=n_2=p^\alpha)$ such that
$$(PQ)^{p^{\alpha}}=R_1^{n_1}R_2^{n_2}\cdots R_i^{n_i}\cdots$$
and if the weight $w_i$ of $R_i$ in $P$ and $Q$ satisfies the inequality $w_i<p^\beta \leq p^\alpha,$ then $n_i$ is divisible by $p^{\alpha-\beta +1}.$
\end{teo}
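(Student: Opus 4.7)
The plan is to prove Hall's collection formula via the Hall--Petresco identity, combined with Kummer's theorem on the $p$-adic valuation of binomial coefficients. I would first establish the Hall--Petresco identity in the free group on two generators $P,Q$: starting from $(PQ)^n = (PQ)(PQ)\cdots(PQ)$ and applying repeatedly the identities
\[
AB = BA[A,B],\qquad [AB,C]=[A,C]^B[B,C],\qquad [A,BC]=[A,C][A,B]^C,
\]
one moves each occurrence of $P$ past the $Q$'s to its left (the ``collection process'') and handles the commutators produced in the same manner. At each stage the unresolved terms lie in progressively deeper terms of the lower central series, so modulo the $N$-th term the process terminates, yielding an ordered product $R_1^{n_1(n)}R_2^{n_2(n)}\cdots R_N^{n_N(n)}$ of complex commutators in any fixed linear order that refines the weight ordering.

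The crucial step is to show that each exponent $n_i(n)$ is a $\mathbb{Z}$-linear combination
\[
n_i(n)=\sum_{k=1}^{w_i}a_{i,k}\binom{n}{k}
\]
of binomial coefficients in $n$, with $a_{i,k}$ integers independent of $n$. I would prove this by induction on the weight $w_i$. The base case $w_i=1$ is immediate since $R_1=P$ and $R_2=Q$ each occur $n$ times in the unreduced word, so $n_1(n)=n_2(n)=n=\binom{n}{1}$. For the inductive step one tracks how many times a given complex commutator of weight $w$ is produced during collection: each instance comes from selecting an ordered tuple of positions in the original word of length $n$ and, after inducting on the smaller weights already collected, this count reduces to a polynomial in $n$ of degree at most $w$ expressed in the basis $\{\binom{n}{k}\}_{k\leq w}$. (Alternatively, one can invoke universality of the free nilpotent group: the identity is polynomial in $n$ and can be verified on the additive Mal'cev completion, then transferred back by polynomial interpolation.)

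With the Hall--Petresco form in hand, specializing $n=p^\alpha$ together with Kummer's theorem $v_p\!\binom{p^\alpha}{k}=\alpha-v_p(k)$ for $1\le k\le p^\alpha$ finishes the argument. If $w_i<p^\beta$, then every $k\le w_i$ satisfies $v_p(k)\le\beta-1$, hence
\[
v_p\!\binom{p^\alpha}{k}\;\geq\;\alpha-\beta+1
\]
for each term contributing to $n_i(p^\alpha)=\sum_k a_{i,k}\binom{p^\alpha}{k}$. Therefore $p^{\alpha-\beta+1}\mid n_i(p^\alpha)$, which is the required divisibility. The explicit value $n_1(p^\alpha)=n_2(p^\alpha)=p^\alpha$ follows at once from the base case of the induction.

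The main obstacle is the combinatorial bookkeeping in the inductive step: showing that the exponent of each collected complex commutator of weight $w$ is polynomial of degree $\le w$ in $n$ requires a careful accounting of how the three commutator identities above propagate commutators deeper into the lower central series, without ever producing terms of the wrong weight or uncontrolled multiplicity. This is the delicate heart of Hall's 1933 argument; modern expositions either formalize the induction on the ``collection tree'' of nested commutators, or sidestep it by realizing the free two-generator nilpotent group of class $N$ as a unipotent algebraic group, where $(PQ)^n$ becomes polynomial in $n$ by construction and the coefficients can be read off on a Mal'cev basis.
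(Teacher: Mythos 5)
The thesis cites this statement as Theorem 3.2 of \cite{hall2} and does not reproduce a proof, so there is no in-paper argument for you to match; your proposal has to be judged on its own. As an outline it is the standard modern route and it is sound. Granting that each exponent has the form $n_i(n)=\sum_{k=1}^{w_i}a_{i,k}\binom{n}{k}$ with integers $a_{i,k}$ independent of $n$, your valuation step is correct: $v_p\binom{p^\alpha}{k}=\alpha-v_p(k)$ for $1\le k\le p^\alpha$, and $k\le w_i<p^\beta$ forces $v_p(k)\le\beta-1$, hence $p^{\alpha-\beta+1}\mid\binom{p^\alpha}{k}$ for every $k$ in the sum, hence $p^{\alpha-\beta+1}\mid n_i(p^\alpha)$. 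Together with $n_1(p^\alpha)=n_2(p^\alpha)=\binom{p^\alpha}{1}=p^\alpha$ this is exactly the theorem. The three commutator identities you collect with are all correct for the convention $[A,B]=A^{-1}B^{-1}AB$.

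The gap is the one you flag yourself: the crucial lemma, that during collection the multiplicity of a weight-$w$ complex commutator is an integer linear combination of $\binom{n}{1},\dots,\binom{n}{w}$ with coefficients independent of $n$, is not actually proved. It is the hard part of Hall's argument, not a routine induction: one has to track, through repeated applications of $AB=BA[A,B]$, $[AB,C]=[A,C]^B[B,C]$, $[A,BC]=[A,C][A,B]^C$, which ordered $k$-tuples of the original $n$ factors $PQ$ give rise to a given commutator, and show that this count is a binomial-type polynomial of degree bounded by the weight with integer coefficients. Your alternative via the Mal'cev or unipotent completion gets polynomiality in $n$ for free, but it does not by itself give integrality of the $a_{i,k}$, which still requires a separate argument (one must interpolate on enough integer values and control denominators). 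So what you have written is a correct and clean reduction of Hall's theorem to the collection lemma plus the $p$-adic valuation computation; it is not yet a self-contained proof of the collection lemma, and therefore not yet a self-contained proof of the theorem.
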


\begin{lema}[\cite{hall} ,Three Subgroups Lemma]
Let $A,$ $B$ and $C$ be subgroups of a group $G.$ If $N\lhd G$ is normal subgroup such that $[A,[B,C]]$ and $[B,[C,A]]$ are contained in $N$ then $[C,[A,B]]$ is also contained in $N.$
\end{lema}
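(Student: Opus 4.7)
The plan is to deduce this from the Hall--Witt identity together with the normality of $N$ in $G$. Recall that the Hall--Witt identity asserts that for any three elements $a,b,c$ in a group one has
\begin{equation*}
[a,[b,c^{-1}]]^{c}\cdot [c,[a,b^{-1}]]^{b}\cdot [b,[c,a^{-1}]]^{a}=1,
\end{equation*}
where $x^{y}=y^{-1}xy$ denotes conjugation. (Any of the equivalent formulations of this identity will do; one simply expands all commutators in terms of the group operation and verifies that everything cancels.) I would first write out this identity carefully and sanity-check it by hand for the chosen convention $[x,y]=x^{-1}y^{-1}xy$, so that the exponents and inverses are internally consistent.

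Next, I would observe that since $A$, $B$ and $C$ are \emph{subgroups}, the inversions appearing in the Hall--Witt identity cause no trouble: for $a\in A$, $b\in B$, $c\in C$ one has $a^{-1}\in A$, $b^{-1}\in B$, $c^{-1}\in C$, and therefore $[b,c^{-1}]\in[B,C]$, $[a,b^{-1}]\in[A,B]$, $[c,a^{-1}]\in[C,A]$. Consequently the three basic commutators appearing inside the identity belong respectively to $[A,[B,C]]$, $[C,[A,B]]$ and $[B,[C,A]]$. By hypothesis, the first and third of these subgroups lie inside $N$, so the elements $[a,[b,c^{-1}]]$ and $[b,[c,a^{-1}]]$ lie in $N$. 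Because $N$ is normal in $G$, their conjugates $[a,[b,c^{-1}]]^{c}$ and $[b,[c,a^{-1}]]^{a}$ still lie in $N$.

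Solving the Hall--Witt identity for the middle factor then gives
\begin{equation*}
[c,[a,b^{-1}]]^{b}=\bigl([a,[b,c^{-1}]]^{c}\bigr)^{-1}\cdot \bigl([b,[c,a^{-1}]]^{a}\bigr)^{-1}\in N,
\end{equation*}
and again using normality of $N$ we conclude $[c,[a,b^{-1}]]\in N$. As this holds for arbitrary $a\in A$, $b\in B$, $c\in C$, and since $b^{-1}$ ranges over all of $B$ as $b$ does, the generators of $[C,[A,B]]$ all lie in $N$, whence $[C,[A,B]]\subseteq N$, as desired.

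The main obstacle I anticipate is purely bookkeeping: getting the conventions for the commutator bracket $[x,y]$ and the conjugation $x^{y}$ to match up so that the Hall--Witt identity really does hold on the nose. Once the conventions are fixed and the identity is verified, the rest of the argument is a short deduction using only the definitions of the subgroups $[X,[Y,Z]]$ and the normality of $N$. No properties specific to the free group $\Gamma$ or to the $p$-central series enter the proof; this is a statement about arbitrary groups.
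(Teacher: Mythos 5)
The paper cites this as a known result (from P.\ Hall) and does not supply a proof, so there is no argument of the paper's to compare against. Your route — Hall--Witt plus normality of $N$ — is exactly the standard one, and your bookkeeping about conventions and about why the inverses $a^{-1}, b^{-1}, c^{-1}$ cause no loss of generality is fine.

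There is one step where you should be more careful. You conclude that ``the generators of $[C,[A,B]]$ all lie in $N$.'' But the subgroup $[C,[A,B]]$ is generated by the commutators $[c,x]$ with $c\in C$ and $x$ an \emph{arbitrary} element of the subgroup $[A,B]$, whereas the Hall--Witt identity only hands you $[c,[a,b^{-1}]]\in N$, i.e.\ $[c,x]\in N$ for $x$ ranging over the generating set $\{[a,b]\}$ of $[A,B]$, not over all of $[A,B]$. To close this gap, fix $c\in C$ and consider $S_c=\{x\in G: [c,x]\in N\}$. The identities $[c,xy]=[c,y]\,[c,x]^{y}$ and $[c,x^{-1}]=\bigl([c,x]^{-1}\bigr)^{x^{-1}}$, together with the normality of $N$ in $G$, show that $S_c$ is a subgroup of $G$. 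Since $S_c$ contains every $[a,b]$ with $a\in A$, $b\in B$, it contains the subgroup they generate, namely $[A,B]$. Hence $[c,x]\in N$ for all $c\in C$ and all $x\in[A,B]$, and therefore $[C,[A,B]]\subseteq N$. With this routine but non-optional extension added, your proof is complete.
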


\begin{defi}(Holomorph group)
Let $G$ be a group. The Holomorph group of $G$ is defined as the semidirect product
$$Hol(G)=G\rtimes Aut(G),$$
where the multiplication is given by $(g_1,f_1)(g_2,f_2)=(g_1f_1(g_2),f_1f_2).$
\end{defi}

\begin{rem}
Consider the natural inclusion homomorphisms
\begin{equation*}
\begin{aligned}
\iota_1: \; G & \rightarrow G\rtimes Aut(G) ,\\
 g & \mapsto (g,id)
\end{aligned}
\qquad
\begin{aligned}
\iota_2: \;Aut(G) & \rightarrow G\rtimes Aut(G). \\
 f & \mapsto (1,f)
\end{aligned}
\end{equation*}
Using these homomorphisms, we can view any subgroup of $G$ and any subgroup of $Aut(G)$ as subgroups of $Hol(G),$
and any normal subgroup of $G$ as a normal subgroup of $Hol(G).$
\end{rem}

\begin{nota}
\label{notacio_com}
Let $x\in G,$ $f\in Aut(G),$ $H\triangleleft G$ and $K\triangleleft Aut(G).$ Throughout this chapter we denote by $[f,x]\in G$ the element $[f,x]=f(x)x^{-1},$ and by $[K,H]$ the subgroup given by
$[K,H]=\{[f,x]\in G \; ; \; f\in K, x\in H \}.$
\end{nota}

\begin{rem}
Given $x\in G$ and $f\in Aut(G).$ The notation $[f,x]=f(x)x^{-1}$ makes sense because computing the commutator $[\iota_2(f),\iota_1(x)]\in Hol(G)$ one gets that
\begin{align*}
[\iota_2(f),\iota_1(x)]= & [(1,f),(x,id)]= (1,f)(x,id)(1,f^{-1})(x^{-1},id)= \\
= & (f(x),f)(1,f^{-1})(x^{-1},id)= (f(x),id)(x^{-1},id)= \\
= & (f(x)x^{-1},id)=\iota_1(f(x)x^{-1}),
\end{align*}
which corresponds, by the natural inclusion homomorphism $\iota_1,$ to $f(x)x^{-1}\in G.$
\end{rem}

\begin{lema}[\cite{coop}, J. Cooper]
\label{lema_coop_SZ}
Given a prime $p.$ If $f\in \mathcal{I}^\bullet_{g,1}(k)$ and $x\in \Gamma^\bullet_l$ then $f(x)x^{-1}\in \Gamma^\bullet_{k+l}.$ Equivalently, $[\mathcal{I}^\bullet_{g,1}(k),\Gamma^\bullet_l]<\Gamma^\bullet_{k+l}.$
\end{lema}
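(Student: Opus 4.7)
The plan is to prove the inclusion $[\mathcal{I}^\bullet_{g,1}(k),\Gamma^\bullet_l]<\Gamma^\bullet_{k+l}$ by induction on $l$, keeping $k$ fixed. The base case $l=1$ is immediate: an element $f\in\mathcal{I}^\bullet_{g,1}(k)$ is, by definition, an automorphism of $\Gamma$ acting trivially on $\Gamma/\Gamma^\bullet_{k+1}$, so $[f,y]=f(y)y^{-1}\in\Gamma^\bullet_{k+1}$ for every $y\in\Gamma^\bullet_1=\Gamma$. For the inductive step I would use the notation introduced in Notation \ref{notacio_com} and work inside the holomorph $Hol(\Gamma)=\Gamma\rtimes Aut(\Gamma)$, where the commutator identities of Hall are valid. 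Since $f\mapsto [f,\cdot]$ is a crossed homomorphism, it suffices to check the claim on a generating set of $\Gamma^\bullet_l$.

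For the Stallings case ($\bullet=S$), the group $\Gamma^S_l$ is generated by commutators $[a,b]$ with $a\in\Gamma$, $b\in\Gamma^S_{l-1}$, together with $p$-th powers $c^p$ for $c\in\Gamma^S_{l-1}$. For commutators, apply the Three Subgroups Lemma inside $Hol(\Gamma)$ to the subgroups $A=\langle f\rangle$, $B=\langle a\rangle$, $C=\langle b\rangle$: the inductive hypothesis together with the definition of $\mathcal{I}^\bullet_{g,1}(k)$ yields $[f,a]\in\Gamma^\bullet_{k+1}$ and $[f,b]\in\Gamma^\bullet_{k+l-1}$; then $[[f,a],b]\in[\Gamma^\bullet_{k+1},\Gamma^\bullet_{l-1}]\subset\Gamma^\bullet_{k+l}$ and $[[b,f],a]\in[\Gamma^\bullet_{k+l-1},\Gamma]\subset\Gamma^\bullet_{k+l}$, and the Three Subgroups Lemma with $N=\Gamma^\bullet_{k+l}$ forces $[f,[a,b]]\in\Gamma^\bullet_{k+l}$.

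For the $p$-th power generators, write $u=[f,c]$; by the inductive hypothesis $u\in\Gamma^\bullet_{k+l-1}$. Then $[f,c^p]=f(c)^pc^{-p}=(cu)^pc^{-p}$, and I would apply Hall's Theorem \ref{teo_hall} with $P=c$, $Q=u$ and $\alpha=1$ to expand $(cu)^p$ as $c^pu^p\cdot\prod R_i^{n_i}$, where each $R_i$ is a complex commutator of $c$ and $u$ of weight $w_i\geq 2$, with $p\mid n_i$ whenever $w_i<p$. The factor $c^p$ cancels; the factor $u^p$ lies in $(\Gamma^\bullet_{k+l-1})^p\subset\Gamma^\bullet_{k+l}$ by the defining property of the Stallings series. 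Each remaining $R_i^{n_i}$ has one of two forms: either $w_i\geq p$, in which case $R_i$ itself lies in the appropriate iterated commutator subgroup whose $p$-central weight is at least $k+l$; or $w_i<p$ and $p\mid n_i$, so that $R_i^{n_i}\in(\Gamma^\bullet_{k+l-1})^p\subset\Gamma^\bullet_{k+l}$. Hence $[f,c^p]\in\Gamma^\bullet_{k+l}$.

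The main obstacle will be the Zassenhaus case, where the generating set is richer: $\Gamma^Z_l=\prod_{ip^j\geq l}(\Gamma_i)^{p^j}$, so one must treat $p^j$-th powers of elements of the ordinary lower central series and track both the commutator weight and the $p$-exponent simultaneously. My approach will be to combine the above commutator argument (for the ordinary commutator part of each generator) with the full strength of Hall's Theorem \ref{teo_hall}, applied this time with $\alpha=j$: the divisibility $p^{\alpha-\beta+1}\mid n_i$ when $w_i<p^\beta$ is precisely what converts a weight-$w_i$ commutator $R_i$ of elements in $\Gamma_i$ and $\Gamma^\bullet_{k+l-1}$ into an element of $(\Gamma_{?})^{p^{?}}\subset\Gamma^Z_{k+l}$. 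The careful bookkeeping — showing that in every term arising from the expansion the product of the weight and the $p$-exponent is at least $k+l$ — is the technical heart of the proof, and is the reason the result is stated uniformly for both $\bullet=S$ and $\bullet=Z$.
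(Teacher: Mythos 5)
Your overall strategy coincides with the paper's (the proof actually given there is for the more general Lemma~\eqref{lema_coop_general}, of which Lemma~\eqref{lema_coop_SZ} is the special case $G=\Gamma$): induction on $l$, working in $Hol(\Gamma)$, treating the commutator generators with the Three Subgroups Lemma, and treating the power generators with the Hall--Petrescu identity, finally reducing to generators via the crossed-homomorphism identity $[f,xy]={}^{f(x)}[f,y]\cdot[f,x]$. Your Three Subgroups argument for commutators is literally what the paper does.

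Where you diverge is in the treatment of powers. For the Stallings case the paper avoids Hall--Petrescu entirely: it expands $[f,z^p]=[f,z]\cdot[f,z]^z\cdots[f,z]^{z^{p-1}}$, observes each conjugate is $\equiv[f,z]$ modulo $\Gamma^S_{k+l+1}$, and so $[f,z^p]\equiv[f,z]^p\in(\Gamma^S_{k+l})^p\subset\Gamma^S_{k+l+1}$. Your route through Hall's theorem (with $\alpha=1$) also works, but is heavier than needed here; also note that with the paper's sign convention $[f,c]=f(c)c^{-1}$ one has $f(c)=uc$, so the product to expand is $(uc)^p$, not $(cu)^p$, though this does not affect the argument. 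For the Zassenhaus case --- the genuinely delicate one, as the remark following the lemma explains --- the paper does \emph{not} use Hall with $\alpha=j$ in one shot. It first establishes $[f,x]\in\Gamma^Z_{i+k}$ for $x\in\Gamma_i$ by the Three Subgroups induction, and then runs a second, inner induction on $j$: writing $x_q=x_i^{p^{j-1}}$, $q=ip^{j-1}$, it applies Hall with $\alpha=1$ to $(x_q\,y_{q+k})^p$ where $y_{q+k}=x_q^{-1}f(x_q)\in\Gamma^Z_{q+k}$. Your single-shot $\alpha=j$ route can be made to work --- the inequality to verify is $p^{j-\beta+1}(w_i i+k)\geq p^j i+k$ for $p^{\beta-1}\leq w_i<p^\beta$, using that at least one component of each $R_i$ lies in $\Gamma^Z_{i+k}$ --- but you leave exactly this bookkeeping undone, and your sketch misstates the weight of the second component: it lies in $\Gamma^Z_{i+k}$, not $\Gamma^\bullet_{k+l-1}$ (with $l=ip^j$ these are very different when $j\geq 1$). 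Since this is the only place where Cooper's original argument was found wanting and the paper's proof was written precisely to close it, leaving it at the level of a sketch with a wrong index is where your proposal falls short of a complete proof.
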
 

\begin{rem}
In the proof of Lemma \eqref{lema_coop_SZ}, J. Cooper asserts that the result for the case of the Zassenhaus filtration follows using the same argument that he used for the Stallings filtration.
Reviewing his proof we found that it is not clear how to use the same argument to prove the statement for the Zassenhaus filtration.
However, using the Hall-Petresco identity, we will give an argument to get the result.
Indeed, we realised that Lemma \eqref{lema_coop_SZ} is more general.
\end{rem}

Next we will state a generalization of Lemma \eqref{lema_coop_SZ},
whose proof follows the original one by J.Cooper.

\begin{defi}
Denote by $IA_k^\bullet(G)$ the elements of $Aut(G)$ that act trivially on $\frac{G}{G^\bullet_{k+1}},$ that is,
$$IA_k^\bullet(G)=\{f\in Aut(G)\;\mid \; f(x)x^{-1}\in G^\bullet_{k+1} \text{ for all } x\in G\}.$$
\end{defi}
Throughout this section we denote $IA_1^\bullet(G)$ by $IA^p(G),$ with $\bullet= S \text{ or }Z.$

\begin{lema}
\label{lema_coop_general}
Given a prime $p.$ If $f\in IA_k^\bullet(G)$ and $x\in G^\bullet_l,$ then $f(x)x^{-1}\in G^\bullet_{k+l}.$ Equivalently, $[IA_k^\bullet(G),G^\bullet_l]<G^\bullet_{k+l}.$
\end{lema}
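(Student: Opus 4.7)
The plan is to induct on $l$ with $k$ fixed. The base case $l=1$ is exactly the defining property of $IA_k^\bullet(G)$: any $f\in IA_k^\bullet(G)$ satisfies $f(x)x^{-1}\in G^\bullet_{k+1}=G^\bullet_{k+l}$ for all $x\in G=G^\bullet_1$. For the inductive step, since $f$ is a homomorphism and $G^\bullet_l$ is generated by elements built from $G^\bullet_{l-1}$ via commutators with $G$ and via $p$-th powers (Stallings) or, more generally, via $p^j$-powers of $G_i$ with $ip^j\geq l$ (Zassenhaus), it suffices to check the conclusion on such generators.

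For a commutator generator $[g,y]$ with $g\in G$ and $y\in G^\bullet_{l-1}$, I would write $f(g)=g\alpha$ with $\alpha\in G^\bullet_{k+1}$ and $f(y)=y\beta$ with $\beta\in G^\bullet_{k+l-1}$ (the latter by the inductive hypothesis). Expanding $[f(g),f(y)]=[g\alpha,y\beta]$ via the standard commutator identities and then applying the Three Subgroups Lemma, every extra factor produced beyond $[g,y]$ pairs something in $G^\bullet_{k+1}$ with something in $G^\bullet_{l-1}$ (or higher), so it lies in $G^\bullet_{k+l}$ as required. This step is essentially the same for the Stallings and Zassenhaus filtrations because both are closed under the relevant commutator brackets ($[G^\bullet_a,G^\bullet_b]\subseteq G^\bullet_{a+b}$).

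For a $p$-th power generator $z^p$ with $z\in G^S_{l-1}$ (Stallings), I would set $\gamma=f(z)z^{-1}\in G^S_{k+l-1}$ by induction and apply Theorem~\ref{teo_hall} to $(z\gamma)^p$: the Hall--Petresco expansion writes this as $z^p\gamma^p$ times a product of complex commutators in $z,\gamma$ with exponents divisible by appropriate powers of $p$. The leading extra term $\gamma^p$ lies in $(G^S_{k+l-1})^p\subset G^S_{k+l}$ by the defining property of the Stallings series, and each complex commutator involves at least one copy of $\gamma\in G^S_{k+l-1}$ bracketed with elements of $G^S_{l-1}$, hence lies in $G^S_{k+l}$ as well. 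Thus $f(z^p)z^{-p}=(z\gamma)^pz^{-p}\in G^S_{k+l}$.

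The Zassenhaus case is where I expect the main obstacle to lie, because the filtration is defined by the mixed formula $G^Z_m=\prod_{ip^j\geq m}(G_i)^{p^j}$ and one must track the lower-central-series weight and the $p$-exponent simultaneously. The plan is first to establish, by an internal induction on $i$ using the commutator step, that $f(y_i)y_i^{-1}\in G^Z_{k+i}$ for $y_i\in G_i$, and then to apply Theorem~\ref{teo_hall} to $(y_i\cdot (f(y_i)y_i^{-1}))^{p^j}$. The divisibility clause of the Hall--Petresco identity (a complex commutator of weight $w<p^\beta\leq p^j$ carries an exponent divisible by $p^{j-\beta+1}$) is exactly what is needed to place each resulting factor into some $(G_w)^{p^{j-\beta+1}}$ with $w\,p^{j-\beta+1}\geq k+ip^j\geq k+l$, landing in $G^Z_{k+l}$. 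The delicate bookkeeping of these two gradings against the definition of $G^Z_{k+l}$ will be the technical heart of the proof; once it is set up, the conclusion $[IA_k^\bullet(G),G^\bullet_l]<G^\bullet_{k+l}$ follows by assembling the commutator and $p$-power cases.
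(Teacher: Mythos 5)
Your outline for the Stallings case and the commutator step tracks the paper's argument closely, up to small presentational differences: the paper applies the Three Subgroup Lemma to $IA^S_k(G)$, $G$, $G^S_l$ inside $Hol(G)$ rather than hand-expanding $[g\alpha, y\beta]$, and for the Stallings $p$-th power it computes $[f,z^p]=[f,z][f,z]^z\cdots[f,z]^{z^{p-1}}$ directly rather than invoking Hall--Petresco. Both variants work, though you omit the closure-under-products step (the generators are only a spanning set, and the paper spells out the passage to products explicitly). The substantive gap is in the Zassenhaus $p^j$-power case, exactly where you flagged the difficulty.

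You propose placing each Hall--Petresco factor $R_m^{n_m}$ into some $(G_w)^{p^{j-\beta+1}}$ with $w\,p^{j-\beta+1}\geq k+ip^j$. This cannot be done if $w$ is the lower central series depth of $R_m$. The obstacle is that $\gamma=f(y_i)y_i^{-1}$ lies in $G^Z_{k+i}$ but, as far as the lower central series is concerned, only in $G_1$; a complex commutator $R_m$ with $c_1\geq 1$ copies of $y_i$ and $c_2\geq 1$ copies of $\gamma$ (so $c_1+c_2=w_m$) therefore only satisfies $R_m\in G_{c_1 i + c_2}$, and with $\beta\geq 1$ minimal so that $w_m<p^\beta$ one finds $(c_1 i + c_2)p^{j-\beta+1}<k+ip^j$ in the worst case $c_1=1$ since $p^{j-\beta+1}\leq p^j$ and $k\geq 1$. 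The fix is to track the Zassenhaus filtration itself: $R_m\in G^Z_{c_1 i + c_2(k+i)}=G^Z_{w_m i + c_2 k}\leq G^Z_{w_m i + k}$, so $R_m^{n_m}\in (G^Z_{w_m i + k})^{p^{j-\beta+1}}\leq G^Z_{p^{j-\beta+1}(w_m i + k)}\leq G^Z_{l+k}$, using $p^{j-\beta+1}w_m\geq p^j$. With that correction your one-shot application of Hall--Petresco at exponent $p^j$ does close the proof. The paper instead peels off one power of $p$ at a time via $f(x_i^{p^j})=f(x_i^{p^{j-1}})^p$, reduces to $q=ip^{j-1}<l$ by the outer induction, and applies Hall--Petresco only at exponent $p$, where the divisibility clause collapses to ``weight $<p$ implies $p\mid n_m$'' and no $\beta$-bookkeeping is needed.
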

\begin{proof}
We begin with the case of the Stallings series. Recall that by definition, $G^S_{l+1}=[G,G^S_l](G_l^S)^p.$ Thus every element of $G^S_{l+1}$ is a product of elements of the form $[x,y]\in G^S_{l+1},$ $z^p\in G^S_{l+1},$ where $x\in G$ and $y,z\in G^S_l.$
So first of all we will prove the statement for such elements and later for any product of them.

We proceed by induction on $l.$ The base case $l=1$ follows from the definition of $IA_k^S(G).$ Now assume that the lemma holds for $l.$ We will prove that lemma holds for $l+1.$

Consider elements of the form $[x,y]\in G^S_{l+1}$ and $z^p\in G^S_{l+1},$ where $x\in G$ and $y,z\in G^S_l.$

We first show that $f([x,y])[x,y]^{-1}\in G^S_{k+l+1}$ for $f\in IA_k^S(G).$ The main idea of this proof originally comes from \cite{andrea}. First note that
$$f([x,y])[x,y]^{-1}=[f,[x,y]]\in [IA_k^S(G),[G,G^S_l]].$$
The idea is to apply the Three Subgroup Lemma for the subgroups $IA_k^S(G),$ $G,$ $G^S_l$ of $Hol(G).$
Observe that by induction and the definition of the Stallings series,
\begin{align*}
[[IA_k^S(G),G^S_l],G] & <[G^S_{k+l},G]< G^S_{k+l+1}, \\
[[IA_k^S(G),G],G^S_l] & <[G^S_{k+l},G^S_l]< G^S_{k+l+1}.
\end{align*}
Moreover, since $G^S_{k+l+1}$ is a normal subgroup of $G,$ we can view $G^S_{k+l+1}$ as a normal subgroup of $Hol(G).$
Therefore the Three Subgroup Lemma implies that
$$[IA_k^S(G),[G,G^S_l]]<G^S_{k+l+1}.$$

Next we show that $f(z^p)z^{-p}\in G^S_{k+l+1}.$ First of all we prove that
$$f(z^p)z^{-p}\equiv (f(z)z^{-1})^p \text{ mod } G^S_{k+l+1}.$$
Observe that the following equality holds
$$f(z^p)z^{-p}=[f,z^p]=[f,z][f,z]^z\ldots [f,z]^{z^{p-1}}.$$
Note that by induction, $[f,z]\in G^S_{k+l}$ and by normality, $[f,z]^{z^i}\in G^S_{k+l}$ for each $i=1,\ldots, p-1.$ Furthermore,
$[[f,z],z^i]\in G^S_{k+l+1}$ so that $[f,z] \equiv [f,z]^{z^i} \text{ mod } G^S_{k+l+1}$ and finally $f(z^p)z^{-p}\equiv (f(z)z^{-1})^p \text{ mod } G^S_{k+l+1}.$

Now we note that by induction, $(f(z)z^{-1})^p\in (G^S_{k+l})^p,$ and by the properties of the Stallings series, $(G^S_{k+l})^p \in G^S_{k+l+1}.$
Therefore, $f(z^p)z^{-p}\in G^S_{k+l+1}.$

Finally, we prove the statement for products of elements of $G_{l+1}^S.$
Let $f\in IA_k^S(G)$ and $\omega_i\in G^S_{l+1}.$ Using the fact that $f(\omega_i)\omega_i^{-1}\in G^S_{k+l+1}$ for all $i,$ we have:
\begin{align*}
f\left(\prod_{i=1}^n \omega_i\right)\left(\prod_{i=1}^n \omega_i\right)^{-1}= & f(\omega_1)\cdots f(\omega_{n-1})f(\omega_n)\omega_n^{-1}\omega_{n-1}^{-1}\cdots \omega_1 \\
\equiv & f(\omega_1)\cdots f(\omega_{n-1})\omega_{n-1}^{-1}\cdots \omega_1^{-1} \quad(\text{mod }G^S_{k+l+1})\\
\equiv & f(\omega_1)\cdots f(\omega_{n-2})\omega_{n-2}^{-1}\cdots \omega_1^{-1} \quad(\text{mod }G^S_{k+l+1})\\
\equiv & \cdots \equiv f(\omega_{1})f(\omega_{2})\omega_{2}^{-1}\omega_{1}^{-1}\equiv f(\omega_{1})\omega_1 \equiv 1\quad(\text{mod }G^S_{k+l+1}).
\end{align*}
Therefore,
$$f\left(\prod_{i=1}^n \omega_i\right)\left(\prod_{i=1}^n \omega_i\right)^{-1}\in G^S_{k+l+1}.$$

For the case of the Zassenhaus series, recall that by definition $G^Z_l=\prod_{ip^j\geq l}(G_i)^{p^j}.$ Thus every element of $G^Z_l$ is a product of elements of the form $x_i^{p^j}\in G^Z_l$ with $x_i\in G_i$ and $ip^j\geq l.$
So first of all we will prove the statement for such elements and later for any product of them.

Let $f\in I_k^Z(G)$ and $x_i^{p^j}\in G^Z_l$ with $x_i\in G_i,$ i.e. $ip^j\geq l.$ We want to prove that $f(x_i^{p^j})x_i^{-p^j}\in G^Z_{l+1}.$
Observe that if $ip^j\geq l+1$ then $x_i^{p^j}\in G^Z_{l+1}$
and as a consequence $f(x_i^{p^j})x_i^{-p^j}\in G^Z_{l+1}.$
Thus we can assume that $ip^j=l.$
Next, the idea is to prove the statement for the elements $x_l\in G_l,$ and use an inductive argument on $j$ to prove the general result.

We first prove that $f(x_l)x_l^{-1} \in G^Z_{l+k}.$
By induction on $l.$ The base case $x_1$ follows from the definition of $IA^Z_k(G).$ Now assume that the lemma holds for $x_l,$ and  prove that the Lemma holds for $x_{l+1}.$
As in the case of Stallings, applying the Three Subgroup Lemma for the subgroups $IA^Z_k(G),G,G^Z_l< Hol(G),$ we have that
$[IA^Z_k(G),[G,G^Z_l]]<G^Z_{k+l+1}.$

Observe that
$$f(x_i^{p^j})x_i^{-p^j}=f(x_i^{p^{j-1}})^p x_i^{-p^j}.$$
Let $q=ip^{j-1},$ take $x_q=x_i^{p^{j-1}}.$ Observe that since $x_q\in G_{q}^Z,$ by induction hypothesis, $f(x_q)x^{-1}_q\in G^Z_{q+k}.$
Then there exists an element $y_{q+k}\in G^Z_{q+k}$ such that $f(x_q)=x_qy_{q+k}.$
Next we show that
$$(x_qy_{q+k})^p\equiv x_q^p \quad (\text{mod } G^Z_{l+k}).$$
By Theorem \eqref{teo_hall} we have that
$$(x_qy_{q+k})^{p}=R_1^{n_1}R_2^{n_2}\cdots R_i^{n_i}\cdots$$
where
$$R_1,R_2,\ldots ,R_i,\ldots \qquad (R_1=x_q,\quad R_2=y_{q+k})$$
are the various formally distinct complex commutators of $x_q$ and $y_{q+k}$ arranged in increasing weights order, and $n_1,n_2,\ldots ,n_i,\ldots$ positive integers such that $n_1=n_2=p$ and if the weight $w_i$ of $R_i$ in $x_q$ and $y_{q+k}$ satisfies that $w_i<p$ then $n_i$ is divisible by $p.$

Next we prove that $R_i^{n_i}\in G^Z_{l+k}$ for $i\geq 2.$
\begin{itemize}
\item \textbf{For $i=2,$} we know that $R_2=y_{q+k}$ and $n_2=p.$
Since $y_{q+k}\in G^Z_{q+k},$ we have that
$$y_{q+k}^p\in (G^Z_{q+k})^p\leq G^Z_{p(q+k)}\leq G^Z_{pq+k}=G^Z_{l+k}.$$
\item \textbf{For $i\geq 3,$} as $R_i$ are complex commutators of weight $w_i$ in the two components $x_k,$ $y_{q+k},$ we have that $w_i\geq 2.$
As a consequence,
at least one component of $R_i$ has to be $y_{q+k},$ because if it is not the case then $R_i$ has to be $1.$

If the weight $w_i$ of $R_i$ in $x_k$ and $y_{q+k}$ satisfies that $2\leq w_i<p,$ since at least one component of $R_i$ has to be $y_{q+k},$ we get that $R_i\in G^Z_{(\omega_i-1)q+(q+k)}=G^Z_{\omega_iq+k}.$
Moreover, in the case $2\leq w_i<p,$ we have that $p\mid n_i$ and then
$$R_i^{n_i}\in (G^Z_{\omega_iq+k})^{n_i}\leq (G^Z_{\omega_iq+k})^p\leq G^Z_{p(\omega_iq+k)}\leq G^Z_{pq+k}=G^Z_{l+k}.$$

On the other hand, if the weight $w_i$ of $R_i$ in $x_q$ and $y_{q+k}$ satisfies that $w_i\geq p,$ since at least one component of $R_i$ has to be $y_{q+k},$ then
$R_i\in G^Z_{(\omega_i-1)q+(q+k)}=G^Z_{\omega_iq+k}\leq G^Z_{pq+k}.$ As a consequence, $$R_i^{n_i}\in G^Z_{pq+k}.$$
\end{itemize}

Therefore,
$(x_qy_{q+k})^{p}\equiv x_q^p \quad (\text{ mod } G^Z_{l+k} ),$
i.e. $f(x_i^{p^j})x_i^{-p^j}\in G^Z_{l+k}.$

Finally, we prove the statement for products. The same argument of the case of Stallings works here, giving that if 
$f\in IA^Z_k(G)$ and $\omega_i\in G^Z_{l+k},$ then
$$f\left(\prod_{i=1}^n \omega_i\right)\left(\prod_{i=1}^n \omega_i\right)^{-1}\in G^S_{l+k}.$$
\end{proof}

As a direct consequence of Lemma \eqref{lema_coop_general}, using ideas of S. Andreadakis (see Theorem 1.1 in \cite{andrea}),
we get the following result.
\begin{cor}
\label{cor_IA_bullet}
For any two elements $\varphi\in IA^\bullet_k(G)$ and $\psi\in IA^\bullet_l(G),$ the commutator $[\varphi, \psi]$ is contained in $IA^\bullet_{k+l}(G).$ Equivalently, $[IA^\bullet_k(G),IA^\bullet_l(G)] < IA^\bullet_{k+l}(G).$
\end{cor}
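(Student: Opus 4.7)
The plan is to follow the classical strategy of Andreadakis, adapted to the general central series setting. The key observation is that the statement $[IA_k^\bullet(G),IA_l^\bullet(G)]<IA_{k+l}^\bullet(G)$ translates, via the Notation \ref{notacio_com} conventions, into the assertion
\[
[[\varphi,\psi],x]\in G^\bullet_{k+l+1}\qquad\text{for all }\varphi\in IA^\bullet_k(G),\ \psi\in IA^\bullet_l(G),\ x\in G.
\]
I would work entirely inside the holomorph $Hol(G)=G\rtimes Aut(G)$, identifying $IA^\bullet_k(G),\,IA^\bullet_l(G)$ with their images under $\iota_2$ and $G$ with its image under $\iota_1$. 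In this ambient group every commutator appearing below is an honest group-theoretic commutator, and the defining condition for membership in $IA^\bullet_k(G)$ becomes the clean identity $[IA^\bullet_k(G),G]\subseteq G^\bullet_{k+1}$ (and analogously for $l$).

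Next I would invoke the Three Subgroup Lemma with $A=IA^\bullet_k(G)$, $B=IA^\bullet_l(G)$, $C=G$ and normal subgroup $N=G^\bullet_{k+l+1}$. The normality of $N$ inside $Hol(G)$ is a consequence of the fact that $G^\bullet_{k+l+1}$ is a characteristic subgroup of $G$ (Proposition \ref{prop_charSZ}) together with the normality of $G$ in $Hol(G)$: a direct check on an element $(g,f)\in Hol(G)$ shows that $(g,f)(n,1)(g,f)^{-1}=(gf(n)g^{-1},1)$, which lies in $N$ since $f(n)\in N$ by characteristicity. The two hypotheses of the Three Subgroup Lemma then read
\begin{align*}
[A,[B,C]] & = [IA^\bullet_k(G),\,[IA^\bullet_l(G),G]]\subseteq [IA^\bullet_k(G),G^\bullet_{l+1}]\subseteq G^\bullet_{k+l+1},\\
[B,[C,A]] & = [IA^\bullet_l(G),\,[G,IA^\bullet_k(G)]]\subseteq [IA^\bullet_l(G),G^\bullet_{k+1}]\subseteq G^\bullet_{k+l+1},
\end{align*}
where each final inclusion is an application of Lemma \ref{lema_coop_general} (the generalization of Cooper's lemma just proved). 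The Three Subgroup Lemma then gives $[C,[A,B]]=[G,[IA^\bullet_k(G),IA^\bullet_l(G)]]\subseteq G^\bullet_{k+l+1}$, which is exactly the required statement.

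The only delicate point, and the one I would want to verify carefully, is the bookkeeping that translates the holomorph commutator $[\iota_2(\varphi),\iota_1(x)]$ back to $\varphi(x)x^{-1}$ in $G$, so that the Three Subgroup Lemma conclusion really does say what we want about the automorphism $[\varphi,\psi]\in Aut(G)$. This is exactly the computation already recorded just after Notation \ref{notacio_com}, so there is no real obstruction — the argument is essentially formal once Lemma \ref{lema_coop_general} is in hand, and the uniform treatment of the Stallings and Zassenhaus cases is already absorbed into that lemma.
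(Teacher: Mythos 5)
Your proposal is correct and follows essentially the same route as the paper's proof: embed everything in the holomorph, verify the two hypotheses of the Three Subgroup Lemma via Lemma \eqref{lema_coop_general}, and conclude. The one small refinement in your write-up is the explicit verification that $G^\bullet_{k+l+1}$ is normal in $Hol(G)$ because it is characteristic in $G$ (Proposition \eqref{prop_charSZ}), which the paper leaves implicit in the remark following the holomorph definition.
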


\begin{proof}
Notice that $G,$ $IA^\bullet_k(G),$ $IA^\bullet_l(G)$ are subgroups of $Hol(G).$ By Lemma \eqref{lema_coop_general},
\begin{align*}
[IA^\bullet_l(G),[IA^\bullet_k(G),G]] & < [IA^\bullet_l(G),G^\bullet_{k+1}]< G^\bullet_{k+l+1}, \\
[IA^\bullet_k(G),[G, IA^\bullet_l(G)]] & < [IA^\bullet_k(G),G^\bullet_{l+1}]< G^\bullet_{k+l+1}.
\end{align*}
Then, by the Three Subgroups Lemma, $[G,[IA^\bullet_k(G),IA^\bullet_l(G)]]< G^\bullet_{k+l+1}.$
Hence,
$$[[IA^\bullet_k(G),IA^\bullet_l(G)],G]< G^\bullet_{k+l+1}.$$
As a consequence, for every $\gamma \in G,$ $\varphi\in IA^\bullet_k(G)$ and $\psi\in IA^\bullet_l(G),$ we have that
$$[\varphi, \psi](\gamma)\gamma^{-1}\in G^\bullet_{k+l+1},$$
which, by definition, means that $[\varphi, \psi]\in IA^\bullet_{k+l}(G),$ as desired.
\end{proof}

Next, we show that, for a free group of finite rank, Lemma \eqref{lema_coop_general} and Corollary \eqref{cor_IA_bullet} give us the most efficient bound, in the sense that
\begin{align*}
[IA_k^\bullet(\Gamma),\Gamma^\bullet_l]<\Gamma^\bullet_{k+l}, & \qquad [IA^\bullet_k(\Gamma),IA^\bullet_l(\Gamma)] < IA^\bullet_{k+l}(\Gamma), \\
[IA_k^\bullet(\Gamma),\Gamma^\bullet_l]\nless\Gamma^\bullet_{k+l+1}, &
\qquad [IA^\bullet_k(\Gamma),IA^\bullet_l(\Gamma)] \nless IA^\bullet_{k+l+1}(\Gamma).
\end{align*}

\begin{prop}
\label{prop_cotas_IA_bullet,N_bullet}
Let $\Gamma$ be a free group of finite rank $n > 1.$ Then
\begin{enumerate}[i)]
\item $[IA_k^\bullet(\Gamma),\Gamma^\bullet_l]\nless\Gamma^\bullet_{k+l+1},$
\item $[IA^\bullet_k(\Gamma),IA^\bullet_l(\Gamma)] \nless IA^\bullet_{k+l+1}(\Gamma).$
\end{enumerate}
\end{prop}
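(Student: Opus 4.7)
The strategy is to realize the sharp bound by explicitly exhibiting automorphisms and commutators, using the standard fact that passage to the associated graded converts the problem into a computation in the (restricted) free Lie algebra $L^\bullet=\bigoplus_k\Gamma^\bullet_k/\Gamma^\bullet_{k+1}$ over $\mathbb{F}_p$ on $\overline{x}_1,\ldots,\overline{x}_n$, where $x_1,\ldots,x_n$ is a fixed free basis of $\Gamma$ (we use $n\geq 2$). Specifically, for any $\varphi\in IA^\bullet_k(\Gamma)$ the assignment $\overline{x}\mapsto\overline{\varphi(x)x^{-1}}\in L^\bullet_{k+l}$ for $x\in\Gamma^\bullet_l$ defines a degree-$k$ derivation $d_\varphi$ of $L^\bullet$, and $d_\varphi=0$ is equivalent to $\varphi\in IA^\bullet_{k+1}(\Gamma)$. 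Thus to prove (i) it suffices to exhibit a $\varphi\in IA^\bullet_k(\Gamma)$ and a $y\in\Gamma^\bullet_l$ for which $d_\varphi(\overline{y})\neq 0$ in $L^\bullet_{k+l}$, and for (ii) to exhibit $\varphi\in IA^\bullet_k(\Gamma)$ and $\psi\in IA^\bullet_l(\Gamma)$ whose induced derivations do not commute modulo degree $k+l+2$.

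For (i), fix any nonzero homogeneous element $\overline{u}\in L^\bullet_{k+1}$ which is a Lie word in $\overline{x}_2,\ldots,\overline{x}_n$ not involving $\overline{x}_1$; for instance the left-normed bracket $\overline{u}=[\overline{x}_3,\overline{x}_2,\ldots,\overline{x}_2]$ of weight $k+1$, which is nonzero by the standard Hall basis of the (restricted) free Lie algebra on $n\geq 2$ generators. Lift $\overline{u}$ to some $u\in\Gamma^\bullet_{k+1}$ and define $\varphi\in\mathrm{Aut}(\Gamma)$ by $\varphi(x_1)=x_1u$ and $\varphi(x_i)=x_i$ for $i\geq 2$. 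Then $\varphi\in IA^\bullet_k(\Gamma)$ and its associated degree-$k$ derivation satisfies $d_\varphi(\overline{x}_1)=\overline{u}$ and $d_\varphi(\overline{x}_i)=0$ for $i\geq 2$. Taking $y=[x_1,x_2,\ldots,x_2]\in\Gamma^\bullet_l$ (left-normed, with $l-1$ copies of $x_2$) the Leibniz rule gives
\[
d_\varphi(\overline{y})=[\,\overline{u},\overline{x}_2,\ldots,\overline{x}_2\,],
\]
which is a basic commutator of weight $k+l$ in the three letters $\overline{x}_1,\overline{x}_2,\overline{x}_3$ and is therefore nonzero in $L^\bullet_{k+l}$. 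Hence $\varphi(y)y^{-1}\notin\Gamma^\bullet_{k+l+1}$, as required.

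For (ii), keep $\varphi$ as above and build $\psi\in IA^\bullet_l(\Gamma)$ by $\psi(x_2)=x_2v$, $\psi(x_i)=x_i$ otherwise, where $v\in\Gamma^\bullet_{l+1}$ is chosen so that $\overline{v}\in L^\bullet_{l+1}$ is a Lie word whose bracketing against $\overline{u}$ via the derivation formalism survives: concretely pick $\overline{v}=[\overline{x}_4,\overline{x}_3,\ldots,\overline{x}_3]$ of weight $l+1$ (using a fourth generator; if $n=2$ one replaces these words by carefully chosen ones in $\overline{x}_1,\overline{x}_2$, which is still possible because the free Lie algebra on two generators has plenty of independent homogeneous elements in every positive degree). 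A direct computation of the Lie bracket of derivations gives
\[
[d_\varphi,d_\psi](\overline{x}_1)=-d_\psi(\overline{u})
\]
and with the choices made, $d_\psi(\overline{u})$ is a nonzero basic commutator of weight $k+l+1$ in $L^\bullet_{k+l+1}$. Consequently the degree-$(k+l)$ derivation $d_{[\varphi,\psi]}$ does not vanish, so $[\varphi,\psi]\notin IA^\bullet_{k+l+1}(\Gamma)$.

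\textbf{Main obstacle.} The delicate point is the identification of the graded pieces $L^\bullet_k=\Gamma^\bullet_k/\Gamma^\bullet_{k+1}$ and the nonvanishing of the chosen basic commutators in $L^\bullet$. For the Stallings filtration this identification yields the free restricted Lie algebra over $\mathbb{F}_p$, while for the Zassenhaus filtration one recovers the restricted free Lie algebra with the grading in which $\overline{x}_i$ sits in degree $1$ and the $p$-operation raises degree by multiplication by $p$; in both cases one must verify that the specific Hall/basic commutators above are not killed by any of the defining relations (including the $p$-power operation in the restricted setting). This is where the hypothesis $n\geq 2$ (or, for the bracket $[\overline{x}_4,\overline{x}_3,\ldots]$, the possibility of replacing by analogous words in two generators) is essential, and where the bulk of the bookkeeping lies.
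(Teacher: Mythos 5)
Your proposal takes a genuinely different route from the paper. The paper's proof uses \emph{inner} automorphisms: it conjugates by two alternating commutators $\gamma_1,\gamma_2$ in $x_1,x_2$ of weights $k$ and $l$, and directly verifies that $[\gamma_1,\gamma_2]$ (resp.\ $[[\gamma_1,\gamma_2],x_1]$) falls outside $\Gamma^\bullet_{k+l+1}$ (resp.\ $\Gamma^\bullet_{k+l+2}$). You instead pass to the associated graded restricted Lie algebra $L^\bullet$ and use \emph{non-inner} Nielsen-type automorphisms $x_1\mapsto x_1u$, converting the problem into a derivation calculation. Your framework is the standard Andreadakis--Johnson one and is conceptually appealing, but it has gaps that the paper's choice of inner automorphisms automatically avoids.

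The main gap is the rank-two case. Your $\varphi$ must be a bona fide element of $\mathrm{Aut}(\Gamma)$. For $n\geq 3$ (and $n\geq 4$ for $\psi$ in part (ii)) the maps $x_1\mapsto x_1u$, $x_i\mapsto x_i$ with $u\in\langle x_2,\ldots,x_n\rangle$ are automorphisms with the obvious inverse. For $n=2$ this dodge is unavailable: if $u$ is a Lie word in $x_2$ only, it lies in $\langle x_2\rangle\cap\Gamma^\bullet_{k+1}$, which for most $k$ forces $u$ to be a power $x_2^{p^j}$ of a single fixed weight, so you cannot place $\overline{u}$ in the degree $k+1$ you want; whereas if $u$ is allowed to involve $x_1$, the assignment $x_1\mapsto x_1u$, $x_2\mapsto x_2$ need not be an automorphism at all, and verifying it is is a nontrivial task (Nielsen's theorem shows that the automorphisms of $F_2$ acting trivially on $H_1(F_2;\mathbb{Z})$ are inner, which forces any such $\varphi$ fixing $x_2$ to be conjugation by a power of $x_2$, and then $\varphi(x_1)x_1^{-1}=[x_2^m,x_1]$ has weight only $2$). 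Your parenthetical ``carefully chosen words in $\overline{x}_1,\overline{x}_2$'' does not address this: the obstruction is not finding independent Lie elements (which of course exist in every degree) but realizing them as $d_\varphi(\overline{x}_1)$ for an actual automorphism $\varphi$ of $F_2$ fixing $x_2$. The paper's inner automorphism by $\gamma_1$ fixes neither generator and is always an automorphism, which is precisely why that construction works uniformly for all $n\geq 2$.

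A secondary gap is in part (ii): $d_\psi(\overline{u})$ is a sum of $k$ left-normed brackets in which $\overline{v}$ sits at a different depth, and you assert this sum is a ``nonzero basic commutator,'' but it is not a single basic commutator and its nonvanishing is not automatic — it needs a multidegree/PBW argument or an explicit computation. By contrast the paper's part (ii) reduces to checking that a \emph{single} explicit commutator $[[\gamma_1,\gamma_2],x_1]$ is nontrivial. (You should also note, for your own bookkeeping, that the paper's choice $\gamma_1=\gamma_2$ collides when $k=l$; starting $\gamma_2$ with $x_2$ instead of $x_1$ fixes this — the essential idea is to pick two distinct commutators of the given weights.) If you want to salvage your route, the cleanest fix is to take $\varphi$ and $\psi$ themselves to be \emph{inner}, i.e.\ to use $d_\varphi=\mathrm{ad}(\overline{\gamma}_1)$ and $d_\psi=\mathrm{ad}(\overline{\gamma}_2)$; but then the derivation framework buys you nothing over the paper's direct computation.
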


\begin{proof}
Consider the free group $\Gamma=\langle x_1,\ldots, x_n \rangle.$ Let $\gamma_1=[x_1,[x_2,[x_1,[x_2,\ldots]]]],$ a commutator of length $k$ alternating $x_1,$ $x_2,$ and $\gamma_2=[x_1,[x_2,[x_1,[x_2,\ldots]]]]$ a commutator of length $l$ alternating $x_1,$ $x_2.$

\textbf{i)} Take $f\in IA_k^\bullet(\Gamma)$ the inner automorphism given by the conjugation by $\gamma_1.$ Notice that $f\in IA_k^\bullet(\Gamma)$ because for every $x\in \Gamma,$
$$f(x)=\gamma_1x\gamma_1^{-1}= [\gamma_1,x]x=x \;(\text{mod } \Gamma_{k+1}).$$
Then we have that $[f,\gamma_2]\in [IA^\bullet_k(\Gamma),\Gamma^\bullet_l]$ and
$$[f,\gamma_2]=f(\gamma_2)\gamma_2^{-1}=\gamma_1\gamma_2\gamma_1^{-1}\gamma_2^{-1}=[\gamma_1,\gamma_2] \; \in \Gamma_{k+l},$$
which does not belong to $\Gamma_{k+l+1}^\bullet<\Gamma_{k+l+1} \Gamma^p$ because $[\gamma_1,\gamma_2]$ is neither a power $p$ element nor an element of $\Gamma_{k+l+1}$ because it can not be written as a product of at least $2(k+l)+2$ generators.

\textbf{ii)} Take $\varphi\in IA_k^\bullet(\Gamma),$ $\psi \in IA_l^\bullet(\Gamma)$ the inner automorphisms respectively given by the conjugation by $\gamma_1$ and $\gamma_2.$
Then we have that $[\varphi, \psi]\in [IA_k^\bullet(\Gamma),IA_l^\bullet(\Gamma)]$ and
$$
[\varphi, \psi](x_1)x_1^{-1}=[\gamma_1,\gamma_2]x_1[\gamma_1,\gamma_2]^{-1}x_1^{-1}=[[\gamma_1,\gamma_2],x_1]\;\in \Gamma_{k+l+1},
$$
which, as in $i),$ does not belong to $\Gamma_{k+l+2}^\bullet.$
Therefore, $[\phi,\psi]\notin IA^\bullet_{k+l+1}(\Gamma).$
\end{proof}

As a direct consequence we have the following result:

\begin{cor}
\label{coro_cota_IA_bullet,N_bullet}
The group $[IA^\bullet_k(\mathcal{N}_n^\bullet),(\mathcal{N}_n^\bullet)^\bullet_l]$ with $l+k=n$ and $[IA_k^\bullet(\mathcal{N}^\bullet_{n}),IA_l^\bullet(\mathcal{N}^\bullet_{n})]$ with $l+k=n-1,$ are not trivial.
\end{cor}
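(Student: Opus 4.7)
The plan is to push the explicit witnesses constructed in the proof of Proposition~\ref{prop_cotas_IA_bullet,N_bullet} through the projection $\pi\colon \Gamma \twoheadrightarrow \mathcal{N}_n^\bullet = \Gamma/\Gamma_{n+1}^\bullet$ and check that nothing collapses. Choose two elements $x_1,x_2$ of a free basis of $\Gamma$ (possible since $\Gamma$ is of rank $>1$) and, exactly as in the proof of Proposition~\ref{prop_cotas_IA_bullet,N_bullet}, set
\[
\gamma_1 = [x_1,[x_2,[x_1,\ldots ]]]\in \Gamma_k, \qquad \gamma_2 = [x_1,[x_2,[x_1,\ldots ]]]\in \Gamma_l,
\]
the alternating iterated commutators of lengths $k$ and $l$. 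Write $\bar\gamma_i = \pi(\gamma_i)$ and $\bar x_i = \pi(x_i)$.

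The first step is to note that, since $[\Gamma_k^\bullet,\Gamma]\subset \Gamma_{k+1}^\bullet$ for both filtrations $\bullet = S,Z$, conjugation by $\bar\gamma_1$ defines an automorphism $\varphi\in IA_k^\bullet(\mathcal{N}_n^\bullet)$, and likewise conjugation by $\bar\gamma_2$ gives $\psi\in IA_l^\bullet(\mathcal{N}_n^\bullet)$. For part (i), with $k+l = n$ and $\bar\gamma_2 \in (\mathcal{N}_n^\bullet)_l^\bullet$, one computes (using Notation~\ref{notacio_com})
\[
[\varphi,\bar\gamma_2] \;=\; \varphi(\bar\gamma_2)\bar\gamma_2^{-1} \;=\; \overline{[\gamma_1,\gamma_2]} \in \mathcal{N}_n^\bullet.
\]
Proposition~\ref{prop_cotas_IA_bullet,N_bullet}(i) asserts $[\gamma_1,\gamma_2]\in \Gamma_n\setminus \Gamma_{n+1}^\bullet$, so this image is non-trivial, exhibiting a non-trivial element of $[IA_k^\bullet(\mathcal{N}_n^\bullet),(\mathcal{N}_n^\bullet)_l^\bullet]$. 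For part (ii), with $k+l = n-1$, the same set-up gives
\[
[\varphi,\psi](\bar x_1)\bar x_1^{-1} \;=\; \overline{[[\gamma_1,\gamma_2],x_1]},
\]
and Proposition~\ref{prop_cotas_IA_bullet,N_bullet}(ii) guarantees that $[[\gamma_1,\gamma_2],x_1]\notin \Gamma_{n+1}^\bullet$; hence $[\varphi,\psi]$ acts non-trivially on $\bar x_1$ and so is a non-trivial element of $[IA_k^\bullet(\mathcal{N}_n^\bullet),IA_l^\bullet(\mathcal{N}_n^\bullet)]$.

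The argument is a corollary in the literal sense: all the combinatorial work has already been done in Proposition~\ref{prop_cotas_IA_bullet,N_bullet}. The only point requiring verification is that inner conjugation by $\bar\gamma_1$ (resp.\ $\bar\gamma_2$) lands in $IA_k^\bullet(\mathcal{N}_n^\bullet)$ (resp.\ $IA_l^\bullet(\mathcal{N}_n^\bullet)$), which is immediate from the defining inclusion $[\Gamma_a^\bullet,\Gamma_b^\bullet]\subset \Gamma_{a+b}^\bullet$ shared by the Stallings and Zassenhaus series. Thus no new obstacle appears at the nilpotent quotient level beyond what has already been overcome in the free group.
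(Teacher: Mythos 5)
Your proof is correct and follows essentially the same approach as the paper: take the explicit witnesses (the inner automorphisms by the alternating commutators $\gamma_1$, $\gamma_2$ and the elements $\gamma_2$, $x_1$) constructed in the proof of Proposition~\ref{prop_cotas_IA_bullet,N_bullet}, push them to the quotient $\mathcal{N}_n^\bullet = \Gamma/\Gamma_{n+1}^\bullet$, and observe that the commutators remain non-trivial precisely because Proposition~\ref{prop_cotas_IA_bullet,N_bullet} places them outside $\Gamma_{n+1}^\bullet$, the kernel of the projection. The only cosmetic difference is that you construct the inner automorphisms directly as conjugation by $\bar\gamma_1,\bar\gamma_2$ on $\mathcal{N}_n^\bullet$ rather than inducing them from $\Gamma$, and you explicitly verify the membership in $IA_k^\bullet(\mathcal{N}_n^\bullet)$ and $IA_l^\bullet(\mathcal{N}_n^\bullet)$, which the paper leaves implicit — a mild improvement in rigor but not a different route.
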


\begin{proof}
We use the same notation of proof of \eqref{prop_cotas_IA_bullet,N_bullet}.
Take the $f\in IA_k^\bullet(\Gamma)$ and $x\in \Gamma^\bullet_l$ as in the proof of Proposition \eqref{prop_cotas_IA_bullet,N_bullet}. These elements induce elements $\overline{f}\in IA_k^\bullet(\Gamma)$ and $\overline{x}\in \Gamma^\bullet_l.$ Moreover, by Proposition \eqref{prop_cotas_IA_bullet,N_bullet}, we have that
$$[\overline{f},\overline{x}]=\overline{[f,x]}=\overline{[\gamma_1,\gamma_2]}\neq 1.$$
Therefore there is an element in $[IA^\bullet_k(\mathcal{N}_n^\bullet),(\mathcal{N}_n^\bullet)^\bullet_l]$ which is not trivial.

Take $\varphi\in IA_k^\bullet(\Gamma),$ $\psi\in IA_l^\bullet(\Gamma),$ and $x_1\in \Gamma.$ These elements induce elements $\overline{\varphi}\in IA_k^\bullet(\mathcal{N}_n^\bullet),$ $\overline{\psi}\in IA_l^\bullet(\mathcal{N}_n^\bullet),$ and $\overline{x_1}\in \mathcal{N}_n^\bullet.$
Moreover, by Proposition \eqref{prop_cotas_IA_bullet,N_bullet}, we have that
$$[[\overline{\varphi},\overline{\psi}],\overline{x_1}]=\overline{[[\varphi,\psi],x_1]}=\overline{[[\gamma_1,\gamma_2],x_1]}\neq 1.$$
Therefore there is an element in $[IA_k^\bullet(\mathcal{N}^\bullet_{n}),IA_l^\bullet(\mathcal{N}^\bullet_{n})]$ which is not trivial.
\end{proof}

\begin{prop}
\label{prop_act_aut_bullet}
The natural action of $Aut(\mathcal{N}^\bullet_{k+1})$ on $Hom(\mathcal{N}_1^\bullet, \mathcal{L}^\bullet_{k+1})$
factors through $Aut(\mathcal{N}^\bullet_{1}).$
\end{prop}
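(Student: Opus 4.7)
The strategy is to write down explicitly the conjugation action of $Aut(\mathcal{N}^\bullet_{k+1})$ on the abelian normal subgroup $Hom(\mathcal{N}^\bullet_1, \mathcal{L}^\bullet_{k+1})$ embedded via $i$, and then to exhibit this action as a composition of two pieces, each of which manifestly factors through $Aut(\mathcal{N}^\bullet_1)$.

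First I would pick $\Phi\in Aut(\mathcal{N}^\bullet_{k+1})$ and $f\in Hom(\mathcal{N}^\bullet_1,\mathcal{L}^\bullet_{k+1})$, and compute directly from the formula $i(f)(\gamma)=f([\gamma])\gamma$ that
\begin{equation*}
(\Phi\cdot i(f)\cdot \Phi^{-1})(\gamma)=\Phi\bigl(f([\Phi^{-1}(\gamma)])\bigr)\cdot \gamma,
\end{equation*}
where $[\,\cdot\,]$ denotes the projection to $\mathcal{N}^\bullet_1$. Comparing with $i(\Phi\cdot f)(\gamma)=(\Phi\cdot f)([\gamma])\,\gamma$, one obtains the identity
\begin{equation*}
\Phi\cdot f \;=\; \overline{\Phi}_L\circ f\circ \overline{\Phi}_1^{-1},
\end{equation*}
where $\overline{\Phi}_1\in Aut(\mathcal{N}^\bullet_1)$ is the automorphism induced by $\Phi$ on the quotient $\mathcal{N}^\bullet_1$ and $\overline{\Phi}_L\in Aut(\mathcal{L}^\bullet_{k+1})$ is the restriction of $\Phi$ to the characteristic subgroup $\mathcal{L}^\bullet_{k+1}$ (see Corollary \eqref{cor_charsub}).

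To finish, it suffices to show that both assignments $\Phi\mapsto \overline{\Phi}_1$ and $\Phi\mapsto \overline{\Phi}_L$ depend only on the image of $\Phi$ in $Aut(\mathcal{N}^\bullet_1)$. The first claim is automatic from functoriality of the projection $\mathcal{N}^\bullet_{k+1}\to \mathcal{N}^\bullet_1$. For the second claim, I would take $\Phi\in IA^p(\mathcal{N}^\bullet_{k+1})$, i.e.\ the kernel of $Aut(\mathcal{N}^\bullet_{k+1})\to Aut(\mathcal{N}^\bullet_1)$, which in the notation of the paper is exactly $IA_1^\bullet(\mathcal{N}^\bullet_{k+1})$, and apply Lemma \eqref{lema_coop_general} to the group $G=\mathcal{N}^\bullet_{k+1}$ with indices $k=1$ and $l=k+1$. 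Using the identification $\mathcal{L}^\bullet_{k+1}=(\mathcal{N}^\bullet_{k+1})^\bullet_{k+1}$ provided by Proposition \eqref{prop_ident_L_versal}, the lemma yields
\begin{equation*}
\bigl[IA_1^\bullet(\mathcal{N}^\bullet_{k+1}),\,\mathcal{L}^\bullet_{k+1}\bigr] \;<\; (\mathcal{N}^\bullet_{k+1})^\bullet_{k+2} \;=\; \Gamma^\bullet_{k+2}/\Gamma^\bullet_{k+2} \;=\; 1,
\end{equation*}
so $\overline{\Phi}_L=\mathrm{id}_{\mathcal{L}^\bullet_{k+1}}$ for every such $\Phi$, which gives the desired factorisation.

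The only genuinely delicate ingredient is the vanishing of the commutator $[IA^p(\mathcal{N}^\bullet_{k+1}),\mathcal{L}^\bullet_{k+1}]$, and this has already been packaged in Lemma \eqref{lema_coop_general}; the rest is a bookkeeping computation with the conjugation formula. I do not expect any obstacle beyond being careful with the identification of $\mathcal{L}^\bullet_{k+1}$ as the top piece of the $\bullet$-filtration of $\mathcal{N}^\bullet_{k+1}$, which is exactly where Proposition \eqref{prop_ident_L_versal} enters.
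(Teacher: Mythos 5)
Your proof is correct and takes essentially the same route as the paper: both express the action by the formula $\Phi\cdot f=\overline{\Phi}_L\circ f\circ\overline{\Phi}_1^{-1}$, and both invoke Proposition \eqref{prop_ident_L_versal} together with Lemma \eqref{lema_coop_general} to show that the restriction of any $\Phi\in IA^p(\mathcal{N}^\bullet_{k+1})$ to $\mathcal{L}^\bullet_{k+1}$ is the identity, which gives the factorisation through $Aut(\mathcal{N}^\bullet_1)$.
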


\begin{proof}
In virtue of Proposition \eqref{prop_exact_seq_modp_L}, the natural action of $Aut(\mathcal{N}^\bullet_{k+1})$ on $Hom(\mathcal{N}_1^\bullet, \mathcal{L}^\bullet_{k+1})$ is given by
\begin{align*}
Aut(\mathcal{N}^\bullet_{k+1})\times Hom(\mathcal{N}_1^\bullet, \mathcal{L}^\bullet_{k+1})&\longrightarrow Hom(\mathcal{N}_1^\bullet, \mathcal{L}^\bullet_{k+1}) \\
(h,f) & \longmapsto (x\mapsto h(f(h^{-1}x)))),
\end{align*}
where $h^{-1}x$ is the action of $h^{-1}\in Aut(\mathcal{N}^\bullet_{k+1})$ on $x\in \mathcal{N}^\bullet_1$ via the surjection
$Aut\;\mathcal{N}^\bullet_{k} \rightarrow Aut \;\mathcal{N}^\bullet_1.$
Moreover, by Proposition \eqref{prop_ident_L_versal} and Lemma \eqref{lema_coop_general}, we know that if $h\in IA^p(\mathcal{N}^\bullet_{k+1})$ and $y\in \mathcal{L}_{k+1}^\bullet,$ then
$h(y)=y.$ Therefore the action of $Aut\;\mathcal{N}^\bullet_{k}$ on $\mathcal{L}_{k+1}^\bullet$ factors through $Aut(\mathcal{N}^\bullet_{1})$
via the surjection $Aut\;\mathcal{N}^\bullet_{k} \rightarrow Aut \;\mathcal{N}^\bullet_1$ and as a consequence we get the result.
\end{proof}

As a direct consequence of Proposition \eqref{prop_act_aut_bullet}, we have the following result:
\begin{cor}
\label{cor_cent_ext_IA}
Let $\Gamma$ be a free group of finite rank $n > 1.$ The extension
\begin{equation*}
\xymatrix@C=7mm@R=10mm{0 \ar@{->}[r] & Hom(\mathcal{N}^\bullet_1, \mathcal{L}^\bullet_{k+1}) \ar@{->}[r]^-{i} & IA^p(\mathcal{N}^\bullet_{k+1}) \ar@{->}[r]^-{\pi} & IA^p(\mathcal{N}^\bullet_k) \ar@{->}[r] & 1 }
\end{equation*}
is central.
\end{cor}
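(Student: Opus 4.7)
The plan is to deduce centrality directly from Proposition \ref{prop_act_aut_bullet} together with the definition of $IA^p(\mathcal{N}^\bullet_{k+1})$. Recall that, by construction, an extension
$$0 \to A \to E \to G \to 1$$
with abelian kernel is central precisely when the conjugation action of $E$ on the normal subgroup $A$ (which always factors through $G$) is trivial. So the task reduces to showing that the induced action of $IA^p(\mathcal{N}^\bullet_{k+1})$ on $Hom(\mathcal{N}^\bullet_1, \mathcal{L}^\bullet_{k+1})$ is trivial.

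First I would invoke Proposition \ref{prop_act_aut_bullet}, which asserts that the natural action of $Aut(\mathcal{N}^\bullet_{k+1})$ on $Hom(\mathcal{N}^\bullet_1, \mathcal{L}^\bullet_{k+1})$ factors through the quotient $Aut(\mathcal{N}^\bullet_{k+1}) \twoheadrightarrow Aut(\mathcal{N}^\bullet_1)$ given by reduction modulo $\mathcal{L}_2^\bullet$ (equivalently, by the action on $H_1(-;\mathbb{Z}/p)$). In particular, restricting to the subgroup $IA^p(\mathcal{N}^\bullet_{k+1}) \subset Aut(\mathcal{N}^\bullet_{k+1})$, the action of $IA^p(\mathcal{N}^\bullet_{k+1})$ on $Hom(\mathcal{N}^\bullet_1, \mathcal{L}^\bullet_{k+1})$ factors through the image of $IA^p(\mathcal{N}^\bullet_{k+1})$ in $Aut(\mathcal{N}^\bullet_1)$.

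Next I would observe that, by the very definition of $IA^p(\mathcal{N}^\bullet_{k+1})$ (cf.\ the short exact sequence \eqref{ses_IAmodp}), this image is trivial: $IA^p(\mathcal{N}^\bullet_{k+1})$ is the kernel of $Aut(\mathcal{N}^\bullet_{k+1}) \to Aut(\mathcal{N}^\bullet_1)$. Consequently every element of $IA^p(\mathcal{N}^\bullet_{k+1})$ acts trivially on $Hom(\mathcal{N}^\bullet_1, \mathcal{L}^\bullet_{k+1})$, and via the injection $i$ of Proposition \ref{prop_exact_seq_modp_L} the image $i(Hom(\mathcal{N}^\bullet_1, \mathcal{L}^\bullet_{k+1}))$ is therefore central in $IA^p(\mathcal{N}^\bullet_{k+1})$.

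There is essentially no obstacle here; the corollary is an immediate byproduct of the previous proposition and the definition of $IA^p$. The only book-keeping point worth a line in the write-up is the identification of the conjugation action of $E = IA^p(\mathcal{N}^\bullet_{k+1})$ on $A = i(Hom(\mathcal{N}^\bullet_1, \mathcal{L}^\bullet_{k+1}))$ with the restriction of the natural $Aut(\mathcal{N}^\bullet_{k+1})$-action on $Hom(\mathcal{N}^\bullet_1, \mathcal{L}^\bullet_{k+1})$ described in the proof of Proposition \ref{prop_act_aut_bullet}; once this is noted, centrality follows in a single line.
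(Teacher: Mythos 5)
Your argument is correct and coincides with the paper's: the paper dismisses this corollary in one line as an immediate consequence of Proposition \eqref{prop_act_aut_bullet} together with the definition $IA^p(\mathcal{N}^\bullet_{k+1}) = \ker\bigl(Aut(\mathcal{N}^\bullet_{k+1}) \to Aut(\mathcal{N}^\bullet_1)\bigr)$, which is exactly what you spell out, including the identification of the conjugation action with the natural $Aut(\mathcal{N}^\bullet_{k+1})$-action on $Hom(\mathcal{N}^\bullet_1,\mathcal{L}^\bullet_{k+1})$.
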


\begin{prop}
Let $\Gamma$ be a free group of finite rank $n > 1.$ The extension
\begin{equation*}
\xymatrix@C=7mm@R=10mm{0 \ar@{->}[r] & Hom(\mathcal{N}^\bullet_1,\mathcal{L}^\bullet_{k+1}) \ar@{->}[r]^-{i} & Aut\;\mathcal{N}^\bullet_{k+1} \ar@{->}[r]^-{\psi^\bullet_k} & Aut \;\mathcal{N}^\bullet_k \ar@{->}[r] & 1, }
\end{equation*}
is not central.
\end{prop}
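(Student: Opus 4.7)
The plan is to leverage Proposition \ref{prop_act_aut_bullet} to reduce the question to one about an action of $GL_n(\mathbb{Z}/p)$, then to exhibit explicit witnesses to its non-triviality. First, observe that centrality of the extension is equivalent to the conjugation action of $\mathrm{Aut}\,\mathcal{N}^\bullet_{k+1}$ on the kernel $\mathrm{Hom}(\mathcal{N}^\bullet_1,\mathcal{L}^\bullet_{k+1})$ being trivial; by Proposition \ref{prop_act_aut_bullet} this action factors through $\mathrm{Aut}\,\mathcal{N}^\bullet_1 = GL_n(\mathbb{Z}/p)$, so it suffices to show that the induced $GL_n(\mathbb{Z}/p)$-action is not the trivial one.

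Second, I would make the induced action explicit. Using the embedding $i(f)(\gamma)=f([\gamma])\,\gamma$ and the fact that $\mathcal{L}^\bullet_{k+1}$ is central in $\mathcal{N}^\bullet_{k+1}$, a short computation shows that for any lift $g\in\mathrm{Aut}\,\mathcal{N}^\bullet_{k+1}$ of $\bar g\in GL_n(\mathbb{Z}/p)$,
$$(\bar g\cdot f)([y]) = \bar g\bigl(f(\bar g^{-1}([y]))\bigr),\qquad [y]\in\mathcal{N}^\bullet_1,$$
where $\bar g$ acts on $\mathcal{L}^\bullet_{k+1}$ via the canonical $GL_n(\mathbb{Z}/p)$-action on the characteristic subgroup $\mathcal{L}^\bullet_{k+1}$. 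That this action on $\mathcal{L}^\bullet_{k+1}$ depends only on $\bar g$, and not on the chosen lift, follows from Lemma \ref{lema_coop_general}, which guarantees that $IA^p(\mathcal{N}^\bullet_{k+1})$ acts trivially on $\mathcal{L}^\bullet_{k+1}$.

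Third, to exhibit an explicit witness, fix a free basis $\{x_1,\ldots,x_n\}$ of $\Gamma$ and let $\bar x_1,\ldots,\bar x_n$ denote the induced $\mathbb{Z}/p$-basis of $\mathcal{N}^\bullet_1$. Since $\Gamma$ is free of rank $n\geq 2$, the graded piece $\mathcal{L}^\bullet_{k+1}$ is non-zero; indeed, the class $\omega$ of the left-normed commutator $[x_1,[x_1,\ldots,[x_1,x_2]]]$ of weight $k+1$ is a non-trivial element of $\mathcal{L}^\bullet_{k+1}$, as a consequence of the Magnus--Lazard description of the graded $\mathbb{Z}/p$-restricted Lie algebra $\bigoplus_k\mathcal{L}^\bullet_k$ as a free object on $n$ generators. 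Define $f\in\mathrm{Hom}(\mathcal{N}^\bullet_1,\mathcal{L}^\bullet_{k+1})$ by $f(\bar x_1)=\omega$ and $f(\bar x_i)=0$ for $i\geq 2$, and let $\bar g\in GL_n(\mathbb{Z}/p)$ be the permutation exchanging $\bar x_1$ and $\bar x_2$. Applying the formula of the previous step yields $(\bar g\cdot f)(\bar x_1)=\bar g(f(\bar x_2))=0$, which differs from $f(\bar x_1)=\omega$. Hence $\bar g\cdot f\neq f$, the $GL_n(\mathbb{Z}/p)$-action on $\mathrm{Hom}(\mathcal{N}^\bullet_1,\mathcal{L}^\bullet_{k+1})$ is non-trivial, and consequently the extension is not central.

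The main obstacle I anticipate is justifying the formula for the induced action cleanly: this requires verifying that the composition in $\mathrm{Aut}\,\mathcal{N}^\bullet_{k+1}$ of a lift of $\bar g$, a stabilizing automorphism $i(f)$, and an inverse lift of $\bar g$ yields $i(\bar g\cdot f)$, and in particular that $\bar g$ acts on $\mathcal{L}^\bullet_{k+1}$ in a well defined way. Both checks are short formal verifications resting on the centrality of $\mathcal{L}^\bullet_{k+1}$ in $\mathcal{N}^\bullet_{k+1}$ and on Lemma \ref{lema_coop_general}; once these are in place, the construction of the witness and the conclusion are essentially routine.
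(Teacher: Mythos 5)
Your proof is correct and takes essentially the same approach as the paper: both exhibit a homomorphism $f$ supported only on $\bar x_1$, hitting a non-trivial iterated-commutator class in $\mathcal{L}^\bullet_{k+1}$, and conjugate by the automorphism induced by the transposition swapping $x_1$ and $x_2$. The only cosmetic differences are that you route the argument explicitly through Proposition \eqref{prop_act_aut_bullet} to identify the action as one of $GL_n(\mathbb{Z}/p)$ (the paper does the conjugation computation directly), and you use $\mathrm{ad}(x_1)^k(x_2)$ rather than the alternating commutator $[x_1,[x_2,[x_1,\ldots]]]$ appearing in the paper — either choice is non-zero in $\mathcal{L}^\bullet_{k+1}$ for $n>1$, though the paper establishes non-vanishing of its chosen element via the Magnus embedding argument in Proposition \eqref{prop_cotas_IA_bullet,N_bullet}, which you would want to cite or reproduce to close the small gap around your appeal to the ``Magnus--Lazard description'' (which is literally true for the Zassenhaus graded object, but needs the Magnus-embedding argument in the Stallings case since that graded is not the free restricted Lie algebra).
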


\begin{proof}
We provide a counterexample.

Consider $\Gamma=\langle x_1,\ldots, x_n \rangle$ and $x=[x_1,[x_2,[x_1,[x_2,\ldots]]]]$ a commutator of length $k+1.$
Denote $[x_i]$ the class of $x_i$ in $\mathcal{N}^\bullet_1,$ and $\overline{x}$ the class of $x$ in $\mathcal{L}^\bullet_{k+1}.$
Observe that $\mathcal{N}_1^\bullet$ is a $\mathbb{F}_p$-vector space with basis $\{[x_1],\ldots ,[x_n]\}.$
Let $f\in Hom(\mathcal{N}_1^\bullet,\mathcal{L}^\bullet_{k+1})$ be the homomorphism defined on the basis of $\mathcal{N}_1^\bullet$ by
$f([x_1])=\overline{x}$ and $f([x_i])=0$ for $1<i\leq n.$

Consider $h=(x_1x_2)\in \mathfrak{S}_n\subset Aut \; \Gamma.$
Since $\Gamma^\bullet_{k+2}$ is a characteristic subgroup of $\Gamma,$
$h$ induces an element $\overline{h}\in Aut\;\mathcal{N}^\bullet_{k+1}.$ 
Then we have that
$$f([x_1])=\overline{x}\quad \text{and} \quad \overline{h}(f((\overline{h})^{-1}[x_1]))=\overline{h}(f([x_2]))=\overline{h}(1)=1.$$
Therefore the extension is not central.
\end{proof}

\subsection{Splitting the extensions}
In this section, our goal is to study the splitability of the extension
\begin{equation*}
\xymatrix@C=7mm@R=10mm{0 \ar@{->}[r] & Hom(\mathcal{N}^\bullet_1,\mathcal{L}^\bullet_{k+1}) \ar@{->}[r]^-{i} & Aut\;\mathcal{N}^\bullet_{k+1} \ar@{->}[r]^-{\psi^\bullet_k} & Aut \;\mathcal{N}^\bullet_k \ar@{->}[r] & 1. }
\end{equation*}
We first consider the case $k=1.$
To make notations lighter, we will denote by $H_p$ the group
$\mathcal{N}_1^S=\mathcal{N}_1^Z=\Gamma/\Gamma^p\Gamma_2.$

\begin{nota}
Let $p$ be a prime number and $n\geq 2$ an integer, we
denote by $\mathfrak{gl}_{n}(\mathbb{Z}/p)$ the additive group of matrices $n\times n$ with coefficients in $\mathbb{Z}/p,$
and by $\mathfrak{sl}_{n}(\mathbb{Z}/p)$ the subgroup of $\mathfrak{gl}_{n}(\mathbb{Z}/p)$ formed by the matrices of trace zero. 
\end{nota}

\begin{teo}[Theorem 7 in \cite{chih}]
\label{teo_SL_split}
Let $p$ be a prime number and $n\geq 2$ an integer. The extension
$$\xymatrix@C=7mm@R=10mm{ 0 \ar@{->}[r] & \mathfrak{sl}_{n}(\mathbb{Z}/p) \ar@{->}[r] &  SL_{n}(\mathbb{Z}/p^2) \ar@{->}[r]^{r_p} &  SL_{n}(\mathbb{Z}/p) \ar@{->}[r] & 1,}$$
only splits for $(p,n)=(3,2)\text{ and }(2,3).$
\end{teo}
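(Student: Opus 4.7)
The plan is to convert the splitting question into a cohomological obstruction, analyze it on a Sylow $p$-subgroup, and then handle the two exceptional pairs by exhibiting explicit sections. First I would identify the kernel of $r_p$: any element of the form $I+pM$ with $M\in M_n(\mathbb{Z}/p)$ satisfies $\det(I+pM)\equiv 1+p\,\mathrm{tr}(M)\pmod{p^2}$, so it lies in $SL_n(\mathbb{Z}/p^2)$ precisely when $\mathrm{tr}(M)\equiv 0\pmod p$. The map $I+pM\mapsto M$ therefore identifies the kernel with $\mathfrak{sl}_n(\mathbb{Z}/p)$, and conjugation by a lift of $g\in SL_n(\mathbb{Z}/p)$ acts as the adjoint representation $\mathrm{Ad}(g)$. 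By Proposition~\eqref{prop_class_extensions} the extension is thus classified by a class
\[
c \in H^2\bigl(SL_n(\mathbb{Z}/p);\,\mathfrak{sl}_n(\mathbb{Z}/p)\bigr),
\]
and the extension splits if and only if $c=0$.

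Next I would reduce the problem to testing lifts of the natural generators. Any section of $r_p$ restricts to a section over the Sylow $p$-subgroup $U_n(\mathbb{Z}/p)$ of upper unitriangular matrices, and also to one over each cyclic subgroup generated by an elementary matrix $e_{ij}(1)$, which has order $p$ in $SL_n(\mathbb{Z}/p)$. The tautological lift $\widetilde{e}_{ij}(1)=I+E_{ij}\in SL_n(\mathbb{Z}/p^2)$ satisfies $\widetilde{e}_{ij}(1)^p=I+pE_{ij}\neq I$, so any candidate section must replace it by $\widetilde{e}_{ij}(1)\cdot(I+pM_{ij})$ for some traceless $M_{ij}\in \mathfrak{sl}_n(\mathbb{Z}/p)$. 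The key step is to solve the resulting system coming from a Steinberg-type presentation of $SL_n(\mathbb{Z}/p)$: the modified elements must satisfy (i)~$x^p=I$ on each elementary generator, and (ii)~the Chevalley commutator relations $[e_{ij}(a),e_{jk}(b)]=e_{ik}(ab)$ when $i,j,k$ distinct, and $[e_{ij}(a),e_{kl}(b)]=I$ when $\{i,j\}\cap\{k,l\}=\emptyset$. Each relation yields a linear condition on the family $\{M_{ij}\}\subset \mathfrak{sl}_n(\mathbb{Z}/p)$.

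For the generic cases (all $(p,n)$ with $n\geq 2$, $p$ prime, excluding $(3,2)$ and $(2,3)$) I would show that this linear system is inconsistent. A convenient route is to extract the corresponding $2$-cocycle $f\colon SL_n(\mathbb{Z}/p)\times SL_n(\mathbb{Z}/p)\to \mathfrak{sl}_n(\mathbb{Z}/p)$ from the identity $\widetilde{g}\widetilde{h}=(I+p\,f(g,h))\widetilde{gh}$ using the tautological set-theoretic lift, compute $f$ on the Steinberg relators, and exhibit a nontrivial component, typically arising because $(I+E_{ij})^p=I+pE_{ij}$ forces the cocycle to pair the Weyl-style relation with a diagonal traceless matrix whose class in $\mathfrak{sl}_n(\mathbb{Z}/p)^{SL_n}$ survives. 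The hard part, and the main obstacle, is organizing this calculation uniformly in $n$ and $p$; one needs inflation from the $SL_2$-blocks together with an Eckmann--Shapiro or restriction--corestriction argument to see that the obstruction is nonzero already on a suitable parabolic subgroup.

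Finally, for the two exceptional pairs I would produce explicit sections. For $(p,n)=(3,2)$, the group $SL_2(\mathbb{F}_3)$ has order $24$ and a faithful action on a $\mathbb{Z}/9$-lattice arising from the binary tetrahedral group realization; one writes down generators
\[
\sigma\!\begin{pmatrix}0&-1\\1&0\end{pmatrix}\mapsto \begin{pmatrix}0&-1\\1&0\end{pmatrix},\qquad
\sigma\!\begin{pmatrix}1&1\\0&1\end{pmatrix}\mapsto \begin{pmatrix}1&1\\0&1\end{pmatrix}\cdot(I+3M),
\]
and verifies directly that a suitable $M\in \mathfrak{sl}_2(\mathbb{Z}/3)$ makes the map a homomorphism. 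The case $(p,n)=(2,3)$ is handled by the classical identification $SL_3(\mathbb{F}_2)\cong PSL_2(\mathbb{F}_7)$, together with a concrete embedding $PSL_2(\mathbb{F}_7)\hookrightarrow SL_3(\mathbb{Z}/4)$ obtained from a modular representation of order coprime to the relevant obstruction. Verifying that these maps are group homomorphisms is a finite check, which, combined with the non-splitting in all remaining cases, gives the theorem.
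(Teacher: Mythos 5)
This theorem is cited in the paper as Theorem~7 of reference~[chih] and is not proved there, so a direct comparison of your argument to the paper's own proof is not possible.

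On the merits of the plan: the cohomological framework you set up is correct. The identification of the kernel with $\mathfrak{sl}_n(\mathbb{Z}/p)$ via $I+pM\mapsto M$, the observation that conjugation induces the adjoint action, and the reduction to the Sylow $p$-subgroup $U_n(\mathbb{Z}/p)$ via the fact that restriction is injective on the $p$-torsion group $H^2(SL_n(\mathbb{Z}/p);\mathfrak{sl}_n(\mathbb{Z}/p))$ are all standard and correct, as is the observation that $(I+E_{ij})^p=I+pE_{ij}$, forcing any splitting to correct the tautological lift by a traceless matrix. However, as written this is an outline, not a proof, and there are two substantive gaps.

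First, you explicitly defer the central step: you say the ``main obstacle'' is organizing the Steinberg-relation calculation uniformly in $n$ and $p$ and indicate that you have not resolved it. Whether the obstruction is nonzero is precisely what the theorem asserts in the generic case; a gesture toward Eckmann--Shapiro and parabolic subgroups does not substitute for exhibiting a nontrivial class or showing the linear system on the $M_{ij}$ is inconsistent. The low-rank cases (e.g.\ $(p,n)=(2,2)$, where the Sylow subgroup is cyclic of order $2$) should be checked explicitly before asserting a uniform pattern, since the shape of $U_n$ changes with $n$.

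Second, the handling of the exceptional pair $(2,3)$ is not correct as stated. You propose to obtain a section ``from a modular representation of order coprime to the relevant obstruction,'' but $|PSL_2(\mathbb{F}_7)|=168$ is even and the obstruction lives in the $2$-torsion group $H^2(SL_3(\mathbb{F}_2);\mathfrak{sl}_3(\mathbb{F}_2))$, so no coprimality is available; the isomorphism $SL_3(\mathbb{F}_2)\cong PSL_2(\mathbb{F}_7)$ by itself buys you nothing. A genuine argument here must split the pullback over the Sylow $2$-subgroup $U_3(\mathbb{F}_2)\cong D_4$, or exhibit an honest section directly. Similarly, for $(3,2)$ you should actually verify the lift you propose is a homomorphism; the pullback to the cyclic Sylow $3$-subgroup does split (the resulting congruences $1+\gamma\equiv 0$ and $\alpha+\delta-\gamma\equiv 0$ modulo $3$ are simultaneously solvable), and this already yields the global splitting by restriction--corestriction, which is a cleaner route than invoking the binary tetrahedral group.
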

\begin{cor}
\label{cor_put_GL_split}
Let $p$ be a prime number and $n\geq 2$ an integer. The extension
\begin{equation}
\label{ext_GL_split}
\xymatrix@C=7mm@R=10mm{ 0 \ar@{->}[r] & \mathfrak{gl}_{n}(\mathbb{Z}/p) \ar@{->}[r] &  GL_{n}(\mathbb{Z}/p^2) \ar@{->}[r]^{r_p} &  GL_{n}(\mathbb{Z}/p) \ar@{->}[r] & 1,}
\end{equation}
only splits for $(p,n)=(3,2),\; (2,2) \text{ and } (2,3).$
\end{cor}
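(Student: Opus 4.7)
The plan is to derive the corollary from Theorem \eqref{teo_SL_split} by relating the $SL$ and $GL$ extensions through the determinant map. The starting point is the commutative diagram of short exact sequences
\begin{equation*}
\xymatrix@C=5mm@R=7mm{
0 \ar[r] & \mathfrak{sl}_n(\mathbb{Z}/p) \ar[r] \ar@{^{(}->}[d] & SL_n(\mathbb{Z}/p^2) \ar[r] \ar@{^{(}->}[d] & SL_n(\mathbb{Z}/p) \ar[r] \ar@{^{(}->}[d] & 1 \\
0 \ar[r] & \mathfrak{gl}_n(\mathbb{Z}/p) \ar[r] \ar[d]^-{\mathrm{tr}} & GL_n(\mathbb{Z}/p^2) \ar[r] \ar[d]^-{\det} & GL_n(\mathbb{Z}/p) \ar[r] \ar[d]^-{\det} & 1 \\
0 \ar[r] & \mathbb{Z}/p \ar[r] & (\mathbb{Z}/p^2)^\times \ar[r] & (\mathbb{Z}/p)^\times \ar[r] & 1,
}
\end{equation*}
together with the preliminary observation that the bottom (abelian) extension always splits: for $p$ odd the cyclic group $(\mathbb{Z}/p^2)^\times$ of order $p(p-1)$ contains a unique subgroup of order $p-1$ (the Teichm\"uller lift), and for $p=2$ the target is trivial.

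For the necessity direction, suppose a splitting $s:GL_n(\mathbb{Z}/p)\to GL_n(\mathbb{Z}/p^2)$ exists. For every $M\in SL_n(\mathbb{Z}/p)$ the determinant $\det s(M)$ is congruent to $1$ modulo $p$, so one may write $\det s(M)=1+p\,\delta(M)$ for a uniquely determined group homomorphism $\delta:SL_n(\mathbb{Z}/p)\to\mathbb{Z}/p$. Two sub-cases arise. If $\delta\equiv 0$, then $s$ restricts to a splitting of the top row, whence Theorem \eqref{teo_SL_split} forces $(p,n)\in\{(3,2),(2,3)\}$. If $\delta\not\equiv 0$, then $SL_n(\mathbb{F}_p)$ admits a non-trivial quotient of exponent $p$; since $SL_n(\mathbb{F}_p)$ is perfect for $n\geq 2$ except in the two exceptional cases, whose abelianizations are $\mathbb{Z}/2$ for $SL_2(\mathbb{F}_2)$ and $\mathbb{Z}/3$ for $SL_2(\mathbb{F}_3)$, in our notation this forces $(p,n)\in\{(2,2),(3,2)\}$. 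The union of the two sub-cases is $\{(2,2),(3,2),(2,3)\}$.

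For sufficiency, I would exhibit an explicit splitting in each of the three cases. When $(p,n)=(2,2)$, the dihedral group $S_3\cong GL_2(\mathbb{F}_2)$ embeds in $GL_2(\mathbb{Z})$ via $r=\bigl(\begin{smallmatrix}0&-1\\1&-1\end{smallmatrix}\bigr)$ of order $3$ and $s=\bigl(\begin{smallmatrix}0&1\\1&0\end{smallmatrix}\bigr)$ of order $2$, satisfying $srs=r^{-1}$; reducing modulo~$4$ produces a subgroup of order $6$ in $GL_2(\mathbb{Z}/4)$ which further reduced modulo~$2$ recovers all of $GL_2(\mathbb{F}_2)$, so the composition is a section. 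When $(p,n)=(2,3)$, the group $SL_3(\mathbb{F}_2)=GL_3(\mathbb{F}_2)$ is simple and Theorem~\eqref{teo_SL_split} provides a splitting of the top row, which composed with the inclusion $SL_3(\mathbb{Z}/4)\hookrightarrow GL_3(\mathbb{Z}/4)$ yields a splitting of the middle row. Finally, when $(p,n)=(3,2)$, I would combine a splitting $s_0:SL_2(\mathbb{F}_3)\to SL_2(\mathbb{Z}/9)$ from Theorem~\eqref{teo_SL_split} with the order-$2$ lift $\widetilde\sigma=\mathrm{diag}(8,1)\in GL_2(\mathbb{Z}/9)$ of $\sigma=\mathrm{diag}(-1,1)\in GL_2(\mathbb{F}_3)$; using the semi-direct decomposition $GL_2(\mathbb{F}_3)\cong SL_2(\mathbb{F}_3)\rtimes\langle\sigma\rangle$, the formula $(M,\sigma^k)\mapsto s_0(M)\widetilde\sigma^{k}$ gives a splitting, provided $s_0$ is equivariant under conjugation by $\sigma$.

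The main obstacle is the equivariance step in the case $(p,n)=(3,2)$. It rests on the fact that the set of splittings of the $SL$ extension is a torsor over the group of crossed homomorphisms $Z^1(SL_2(\mathbb{F}_3);\mathfrak{sl}_2(\mathbb{F}_3))$, which is an $\mathbb{F}_3$-vector space and hence carries an invertible action of $2$. Averaging an arbitrary splitting over $\langle\sigma\rangle\cong\mathbb{Z}/2$ therefore produces a fixed point of the conjugation action, yielding the required equivariant $s_0$. The case $(p,n)=(2,2)$ is treated by the separate direct construction above, since Theorem \eqref{teo_SL_split} fails there and no $SL$ splitting is available as a starting point.
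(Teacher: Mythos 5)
Your proof is correct, and it diverges from the paper's in a way worth noting. For the non-splitting direction, both arguments hinge on restricting a hypothetical splitting to $SL_n(\mathbb{Z}/p)$, but the paper runs the argument in cohomology: it forms the push-out extension over $\mathfrak{gl}_n(\mathbb{Z}/p)$, invokes the long exact coefficient sequence for $0\to\mathfrak{sl}_n\to\mathfrak{gl}_n\to\mathbb{Z}/p\to 0$ to show $i_*:H^2(SL_n;\mathfrak{sl}_n)\to H^2(SL_n;\mathfrak{gl}_n)$ is injective whenever $H^1(SL_n;\mathbb{Z}/p)=0$, and then cites the vanishing of that $H^1$ outside $(p,n)\in\{(2,2),(3,2)\}$. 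Your determinant homomorphism $\delta$ is exactly the cocycle-level manifestation of that $H^1$ obstruction, so the two arguments are parallel in content; yours is just phrased on the level of explicit sections and avoids unwinding the connecting homomorphism. The more substantive difference is in the sufficiency direction. For $(p,n)=(2,3)$ the two proofs coincide (both note $GL_3(\mathbb{Z}/4)=SL_3^{(2)}(\mathbb{Z}/4)$ and $GL_3(\mathbb{F}_2)=SL_3(\mathbb{F}_2)$ and feed through the Theorem's splitting). But for $(p,n)=(2,2)$ and $(3,2)$ the paper simply quotes Ginosar--Schnabel ([chih2], Prop.\ 4.5) for $H^2(GL_2(\mathbb{Z}/p);\mathfrak{gl}_2(\mathbb{Z}/p))=0$, whereas you construct explicit sections: lifting $S_3$ through $GL_2(\mathbb{Z})$ when $p=2$, and using $GL_2(\mathbb{F}_3)\cong SL_2(\mathbb{F}_3)\rtimes\langle\sigma\rangle$ together with an averaging argument over $\mathbb{Z}/2$ (valid since $2$ is invertible in $\mathbb{F}_3$) when $p=3$. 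This buys self-containedness and makes the splittings concrete; the paper's cohomological route is shorter once the literature input is accepted, and produces nothing explicit. Both are valid; yours is the more elementary of the two.
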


\begin{proof}
We first prove that the extension does not split for $(p,n)\neq(3,2),\;(2,2) \text{ and } (2,3).$
Set
$SL_{n}^{(p)}(\mathbb{Z}/p^2)=\{ A\in GL_{n}(\mathbb{Z}/p^2) \mid det(A)\equiv 1 \;(\text{mod }p) \}.$

Notice that we have a pull-back diagram
\begin{equation*}
\xymatrix@C=7mm@R=10mm{ 0 \ar@{->}[r] & \mathfrak{gl}_{n}(\mathbb{Z}/p) \ar@{=}[d] \ar@{->}[r] &  SL^{(p)}_{n}(\mathbb{Z}/p^2)\ar@{^{(}->}[d] \ar@{->}[r]^{r_p} &  SL_{n}(\mathbb{Z}/p) \ar@{^{(}->}[d]^{\phi} \ar@{->}[r] & 1 \\
0 \ar@{->}[r] & \mathfrak{gl}_{n}(\mathbb{Z}/p) \ar@{->}[r] &  GL_{n}(\mathbb{Z}/p^2) \ar@{->}[r]^{r_p} &  GL_{n}(\mathbb{Z}/p) \ar@{->}[r] & 1.}
\end{equation*}
Then it is enough to show that the top extension in above diagram does not split.

Next notice that we have a push-out diagram
\begin{equation}
\label{diag_push-out_M}
\xymatrix@C=7mm@R=10mm{ 0 \ar@{->}[r] & \mathfrak{sl}_{n}(\mathbb{Z}/p) \ar@{^{(}->}[d]^{i} \ar@{->}[r] &  SL_{n}(\mathbb{Z}/p^2)\ar@{^{(}->}[d] \ar@{->}[r]^{r_p} &  SL_{n}(\mathbb{Z}/p) \ar@{=}[d] \ar@{->}[r] & 1 \\
0 \ar@{->}[r] & \mathfrak{gl}_{n}(\mathbb{Z}/p) \ar@{->}[r] &  SL^{(p)}_{n}(\mathbb{Z}/p^2) \ar@{->}[r]^{r_p} &  SL_{n}(\mathbb{Z}/p) \ar@{->}[r] & 1.}
\end{equation}
which induce a map
$i_*: H^2(SL_{n}(\mathbb{Z}/p);\mathfrak{sl}_{n}(\mathbb{Z}/p))\longrightarrow H^2(SL_{n}(\mathbb{Z}/p);\mathfrak{gl}_{n}(\mathbb{Z}/p)).$

By Theorem \eqref{teo_SL_split} the top extension in above diagram does not split for $(p,n)\neq (3,2),(2,3).$
Next we show that $i_*$ is injective for $(p,n)\neq (2,2),(3,2),$ and as a consequence, we will get that the extension \eqref{ext_GL_split} does not split for $(p,n)\neq (3,2),(2,2),(2,3).$

Consider the short exact sequence
$$\xymatrix@C=7mm@R=10mm{ 0 \ar@{->}[r] & \mathfrak{sl}_{n}(\mathbb{Z}/p) \ar@{->}[r] &  \mathfrak{gl}_{n}(\mathbb{Z}/p) \ar@{->}[r]^-{tr} & \mathbb{Z}/p \ar@{->}[r] & 1,}$$
where $tr$ is given by the matrix trace.
The long chomology sequence for $SL_{n}(\mathbb{Z}/p)$ with values in above short exact sequence, give us an exact sequence
$$\xymatrix@C=7mm@R=10mm{ H^1(SL_{n}(\mathbb{Z}/p);\mathbb{Z}/p) \ar@{->}[r] &  H^2(SL_{n}(\mathbb{Z}/p);\mathfrak{sl}_{n}(\mathbb{Z}/p)) \ar@{->}[r]^-{i_*} & H^2(SL_{n}(\mathbb{Z}/p);\mathfrak{gl}_{n}(\mathbb{Z}/p)).}$$
By proof of Theorem 7 in \cite{chih},
$H^1(SL_{n}(\mathbb{Z}/p);\mathbb{Z}/p)=0$ for $(p,n)\neq (2,2),(3,2).$
As a consequence, $i_*$ is injective for $(p,n)\neq (2,2), (3,2).$

Next we prove that the extension \eqref{ext_GL_split} splits for $(p,n)=(3,2),\; (2,2) \text{ and } (2,3).$

By Proposition 4.5 in \cite{chih2}, we know that $H^2(GL_2(\mathbb{Z}/p);\mathfrak{gl}_2(\mathbb{Z}/p))=0$ for $p=2,3.$ Therefore the extension \eqref{ext_GL_split} splits for $(p,n)=(3,2) \text{ and } (2,2).$

For the case $(p,n)=(2,3),$ consider the push-out diagram
\begin{equation}
\label{diag_push-out_M_(2,3)}
\xymatrix@C=7mm@R=10mm{ 0 \ar@{->}[r] & \mathfrak{sl}_{3}(\mathbb{Z}/2) \ar@{^{(}->}[d]^{i} \ar@{->}[r] &  SL_{3}(\mathbb{Z}/4)\ar@{^{(}->}[d] \ar@{->}[r]^{r_2} &  SL_{3}(\mathbb{Z}/2) \ar@{=}[d] \ar@{->}[r] & 1 \\
0 \ar@{->}[r] & \mathfrak{gl}_{3}(\mathbb{Z}/2) \ar@{->}[r] &  SL^{(2)}_{3}(\mathbb{Z}/4) \ar@{->}[r]^{r_2} &  SL_{3}(\mathbb{Z}/2) \ar@{->}[r] & 1.}
\end{equation}
By Theorem \eqref{teo_SL_split} we know that the top extension of this commutative diagram splits. Therefore, the bottom extension in above diagram splits too.
Notice that $SL^{(2)}_{3}(\mathbb{Z}/4)=GL_3(\mathbb{Z}/4)$ and $SL_3(\mathbb{Z}/2)=SL_3(\mathbb{Z}/2).$ Hence, the extension \eqref{ext_GL_split} splits for $(p,n)=(2,3).$
\end{proof}

\begin{defi}
Let $k$ be an arbitrary commutative ring, let $V$ be a $k$-module, and let $T^q(V)=V\otimes \cdots \otimes V$ ($q$ copies of $V$), where $\otimes=\otimes_k.$ We denote by $\extp^q(V)$ the quotient of $T^q(V)$ by the $k$-submodule generated by the elements $v_1\otimes \cdots \otimes v_q$ such that $v_i=v_{i+1}$ for some $i.$
\end{defi}

\begin{prop}
Let $p$ be a prime number and $\Gamma$ a free group of rank $n.$ The extension
\begin{equation}
\label{ext_split_1}
\xymatrix@C=7mm@R=10mm{0 \ar@{->}[r] & Hom(\mathcal{N}^\bullet_1,\mathcal{L}^\bullet_{2}) \ar@{->}[r]^-{i} & Aut\;\mathcal{N}^\bullet_{2} \ar@{->}[r]^-{\psi^\bullet_1} & Aut \;\mathcal{N}^\bullet_1 \ar@{->}[r] & 1, }
\end{equation}
only splits for $\bullet=Z$ with $p$ odd and for $\bullet=S$ with $(p,n)= (3,2), (2,2), (2,3).$
\end{prop}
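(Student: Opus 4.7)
The plan is to split the proof into a positive direction, where I construct explicit sections in each of the listed cases, and a negative direction, where I rule out all other cases by reduction to Corollary~\eqref{cor_put_GL_split}.

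For the Zassenhaus extension with $p$ odd, I would build the section using the Lazard/Mal'cev correspondence. Since $p\geq 3$, one has $\Gamma_3^Z = \Gamma_3\Gamma^p$, so $\mathcal{N}_2^Z$ is $2$-step nilpotent of exponent $p$; the Baker--Campbell--Hausdorff formula furnishes an identification of $\mathcal{N}_2^Z$ with an $\mathbb{F}_p$-Lie algebra $\mathfrak{l} = H_p \oplus \extp^2 H_p$ (with bracket induced by commutation), under which $Aut\,\mathcal{N}_2^Z \cong Aut\,\mathfrak{l}$. The natural functorial action of $GL_n(\mathbb{F}_p)$ on $H_p$ then extends to $\extp^2 H_p$ and assembles into a homomorphism $GL_n(\mathbb{F}_p) \to Aut\,\mathfrak{l}$ that one checks is a section of $\psi_1^Z$.

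For the obstruction direction, I would first verify by a Hall--Petresco computation that, in the Stallings case for every prime $p$ and in the Zassenhaus case at $p = 2$, the inclusion $\Gamma_3^\bullet \subset [\Gamma,\Gamma]\,\Gamma^{p^2}$ holds; the key points are $(\Gamma_2^S)^p \subset [\Gamma,\Gamma]\Gamma^{p^2}$ and $\Gamma_2^2\Gamma^4 \subset [\Gamma,\Gamma]\Gamma^4$. This yields a characteristic surjection $\mathcal{N}_2^\bullet \twoheadrightarrow (\mathbb{Z}/p^2)^n$ and, induced from it, a commutative diagram
\begin{equation*}
\xymatrix@C=4mm@R=6mm{0 \ar@{->}[r] & Hom(H_p,\mathcal{L}_2^\bullet) \ar@{->}[d] \ar@{->}[r] & Aut\,\mathcal{N}_2^\bullet \ar@{->}[d] \ar@{->}[r]^-{\psi_1^\bullet} & GL_n(\mathbb{Z}/p) \ar@{=}[d] \ar@{->}[r] & 1 \\ 0 \ar@{->}[r] & \mathfrak{gl}_n(\mathbb{Z}/p) \ar@{->}[r] & GL_n(\mathbb{Z}/p^2) \ar@{->}[r] & GL_n(\mathbb{Z}/p) \ar@{->}[r] & 1}
\end{equation*}
that is the identity on the right. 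Any section of the top row composes with the middle vertical arrow to give a section of the bottom one, and by Corollary~\eqref{cor_put_GL_split} this is only possible at $(p,n) \in \{(3,2), (2,2), (2,3)\}$; all other Stallings and Zassenhaus-$p=2$ cases are thereby excluded.

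Finally, in the exceptional Stallings cases $(p,n) \in \{(3,2),(2,2),(2,3)\}$ I would exhibit explicit splittings by lifting small generating sets of $GL_n(\mathbb{F}_p)$ (Nielsen transformations and signed permutations of the free basis) to automorphisms of $\Gamma$, then verifying the defining relations of $GL_n(\mathbb{F}_p)$ modulo $\Gamma_3^S$; this is a finite check in each of the three cases. The hardest step, and where I expect the real work, is ruling out splittings in the Zassenhaus setting at $p=2$ for $n=2,3$: the $GL_n(\mathbb{Z}/4)$-reduction leaves these cases open, so a finer obstruction is required. My plan is to compare with the Stallings quotient through the natural surjection $\mathcal{N}_2^Z \twoheadrightarrow \mathcal{N}_2^S$ (valid because $\Gamma_3^Z \subseteq \Gamma_3^S$) and to exploit the $GL_n(\mathbb{F}_2)$-module $\Gamma_3^S/\Gamma_3^Z$, generated by the Hall--Petresco correction classes of type $x^2 y^2$ and $[x,y]^2$; the plan is to show that the obstruction class of the Zassenhaus extension in $H^2(GL_n(\mathbb{F}_2); Hom(H_2, \mathcal{L}_2^Z))$ fails to lift through the map induced by this module even when the Stallings obstruction can be killed, giving the desired non-splitting for $n=2,3$.
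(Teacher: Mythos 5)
Your argument splits cleanly into three pieces, two of which are sound, but the third—the part you flag as "the hardest step"—is built on a misconception that makes the whole plan for it vacuous.

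The Lazard correspondence argument for $\bullet = Z$ with $p$ odd is correct and genuinely different from the paper. For $p\geq 3$ one has $\Gamma_3^Z = \Gamma_3\Gamma^p$, so $\mathcal{N}_2^Z$ is $2$-step nilpotent of exponent $p$, and since the class is strictly less than $p$ the Baker--Campbell--Hausdorff identification with the free $2$-step nilpotent $\mathbb{F}_p$-Lie algebra on $H_p$ goes through; the functorial $GL_n(\mathbb{F}_p)$-action on the Lie algebra then gives an explicit section of $\psi_1^Z$. The paper instead uses the Center kills lemma to show the relevant $H^2$ vanishes (and therefore the extension splits), which also yields uniqueness of the splitting up to conjugacy; your route is more explicit but both are fine. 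The $GL_n(\mathbb{Z}/p^2)$ reduction for the negative direction is essentially identical to the paper's (diagram \eqref{com_diag_S2} is exactly your commutative square), and the Hall--Petresco verifications $(\Gamma_2^S)^p \subset [\Gamma,\Gamma]\Gamma^{p^2}$ and $\Gamma_2^2\Gamma^4\subset[\Gamma,\Gamma]\Gamma^4$ are correct.

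The genuine gap is your last paragraph. You say that the Zassenhaus extension at $p=2$ for $n=2,3$ needs a finer non-splitting obstruction, and you propose to build one from the $GL_n(\mathbb{F}_2)$-module $\Gamma_3^S/\Gamma_3^Z$. But for $p=2$ one has $\Gamma_i^S = \Gamma_i^Z$ for $i=1,2,3$ (this is recorded in the Remark that follows the Proposition, and it can be checked directly: both $\Gamma_3^S$ and $\Gamma_3^Z$ equal $\Gamma_3\Gamma_2^2\Gamma^4$). Consequently $\mathcal{N}_2^Z = \mathcal{N}_2^S$ and the two extensions literally coincide, so there is nothing to rule out—the Zassenhaus extension at $(p,n)=(2,2),(2,3)$ does split, and the module $\Gamma_3^S/\Gamma_3^Z$ you want to "exploit" is trivial. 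You have read the Proposition's phrase "only for $\bullet=Z$ with $p$ odd" as asserting non-splitting for $\bullet=Z$, $p=2$; it does not, precisely because that case is subsumed in the $\bullet=S$ list. So you should delete that paragraph entirely; what is actually still missing from your write-up is the positive direction for $\bullet=S$ at $(p,n)\in\{(3,2),(2,2),(2,3)\}$, which you only gesture at with "a finite check." The paper carries that out by decomposing $\mathcal{L}_2^S\cong\extp^2 H_p\oplus H_p$ as $GL_n(\mathbb{F}_p)$-modules and showing each summand of $H^2(GL_n(\mathbb{F}_p);Hom(H_p,\mathcal{L}_2^S))$ vanishes (using the cited computations of $H^2(GL_2;\mathfrak{gl}_2)$, the Center kills lemma at $(3,2)$, and a separate comparison for $(2,3)$); your generator-and-relations approach could also work, but as written it is a plan, not a proof.
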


\begin{rem}
Notice that for $p=2,$ the extensions \eqref{ext_split_1} with $\bullet=Z,$ $\bullet=S$ coincide, because in this case,
$\Gamma_i^S=\Gamma_i^Z$ for $i=1,2,3.$
\end{rem}

\begin{proof}
\textbf{For $\bullet=Z$ and $p$ an odd prime.} In this case, we have that
$$\mathcal{L}^Z_{2}\cong\extp^2 \mathcal{N}^Z_1=\extp^2 H_p, \qquad Aut\; \mathcal{N}^Z_1 =Aut\; H_p\cong GL_{n}(\mathbb{Z}/p).$$
Then, the extension \eqref{ext_split_1} becomes
\begin{equation*}
\xymatrix@C=7mm@R=10mm{0 \ar@{->}[r] & Hom(H_p,\extp^2 H_p) \ar@{->}[r]^-{i} & Aut\;\mathcal{N}^Z_{2} \ar@{->}[r]^-{\psi^Z_1} & GL_{n}(\mathbb{Z}/p) \ar@{->}[r] & 1. }
\end{equation*}
Notice that $-Id$ is an element of the center of $GL_{n}(\mathbb{Z}/p),$ which acts on $Hom(H_p,\extp^2 H_p)$ by the multiplication of $-1.$
Then, by the Center kills Lemma,
$$H_2(GL_{n}(\mathbb{Z}/p);Hom(H_p,\extp^2 H_p))=0.$$
As a consequence, by Lemma \eqref{lema_duality_homology}, we have that
\begin{align*}
H^2(GL_{n}(\mathbb{Z}/p);Hom(H_p,\extp^2 H_p))\cong\; & H^2(GL_{n}(\mathbb{Z}/p);Hom(H_p,\extp^2 H_p)^*)\cong \\
\cong\; & (H^2(GL_{n}(\mathbb{Z}/p);Hom(H_p,\extp^2 H_p)))^*=0.
\end{align*}
Therefore the extension \eqref{ext_split_1} splits.

\textbf{For $\bullet=S$ and $p$ prime with $(p,n)\neq (3,2), (2,2), (2,3).$} 
In this case, we have that
$$ Aut\; \mathcal{N}^S_1 =Aut\; H_p\cong GL_{n}(\mathbb{Z}/p).$$
Then, the extension \eqref{ext_split_1} becomes
\begin{equation}
\label{ses_nosplit_S}
\xymatrix@C=7mm@R=10mm{0 \ar@{->}[r] & Hom(H_p,\mathcal{L}^S_{2}) \ar@{->}[r]^-{i} & Aut\;\mathcal{N}^S_{2} \ar@{->}[r]^-{\psi^S_1} & GL_{n}(\mathbb{Z}/p) \ar@{->}[r]  & 1.}
\end{equation}
Set
$\overline{\mathcal{N}^S_{2}}=\frac{\Gamma }{\Gamma_2 \Gamma^{p^2}}, \; \overline{\mathcal{L}^S_{2}}=\frac{\Gamma^p}{\Gamma_2 \Gamma^{p^2}}.$ Observe that $\frac{\Gamma_2}{\Gamma_3\Gamma_2^p \Gamma^{p^2}}$ is a characteristic group of $\mathcal{N}_2^S.$ Then there is a well defined homomorphism $\widetilde{q}:Aut\;\mathcal{N}^S_{2} \rightarrow Aut\;\overline{\mathcal{N}^S_{2}}$ and there is a push-out diagram
\begin{equation}
\label{com_diag_S2}
\xymatrix@C=7mm@R=10mm{0 \ar@{->}[r] & Hom(H_p,\mathcal{L}^S_{2})\ar@{->}[d]^q \ar@{->}[r]^-{i} & Aut\;\mathcal{N}^S_{2} \ar@{->}[d]^{\widetilde{q}} \ar@{->}[r]^-{\psi^S_1} & GL_{n}(\mathbb{Z}/p) \ar@{->}[r] \ar@{=}[d] & 1 \\
0 \ar@{->}[r] & Hom(H_p,\overline{\mathcal{L}^S_{2}}) \ar@{->}[r]^-{i} & Aut\;\overline{\mathcal{N}^S_{2}} \ar@{->}[r]^-{\overline{\psi^S_1}} & GL_{n}(\mathbb{Z}/p) \ar@{->}[r] & 1, }
\end{equation}
where $q,$ $\widetilde{q}$ are induced by the quotient map respect to $\frac{\Gamma_2}{\Gamma_3\Gamma_2^p \Gamma^{p^2}}.$
Next, notice that $$Hom(H_p,\overline{\mathcal{L}^S_{2}})\cong \mathfrak{gl}_{n}(\mathbb{Z}/p), \qquad Aut\;\overline{\mathcal{N}^S_{2}}\cong GL_{n}(\mathbb{Z}/p^2).$$
Thus, the bottom row of diagram \eqref{com_diag_S2} becomes
\begin{equation*}
\xymatrix@C=7mm@R=10mm{ 0 \ar@{->}[r] & \mathfrak{gl}_{n}(\mathbb{Z}/p) \ar@{->}[r] &  GL_{n}(\mathbb{Z}/p^2) \ar@{->}[r]^-{r_p} &  GL_{n}(\mathbb{Z}/p) \ar@{->}[r] & 1.}
\end{equation*}
By Corollary \eqref{cor_put_GL_split}, the above extension does not split for $(p,n)\neq (3,2), (2,2), (3,2).$ Then, by push-out diagram \eqref{com_diag_S2}, we have that the extension \eqref{ses_nosplit_S} does not split for $(p,n)\neq (3,2),$ $(2,2), (2,3).$

\textbf{For $\bullet=S$ with $(p,n)=(3,2),(2,2).$}
In this case the extension \eqref{ext_split_1} becomes
\begin{equation}
\label{ses_split_S2}
\xymatrix@C=7mm@R=10mm{0 \ar@{->}[r] & Hom(H_p,\mathcal{L}^S_{2}) \ar@{->}[r]^-{i} & Aut\;\mathcal{N}^S_{2} \ar@{->}[r]^-{\psi^S_1} & GL_2(\mathbb{Z}/p) \ar@{->}[r]  & 1.}
\end{equation}
By Universal coefficients Theorem and Hopf formula we have that
$$\mathcal{L}^S_2=H_2(\mathcal{N}_1^S,\mathbb{Z}/p)\cong H_2(\mathcal{N}_1^S)\otimes \mathbb{Z}/p \oplus Tor(H_1(N_1^S); \mathbb{Z}/p)\cong \extp^2 H_p \oplus H_p,$$
as $GL_{2}(\mathbb{Z}/p)$-modules.
As a consequence, we have an isomorphism
$$Hom(H_p,\mathcal{L}^S_2)\cong Hom(H_p,\extp^2 H_p)\oplus Hom(H_p,H_p),$$
also as $GL_{2}(\mathbb{Z}/p)$-modules.
Then we have the following isomorphism in cohomology:
\begin{align*}
H^2(GL_{2}(\mathbb{Z}/p); & Hom(H_p,\mathcal{L}^S_2)) \cong \\
\cong H^2(GL_{2}(\mathbb{Z}/p);Hom(H_p, H_p)) & \oplus H^2(GL_{2}(\mathbb{Z}/p);Hom(H_p,\extp^2 H_p)).
\end{align*}
By Proposition 4.5 in \cite{chih2}, we know that $H^2(GL_2(\mathbb{Z}/p);Hom(H_p, H_p))=0$ for $p=2,3.$

For $(p,n)=(3,2),$ by the Center kills lemma, we have that
$$H^2(GL_{2}(\mathbb{Z}/3);Hom(H_3,\extp^2 H_3))=0.$$

For $(p,n)=(2,2),$ the groups $Sp_2(\mathbb{Z}/2),$ $SL_2(\mathbb{Z}/2)$ coincide and so we have an isomorphism $\extp^2H_2\cong \mathbb{Z}/2$ of $SL_2(\mathbb{Z}/2)$-modules given by the symplectic intersection form $\omega,$ which induces the following isomorphisms of $SL_2(\mathbb{Z}/2)$-modules
$$Hom(H_2,\extp^2 H_2)\cong Hom(H_2,\mathbb{Z}/2)\cong H_2.$$
Moreover, by Proposition 4.4 in \cite{chih2}, we know that $H^2(SL_2(\mathbb{Z}/2);H_2)=0.$

Therefore $H^2(GL_{2}(\mathbb{Z}/p);Hom(H_p,\mathcal{L}^S_2))=0$ for $p=2,3$ and so the extension \eqref{ses_split_S2} splits for $(p,n)=(3,2),(2,2).$

\textbf{For $\bullet=S$ with $(p,n)=(2,3).$}
Analogously to the above case, the isomorphism 
$$q_1\oplus q_2\; :Hom(H_2,\mathcal{L}_2^S)\rightarrow Hom(H_2,\extp^2 H_2) \oplus Hom(H_2,H_2)$$
induces the following isomorphism in cohomology:
\begin{equation}
\label{iso_suma_L2S}
\begin{aligned}
H^2(GL_{3}(\mathbb{Z}/2); & Hom(H_2,\mathcal{L}^S_2)) \cong \\
\cong H^2(GL_{3}(\mathbb{Z}/2);Hom(H_2, H_2)) & \oplus H^2(GL_{3}(\mathbb{Z}/2);Hom(H_2,\extp^2 H_2)).
\end{aligned}
\end{equation}
Notice that we have the following commutative diagram:
\begin{equation}
\label{diag_suma_L2S}
\xymatrix@C=7mm@R=10mm{0 \ar@{->}[r] & Hom(H_2,\mathfrak{gl}_3(\mathbb{Z}/2)) \ar@{->}[r]^-{i} & GL_{3}(\mathbb{Z}/4) \ar@{->}[r] & GL_{3}(\mathbb{Z}/2) \ar@{->}[r] & 1\\
0 \ar@{->}[r] & Hom(H_2,\mathcal{L}^S_{2})\ar@{->}[d]^{q_1}\ar@{->}[u]_{q_2} \ar@{->}[r]^-{i} & Aut\;\mathcal{N}^S_{2} \ar@{->}[d]^{\widetilde{q_1}}\ar@{->}[u]_{\widetilde{q_2}} \ar@{->}[r] & GL_{3}(\mathbb{Z}/2) \ar@{->}[r] \ar@{=}[d]\ar@{=}[u] & 1 \\
0 \ar@{->}[r] & Hom(H_2,\extp^2 H_2) \ar@{->}[r]^-{i} & Aut\;\frac{\Gamma}{\Gamma_3\Gamma^2} \ar@{->}[r] & GL_{3}(\mathbb{Z}/2) \ar@{->}[r] & 1. }
\end{equation}
By Corollary \eqref{cor_put_GL_split} the bottom extension in diagram \eqref{diag_suma_L2S} splits. Then, by the isomorphism \eqref{iso_suma_L2S}, the middle extension in diagram \eqref{diag_suma_L2S} splits if and only if the bottom extension in diagram \eqref{diag_suma_L2S} splits.

Next, observe that we have the following commutative diagram:
\begin{equation}
\xymatrix@C=7mm@R=10mm{0 \ar@{->}[r] & Hom(H_4,\extp^2 H_4)\ar@{->}[d]^q \ar@{->}[r]^-{i} & Aut\;\frac{\Gamma}{\Gamma_3\Gamma^4} \ar@{->}[d]^{\widetilde{q}} \ar@{->}[r] & GL_{3}(\mathbb{Z}/4) \ar@{->}[r] \ar@{->}[d] & 1 \\
0 \ar@{->}[r] & Hom(H_2,\extp^2 H_2) \ar@{->}[r]^-{i} & Aut\;\frac{\Gamma}{\Gamma_3\Gamma^2} \ar@{->}[r] & GL_{3}(\mathbb{Z}/2) \ar@{->}[r] & 1, }
\end{equation}
By the Center kills lemma, the top extension in the above diagram splits.
and, by Corollary \eqref{cor_put_GL_split}, the homomorphism
$GL_{3}(\mathbb{Z}/4)\rightarrow GL_{3}(\mathbb{Z}/2)$ also splits.
As a consequence, the bottom extension in the above commutative diagram splits too, which implies the result.
\end{proof}

\begin{prop}
\label{prop_non-split_versal}
Let $\Gamma$ be a free group of finite rank $n > 1.$ The extension
\begin{equation*}
\xymatrix@C=7mm@R=10mm{0 \ar@{->}[r] & Hom(\mathcal{N}^\bullet_1,\mathcal{L}^\bullet_{k+1}) \ar@{->}[r]^-{i} & IA^p(\mathcal{N}^\bullet_{k+1}) \ar@{->}[r]^-{\psi^\bullet_k} & IA^p(\mathcal{N}^\bullet_k ) \ar@{->}[r] & 1, }
\end{equation*}
does not split for $k\geq 2.$
\end{prop}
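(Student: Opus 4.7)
The plan is to exploit the interplay between the Andreadakis-style filtration $\{IA_i^{\bullet}(\mathcal{N}_{k+1}^{\bullet})\}_i$ and the fact that, by Corollary \eqref{cor_cent_ext_IA}, the extension is central. The rough idea is that a splitting would force all pairs of elements of the filtration to commute deeply enough to violate the non-vanishing commutators of Corollary \eqref{coro_cota_IA_bullet,N_bullet}.

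First I would identify the kernel of $\psi_k^{\bullet}$ restricted to $IA^{p}(\mathcal{N}_{k+1}^{\bullet})$ with $IA_k^{\bullet}(\mathcal{N}_{k+1}^{\bullet})$: an automorphism of $\mathcal{N}_{k+1}^{\bullet}$ induces the identity on $\mathcal{N}_k^{\bullet}=\mathcal{N}_{k+1}^{\bullet}/\mathcal{L}_{k+1}^{\bullet}$ exactly when $f(x)x^{-1}\in\mathcal{L}_{k+1}^{\bullet}=(\mathcal{N}_{k+1}^{\bullet})^{\bullet}_{k+1}$. Next I would observe that $\psi_k^{\bullet}$ sends $IA_i^{\bullet}(\mathcal{N}_{k+1}^{\bullet})$ into $IA_i^{\bullet}(\mathcal{N}_k^{\bullet})$, which is immediate from the definitions since the projection $\mathcal{N}_{k+1}^{\bullet}\to\mathcal{N}_k^{\bullet}$ sends $(\mathcal{N}_{k+1}^{\bullet})^{\bullet}_{i+1}$ into $(\mathcal{N}_k^{\bullet})^{\bullet}_{i+1}$.

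Assume now, towards contradiction, that the extension splits via a section $s:IA^{p}(\mathcal{N}_k^{\bullet})\to IA^{p}(\mathcal{N}_{k+1}^{\bullet})$. Since the extension is central, each $\varphi\in IA^{p}(\mathcal{N}_{k+1}^{\bullet})$ decomposes uniquely as $\varphi=i(a_\varphi)\cdot s(\psi_k^{\bullet}(\varphi))$ with $i(a_\varphi)$ central, so for any $\varphi,\psi\in IA^{p}(\mathcal{N}_{k+1}^{\bullet})$,
\[
[\varphi,\psi]\;=\;[s(\psi_k^{\bullet}(\varphi)),\,s(\psi_k^{\bullet}(\psi))]\;=\;s\bigl([\psi_k^{\bullet}(\varphi),\psi_k^{\bullet}(\psi)]\bigr).
\]
Pick $i,j\geq 1$ with $i+j=k$ (available precisely because $k\geq 2$), and take $\varphi\in IA_i^{\bullet}(\mathcal{N}_{k+1}^{\bullet})$, $\psi\in IA_j^{\bullet}(\mathcal{N}_{k+1}^{\bullet})$. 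Then by the previous paragraph $\psi_k^{\bullet}(\varphi)\in IA_i^{\bullet}(\mathcal{N}_k^{\bullet})$ and $\psi_k^{\bullet}(\psi)\in IA_j^{\bullet}(\mathcal{N}_k^{\bullet})$; Corollary \eqref{cor_IA_bullet} gives $[\psi_k^{\bullet}(\varphi),\psi_k^{\bullet}(\psi)]\in IA_{k}^{\bullet}(\mathcal{N}_k^{\bullet})$, which is trivial since $(\mathcal{N}_k^{\bullet})^{\bullet}_{k+1}=1$. Hence the splitting forces $[IA_i^{\bullet}(\mathcal{N}_{k+1}^{\bullet}),IA_j^{\bullet}(\mathcal{N}_{k+1}^{\bullet})]=1$ whenever $i+j=k$.

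The main (and only) obstacle is then to produce a witness contradicting this vanishing, and this is exactly what Corollary \eqref{coro_cota_IA_bullet,N_bullet} supplies: applied with $n=k+1$ and $i+j=k=n-1$, it guarantees that $[IA_i^{\bullet}(\mathcal{N}_{k+1}^{\bullet}),IA_j^{\bullet}(\mathcal{N}_{k+1}^{\bullet})]$ is non-trivial. This contradiction rules out the existence of the section $s$ and proves the proposition.
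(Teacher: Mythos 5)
Your proof is correct. Both your argument and the paper's rest on the same three ingredients: the centrality of the extension (Corollary \eqref{cor_cent_ext_IA}), the filtration compatibility $[IA_i^\bullet, IA_j^\bullet] < IA_{i+j}^\bullet$ (Corollary \eqref{cor_IA_bullet}), and the non-vanishing of $[IA_i^\bullet(\mathcal{N}_{k+1}^\bullet), IA_j^\bullet(\mathcal{N}_{k+1}^\bullet)]$ for $i+j=k$ (Corollary \eqref{coro_cota_IA_bullet,N_bullet}). Where the paper invokes Theorem \eqref{teo_5-term} to read the extension class as $\delta(\mathrm{id})$, translates a splitting into a retraction $f\in Hom(IA^p(\mathcal{N}^\bullet_{k+1}),Hom(\mathcal{N}^\bullet_1,\mathcal{L}^\bullet_{k+1}))$ with $res(f)=\mathrm{id}$, and then observes that $f$ must kill the commutator subgroup while $[IA^p,IA_{k-1}^\bullet]$ sits nontrivially inside the kernel, you work directly with a section $s$ and the decomposition $\varphi=i(a_\varphi)s(\psi_k^\bullet(\varphi))$ forced by centrality, so that $[\varphi,\psi]=s([\psi_k^\bullet(\varphi),\psi_k^\bullet(\psi)])$ and the same commutator must therefore vanish. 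The two proofs are dual readings of the same obstruction (a split central extension is a direct product, so its commutator subgroup misses the abelian factor), but your version bypasses the five-term sequence and the identification of the extension class with a transgression image, which is a mild simplification; the paper's version, on the other hand, records the obstruction explicitly as a cohomology class, which is the form in which it is reused elsewhere in the thesis.
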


\begin{proof}
Applying Theorem \eqref{teo_5-term} to the central extension \begin{equation}
\label{ext_split_k}
\xymatrix@C=7mm@R=10mm{0 \ar@{->}[r] & Hom(\mathcal{N}^\bullet_1,\mathcal{L}^\bullet_{k+1}) \ar@{->}[r]^-{i} & IA^p(\mathcal{N}^\bullet_{k+1}) \ar@{->}[r]^-{\psi^\bullet_k} & IA^p(\mathcal{N}^\bullet_k ) \ar@{->}[r] & 1, }
\end{equation}
one gets the exact sequence
\begin{equation}
\label{les_5-term}
\begin{gathered}
\xymatrix@C=7mm@R=10mm{ Hom(IA^p(\mathcal{N}^\bullet_{k+1});Hom(\mathcal{N}^\bullet_1,\mathcal{L}^\bullet_{k+1})) \ar@{->}[r]^-{res} & Hom(Hom(\mathcal{N}^\bullet_1,\mathcal{L}^\bullet_{k+1}),Hom(\mathcal{N}^\bullet_1,\mathcal{L}^\bullet_{k+1}))\ar@{->}[r] &}
\\
\xymatrix@C=7mm@R=10mm{ \ar@{->}[r]^-{\delta} & H^2(IA^p(\mathcal{N}^\bullet_k ); Hom(\mathcal{N}^\bullet_1,\mathcal{L}^\bullet_{k+1})).}
\end{gathered}
\end{equation}
Then the cohomology class associated to the extension \eqref{ext_split_k} is given by $\delta(id).$

Suppose that the central extension \eqref{ext_split_k} splits, i.e. $\delta(id)=0.$ Since \eqref{les_5-term} is exact, there would be an element $f\in Hom(IA^p(\mathcal{N}^\bullet_{k+1});Hom(\mathcal{N}^\bullet_1,\mathcal{L}^\bullet_{k+1}))$ such that $res(f)=id.$

Notice that for every element $x\in [IA^p(\mathcal{N}^\bullet_{k+1}),IA^p(\mathcal{N}^\bullet_{k+1})],$ one has that $f(x)=0$ because $Hom(\mathcal{N}^\bullet_1,\mathcal{L}^\bullet_{k+1})$ is abelian.
On the other hand, by Corollary \eqref{coro_cota_IA_bullet,N_bullet},
$$[IA^p(\mathcal{N}^\bullet_{k+1}),IA^\bullet_{k-1}(\mathcal{N}^\bullet_{k+1})]\neq 1.$$
In addition, by Corollary \eqref{cor_IA_bullet},
$$\psi^\bullet_k([IA^p(\mathcal{N}^\bullet_{k+1}),IA^\bullet_{k-1}(\mathcal{N}^\bullet_{k+1})])=[IA^p(\mathcal{N}^\bullet_{k}),IA^\bullet_{k-1}(\mathcal{N}^\bullet_{k})]\leq IA^\bullet_{k}(\mathcal{N}^\bullet_{k})=1.$$
Then, by short exact sequence \eqref{ses_ZS},
$[IA^p(\mathcal{N}^\bullet_{k+1}),IA^\bullet_{k-1}(\mathcal{N}^\bullet_{k+1})] \hookrightarrow Hom(\mathcal{N}^\bullet_1,\mathcal{L}^\bullet_{k+1}).$

As a consequence, $res(f)$ can not be the identity.
\end{proof}

\begin{cor}
Let $\Gamma$ be a free group of finite rank $n > 1.$ The extension
\begin{equation*}
\xymatrix@C=7mm@R=10mm{0 \ar@{->}[r] & Hom(\mathcal{N}^\bullet_1,\mathcal{L}^\bullet_{k+1}) \ar@{->}[r]^-{i} & Aut\;\mathcal{N}^\bullet_{k+1} \ar@{->}[r]^-{\psi^\bullet_k} & Aut \;\mathcal{N}^\bullet_k \ar@{->}[r] & 1 }
\end{equation*}
does not split for $k\geq 2.$
\end{cor}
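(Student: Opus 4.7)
The plan is to deduce the non-splitting from Proposition \ref{prop_non-split_versal} by restricting any hypothetical splitting to the $IA^p$-subgroups. The key observation is that the two extensions fit into a commutative diagram
\begin{equation*}
\xymatrix@C=6mm@R=8mm{0 \ar@{->}[r] & Hom(\mathcal{N}^\bullet_1,\mathcal{L}^\bullet_{k+1}) \ar@{=}[d] \ar@{->}[r] & IA^p(\mathcal{N}^\bullet_{k+1}) \ar@{^{(}->}[d] \ar@{->}[r]^-{\psi^\bullet_k} & IA^p(\mathcal{N}^\bullet_k) \ar@{^{(}->}[d] \ar@{->}[r] & 1 \\
0 \ar@{->}[r] & Hom(\mathcal{N}^\bullet_1,\mathcal{L}^\bullet_{k+1}) \ar@{->}[r] & Aut\;\mathcal{N}^\bullet_{k+1} \ar@{->}[r]^-{\psi^\bullet_k} & Aut\;\mathcal{N}^\bullet_k \ar@{->}[r] & 1,}
\end{equation*}
in which the right-hand square is cartesian: by the very definition of $IA^p$ (recall the short exact sequence \eqref{ses_IAmodp} together with the observation that $\mathcal{N}^\bullet_1$ is the quotient of both $\mathcal{N}^\bullet_k$ and $\mathcal{N}^\bullet_{k+1}$ by their Frattini subgroups), an element $\Phi \in Aut\;\mathcal{N}^\bullet_{k+1}$ belongs to $IA^p(\mathcal{N}^\bullet_{k+1})$ if and only if $\psi^\bullet_k(\Phi)$ belongs to $IA^p(\mathcal{N}^\bullet_k).$

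Suppose now, for contradiction, that the bottom extension splits, with section $s:Aut\;\mathcal{N}^\bullet_k \to Aut\;\mathcal{N}^\bullet_{k+1}.$ I would then consider the restriction $s_{|IA^p(\mathcal{N}^\bullet_k)}.$ For any $f \in IA^p(\mathcal{N}^\bullet_k),$ the automorphism $s(f)$ lifts $f$ and hence induces $f$ on $\mathcal{N}^\bullet_1$ through the factorisation $Aut\;\mathcal{N}^\bullet_{k+1}\to Aut\;\mathcal{N}^\bullet_k\to Aut\;\mathcal{N}^\bullet_1;$ as $f$ acts trivially on $\mathcal{N}^\bullet_1,$ so does $s(f),$ whence $s(f)\in IA^p(\mathcal{N}^\bullet_{k+1}).$ Therefore $s_{|IA^p(\mathcal{N}^\bullet_k)}$ factors through $IA^p(\mathcal{N}^\bullet_{k+1})$ and provides a set-theoretic (and in fact group-theoretic, since $s$ is a homomorphism) section of the top extension.

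This contradicts Proposition \ref{prop_non-split_versal}, which asserts that for $k\geq 2$ the top row does not split, and the corollary follows. The main (very mild) obstacle is simply to verify the pullback property of the right-hand square above; once it is in place, everything reduces to transporting a hypothetical splitting downstairs.
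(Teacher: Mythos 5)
Your argument is correct and follows precisely the paper's own route: the paper also observes that the $IA^p$ extension is a pull-back of the full automorphism extension and then invokes Proposition \ref{prop_non-split_versal} to conclude. You simply spell out in a bit more detail why the right-hand square is cartesian and why a section downstairs would induce one upstairs, which the paper leaves implicit.
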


\begin{proof}
Observe that we have the following pull-back diagram:
\begin{equation*}
\xymatrix@C=7mm@R=10mm{0 \ar@{->}[r] & Hom(\mathcal{N}^\bullet_1,\mathcal{L}^\bullet_{k+1})\ar@{=}[d] \ar@{->}[r]^-{i} & IA^p(\mathcal{N}^\bullet_{k+1})\ar@{->}[r]^-{\psi^\bullet_k} \ar@{^{(}->}[d] & IA^p(\mathcal{N}^\bullet_k) \ar@{^{(}->}[d] \ar@{->}[r] & 1\\
0 \ar@{->}[r] & Hom(\mathcal{N}^\bullet_1,\mathcal{L}^\bullet_{k+1}) \ar@{->}[r]^-{i} & Aut\;\mathcal{N}^\bullet_{k+1} \ar@{->}[r]^-{\psi^\bullet_k} & Aut \;\mathcal{N}^\bullet_k \ar@{->}[r] & 1
.}
\end{equation*}
By Proposition \eqref{prop_non-split_versal}, the top extension of this diagram does not split. Therefore the bottom extension this diagram does not split.
\end{proof}

\chapter{Trivial cocycles and invariants on the $(\text{mod }d)$ Torelli group}
\label{chapter: trivocicles torelli mod p}

In this chapter, following the same strategy of Chapter \ref{chapter: trivocicles torelli}, we give a tool to construct invariants of $\mathbb{Q}$-homology spheres that have a Heegaard splitting with gluing map an element of the $(\text{mod }d)$ Torelli group, from a family of trivial $2$-cocycles on the $(\text{mod }d)$ Torelli group.
As in Chapter \ref{chapter: trivocicles torelli}, such a construction does not give a unique invariant. This is because there exists an invariant, which is not an extension of the Rohlin invariant, that makes to fail the unicity of the construction.

As a starting point, in the first section we give some results about a mod $p$ version of the Johnson homomorphism.
In the second and third sections we exhibit the construction of invariants form a family of trivial $2$-cocycles on the $(\text{mod }d)$ Torelli group.
Finally, in the last section we give a way to construct a family of $2$-cocycles satisfying the hypothesis of our construction as a pull-back of bilinear forms along the mod $p$ Johnson homomorphisms.

\section{Johnson homomorphisms mod p}
\label{sec_johnson mod p}
Let $\Gamma$ be the fundamental group of $\Sigma_{g,1}$ and $\Gamma^\bullet_k$ the $k$-term of one of the mod-$p$ central series defined in Section \ref{section: mod-p series}.
The natural action of the mapping class group $\mathcal{M}_{g,1}$ on $\Gamma$ induces one on the $p$-group $\mathcal{N}^\bullet_{k}=\Gamma/\Gamma^\bullet_{k+1}.$ Therefore we have a representation
$$\rho^\bullet_k:\mathcal{M}_{g,1}\rightarrow Aut\; \mathcal{N}^\bullet_k.$$
Denote  by $\mathcal{I}^\bullet_{g,1}(k)\subset \mathcal{M}_{g,1}$ the kernel of $\rho_{k}^\bullet:\mathcal{M}_{g,1} \rightarrow Aut\;\mathcal{N}^\bullet_{k}.$ For instance, $$\mathcal{I}^S_{g,1}(1)=\mathcal{I}^Z_{g,1}(1)=\mathcal{M}_{g,1}[p].$$
In \cite{coop} J. Cooper defined the following maps
\begin{align*}
\tau_k^\bullet:\mathcal{I}^\bullet_{g,1}(k) & \rightarrow Hom (H_p, \mathcal{L}^\bullet_{k+1}) \\
f &\mapsto (x\mapsto f(x)x^{-1})
\end{align*}
and proved that these are well defined homomorphisms.
These homomorphisms are called the \textit{Zassenhaus mod-$p$ Johnson homomorphisms} for $\bullet=Z,$ and \textit{Stallings mod-$p$ Johnson homomorphisms} for $\bullet=S.$
In general, for $\bullet=Z \text{ or } S,$ we call these maps \textit{the mod-$p$ Johnson homomorphisms.}

In fact, we show that these homomorphisms arise in a more natural way.
\begin{prop}
The mod-$p$ Johnson homomorphisms $\tau_k^\bullet$ are the restriction of $\rho_{k+1}^\bullet$ to $\mathcal{I}_{g,1}^\bullet.$
\end{prop}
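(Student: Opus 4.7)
The plan is to unwind the definitions and use the short exact sequence
\[
\xymatrix@C=7mm@R=10mm{0 \ar@{->}[r] & Hom(\mathcal{N}^\bullet_1,\mathcal{L}^\bullet_{k+1}) \ar@{->}[r]^-{i} & Aut\;\mathcal{N}^\bullet_{k+1} \ar@{->}[r]^-{\psi^\bullet_k} & Aut \;\mathcal{N}^\bullet_k \ar@{->}[r] & 1 }
\]
from Proposition \ref{prop_exact_seq_modp_L}, where $\mathcal{N}_1^\bullet = H_p$ and $i(f) = (\gamma \mapsto f([\gamma])\gamma)$. First I would observe that the representations are compatible with the quotient $\mathcal{N}^\bullet_{k+1} \twoheadrightarrow \mathcal{N}^\bullet_{k}$; concretely, the naturality of $\psi_k^\bullet$ gives $\psi_k^\bullet \circ \rho_{k+1}^\bullet = \rho_k^\bullet$.

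Now take $\phi \in \mathcal{I}^\bullet_{g,1}(k) = \ker \rho_k^\bullet$. Then $\psi_k^\bullet(\rho_{k+1}^\bullet(\phi)) = \rho_k^\bullet(\phi) = \mathrm{id}$, so $\rho_{k+1}^\bullet(\phi) \in \ker \psi_k^\bullet = \mathrm{Im}(i)$. Since $i$ is injective, there is a unique $h_\phi \in Hom(H_p, \mathcal{L}^\bullet_{k+1})$ with $\rho_{k+1}^\bullet(\phi) = i(h_\phi)$, i.e.\ for every $\gamma \in \mathcal{N}^\bullet_{k+1}$,
\[
\rho_{k+1}^\bullet(\phi)(\gamma) = h_\phi([\gamma])\,\gamma.
\]
Rearranging, $\phi(\gamma)\gamma^{-1} = h_\phi([\gamma])$ inside $\mathcal{L}^\bullet_{k+1}$, and this coincides with $\tau_k^\bullet(\phi)$ by its very definition. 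Thus the map $\phi \mapsto h_\phi$ is exactly $\tau_k^\bullet$, and the restriction of $\rho_{k+1}^\bullet$ to $\mathcal{I}^\bullet_{g,1}(k)$ factors through $i$ as $\tau_k^\bullet$.

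The main (very mild) subtlety is to be careful that the output $\phi(\gamma)\gamma^{-1}$ lies in $\mathcal{L}^\bullet_{k+1}$ and genuinely depends only on the class $[\gamma] \in H_p$, rather than on $\gamma \in \mathcal{N}^\bullet_{k+1}$ itself. Both points however are automatic: the first from $\phi \in \mathcal{I}^\bullet_{g,1}(k)$, meaning $\phi$ acts trivially modulo $\Gamma^\bullet_{k+1}$, and the second is forced by the description of $i$ in Proposition \ref{prop_exact_seq_modp_L}, which already encodes the fact that elements of $\ker \psi_k^\bullet$ are determined by a homomorphism out of $\mathcal{N}^\bullet_1 = H_p$. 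Once these are noted the proof is essentially a one-line identification and the statement follows.
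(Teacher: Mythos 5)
Your proposal is correct and follows essentially the same approach as the paper: set up the short exact sequence from Proposition \eqref{prop_exact_seq_modp_L}, observe that $\rho_{k+1}^\bullet$ restricted to $\ker\rho_k^\bullet$ lands in $\ker\psi_k^\bullet=\mathrm{Im}(i)$, and identify the resulting homomorphism $h_\phi$ with $\tau_k^\bullet(\phi)$ via the explicit description of $i$. The paper merely labels this identification a ``trivial computation,'' whereas you spell out the details, including the two mild verifications at the end.
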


\begin{proof}
Consider the following diagram:
\begin{equation*}
\xymatrix@C=7mm@R=10mm{0 \ar@{->}[r] & \mathcal{I}^\bullet_{g,1}(k) \ar@{->}[r] \ar@{->}[d]^-{\tau^\bullet_k}& \mathcal{M}_{g,1} \ar@{->}[r]^{\rho_{k}^\bullet} \ar@{->}[d]^-{\rho_{k+1}^\bullet}& Aut \;\mathcal{N}^\bullet_k \ar@{=}[d] &  \\
0 \ar@{->}[r] & Hom(H_p, \mathcal{L}^\bullet_{k+1}) \ar@{->}[r]^-{i} & Aut\;\mathcal{N}^\bullet_{k+1} \ar@{->}[r]^-{\pi} & Aut \;\mathcal{N}^\bullet_k \ar@{->}[r] & 1 .}
\end{equation*}
If $f\in \mathcal{I}^\bullet_{g,1}(k),$ a trivial computation show that show that $i(\tau_k^\bullet (f))=\rho_{k+1}^\bullet(f).$
\end{proof}

\begin{lema}[Lemma 5.10 in \cite{coop}]
Let $\gamma\subset \Sigma_{g,1}$ be a simple closed curve and $y\in \Gamma$ be a based curve isotopic to $\gamma.$ Let $x\in \Gamma$ with $|x\cap y|=n.$ Then
$$T^p_\gamma(x)x^{-1}=\prod^n_{i=1}z_iy^{\epsilon(p_i)p}z_i^{-1}$$
for some $z_i\in \Gamma$ (where $\epsilon(p_i)=\pm 1$ depending on the sign of the intersection between $x$ and $y$ at $p_i).$
\end{lema}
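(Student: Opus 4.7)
The plan is to reduce this to the standard formula for the action of a single Dehn twist on $\pi_1$ and then exploit that $T_\gamma^p$ inserts a $p$-th power loop around $\gamma$ at every transverse intersection point.

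First I would choose representatives in the free homotopy classes so that $x$ and $\gamma$ meet transversally in exactly $n$ points $p_1,\ldots,p_n$, consistently with $|x\cap y|=n$. Fixing a regular annular neighbourhood $N$ of $\gamma$, I would cut $x$ at the points where it enters/leaves $N$ and write it as a concatenation
\[
x \;=\; a_0\cdot\beta_1\cdot a_1\cdot\beta_2\cdots \beta_n\cdot a_n,
\]
where each $\beta_i$ is a short sub-arc of $x$ living in $N$ that crosses $\gamma$ once at $p_i$, and each $a_i$ is a sub-arc of $x$ lying in $\Sigma_{g,1}\setminus N$ (the arcs $a_0$ and $a_n$ run from/to the base point).

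Next I would apply the explicit formula for the twist map inside $N$: $T_\gamma^p$ acts on each $\beta_i$ by wrapping it around the core $p$ times with sign $\epsilon(p_i)$, while leaving each $a_j$ fixed. Concretely,
\[
T_\gamma^p(\beta_i) \;=\; \beta_i\cdot \delta_i^{\epsilon(p_i) p},
\]
where $\delta_i$ is a small loop following the core $\gamma$ once, based at the endpoint of $\beta_i$. Therefore
\[
T_\gamma^p(x) \;=\; a_0\cdot\beta_1\cdot\delta_1^{\epsilon(p_1)p}\cdot a_1\cdots \beta_n\cdot\delta_n^{\epsilon(p_n)p}\cdot a_n.
\]

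Then I would multiply on the right by $x^{-1}$ and telescope: setting $z_i=a_0\beta_1 a_1\cdots\beta_i$ and repeatedly inserting pairs $a_j a_j^{-1},\;\beta_j\beta_j^{-1}$, one obtains
\[
T_\gamma^p(x)\,x^{-1} \;=\; \prod_{i=1}^n z_i\,\delta_i^{\epsilon(p_i)p}\,z_i^{-1}.
\]
Finally, since each $\delta_i$ is freely homotopic to $\gamma$ and $y$ is a based representative of the same free homotopy class, there exists $w_i\in\Gamma$ with $\delta_i=w_i y w_i^{-1}$; absorbing $w_i$ into $z_i$ gives the stated formula.

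The only delicate point is bookkeeping: making sure the decomposition of $x$ is carried out consistently with the orientation of $\gamma$ (so that $\epsilon(p_i)$ is really the sign of the intersection, not its opposite), and that the conjugating elements $z_i$ are well-defined as elements of $\Gamma$, i.e. we use the based arcs rather than free arcs. This is routine but is where one could easily mis-sign the exponents, so I would double check the computation on a single intersection as a sanity check before concatenating.
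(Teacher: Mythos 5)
The thesis does not prove this lemma; it simply cites it as Lemma 5.10 of Cooper's paper \cite{coop}, so there is no in-text proof to compare against. Your argument is the standard cut-and-telescope proof of the formula for the action of a Dehn twist power on $\pi_1$: decompose $x$ along an annular neighbourhood of $\gamma$ into segments $a_j$ outside and crossing arcs $\beta_i$ inside, observe that $T_\gamma^p$ inserts a signed $p$-th power loop around the core at each $\beta_i$ and fixes the $a_j$, then telescope $T_\gamma^p(x)x^{-1}$ to collect conjugates. The bookkeeping worries you flag are exactly the right ones and do resolve: $z_i = a_0\beta_1\cdots\beta_i$ is an arc from the basepoint to the $i$-th crossing (not a loop), $\delta_i$ is a loop at that crossing, so $z_i\delta_i^{\epsilon(p_i)p}z_i^{-1}$ is a genuine based loop; after replacing $\delta_i$ by a conjugate $w_i^{-1}y\,w_i$ with $w_i$ an arc joining the crossing to the basepoint, the combined conjugator $z_iw_i^{-1}$ \emph{is} a based loop and hence an element of $\Gamma$, giving the stated form. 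For the sign, the orientation convention you adopt for the twist determines whether the loop is inserted to the left or right of $\beta_i$ and whether $\epsilon$ or $-\epsilon$ appears, but since the identity only asserts existence of some $z_i$ with signs $\pm 1$ matching the intersection sign, either consistent convention yields the claim. So the proposal is sound; it is the expected elementary proof, and, while the paper gives no proof of its own, this is almost certainly the same route Cooper takes.
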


As a consequence of this result, following the proof of Lemma 5.11 in \cite{coop}, we have the following proposition:

\begin{prop}
\label{prop_tau2_Dp}
Let $p$ be an odd prime number and $D_{g,1}[p]$ the subgroup of $\mathcal{M}_{g,1}[p]$ generated by $p$-powers of Dehn twists. Then $D_{g,1}[p]\subset \mathcal{I}^Z_{g,1}(p-1),$ i.e. $D_{g,1}[p]$ is in the kernel of $\tau_k^Z$ for every $k\leq p-1.$
\end{prop}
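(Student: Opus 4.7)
The plan is to reduce the statement to the generators of $D_{g,1}[p]$ and then directly apply the cited Lemma 5.10 of Cooper together with the definition of the Zassenhaus mod-$p$ filtration.

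First I would recall that $\mathcal{I}^Z_{g,1}(k) = \ker(\rho^Z_k)$ consists precisely of those $f \in \mathcal{M}_{g,1}$ for which $f(x)x^{-1} \in \Gamma^Z_{k+1}$ for every $x \in \Gamma$. Since the filtration $\{\mathcal{I}^Z_{g,1}(k)\}_k$ is descending in $k$ and $D_{g,1}[p]$ is generated by elements of the form $T^p_\gamma$ for $\gamma$ a simple closed curve on $\Sigma_{g,1}$, it suffices to show that a single such generator $T^p_\gamma$ satisfies
\[
T^p_\gamma(x)\, x^{-1} \in \Gamma^Z_p \qquad \text{for every } x\in \Gamma.
\]

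Next I would apply the cited Lemma 5.10 in \cite{coop} to write, for $y\in\Gamma$ a based curve isotopic to $\gamma$,
\[
T^p_\gamma(x)\, x^{-1} \;=\; \prod_{i=1}^{n} z_i\, y^{\epsilon(p_i)\,p}\, z_i^{-1}
\]
with $z_i\in\Gamma$. The key point is now purely algebraic: by definition of the Zassenhaus mod-$p$ series,
\[
\Gamma^Z_p \;=\; \prod_{ip^j\geq p}\bigl(\Gamma_i\bigr)^{p^j},
\]
so taking $i=1,\,j=1$ one has $\Gamma^p = (\Gamma_1)^p \subseteq \Gamma^Z_p$. Since $y\in \Gamma=\Gamma_1$, this gives $y^{\pm p}\in \Gamma^Z_p$. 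By Proposition \ref{prop_charSZ}, $\Gamma^Z_p$ is a characteristic (hence normal) subgroup of $\Gamma$, so each conjugate $z_i y^{\epsilon(p_i)p} z_i^{-1}$ lies in $\Gamma^Z_p$, and therefore so does their product. This yields $T^p_\gamma\in \mathcal{I}^Z_{g,1}(p-1)$, and hence $D_{g,1}[p]\subset \mathcal{I}^Z_{g,1}(p-1)$.

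For the ``i.e.'' part, I would observe that containment in $\mathcal{I}^Z_{g,1}(p-1)$ means that $D_{g,1}[p]$ is also contained in every $\mathcal{I}^Z_{g,1}(k)$ with $k\leq p-1$, and hence acts trivially on each quotient $\Gamma/\Gamma^Z_{k+1}$; this is exactly the vanishing of $\tau^Z_k$ restricted to $D_{g,1}[p]$ at the successive steps. There is no real obstacle here: the odd-prime hypothesis enters only implicitly through the cited surgery formula of Cooper (which needs $p$ odd to identify the twist with a product of conjugates of $y^{\pm p}$), and the mod-$p$ Zassenhaus bound is tight precisely because $\Gamma^p\subseteq \Gamma^Z_p$ but $\Gamma^p\not\subseteq \Gamma^Z_{p+1}$, so one cannot push the statement beyond $\mathcal{I}^Z_{g,1}(p-1)$ with this elementary argument.
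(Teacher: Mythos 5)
Your proof is correct and more economical than the one in the paper, though both rest on the same two ingredients: Cooper's Lemma~5.10, which writes $T^p_\gamma(x)x^{-1}$ as a finite product of conjugates $z_i y^{\epsilon(p_i)p} z_i^{-1}$, and the containment $\Gamma^p\subseteq\Gamma^Z_p$. The paper proceeds by induction on $k$: assuming $D_{g,1}[p]\subset\mathcal{I}^Z_{g,1}(k)$, it passes to the next level by rewriting each conjugate as $[z_i,y^{\pm p}]\,y^{\pm p}$ and invoking $[\Gamma,\Gamma^p]\subset\Gamma^Z_{k+1}$ to collapse the product to a single $p$-th power of $y$, then applies $\Gamma^p\subset\Gamma^Z_{k+1}$. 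You instead observe that $\Gamma^Z_p$ is characteristic, hence normal (Proposition~\ref{prop_charSZ}), so each conjugate $z_i y^{\pm p} z_i^{-1}$ already lies in $\Gamma^Z_p$; this gives $T^p_\gamma(x)x^{-1}\in\Gamma^Z_p$ in one step, with no induction and no commutator juggling, and it also makes transparent why the bound is sharp (your closing remark that $\Gamma^p\not\subseteq\Gamma^Z_{p+1}$). Two small clean-ups: the reduction to generators is justified because $\mathcal{I}^Z_{g,1}(p-1)$ is a subgroup of $\mathcal{M}_{g,1}$ (it is the kernel of $\rho^Z_{p-1}$), not because the filtration is descending; and in your last sentence note that acting trivially on $\Gamma/\Gamma^Z_{k+1}$ is membership in $\ker\rho^Z_k=\mathcal{I}^Z_{g,1}(k)$, whereas $\ker\tau^Z_k=\mathcal{I}^Z_{g,1}(k+1)$, so the indexing there is off by one (a slip that mirrors the phrasing of the Proposition itself).
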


\begin{proof}
We proceed by induction.
We know that $D_{g,1}[p]\subset \mathcal{I}^Z_{g,1}(1)=\mathcal{M}_{g,1}[p].$ 
Now suppose that $D_{g,1}[p]\subset \mathcal{I}^Z_{g,1}(k)$ for $k\leq p-1$ and we will prove that $D_{g,1}[p]\subset \mathcal{I}^Z_{g,1}(k+1).$
Recall that by the properties of Zassenhaus filtration, we know that $[\Gamma^Z_k,\Gamma_l]<\Gamma^Z_{k+l}$ and observe that by definition of $\Gamma^Z_l,$ for $l\leq p$ we have that $\Gamma^p<\Gamma^Z_l,$ so we obtain that
$[\Gamma^p,\Gamma]<\Gamma^Z_l$ for every $l\leq p.$
Then by definition of $\tau_k^Z,$ we know that given an element $f\in \mathcal{I}^Z_{g,1}(k),$
$$\tau_k^Z(f)= (x \mapsto f(x)x^{-1} \text{ mod } \Gamma^Z_{k+1})$$
Taking $f=T^p_\gamma,$ since $[\Gamma^p,\Gamma]<\Gamma^Z_{k+1}$ and $\Gamma^p<\Gamma^Z_{k+1},$ we have that
$$f(x)x^{-1}=\prod^n_{i=1}z_iy^{\epsilon(p_i) p}z_i^{-1} \equiv y^{pk}\equiv 0 \text{ mod }\Gamma^Z_{k+1},$$ where $k=\sum_{i=1}^n \epsilon(p_i).$
Thus $f\in Ker(\tau_k^Z)\subset Ker(\rho_{k+1}^Z),$ i.e. $f\in \mathcal{I}^Z_{g,1}(k+1).$
\end{proof}

\begin{rem}
Observe that $\mathcal{I}^\bullet_{g,1}(k)=Ker(\rho^\bullet_k)$ and thus $\mathcal{I}^\bullet_{g,1}(k+1)\subset \mathcal{I}^\bullet_{g,1}(k).$ So $Ker(\rho^\bullet_{k+1})$ is equal to the kernel of $\rho^\bullet_{k+1}$ restricted to $\mathcal{I}^\bullet_{g,1}(k),$ but by definition of $\tau_k^Z,$ we know that this is the kernel of $\tau_k^Z.$
Therefore $Ker(\rho^\bullet_{k+1})=Ker(\tau^\bullet_k).$
\end{rem}

\section{From invariants to trivial cocycles}
\label{section_From invariants to trivial cocycles}
Let $A$ be an abelian group. Denote by $A_d$ the subgroup formed by $d$-torsion elements.

Consider an $A$-valuated invariant of $\mathbb{Z}/d$-homology spheres
$$F: \mathcal{S}^3[d] \rightarrow A.$$
Precomposing with the canonical maps $\mathcal{M}_{g,1}[d]\rightarrow \lim_{g\rightarrow \infty} \mathcal{M}_{g,1}[d]/\sim \rightarrow  \mathcal{S}^3[d]$ we get a family of maps $\{F_g\}_g$ with $F_g:\mathcal{M}_{g,1}[d]\rightarrow A$ satisfying the following properties:
\begin{enumerate}[i)]
\item $F_{g+1}(x)=F_g(x) \quad \text{for every }x\in \mathcal{M}_{g,1}[d],$
\item $F_g(\xi_a x\xi_b)=F_g(x) \quad \text{for every } x\in \mathcal{M}_{g,1}[d],\;\xi_a\in \mathcal{A}_{g,1}[d],\;\xi_b\in \mathcal{B}_{g,1}[d],$
\item $F_g(\phi x \phi^{-1})=F_g(x)  \quad \text{for every }  x\in \mathcal{M}_{g,1}[d], \; \phi\in \mathcal{AB}_{g,1}.$
\end{enumerate}
We avoid the peculiarities of the first Torelli groups by restricting ourselves to $g\geq 3.$ If we consider the associated trivial $2$-cocycles $\{C_g\}_g,$ i.e.
\begin{align*}
C_g: \mathcal{M}_{g,1}[d]\times \mathcal{M}_{g,1}[d] & \longrightarrow A, \\
 (\phi,\psi) & \longmapsto F_g(\phi)+F_g(\psi)-F_g(\phi\psi),
\end{align*}
then the family of $2$-cocycles $\{C_g\}_g$ inherits the following properties:
\begin{enumerate}[(1)]
\item The $2$-cocycles $\{C_g\}_g$ are compatible with the stabilization map,
\item The $2$-cocycles $\{C_g\}_g$ are invariant under conjugation by elements in $\mathcal{AB}_{g,1},$
\item If $\phi\in \mathcal{A}_{g,1}[d]$ or $\psi \in \mathcal{B}_{g,1}[d]$ then $C_g(\phi, \psi)=0.$
\end{enumerate}

In general there are many families of maps $\{F_g\}_g$ satisfying the properties i) - iii) that induce the same family of trivial $2$-cocycles $\{C_g\}_g.$
Next, we concern about the number of such families.

Notice that given two families of maps $\{F_g\}_g,$ $\{F_g'\}_g$ satisfying the properties i) - iii), we have that
$\{F_g-F_g'\}_g$ is a family of homomorphisms satisfying the same conditions.
As a consequence, the number of families $\{F_g\}_g$ satisfying the properties i) - iii) that induce the same family of trivial $2$-cocycles $\{C_g\}_g,$ coincides with the number of homomorphisms in $Hom(\mathcal{M}_{g,1}[p],A)^{\mathcal{AB}_{g,1}}$ compatible with the stabilization map, and that are trivial over $\mathcal{A}_{g,1}[p]$ and $\mathcal{B}_{g,1}[p].$
We devote the rest of this section to compute and study such homomorphisms.

\begin{prop}
\label{prop_iso_M[d],Sp[d]}
Let $A$ be an abelian group. For $g\geq 3,$ $d\geq 3$ an odd integer and for $g\geq 5,$ $d\geq 2$ an even integer such that $4 \nmid d,$ the pull-back along $\Psi:\mathcal{M}_{g,1}[d]\rightarrow Sp_{2g}(\mathbb{Z},d)$ induces an isomorphism
$$Hom(\mathcal{M}_{g,1}[d],A)^{\mathcal{AB}_{g,1}}\cong Hom(Sp_{2g}(\mathbb{Z},d),A)^{GL_g(\mathbb{Z})}.$$
\end{prop}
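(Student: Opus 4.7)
The plan is to apply the inflation--restriction sequence to the short exact sequence
$$1 \to \mathcal{T}_{g,1} \to \mathcal{M}_{g,1}[d] \xrightarrow{\Psi} Sp_{2g}(\mathbb{Z},d) \to 1,$$
whose right-exactness follows from the surjectivity of $\Psi:\mathcal{M}_{g,1}\to Sp_{2g}(\mathbb{Z})$ (Theorem \eqref{teo_rep_symplec}) together with the description of $\mathcal{M}_{g,1}[d]$ as $\Psi^{-1}(Sp_{2g}(\mathbb{Z},d))$. The whole sequence is $\mathcal{AB}_{g,1}$-equivariant for the conjugation action, and by Lemma \eqref{lem_AB} the action on $Sp_{2g}(\mathbb{Z},d)$ factors through $GL_g(\mathbb{Z})$. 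Writing down the first four terms of the five-term exact sequence of Theorem \eqref{teo_5-term} with trivial coefficients $A$ and then taking $\mathcal{AB}_{g,1}$-invariants (left exact) yields
$$0 \to Hom(Sp_{2g}(\mathbb{Z},d),A)^{GL_g(\mathbb{Z})} \xrightarrow{\Psi^*} Hom(\mathcal{M}_{g,1}[d],A)^{\mathcal{AB}_{g,1}} \xrightarrow{\mathrm{res}} (Hom(\mathcal{T}_{g,1},A)^{Sp_{2g}(\mathbb{Z},d)})^{\mathcal{AB}_{g,1}}.$$
Since $\Psi$ is surjective the first map is automatically injective, so it suffices to show that the rightmost restriction map vanishes.

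\paragraph{Identifying the target.}
By Lemma \eqref{lem_abinv}, $Hom(\mathcal{T}_{g,1},A)^{\mathcal{AB}_{g,1}}=A_2\cdot R_g$, where $R_g$ is the Rohlin invariant, and Proposition \eqref{prop_inv_coincide} shows these all factor through the composition $\sigma:\mathcal{T}_{g,1}\to\mathfrak{B}_3\to\mathfrak{B}_0=\mathbb{Z}/2$. Since $\sigma$ is $Sp_{2g}(\mathbb{Z}/2)$-equivariant and the action on $\mathfrak{B}_0$ is trivial, each $x\cdot R_g$ is automatically $Sp_{2g}(\mathbb{Z})$-invariant, hence $Sp_{2g}(\mathbb{Z},d)$-invariant, so the target group is exactly $A_2\cdot R_g$. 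The proposition is thus reduced to showing that no nonzero multiple of $R_g$ is the restriction to $\mathcal{T}_{g,1}$ of an $\mathcal{AB}_{g,1}$-invariant homomorphism $F:\mathcal{M}_{g,1}[d]\to A$, or equivalently that the transgression $\mathrm{tr}(R_g)$ is a nontrivial element of $H^2(Sp_{2g}(\mathbb{Z},d);\mathbb{Z}/2)^{GL_g(\mathbb{Z})}$.

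\paragraph{The main obstacle.}
Non-extendability of the Rohlin invariant in the specified ranges is where the hard work lies. The approach I would take is to assume for contradiction that an extension $F$ of $x\cdot R_g$ with $x\neq 0$ exists, pick a concrete element of $\mathcal{T}_{g,1}$ on which $R_g$ evaluates to $1$ (the trefoil Dehn twist $T_K$ constructed in the proof of Proposition \eqref{prop_inv_coincide} is a convenient choice), and use Corollary \eqref{cor_gen_M[p]} to write alternative decompositions of suitable conjugates of $T_K$ in $\mathcal{M}_{g,1}[d]$ as products of a Torelli element and $d$-th powers of Dehn twists. The hypothesis splits into two cases depending on whether the reduction $Sp_{2g}(\mathbb{Z},d)\to Sp_{2g}(\mathbb{Z}/2)$ is trivial (the even case $4\nmid d$, where the requirement $g\geq 5$ gives enough symplectic room) or nontrivial (the odd case, where $g\geq 3$ suffices). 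In each case the $\mathcal{AB}_{g,1}$-invariance of $F$, together with Lemma \eqref{lem_TB,TA} applied to $\mathcal{TA}_{g,1}$ and $\mathcal{TB}_{g,1}$ and the vanishing of the mod-$p$ Johnson homomorphisms on $D_{g,1}[p]$ from Proposition \eqref{prop_tau2_Dp}, should produce conflicting constraints on $F(T_K)$ forcing $x=0$. The explicit bookkeeping of these transgression cocycles, case by case in $d$, is the main technical difficulty; once the transgression of $R_g$ is established to be nontrivial, the displayed exact sequence yields the stated isomorphism.
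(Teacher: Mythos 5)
Your reduction is sound, and in fact runs parallel to the paper's argument: the paper applies the five-term sequence of Theorem \eqref{teo_5-term} (in its homology form) to the same extension
$1 \to \mathcal{T}_{g,1} \to \mathcal{M}_{g,1}[d] \to Sp_{2g}(\mathbb{Z},d) \to 1$,
and your cohomological inflation--restriction sequence followed by $\mathcal{AB}_{g,1}$-invariants is a dual packaging of the same idea. The identification of the target as $A_2 \cdot R_g$ via Lemma \eqref{lem_abinv} and Proposition \eqref{prop_inv_coincide} is also correct.

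The gap is precisely where you flag it, and it is not a matter of ``bookkeeping'': the step you leave open is the entire content of the proposition. You reduce to showing that no nonzero multiple of $R_g$ extends to an $\mathcal{AB}_{g,1}$-invariant homomorphism on $\mathcal{M}_{g,1}[d]$, and then propose to prove this by picking the trefoil twist $T_K$, rewriting conjugates of it via Corollary \eqref{cor_gen_M[p]}, and hunting for conflicting constraints. This is unlikely to close, because the non-extension of $R_g$ is a structural fact about $H_1(\mathcal{M}_{g,1}[d];\mathbb{Z})$, not something visible from playing with a single element. The paper's proof gets through exactly by invoking Putman's computations from \cite{Putman}: Lemma $6.4$ there gives that the map $j\colon H_1(\mathcal{T}_{g,1};\mathbb{Z})_{Sp_{2g}(\mathbb{Z},d)} \to H_1(\mathcal{M}_{g,1}[d];\mathbb{Z})$ is injective for $d$ odd and has kernel $\mathbb{Z}/2$ for $d$ even with $4\nmid d$; Proposition $6.6$ there identifies the source of $j$ (so that for $d$ odd the whole $\mathfrak{B}_2$-part already dies in coinvariants); and Theorem $7.8$ there shows that for $d$ even the class $\overline{1}$ --- which is exactly the Rohlin direction --- maps to zero under $j\circ i$. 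It is these three results that produce the constraints $g\geq 3$ for $d$ odd and $g\geq 5$ for $d$ even, and nothing in your sketch replaces them. In particular your remark that the even case ``gives enough symplectic room'' at $g\geq 5$ has the dependency backwards: the hypothesis on $g$ is not feeding your cocycle computation, it is the range of validity of Putman's homology calculation, and without citing it the case split by $d$ has no mechanism to conclude.

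To fix the proposal, replace the last paragraph with: (i) note that any $f \in Hom(\mathcal{M}_{g,1}[d],A)^{\mathcal{AB}_{g,1}}$, being a homomorphism to an abelian group, factors through $H_1(\mathcal{M}_{g,1}[d];\mathbb{Z})$, so $f|_{\mathcal{T}_{g,1}}$ factors through the composite $\mathcal{T}_{g,1}\to H_1(\mathcal{T}_{g,1};\mathbb{Z})_{Sp_{2g}(\mathbb{Z},d)} \xrightarrow{j} H_1(\mathcal{M}_{g,1}[d];\mathbb{Z})$; (ii) for $d$ odd, $g\geq 3$, use Putman's Proposition $6.6$ to identify the source with $\extp^3 H_d$ and conclude via Lemma \eqref{lema_hom_extp_H_d}; (iii) for $d$ even with $4\nmid d$, $g\geq 5$, use Putman's Theorem $7.8$ to see that $(j\circ i)(\overline{1})=0$, so $f|_{\mathcal{K}_{g,1}}=\varphi_g^x\circ\sigma$ forces $\varphi_g^x(\overline{1})=0$, i.e.\ $x=0$.
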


\begin{proof}
Restricting the symplectic representation of the mapping class group to $\mathcal{M}_{g,1}[d]$ we have a short exact sequence
\begin{equation}
\label{ses_MCGp}
\xymatrix@C=10mm@R=13mm{1 \ar@{->}[r] & \mathcal{T}_{g,1} \ar@{->}[r] & \mathcal{M}_{g,1}[d] \ar@{->}[r]^-{\Psi} & Sp_{2g}(\mathbb{Z},d) \ar@{->}[r] & 1 }.
\end{equation}
Applying the 5-term sequence associated to the above short exact sequence, we get an exact sequence
\begin{equation}
\label{es_MCGpH_1}
\xymatrix@C=7mm@R=10mm{ H_1(\mathcal{T}_{g,1};\mathbb{Z})_{Sp_{2g}(\mathbb{Z},d)} \ar@{->}[r]^-{j} & H_1(\mathcal{M}_{g,1}[d];\mathbb{Z}) \ar@{->}[r] & H_1(Sp_{2g}(\mathbb{Z},d);\mathbb{Z}) \ar@{->}[r] & 1 .}
\end{equation}
In Lemma 6.4. in \cite{Putman}, A. Putman proved that the homomorphism $j$ in \eqref{es_MCGpH_1} is injective for $g\geq 3,$ $d\geq 3$ an odd integer and that for $g\geq 5,$ $d\geq 2$ an even integer with $4\nmid d,$ the kernel of $j$ in \eqref{es_MCGpH_1} is $\mathbb{Z}/2.$
Next we distinguish both cases.

\textbf{ For $g\geq 3$ and $d\geq 3$ an odd integer,}
we have the following commutative diagram of exact sequences:
\begin{equation*}
\xymatrix@C=7mm@R=10mm{1 \ar@{->}[r] & \mathcal{T}_{g,1} \ar@{->}[r]^-{i} \ar@{->>}[d] & \mathcal{M}_{g,1}[d] \ar@{->}[r]^-{\Psi} \ar@{->>}[d] & Sp_{2g}(\mathbb{Z},d) \ar@{->}[r] \ar@{->>}[d] & 1 \\
 1 \ar@{->}[r] & H_1(\mathcal{T}_{g,1};\mathbb{Z})_{Sp_{2g}(\mathbb{Z},d)} \ar@{->}[r]^-{j} & H_1(\mathcal{M}_{g,1}[d];\mathbb{Z}) \ar@{->}[r] & H_1(Sp_{2g}(\mathbb{Z},d);\mathbb{Z}) \ar@{->}[r] & 1. }
\end{equation*}
By Proposition 6.6 in \cite{Putman}, we have an isomorphism $H_1(\mathcal{T}_{g,1};\mathbb{Z})_{Sp_{2g}(\mathbb{Z},d)}\cong \extp^3H_d,$
induced by the first Zassenhaus mod-$d$ Johnson homomorphism $\tau_1^Z.$ 
Then we have a commutative diagram
\begin{equation}
\label{com_diag_M[p]}
\xymatrix@C=10mm@R=13mm{\mathcal{T}_{g,1} \ar@{->}[r]^-{i} \ar@{->>}[d]^{\tau_1^Z} & \mathcal{M}_{g,1}[d]  \ar@{->}[d]^{H_1} \\
 \extp^3H_d \ar@{->}[r]^-{j} & H_1(\mathcal{M}_{g,1}[d];\mathbb{Z})  .}
\end{equation}
Consider an element $f\in Hom(\mathcal{M}_{g,1}[d],A)^{\mathcal{AB}_{g,1}}.$
Since $A$ is an abelian group, by commutativity of diagram \eqref{com_diag_M[p]},
the restriction of $f$ to $\mathcal{T}_{g,1}$ factors through $\tau_1^Z.$
Since $\tau_1^Z$ is an $\mathcal{M}_{g,1}$-equivariant epimorphism, we have that the restriction of $f$ to $\mathcal{T}_{g,1}$ is a pull-back of an element of $Hom(\extp^3H_d,A)^{GL_g(\mathbb{Z})}$ along $\tau_1^Z.$
But, by Lemma \eqref{lema_hom_extp_H_d}, $Hom(\extp^3H_d,A)^{GL_g(\mathbb{Z})}=0.$

As a consequence, the restriction of $f$ to $\mathcal{T}_{g,1},$ is zero and therefore $f$ factors through the symplectic representation,
i.e. we have an isomorphism
$$Hom(\mathcal{M}_{g,1}[d],A)^{\mathcal{AB}_{g,1}}\cong Hom(Sp_{2g}(\mathbb{Z},d),A)^{GL_g(\mathbb{Z})}.$$

\textbf{For $g\geq 5$ and $d\geq 2$ an even integer such that $4 \nmid d,$}
we have the following commutative diagrams of exact sequences:
\begin{equation*}
\xymatrix@C=7mm@R=10mm{ & 1 \ar@{->}[r] & \mathcal{T}_{g,1} \ar@{->}[r] \ar@{->>}[d] & \mathcal{M}_{g,1}[d] \ar@{->}[r]^-{\Psi} \ar@{->>}[d] & Sp_{2g}(\mathbb{Z},d) \ar@{->}[r] \ar@{->>}[d] & 1 \\
 1 \ar@{->}[r] & \mathbb{Z}/2 \ar@{->}[r] & H_1(\mathcal{T}_{g,1};\mathbb{Z})_{Sp_{2g}(\mathbb{Z},d)} \ar@{->}[r]^-{j} & H_1(\mathcal{M}_{g,1}[d];\mathbb{Z}) \ar@{->}[r] & H_1(Sp_{2g}(\mathbb{Z},d);\mathbb{Z}) \ar@{->}[r] & 1 ,}
\end{equation*}
\begin{equation*}
\xymatrix@C=7mm@R=10mm{ 1 \ar@{->}[r] & \mathcal{K}_{g,1} \ar@{->}[r] \ar@{->>}[d]^{\sigma} & \mathcal{T}_{g,1} \ar@{->}[r]^-{\tau_1} \ar@{->>}[d] & \extp^3H \ar@{->}[r] \ar@{->>}[d] & 1 \\
 1 \ar@{->}[r] & \mathfrak{B}_2 \ar@{->}[r]^-{i} & H_1(\mathcal{T}_{g,1};\mathbb{Z})_{Sp_{2g}(\mathbb{Z},d)} \ar@{->}[r] & \extp^3H_d \ar@{->}[r] & 1 .}
\end{equation*}
Reassembling these diagrams we get another commutative diagram
\begin{equation}
\label{diag_res_K_par1}
\xymatrix@C=7mm@R=10mm{ \mathcal{K}_{g,1} \ar@{->}[r] \ar@{->>}[d]^{\sigma} & \mathcal{T}_{g,1} \ar@{->}[r] \ar@{->>}[d] & \mathcal{M}_{g,1}[d] \ar@{->>}[d] \\
 \mathfrak{B}_2 \ar@{->}[r]^-{i} & H_1(\mathcal{T}_{g,1};\mathbb{Z})_{Sp_{2g}(\mathbb{Z},d)} \ar@{->}[r]^-{j} & H_1(\mathcal{M}_{g,1}[d];\mathbb{Z}).}
\end{equation}
Let $f\in Hom(\mathcal{M}_{g,1}[d],A)^{\mathcal{AB}_{g,1}}.$ We show that $f$ restricted to $\mathcal{T}_{g,1}$ is zero.

Let $\overline{f}\in Hom(\mathcal{T}_{g,1},A)^{\mathcal{AB}_{g,1}}$ be the restriction of $f$ to $\mathcal{T}_{g,1}.$
By Lemma \eqref{lem_abinv}, we have that $\overline{f}= \varphi^x_g\circ \sigma,$ where $\varphi^x_g$ is zero on all $\mathfrak{B}_3$ except possibly on $\overline{1}.$
Since $A$ is abelian, $\overline{f}$ restricted on $\mathcal{K}_{g,1}$ factors through $j\circ i \circ \sigma.$

On the other hand, by Theorem 7.8. in \cite{Putman}, we know that $(j\circ i)(\overline{1})=0.$ As a consequence, $\varphi^x_g(\overline{1})=0$ and so $f$ restricted to $\mathcal{T}_{g,1}$ is zero. Therefore every element of $Hom(\mathcal{M}_{g,1}[d],A)^{\mathcal{AB}_{g,1}}$ factors through $Sp_{2g}(\mathbb{Z},d),$ i.e. we have an isomorphism
$$Hom(\mathcal{M}_{g,1}[d],A)^{\mathcal{AB}_{g,1}}\cong Hom(Sp_{2g}(\mathbb{Z},d),A)^{GL_g(\mathbb{Z})}.$$

\end{proof}

\begin{prop}
\label{prop_iso_M[d],sp(Z/d)}
Let $A$ be an abelian group and $g\geq 3,$ for every integer $d\geq 2$ we have the following isomorphism
$$Hom(Sp_{2g}(\mathbb{Z},d),A)^{GL_g(\mathbb{Z})}\cong Hom(\mathfrak{sp}_{2g}(\mathbb{Z}/d),A)^{GL_g(\mathbb{Z})}.$$
\end{prop}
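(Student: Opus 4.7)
The plan is to construct an explicit natural homomorphism from $Sp_{2g}(\mathbb{Z},d)$ onto $\mathfrak{sp}_{2g}(\mathbb{Z}/d)$ and to show that it induces the desired isomorphism on $GL_g(\mathbb{Z})$-invariant homomorphisms into $A$.

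First I would define the ``logarithm'' map
\begin{equation*}
\ell: Sp_{2g}(\mathbb{Z},d) \longrightarrow \mathfrak{sp}_{2g}(\mathbb{Z}/d),\qquad I+dN \longmapsto \overline{N},
\end{equation*}
where $\overline{N}$ denotes the reduction modulo $d$. Writing $X=I+dN$ and expanding $X\Omega X^{t}=\Omega$ shows that $dN\Omega + d\Omega N^{t} + d^{2}N\Omega N^{t}=0$, hence modulo $d$ one gets $\overline{N}\Omega + \Omega \overline{N}^{t}=0$, which is precisely the condition that $\overline{N}\in\mathfrak{sp}_{2g}(\mathbb{Z}/d)$. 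A direct computation gives $(I+dN)(I+dM)=I+d(N+M+dNM)$, so $\ell$ is a group homomorphism. Its kernel is $Sp_{2g}(\mathbb{Z},d^{2})$, and using the surjectivity of $Sp_{2g}(\mathbb{Z})\twoheadrightarrow Sp_{2g}(\mathbb{Z}/d^{2})$ (Theorem 1 in \cite{newman}) one lifts any $\overline{N}\in\mathfrak{sp}_{2g}(\mathbb{Z}/d)$ to an element of $Sp_{2g}(\mathbb{Z},d)$, showing that $\ell$ is surjective. Finally, the conjugation action of $\left(\begin{smallmatrix}G & 0\\0 & {}^{t}G^{-1}\end{smallmatrix}\right)\in GL_{g}(\mathbb{Z})$ on $I+dN$ sends it to $I+d\left(\begin{smallmatrix}G & 0\\0 & {}^{t}G^{-1}\end{smallmatrix}\right)N\left(\begin{smallmatrix}G^{-1} & 0\\0 & {}^{t}G\end{smallmatrix}\right)$, so $\ell$ is $GL_{g}(\mathbb{Z})$-equivariant for the adjoint action on the target.

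The resulting central short exact sequence
\begin{equation*}
1 \longrightarrow Sp_{2g}(\mathbb{Z},d^{2}) \longrightarrow Sp_{2g}(\mathbb{Z},d) \xrightarrow{\ \ell\ } \mathfrak{sp}_{2g}(\mathbb{Z}/d) \longrightarrow 0
\end{equation*}
(centrality follows from the commutator estimate $[I+dN,I+dM]\equiv I+d^{2}[N,M]\pmod{d^{3}}$) gives by pull-back along $\ell$ an injection
\begin{equation*}
\ell^{*}: Hom(\mathfrak{sp}_{2g}(\mathbb{Z}/d),A)^{GL_{g}(\mathbb{Z})} \hookrightarrow Hom(Sp_{2g}(\mathbb{Z},d),A)^{GL_{g}(\mathbb{Z})}.
\end{equation*}
To establish the reverse inclusion I would show that every $GL_{g}(\mathbb{Z})$-invariant homomorphism $f:Sp_{2g}(\mathbb{Z},d)\to A$ vanishes on $Sp_{2g}(\mathbb{Z},d^{2})$; it then automatically factors through $\ell$.

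The main obstacle is precisely this vanishing. My strategy would be to use the commutator formula together with a generating set of $Sp_{2g}(\mathbb{Z},d^{2})$. Concretely, the elementary symplectic transvections $I+d^{2}E$ generating $Sp_{2g}(\mathbb{Z},d^{2})$ must be expressed modulo $[Sp_{2g}(\mathbb{Z},d),Sp_{2g}(\mathbb{Z},d)]$ as sums (under the logarithm) of Lie brackets $[N,M]$ of elements of $\mathfrak{sp}_{2g}(\mathbb{Z}/d)$, possibly after translating by elements whose image under any $GL_{g}(\mathbb{Z})$-invariant homomorphism is automatically trivial. This reduces to an algebraic statement about $\mathfrak{sp}_{2g}(\mathbb{Z}/d)$ as a $GL_{g}(\mathbb{Z})$-module, namely that $\mathfrak{sp}_{2g}(\mathbb{Z}/d)$ is generated by its derived subspace modulo the $GL_{g}(\mathbb{Z})$-trivial part; this in turn can be verified block by block using the decomposition $\mathfrak{sp}_{2g}(\mathbb{Z}/d)\cong \mathfrak{gl}_{g}(\mathbb{Z}/d)\oplus S_{g}(\mathbb{Z}/d)\oplus S_{g}(\mathbb{Z}/d)$ relative to the Lagrangian splitting $H_{d}=A_{d}\oplus B_{d}$ and by adapting the elementary $GL_{g}(\mathbb{Z})$-invariance arguments already used in the proofs of Lemmas \eqref{lem_B_3-inv} and \eqref{lem_B_2-inv}. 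Once the vanishing on $Sp_{2g}(\mathbb{Z},d^{2})$ is established, $\ell^{*}$ is bijective and the proposition follows.
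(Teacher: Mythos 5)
Your setup coincides with the paper's: the map $\ell$ you construct is exactly the homomorphism $abel: Sp_{2g}(\mathbb{Z},d)\to\mathfrak{sp}_{2g}(\mathbb{Z}/d)$, $I+dN\mapsto N\ \mathrm{mod}\ d$, used there; the short exact sequence with kernel $Sp_{2g}(\mathbb{Z},d^2)$ is the same (though note the extension is not central, since $[Sp_{2g}(\mathbb{Z},d^2),Sp_{2g}(\mathbb{Z},d)]\neq\{1\}$; your commutator estimate only shows that the quotient is abelian, which is what matters); and the reduction to showing that every $GL_g(\mathbb{Z})$-invariant homomorphism $f:Sp_{2g}(\mathbb{Z},d)\to A$ vanishes on $Sp_{2g}(\mathbb{Z},d^2)$ is the correct target.

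The gap is in that last step, and it is a serious one. You propose to handle it by expressing the generators $I+d^2E$ of $Sp_{2g}(\mathbb{Z},d^2)$ as commutators in $Sp_{2g}(\mathbb{Z},d)$ (up to $GL_g$-trivial corrections) and then ``adapting the elementary $GL_g(\mathbb{Z})$-invariance arguments of Lemmas \eqref{lem_B_3-inv} and \eqref{lem_B_2-inv}.'' But the statement you would need -- that $Sp_{2g}(\mathbb{Z},d^2)$ is contained in $[Sp_{2g}(\mathbb{Z},d),Sp_{2g}(\mathbb{Z},d)]$ up to pieces killed by $GL_g$-invariance -- is not a formal consequence of the commutator formula $[I+dN,I+dM]\equiv I+d^2[N,M]\ (\mathrm{mod}\ d^3)$, and your ``reduction to an algebraic statement about $\mathfrak{sp}_{2g}(\mathbb{Z}/d)$ as a $GL_g(\mathbb{Z})$-module'' does not parse: the object that needs to be controlled is the group $Sp_{2g}(\mathbb{Z},d^2)$ modulo $[Sp_{2g}(\mathbb{Z},d),Sp_{2g}(\mathbb{Z},d)]$, not the abelian target $\mathfrak{sp}_{2g}(\mathbb{Z}/d)$. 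What the paper actually invokes is the computation of the abelianization of $Sp_{2g}(\mathbb{Z},d)$: for $d$ odd this is $[Sp_{2g}(\mathbb{Z},d),Sp_{2g}(\mathbb{Z},d)]=Sp_{2g}(\mathbb{Z},d^2)$ (Perron, Sato, Putman), so the factorization is immediate with no invariance argument at all; for $d$ even the commutator subgroup is the strictly smaller $Sp_{2g}(\mathbb{Z},d^2,2d^2)$ (Sato), and the quotient $Sp_{2g}(\mathbb{Z},d^2)/Sp_{2g}(\mathbb{Z},d^2,2d^2)$ is a nontrivial $2$-torsion group that must be killed by an explicit $GL_g(\mathbb{Z})$-invariance computation on the generators $I+d^2E_{i,g+i}$, $I+d^2E_{g+i,i}$. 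Your proposal does not distinguish the parities of $d$, does not identify the deep commutator-subgroup theorems that make the factorization possible, and is silent on the $2$-torsion obstruction that is the whole content of the even case. An ``elementary'' proof in the style of Lemmas \eqref{lem_B_3-inv}/\eqref{lem_B_2-inv} would amount to re-deriving those theorems, which is not realistic.
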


\begin{proof}
Denote by $\mathfrak{sp}_{2g}(\mathbb{Z}/d)$ the additive group of $2g\times 2g$ matrices M with entries in $\mathbb{Z}/d$ satisfying $M^T\Omega +\Omega M=0.$
In section 1 in \cite{Lee}, R. Lee and R.H. Szczarba showed that the homomorphism
\begin{align*}
abel: Sp_{2g}(\mathbb{Z},d) & \longrightarrow \mathfrak{sp}_{2g}(\mathbb{Z}/d) \\
Id_{2g} +dA & \longmapsto A \;(\text{mod }d)
\end{align*}
is the abelianization.
Moreover, a direct computation shows that this homomorphism is $Sp_{2g}(\mathbb{Z})$-equivariant.
In Corollary of Lemmas 2.1, 2.2, 2.3 in \cite{Lee}, R. Lee and R.H. Szczarba proved that, for any prime $d,$ there is the following short exact sequence
\begin{equation}
\label{ses_abel_d}
\xymatrix@C=10mm@R=13mm{1 \ar@{->}[r] & Sp_{2g}(\mathbb{Z},d^2) \ar@{->}[r] & Sp_{2g}(\mathbb{Z},d) \ar@{->}[r]^{abel} & \mathfrak{sp}_{2g}(\mathbb{Z}/d) \ar@{->}[r] & 1 .}
\end{equation}
In fact, following their proof, the same short exact sequence holds for any integer $d\geq 2.$
Next, we distinguish the case of $d$ odd and the case of $d$ even.

\textbf{The case $d$ odd.}
In \cite{per}, \cite{sato_abel}, \cite{Putman_abel}, B. Perron , M. Sato  and A. Putman respectively proved independently that
for any $g\geq 3$ and an odd integer $d\geq 3,$
$$[Sp_{2g}(\mathbb{Z},d),Sp_{2g}(\mathbb{Z},d)]=Sp_{2g}(\mathbb{Z},d^2).$$
Then, by the short exact sequence \eqref{ses_abel_d}, since $A$ is an abelian group, every homomorphism of $Hom(Sp_{2g}(\mathbb{Z},d),A)^{GL_g(\mathbb{Z})}$ factors through the homomorphism $abel,$
i.e. we have the following isomorphism
$$Hom(Sp_{2g}(\mathbb{Z},d),A)^{GL_g(\mathbb{Z})}\cong Hom(\mathfrak{sp}_{2g}(\mathbb{Z}/d),A)^{GL_g(\mathbb{Z})},$$
given by the pull-back of the map $abel$ and where $GL_g(\mathbb{Z})$ acts on $\mathfrak{sp}_{2g}(\mathbb{Z}/d)$ by conjugation.

\textbf{The case $d$ even.}
Consider the following normal subgroup of $Sp_{2g}(\mathbb{Z}):$
$$Sp_{2g}(\mathbb{Z},d,2d)=\{A\in Sp_{2g}(\mathbb{Z},d) \mid A=Id_{2g}+dA',\; A'_{g+i,i}\equiv A'_{i,g+i} \equiv 0 \;(\text{mod } 2) \;\forall i   \}.$$
In Proposition 10.1 in \cite{sato_abel}, M. Sato proved that for every even integer $d\geq 2,$
$$Sp_{2g}(\mathbb{Z},d^2,2d^2)=[Sp_{2g}(\mathbb{Z},d),Sp_{2g}(\mathbb{Z},d)].$$
As a consequence, in (10.3) in \cite{sato_abel}, he obtained the following short exact sequence
\begin{equation*}
\xymatrix@C=10mm@R=13mm{1 \ar@{->}[r] & \frac{Sp_{2g}(\mathbb{Z},d^2)}{Sp_{2g}(\mathbb{Z},d^2,2d^2)} \ar@{->}[r] & \frac{Sp_{2g}(\mathbb{Z},d)}{Sp_{2g}(\mathbb{Z},d^2,2d^2)} \ar@{->}[r]^{abel} & \mathfrak{sp}_{2g}(\mathbb{Z}/d) \ar@{->}[r] & 1 .}
\end{equation*}
Since $A$ is abelian, we have that
$$Hom(Sp_{2g}(\mathbb{Z},d),A)^{GL_g(\mathbb{Z})}\cong Hom\left(\frac{Sp_{2g}(\mathbb{Z},d)}{Sp_{2g}(\mathbb{Z},d^2,2d^2)},A\right)^{GL_g(\mathbb{Z})}.$$
We prove that every element $\varphi_g\in Hom\left(\frac{Sp_{2g}(\mathbb{Z},d)}{Sp_{2g}(\mathbb{Z},d^2,2d^2)},A\right)^{GL_g(\mathbb{Z})}$ factors through $\mathfrak{sp}_{2g}(\mathbb{Z}/d),$ i.e. $\varphi_g$ is zero on $\frac{Sp_{2g}(\mathbb{Z},d^2)}{Sp_{2g}(\mathbb{Z},d^2,2d^2)}.$

In section 10 in \cite{sato_abel}, M. Sato proved that
$\frac{Sp_{2g}(\mathbb{Z},d^2)}{Sp_{2g}(\mathbb{Z},d^2,2d^2)}$ is generated by
\begin{equation}
\label{elem_Sp[2,4]}
\{(Id_{2g}+d^2E_{i,g+i}),(Id_{2g}+d^2E_{g+i,i})\}_{i=1}^g.
\end{equation}
Then it is enough to check that $\varphi_g$ is zero on all elements of \eqref{elem_Sp[2,4]}.

Take $f$ the matrix $\left(\begin{smallmatrix}
G & 0 \\
0 & {}^tG^{-1}
\end{smallmatrix}\right)$ with $G=(1,i)\in \mathfrak{S}_g.$ Since $\varphi_g$ is $GL_g(\mathbb{Z})$-invariant, we have that
\begin{equation}
\label{eq_Sp[2]1i}
\begin{aligned}
\varphi_g(Id_{2g}+d^2E_{i,g+i})= & \varphi_g(f(Id_{2g}+d^2E_{i,g+i})f^{-1})= \\
= & \varphi_g(Id_{2g}+d^2GE_{i,g+i}G^t)=\varphi_g(Id_{2g}+d^2E_{1,g+1}).
\end{aligned}
\end{equation}
Analogously, $\varphi_g(Id_{2g}+d^2E_{g+i,i})=\varphi_g(Id_{2g}+d^2E_{g+1,1}).$

Now, take $f$ the matrix $\left(\begin{smallmatrix}
G & 0 \\
0 & {}^tG^{-1}
\end{smallmatrix}\right)$ with $G\in GL_g(\mathbb{Z})$ the matrix with a $1$ at position $(2,1),$ $1$'s at the diagonal and $0$'s at the other positions. Then we have that
\begin{align*}
\varphi_g(Id_{2g}+d^2E_{1,g+1})= & \varphi_g(f(Id_{2g}+d^2E_{1,g+1})f^{-1})= \\
= & \varphi_g(Id_{2g}+d^2GE_{1,g+1}G^t)= \\
= & \varphi_g(Id_{2g}+d^2(E_{1,g+1}+E_{2,g+2}+(E_{1,g+2}+E_{2,g+1})))=\\
= & \varphi_g(Id_{2g}+d^2E_{1,g+1})+\varphi_g(Id_{2g}+d^2E_{2,g+2})+ \\
& +\varphi_g(Id_{2g}+d^2(E_{1,g+2}+E_{2,g+1})))
\end{align*}
But $Id_{2g}+d^2(E_{1,g+2}+E_{2,g+1}) \in Sp_{2g}(\mathbb{Z},d^2,2d^2),$ i.e. $\varphi_g(Id_{2g}+d^2(E_{1,g+2}+E_{2,g+1}))=0.$ Therefore, by relation \eqref{eq_Sp[2]1i}, we get that
$$\varphi_g(Id_{2g}+d^2E_{1,g+1})=2\varphi_g(Id_{2g}+d^2E_{1,g+1})$$
i.e.
$$\varphi_g(Id_{2g}+d^2E_{1,g+1})=0.$$
Analogously $\varphi_g(Id_{2g}+d^2E_{g+1,1})=0.$
Therefore we get the desired result.
\end{proof}

\begin{nota}
Throughout this chapter, we denote by $\pi_g\in Hom(\mathfrak{sp}_{2g}(\mathbb{Z}/d),\mathfrak{gl}_{g}(\mathbb{Z}/d))$ the homomorphism given by
\begin{align*}
\pi_g:\; \mathfrak{sp}_{2g}(\mathbb{Z}/d) & \rightarrow\mathfrak{gl}_{g}(\mathbb{Z}/d) \\
\left(\begin{smallmatrix}
\alpha & \beta \\
\gamma & -\alpha^t
\end{smallmatrix}\right)&\mapsto\alpha.
\end{align*}

\end{nota}

\begin{lema}
\label{lema_elem_sp(Z/d)}
Let $g\geq 3,$ $d\geq 2$ be integers. There is an isomorphism
\begin{align*}
\Theta: A_d & \longrightarrow Hom(\mathfrak{sp}_{2g}(\mathbb{Z}/d),A)^{GL_g(\mathbb{Z})} \\
x & \longmapsto \left(\overline{\varphi}_g^x:X\mapsto tr(\pi_g(X))x\right)
\end{align*}
\end{lema}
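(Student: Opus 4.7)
The plan is to decompose $\mathfrak{sp}_{2g}(\mathbb{Z}/d)$ as a $GL_g(\mathbb{Z})$-module into three pieces, determine the invariant homomorphisms on each, and reassemble. Writing a symplectic element as $X = \left(\begin{smallmatrix}\alpha & \beta \\ \gamma & -\alpha^t\end{smallmatrix}\right)$ with $\alpha\in\mathfrak{gl}_g(\mathbb{Z}/d)$ and $\beta,\gamma\in S_g(\mathbb{Z}/d)$, a direct computation shows that conjugation by $\left(\begin{smallmatrix}G & 0 \\ 0 & {}^tG^{-1}\end{smallmatrix}\right)$ restricts to $\alpha\mapsto G\alpha G^{-1}$, $\beta\mapsto G\beta G^t$ and $\gamma\mapsto {}^tG^{-1}\gamma G^{-1}$. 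Any invariant $\varphi$ decomposes as $\varphi_1+\varphi_2+\varphi_3$. That $\Theta$ is well defined and injective is immediate: trace is conjugation-invariant and $tr(\pi_g(X_0))=1$ for $X_0=\left(\begin{smallmatrix}E_{11} & 0 \\ 0 & -E_{11}\end{smallmatrix}\right)$, so $\overline{\varphi}_g^x(X_0)=x$. The heart of the argument is surjectivity, which I would reduce to showing $\varphi_2=\varphi_3=0$ and $\varphi_1(\alpha)=tr(\alpha)\cdot x$ for some $x\in A_d$.

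For the $\beta$-part I would use the same elementary-matrix technique as in Lemmas 2.3.2--2.3.3. Permutations $G=(1\,i)(2\,j)\in\mathfrak{S}_g\subset GL_g(\mathbb{Z})$ reduce the problem to the two values $c_1:=\varphi_2(E_{11})$ and $c_2:=\varphi_2(E_{12}+E_{21})$. Testing invariance against $G=I+E_{21}$ gives $GE_{11}G^t=E_{11}+E_{22}+(E_{12}+E_{21})$, whence $c_2=-c_1$. Then, using that $g\geq 3$, testing against $G=I+E_{21}+E_{31}$ yields
\begin{equation*}
GE_{11}G^t=\sum_{i=1}^3 E_{ii}+\sum_{1\leq i<j\leq 3}(E_{ij}+E_{ji}),
\end{equation*}
and invariance together with $\mathfrak{S}_g$-symmetry produces $0=2c_1+3c_2=-c_1$. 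Hence $\varphi_2\equiv 0$; the same argument with $G$ replaced by ${}^tG^{-1}$ handles $\varphi_3$. For $\varphi_1$ on $\mathfrak{gl}_g(\mathbb{Z}/d)$, the computation $(I+E_{21})E_{11}(I-E_{21})=E_{11}+E_{21}$ forces $\varphi_1(E_{21})=0$, and by permutation $\varphi_1(E_{ij})=0$ for $i\neq j$; permutations also give $\varphi_1(E_{ii})=x$ for a single constant $x$, so $\varphi_1(\alpha)=tr(\alpha)\cdot x$. Since $d\cdot E_{11}=0$ in $\mathfrak{gl}_g(\mathbb{Z}/d)$ implies $dx=0$, we have $x\in A_d$ and $\varphi=\overline{\varphi}_g^x$.

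The delicate point is the vanishing of $\varphi_2$ and $\varphi_3$ when $d$ has $2$-torsion. A scaling argument with $G=\mathrm{diag}(k,1,\ldots,1)$ alone produces only constraints $(k^2-1)c_1=0$, which are insufficient when $2\mid d$ since every odd $k$ satisfies $k^2\equiv 1\pmod 8$. The three-index transvection $I+E_{21}+E_{31}$, requiring $g\geq 3$, sidesteps this obstruction by producing a single relation that, after combining with $c_2=-c_1$, directly kills $c_1$. This is where the hypothesis $g\geq 3$ is truly used; the rest of the proof is a routine $GL_g(\mathbb{Z})$-invariant decomposition argument.
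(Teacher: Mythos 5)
Your proof is correct and follows essentially the same route as the paper's: both reduce to a basis of $\mathfrak{sp}_{2g}(\mathbb{Z}/d)$ (your $\alpha$-/$\beta$-/$\gamma$-decomposition matches the paper's $\{A_{i,j},B_i,B_i',C_{i,j},C_{i,j}'\}$), use permutation matrices to normalize, and then exploit the two-index and three-index transvections $I+E_{21}$ and $I+E_{21}+E_{31}$ to kill the off-trace contributions. Your explicit splitting $\varphi=\varphi_1+\varphi_2+\varphi_3$ and the closing remark pinpointing exactly why $g\geq 3$ is needed (the three-index transvection) are a cleaner packaging of the same computations, but the mathematical content is identical.
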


\begin{proof}
First of all we show that $\Theta$ is well-defined, i.e. $\overline{\varphi}_g^x\in Hom(\mathfrak{sp}_{2g}(\mathbb{Z}/d),A)^{GL_g(\mathbb{Z})}.$

Let $X'=\left(\begin{smallmatrix}
\alpha' & \beta' \\
\gamma' & -{}^{t}\alpha'
\end{smallmatrix}\right)$ and $X=\left(\begin{smallmatrix}
\alpha & \beta \\
\gamma & -\alpha^t
\end{smallmatrix}\right),$ we have that
$$\overline{\varphi}^x_g(X+X')=tr(\alpha+\alpha')x=tr(\alpha)x+tr(\alpha')x=\overline{\varphi}^x_g(X)+\overline{\varphi}^x_g(X').$$
In addition, it is well known that in general $tr(AB)=tr(BA)$ for any matrices $A,B\in M_{g\times g}(\mathbb{Z}).$
Then,
for every matrix $f\in Sp_{2g}(\mathbb{Z})$ of the form $\left(\begin{smallmatrix}
G & 0 \\
0 & {}^tG^{-1}
\end{smallmatrix}\right)$ with $G\in GL_g(\mathbb{Z}),$ we have that
$$\overline{\varphi}_g^x(fXf^{-1})=tr(G\alpha G^{-1})x=tr(\alpha)x=\overline{\varphi}^x_g(X).$$
Therefore, $\overline{\varphi_g^x}\in Hom(\mathfrak{sp}_{2g}(\mathbb{Z}/d),A)^{GL_g(\mathbb{Z})}.$
Moreover, it is clear that $\Theta$ is injective.

Next we show that $\Theta$ is surjective.
Recall that, by definition of $\mathfrak{sp}_{2g}(\mathbb{Z}/d),$ the elements of
$\mathfrak{sp}_{2g}(\mathbb{Z}/d)$ are matrices
$\left(\begin{smallmatrix}
\alpha & \beta \\
\gamma & -\alpha^t
\end{smallmatrix}\right)$ where $\alpha,\beta,\gamma$ are matrices of size $g\times g$ with $\beta,\gamma$ symmetric matrices.

For $1 \leq i,j\leq 2g,$ let $E_{i,j}$ denote the matrix with a $1$ at position $(i,j)$ and $0$'s elsewhere. Define
$$S=\{A_{i,j}\;|\;1\leq i,j\leq g\}\cup \{B_i,B'_i\;|\;1\leq i\leq g\}\cup \{C_{i,j},C'_{i,j}\;|\;1\leq i<j\leq g\}\subset \mathfrak{sp}_{2g}(\mathbb{Z}/p),$$
where
$$A_{i,j}=E_{i,j}-E_{g+j,g+i},\quad C_{i,j}=E_{g+i,j}+E_{g+j,i},\quad C'_{i,j}=E_{i,g+j}+E_{j,g+i},$$
$$B_i=E_{g+i,i},\quad B'_i=E_{i,g+i}.$$
By the form of the elements of $\mathfrak{sp}_{2g}(\mathbb{Z}/d),$ it is clear that $S$ is a basis for $\mathfrak{sp}_{2g}(\mathbb{Z}/d).$

Moreover, given an element $X=\left(\begin{smallmatrix}
\alpha & \beta \\
\gamma & -\alpha^t
\end{smallmatrix}\right)\in \mathfrak{sp}_{2g}(\mathbb{Z}/d)$ and $f\in Sp_{2g}(\mathbb{Z})$ a matrix of the form $\left(\begin{smallmatrix}
G & 0 \\
0 & {}^tG^{-1}
\end{smallmatrix}\right)$ with $G\in GL_g(\mathbb{Z}),$ we have that

\begin{align*}
fXf^{-1} = & \left(\begin{matrix}
G & 0 \\
0 & {}^{t}G^{-1}
\end{matrix}\right)
\left(
\begin{matrix}
\alpha & \beta \\
\gamma & -\alpha^t
\end{matrix}\right)
\left(
\begin{matrix}
G^{-1} & 0 \\
0 & G^t
\end{matrix}\right)\\
=& \left(
\begin{matrix}
G\alpha & G\beta \\
{}^{t}G^{-1}\gamma & -{}^{t}G^{-1}\alpha^t
\end{matrix}\right)
\left(
\begin{matrix}
G^{-1} & 0 \\
0 & G^t
\end{matrix}\right) \\
=& \left(
\begin{matrix}
G\alpha G^{-1} & G\beta G^t \\
{}^{t}G^{-1}\gamma G^{-1} & -(G\alpha G^{-1})^t
\end{matrix}\right).
\end{align*}

Let $\varphi_g\in Hom(\mathfrak{sp}_{2g}(\mathbb{Z}/d),A)^{GL_g(\mathbb{Z})},$
then we have the following relations:
\begin{itemize}
\item Take $f$ the matrix $\left(\begin{smallmatrix}
G & 0 \\
0 & {}^tG^{-1}
\end{smallmatrix}\right)$ with $G=(1i)(2j)\in \mathfrak{S}_g\subset GL_g(\mathbb{Z})$ for $i\neq j.$
\begin{equation}
\label{eqp:0}
\begin{aligned}
\varphi_g(A_{i,j}) = & \varphi_g(fA_{i,j}f^{-1})=\varphi_g(f(E_{i,j}-E_{g+j,g+i})f^{-1}) \\
 = & \varphi_g(E_{1,2}-E_{g+2,g+1})=\varphi_g(A_{1,2}),
\end{aligned}
\end{equation}
\begin{equation}
\label{eqp:1}
\begin{aligned}
\varphi_g(C_{i,j}) = & \varphi_g(fC_{i,j}f^{-1})=\varphi_g(f(E_{g+i,j}+E_{g+j,i})f^{-1}) \\
 = & \varphi_g(E_{g+1,2}+E_{g+2,1})=\varphi_g(C_{1,2}),
\end{aligned}
\end{equation}
\begin{equation}
\label{eqp:2}
\begin{aligned}
\varphi_g(C'_{i,j}) = & \varphi_g(fC'_{i,j}f^{-1})=\varphi_g(f(E_{i,g+j}+E_{j,g+i})f^{-1}) \\
 = & \varphi_g(E_{1,g+2}+E_{2,g+1})=\varphi_g(C'_{1,2}),
\end{aligned}
\end{equation}
\begin{equation}
\label{eqp:6}
\begin{aligned}
\varphi_g(A_{i,i}) = & \varphi_g(fA_{i,i}f^{-1})=\varphi_g(f(E_{i,i}-E_{g+i,g+i})f^{-1}) \\
 = & \varphi_g(E_{1,1}-E_{g+1,g+1})= \varphi_g(A_{1,1}),
\end{aligned}
\end{equation}
\begin{equation}
\label{eqp:7}
\begin{aligned}
\varphi_g(B_i) =\varphi_g(fB_if^{-1})=\varphi_g(fE_{g+i,i}f^{-1})= \varphi_g(E_{g+1,1})
= \varphi_g(B_1),
\end{aligned}
\end{equation}
\begin{equation}
\label{eqp:8}
\begin{aligned}
\varphi_g(B'_i) =\varphi_g(fB'_if^{-1})=\varphi_g(fE_{i,g+i}f^{-1})= \varphi_g(E_{1,g+1})
= \varphi_g(B'_1).
\end{aligned}
\end{equation}
\item Take $f$ the matrix $\left(\begin{smallmatrix}
G & 0 \\
0 & {}^tG^{-1}
\end{smallmatrix}\right)$ with $G\in GL_g(\mathbb{Z})$ the matrix with $1$'s at positions $(1,2),$ $(2,1)$ and $(i,i)\neq (2,2) \;\forall i$  and $0$'s at the other positions.

Observe that
$$
fB'_1f^{-1} = fE_{1,g+1}f^{-1} = \left(\begin{matrix}
G & 0 \\
0 & {}^{t}G^{-1}
\end{matrix}\right)
\left(
\begin{matrix}
0 & E_{1,1} \\
0 & 0
\end{matrix}\right)
\left(
\begin{matrix}
G^{-1} & 0 \\
0 & G^t
\end{matrix}\right)
=\left(
\begin{matrix}
0 & GE_{1,1}G^t \\
0 & 0
\end{matrix}\right),
$$
where
\begin{align*}
GE_{1,1}G^t = &
\left(\begin{array}{c|c}
\begin{smallmatrix}
1 & 1 \\
1 & 0
\end{smallmatrix}
 & 0 \\
 \hline
0 & Id
\end{array}\right)\left(\begin{array}{c|c}
\begin{smallmatrix}
1 & 0 \\
0 & 0
\end{smallmatrix}
 & 0 \\
 \hline
0 & 0
\end{array}\right)\left(\begin{array}{c|c}
\begin{smallmatrix}
1 & 1 \\
1 & 0
\end{smallmatrix}
 & 0 \\
 \hline
0 & Id
\end{array}\right) \\
= &
\left(\begin{array}{c|c}
\begin{smallmatrix}
1 & 0 \\
1 & 0
\end{smallmatrix}
 & 0 \\
 \hline
0 & 0
\end{array}\right)\left(\begin{array}{c|c}
\begin{smallmatrix}
1 & 1 \\
1 & 0
\end{smallmatrix}
 & 0 \\
 \hline
0 & Id
\end{array}\right) \\
= &
\left(\begin{array}{c|c}
\begin{smallmatrix}
1 & 1 \\
1 & 1
\end{smallmatrix}
 & 0 \\
 \hline
0 & 0
\end{array}\right)=E_{1,1}+E_{2,2}+(E_{1,2}+E_{2,1}).
\end{align*}
Thus $fB'_1f^{-1} = E_{1,g+1}+E_{2,g+2}+(E_{1,g+2}+E_{2,g+1})=B'_1+B'_2+C'_{1,2}.$
Therefore,
$$ \varphi_g(B'_1)= \varphi_g(fB'_1f^{-1})=\varphi_g(B'_1+B'_2+C'_{1,2})=
\varphi_g(B'_1)+\varphi_g(B'_2)+\varphi_g(C'_{1,2}),\quad \text{i.e.} $$
\begin{equation}
\label{eqp:9} \varphi_g(B'_2)=  -\varphi_g(C'_{1,2}).
\end{equation}
Similarly, we have that
$$
f^{-1}B_1f = f^{-1}E_{g+1,1}f = \left(\begin{matrix}
G^{-1} & 0 \\
0 & G^t
\end{matrix}\right)
\left(
\begin{matrix}
0 & 0 \\
E_{1,1} & 0
\end{matrix}\right)
\left(
\begin{matrix}
G & 0 \\
0 & {}^tG^{-1}
\end{matrix}\right)
=\left(
\begin{matrix}
0 & 0 \\
G^tE_{1,1}G & 0
\end{matrix}\right),
$$
where
\begin{align*}
G^tE_{1,1}G = &
\left(\begin{array}{c|c}
\begin{smallmatrix}
1 & 1 \\
1 & 0
\end{smallmatrix}
 & 0 \\
 \hline
0 & Id
\end{array}\right)\left(\begin{array}{c|c}
\begin{smallmatrix}
1 & 0 \\
0 & 0
\end{smallmatrix}
 & 0 \\
 \hline
0 & 0
\end{array}\right)\left(\begin{array}{c|c}
\begin{smallmatrix}
1 & 1 \\
1 & 0
\end{smallmatrix}
 & 0 \\
 \hline
0 & Id
\end{array}\right) \\
= &
\left(\begin{array}{c|c}
\begin{smallmatrix}
1 & 0 \\
1 & 0
\end{smallmatrix}
 & 0 \\
 \hline
0 & 0
\end{array}\right)\left(\begin{array}{c|c}
\begin{smallmatrix}
1 & 1 \\
1 & 0
\end{smallmatrix}
 & 0 \\
 \hline
0 & Id
\end{array}\right) \\
= &
\left(\begin{array}{c|c}
\begin{smallmatrix}
1 & 1 \\
1 & 1
\end{smallmatrix}
 & 0 \\
 \hline
0 & 0
\end{array}\right)=E_{1,1}+E_{2,2}+(E_{1,2}+E_{2,1}).
\end{align*}
Thus $f^{-1}B_1f = E_{g+1,1}+E_{g+2,2}+(E_{g+1,2}+E_{g+2,1})=B_1+B_2+C_{1,2}.$
Therefore,
$$\varphi_g(B_1)= \varphi_g(f^{-1}B_1f)=\varphi_g(B_1+B_2+C_{1,2})=
\varphi_g(B_1)+\varphi_g(B_2)+\varphi_g(C_{1,2}),\quad \text{i.e.}$$
\begin{equation}
\label{eqp:10}
\varphi_g(B_2)= -\varphi_g(C_{1,2}).
\end{equation}

\item Take $f$ the matrix $\left(\begin{smallmatrix}
G & 0 \\
0 & {}^tG^{-1}
\end{smallmatrix}\right)$ with $G\in GL_g(\mathbb{Z})$ the matrix with  $1$'s at the diagonal and positions $(2,1),(3,1),$ and $0$'s elsewhere.

Observe that
$$
fB'_1f^{-1} = fE_{1,g+1}f^{-1} = \left(\begin{matrix}
G & 0 \\
0 & {}^tG^{-1}
\end{matrix}\right)
\left(
\begin{matrix}
0 & E_{1,1} \\
0 & 0
\end{matrix}\right)
\left(
\begin{matrix}
G^{-1} & 0 \\
0 & G^t
\end{matrix}\right)
=\left(
\begin{matrix}
0 & GE_{1,1}G^t \\
0 & 0
\end{matrix}\right),
$$
where
\begin{align*}
GE_{1,1}G^t = &
\left(\begin{array}{c|c}
\begin{smallmatrix}
1 & 0 & 0 \\
1 & 1 & 0 \\
1 & 0 & 1
\end{smallmatrix}
 & 0 \\
 \hline
0 & Id
\end{array}\right)\left(\begin{array}{c|c}
\begin{smallmatrix}
1 & 0 & 0 \\
0 & 0 & 0 \\
0 & 0 & 0
\end{smallmatrix}
 & 0 \\
 \hline
0 & 0
\end{array}\right)\left(\begin{array}{c|c}
\begin{smallmatrix}
1 & 1 & 1 \\
0 & 1 & 0 \\
0 & 0 & 1
\end{smallmatrix}
 & 0 \\
 \hline
0 & Id
\end{array}\right)= \\
= &
\left(\begin{array}{c|c}
\begin{smallmatrix}
1 & 0 & 0 \\
1 & 0 & 0 \\
1 & 0 & 0
\end{smallmatrix}
 & 0 \\
 \hline
0 & 0
\end{array}\right)\left(\begin{array}{c|c}
\begin{smallmatrix}
1 & 1 & 1 \\
0 & 1 & 0 \\
0 & 0 & 1
\end{smallmatrix}
 & 0 \\
 \hline
0 & Id
\end{array}\right)
=
\left(\begin{array}{c|c}
\begin{smallmatrix}
1 & 1 & 1 \\
1 & 1 & 1 \\
1 & 1 & 1
\end{smallmatrix}
 & 0 \\
 \hline
0 & 0
\end{array}\right) =\\
= & E_{1,1}+E_{2,2}+E_{3,3}+(E_{1,2}+E_{2,1})+(E_{2,3}+E_{3,2})+(E_{1,3}+E_{3,1}).
\end{align*}
Thus,
$$fB_1'f^{-1}=B_1'+B_2'+B_3'+C_{1,2}'+C_{2,3}'+C_{1,3}'.$$
Therefore,
$$\varphi_g(B_1')=\varphi_g(fB_1'f^{-1})=\varphi_g(B_1')+\varphi_g(B_2')+\varphi_g(B_3')+\varphi_g(C_{1,2}')+\varphi_g(C_{2,3}')+\varphi_g(C_{1,3}'),$$
and by relations \eqref{eqp:8}, \eqref{eqp:9}, \eqref{eqp:2}, we get that
$$\varphi_g(B'_1)=3\varphi_g(B'_1)-3\varphi_g(B'_1)=0.$$
Moreover by relations \eqref{eqp:8}, \eqref{eqp:9}, \eqref{eqp:2}, we also get that
$$0=\varphi_g(B_1')=\varphi_g(B_i')=\varphi_g(C_{i,j}').$$
Similarly, take $f$ the matrix $\left(\begin{smallmatrix}
G & 0 \\
0 & {}^tG^{-1}
\end{smallmatrix}\right)$ with $G^t\in GL_g(\mathbb{Z})$ the matrix with  $1$'s at the diagonal and positions $(2,1),(3,1),$ and $0$'s at the other positions.
Then we have that
$$
f^{-1}B_1f = f^{-1}E_{g+1,1}f = \left(\begin{matrix}
G^{-1} & 0 \\
0 & G^t
\end{matrix}\right)
\left(
\begin{matrix}
0 & 0 \\
E_{1,1} & 0
\end{matrix}\right)
\left(
\begin{matrix}
G & 0 \\
0 & {}^{t}G^{-1}
\end{matrix}\right)
=\left(
\begin{matrix}
0 & 0 \\
G^tE_{1,1}G & 0
\end{matrix}\right),
$$
where
\begin{align*}
G^tE_{1,1}G = & \left(\begin{array}{c|c}
\begin{smallmatrix}
1 & 0 & 0 \\
1 & 1 & 0 \\
1 & 0 & 1
\end{smallmatrix}
 & 0 \\
 \hline
0 & Id
\end{array}\right)\left(\begin{array}{c|c}
\begin{smallmatrix}
1 & 0 & 0 \\
0 & 0 & 0 \\
0 & 0 & 0
\end{smallmatrix}
 & 0 \\
 \hline
0 & 0
\end{array}\right)\left(\begin{array}{c|c}
\begin{smallmatrix}
1 & 1 & 1 \\
0 & 1 & 0 \\
0 & 0 & 1
\end{smallmatrix}
 & 0 \\
 \hline
0 & Id
\end{array}\right)= \\
= &  E_{1,1}+E_{2,2}+E_{3,3}+(E_{1,2}+E_{2,1})+(E_{2,3}+E_{3,2})+(E_{1,3}+E_{3,1}).
\end{align*}
Thus,
$$fB_1f^{-1}=B_1+B_2+B_3+C_{1,2}+C_{2,3}+C_{1,3}.$$
Therefore,
$$\varphi_g(B_1)=\varphi_g(fB_1f^{-1})=\varphi_g(B_1)+\varphi_g(B_2)+\varphi_g(B_3)+\varphi_g(C_{1,2})+\varphi_g(C_{2,3})+\varphi_g(C_{1,3}),$$
and by relations \eqref{eqp:7}, \eqref{eqp:10}, \eqref{eqp:1}, we have that
$$\varphi_g(B_1)=3\varphi_g(B_1)-3\varphi_g(B_1)=0.$$
Moreover by relations \eqref{eqp:7}, \eqref{eqp:10}, \eqref{eqp:1}, we have that
\begin{equation}
\label{eqp:10'}
0=\varphi_g(B_1)=\varphi_g(B_i)=\varphi_g(C_{i,j}).
\end{equation}

\item Take $f$ the matrix $\left(\begin{smallmatrix}
G & 0 \\
0 & {}^tG^{-1}
\end{smallmatrix}\right)$ with $G\in GL_g(\mathbb{Z})$ the matrix with a $1$'s at the diagonal and position $(1,2),$ and $0$'s at the other positions.

Observe that
\begin{align*}
fA_{2,1}f^{-1} = & f(E_{2,1}-E_{g+1,g+2})f^{-1} = \left(\begin{matrix}
G & 0 \\
0 & {}^{t}G^{-1}
\end{matrix}\right)
\left(
\begin{matrix}
E_{2,1} & 0 \\
0 & -E_{1,2}
\end{matrix}\right)
\left(
\begin{matrix}
G^{-1} & 0 \\
0 & G^t
\end{matrix}\right) \\
= & \left(
\begin{matrix}
GE_{2,1}G^{-1} & 0 \\
0 & -(GE_{2,1}G^{-1})^t
\end{matrix}\right),
\end{align*}
where
\begin{align*}
GE_{2,1}G^{-1} = &
\left(\begin{array}{c|c}
\begin{smallmatrix}
1 & 1 \\
0 & 1
\end{smallmatrix}
 & 0 \\
 \hline
0 & Id
\end{array}\right)\left(\begin{array}{c|c}
\begin{smallmatrix}
0 & 0 \\
1 & 0
\end{smallmatrix}
 & 0 \\
 \hline
0 & 0
\end{array}\right)\left(\begin{array}{c|c}
\begin{smallmatrix}
1 & -1 \\
0 & 1
\end{smallmatrix}
 & 0 \\
 \hline
0 & Id
\end{array}\right) \\
= &
\left(\begin{array}{c|c}
\begin{smallmatrix}
1 & 0 \\
1 & 0
\end{smallmatrix}
 & 0 \\
 \hline
0 & 0
\end{array}\right)\left(\begin{array}{c|c}
\begin{smallmatrix}
1 & -1 \\
0 & 1
\end{smallmatrix}
 & 0 \\
 \hline
0 & Id
\end{array}\right) \\
= &
\left(\begin{array}{c|c}
\begin{smallmatrix}
1 & -1 \\
1 & -1
\end{smallmatrix}
 & 0 \\
 \hline
0 & 0
\end{array}\right)=E_{1,1}-E_{2,2}-E_{1,2}+E_{2,1}.
\end{align*}
Thus
\begin{align*}
fA_{2,1}f^{-1}= & E_{1,1}-E_{2,2}-E_{1,2}+E_{2,1}-E_{g+1,g+1}+E_{g+2,g+2}+E_{g+2,g+1}-E_{g+1,g+2} \\
= & A_{1,1}-A_{2,2}+A_{1,2}-A_{2,1}.
\end{align*}
Therefore,
\begin{align*}
\varphi_g(A_{2,1})= & \varphi_g(fA_{2,1}f^{-1})=\varphi_g(A_{1,1}-A_{2,2}+A_{1,2}-A_{2,1})\\
= & \varphi_g(A_{1,1})-\varphi_g(A_{2,2})+\varphi_g(A_{1,2})-\varphi_g(A_{2,1}),
\end{align*}
and by relations \eqref{eqp:0}, \eqref{eqp:6} we obtain that
\begin{equation}
\label{eqp:11}
\varphi_g(A_{2,1})=0,
\end{equation}
and by relation \eqref{eqp:6} we get that $\varphi_g(A_{i,j})=0$ for all $1\leq i<j\leq g.$
\end{itemize}
Summarizing, the unique elements of the basis $S,$ whose image by $\varphi_g$ are not necessarily zero, are the elements $\{A_{i,i}\;|\;1\leq i\leq g\}.$ By the relation \eqref{eqp:0} we have that $\varphi_g(A_{i,i})=\varphi_g(A_{1,1}).$
Thus, any homomorphism $\varphi_g\in Hom(\mathfrak{sp}_{2g}(\mathbb{Z}/d),A)^{GL_g(\mathbb{Z})}$ only depends on the image of $A_{1,1}.$
In particular, for all $X=\left(\begin{smallmatrix}
\alpha & \beta \\
\gamma & -\alpha^t
\end{smallmatrix}\right)\in \mathfrak{sp}_{2g}(\mathbb{Z}/d),$ we have that
$$\varphi_g(X)=tr(\alpha)\varphi_g(A_{1,1}),$$
where $tr(\alpha)$ denotes the trace of the matrix $\alpha.$

Furthermore, since every element of $\mathfrak{sp}_{2g}(\mathbb{Z}/d)$ is a $d$-torsion element and $\varphi_g$ is a homomorphism, then $\varphi_g(A_{1,1})$ is also a $d$-torsion element. Therefore, $\varphi_g(A_{1,1})$ may be any element of $A_d.$
\end{proof}

As a direct consequence of Propositions \eqref{prop_iso_M[d],Sp[d]}, \eqref{prop_iso_M[d],sp(Z/d)} and Lemma \eqref{lema_elem_sp(Z/d)} we have the following result:
\begin{lema}
\label{lem_abinv_modp}
For $g\geq 3,$ $d\geq 3$ an odd integer and for $g\geq 5,$ $d\geq 2$ an even integer such that $4\nmid d,$ there is an isomorphism
\begin{align*}
A_d & \longrightarrow Hom(\mathcal{M}_{g,1}[d],A)^{\mathcal{AB}_{g,1}} \\
x & \longmapsto \varphi^x_g=\overline{\varphi}_g^x \circ abel \circ \Psi,
\end{align*}
where $\overline{\varphi}^x_g$ is defined in Lemma \eqref{lema_elem_sp(Z/d)}.
\end{lema}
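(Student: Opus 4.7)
The plan is essentially to chain together the three preceding results: Proposition \ref{prop_iso_M[d],Sp[d]}, Proposition \ref{prop_iso_M[d],sp(Z/d)} and Lemma \ref{lema_elem_sp(Z/d)}. Since the hypotheses on $g$ and $d$ in the lemma are precisely those under which Proposition \ref{prop_iso_M[d],Sp[d]} applies (the odd and even cases being handled separately there), all three results are available simultaneously, and the proof reduces to tracking what the composed isomorphism does on an element $x\in A_d$.

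First, I would apply Proposition \ref{prop_iso_M[d],Sp[d]}, which gives the isomorphism
\[
\Psi^{*}:Hom(Sp_{2g}(\mathbb{Z},d),A)^{GL_g(\mathbb{Z})}\xrightarrow{\ \sim\ } Hom(\mathcal{M}_{g,1}[d],A)^{\mathcal{AB}_{g,1}},
\]
defined by pull-back along the symplectic representation restricted to $\mathcal{M}_{g,1}[d]$. Next, I would invoke Proposition \ref{prop_iso_M[d],sp(Z/d)} to obtain
\[
abel^{*}:Hom(\mathfrak{sp}_{2g}(\mathbb{Z}/d),A)^{GL_g(\mathbb{Z})}\xrightarrow{\ \sim\ } Hom(Sp_{2g}(\mathbb{Z},d),A)^{GL_g(\mathbb{Z})},
\]
given by pull-back along the $GL_g(\mathbb{Z})$-equivariant abelianization map $abel:Sp_{2g}(\mathbb{Z},d)\to\mathfrak{sp}_{2g}(\mathbb{Z}/d)$. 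Finally, Lemma \ref{lema_elem_sp(Z/d)} furnishes the isomorphism
\[
\Theta:A_d\xrightarrow{\ \sim\ }Hom(\mathfrak{sp}_{2g}(\mathbb{Z}/d),A)^{GL_g(\mathbb{Z})},\qquad x\mapsto\overline{\varphi}_g^{x}.
\]
Composing these three isomorphisms in order yields the map
\[
x\longmapsto \Psi^{*}\bigl(abel^{*}(\overline{\varphi}_g^{x})\bigr)=\overline{\varphi}_g^{x}\circ abel\circ\Psi=\varphi_g^{x},
\]
which is exactly the map stated in the lemma, and it is an isomorphism as a composite of isomorphisms.

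There is no serious obstacle remaining: the substantive work has been distributed among the three earlier results. The only point that requires a word of attention is making sure the compatibility conditions on $g$ and $d$ in the hypotheses are consistent, which they are by construction (Proposition \ref{prop_iso_M[d],Sp[d]} is the bottleneck, while Proposition \ref{prop_iso_M[d],sp(Z/d)} and Lemma \ref{lema_elem_sp(Z/d)} hold for all $g\geq 3$ and $d\geq 2$). Hence the proof is a one-line citation of the three preceding statements together with the identification of the composite map.
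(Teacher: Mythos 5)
Your proof is correct and is essentially identical to the paper's: the lemma is stated there as a direct consequence of Propositions \eqref{prop_iso_M[d],Sp[d]}, \eqref{prop_iso_M[d],sp(Z/d)} and Lemma \eqref{lema_elem_sp(Z/d)}, which is precisely the composition you wrote out.
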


Next, we show that every element of $Hom (\mathcal{M}_{g,1}[d],A)^{\mathcal{AB}_{g,1}}$ is zero on $\mathcal{A}_{g,1}[d],$
$\mathcal{B}_{g,1}[d].$
We first need to compute $\Psi(\mathcal{A}_{g,1}[d]),$ $\Psi(\mathcal{B}_{g,1}[d]).$ We also compute $\Psi(\mathcal{AB}_{g,1}[d]),$ for sake of completeness.

\begin{lema}
\label{lem_B[p]_2}
There are short exact sequences of groups:
\begin{align*}
& \xymatrix@C=10mm@R=13mm{1 \ar@{->}[r] & \mathcal{TB}_{g,1} \ar@{->}[r] & \mathcal{B}_{g,1}[d] \ar@{->}[r]^-{\phi^B\; \circ \;\Psi}  & SL_g(\mathbb{Z},d)\ltimes S_g(d\mathbb{Z}) \ar@{->}[r] & 1,} \\
& \xymatrix@C=10mm@R=13mm{1 \ar@{->}[r] & \mathcal{TA}_{g,1} \ar@{->}[r] & \mathcal{A}_{g,1}[d] \ar@{->}[r]^-{\phi^A\; \circ \;\Psi}  & SL_g(\mathbb{Z},d)\ltimes S_g(d\mathbb{Z}) \ar@{->}[r] & 1 ,} \\
& \xymatrix@C=10mm@R=13mm{1 \ar@{->}[r] & \mathcal{TAB}_{g,1} \ar@{->}[r] & \mathcal{AB}_{g,1}[d] \ar@{->}[r]^-{\phi^{AB}\; \circ \;\Psi} & SL_g(\mathbb{Z},d) \ar@{->}[r] & 1 .}
\end{align*}

\end{lema}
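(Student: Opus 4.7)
The plan is to deduce each of the three short exact sequences from the corresponding sequence over $\mathbb{Z}$ (Lemmas \eqref{lem_B}, \eqref{lem_AB}) by intersecting with the kernel of $\Psi_d$. I will write out the argument only for the first sequence; the arguments for $\mathcal{A}_{g,1}[d]$ and $\mathcal{AB}_{g,1}[d]$ are identical, replacing the lagrangian $B$ by $A$ (respectively ignoring the $S_g$-factor).

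First, I would check that the image of $\mathcal{B}_{g,1}[d]$ under $\phi^B\circ\Psi$ actually lies in $SL_g(\mathbb{Z},d)\ltimes S_g(d\mathbb{Z})$. Given $x\in\mathcal{B}_{g,1}[d]$ we have $\Psi(x)=\left(\begin{smallmatrix}G & 0\\ M & {}^tG^{-1}\end{smallmatrix}\right)$, and the condition $\Psi_d(x)=Id$ means exactly that $G\equiv Id$ and $M\equiv 0\pmod d$. Since $\phi^B\circ\Psi(x)=(G,{}^tGM)$, the first factor lies in $SL_g(\mathbb{Z},d)$ and the second in $S_g(d\mathbb{Z})$, as required.

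Next, identifying the kernel is immediate: a map $x\in\mathcal{B}_{g,1}[d]$ with $\phi^B\circ\Psi(x)=Id$ satisfies $\Psi(x)=Id$, so $x\in\mathcal{T}_{g,1}\cap\mathcal{B}_{g,1}[d]$. Since $\mathcal{T}_{g,1}\subset\mathcal{M}_{g,1}[d]$, this intersection is precisely $\mathcal{T}_{g,1}\cap\mathcal{B}_{g,1}=\mathcal{TB}_{g,1}$.

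The only step requiring more than a bookkeeping argument is surjectivity, and this is where I expect the main (mild) obstacle to be: one must lift an arbitrary element $(G,S)\in SL_g(\mathbb{Z},d)\ltimes S_g(d\mathbb{Z})$ to $\mathcal{B}_{g,1}[d]$ rather than merely to $\mathcal{B}_{g,1}$. I would proceed by invoking Lemma \eqref{lem_B}, which provides some $x\in\mathcal{B}_{g,1}$ with $\phi^B\circ\Psi(x)=(G,S)$. Writing $\Psi(x)=\left(\begin{smallmatrix}G & 0\\ M & {}^tG^{-1}\end{smallmatrix}\right)$ with $M=({}^tG)^{-1}S$, one sees that $G\equiv Id\pmod d$ forces ${}^tG^{-1}\equiv Id\pmod d$, and then $M\equiv S\equiv 0\pmod d$. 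Hence $\Psi_d(x)=Id$, so $x\in\mathcal{M}_{g,1}[d]\cap\mathcal{B}_{g,1}=\mathcal{B}_{g,1}[d]$, which establishes surjectivity. Combining the three observations yields the first short exact sequence; the other two follow by the same scheme using the isomorphisms $\phi^A$ and $\phi^{AB}$ together with Lemmas \eqref{lem_B} and \eqref{lem_AB}.
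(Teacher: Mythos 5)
Your proof is correct and follows essentially the same strategy as the paper: identify the kernel as $\mathcal{TB}_{g,1}$, and obtain surjectivity by lifting to $\mathcal{B}_{g,1}$ via Lemma \eqref{lem_B} and then checking the lift already lies in $\mathcal{B}_{g,1}[d]$. Your verification of that last point by a direct congruence computation on $G$ and $M$ is actually a bit cleaner than the paper's, which routes the same fact through a commutative diagram citing Lemmas \eqref{lem_Sp[p]_3} and \eqref{lem_B[p]} (stated there only for prime moduli) when all that is really used is the containment of kernels that you verify by hand.
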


\begin{proof}
We only prove the first short exact sequence (the proof for the other two short exact sequences is analogous).
Observe that we have the following commutative diagram
$$
\xymatrix@C=7mm@R=10mm{   & & 1 \ar@{->}[d] & 1 \ar@{->}[d] &\\
 1 \ar@{->}[r] & \mathcal{TB}_{g,1} \ar@{->}[r] \ar@{=}[d] & \mathcal{B}_{g,1}[d] \ar@{->}[r]^-{\phi^B\; \circ \;\Psi} \ar@{->}[d] & SL_g(\mathbb{Z},d)\ltimes S_g(d\mathbb{Z}) \ar@{->}[r] \ar@{->}[d]^-{i} & 1\\
1 \ar@{->}[r] & \mathcal{TB}_{g,1} \ar@{->}[r] & \mathcal{B}_{g,1} \ar@{->}[r]^-{\phi^B \;\circ\;\Psi} \ar@{->}[d]^-{\phi_d^B\;\circ\;\Psi_d} & GL_g(\mathbb{Z})\ltimes S_g(\mathbb{Z}) \ar@{->}[r] \ar@{->}[d]^-{r'_d\times r''_d} & 1\\
 &  & GL_g(\mathbb{Z}/d)\ltimes S_g(\mathbb{Z}/d) \ar@{=}[r] & GL_g(\mathbb{Z}/d)\ltimes S_g(\mathbb{Z}/d) &  ,}
$$
where $i$ denotes the natural inclusion.
We prove that the 1st row is a short exact sequence.

We first show that the kernel of
$\phi^B\; \circ \;\Psi$ restricted to $\mathcal{B}_{g,1}[d]$ is $\mathcal{TB}_{g,1}.$

Since $\mathcal{TB}_{g,1}\subset Ker(\Psi),$  we have that
$$\mathcal{TB}_{g,1}\subset Ker((\phi^B\; \circ \;\Psi) :\mathcal{B}_{g,1}[d] \rightarrow SL_g(\mathbb{Z},d)\ltimes S_g(d\mathbb{Z}))\subset B_{g,1}\cap \mathcal{T}_{g,1}=\mathcal{TB}_{g,1}.$$

Next, we prove that
$(\phi\; \circ \;\Psi) :\mathcal{B}_{g,1}[d] \rightarrow SL_g(\mathbb{Z},d)\ltimes S_g(d\mathbb{Z})$ is surjective.
Let $x\in SL_g(\mathbb{Z},d)\ltimes S_g(d\mathbb{Z}),$ then $i(x)\in GL_g(\mathbb{Z})\ltimes S_g(\mathbb{Z}).$ By Lemma \eqref{lem_B} we know that $\phi\; \circ \;\Psi :\mathcal{B}_{g,1}\rightarrow GL_g(\mathbb{Z})\ltimes S_g(\mathbb{Z})$ is surjective and as a consequence there exists an element $\widetilde{x}\in \mathcal{B}_{g,1}$ such that $(\phi\; \circ \;\Psi)(\widetilde{x})=i(x).$
By the above commutative diagram and Lemma \eqref{lem_Sp[p]_3}, we know that the 3th column is exact and we have the following equality:
$$(\phi_d\;\circ\;\Psi_d)(\widetilde{x})=(r'_d\times r''_d)(i(x))=0.$$
Thus $(\phi_d\;\circ\;\Psi_d)(\widetilde{x})=0.$ Since by Lemma \eqref{lem_B[p]} the 2nd column is exact then we get that $\widetilde{x}\in \mathcal{B}_{g,1}[d].$
Therefore by the above commutative diagram we know that $i((\phi\; \circ \;\Psi)(\tilde{x}))=i(x)$ and since $i$ is injective we get that $(\phi\; \circ \;\Psi)(\tilde{x})=x.$
\end{proof}

Now we are ready to prove the following result:

\begin{lema}
\label{lem_B[p],A[p]}
Let $A$ be an abelian group. Fix an integer $d\geq 2,$ then for any $g\geq 3$ all $\mathcal{AB}_{g,1}$-invariant homomorphisms
$$\varphi_g: \mathcal{B}_{g,1}[d]\rightarrow A \quad \text{and} \quad \varphi_g:\mathcal{A}_{g,1}[d]\rightarrow A,$$
have to be zero.
\end{lema}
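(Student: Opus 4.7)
By the symmetry of the setup (swapping the Lagrangians $A$ and $B$ interchanges $\mathcal{A}$ and $\mathcal{B}$ and their Torelli parts) it suffices to treat the case $\varphi_g\colon\mathcal{B}_{g,1}[d]\to A$. The plan proceeds in two stages, closely modelled on the integral case (Lemma~\eqref{lem_TB,TA}) and the symplectic Lie-algebra computation (Lemma~\eqref{lema_elem_sp(Z/d)}).

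The first stage reduces the problem to the quotient $SL_g(\mathbb{Z},d)\ltimes S_g(d\mathbb{Z})$. Since $\mathcal{TB}_{g,1}\subset\mathcal{B}_{g,1}[d]$, restricting $\varphi_g$ gives an $\mathcal{AB}_{g,1}$-invariant homomorphism $\mathcal{TB}_{g,1}\to A$, which is zero by Lemma~\eqref{lem_TB,TA}. By the exact sequence of Lemma~\eqref{lem_B[p]_2}, $\varphi_g$ then descends through $\phi^B\circ\Psi$ to a homomorphism $\tilde\varphi_g\colon SL_g(\mathbb{Z},d)\ltimes S_g(d\mathbb{Z})\to A$. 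The conjugation action of $\mathcal{AB}_{g,1}$ descends, via the surjection $\mathcal{AB}_{g,1}\twoheadrightarrow GL_g(\mathbb{Z})$ of Lemma~\eqref{lem_AB}, to the natural conjugation action of $GL_g(\mathbb{Z})$ on the semidirect product, so $\tilde\varphi_g$ is $GL_g(\mathbb{Z})$-invariant.

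The second stage is to show that $\mathrm{Hom}\bigl(SL_g(\mathbb{Z},d)\ltimes S_g(d\mathbb{Z}),A\bigr)^{GL_g(\mathbb{Z})}=0$. Since $A$ is abelian, $\tilde\varphi_g$ factors through the abelianization of the semidirect product. I would run the same type of explicit basis-element computation as in Lemma~\eqref{lema_elem_sp(Z/d)}: $SL_g(\mathbb{Z},d)$ is generated by the elementary matrices $Id+dE_{ij}$ ($i\neq j$), and $S_g(d\mathbb{Z})$ has basis $\{dE_{ii}\}_{i=1}^g\cup\{d(E_{ij}+E_{ji})\}_{i<j}$. Conjugating these by permutation matrices in $\mathfrak{S}_g\subset GL_g(\mathbb{Z})$ shows that $\tilde\varphi_g$ is constant on symmetric classes of generators, and conjugating by unipotent shears $Id+E_{ij}\in GL_g(\mathbb{Z})$ produces relations of the form $\tilde\varphi_g(x)=\tilde\varphi_g(x)+\tilde\varphi_g(y)$ with $y$ a distinct basis element, forcing each generator to be annihilated. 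As a sanity check, the $GL_g(\mathbb{Z})$-equivariant embedding $(G,M)\mapsto\bigl(\begin{smallmatrix}G&0\\M&{}^tG^{-1}\end{smallmatrix}\bigr)$ into $Sp_{2g}(\mathbb{Z},d)$ composed with $abel$ (from Proposition~\eqref{prop_iso_M[d],sp(Z/d)}) sends $Id+dA\in SL_g(\mathbb{Z},d)$ to an element whose $\pi_g$-trace reduces to $\operatorname{tr}(A)\pmod d$; but the condition $\det(Id+dA)=1$ forces $\operatorname{tr}(A)\equiv 0\pmod d$, so the unique nontrivial $GL_g(\mathbb{Z})$-invariant functional on $\mathfrak{sp}_{2g}(\mathbb{Z}/d)$ (Lemma~\eqref{lema_elem_sp(Z/d)}) is in any case killed on the image.

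The principal obstacle is the elementary-matrix computation in the second stage: whereas in the symplectic case a single trace functional survives (and one checks separately that it vanishes on our image), here we need to verify that no $GL_g(\mathbb{Z})$-invariant functional survives at all on $SL_g(\mathbb{Z},d)\ltimes S_g(d\mathbb{Z})$. This requires handling the cross-terms arising when a shear conjugation mixes the $SL_g(\mathbb{Z},d)$-factor with the $S_g(d\mathbb{Z})$-factor through the semidirect-product structure, and showing that the $GL_g(\mathbb{Z})$-orbit of each basis generator always produces an extra distinct basis element forcing cancellation.
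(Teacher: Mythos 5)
Your first stage and the treatment of the $S_g(d\mathbb{Z})$-factor mirror the paper. The gap is in the second stage: the claim that \emph{$SL_g(\mathbb{Z},d)$ is generated by the elementary matrices $Id+dE_{ij}$ ($i\neq j$)} is false, and it is not a minor technicality. The subgroup generated by those off-diagonal elementaries maps, under the abelianization $Id+dA\mapsto A\bmod d$, into the off-diagonal part $\bigoplus_{i\neq j}(\mathbb{Z}/d)E_{ij}$ of $\mathfrak{sl}_g(\mathbb{Z}/d)$. But $H_1(SL_g(\mathbb{Z},d))\cong\mathfrak{sl}_g(\mathbb{Z}/d)$ also contains the traceless \emph{diagonal} part (rank $g-1$ over $\mathbb{Z}/d$), which is reached only by elements such as $\bigl(\begin{smallmatrix}1+d&d\\-d&1-d\end{smallmatrix}\bigr)\oplus Id_{g-2}\in SL_g(\mathbb{Z},d)$ that are not products of your elementaries. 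So a homomorphism vanishing on your proposed generators need not vanish on $SL_g(\mathbb{Z},d)$.

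The identification $H_1(SL_g(\mathbb{Z},d))\cong\mathfrak{sl}_g(\mathbb{Z}/d)$, due to Lee--Szczarba (for $p$ prime, same proof for any $d$, and $g\geq 3$), is exactly the missing input. The paper's proof first restricts to $\mathcal{TB}_{g,1}$ and uses Lemma~\eqref{lem_TB,TA} to factor through $SL_g(\mathbb{Z},d)\ltimes_B S_g(d\mathbb{Z})$ (as you do); then shows $\mathrm{Hom}(S_g(d\mathbb{Z}),A)^{GL_g(\mathbb{Z})}=0$ by the shear and permutation relations you describe; then, rather than trying to write down generators of $SL_g(\mathbb{Z},d)$ by hand, invokes Lee--Szczarba to pass to $\mathfrak{sl}_g(\mathbb{Z}/d)$ and runs the invariance computation there (the relations \eqref{eqp:0}, \eqref{eqp:11}, \eqref{eqp:6} from Lemma~\eqref{lema_elem_sp(Z/d)} directly force the remaining invariant functional to vanish, because on $\mathfrak{sl}_g$ the trace is identically zero). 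Your ``sanity check'' paragraph is essentially this last observation, but you use it only as a consistency check rather than as the actual argument, and without Lee--Szczarba you have no licence to work with $\mathfrak{sl}_g(\mathbb{Z}/d)$ in place of $SL_g(\mathbb{Z},d)$. The fix is to promote the sanity check to the main argument: cite the Lee--Szczarba abelianization and carry out the $GL_g(\mathbb{Z})$-invariance computation directly on $\mathfrak{sl}_g(\mathbb{Z}/d)$.
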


\begin{proof}
We only prove the result for $\mathcal{B}_{g,1}[d],$ since the other case is similar.

Let $\varphi_g: \mathcal{B}_{g,1}[d]\rightarrow A$ be a $\mathcal{AB}_{g,1}$-invariant homomorphism.
By Lemma \eqref{lem_TB,TA} we know that the restriction of $\varphi_g$ to $\mathcal{TB}_{g,1}$ is zero. Then, by the short exact sequence in Lemma \eqref{lem_B[p]_2}, $\varphi_g$
factors through $SL_g(\mathbb{Z},d) \ltimes_B S_g(d\mathbb{Z}).$ Thus we have an isomorphism
$$Hom(\mathcal{B}_{g,1}[d],A)^{\mathcal{AB}_{g,1}}\cong Hom(SL_g(\mathbb{Z},d) \ltimes_B S_g(d\mathbb{Z}), A)^{GL_g(\mathbb{Z})}.$$
Observe that we have the short exact sequence
$$
\xymatrix@C=7mm@R=13mm{1 \ar@{->}[r] & S_g(d\mathbb{Z}) \ar@{->}[r] & SL_{g}(\mathbb{Z},d)\ltimes_B S_g(d\mathbb{Z}) \ar@{->}[r] & SL_g(\mathbb{Z},d) \ar@{->}[r] & 1 .}
$$
By analogous relations to the relations \eqref{eqp:1}, \eqref{eqp:7},
\eqref{eqp:10}, \eqref{eqp:10'} in the proof of Lemma \eqref{lem_abinv_modp}, we have that
$Hom(S_g(d\mathbb{Z}),A)^{GL_g(\mathbb{Z})}=0.$
Thus we have an isomorphism
$$Hom(SL_g(\mathbb{Z},d) \ltimes_B S_g(d\mathbb{Z}),A)^{GL_g(\mathbb{Z})}\cong Hom(SL_g(\mathbb{Z},d),A)^{GL_g(\mathbb{Z})}.$$
In \cite{Lee}, Lee and Szczarba showed that for $g\geq 3$ and any prime $p,$ $H_1(SL_g(\mathbb{Z},p))\cong \mathfrak{sl}_g(\mathbb{Z}/p),$ as modules over $SL_g(\mathbb{Z}/p).$ Actually, following the same proof the same result holds for any integer $d.$
Then, since $A$ is an abelian group, we have that every homomorphism $Hom(SL_g(\mathbb{Z},d),A)^{GL_g(\mathbb{Z})},$ factors through $\mathfrak{sl}_g(\mathbb{Z}/d),$ i.e. we have an isomorphism
$$Hom(SL_g(\mathbb{Z},d),A)^{GL_g(\mathbb{Z})}\cong Hom(\mathfrak{sl}_g(\mathbb{Z}/d),A)^{GL_g(\mathbb{Z})}.$$
Finally, by relations \eqref{eqp:0}, \eqref{eqp:11}, \eqref{eqp:6} in proof of Lemma \eqref{lem_abinv_modp}, we get that every element of
$Hom(\mathfrak{sl}_g(\mathbb{Z}/d),A)^{GL_g(\mathbb{Z})}$ have to be zero,
obtaining the desired result.
\end{proof}

\begin{lema}
\label{lem_stab_modp}
The homomorphisms $\varphi_g^x$ defined in Lemma \eqref{lem_abinv_modp} are compatible with the stabilization map.
\end{lema}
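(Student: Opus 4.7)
The plan is to unravel the definition $\varphi_g^x=\overline{\varphi}_g^x\circ abel\circ\Psi$ and verify compatibility with stabilization for each of the three factors separately, moving from left to right through the composition.

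First I would treat the symplectic representation $\Psi$. The stabilization map $i_{g,g+1}:\mathcal{M}_{g,1}\hookrightarrow\mathcal{M}_{g+1,1}$ extends a diffeomorphism by the identity on the added two-holed torus, hence $i_{g,g+1}(f)$ acts trivially on the homology classes $A_{g+1},B_{g+1}$. Using the symplectic basis ordered as $A_1,\dots,A_{g+1},B_1,\dots,B_{g+1}$ and writing $\Psi(f)=Id_{2g}+dM_f$ with $M_f=\left(\begin{smallmatrix}\alpha_f & \beta_f \\ \gamma_f & \delta_f\end{smallmatrix}\right)$, it follows directly that $\Psi(i_{g,g+1}(f))=Id_{2(g+1)}+dM_{i(f)}$ where $M_{i(f)}$ is obtained from $M_f$ by inserting zero rows and columns corresponding to the new basis vectors $A_{g+1},B_{g+1}$.

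Second I would handle the abelianization $abel$, which is immediate: by its definition $abel(Id_{2n}+dN)=N\pmod d$, so $abel(\Psi(i_{g,g+1}(f)))$ is simply the mod $d$ reduction of the block-extended matrix $M_{i(f)}$ described above. Explicitly, its upper-left $(g+1)\times(g+1)$ block is $\left(\begin{smallmatrix}\alpha_f & 0 \\ 0 & 0\end{smallmatrix}\right)\pmod d$. Finally, $\overline{\varphi}_g^x$ is the composition of $\pi_g$ (extraction of the upper-left $g\times g$ block) with the trace and the scalar multiplication by $x$. Since the extra row and column added by stabilization contribute a zero diagonal entry, we get
\begin{equation*}
\varphi_{g+1}^x(i_{g,g+1}(f)) \;=\; \operatorname{tr}\!\left(\begin{smallmatrix}\alpha_f & 0 \\ 0 & 0\end{smallmatrix}\right)\cdot x \;=\; \operatorname{tr}(\alpha_f)\cdot x \;=\; \varphi_g^x(f),
\end{equation*}
which gives the desired compatibility.

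There is no genuine obstacle here: once the symplectic basis is chosen compatibly with stabilization, the verification is a direct bookkeeping check through the three explicit maps. The only point requiring minor care is ensuring that the identification of $abel$ with the passage $Id+dN\mapsto N$ is consistent with the block-diagonal structure induced by stabilization, which it is by construction.
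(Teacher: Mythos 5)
Your proof is correct and is essentially the same argument as the paper's: both proofs track the effect of stabilization on $abel\circ\Psi$ and observe that the extra row and column are zero, so $\pi_g$ followed by the trace is unchanged. The paper states this more compactly by directly displaying $\alpha'=\left(\begin{smallmatrix}\alpha & 0 \\ 0 & 0\end{smallmatrix}\right)$, while you justify the same block structure step by step.
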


\begin{proof}
Recall that the homomorphisms $\varphi_g^x,$ for $T\in \mathcal{M}_{g,1}[d]$ are defined as follows
$$\varphi^x_g(T)=tr(\pi_g\circ abel \circ \Psi(T))x.$$
Let $abel (\Psi(T))=\left(\begin{smallmatrix}
\alpha & \beta \\
\gamma & -\alpha^t
\end{smallmatrix}\right)\in \mathfrak{sp}_{2g}(\mathbb{Z}/d).$ Then if we take $T$ as an element of $\mathcal{M}_{g+1,1}[d],$ we have that
$$abel (\Psi(T))=\left(\begin{smallmatrix}
\alpha' & \beta' \\
\gamma' & -(\alpha')^t
\end{smallmatrix}\right)\in \mathfrak{sp}_{2(g+1)}(\mathbb{Z}/d),$$
where $\alpha'=\left(\begin{smallmatrix}
\alpha & 0 \\
0 & 0
\end{smallmatrix}\right).$

Hence, $\varphi^x_g(T)=\varphi^x_{g+1}(T)$ for every $T\in \mathcal{M}_{g,1}[d].$
\end{proof}

\begin{rem}
By Lemmas \eqref{lem_B[p],A[p]}, \eqref{lem_stab_modp}, the $\mathcal{AB}_{g,1}$-invariant homomorphisms $\{\varphi_g^x\}_g,$ in Lemma \eqref{lem_abinv_modp}, are compatible with the stabilization map and zero on $\mathcal{A}_{g,1}[d],$ $\mathcal{B}_{g,1}[d].$ Then, by bijection \eqref{bij_M[p]}, we get that, for each $x\in A_d,$ the family of homomorphisms $\{\varphi^x_g\}_g$ of Lemma \eqref{lem_abinv_modp} reassemble into an invariant of $\mathcal{S}^3[d].$
\end{rem}

A natural question that arise from this section is what $3$-manifolds distinguish the invariant $\varphi^x.$ Next we answer this question for lens spaces.
Given a lens space $L(p,q),$ since Lens spaces are $\mathbb{Q}$-homology $3$-spheres, by Theorem \eqref{teo_rat_homology_gen}, we know that there exists an integer $d\geq 2$ for which $L(p,q)\in \mathcal{S}^3[d].$ Thus, given an integer $d\geq 2$, we first need to determine which lens spaces are in $\mathcal{S}^3[d].$

\begin{prop}
A lens space $L(p,q)$ is in $\mathcal{S}^3[d]$ if and only if $p\equiv  \pm 1 (\text{mod } d).$
\end{prop}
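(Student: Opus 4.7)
The plan is to apply Theorem \ref{teo_rat_homology_gen} directly, once the order of $H_1(L(p,q);\mathbb{Z})$ is identified. First I would recall the standard fact that $H_1(L(p,q);\mathbb{Z})\cong \mathbb{Z}/p$. This is immediate from the Mayer--Vietoris sequence associated to the decomposition $L(p,q)=V_1\cup_h V_2$ into two solid tori described in the example preceding Theorem \ref{teo_lens_classification}: with the chosen generators $\{a_i,b_i\}$ of the boundary tori, the gluing $h_*(b_2)=pa_1+qb_1$ makes the map $H_1(\partial V_1;\mathbb{Z})\to H_1(V_1;\mathbb{Z})\oplus H_1(V_2;\mathbb{Z})$ a $2\times 2$ integer matrix whose determinant is $\pm p$, giving cokernel $\mathbb{Z}/p$. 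In particular $L(p,q)$ is a $\mathbb{Q}$-homology $3$-sphere with $n:=|H_1(L(p,q);\mathbb{Z})|=|p|$.

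Next, since we can assume $p>0$ (using $L(p,q)\cong L(-p,-q)$ from Theorem \ref{teo_lens_classification}), Theorem \ref{teo_rat_homology_gen} applies with $n=p$. It tells us that $L(p,q)\in\mathcal{S}^3[d]$ if and only if $d\mid n-1$ or $d\mid n+1$, that is, $d\mid p-1$ or $d\mid p+1$. The two divisibility conditions translate respectively into $p\equiv 1\pmod{d}$ and $p\equiv -1\pmod{d}$, giving the desired congruence $p\equiv \pm 1\pmod d$.

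The argument is essentially a one-line corollary of Theorem \ref{teo_rat_homology_gen}, so there is no serious obstacle; the only work is the computation of $H_1(L(p,q);\mathbb{Z})$ (which is classical) and the unpacking of the divisibility condition into the stated congruence. No properties specific to lens spaces beyond the homology are used, which is consistent with the remark made in Chapter \ref{chapter: RHS} that the criterion of Theorem \ref{teo_rat_homology_gen} depends only on $|H_1(M;\mathbb{Z})|$.
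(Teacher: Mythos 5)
Your proof is correct and takes essentially the same approach as the paper: identify $|H_1(L(p,q);\mathbb{Z})|=p$ and then apply Theorem~\ref{teo_rat_homology_gen} to translate membership in $\mathcal{S}^3[d]$ into the congruence $p\equiv\pm1\pmod d$. The only difference is that you sketch the Mayer--Vietoris computation of $H_1(L(p,q))$, while the paper simply cites it as well known.
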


\begin{proof}
Given a lens space $L(p,q)$, it is well known that its homology groups are:
$$H_k(L(p,q);\mathbb{Z})=\left\{\begin{array}{rl}
\mathbb{Z}, & \text{for }k=0,3, \\
\mathbb{Z}/p, & \text{for } k=1, \\
0, & \text{otherwise.} 
\end{array} \right.$$
Then $|H_1(L(p,q);\mathbb{Z})|=p.$ By Theorem \eqref{teo_rat_homology_gen}, $L(p,q)\in \mathcal{S}^3[d]$ if and only if $p\equiv \pm 1 (\text{mod }d).$
\end{proof}

As a consequence, all lens spaces in $\mathcal{S}^3[d]$ are of the form $L(\pm 1+dk,q)$ with $k,q\in \mathbb{Z}.$ Next we compute the values of the invariant $\varphi^x$ on these lens spaces. By the classification of lens spaces we know that $L(1+dk,q)\cong L(-1-dk,q).$ Then, it is enough to compute the value of the invariant $\varphi^x$ on the lens spaces $L(-1+dk,q).$

Given a lens space $L(-1+dk,q)\in \mathcal{S}^3[d],$ by definition of lens spaces, such space has a Heegaard splitting of genus $1.$ Following the proof of Theorem \eqref{teo_rat_homology_gen}, we have that there exists an element $f_d\in \mathcal{M}_{1,1}[d]$ such that
$L(-1+dk,q)\cong\mathcal{H}_1\cup_{\iota f_d}-\mathcal{H}_1.$
Then, the lens space $L(-1+dk,q)$ is homeomorphic to $L(-1+dk,-dl)$ for suitable integers $k,l,m,r,$ with
$$\Psi(f_d)= \left(\begin{matrix}
1+dr & dm \\
dl & 1-dk \end{matrix}\right)\in Sp_{2}(\mathbb{Z}).$$
Since $Sp_2(\mathbb{Z})=SL_2(\mathbb{Z}),$ $det(\Psi(f_d))=1$ and reducing modulo $d^2$ we have that
$$1=det(\Psi(f_d))=1+d(r-k)+d^2(-rk+lm)\equiv 1+d(r-k) \;(\text{mod } d^2).$$
Thus, $k\equiv r \;(\text{mod } d).$
Then,
$$\varphi^x(L(-1+pk,-pl))=\varphi^x_1(f_d)=tr(\pi_1\circ abel \circ \Psi(f_d))x=rx\equiv kx.$$
Therefore we get the following result:
\begin{prop}
The homomorphisms $\{\varphi_g^x\}_g\in Hom(\mathcal{M}_{g,1}[d],A),$ defined in Lemma \eqref{lem_abinv_modp}, take the value $kx$ on lens spaces $L(-1+dk,q)$ and $-kx$ on lens spaces $L(1+dk,q).$
\end{prop}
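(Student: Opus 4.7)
The plan is to reduce the computation of $\varphi^x$ on a lens space $L(-1+dk,q)$ to a direct calculation with a single symplectic $2\times 2$ matrix, and then handle the case $L(1+dk,q)$ by means of the classification $L(1+dk,q)\cong L(-1-dk,q)$ from Theorem~\ref{teo_lens_classification}.

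First I would recall that every lens space admits a Heegaard splitting of genus~$1$. Since $L(-1+dk,q)\in \mathcal{S}^{3}[d]$, by Theorem~\ref{teo_rat_homology_gen} combined with the surjectivity argument in bijection~\eqref{bij_M[p]}, I can find $f_d\in \mathcal{M}_{1,1}[d]$ realising $L(-1+dk,q)\cong \mathcal{H}_1\cup_{\iota f_d}-\mathcal{H}_1$. By the standard description of lens spaces via gluing of solid tori and the compatibility between Heegaard gluings and the symplectic representation (Lemma~\ref{lema_n_detH}), one checks that up to the lens space equivalence $L(-1+dk,q)\cong L(-1+dk,-dl)$ for appropriate $l$, the map $f_d$ can be chosen so that
\[
\Psi(f_d)=\begin{pmatrix} 1+dr & dm\\ dl & 1-dk\end{pmatrix},
\]
for integers $r,m,l$, whose existence is forced by $f_d\in \mathcal{M}_{1,1}[d]$ (every entry off the diagonal, and the defect of each diagonal entry from $1$, is divisible by $d$).

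The second step is to exploit that $\Psi(f_d)\in Sp_2(\mathbb{Z})=SL_2(\mathbb{Z})$. Expanding the determinant and reducing modulo $d^2$ gives
\[
1=\det\Psi(f_d)=1+d(r-k)+d^2(lm-rk)\equiv 1+d(r-k)\pmod{d^2},
\]
hence $r\equiv k\pmod d$. Now by definition $\varphi_1^x(f_d)=\mathrm{tr}\bigl(\pi_1\circ abel\circ\Psi(f_d)\bigr)\,x$; reducing $\Psi(f_d)-\mathrm{Id}_2$ modulo $d$ and projecting onto the upper-left block yields the scalar $r\pmod d$, so $\varphi^x(L(-1+dk,q))=rx=kx$.

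For $L(1+dk,q)$, I would invoke Theorem~\ref{teo_lens_classification}: $L(1+dk,q)\cong L(-1-dk,q)$. Writing $1+dk=-1+d(-k)+\text{(sign adjustment)}$ I can then apply the formula for the $-1+dk'$ case with $k'=-k$, obtaining $\varphi^x(L(1+dk,q))=-kx$, which finishes the proof.

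The main obstacle I expect is the first step: one has to justify carefully that the gluing map can indeed be brought into the asserted normal form $\Psi(f_d)=\bigl(\begin{smallmatrix}1+dr & dm\\ dl & 1-dk\end{smallmatrix}\bigr)$ with $1-dk$ precisely in the $(2,2)$ position. This requires unpacking the proof of Theorem~\ref{teo_rat_homology_gen} (where the $(2,2)$ entry of $\Psi(f)$ controls $|H_1(M;\mathbb{Z})|$ via Lemma~\ref{lema_n_detH}) and using the equivalence relation on $\mathcal{A}_{1,1}[d]\backslash\mathcal{M}_{1,1}[d]/\mathcal{B}_{1,1}[d]$ to normalise the off-diagonal entries to multiples of $d$; everything downstream is a short linear-algebra computation.
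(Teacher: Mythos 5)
Your argument is essentially the same as the paper's: you invoke a genus-$1$ Heegaard splitting, normalise $\Psi(f_d)$ to the form $\bigl(\begin{smallmatrix}1+dr & dm\\ dl & 1-dk\end{smallmatrix}\bigr)$ via the proof of Theorem~\eqref{teo_rat_homology_gen}, derive $r\equiv k\pmod d$ from the determinant condition modulo $d^2$, and read off $\varphi^x=kx$; the case $L(1+dk,q)$ is handled exactly as the paper does by writing $L(1+dk,q)\cong L(-1-dk,q)=L(-1+d(-k),q)$. Your ``sign adjustment'' phrasing is a little garbled (you mean $-1-dk=-1+d(-k)$, not $1+dk=-1+d(-k)$), but the intended substitution $k\mapsto -k$ is clear and correct.
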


\section{From trivial cocycles to invariants}

Conversely, what are the conditions for a family of trivial $2$-cocycles $C_g$ on $\mathcal{M}_{g,1}[d]$ satisfying properties (1)-(3) to actually provide an invariant?
Here we follow the same arguments used in Section \ref{section_cocycles to invariants} taking $\mathcal{M}_{g,1}[d]$ instead of $\mathcal{T}_{g,1}.$

Firstly we need to check the existence of an $\mathcal{AB}_{g,1}$-invariant trivialization of each $C_g.$
Denote by $\mathcal{Q}_{C_g}$ the set of all trivializations of the cocycle $C_g:$
$$\mathcal{Q}_{C_g}=\{q:\mathcal{M}_{g,1}[d]\rightarrow A\mid q(\phi)+q(\psi)-q(\phi\psi)=C_g(\phi,\psi)\}.$$
The group $\mathcal{AB}_{g,1}$ acts on $\mathcal{Q}_g$ via its conjugation action on $\mathcal{M}_{g,1}[d].$ This action confers the set $\mathcal{Q}_{C_g}$ the structure of an affine set over the abelian group $Hom(\mathcal{M}_{g,1}[d],A).$ On the other hand, choosing an arbitrary element $q\in \mathcal{Q}_{C_g}$ the map defined as follows
\begin{align*}
\rho_q:\mathcal{AB}_{g,1} & \longrightarrow Hom(\mathcal{M}_{g,1}[d],A) \\ \phi & \longmapsto \phi \cdot q-q,
\end{align*}
induces a well-defined cohomology class
$\rho(C_g)\in H^1(\mathcal{AB}_{g,1};Hom(\mathcal{M}_{g,1}[d],A)),$
called the torsor of the cocycle $C_g,$ and we have the following result:

\begin{prop}
\label{prop_torsor}
The natural action of $\mathcal{AB}_{g,1}$ on $\mathcal{Q}_{C_g}$ admits a fixed point if and only if the associated torsor $\rho(C_g)$ is trivial.
\end{prop}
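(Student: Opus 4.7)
The plan is to mimic verbatim the argument given in Section~\ref{section_cocycles to invariants} for the Torelli group, with $\mathcal{M}_{g,1}[d]$ replacing $\mathcal{T}_{g,1}$. The only facts we actually use are the affine structure on $\mathcal{Q}_{C_g}$ over $Hom(\mathcal{M}_{g,1}[d],A)$ (which is immediate from the fact that any two trivializations of a $2$-cocycle differ by a homomorphism) and the general dictionary between $H^1$ with coefficients in an abelian group and derivations modulo principal derivations.

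First I would settle the easy direction. Suppose there is a fixed point $q_0 \in \mathcal{Q}_{C_g}$ for the action of $\mathcal{AB}_{g,1}$. Then by construction $\rho_{q_0}(\phi) = \phi \cdot q_0 - q_0 = 0$ for every $\phi \in \mathcal{AB}_{g,1}$, so $\rho_{q_0}$ is the zero derivation and the cohomology class $\rho(C_g)\in H^1(\mathcal{AB}_{g,1};Hom(\mathcal{M}_{g,1}[d],A))$, which is independent of the representative, is trivial.

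For the converse, I would fix an arbitrary $q \in \mathcal{Q}_{C_g}$ and unfold the definition of $\rho(C_g)$. Since this class is trivial, $\rho_q$ is a principal derivation, meaning there exists $m_q \in Hom(\mathcal{M}_{g,1}[d],A)$ with $\rho_q(\phi) = \phi \cdot m_q - m_q$ for every $\phi \in \mathcal{AB}_{g,1}$. I would then propose $q - m_q \in \mathcal{Q}_{C_g}$ as the candidate fixed point (it still lies in $\mathcal{Q}_{C_g}$ because $m_q$ is a homomorphism, so subtracting it does not change the coboundary). Checking invariance is a one-line computation:
\begin{equation*}
\phi \cdot (q - m_q) = \phi \cdot q - \phi \cdot m_q = (\rho_q(\phi) + q) - (\rho_q(\phi) + m_q) = q - m_q,
\end{equation*}
which produces the desired $\mathcal{AB}_{g,1}$-fixed trivialization.

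There is essentially no obstacle here: the proof is formal and relies only on the affine structure and the interpretation of $H^1$ as derivations modulo principal derivations. In particular, nothing about the specific nature of $\mathcal{M}_{g,1}[d]$, about $d$, or about $g$ is used, which is why the statement and proof are identical in form to the one already carried out for $\mathcal{T}_{g,1}$. The only mild point of care is to verify that the action of $\mathcal{AB}_{g,1}$ on $\mathcal{Q}_{C_g}$ is well-defined, i.e.\ that if $q\in\mathcal{Q}_{C_g}$ and $\phi\in\mathcal{AB}_{g,1}$ then $\phi\cdot q \in \mathcal{Q}_{C_g}$; this is exactly the content of property~(2) of the family $\{C_g\}_g$ (invariance under conjugation by $\mathcal{AB}_{g,1}$), and should be noted explicitly at the start of the argument.
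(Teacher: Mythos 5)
Your proof is correct and is exactly the argument the paper uses: the paper explicitly states that this proposition is proved by repeating the argument from the Torelli case verbatim, replacing $\mathcal{T}_{g,1}$ by $\mathcal{M}_{g,1}[d]$, which is precisely what you do (easy direction from $\rho_{q_0}=0$; converse by noting $\rho_q$ is principal, taking $m_q$ and checking $q-m_q$ is fixed). Your extra remark about well-definedness of the action resting on property~(2) is a useful clarification but does not change the substance.
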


Suppose that for every $g\geq 3$ there is a fixed point $q_g$ of $\mathcal{Q}_{C_g}$ for the action of $\mathcal{AB}_{g,1}$ on $\mathcal{Q}_{C_g}.$
Since every pair of $\mathcal{AB}_{g,1}$-invariant trivialization differ by a $\mathcal{AB}_{g,1}$-invariant homomorphism, by Lemma \eqref{lem_abinv_modp}, for every $g\geq 3$ the fixed points are $q_g+\varphi_g^x$ with $x\in A_d.$

By Lemma \eqref{lem_stab_modp}, all elements of $Hom(\mathcal{M}_{g,1}[d],A)^{\mathcal{AB}_{g,1}}$ are compatible with the stabilization map. Then, given two different fixed points $q_g,$ $q'_g$ of $\mathcal{Q}_{C_g}$ for the action of $\mathcal{AB}_{g,1},$ we have that
$${q_g}_{\mid\mathcal{M}_{g-1,1}[d]}-{q'_g}_{\mid\mathcal{M}_{g-1,1}[d]}=(q_g-q'_g)_{\mid\mathcal{M}_{g-1,1}[d]} ={\varphi_g^x}_{\mid\mathcal{M}_{g-1,1}[d]}=\varphi_{g-1}^x.$$
Therefore the restriction of the trivializations of $\mathcal{Q}_{C_g}$ to $\mathcal{M}_{g-1,1}[d],$ give us a bijection between the fixed points of $\mathcal{Q}_{C_g}$ for the action of $\mathcal{AB}_{g,1}$ and the fixed points of $\mathcal{Q}_{C_{g-1}}$ for the action of $\mathcal{AB}_{g-1,1}.$

Therefore, given an $\mathcal{AB}_{g,1}$-invariant trivialization $q_g,$ for each $x\in A_d$ we get a well-defined map
$$
q+\varphi^x= \lim_{g\to \infty}q_g+\varphi^x_g: \lim_{g\to \infty}\mathcal{M}_{g,1}[d]\longrightarrow A.
$$
These are the only candidates to be $A$-valued invariants of $\mathbb{Z}/d$-homology spheres with associated family of $2$-cocycles $\{C_g\}_g.$ For these maps to be invariants, since they are already $\mathcal{AB}_{g,1}$-invariant, we only have to prove that they are constant on the double cosets $\mathcal{A}_{g,1}[d]\backslash \mathcal{M}_{g,1}[d]/\mathcal{B}_{g,1}[d].$
From property (3) of our cocycle we have that $\forall \phi\in \mathcal{M}_{g,1}[d],$
$\forall \psi_a\in \mathcal{A}_{g,1}[d]$ and $\forall \psi_b\in \mathcal{B}_{g,1}[d],$
\begin{equation}
\label{eq_A[p],B[p]_constant}
\begin{aligned}
(q_g+\varphi^x_g)(\phi)-(q_g+\varphi^x_g)(\phi \psi_b)= & -(q_g+\varphi^x_g)(\psi_b) , \\
(q_g+\varphi^x_g)(\phi)-(q_g+\varphi^x_g)(\psi_a\phi )= & -(q_g+\varphi^x_g)(\psi_a).
\end{aligned}
\end{equation}
Thus in particular, taking $\phi=\psi_a,\psi_b$ in above equations, we have that $q_g+\varphi^x_g$ with $x\in A_d,$ are homomorphisms on $\mathcal{A}_{g,1}[d],$ $\mathcal{B}_{g,1}[d].$ Then, by Lemma \eqref{lem_B[p],A[p]} we have that $q_g+\varphi^x_g$ are trivial on $\mathcal{A}_{g,1}[d]$ and $\mathcal{B}_{g,1}[d].$

Therefore, by equalities \eqref{eq_A[p],B[p]_constant}, we get that $q_g+\varphi_g^x$ with $x\in A_d$ are constant on the double cosets $\mathcal{A}_{g,1}[d]\backslash \mathcal{M}_{g,1}[d]/\mathcal{B}_{g,1}[d].$

Summarizing, we get the following result:

\begin{teo}
\label{teo_cocy_p}
Let $A$ an abelian group. For $g\geq 3,$ $d\geq 3$ an odd integer and for $g\geq 5,$ $d\geq 2$ an even integer such that $4 \nmid d.$
For each $x\in A_d,$ a family of 2-cocycles $(C_g)_{g\geq 3}$ on the $(\text{mod }d)$ Torelli groups $\mathcal{M}_{g,1}[d],$ with values in $A,$ satisfying conditions (1)-(3) provides a compatible family of trivializations $F_g+\varphi_g^x: \mathcal{M}_{g,1}[d]\rightarrow A$ that reassembles into an invariant of $\mathbb{Q}$-homology spheres $\mathcal{S}^3[d]$
$$\lim_{g\to \infty}F_g+\varphi_g^x: \mathcal{S}^3[d]\longrightarrow A$$
if and only if the following two conditions hold:
\begin{enumerate}[(i)]
\item The associated cohomology classes $[C_g]\in H^2(\mathcal{M}_{g,1}[d];A)$ are trivial.
\item The associated torsors $\rho(C_g)\in H^1(\mathcal{AB}_{g,1},Hom(\mathcal{M}_{g,1}[d],A))$ are trivial.
\end{enumerate}
\end{teo}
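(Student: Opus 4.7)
The plan is to follow verbatim the strategy developed in Section \ref{section_cocycles to invariants} to prove Theorem \ref{teo_constr_torelli}, substituting $\mathcal{M}_{g,1}[d]$ for $\mathcal{T}_{g,1}$, the subgroups $\mathcal{A}_{g,1}[d]$, $\mathcal{B}_{g,1}[d]$ for $\mathcal{TA}_{g,1}$, $\mathcal{TB}_{g,1}$, the bijection \eqref{bij_M[p]} for \eqref{bij_ZHS_torelli}, and, crucially, the new family of $\mathcal{AB}_{g,1}$-invariant homomorphisms $\{\varphi_g^x\}$ of Lemma \ref{lem_abinv_modp} for the family $\{\mu_g^x\}$ which played the role of the Rohlin invariant in the integral case.

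For the ``only if'' direction, assume that the family $\{F_g+\varphi_g^x\}$ reassembles into an invariant of $\mathcal{S}^3[d]$. Then each $F_g+\varphi_g^x$ is, in particular, a set-theoretic trivialization of $C_g$, which immediately yields $[C_g]=0$ in $H^2(\mathcal{M}_{g,1}[d];A)$, proving (i). Moreover, invariance under the action of $\mathcal{AB}_{g,1}$ exhibits $F_g+\varphi_g^x$ as a fixed point of $\mathcal{Q}_{C_g}$ under the natural $\mathcal{AB}_{g,1}$-action, so Proposition \ref{prop_torsor} forces the torsor $\rho(C_g)$ to vanish, proving (ii).

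For the ``if'' direction, condition (i) supplies for each $g\geq 3$ some trivialization $q'_g\in\mathcal{Q}_{C_g}$, and condition (ii) together with Proposition \ref{prop_torsor} promotes it to an $\mathcal{AB}_{g,1}$-invariant trivialization $q_g\in\mathcal{Q}_{C_g}$. By Lemma \ref{lem_abinv_modp}, the whole orbit of $\mathcal{AB}_{g,1}$-invariant trivializations is $\{q_g+\varphi_g^x\mid x\in A_d\}$. Since $\varphi_g^x$ is itself compatible with the stabilization map (Lemma \ref{lem_stab_modp}), the restriction of $q_g+\varphi_g^x$ to $\mathcal{M}_{g-1,1}[d]$ differs from $q_{g-1}+\varphi_{g-1}^x$ by an element of $\mathrm{Hom}(\mathcal{M}_{g-1,1}[d],A)^{\mathcal{AB}_{g-1,1}}$, i.e. by some $\varphi_{g-1}^{y}$ with $y\in A_d$ by Lemma \ref{lem_abinv_modp} again; absorbing this ambiguity inductively, for each $x\in A_d$ we obtain a compatible family of $\mathcal{AB}_{g,1}$-invariant trivializations $F_g+\varphi_g^x$ and hence a well-defined map $F+\varphi^x:\lim_g\mathcal{M}_{g,1}[d]\to A$.

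It remains to check that $F_g+\varphi_g^x$ is constant on the double cosets $\mathcal{A}_{g,1}[d]\backslash\mathcal{M}_{g,1}[d]/\mathcal{B}_{g,1}[d]$. Property (3) of the cocycle combined with the cocycle identity yields, for every $\phi\in\mathcal{M}_{g,1}[d]$, $\xi_a\in\mathcal{A}_{g,1}[d]$ and $\xi_b\in\mathcal{B}_{g,1}[d]$,
\begin{equation*}
(F_g+\varphi_g^x)(\phi)-(F_g+\varphi_g^x)(\xi_a\phi\xi_b)=-(F_g+\varphi_g^x)(\xi_a)-(F_g+\varphi_g^x)(\xi_b),
\end{equation*}
and specialising to $\phi=\xi_a$ or $\phi=\xi_b$ shows that the restrictions of $F_g+\varphi_g^x$ to $\mathcal{A}_{g,1}[d]$ and to $\mathcal{B}_{g,1}[d]$ are homomorphisms which, being $\mathcal{AB}_{g,1}$-invariant, must vanish by Lemma \ref{lem_B[p],A[p]}. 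Combining with $\mathcal{AB}_{g,1}$-invariance and the bijection \eqref{bij_M[p]}, $F+\varphi^x$ descends to the desired invariant of $\mathcal{S}^3[d]$. The main point of care is the inductive reabsorption of stabilisation discrepancies by the parameter $x\in A_d$, which is exactly what forces the hypotheses on $(g,d)$ in the statement, since Lemma \ref{lem_abinv_modp} providing the complete classification of $\mathcal{AB}_{g,1}$-invariant homomorphisms requires $g\geq 3$ with $d\geq 3$ odd, or $g\geq 5$ with $d\geq 2$ even and $4\nmid d$.
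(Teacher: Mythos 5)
Your proposal follows the same strategy as the paper's own proof: lift the argument of Section \ref{section_cocycles to invariants} to $\mathcal{M}_{g,1}[d]$ using the bijection \eqref{bij_M[p]}, replace $\{\mu_g^x\}$ by $\{\varphi_g^x\}$ via Lemma \ref{lem_abinv_modp}, use Proposition \ref{prop_torsor} to translate condition (ii) into existence of an $\mathcal{AB}_{g,1}$-fixed trivialization, invoke Lemma \ref{lem_stab_modp} for stabilization compatibility, and kill the restrictions to $\mathcal{A}_{g,1}[d],\mathcal{B}_{g,1}[d]$ via Lemma \ref{lem_B[p],A[p]}. The only cosmetic difference is that you spell out the ``only if'' direction explicitly and merge the two double-coset equalities \eqref{eq_A[p],B[p]_constant} into a single identity; the substance is identical.
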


\section{Pull-back of $2$-cocycles over abelian groups}
\label{section: pull-back 2-cocycles}
In general, it is not easy to construct a family of $2$-cocycles $\{C_g\}_g\geq 3$ satisfying the hypothesis of Theorem \eqref{teo_cocy_p}.
The idea to construct such family of $2$-cocycles $(C_g)_{g\geq 3},$ inspired on \cite{pitsch}, is the following:
Consider a $\mathcal{AB}_{g,1}$-equivariant map $f$ from $\mathcal{M}_{g,1}[p]$ to a certain module $V.$ Then, we construct a family of bilinear forms $\{B_g\}_g$ on the module $V,$ (which are naturally $2$-cocycles on $V$), in such a way the pull-back of this bilinear forms along $f$ will be the desired family of $2$-cocycles on $\mathcal{M}_{g,1}[p].$
For the purposes of this thesis it is enough to give the construction for the first Zassenhaus mod $p$ Johnson homomorphism $\tau_1^Z,$ defined in Section \eqref{sec_johnson mod p}, and bilinear forms on $\extp^3 H_p.$

Consider the first Zassenhaus mod $p$ Johnson homomorphism $\tau_1^Z$ and a family of bilinear forms $(B_g)_{g\geq 3},$ defined on $\extp^3H_p$ satisfying the following properties:

\begin{enumerate}[(1')]
\item The $2$-cocycles $\{B_g\}_g$ are compatible with the stabilization map.
\item For every $\phi=
\left(\begin{smallmatrix}
G & 0 \\
0 & {}^tG^{-1}
\end{smallmatrix}\right)
 \in Sp_{2g}(\mathbb{Z})$ with $G\in GL_g(\mathbb{Z}),$ $B_g(\phi - \phi^{-1},\phi - \phi^{-1})=B_g(-,-),$
\item If $\phi\in \tau_1^Z(\mathcal{A}_{g,1}[d])$ or $\psi \in \tau_1^Z(\mathcal{B}_{g,1}[d])$ then $B_g(\phi, \psi)=0.$
\end{enumerate}

Since $\tau_1^Z$ is an $\mathcal{M}_{g,1}$-equivariant homomorphism and compatible with the stabilization map, the pull-back of this family of $2$-cocycles along $\tau_1^Z$ give us a family of $2$-cocycles on $\mathcal{M}_{g,1}[p]$ satisfying the properties (1)-(3) in Section \eqref{section_From invariants to trivial cocycles}.

Then we devote this section to find families of $2$-cocycles on $\extp^3H_p$ satisfying the properties (1')-(3').
To find such families, we first compute the image of $\mathcal{A}_{g,1}[d],$ $\mathcal{B}_{g,1}[d]$ under $\tau_1^Z.$


\subsection{The extension of Johnson's homomorphism}
\sectionmark{The extension of Johnson's homomorphism modulo $p$}
\label{sec_ext_jonh_hom}
We first construct, in a natural way, a crossed homomorphism $k_p: \mathcal{M}_{g,1}\rightarrow \extp^3 H_p$ which extends the Jonhson homomorphism $\tau_1$ modulo $p.$
By definition of the Zassenhaus filtration, we have a commutative diagram
\begin{equation}
\label{diag_com_M}
\xymatrix@C=10mm@R=10mm{
 1 \ar@{->}[r] & \mathcal{T}_{g,1} \ar@{->}[r] \ar@{->}[d]^{\tau_1^Z} & \mathcal{M}_{g,1} \ar@{->}[r]^{\Psi} \ar@{->}[d]^{\rho_2^Z} & Sp_{2g}(\mathbb{Z}) \ar@{->}[r] \ar@{->}[d]^{r_p} & 1\\
0  \ar@{->}[r] & \bigwedge^3 H_p \ar@{->}[r]^-{i}
 & \rho_2^Z(\mathcal{M}_{g,1}) \ar@{->}[r]^-{\psi_1^Z} & Sp_{2g}(\mathbb{Z}/p) \ar@{->}[r] & 1 }
\end{equation}
By Lemma \eqref{lema_duality_homology} and the fact that $\extp^3H_p$ is an $\mathbb{F}_p$-vector space, we have isomorphisms
$$H^*(Sp_{2g}(\mathbb{Z}/p);\extp^3H_p)\cong H^*(Sp_{2g}(\mathbb{Z}/p);(\extp^3H_p)^*)\cong (H_*(Sp_{2g}(\mathbb{Z}/p);\extp^3H_p))^*.$$
Since $-Id\in Sp_{2g}(\mathbb{Z}/p)$ acts on $\extp^3H_p$ by the multiplication of $-1,$ by the Center kills Lemma we have that
$H_*(Sp_{2g}(\mathbb{Z}/p);\extp^3H_p)=0.$
As a consequence,
$$H^*(Sp_{2g}(\mathbb{Z}/p);\extp^3H_p)=0.$$
Then the bottom extension of diagram \eqref{diag_com_M} splits with only one $\extp^3H_p$-conjugacy class of splittings.
By Proposition \eqref{prop_eqiv_splittings}, we have an isomorphism
$$f: \rho_2^Z(\mathcal{M}_{g,1})\longrightarrow \extp^3H_p\rtimes Sp_{2g}(\mathbb{Z}).$$
Recall that the operation on $\extp^3H_p\rtimes Sp_{2g}(\mathbb{Z}/p)$ is given by $(a,g)\cdot (b,h)= (a+g\cdot b, gh).$
Then, if we take $\pi:\extp^3H_p\rtimes Sp_{2g}(\mathbb{Z}/p)\rightarrow \extp^3H_p$ the projection on the first component, we get a derivation, and as a consequence we have that $k_p=(\pi \circ f \circ \rho_2^Z)$ is a crossed homomorphism
$$k_p: \mathcal{M}_{g,1}\longrightarrow \extp^3 H_p.$$
Observe that in fact $\pi$ is a retraction of the bottom extension of \eqref{diag_com_M}, then using the commutative diagram \eqref{diag_com_M} we have that $k_p$ restricted to $\mathcal{T}_{g,1}$ is the image of the Johnson homomorphism reduced modulo $p.$
Thus, we have found an extension of the Johnson's homomorphism modulo $p$ to the whole mapping class group $\mathcal{M}_{g,1}.$

Moreover, if we consider the crossed homomorphism $k_p\in H^1(\mathcal{M}_{g,1};\extp^3H_p)$ restricted to $\mathcal{M}_{g,1}[p]$ we get an extension of the Johnson's homomorphism modulo $p$ to $\mathcal{M}_{g,1}[p].$
Notice that the first Zassenhaus mod $p$ Johnson homomorphism $\tau_1^Z$ is also an extension of the Johnson's homomorphism modulo $p$ to $\mathcal{M}_{g,1}[p].$

Next we concern about the unicity of such extensions.

\begin{prop}
For any odd prime $p$ and $g\geq 4,$ there are isomorphisms
\begin{align*}
 H^1(\mathcal{M}_{g,1};\extp^3H_p)\cong & H^1(\mathcal{M}_{g,1}[p];\extp^3H_p)^{Sp_{2g}(\mathbb{Z}/p)}\cong H^1(\mathcal{T}_{g,1};\extp^3H_p)^{Sp_{2g}(\mathbb{Z}/p)} \cong \\
\cong & Hom(\extp^3 H_p, \extp^3 H_p)^{Sp_{2g}(\mathbb{Z}/p)} \cong \left(\mathbb{Z}/p\right)^2.
\end{align*}
\end{prop}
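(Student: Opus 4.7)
Set $A=\extp^3 H_p$. The plan is to prove the four isomorphisms in sequence, from right to left, and then stitch them together using two instances of the 5-term exact sequence.

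\emph{Vanishing of symplectic cohomology.} First I observe that $H^*(Sp_{2g}(\mathbb{Z});A)=0$ and $H^*(Sp_{2g}(\mathbb{Z}/p);A)=0$. Both follow from the Center kills Lemma \ref{lem_cen_kill} applied to the central element $-Id$: it acts on $\extp^3 H_p$ by $(-1)^3=-1$, so $-2$ annihilates the homology groups. Since $p$ is odd, $-2$ is a unit in $\mathbb{F}_p$, giving vanishing; Lemma \ref{lema_duality_homology} combined with the fact that $A$ is an $\mathbb{F}_p$-vector space transfers the vanishing to cohomology.

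\emph{Combining the 5-term exact sequences.} Applying Theorem \ref{teo_5-term} to the two extensions
\begin{equation*}
1\to \mathcal{T}_{g,1}\to \mathcal{M}_{g,1}\to Sp_{2g}(\mathbb{Z})\to 1,\qquad 1\to \mathcal{M}_{g,1}[p]\to \mathcal{M}_{g,1}\to Sp_{2g}(\mathbb{Z}/p)\to 1,
\end{equation*}
with coefficients in $A$ (on which $\mathcal{T}_{g,1}$ and $\mathcal{M}_{g,1}[p]$ act trivially, so $A^N=A$), the vanishing above collapses each sequence into an isomorphism $H^1(\mathcal{M}_{g,1};A)\cong H^1(\mathcal{T}_{g,1};A)^{Sp_{2g}(\mathbb{Z})}$ and $H^1(\mathcal{M}_{g,1};A)\cong H^1(\mathcal{M}_{g,1}[p];A)^{Sp_{2g}(\mathbb{Z}/p)}$. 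Once the identification in the next paragraph gives $H^1(\mathcal{T}_{g,1};A)\cong Hom(\extp^3 H_p,A)$, the $Sp_{2g}(\mathbb{Z})$-action factors through $Sp_{2g}(\mathbb{Z}/p)$ (source and target both being $\mathbb{F}_p$-modules), so the $Sp_{2g}(\mathbb{Z})$- and $Sp_{2g}(\mathbb{Z}/p)$-invariants coincide. Chaining these isomorphisms produces the first two isomorphisms of the statement.

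\emph{Reduction via Johnson.} For the third isomorphism I invoke Johnson's Proposition \ref{prop_johnson}: since $A$ has no $2$-torsion (as $p$ is odd), every homomorphism $\mathcal{T}_{g,1}\to A$ factors uniquely through $\tau\colon \mathcal{T}_{g,1}\to \extp^3 H$. Hence $H^1(\mathcal{T}_{g,1};A)=Hom(\mathcal{T}_{g,1},A)\cong Hom(\extp^3 H,A)\cong Hom(\extp^3 H_p,A)$, the last step using that $A$ is $p$-torsion. The whole chain is $Sp_{2g}$-equivariant by naturality of $\tau$; taking invariants yields the third isomorphism.

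\emph{The main obstacle: identifying the invariant endomorphism algebra.} It remains to prove $End_{Sp_{2g}(\mathbb{F}_p)}(\extp^3 H_p)\cong (\mathbb{Z}/p)^2$. For $g\geq 3$ and $p\nmid (g-1)$, the $Sp_{2g}(\mathbb{F}_p)$-module $\extp^3 H_p$ splits as $V_g\oplus H_p$, where $H_p$ embeds via wedge with the symplectic bivector $\omega=\sum_i a_i\wedge b_i$ and $V_g$ is the kernel of contraction with $\omega$; a direct computation shows that contraction composed with wedge equals $-(g-1)\cdot id_{H_p}$, invertible in $\mathbb{F}_p$ under the hypothesis. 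Both $V_g$ and $H_p$ are absolutely irreducible and nonisomorphic (their $\mathbb{F}_p$-dimensions differ for $g\geq 3$), so Schur's lemma delivers $\mathbb{F}_p\oplus\mathbb{F}_p$. The main obstacle is the exceptional case $p\mid (g-1)$: then $\extp^3 H_p$ fails to be semisimple, and although $V_g$ and $H_p$ remain the two composition factors and remain nonisomorphic, a separate argument is required to confirm that the endomorphism algebra is still two-dimensional over $\mathbb{F}_p$ (for instance by computing $Sp_{2g}(\mathbb{F}_p)$-invariants in $(\extp^3 H_p)^\ast\otimes\extp^3 H_p$ directly via classical invariant theory for the symplectic group).
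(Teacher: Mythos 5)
Your reduction of the statement to the computation of $Hom(\extp^3H_p,\extp^3H_p)^{Sp_{2g}(\mathbb{Z}/p)}$ is sound, and in fact cleaner than the paper's: by establishing the vanishing of both $H^1$ and $H^2$ of the symplectic quotient (Center kills plus self-duality of $\extp^3H_p$), the 5-term sequences give isomorphisms outright, whereas the paper only deduces injections and must construct the explicit elements $Id\circ k_p$ and $(u\circ C)\circ k_p$ to conclude. The route via Johnson's theorem for the third isomorphism is the same as the paper's.

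The genuine gap is in the last step, and it is larger than you acknowledge. Your decomposition $\extp^3H_p \cong V_g\oplus H_p$ only holds when $p\nmid (g-1)$; you flag this, but your description of the exceptional case is wrong: when $p\mid(g-1)$ the contraction-composed-with-wedge is zero, so $\operatorname{Im}(x\mapsto x\wedge\omega)\subset\ker C$, and the module has \emph{three} composition factors, not two, with $H_p$ appearing twice (once as $\operatorname{Im} u$, once as $\extp^3H_p/\ker C$) and $L(\omega_3)=\ker C/\operatorname{Im} u$ in the middle. A module with a repeated composition factor in its Loewy series can have a larger endomorphism ring than Schur's lemma naively predicts, so "a separate argument is required" is understating it: the entire filtration structure of the uniserial (or non-uniserial) module must be pinned down before you can count equivariant endomorphisms, and your claim that the two composition factors "remain nonisomorphic" is incorrect. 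Additionally, even in the generic case $p\nmid(g-1)$ you assert without justification that $V_g$ is absolutely irreducible over $\mathbb{F}_p$; this is a nontrivial fact about the Weyl module $V(\omega_3)$ of $Sp_{2g}$ in characteristic $p$, and although it happens to hold under your hypothesis, it needs a reference or an argument. The paper sidesteps all of this representation-theoretic delicacy by directly computing the $GL_g(\mathbb{Z})$- and $Sp_{2g}(\mathbb{Z})$-invariant endomorphisms via evaluation on a basis, an explicit calculation that is uniform in $g$ and in $p$ and does not depend on whether $p\mid(g-1)$; it then exhibits $Id$ and $r=\tfrac{1}{2}u\circ C$ as a basis. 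To repair your proof, either carry out this direct invariant-theoretic computation (as you tentatively suggest), or supply the full structure of $\extp^3H_p$ as an $\mathbb{F}_p[Sp_{2g}(\mathbb{F}_p)]$-module in the exceptional characteristic and argue carefully about its endomorphism ring.
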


\begin{proof}

Consider the 5-term exact sequence associated to the short exact sequence
$$
\xymatrix@C=7mm@R=10mm{1 \ar@{->}[r] & \mathcal{M}_{g,1}[p] \ar@{->}[r] & \mathcal{M}_{g,1} \ar@{->}[r] & Sp_{2g}(\mathbb{Z}/p) \ar@{->}[r] & 1 }$$
and the $Sp_{2g}(\mathbb{Z}/p)$-module $\extp^3H_p.$
Then, we get the exact sequence
$$
\xymatrix@C=7mm@R=10mm{0 \ar@{->}[r] & H^1(Sp_{2g}(\mathbb{Z}/p);\extp^3H_p) \ar@{->}[r]^-{inf} & H^1(\mathcal{M}_{g,1};\extp^3H_p) \ar@{->}[r]^-{res} & H^1(\mathcal{M}_{g,1}[p];\extp^3H_p)^{Sp_{2g}(\mathbb{Z}/p)}.}$$
By the Center kills Lemma, since $-Id$ acts on $\extp^3H_p$ as the multiplication by $-1,$ we have that $H^1(Sp_{2g}(\mathbb{Z}/p);\extp^3H_p)=0.$ 
Therefore we have the injection
\begin{equation}
\label{res_inj_M[p]}
res: \; H^1(\mathcal{M}_{g,1};\extp^3H_p) \rightarrow H^1(\mathcal{M}_{g,1}[p];\extp^3H_p)^{Sp_{2g}(\mathbb{Z}/p)}.
\end{equation}
Next, consider the 5-term exact sequence associated to the short exact sequence
$$
\xymatrix@C=7mm@R=10mm{1 \ar@{->}[r] & \mathcal{T}_{g,1} \ar@{->}[r] & \mathcal{M}_{g,1}[p] \ar@{->}[r] & Sp_{2g}(\mathbb{Z},p) \ar@{->}[r] & 1 }$$
and the $Sp_{2g}(\mathbb{Z}/p)$-module $\extp^3H_p.$
Then we get the exact sequence
$$
\xymatrix@C=7mm@R=10mm{0 \ar@{->}[r] & H^1(Sp_{2g}(\mathbb{Z},p);\extp^3H_p) \ar@{->}[r]^-{inf} & H^1(\mathcal{M}_{g,1}[p];\extp^3H_p) \ar@{->}[r]^-{res} & H^1(\mathcal{T}_{g,1};\extp^3H_p).}$$
Taking $Sp_{2g}(\mathbb{Z}/p)$-invariants, we get another exact sequence
$$
0 \rightarrow H^1(Sp_{2g}(\mathbb{Z},p);\extp^3H_p)^{Sp_{2g}(\mathbb{Z}/p)} \rightarrow H^1(\mathcal{M}_{g,1}[p];\extp^3H_p)^{Sp_{2g}(\mathbb{Z}/p)}\rightarrow$$
$$\rightarrow H^1(\mathcal{T}_{g,1};\extp^3H_p)^{Sp_{2g}(\mathbb{Z}/p)}.
$$
By the Universal coefficients Theorem and Proposition \eqref{prop_iso_M[d],sp(Z/d)} we have that
$$H^1(Sp_{2g}(\mathbb{Z},p);\extp^3H_p)^{Sp_{2g}(\mathbb{Z}/p)} \cong
Hom(\mathfrak{sp}_{2g}(\mathbb{Z}/p),\extp^3H_p)^{Sp_{2g}(\mathbb{Z}/p)}.$$
Notice that, since $-Id\in Sp_{2g}(\mathbb{Z}/p)$ acts trivially on $\mathfrak{sp}_{2g}(\mathbb{Z}/p)$ and as $-1$ on $\extp^3H_p,$ then $$Hom(\mathfrak{sp}_{2g}(\mathbb{Z}/p),\extp^3H_p)^{Sp_{2g}(\mathbb{Z}/p)}=0.$$
Therefore we have the injection
\begin{equation}
\label{res_inj_T}
res: \; H^1(\mathcal{M}_{g,1}[p];\extp^3H_p) \rightarrow H^1(\mathcal{T}_{g,1};\extp^3H_p)^{Sp_{2g}(\mathbb{Z}/p)}.
\end{equation}

Moreover, by the Universal coefficients Theorem and Theorem 6.19 in \cite{farb}, we have that the first Johnson homomorphism modulo $p$ induces an isomorphism
\begin{equation}
\label{iso_T}
H^1(\mathcal{T}_{g,1};\extp^3H_p)^{Sp_{2g}(\mathbb{Z}/p)}\cong Hom(\extp^3H_p,\extp^3H_p)^{Sp_{2g}(\mathbb{Z}/p)}.
\end{equation}

Next, we compute $Hom(\extp^3H_p,\extp^3H_p)^{Sp_{2g}(\mathbb{Z})}.$
Let $c_i=a_i \text{ or } b_i$ for every $i\in \{1,\ldots ,g\},$ and $f \in Hom(\extp^3H_p,\extp^3H_p)^{Sp_{2g}(\mathbb{Z})}.$

\begin{itemize}
\item Consider the element
$\phi=\left(\begin{smallmatrix}
G & 0 \\
0 & {}^tG^{-1}
\end{smallmatrix}\right) \in Sp_{2g}(\mathbb{Z})$
with $G=(1,i)(2,j)(3,k)\in \mathfrak{S}_g.$ Then we have that
\begin{align*}
\phi \cdot f( c_1\wedge c_2\wedge c_3)= & f(\phi \cdot c_1\wedge c_2\wedge c_3)= f(c_i\wedge c_j\wedge c_k), \\
\phi \cdot f( c_1\wedge a_2\wedge b_2)= & f(\phi \cdot c_1\wedge a_2\wedge b_2)= f(c_i\wedge a_j\wedge b_j).
\end{align*}

Thus, every element of $Hom(\extp^3H_p,\extp^3H_p)^{Sp_{2g}(\mathbb{Z})}$ is determined by the images of the elements $c_1\wedge c_2\wedge c_3,$ $c_1\wedge a_2\wedge b_2$ with $c_i=a_i \text{ or } b_i.$

\item Consider the element
$\phi=\left(\begin{smallmatrix}
0 & Id \\
-Id & 0
\end{smallmatrix}\right) \in Sp_{2g}(\mathbb{Z}).$ Then we have that
\begin{align*}
\phi \cdot f( a_1\wedge a_2\wedge a_3)= & f(\phi \cdot a_1\wedge a_2\wedge a_3)= f(-b_1\wedge b_2\wedge b_3)=-f(b_1\wedge b_2\wedge b_3), \\
\phi \cdot f( a_1\wedge a_2\wedge b_3)= & f(\phi \cdot a_1\wedge a_2\wedge b_3)= f(b_1\wedge b_2\wedge a_3),\\
\phi \cdot f( a_1\wedge a_2\wedge b_2)= & f(\phi \cdot a_1\wedge a_2\wedge b_2)= f(b_1\wedge b_2\wedge a_2).
\end{align*}

\item Consider the element
$\phi \in Sp_{2g}(\mathbb{Z}),$ the matrix with $1$'s at the diagonal and position $(3,g+3),$ and $0$'s elsewhere. Then we have that
\begin{align*}
\phi \cdot f( a_1\wedge a_2\wedge b_3)= & f(\phi \cdot a_1\wedge a_2\wedge b_3)=
f(a_1\wedge a_2\wedge b_3+a_1\wedge a_2\wedge a_3)= \\
= & f(a_1\wedge a_2\wedge b_3)+f(a_1\wedge a_2\wedge a_3).
\end{align*}
So, $f(a_1\wedge a_2\wedge a_3)=\phi \cdot f( a_1\wedge a_2\wedge b_3)-f(a_1\wedge a_2\wedge b_3).$

\item Consider the element
$\phi=\left(\begin{smallmatrix}
G & 0 \\
0 & {}^tG^{-1}
\end{smallmatrix}\right) \in Sp_{2g}(\mathbb{Z}),$ with ${}^tG^{-1}\in GL_{2g}(\mathbb{Z})$ the matrix with $1$'s at positions $(3,2)$ and at the diagonal, and $0$'s elsewhere. Then we have that
\begin{align*}
\phi \cdot f( a_1\wedge a_2\wedge b_2)= & f(\phi \cdot a_1\wedge a_2\wedge b_2)=
f(a_1\wedge a_2\wedge b_2+a_1\wedge a_2\wedge b_3)= \\
= & f(a_1\wedge a_2\wedge b_2)+f(a_1\wedge a_2\wedge b_3).
\end{align*}
So, $f(a_1\wedge a_2\wedge b_3)=\phi \cdot f( a_1\wedge a_2\wedge b_2)-f(a_1\wedge a_2\wedge b_2).$
\end{itemize}

Hence, every element of $Hom(\extp^3H_p,\extp^3H_p)^{Sp_{2g}(\mathbb{Z})}$ is determined by the image of the element $a_1\wedge a_2 \wedge b_2.$
Next we study the possible images of this element.
Set
$$f(a_1\wedge a_2 \wedge b_2)=\sum_{i< j< k} m(c_i, c_j, c_k) c_i\wedge c_j\wedge c_k + \sum_{i, s} m(c_i, a_s, b_s) c_i\wedge a_s\wedge b_s,$$
with $m(c_i, c_j, c_k)\in \{1,\ldots, p-1\}.$
Let $l\neq 1.$ Consider the element $\phi=\left(\begin{smallmatrix}
G & 0 \\
0 & {}^tG^{-1}
\end{smallmatrix}\right) \in Sp_{2g}(\mathbb{Z}),$ with $G\in GL_{2g}(\mathbb{Z})$ the matrix with $-1$ at position $(l,l)$ and $1$'s at the positions $(r,r)$ with $r\neq l.$ Then for $l=g,$ we have that
\begin{align*}
 & \sum_{i< j< k} m(c_i, c_j, c_k) c_i\wedge c_j\wedge c_k + \sum_{i, s} m(c_i, a_s, b_s) c_i\wedge a_s\wedge b_s \\
= & f( a_1\wedge a_2 \wedge b_2)=f(\phi\cdot a_1\wedge a_2 \wedge b_2)=\phi\cdot f( a_1\wedge a_2 \wedge b_2)= \\
= &\sum_{i< j< k<g} m(c_i, c_j, c_k) c_i\wedge c_j\wedge c_k + \sum_{i<g, s} m(c_i, a_s, b_s) c_i\wedge a_s\wedge b_s+ \\
 & -\sum_{i< j<g} m(c_i, c_j, c_g) c_i\wedge c_j\wedge c_g - \sum_{ s} m(c_g, a_s, b_s) c_g\wedge a_s\wedge b_s.
\end{align*}
Hence $m(c_i, c_j, c_g)=0,$ for all $i,j$ such that $i< j<g,$
and $m(c_g, a_s, b_s)=0$ for all $s.$ Then,
$$f(a_1\wedge a_2 \wedge b_2)=\sum_{i< j< k<g} m(c_i, c_j, c_k) c_i\wedge c_j\wedge c_k + \sum_{i<g, s} m(c_i, a_s, b_s) c_i\wedge a_s\wedge b_s.$$
Repeating the same argument from $l=g-1$ to $l=2$ we get that
$m(c_i, c_j, c_k)=0,$ for all $i,j,k$ such that $i< j<k,$ and $m(c_i,a_s,b_s)=0$ for all $i,s$ with $i\neq 1.$ Then
$$f(a_1\wedge a_2 \wedge b_2)=\sum_{j} m(c_1, a_j, b_j) c_1\wedge a_j\wedge b_j.$$
Next, consider the element $\phi\in Sp_{2g}(\mathbb{Z})$ the matrix with $1$'s at the diagonal and position $(1,g+1).$ Then we have that
\begin{align*}
\sum_{ j} m(c_1, a_j, b_j) c_1\wedge a_j\wedge b_j = & f(a_1\wedge a_2 \wedge b_2)=\phi\cdot f(a_1\wedge a_2 \wedge b_2)= \\
= & \sum_{ j} m(c_1, a_j, b_j) c_1\wedge a_j\wedge b_j +\sum_{ j} m(b_1, a_j, b_j) a_1\wedge a_j\wedge b_j.
\end{align*}
Thus $m(b_1, a_j, b_j)=0$ for all $j,$ and we have that
$$f(a_1\wedge a_2 \wedge b_2)=\sum_{ j} m(a_1, a_j, b_j) a_1\wedge a_j\wedge b_j.$$

For a fixed $k\neq 1,2,$ consider the element $\phi=\left(\begin{smallmatrix}
G & 0 \\
0 & {}^tG^{-1}
\end{smallmatrix}\right) \in Sp_{2g}(\mathbb{Z}),$ with $G\in M_{2g}(\mathbb{Z})$ the matrix with $1$'s at the positions $(k+1,k),$ $(l,l).$
Then we have that
\begin{align*}
& \sum_{ j} m(a_1, a_j, b_j) a_1\wedge a_j\wedge b_j= f( a_1\wedge a_2 \wedge b_2)=f(\phi\cdot a_1\wedge a_2 \wedge b_2)=\phi\cdot f( a_1\wedge a_2 \wedge b_2) \\
& = m(a_1, a_k, b_k)a_1\wedge a_{k+1}\wedge b_k -m(a_1, a_{k+1}, b_{k+1})a_1\wedge a_{k+1}\wedge b_k + \\
&  +\sum_{ j} m(a_1, a_j, b_j) a_1\wedge a_j\wedge b_j.
\end{align*}
Thus $m(a_1, a_k, b_k)=m(a_1, a_{k+1}, b_{k+1}),$ for every $k\neq 1,2.$ Hence,
$$f(a_1\wedge a_2 \wedge b_2)=m_1 a_1\wedge a_2 \wedge b_2+m_2\sum^g_{ j=3}  a_1\wedge a_j\wedge b_j,$$
with $m_1,m_2\in \mathbb{Z}/p.$
Therefore $Hom(\extp^3H_p,\extp^3H_p)^{Sp_{2g}(\mathbb{Z})}$ at most has $p^2$ elements.

Observe that the identity is an element of $Hom(\extp^3H_p,\extp^3H_p)^{Sp_{2g}(\mathbb{Z})}.$
In addition, if we consider the $Sp_{2g}(\mathbb{Z}/p)$-equivariant maps $$C:\extp^3H_p\rightarrow H_p,\qquad u:H_p\rightarrow \extp^3H_p,$$ defined by
$$C(a\wedge b\wedge c)=2[\omega(b,c)a+\omega(c,a)b+\omega(a,b)c],$$
$$u(x)=x\wedge \left( \sum_{i=1}^g b_i\wedge a_i\right).$$
Then $r=\frac{1}{2} u\circ C$ is an endomorphism of $\extp^3H_p$ which is $Sp_{2g}(\mathbb{Z}/p)$-equivariant because $\omega$ is $Sp_{2g}(\mathbb{Z}/p)$-invariant and $\sum_{i=1}^g b_i\wedge a_i$ is a fixed point by the action of $Sp_{2g}(\mathbb{Z}/p).$
In addition we have that
$$
r(a_1\wedge a_2 \wedge b_2)= \frac{1}{2}u(C(a_1\wedge a_2 \wedge b_2))
=-\sum_{j=2}^ga_1\wedge a_j \wedge b_j .$$
As a consequence, the homomorphism $r$ and the identity are linearly independent and generates $Hom(\extp^3H_p,\extp^3H_p)^{Sp_{2g}(\mathbb{Z}/p)}.$
Therefore,
$$Hom(\extp^3H_p,\extp^3H_p)^{Sp_{2g}(\mathbb{Z}/p)}\cong (\mathbb{Z}/p)^{2}.$$

Finally, observe that $Id \circ k_p,$ $(u\circ C)\circ k_p$ are elements of $H^1(\mathcal{M}_{g,1};\extp^3H_p),$
whose restriction to $\mathcal{T}_{g,1}$ induce the homomorphisms $Id, (u\circ C)\in Hom(\extp^3H_p,\extp^3H_p)^{Sp_{2g}(\mathbb{Z}/p)}$ respectively. Therefore, by the injections \eqref{res_inj_M[p]}, \eqref{res_inj_T} and the isomorphism \eqref{iso_T},
we get the desired isomorphisms.
\end{proof}

As a consequence, the extension of the Johnson's homomorphism modulo $p$ to $\mathcal{M}_{g,1}$ is unique up to principal derivations and the extension of the Johnson's homomorphism modulo $p$ to $\mathcal{M}_{g,1}[p]$ is also unique.
Therefore, the homomorphism $\tau_1^Z:\mathcal{M}_{g,1}[p]\rightarrow \extp^3H_p$ coincides with the restriction of $k_p$ to $\mathcal{M}_{g,1}[p].$

\paragraph{Images of $\mathcal{A}_{g,1}[p],$ $\mathcal{B}_{g,1}[p]$ under $\tau_1^Z.$}
Next, we focus on computing the image of $\mathcal{A}_{g,1}[p],$ $\mathcal{B}_{g,1}[p]$ under $\tau_1^Z.$ First of all we need to compute the ablianization of $Sp_{2g}^B(\mathbb{Z},p)$ and $Sp_{2g}^A(\mathbb{Z},p).$

\begin{defi}
For $1\leq i,j\leq n,$ denote by $E^n_{i,j}(r)$ the $n\times n$ matrix with an $r$ at position $(i,j)$ and $0$'s elsewhere. Similarly, denote by $SE^n_{i,j}$ the $n\times n$ matrix with an $r$ at positions $(i,j)$ and $(j,i)$ and $0$'s elsewhere.
\end{defi}

\begin{defi}
For $1\leq i,j \leq g,$ denote by $\mathcal{X}^g_{i,j}(r)$ the matrix
$\left(\begin{smallmatrix}
Id_g & 0 \\
SE^g_{i,j}(r) & Id_g
\end{smallmatrix}\right),$ by $\mathcal{Y}^g_{i,j}(r)$ the matrix
$\left(\begin{smallmatrix}
Id_g & SE^g_{i,j}(r) \\
 0 & Id_g
\end{smallmatrix}\right),$
and by $\mathcal{Z}^g_{i,j}(r)$ the matrix $\left(\begin{smallmatrix}
Id_g+E^g_{i,j}(r) & 0 \\
 0 & Id_g-E^g_{i,j(r)}
\end{smallmatrix}\right).$
\end{defi}

The following Proposition is a direct consequence of results in \cite{newman} due to M. Newman and J.R. Smart, and \cite{Lee} due to R. Lee and R.H. Szczarba.

\begin{lema}
\label{lema_ses_SpB}
Let $g\geq 3,$ and $p$ an odd prime, the following sequence is a short exact sequence.
$$
\xymatrix@C=10mm@R=13mm{1 \ar@{->}[r] & Sp_{2g}^B(\mathbb{Z},p^2) \ar@{->}[r] & Sp_{2g}^B(\mathbb{Z},p) \ar@{->}[r]^-{abel} & \mathfrak{sl}_{2g}(\mathbb{Z}/p)\oplus S_g(\mathbb{Z}/p) \ar@{->}[r] & 1 }
$$
\end{lema}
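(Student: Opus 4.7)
The approach is to lift the Lee–Szczarba short exact sequence
$$1\to Sp_{2g}(\mathbb{Z},p^2)\to Sp_{2g}(\mathbb{Z},p)\xrightarrow{\;abel\;}\mathfrak{sp}_{2g}(\mathbb{Z}/p)\to 1,$$
already invoked in the proof of Proposition \ref{prop_iso_M[d],sp(Z/d)}, to its intersection with the block lower-triangular subgroup $Sp^B_{2g}(\mathbb{Z}).$ Concretely, I would first write every element of $Sp^B_{2g}(\mathbb{Z},p)$ as $\bigl(\begin{smallmatrix}Id+pA & 0\\ pN & {}^t(Id+pA)^{-1}\end{smallmatrix}\bigr)$ and observe that the symplectic condition ${}^tGM\in S_g(\mathbb{Z})$ forces $N\equiv N^t\pmod p.$ Defining $abel$ by sending this matrix to the pair $(\overline{A},\overline{N}),$ one obtains a map to the block-triangular piece of $\mathfrak{sp}_{2g}(\mathbb{Z}/p),$ which under the identification $\bigl(\begin{smallmatrix}\alpha & 0\\ \gamma & -\alpha^t\end{smallmatrix}\bigr)\longleftrightarrow(\alpha,\gamma)$ is exactly the abelian group appearing in the statement.

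Second, I would verify the homomorphism property by a direct computation modulo $p^2.$ If $G=Id+pA$ and $G'=Id+pA',$ then $GG'\equiv Id+p(A+A')\pmod{p^2},$ and the off-diagonal block of the product is congruent to $p(N+N')$ modulo $p^2,$ so additivity of $abel$ is immediate. The kernel description follows at once: $(\overline{A},\overline{N})=0$ is equivalent to $G\equiv Id\pmod{p^2}$ and $M\equiv 0\pmod{p^2},$ i.e. to membership in $Sp^B_{2g}(\mathbb{Z},p^2).$

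For surjectivity I would lift an arbitrary $(\overline{A},\overline{N})$ with $\overline{N}=\overline{N}^t$ back to integer matrices $A,N$ with $N$ symmetric, using that $p$ is odd to replace any raw lift $N_0$ by its symmetrisation $(N_0+N_0^t)/2$ whenever needed. Plugging these into $\bigl(\begin{smallmatrix}Id+pA & 0\\ pN & {}^t(Id+pA)^{-1}\end{smallmatrix}\bigr)$ produces a preimage. The only subtlety is to ensure that ${}^tGM\in S_g(\mathbb{Z})$ exactly (not merely modulo $p$): writing ${}^tGM=pN+p^2A^tN,$ symmetry reduces to $A^tN$ being symmetric, which can be arranged by adjusting the lift of $A$ modulo $p^2$ without altering $\overline{A}.$

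The main obstacle, as in \cite{Lee}, is not the construction of $abel$ but proving that it is exactly the abelianisation, i.e. that $[Sp^B_{2g}(\mathbb{Z},p),Sp^B_{2g}(\mathbb{Z},p)]=Sp^B_{2g}(\mathbb{Z},p^2).$ The inclusion $\subseteq$ is automatic from Lee–Szczarba applied to the ambient group. For the reverse inclusion I would exhibit explicit elementary commutators among the generators $\mathcal{X}^g_{i,j}(p)$ and $\mathcal{Z}^g_{k,l}(p)$ of $Sp^B_{2g}(\mathbb{Z},p)$ (introduced just before the statement) which realise each chosen generating family of $Sp^B_{2g}(\mathbb{Z},p^2);$ this is essentially a bookkeeping exercise on symplectic elementary matrices, mirroring the corresponding calculation in \cite{Lee} but restricted to the block-triangular setting.
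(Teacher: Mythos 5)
Your overall framing, restricting the Lee--Szczarba sequence for the ambient congruence subgroup to the block lower-triangular piece and identifying the image with $\mathfrak{sl}_g(\mathbb{Z}/p)\oplus S_g(\mathbb{Z}/p)$, is exactly what the paper does, and your kernel identification is correct. But your surjectivity argument contains a genuine gap, and your closing paragraph addresses a statement that this lemma does not assert.

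The gap is in the lifting step. You propose choosing an integer lift $A$ of $\overline{A}$ and setting $G=Id+pA.$ For this $G$ to give an element of $Sp^B_{2g}(\mathbb{Z},p)$ you need $G\in GL_g(\mathbb{Z}),$ i.e.\ $\det(Id+pA)=1.$ But an arbitrary integer lift of a traceless matrix only guarantees $\det(Id+pA)\equiv 1\pmod{p^2},$ not exact equality; in general $Id+pA$ is not invertible over $\mathbb{Z}.$ Similarly, the claim that $A^tN$ can be made symmetric "by adjusting the lift of $A$ modulo $p^2$" is not available without a further divisibility hypothesis: replacing $A$ by $A+pB$ changes $A^tN$ by $pB^tN,$ so the equation forces $A^tN-NA$ to be divisible by $p,$ which does not hold for arbitrary lifts. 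The paper sidesteps both difficulties by never attempting a direct lift: it factors a block-triangular matrix as $\bigl(\begin{smallmatrix}G&0\\0&{}^tG^{-1}\end{smallmatrix}\bigr)\bigl(\begin{smallmatrix}Id&0\\{}^tGM&Id\end{smallmatrix}\bigr)$ and then invokes two already-known surjectivities: $abel\colon SL_g(\mathbb{Z},p)\twoheadrightarrow\mathfrak{sl}_g(\mathbb{Z}/p)$ from Lee--Szczarba, and $S_g(p\mathbb{Z})\twoheadrightarrow S_g(\mathbb{Z}/p)$ from Newman--Smart. Given a target $(\alpha,\gamma),$ one picks $G\in SL_g(\mathbb{Z},p)$ hitting $\alpha$ and $S\in S_g(p\mathbb{Z})$ hitting $\gamma,$ and the product $\bigl(\begin{smallmatrix}G&0\\0&{}^tG^{-1}\end{smallmatrix}\bigr)\bigl(\begin{smallmatrix}Id&0\\S&Id\end{smallmatrix}\bigr)$ lies in $Sp^B_{2g}(\mathbb{Z},p)$ with the required image, with no adjustment needed; you should adopt this route.

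Finally, the last paragraph of your proposal is out of scope. The lemma only asserts exactness of the displayed sequence, i.e.\ that $abel$ restricted to $Sp^B_{2g}(\mathbb{Z},p)$ is surjective onto $\mathfrak{sl}_g(\mathbb{Z}/p)\oplus S_g(\mathbb{Z}/p)$ with kernel $Sp^B_{2g}(\mathbb{Z},p^2).$ It does not claim that this restricted map is the abelianization of $Sp^B_{2g}(\mathbb{Z},p),$ so you do not need to prove $[Sp^B_{2g}(\mathbb{Z},p),Sp^B_{2g}(\mathbb{Z},p)]=Sp^B_{2g}(\mathbb{Z},p^2)$ here; that identity is the content of the following lemma (Lemma~\ref{lema_comut_SpB}) and uses the present one as input.
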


\begin{proof}
Consider the homomorphism $abel: Sp_{2g}(\mathbb{Z},p)\rightarrow \mathfrak{sp}_{2g}(\mathbb{Z}/p).$ In \cite{Lee}, R. Lee and R.H. Szczarba proved that this map induces the following short exact sequence:
$$
\xymatrix@C=10mm@R=13mm{1 \ar@{->}[r] & Sp_{2g}(\mathbb{Z},p^2) \ar@{->}[r] & Sp_{2g}(\mathbb{Z},p) \ar@{->}[r]^{abel} & \mathfrak{sp}_{2g}(\mathbb{Z}/p) \ar@{->}[r] & 1 .}
$$
Observe that for any element
$\left(\begin{smallmatrix}
G & 0 \\
M & {}^tG^{-1}
\end{smallmatrix}\right) \in Sp_{2g}^B(\mathbb{Z},p)$ i.e. $G\in SL_g(\mathbb{Z},p),$ ${}^tGM\in S_g(p\mathbb{Z}),$  we have that
$$\left(\begin{matrix}
G & 0 \\
M & {}^tG^{-1}
\end{matrix}\right)=\left(\begin{matrix}
G & 0 \\
0 & {}^tG^{-1}
\end{matrix}\right)\left(\begin{matrix}
Id & 0 \\
{}^tGM & Id
\end{matrix}\right).$$
By \cite{Lee} we get that $(G-Id)/p \; \text{mod }p$ is an element of $\mathfrak{sl}_g(\mathbb{Z}/p),$ and clearly ${}^tGM/p \; \text{mod }p$ is an element of $S_g(\mathbb{Z}/p).$
So we get that
$$abel(Sp_{2g}^B(\mathbb{Z},p))\subset \mathfrak{sl}_{2g}(\mathbb{Z}/p)\oplus S_g(\mathbb{Z}/p).$$
On the other hand, in \cite{Lee}, R. Lee and R.H. Szczarba proves that the map
$abel: SL_g[p]\rightarrow \mathfrak{sl}_g(\mathbb{Z}/p)$ is surjective, and
in \cite{newman} M. Newman and J.R. Smart proves that the map $S_g(p\mathbb{Z})\rightarrow S_g(\mathbb{Z}/p)$ given by $N\mapsto N/p\text{ mod }p$ is also surjective, then it is clear that $abel:Sp_{2g}^B(\mathbb{Z},p)\rightarrow \mathfrak{sl}_{2g}(\mathbb{Z}/p)\oplus S_g(\mathbb{Z}/p)$ is surjective.
Finally, the kernel of this map is given by $Sp_{2g}(\mathbb{Z},p^2)\cap Sp_{2g}^B(\mathbb{Z},p)$ and this is by definition $Sp_{2g}^B(\mathbb{Z},p^2).$
Therefore we get the desired short exact sequence.
\end{proof}

\begin{lema}
\label{lema_comut_SpB}
Let $g\geq 3,$ and $p$ an odd prime,
$$H_1(Sp_{2g}^B(\mathbb{Z},p);\mathbb{Z}/p)=\mathfrak{sl}_g(\mathbb{Z}/p)\oplus S_g(\mathbb{Z}/p), \quad H_1(Sp_{2g}^A(\mathbb{Z},p);\mathbb{Z}/p)=\mathfrak{sl}_g(\mathbb{Z}/p)\oplus S_g(\mathbb{Z}/p).$$
\end{lema}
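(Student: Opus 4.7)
The plan is to combine the short exact sequence of Lemma \eqref{lema_ses_SpB} with the standard $5$-term exact sequence in group homology with $\mathbb{Z}/p$ coefficients. Since for any group $G$ one has $H_1(G;\mathbb{Z}/p)\cong G^{ab}\otimes \mathbb{Z}/p$, and since $Q:=\mathfrak{sl}_g(\mathbb{Z}/p)\oplus S_g(\mathbb{Z}/p)$ is itself an elementary abelian $p$-group (so $H_1(Q;\mathbb{Z}/p)\cong Q$), the $5$-term sequence applied to
$$1\to Sp_{2g}^B(\mathbb{Z},p^2)\to Sp_{2g}^B(\mathbb{Z},p)\xrightarrow{abel} Q\to 1$$
reduces to an exact sequence
$$H_1(Sp_{2g}^B(\mathbb{Z},p^2);\mathbb{Z}/p)_{Q}\xrightarrow{\;j\;} H_1(Sp_{2g}^B(\mathbb{Z},p);\mathbb{Z}/p) \xrightarrow{abel_*} Q\to 0.$$
Hence it suffices to prove that $j=0$, or equivalently that
$$Sp_{2g}^B(\mathbb{Z},p^2)\ \subseteq\ [Sp_{2g}^B(\mathbb{Z},p),Sp_{2g}^B(\mathbb{Z},p)]\cdot \bigl(Sp_{2g}^B(\mathbb{Z},p)\bigr)^p.\qquad (\star)$$

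To establish $(\star)$, I would exploit the isomorphism $\phi^B$ of section \eqref{homol_homoto_actions}, which identifies $Sp_{2g}^B(\mathbb{Z},p)\cong SL_g(\mathbb{Z},p)\ltimes_B S_g(p\mathbb{Z})$ and $Sp_{2g}^B(\mathbb{Z},p^2)\cong SL_g(\mathbb{Z},p^2)\ltimes_B S_g(p^2\mathbb{Z})$. It is then enough to verify $(\star)$ separately on each factor: first, every element of $S_g(p^2\mathbb{Z})$ is, in the multiplicative $\mathcal{X}$-notation, the $p^{th}$-power of an $\mathcal{X}^g_{i,j}(p)$-type matrix living in $Sp_{2g}^B(\mathbb{Z},p)$, handling the abelian ``bottom-left block'' contribution. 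Second, for the $SL_g(\mathbb{Z},p^2)$ factor, I would write the congruence-generators $\mathcal{Z}^g_{i,j}(p^2)$ (resp.\ their transvection analogues) as products of a commutator $[\mathcal{Z}^g_{i,k}(p),\mathcal{Z}^g_{k,j}(p)]$, where the auxiliary index $k\notin\{i,j\}$ is available precisely because $g\geq 3$, and a correction lying in $SL_g(\mathbb{Z},p^3)\subseteq SL_g(\mathbb{Z},p^2)$, which is absorbed by iterating the same construction or by the $p^{th}$-powers $\mathcal{Z}^g_{i,j}(p)^p=\mathcal{Z}^g_{i,j}(p^2)+O(p^3)$. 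Lifting these identities across $\phi^B$ produces the required commutators and $p^{th}$-powers inside $Sp_{2g}^B(\mathbb{Z},p)$, finishing $(\star)$.

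The case of $Sp_{2g}^A(\mathbb{Z},p)$ is entirely analogous: one uses the isomorphism $\phi^A$ to write $Sp_{2g}^A(\mathbb{Z},p)\cong SL_g(\mathbb{Z},p)\ltimes_A S_g(p\mathbb{Z})$, obtains the analogue of Lemma \eqref{lema_ses_SpB}, and repeats the argument verbatim, with the $\mathcal{Y}^g_{i,j}(r)$-matrices playing the role of the $\mathcal{X}^g_{i,j}(r)$. Here one could equivalently deduce the result from the $B$-case by conjugating with the symplectic involution $\bigl(\begin{smallmatrix} 0&Id\\-Id&0\end{smallmatrix}\bigr)$, which swaps $Sp_{2g}^B$ with $Sp_{2g}^A$ and preserves the level-$p$ and level-$p^2$ congruence subgroups.

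The main obstacle is the second half of step $(\star)$, namely verifying that level-$p^2$ congruence elements of $SL_g$ already lie in the commutator-plus-$p^{th}$-power subgroup of the smaller group $SL_g(\mathbb{Z},p)$ itself, rather than of the full $SL_g(\mathbb{Z})$ as in the classical Lee--Szczarba analysis. The needed commutator identities are modelled on (and are a parabolic-restricted version of) the computations used by Perron, Sato and Putman for $[Sp_{2g}(\mathbb{Z},p),Sp_{2g}(\mathbb{Z},p)]=Sp_{2g}(\mathbb{Z},p^2)$; the hypothesis $g\geq 3$ is what makes the three-index manipulations available and guarantees that the elementary-type generators suffice.
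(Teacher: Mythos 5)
Your high-level plan matches the paper's: deduce the lemma from the short exact sequence of Lemma~\eqref{lema_ses_SpB} by showing $Sp_{2g}^B(\mathbb{Z},p^2)$ dies in $H_1(Sp_{2g}^B(\mathbb{Z},p);\mathbb{Z}/p)$, then observe that the quotient $\mathfrak{sl}_g(\mathbb{Z}/p)\oplus S_g(\mathbb{Z}/p)$ is elementary abelian so tensoring with $\mathbb{Z}/p$ changes nothing. The decomposition into the $SL_g$ factor and the symmetric block via $\phi^B$ is exactly the paper's device. Your treatment of the symmetric block is a valid and in fact simpler alternative: since that block is additive, $\left(\begin{smallmatrix}Id & 0 \\ p^2A & Id\end{smallmatrix}\right)=\left(\begin{smallmatrix}Id & 0 \\ pA & Id\end{smallmatrix}\right)^p$, giving what you need at the cost of landing in $[G,G]G^p$ rather than in $[G,G]$ (the paper instead uses the commutator identity $\mathcal{X}^g_{i,j}(p^2)=[\mathcal{X}^g_{i,j}(p),\mathcal{Z}^g_{i,j}(p)]$, which is stronger but unnecessary here). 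Your suggestion to obtain the $Sp^A$ statement by conjugating by $\left(\begin{smallmatrix}0&Id\\-Id&0\end{smallmatrix}\right)$, which lies in $Sp_{2g}(\mathbb{Z})$, preserves the principal congruence subgroups, and swaps $Sp^A$ with $Sp^B$, is also a nice tightening of ``the proof is analogous.''

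The genuine gap is the $SL_g$ factor. You describe the ``main obstacle'' as needing commutators inside $SL_g(\mathbb{Z},p)$ rather than inside $SL_g(\mathbb{Z})$, and you portray this as going beyond Lee--Szczarba. That misstates their theorem: for $g\geq 3$, Lee--Szczarba prove precisely $[SL_g(\mathbb{Z},p),SL_g(\mathbb{Z},p)]=SL_g(\mathbb{Z},p^2)$, commutators taken inside the congruence subgroup itself; this is exactly what the paper cites and is exactly what you need, with no re-derivation required. Because you believe the statement is unavailable, you sketch a replacement, and that sketch has a hole. The Steinberg relation $[e_{ik}(p),e_{kj}(p)]=e_{ij}(p^2)$ for pairwise distinct $i,j,k$ (whence the need for $g\geq 3$) is an exact identity, but it only produces the elementary congruence matrices $e_{ij}(p^2)$ and the subgroup they generate. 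To conclude you must know that these elementary congruence matrices generate \emph{all} of $SL_g(\mathbb{Z},p^2)$, which is the nontrivial content of the positive solution to the congruence subgroup problem for $SL_g$, $g\geq 3$ (Bass--Lazard--Serre, Mennicke), not something supplied by a further round of ``iterating the same construction'' or absorbing corrections. As written, your proof of $(\star)$ is therefore incomplete; the clean fix is simply to cite Lee--Szczarba, as the paper does.
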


\begin{proof}
We only prove the result for $Sp_{2g}^B(\mathbb{Z},p)$ (the proof for $Sp_{2g}^A(\mathbb{Z},p)$ is analogous).
We show that $[Sp_{2g}^B(\mathbb{Z},p),Sp_{2g}^B(\mathbb{Z},p) ]=Sp_{2g}^B(\mathbb{Z},p^2).$ Then, by Lemma \eqref{lema_ses_SpB}, we will get the result.
Since $\mathfrak{sl}_{2g}(\mathbb{Z}/p)\oplus S_g(\mathbb{Z}/p)$ is abelian, $[Sp_{2g}^B(\mathbb{Z},p),Sp_{2g}^B(\mathbb{Z},p) ]\subset Sp_{2g}^B(\mathbb{Z},p^2).$
Next we show that
$$Sp_{2g}^B(\mathbb{Z},p^2)\subset [Sp_{2g}^B(\mathbb{Z},p),Sp_{2g}^B(\mathbb{Z},p) ].$$
Observe that for any element
$\left(\begin{smallmatrix}
G & 0 \\
M & {}^tG^{-1}
\end{smallmatrix}\right) \in Sp_{2g}^B(\mathbb{Z},p^2)$ i.e. $G\in SL_g(\mathbb{Z},p^2),$ ${}^tGM\in S_g(p^2\mathbb{Z})$  we have that
$$\left(\begin{matrix}
G & 0 \\
M & {}^tG^{-1}
\end{matrix}\right)=\left(\begin{matrix}
G & 0 \\
0 & {}^tG^{-1}
\end{matrix}\right)\left(\begin{matrix}
Id & 0 \\
{}^tGM & Id
\end{matrix}\right).$$
In \cite{Lee}, R. Lee and R. H. Szczarba proved that
$$SL_g(\mathbb{Z},p^2)=[SL_g(\mathbb{Z},p),SL_g(\mathbb{Z},p)].$$
Then, for every $G\in SL_g(\mathbb{Z},p^2),$ there exists a family of elements $\{ G_{i_1},G_{i_2}\}_{i_1,i_2}$ of $SL_g(\mathbb{Z},p)$ such that
\begin{align*}
\left(\begin{matrix}
G & 0 \\
0 & {}^tG^{-1}
\end{matrix}\right)= & \left(\begin{matrix}
\prod_{i_1,i_2}[G_{i_1},G_{i_2}] & 0 \\
0 & {}^t\left(\prod_{i_1,i_2}[G_{i_1},G_{i_2}]\right)^{-1}
\end{matrix}\right) \\
= & \prod_{i_1,i_2}\left[\left(\begin{matrix}
G_{i_1} & 0 \\
0 & {}^tG_{i_1}^{-1}
\end{matrix}\right),\left(\begin{matrix}
G_{i_2} & 0 \\
0 & {}^tG_{i_2}^{-1}
\end{matrix}\right)\right].
\end{align*}
On the other hand, we have that any element of the form $\left(\begin{smallmatrix}
Id & 0 \\
H & Id
\end{smallmatrix}\right)$ with $H\in S_g(p^2\mathbb{Z})$
is a product of elements of the family $\{\mathcal{X}_{i,j}^g(p^2)\}_{i,j},$ and moreover we have that
$$\mathcal{X}^g_{i,j}(p^2)=[\mathcal{X}^g_{i,j}(p),\mathcal{Z}^g_{i,j}(p)]\subset [Sp_{2g}^B(\mathbb{Z},p),Sp_{2g}^B(\mathbb{Z},p)].$$
Thus we get that $\left(\begin{smallmatrix}
G & 0 \\
M & {}^tG^{-1}
\end{smallmatrix}\right)\in [Sp_{2g}^B(\mathbb{Z},p),Sp_{2g}^B(\mathbb{Z},p)].$
Therefore,
$$Sp_{2g}^B(\mathbb{Z},p^2)=[Sp_{2g}^B(\mathbb{Z},p),Sp_{2g}^B(\mathbb{Z},p)],$$
as desired.
\end{proof}

\begin{prop} \label{prop_res_B}
The map $res: H^1(\mathcal{B}_{g,1}[p];\extp^3H_p)^{\mathcal{AB}_{g,1}}\rightarrow H^1(\mathcal{TB}_{g,1};\extp^3H_p)^{\mathcal{AB}_{g,1}}$ is injective.
\end{prop}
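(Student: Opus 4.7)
The plan is to apply the Lyndon--Hochschild--Serre 5-term exact sequence (Theorem \ref{teo_5-term}) to the short exact sequence of Lemma \ref{lem_B[p]_2},
$$1 \to \mathcal{TB}_{g,1} \to \mathcal{B}_{g,1}[p] \to Sp_{2g}^{B}(\mathbb{Z},p) \to 1,$$
with coefficients in $\extp^3 H_p$. Since $Sp_{2g}^B(\mathbb{Z},p)\subseteq Sp_{2g}(\mathbb{Z},p)$ acts trivially on $H_p=H_1(\Sigma_{g,1};\mathbb{Z}/p)$, both $\mathcal{TB}_{g,1}$ and the quotient act trivially on $\extp^3 H_p$, and the sequence becomes
$$0 \to H^1(Sp_{2g}^B(\mathbb{Z},p);\extp^3 H_p) \xrightarrow{\mathrm{inf}} H^1(\mathcal{B}_{g,1}[p];\extp^3 H_p) \xrightarrow{res} H^1(\mathcal{TB}_{g,1};\extp^3 H_p).$$
Because $\mathcal{AB}_{g,1}\subseteq \mathcal{B}_{g,1}$ and both $\mathcal{T}_{g,1}$ and $\mathcal{M}_{g,1}[p]$ are normal in $\mathcal{M}_{g,1}$, conjugation by $\mathcal{AB}_{g,1}$ preserves each term of the short exact sequence, so the 5-term sequence is $\mathcal{AB}_{g,1}$-equivariant. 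Taking $\mathcal{AB}_{g,1}$-invariants is left exact, so the kernel of $res$ on invariants is $H^1(Sp_{2g}^B(\mathbb{Z},p);\extp^3 H_p)^{\mathcal{AB}_{g,1}}$, and it suffices to prove that this last group vanishes.

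To identify this obstruction, observe that the coefficients carry trivial $Sp_{2g}^B(\mathbb{Z},p)$-action, so the Universal Coefficients Theorem yields
$$H^1(Sp_{2g}^B(\mathbb{Z},p);\extp^3 H_p) \;\cong\; \mathrm{Hom}\bigl(H_1(Sp_{2g}^B(\mathbb{Z},p);\mathbb{Z}),\;\extp^3 H_p\bigr).$$
Combining with Lemma \ref{lema_comut_SpB}, this is $\mathrm{Hom}(\mathfrak{sl}_g(\mathbb{Z}/p)\oplus S_g(\mathbb{Z}/p),\;\extp^3 H_p)$, and via Lemma \ref{lem_AB} the $\mathcal{AB}_{g,1}$-action factors through $GL_g(\mathbb{Z})$, acting on the source by the conjugation action of $Sp_{2g}^{AB}(\mathbb{Z})$ on $Sp_{2g}^B(\mathbb{Z},p)$ and on the target via the standard action on $H_p$.

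The central step will be a ``center kills'' argument. By Lemma \ref{lem_AB} one can pick $\phi\in\mathcal{AB}_{g,1}$ with $\Psi(\phi)=-\mathrm{Id}_{2g}\in Sp_{2g}(\mathbb{Z})$ (corresponding to $-\mathrm{Id}_g\in GL_g(\mathbb{Z})$). Since $-\mathrm{Id}_{2g}$ is central in $Sp_{2g}(\mathbb{Z})$, conjugation by $\phi$ fixes every element of $Sp_{2g}^B(\mathbb{Z},p)$, and hence acts trivially on the abelianization $\mathfrak{sl}_g(\mathbb{Z}/p)\oplus S_g(\mathbb{Z}/p)$. On the other hand, $\phi$ acts on $H_p$ as $-\mathrm{Id}$, hence on $\extp^3 H_p$ as multiplication by $(-1)^3=-1$. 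Therefore any $\mathcal{AB}_{g,1}$-equivariant $f$ in the Hom-group satisfies $f(X)=-f(X)$ for every $X$, and since $p$ is odd this forces $f\equiv 0$, as required.

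The main obstacle I expect is purely one of bookkeeping rather than content: namely verifying carefully that the action of $\mathcal{AB}_{g,1}$ on $H^1(Sp_{2g}^B(\mathbb{Z},p);\extp^3 H_p)$ described abstractly via the 5-term sequence agrees, under the UCT identification, with the conjugation-on-abelianization plus natural-action-on-coefficients action used in the center-kills step. Once these two actions are matched, the argument collapses to the one-line vanishing above.
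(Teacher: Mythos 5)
Your proof is correct and is essentially identical to the paper's own argument: the same five-term exact sequence for $1\to\mathcal{TB}_{g,1}\to\mathcal{B}_{g,1}[p]\to Sp_{2g}^B(\mathbb{Z},p)\to 1$, the same identification of the kernel via UCT and Lemma \ref{lema_comut_SpB} as $\mathrm{Hom}(\mathfrak{sl}_g(\mathbb{Z}/p)\oplus S_g(\mathbb{Z}/p),\extp^3 H_p)$, and the same center-kills step using an element $\phi\in\mathcal{AB}_{g,1}$ with $\Psi(\phi)=-\mathrm{Id}$, which acts trivially on the source and by $-1$ on the target. The bookkeeping concern you flag about matching the two descriptions of the $\mathcal{AB}_{g,1}$-action is resolved by the naturality of the five-term sequence and of the UCT isomorphism, which is precisely why the paper elides it.
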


\begin{proof}
Taking the 5-term sequence associated to the short exact sequence
$$
\xymatrix@C=10mm@R=13mm{1 \ar@{->}[r] & \mathcal{TB}_{g,1} \ar@{->}[r] & \mathcal{B}_{g,1}[p] \ar@{->}[r] & Sp_{2g}^B(\mathbb{Z},p) \ar@{->}[r] & 1 },$$
and the $\mathcal{B}_{g,1}[p]$-module $\extp^3H_p$ with the trivial action, we get the exact sequence
$$
\xymatrix@C=8mm@R=10mm{0 \ar@{->}[r] & H^1(Sp_{2g}^B(\mathbb{Z},p);\extp^3H_p) \ar@{->}[r]^{\;\text{inf}} & H^1(\mathcal{B}_{g,1}[p];\extp^3H_p) \ar@{->}[r]^{\text{res}\quad} & H^1(\mathcal{TB}_{g,1};\extp^3H_p)
 .}
$$
By Universal coefficients Theorem and Lemma \eqref{lema_comut_SpB}, we have that
$$H^1(Sp_{2g}^B(\mathbb{Z},p);\extp^3H_p)=Hom(\mathfrak{sl}_{g}(\mathbb{Z}/p)\oplus S_g(\mathbb{Z}/p),\extp^3H_p),$$
and taking $\mathcal{AB}_{g,1}$-invariants, we get the exact sequence
$$
\xymatrix@C=7mm@R=5mm{0 \ar@{->}[r] & Hom(\mathfrak{sl}_{g}(\mathbb{Z}/p)\oplus S_g(\mathbb{Z}/p),\extp^3H_p)^{\mathcal{AB}_{g,1}} \ar@{->}[r]^-{\text{inf}} & H^1(\mathcal{B}_{g,1}[p];\extp^3H_p)^{\mathcal{AB}_{g,1}} \ar@{->}[r] & }
$$
$$ 
 \xymatrix@C=7mm@R=5mm{ \ar@{->}[r]^-{\text{res}} & H^1(\mathcal{TB}_{g,1};\extp^3H_p)^{\mathcal{AB}_{g,1}}.
 }
$$
Since $\Psi(\mathcal{AB}_{g,1})\cong SL^\pm_g(\mathbb{Z}/p),$ there is an element $f$ such that $\Psi(f)=-Id,$ which acts trivially on $\mathfrak{sl}_{g}(\mathbb{Z}/p)\oplus S_g(\mathbb{Z}/p)$ and as $-1$ on $\extp^3H_p.$
Hence, $Hom(\mathfrak{sl}_{g}(\mathbb{Z}/p)\oplus S_g(\mathbb{Z}/p),\extp^3H_p)^{\mathcal{AB}_{g,1}}=0.$
\end{proof}

Now we are ready to compute the images of $\mathcal{A}_{g,1}[p]$ and $\mathcal{B}_{g,1}[p]$ under $\tau_1^Z.$

Recall that we have a decomposition $H=A\oplus B,$ this induces the decomposition $\bigwedge^3 H=\bigwedge^3 A\oplus B \wedge(\bigwedge^2 A)\oplus A \wedge (\bigwedge^2 B)\oplus \bigwedge^3 B.$
Set $W_A=\bigwedge^3 A,$ $W_B=\bigwedge^3 B$ and $W_{AB}=B\wedge (\bigwedge^2 A)\oplus A \wedge (\bigwedge^2 B).$ The Johnson homomorphism computes the action of the Torelli group on the second nilpotent quotient of the fundamental group of $\Sigma_{g,1}.$ Computing on specific elements one can check that (see \cite{mor})

\begin{lema}
\label{lem_im_TA,TB}
The image of $\mathcal{TA}_{g,1}$ and $\mathcal{TB}_{g,1}$ under $\tau_1$ in $\bigwedge^3H$ are respectively $$W_A\oplus W_{AB}\quad\text{and}\quad W_B\oplus W_{AB}.$$ 
\end{lema}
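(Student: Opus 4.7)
The plan is to establish each inclusion separately, first verifying that $\tau_1(\mathcal{TB}_{g,1}) \subseteq W_B \oplus W_{AB}$ and then finding enough explicit elements of $\mathcal{TB}_{g,1}$ to exhaust this subspace; the case of $\mathcal{TA}_{g,1}$ will then follow by symmetry.

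For the easy inclusion, I would use the alternate form of the Johnson homomorphism as $\tau_1 : \mathcal{T}_{g,1} \to \mathrm{Hom}(H, \bigwedge^2 H)$, sending $f$ to the map $[x] \mapsto [f(\tilde{x})\tilde{x}^{-1}]$, where $\tilde{x}$ is a lift to $\pi_1(\Sigma_{g,1})$ and the bracket on the right means the class in $\pi_1^{(2)}/\pi_1^{(3)} \cong \bigwedge^2 H$. By the Griffith characterization recalled in Subsection \ref{homol_homoto_actions}, every $f \in \mathcal{B}_{g,1}$ preserves the normal subgroup $N \triangleleft \pi_1(\Sigma_{g,1})$ generated by $\beta_1, \ldots, \beta_g$. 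For $b \in B$, I lift to $\beta \in N$ and observe that $f(\beta)\beta^{-1}$ lies in $N \cap \pi_1^{(2)}$; its image in $\bigwedge^2 H$ therefore lies in the subspace $B \wedge H$, because commutators of the form $[n,\gamma]$ with $n \in N$ project onto $b \wedge h$ with $b \in B$, $h \in H$. Hence $\tau_1(f)(B) \subseteq B \wedge H$. A direct linear-algebra check, using the standard identification of $\bigwedge^3 H$ as a subspace of $\mathrm{Hom}(H, \bigwedge^2 H)$ via the symplectic form, shows that an element of $W_A = \bigwedge^3 A$ sends $B$ into $\bigwedge^2 A$ (and is nonzero on $B$ unless it itself is zero), whereas the components $W_B$, $\bigwedge^2 A \wedge B$ and $A \wedge \bigwedge^2 B$ all send $B$ into $B \wedge H$. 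Consequently the constraint forces the $W_A$-component of $\tau_1(f)$ to vanish, giving $\tau_1(\mathcal{TB}_{g,1}) \subseteq W_B \oplus W_{AB}$.

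For the reverse inclusion, I would use Johnson's explicit BP-map formula: if $\gamma, \gamma'$ cobound a genus-$h$ subsurface $\Sigma' \subseteq \Sigma_{g,1}$ with symplectic basis $\{x_i, y_i\}_{i=1}^h$, then $\tau_1(T_\gamma T_{\gamma'}^{-1}) = [\gamma] \wedge \omega_{\Sigma'}$, where $\omega_{\Sigma'} = \sum_i x_i \wedge y_i \in \bigwedge^2 H$. For such a BP-map to lie in $\mathcal{TB}_{g,1}$ it suffices that both $\gamma$ and $\gamma'$ bound in $\mathcal{H}_g$, which forces $[\gamma] \in B$. Conversely, every class $b \in B$ is represented by a nonseparating simple closed curve bounding in $\mathcal{H}_g$, and one can always find a parallel companion $\gamma'$ so that the pair bounds in $\mathcal{H}_g$. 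By choosing $[\gamma]$ ranging over a basis of $B$ and choosing subsurfaces $\Sigma'$ whose symplectic form $\omega_{\Sigma'}$ is varied via conjugation by elements of $\mathcal{B}_{g,1}$, one obtains wedges $b \wedge (a_i \wedge b_i)$, $b \wedge (b_i \wedge b_j)$ and $b \wedge (a_i \wedge a_j)$ which together span $B \wedge \bigwedge^2 H = W_B \oplus W_{AB}$.

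The case of $\mathcal{TA}_{g,1}$ is entirely parallel: swap the roles of the handlebodies and of the Lagrangians $A$ and $B$, using that $\mathcal{A}_{g,1}$ preserves the normal subgroup generated by the $\alpha_i$'s. The main obstacle I foresee is in the second step, namely verifying that the list of BP-maps actually realised in $\mathcal{TB}_{g,1}$ is rich enough to span every wedge in $W_B \oplus W_{AB}$; in particular, one must check that for each prescribed pair $([\gamma], \omega_{\Sigma'})$ with $[\gamma] \in B$ there is a geometric realisation in which $\gamma'$ can be chosen so that $T_\gamma T_{\gamma'}^{-1}$ extends to a diffeomorphism of $\mathcal{H}_g$. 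This amounts to a change-of-coordinates argument within $\mathcal{B}_{g,1}$ combined with transitivity of the handlebody group on suitable curve systems, which should be manageable but requires care.
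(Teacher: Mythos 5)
Your forward inclusion $\tau_1(\mathcal{TB}_{g,1}) \subseteq W_B \oplus W_{AB}$ is sound: since $f \in \mathcal{B}_{g,1}$ preserves the normal closure $N$ of $\beta_1,\ldots,\beta_g$, the class $[f(\tilde b)\tilde b^{-1}]$ lies in $B\wedge H$ for every $b\in B$, and under the contraction identification $\bigwedge^3 H\hookrightarrow \mathrm{Hom}(H,\bigwedge^2 H)$ only the $W_A$-component sends $B$ outside $B\wedge H$ (into $\bigwedge^2 A$, injectively, because the $A$--$B$ pairing is perfect), so that component must vanish. The paper itself offers no argument here, just a reference to Morita, so this part is a genuine addition.

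The surjectivity step, however, has a real gap. For a CBP-twist $T_\gamma T_{\gamma'}^{-1}$ the two bounding disks cap off the cobounding subsurface $\Sigma'$ into a sub-handlebody $V\subset \mathcal{H}_g$, whose meridians supply a Lagrangian half $\{y_i\}$ of a symplectic basis of $\Sigma'$ with $y_i\in B$ for all $i$. Combined with $[\gamma]\in B$, Johnson's formula gives
\[
\tau_1(T_\gamma T_{\gamma'}^{-1}) \;=\; [\gamma]\wedge\textstyle\sum_i x_i\wedge y_i \;\in\; H\wedge\bigwedge^2 B \;=\; \bigwedge^3 B \,\oplus\, A\wedge\bigwedge^2 B,
\]
which misses the entire summand $B\wedge\bigwedge^2 A$ of $W_{AB}$. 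Conjugation by $\mathcal{B}_{g,1}$ cannot rescue this: $\Psi(\mathcal{B}_{g,1})=Sp_{2g}^B(\mathbb{Z})$ is block lower-triangular, so it sends $B$ to $B$ and therefore preserves the descending filtration of $\bigwedge^3 H$ by number of $B$-factors. In particular $H\wedge\bigwedge^2 B$ is a $Sp_{2g}^B(\mathbb{Z})$-submodule, and the wedges $b\wedge a_i\wedge a_j$ that you claim to obtain by conjugation lie strictly outside it. So the elements you list do not span $W_B\oplus W_{AB}$, and the reverse inclusion fails as written.

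To repair the argument you need BP-maps in $\mathcal{TB}_{g,1}$ whose curves are \emph{not} null-homotopic in $\mathcal{H}_g$. The natural ones are twists along properly embedded annuli: if $\eta,\eta'$ cobound an annulus inside $\mathcal{H}_g$ (for instance two push-offs of the longitude $\alpha_2$, bounding a genus-one subsurface around handle $1$ on $\Sigma_{g,1}$), then $T_\eta T_{\eta'}^{-1}$ extends over $\mathcal{H}_g$ even though neither curve bounds a disk, and $\tau_1(T_\eta T_{\eta'}^{-1})=a_2\wedge a_1\wedge b_1$ lands precisely in $B\wedge\bigwedge^2 A$. These annular BP-maps, together with the CBP-twists, are the ``specific elements'' the paper alludes to; your proof needs them, and an explicit span check across all three summands, before the surjectivity closes.
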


Similarly as before, we have a decomposition $H_p=A_p\oplus B_p,$ and this induces the decomposition $\bigwedge^3 H_p=\bigwedge^3 A_p\oplus B _p\wedge(\bigwedge^2 A_p)\oplus A_p \wedge (\bigwedge^2 B_p)\oplus \bigwedge^3 B_p.$
Set $W_{A}^p=\bigwedge^3 A_p,$ $W_B^p=\bigwedge^3 B_p$ and $W_{AB}^p=B_p\wedge (\bigwedge^2 A_p)\oplus A_p \wedge (\bigwedge^2 B_p).$
We get the following lemma analog to \eqref{lem_im_TA,TB}.

\begin{lema}
\label{lem_im_A[p],B[p]}
The image of $\mathcal{A}_{g,1}[p]$ and $\mathcal{B}_{g,1}[p]$ under $\tau_1^Z$ in $\bigwedge^3H_p$ are respectively $$W_A^p\oplus W_{AB}^p\quad\text{and}\quad W_B^p\oplus W_{AB}^p.$$ 
\end{lema}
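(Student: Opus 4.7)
The plan is to reduce the statement to the classical Torelli case of Lemma~\ref{lem_im_TA,TB} by exploiting the injectivity of a restriction map in cohomology, following the template of Proposition~\ref{prop_res_B}.

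First, since $\mathcal{TB}_{g,1}\subset \mathcal{B}_{g,1}[p]$ and the restriction of $\tau_1^Z$ to $\mathcal{T}_{g,1}$ coincides with the mod-$p$ reduction of the classical Johnson homomorphism (as established in Section~\ref{sec_ext_jonh_hom}, where $\tau_1^Z$ is identified with $k_p|_{\mathcal{M}_{g,1}[p]}$), Lemma~\ref{lem_im_TA,TB} immediately yields $\tau_1^Z(\mathcal{TB}_{g,1})=W_B^p\oplus W_{AB}^p$, which produces the inclusion $W_B^p\oplus W_{AB}^p\subseteq \tau_1^Z(\mathcal{B}_{g,1}[p])$.

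For the reverse inclusion I will consider the projection $\pi\colon \extp^3 H_p \to W_A^p$ onto the $\extp^3 A_p$-summand. Because $\mathcal{AB}_{g,1}$ preserves both Lagrangians $A$ and $B$, the decomposition $\extp^3 H_p = W_A^p\oplus W_B^p\oplus W_{AB}^p$ is $\mathcal{AB}_{g,1}$-equivariant, so $\pi\circ \tau_1^Z$ defines a class in $H^1(\mathcal{B}_{g,1}[p];W_A^p)^{\mathcal{AB}_{g,1}}$. The goal is to show this class vanishes. I will adapt Proposition~\ref{prop_res_B} to these coefficients: the $5$-term exact sequence attached to the short exact sequence of Lemma~\ref{lem_B[p]_2}, after taking $\mathcal{AB}_{g,1}$-invariants, identifies the inflation term with $Hom(\mathfrak{sl}_g(\mathbb{Z}/p)\oplus S_g(\mathbb{Z}/p),W_A^p)^{\mathcal{AB}_{g,1}}$ thanks to Lemma~\ref{lema_comut_SpB}. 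This Hom-group vanishes by a Center-Kills argument: by Lemma~\ref{lem_AB} the element $-Id\in GL_g(\mathbb{Z})$ lifts to $\mathcal{AB}_{g,1}$, acts trivially by conjugation on $\mathfrak{sl}_g\oplus S_g$, and acts as $(-1)^3=-1$ on $W_A^p=\extp^3 A_p$, so any $\mathcal{AB}_{g,1}$-invariant homomorphism satisfies $v=-v$ and hence is zero for $p$ odd. The resulting injection
$$H^1(\mathcal{B}_{g,1}[p];W_A^p)^{\mathcal{AB}_{g,1}}\hookrightarrow H^1(\mathcal{TB}_{g,1};W_A^p)^{\mathcal{AB}_{g,1}}$$
combined with the vanishing of $\pi\circ\tau_1^Z|_{\mathcal{TB}_{g,1}}$ from the first paragraph forces $\pi\circ\tau_1^Z=0$ on all of $\mathcal{B}_{g,1}[p]$, i.e.\ $\tau_1^Z(\mathcal{B}_{g,1}[p])\subseteq \ker\pi=W_B^p\oplus W_{AB}^p$, as required.

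The case of $\mathcal{A}_{g,1}[p]$ is entirely symmetric, projecting instead onto $W_B^p$ and invoking the $Sp^A$-version of Lemma~\ref{lema_comut_SpB} (already included in its statement). The principal technical input is the abelianisation computation in Lemma~\ref{lema_comut_SpB}; once it is in hand, the proof amounts to a $5$-term diagram chase combined with the Center-Kills argument above, so I do not anticipate additional obstacles beyond being careful that the $-Id$ lift genuinely lies in $\mathcal{AB}_{g,1}$ and interacts correctly with both the conjugation action on $\mathfrak{sl}_g\oplus S_g$ and the symplectic action on $\extp^3 H_p$.
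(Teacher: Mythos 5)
Your proof is correct, and it takes a genuinely different and arguably more economical route than the paper. The paper's proof first shows (via a Center-kills argument applied to $Sp_{2g}^{B\pm}(\mathbb{Z}/p)$ acting on $W_B^p\oplus W_{AB}^p$) that the extension $W_B^p\oplus W_{AB}^p\to\rho_2^Z(\mathcal{B}_{g,1})\to Sp_{2g}^{B\pm}(\mathbb{Z}/p)$ splits with a unique conjugacy class, uses this to manufacture a crossed homomorphism $k_B\colon\mathcal{B}_{g,1}\to W_B^p\oplus W_{AB}^p$ whose restriction to $\mathcal{TB}_{g,1}$ is the mod-$p$ Johnson homomorphism, and then invokes Proposition~\ref{prop_res_B} as a \emph{uniqueness} statement to conclude that $\tau_1^Z|_{\mathcal{B}_{g,1}[p]}$ must agree with (the inclusion of) $k_B$. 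You instead project onto the complementary summand $W_A^p$ and use the same restriction map as an \emph{injectivity} statement: the $W_A^p$-component of $\tau_1^Z$ restricts to zero on $\mathcal{TB}_{g,1}$, hence is zero. This bypasses the construction of $k_B$ and the extension-splitting argument entirely, so your argument is shorter. One small simplification still available to you: you need not rederive the five-term/Center-kills computation with $W_A^p$ coefficients, because the decomposition $\extp^3H_p=W_A^p\oplus W_{AB}^p\oplus W_B^p$ is $\mathcal{AB}_{g,1}$-equivariant, so the restriction map in Proposition~\ref{prop_res_B} splits as a direct sum of the restriction maps on each summand, and its injectivity is inherited on the $W_A^p$-factor directly. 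Everything you check (the $\mathcal{AB}_{g,1}$-equivariance of the projection, the trivial action of $\mathcal{B}_{g,1}[p]$ on $W_A^p$, the lift of $-Id$ to $\mathcal{AB}_{g,1}$ acting as $(-1)^3$ on $W_A^p$ while conjugating trivially on $\mathfrak{sl}_g\oplus S_g$) is accurate, and the easy containment in your first paragraph follows correctly from Lemma~\ref{lem_im_TA,TB} by surjectivity of the mod-$p$ reduction $W_B\oplus W_{AB}\twoheadrightarrow W_B^p\oplus W_{AB}^p$.
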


\begin{proof}
We only do the proof for $\mathcal{B}_{g,1}[p].$ For $\mathcal{A}_{g,1}[p]$ the argument is analogous.
Observe that we have a commutative diagram
\begin{equation}
\label{diag_com_B}
\xymatrix@C=7mm@R=10mm{
 1 \ar@{->}[r] & \mathcal{TB}_{g,1} \ar@{->}[r] \ar@{->}[d]^{\tau_1^Z} & \mathcal{B}_{g,1} \ar@{->}[r]^-{\Psi} \ar@{->}[d]^-{\rho_2^Z} & Sp_{2g}^B(\mathbb{Z}) \ar@{->}[r] \ar@{->}[d]^{r_p} & 1\\
0  \ar@{->}[r] & W_B^p\oplus W_{AB}^p\ar@{->}[r]^{i}
 & \rho_2^Z(\mathcal{B}_{g,1}) \ar@{->}[r]^-{\psi_1^Z} & Sp_{2g}^{B\pm}(\mathbb{Z}/p) \ar@{->}[r] & 1 .}
\end{equation}
By Center kills Lemma we have that
$H^*(Sp_{2g}^{B\pm}(\mathbb{Z}/p);W_B^p\oplus W_{AB}^p)=0.$
Then the bottom row of the diagram \eqref{diag_com_B} splits with only one $W_B^p\oplus W_{AB}^p$-conjugacy class of splittings.
Therefore, we have an isomorphism $f: \rho_2^Z(\mathcal{B}_{g,1}) \rightarrow (W_B^p\oplus W_{AB}^p)\rtimes Sp_{2g}^{B\pm}(\mathbb{Z}/p).$

Consider the map $\pi: (W_B^p\oplus W_{AB}^p)\rtimes Sp_{2g}^{B\pm}(\mathbb{Z}/p) \rightarrow W_B^p\oplus W_{AB}^p$
given by the projection in the first component.
Take $k_B=(\pi \circ f \circ \rho_2^Z)$ we obtain a crossed homomorphsim:
$$k_B: \mathcal{B}_{g,1}\longrightarrow W_B^p\oplus W_{AB}^p.$$
If we restrict $k_B$ on $\mathcal{B}_{g,1}[p],$ we get an $\mathcal{B}_{g,1}$-equivariant homomorphism:
$k_B: \mathcal{B}_{g,1}[p]\longrightarrow W_B^p\oplus W_{AB}^p.$
Composing $k_B$ with the natural inclusion 
$W_B^p\oplus W_{AB}^p \hookrightarrow \extp^3 H_p$ we get an $\mathcal{B}_{g,1}$-equivariant homomorphism: $$k_B: \mathcal{B}_{g,1}[p]\longrightarrow \extp^3H_p,$$
and by the commutative diagram \eqref{diag_com_B}, and the fact that $\pi$ is a retraction, we get that $k_B$ restricted to $\mathcal{TB}_{g,1}$ is the Johnson homomorphism modulo $p.$

On the other hand, if we take $k_p: \mathcal{M}_{g,1}[p]\longrightarrow \extp^3H_p$ restricted to $\mathcal{B}_{g,1}[p]$ a priori we get another 
$\mathcal{B}_{g,1}$-equivariant homomorphism, which is the Johnson homomorphism modulo $p$ on $\mathcal{TB}_{g,1},$
but by Proposition \eqref{prop_res_B} there is only one $\mathcal{B}_{g,1}$-equivariant homomorphism that coincides with the Johnson homomorphism modulo $p$ on $\mathcal{TB}_{g,1}.$
Hence, the restriction of $k_p$ to $\mathcal{B}_{g,1}[p]$ and $k_B$ have to be equal and so $k_p(\mathcal{B}_{g,1}[p])=W_B^p\oplus W_{AB}^p.$
\end{proof}

\subsection{Pull-back of 2-cocycles}

For each $g,$ the intersection form on homology induces a bilinear form $\omega:A\otimes B\rightarrow \mathbb{Z}/p.$ This in turn induces the bilinear forms $ J_g:W_A^p\otimes W_B^p \rightarrow \mathbb{Z}/p$ and $ J_g^t:W_B^p\otimes W_A^p \rightarrow \mathbb{Z}/p$ that we extend by $0$ to degenerate bilinear forms on $\bigwedge^3H_p=W^p_A\oplus W^p_{AB}\oplus W^p_{B}.$ Written as a matrices according to the decomposition $\bigwedge^3H_p=W^p_A\oplus W^p_{AB}\oplus W^p_{B}$ these are:
$$J_g:=\left(\begin{matrix}
0 & 0 & Id\\
0 & 0 & 0 \\
0 & 0 & 0 
\end{matrix}\right),
\qquad J_g^t:=\left(\begin{matrix}
0 & 0 & 0\\
0 & 0 & 0 \\
Id & 0 & 0 
\end{matrix}\right)$$
Notice that bilinear forms are naturally $2$-cocycles on abelian groups.

\begin{prop}
For each $g\geq 3,$ the $2$-cocycle $J_g^t$ is the unique cocycle (up to a multiplicative constant) on $\bigwedge^3 H_p$ whose pull-back along $k_p$ on the Torelli group modulo $p,$ $\mathcal{M}_{g,1}[p]$ satisfies conditions (2) and (3). Moreover once we have fixed a common multiplicative constant the family of pull-backed cocycles satisfies also (1).
\end{prop}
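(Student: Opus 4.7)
The plan is to translate conditions (2') and (3') into very restrictive algebraic constraints on a bilinear form $B_g$ on $\bigwedge^3 H_p$, and then identify the unique solution up to scalar as $J_g^t$.

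First, I would use Lemma \ref{lem_im_A[p],B[p]} to unravel condition (3'). Since $\tau_1^Z(\mathcal{A}_{g,1}[p]) = W_A^p \oplus W_{AB}^p$ and $\tau_1^Z(\mathcal{B}_{g,1}[p]) = W_B^p \oplus W_{AB}^p$, condition (3') forces $B_g(W_A^p \oplus W_{AB}^p,\,-) = 0$ and $B_g(-,\,W_B^p \oplus W_{AB}^p) = 0$. Writing $B_g$ as a block matrix according to the decomposition $\bigwedge^3 H_p = W_A^p \oplus W_{AB}^p \oplus W_B^p$, the only block that can be nonzero is the $(3,1)$-block, i.e., a bilinear pairing $\beta_g: W_B^p \otimes W_A^p \to \mathbb{Z}/p$.

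Next I would implement condition (2'). For $\phi = \left(\begin{smallmatrix}G & 0 \\ 0 & {}^tG^{-1}\end{smallmatrix}\right) \in Sp_{2g}(\mathbb{Z})$ with $G \in GL_g(\mathbb{Z})$, the action on $H_p$ preserves the Lagrangian decomposition: $G$ acts on $A_p$ and $(G^t)^{-1}$ on $B_p$. So (2') becomes the requirement that $\beta_g$ is $GL_g(\mathbb{Z})$-invariant, where $GL_g$ acts on $W_A^p = \bigwedge^3 A_p$ via $\bigwedge^3 G$ and on $W_B^p = \bigwedge^3 B_p$ via $\bigwedge^3 (G^t)^{-1}$. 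The intersection form $\omega$ identifies $B_p$ with the $GL_g$-dual of $A_p$, hence $W_B^p \cong (W_A^p)^*$ as $GL_g$-modules, and invariant bilinear forms $W_B^p \otimes W_A^p \to \mathbb{Z}/p$ correspond to $GL_g(\mathbb{Z})$-invariant endomorphisms of $W_A^p$. I would then show this space is one-dimensional by the same kind of matrix bookkeeping used in Lemma \ref{lema_elem_sp(Z/d)}: using transpositions in $\mathfrak{S}_g \subset GL_g(\mathbb{Z})$ to reduce to a single generator $a_{i_1}\wedge a_{i_2}\wedge a_{i_3}$ with $i_1<i_2<i_3$, and then applying elementary matrices $Id + E_{i,j}$ to kill every off-diagonal term, leaving only a scalar multiple of the identity. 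The distinguished generator is the form sending $(b_{j_1}\wedge b_{j_2}\wedge b_{j_3}) \otimes (a_{i_1}\wedge a_{i_2}\wedge a_{i_3})$ to $\det(\omega(b_{j_s}, a_{i_t}))$; this is manifestly $GL_g$-invariant because $\omega$ is, and it coincides with $J_g^t$ after extending by zero.

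For the final sentence, I would verify stabilization compatibility (1') directly. The stabilization $H_p \hookrightarrow H_p'$ adds a symplectic pair $a_{g+1}, b_{g+1}$, and extending $\omega$ by the obvious block preserves the Lagrangian decomposition. Once a common multiplicative constant is fixed, $J_{g+1}^t$ restricted to $\bigwedge^3 H_p \subset \bigwedge^3 H_p'$ clearly coincides with $J_g^t$, because any pure wedge of elements from $H_p$ has trivial $\omega$-pairing with basis elements involving $a_{g+1}$ or $b_{g+1}$, so the extra rows and columns of the matrix of $\omega$-pairings do not contribute to the determinant.

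The main obstacle will be the one-dimensionality argument in step two: verifying carefully, via a finite case analysis with permutation, diagonal, and elementary matrices over $\mathbb{Z}/p$, that every $GL_g(\mathbb{Z})$-invariant bilinear form on $W_B^p \otimes W_A^p$ is determined by a single scalar. The pull-back property via $k_p$ is then automatic, since $k_p$ is $\mathcal{AB}_{g,1}$-equivariant (via Section \ref{sec_ext_jonh_hom}) and intertwines the conjugation action of $\mathcal{AB}_{g,1}$ on $\mathcal{M}_{g,1}[p]$ with the $GL_g(\mathbb{Z})$-action on $\bigwedge^3 H_p$.
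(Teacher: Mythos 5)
Your proposal follows essentially the same structure as the paper's argument: use Lemma~\eqref{lem_im_A[p],B[p]} together with condition~(3') to isolate the $W_B^p\otimes W_A^p$ block, then use $GL_g(\mathbb{Z})$-invariance to pin down the scalar, and finally check stabilization. Your reformulation of the invariance computation in terms of $\omega$ identifying $W_B^p$ with the $GL_g$-dual of $W_A^p$, so that the space of admissible forms becomes $\mathrm{End}_{GL_g(\mathbb{Z})}(W_A^p)$, is a clean way to package the paper's hands-on matrix calculations, and the distinguished generator $\det(\omega(b_{j_s},a_{i_t}))$ indeed agrees with $J_g^t$ as defined.

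However, there is a genuine gap at the very first step. The proposition asserts uniqueness among all $2$-cocycles on $\bigwedge^3 H_p$, not just among bilinear forms, and these are far from the same thing on an abelian group. You write ``very restrictive algebraic constraints on a bilinear form $B_g$'' and then ``writing $B_g$ as a block matrix according to the decomposition,'' but this block-matrix decomposition only makes sense once bilinearity is known. The paper first uses the cocycle identity together with condition~(3) to establish the functional reduction $B(v,w) = B(v_b,w_a)$ for arbitrary $v,w\in\bigwedge^3 H_p$ (decomposed as $v=v_a+v_{ab}+v_b$), and only then proves additivity in each argument by a second application of the cocycle identity, e.g.
\begin{equation*}
B(u+v,w)=B(u_b+v_b,w_a)=B(v_b,w_a)+B(u_b,v_b+w_a)-B(u_b,v_b)=B(u_b,w_a)+B(v_b,w_a).
\end{equation*}
Without this derivation you have only proved uniqueness within the subspace of bilinear forms, which is a strictly weaker statement. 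You should insert this step before the block-matrix analysis; the rest of your argument then goes through unchanged.
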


\begin{proof}
Fix an integer $n\in \mathbb{Z}/p.$ It is obvious from the definition and from Lemma \eqref{lem_im_A[p],B[p]} that the family $k^*_p(nJ_g^t)$ satisfies (1), (2) and (3).

Let $B$ denote an arbitrary 2-cocycle on $\bigwedge^3 H_p$ whose pull-back on $\mathcal{M}_{g,1}[p]$ along $\tau_1^Z$ satisfies (2) and (3).
Write each element $w\in \bigwedge^3 H_p $ as $w=w_a+w_{ab}+w_b$ according to the decomposition $W^p_A\oplus W^p_{AB}\oplus W^p_B.$

The cocycle relation together with condition (3) and Lemma \eqref{lem_im_A[p],B[p]} imply that
$$\forall v,w\in \extp^3H_p,\qquad B(v,w)=B(v_b,w_a)$$
We first prove that $B$ is bilinear. For the linearity on the first variable compute
\begin{align*}
B(u+v,w)= & B(u_b+v_b,w_a) \\
= & B(v_b,w_a)+B(u_b,v_b+w_a) -B(u_b,v_b) \\
= & B(u_b,w_a)+B(v_b,w_a) \\
= & B(u,w) +B(v,w).
\end{align*}
where in the second equality we used the cocycle relation.
A similar proof holds for the linearity on the second variable.

The equivariance of $k_p$ and the condition (3) and Lemma \eqref{lem_im_A[p],B[p]} implies that our bilinear form $B(x,y)$ should be zero for
$y\in \extp^3H_p,\; x\in W_A^p\oplus W_{AB}^p$
and
$ x\in \extp^3H_p, \; y\in W_B^p\oplus W_{AB}^p.$
By the equivariance properties of $k_p$ we have the following relations:

\begin{itemize}
\item Let $i,j,k,l,m,n\in\{1,\ldots g\}$ with $i, j\, k$ pairwise distinct, $l, m, k$ pairwise distinct and $n\neq i,j,k,l,m.$  Take $f$ the matrix
$\left(\begin{smallmatrix}
G & 0 \\
0 & {}^tG^{-1}
\end{smallmatrix}\right)$ with $G\in \mathfrak{S}_g\subset GL_g(\mathbb{Z})$ with $1$'s at the diagonal and position $(m,k).$ Then we have that
\begin{align*}
B(b_i\wedge b_j \wedge b_k,\;a_l\wedge a_m \wedge a_k)= & B(f.(b_i\wedge b_j \wedge b_k),\;f.(a_l\wedge a_m \wedge a_k))= \\
= & B(b_i\wedge b_j \wedge b_k,\;a_l\wedge a_m \wedge a_k+a_l\wedge a_m \wedge a_n)= \\
= & B(b_i\wedge b_j \wedge b_k,\;a_l\wedge a_m \wedge a_k)\\
 & +B(b_i\wedge b_j \wedge b_k,\;a_l\wedge a_m \wedge a_n)
\end{align*}
Thus $B(b_i\wedge b_j \wedge b_k,\;a_l\wedge a_m \wedge a_n)=0,$ where $i,j$ not necessarily equal to $l,m.$
\item Take $f$ the matrix $\left(\begin{smallmatrix}
G & 0 \\
0 & {}^tG^{-1}
\end{smallmatrix}\right)$ with $G=(1i)(2j)(3k)\in \mathfrak{S}_g\subset GL_g(\mathbb{Z})$ for $i\neq j \neq k.$ Then we have that
\begin{align*}
B(b_1\wedge b_2 \wedge b_3,\;a_1\wedge a_2 \wedge a_3)= & B(f.(b_1\wedge b_2 \wedge b_3),\;f.(a_1\wedge a_2 \wedge a_3))= \\
= & B(b_i\wedge b_j \wedge b_k,\;a_i\wedge a_j \wedge a_k)
\end{align*}
\end{itemize}
Thus $B$ must be $nJ_g^t$ for some $n\in \mathbb{Z}/p.$
\end{proof}

\chapter{Obstruction to Perron's conjecture}
\label{chapter: obtruction}

In this chapter, our aim is to give an obstruction to the Perron's conjecture. The main ideas of this chapter are inspired in the computations in \cite{mor_linear} due to S. Morita.

As a starting point, in the first section, we state the Perron's conjecture.
In the second section, in order to study the Perron's conjecture, we make some cohomological computations with coefficients in $\mathbb{Z}/p,$ which are similar to the cohomological computations with rational coefficients done in \cite{mor_linear} by S. Morita.
Finally, in the last section, we will use the computations of the second section to give an obstruction to Perron's conjecture.
As we will show in the last section, such obstruction comes from the fact that if the conjectured Perron's invariant is well-defined, then the cohomology class of the associated 2-cocycle is equal to the restriction of the first characteristic class of surface bundles $e_1\in H^2(\mathcal{M}_{g,1};\mathbb{Z}),$ defined in \cite{mor_char}, to $\mathcal{M}_{g,1}[p]$ reduced modulo $p.$
But such cohomology class is not zero.

Throughout this chapter we denote by $\lambda$ the Casson's invariant, and by $S^3_f$ the Heegaard splitting $\mathcal{H}_g \cup_{\iota_g f} -\mathcal{H}_g$ with $f\in \mathcal{M}_{g,1}.$

\section{Perron's conjecture}

In \cite{per} B. Perron stated the following conjectures:
\begin{conj}
\label{conj1}
Let $g\geq 3$ and $p>3$ be a prime number.
For every $f\in \mathcal{T}_{g,1}\cap \mathcal{D}_{g,1}[p],$ we have that
$\lambda(S^3_f)\equiv 0 \;(\text{mod }p).$
\end{conj}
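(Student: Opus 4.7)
The plan is to combine Pitsch's cohomological description of the Casson invariant with Cooper's vanishing of the mod-$p$ Johnson homomorphisms on $D_{g,1}[p]$, reducing the congruence to showing that a certain induced homomorphism on $\mathcal{T}_{g,1}\cap\mathcal{D}_{g,1}[p]$ is trivial.

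First I would invoke the $2$-cocycle identity underlying Pitsch's construction \cite{pitsch}: for $\phi,\psi \in \mathcal{T}_{g,1}$,
\[
\lambda(\phi\psi) - \lambda(\phi) - \lambda(\psi) \;=\; -\,2J_g\bigl(\tau_1(\phi),\tau_1(\psi)\bigr),
\]
where $\tau_1:\mathcal{T}_{g,1}\to \bigwedge^3 H$ is the (integral) first Johnson homomorphism and $2J_g$ is the integral bilinear form whose mod-$p$ reduction appears in Section \ref{section: pull-back 2-cocycles}. Then Proposition \ref{prop_tau2_Dp} gives $D_{g,1}[p]\subset \mathcal{I}^Z_{g,1}(p-1)$, so $\tau_1(f)\in p\cdot\bigwedge^3 H$ for every $f\in\mathcal{T}_{g,1}\cap\mathcal{D}_{g,1}[p]$. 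Consequently, for such $\phi,\psi$ the correction term lies in $p^2\mathbb{Z}$, and therefore
\[
\bar{\lambda} \;:=\; \lambda\bmod p \;:\; \mathcal{T}_{g,1}\cap\mathcal{D}_{g,1}[p] \longrightarrow \mathbb{Z}/p
\]
is actually a group homomorphism.

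It remains to show $\bar{\lambda}$ is the zero map. The plan is to exhibit a generating set on which the vanishing is immediate. The elementary generators are $T_\gamma^{p}$ for bounding simple closed curves $\gamma$: here $S^3_{T_\gamma^{p}}$ is realised as $\pm 1/p$ Dehn surgery on an unknotted framed knot $K_\gamma\subset S^3$ dual to $\gamma$, so Casson's surgery formula gives $\lambda(S^3_{T_\gamma^{p}})=\pm\tfrac{p}{2}\Delta_{K_\gamma}''(1)$, a multiple of $p$ since $\Delta_K''(1)$ is always even. Thus $\bar{\lambda}(T_\gamma^{p})=0$.

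The hard part will be controlling the remaining generators of $\mathcal{T}_{g,1}\cap\mathcal{D}_{g,1}[p]$: products of $p$-th powers of non-bounding Dehn twists whose combined symplectic action cancels only through non-trivial interaction inside $\ker(Sp_{2g}(\mathbb{Z})\to Sp_{2g}(\mathbb{Z}/p))$. Such commutator-type elements sit in the deeper Johnson kernels $\mathcal{I}^Z_{g,1}(k)$ with $k\geq 2$, so their correction terms vanish to higher order in $p$; this suggests a descending induction on the Zassenhaus depth, writing each such $f$ as a product of $p$-th powers of bounding twists modulo elements one layer deeper, and invoking the homomorphism property of $\bar{\lambda}$ at each stage. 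This induction is the delicate point, and it is precisely where the obstruction of Chapter \ref{chapter: obtruction} may intervene: whether the induction closes determines whether Conjecture \ref{conj1} holds or must be rejected.
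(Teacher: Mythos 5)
You are attempting to prove a statement that this thesis in fact refutes: Conjecture \ref{conj1} is Perron's conjecture, and Theorem \ref{teo_ch6}, proved in Chapter \ref{chapter: obtruction}, establishes that it is false. So no completion of your argument exists, and the useful question is exactly where it must break. Your first two steps are essentially sound: combining Pitsch's identity $\lambda(\phi)+\lambda(\psi)-\lambda(\phi\psi)=-2J_g^t(\tau_1(\phi),\tau_1(\psi))$ with Proposition \ref{prop_tau2_Dp} does show that $\bar{\lambda}=\lambda\bmod p$ is a homomorphism on $\mathcal{T}_{g,1}\cap D_{g,1}[p]$, and the surgery formula gives $\lambda(S^3_{T_\gamma^{p}})=\pm p\cdot\tfrac{1}{2}\Delta''_{K_\gamma}(1)\equiv 0\pmod p$ for $\gamma$ a bounding simple closed curve (the knot $K_\gamma$ need not be unknotted as you claim, but the conclusion holds for any knot since $\tfrac{1}{2}\Delta''_K(1)\in\mathbb{Z}$).

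The gap is the final induction, and it is not merely delicate: it provably cannot close. Granting only the two ingredients of your first step, Conjecture \ref{conj1} already forces $\gamma_p(\varphi):=\lambda(S^3_f)\bmod p$ (for any decomposition $\varphi=f\circ m$ with $f\in\mathcal{T}_{g,1}$, $m\in D_{g,1}[p]$) to be a well-defined function on $\mathcal{M}_{g,1}[p]$: if $f_1m_1=f_2m_2$ then $f_2^{-1}f_1\in\mathcal{T}_{g,1}\cap D_{g,1}[p]$, so $\lambda(S^3_{f_2^{-1}f_1})\equiv 0$ and the cocycle correction term vanishes mod $p$ because $\tau_1^Z$ kills $D_{g,1}[p]$. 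Proposition \ref{prop_obst_perron} then shows $\gamma_p$ trivializes $(\tau_1^Z)^*(-2J_g^t)$, so the class $[(\tau_1^Z)^*(J_g^t)]$ would have to vanish in $H^2(\mathcal{M}_{g,1}[p];\mathbb{Z}/p)$. But the last section of Chapter \ref{chapter: obtruction} computes that, modulo a cohomologically trivial cocycle, $(\tau_1^Z)^*(J_g^t)$ equals $-\tfrac{1}{48}(\tau_1^Z)^*(Q_g)$, whose class is the restriction to $\mathcal{M}_{g,1}[p]$ of the first characteristic class of surface bundles $e_1=12c_1$ reduced mod $p$; by Proposition \ref{prop_inj_MGC} the restriction map $H^2(\mathcal{M}_{g,1};\mathbb{Z}/p)\to H^2(\mathcal{M}_{g,1}[p];\mathbb{Z}/p)$ is injective and $12\not\equiv 0\pmod p$ for $p>3$, so this class is non-zero. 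Consequently there must exist $f\in\mathcal{T}_{g,1}\cap D_{g,1}[p]$ with $\lambda(S^3_f)\not\equiv 0\pmod p$: your homomorphism $\bar{\lambda}$ is non-zero, the elements you describe as "products of $p$-th powers of non-bounding twists" are not reducible to separating-twist powers plus deeper filtration terms in the way your induction requires, and the non-trivial class $e_1\bmod p$ is precisely the obstruction that prevents the induction from closing.
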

\begin{conj}
\label{conj2}
Let $g\geq 3,$ $p>3$ be a prime number and $S^3_\varphi \in \mathcal{S}^3[p]$ with $\varphi=f\circ m,$ where $f\in \mathcal{T}_{g,1},$ $m\in D_{g,1}[p].$ Then the map $\gamma_p:\mathcal{M}_{g,1}[p] \rightarrow \mathbb{Z}/p$ given by
$$\gamma_p(\varphi)=\lambda(S^3_f) \; (\text{mod }p)$$
is a well defined invariant of $\mathcal{S}^3[p].$
\end{conj}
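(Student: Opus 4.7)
The plan is to deduce Conjecture \ref{conj2} from Conjecture \ref{conj1} by fitting $\gamma_p$ into the framework of Theorem \ref{teo_cocy_p}. In outline: (a) show the formula is independent of the choice of decomposition $\varphi = f\circ m$ modulo $p$, so that $\gamma_p$ is a well-defined map on $\mathcal{M}_{g,1}[p]$; (b) compute the associated $2$-cocycle and identify it with the pullback of $2J_g$ along $\tau_1^S$ studied in Section \ref{section: pull-back 2-cocycles}; (c) invoke Theorem \ref{teo_cocy_p} together with the bijection \eqref{bij_M[p]} to reassemble $\{\gamma_p\}_g$ into an invariant on $\mathcal{S}^3[p]$.

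\textbf{Step (a): independence of the decomposition.} Suppose $\varphi = f_1 m_1 = f_2 m_2$ with $f_i\in \mathcal{T}_{g,1}$ and $m_i\in D_{g,1}[p]$, and set $h = f_1^{-1}f_2 = m_1 m_2^{-1} \in \mathcal{T}_{g,1}\cap D_{g,1}[p]$. Using Pitsch's identification of Casson as a trivialization of $\tau_1^*(2J_g)$, one has
\begin{equation*}
\lambda(S^3_{f_2}) - \lambda(S^3_{f_1}) \;=\; \lambda(S^3_h) \;-\; 2J_g\bigl(\tau_1(f_1),\,\tau_1(h)\bigr).
\end{equation*}
Conjecture \ref{conj1} kills the first term modulo $p$. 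For the second term, Proposition \ref{prop_tau2_Dp} gives $D_{g,1}[p]\subset \mathcal{I}_{g,1}^Z(p-1)$, and since any $h\in \mathcal{T}_{g,1}\cap D_{g,1}[p]$ can be expressed using $p$-th powers of Dehn twists on bounding pairs or separating curves, $\tau_1(h)$ lies in $p\cdot \bigwedge^3 H$ and so reduces to $0$ in $\bigwedge^3 H_p$. Bilinearity of $2J_g$ forces the cocycle term to vanish modulo $p$, and $\gamma_p$ descends unambiguously to $\mathcal{M}_{g,1}[p]$.

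\textbf{Step (b)--(c): the cocycle and the two conditions.} Expanding $C_g^{\gamma_p}(\varphi_1,\varphi_2) = \gamma_p(\varphi_1)+\gamma_p(\varphi_2)-\gamma_p(\varphi_1\varphi_2)$ using decompositions $\varphi_i = f_i m_i$ and the same mod-$p$ vanishings as above identifies $C_g^{\gamma_p}$ with the restriction of $(\tau_1^S)^*(2J_g)$ to $\mathcal{M}_{g,1}[p]\times \mathcal{M}_{g,1}[p]$. By Section \ref{section: pull-back 2-cocycles} this family satisfies conditions (1)--(3). Condition (i) of Theorem \ref{teo_cocy_p} — triviality of $[C_g^{\gamma_p}]$ in $H^2(\mathcal{M}_{g,1}[p];\mathbb{Z}/p)$ — holds by construction, since $\gamma_p$ is the trivialization. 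Condition (ii) — triviality of the torsor $\rho(C_g^{\gamma_p})\in H^1(\mathcal{AB}_{g,1};\operatorname{Hom}(\mathcal{M}_{g,1}[p],\mathbb{Z}/p))$ — I would deduce from $\mathcal{AB}_{g,1}$-invariance of Casson (so $\mu\cdot \gamma_p = \gamma_p$ for $\mu\in \mathcal{AB}_{g,1}$) together with Lemma \ref{lem_B[p],A[p]}, which kills the principal-derivation ambiguity by forcing the relevant $\mathcal{AB}_{g,1}$-invariant homomorphisms to vanish on $\mathcal{A}_{g,1}[p]$ and $\mathcal{B}_{g,1}[p]$.

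\textbf{Main obstacle.} The genuinely hard step is Conjecture \ref{conj1}: showing $\lambda(S^3_h)\equiv 0 \pmod p$ for every $h \in \mathcal{T}_{g,1}\cap D_{g,1}[p]$. Passing from the $p$-divisibility of $\tau_1(h)$ to the $p$-divisibility of $\lambda(S^3_h)$ is not automatic, since Casson depends on more than the first Johnson homomorphism; a natural attack is to use the surgery formula for Casson together with higher Zassenhaus-filtration information, but one genuinely needs control on how Casson interacts with $p$-th powers of Dehn twists up to depth $p-1$. Even once (a) is in hand, the sharpest technical point in verifying condition (i) is the cocycle identity $C_g^{\gamma_p}=(\tau_1^S)^*(2J_g)\pmod p$: any discrepancy would be detected by a class in $H^2(\mathcal{M}_{g,1}[p];\mathbb{Z}/p)$, and ruling such a class out — in particular ensuring it is not accounted for by the mod-$p$ reduction of a characteristic class such as $e_1$ — is where I expect the argument to be pushed to its limit.
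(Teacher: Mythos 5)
The statement you are trying to prove is a \emph{conjecture} that this thesis actually \emph{disproves}: Chapter \ref{chapter: obtruction} establishes Theorem \ref{teo_ch6}, namely that Perron's conjecture is false, with the obstruction being exactly the class you flag at the end of your proposal as the ``sharpest technical point.'' Your strategy founders on condition (i) of Theorem \eqref{teo_cocy_p}. You assert that triviality of $[C_g^{\gamma_p}]$ ``holds by construction, since $\gamma_p$ is the trivialization,'' but this is circular: $\gamma_p$ is a genuine trivialization only if the conjecture already holds, and the content of Proposition \eqref{prop_obst_perron} is precisely that well-definedness of $\gamma_p$ would \emph{force} $(\tau_1^Z)^*(-2J_g^t)$ to be a trivial cocycle on $\mathcal{M}_{g,1}[p]$. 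The paper then shows this fails: writing $(\tau_1^Z)^*(J^t_g)=(\tau_1^Z)^*\bigl(J^t_g+\tfrac{1}{48}Q_g\bigr)-\tfrac{1}{48}(\tau_1^Z)^*(Q_g)$, the first summand is cohomologically trivial (via the versal-extension/central-extension analysis of $\rho_3^Z(\mathcal{M}_{g,1}[p])$ and Lemma \eqref{lem-cocy-pull-back}), while $(\tau_1^Z)^*(Q_g)$ represents the restriction to $\mathcal{M}_{g,1}[p]$ of Morita's cocycle $\varsigma_g$, i.e.\ of the first characteristic class $e_1=12c_1$, reduced mod $p$. Since $p>3$ and the restriction map $H^2(\mathcal{M}_{g,1};\mathbb{Z}/p)\rightarrow H^2(\mathcal{M}_{g,1}[p];\mathbb{Z}/p)$ is injective (Proposition \eqref{prop_inj_MGC}), this class is nonzero, so no trivialization of $(\tau_1^Z)^*(-2J_g^t)$ on $\mathcal{M}_{g,1}[p]$ exists and $\gamma_p$ cannot be well defined.

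Two further remarks on your Step (a). First, even granting Conjecture \eqref{conj1}, your argument only addresses independence of the decomposition $\varphi=fm$; it does not address invariance under the double-coset and $\mathcal{AB}_{g,1}$-conjugation relations of the bijection \eqref{bij_M[p]}, which is where the cocycle-triviality requirement enters and where the argument breaks. Second, your claim that $\tau_1(h)\in p\cdot\bigwedge^3H$ for $h\in\mathcal{T}_{g,1}\cap D_{g,1}[p]$ is essentially the statement that $\tau_1^Z$ kills $D_{g,1}[p]$, which is fine, but it cannot be parlayed into the triviality of the cocycle class on all of $\mathcal{M}_{g,1}[p]$: the obstruction lives in $H^2$, not in the image of $\tau_1^Z$, and it is nonzero.
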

\begin{rem}
In particular in \cite{per} B. Perron states that the conjecture \eqref{conj1} implies the conjecture \eqref{conj2}.
In addition, he asserts that we can reformulate the conjectures \eqref{conj1}, \eqref{conj2} for all integer coprime with $6$ instead of a prime $p>3.$ Unfortunately, the proof of such results are not available in the literature.
\end{rem}

\section{Preliminary results}
Before to prove the obstruction's result to the Perron's conjecture, we need to do some cohomological computations with $\mathbb{Z}/p$ coefficients.
We devote this section to do such computations, some of them quite similar to the cohomological computations with rational coefficients that S. Morita have done in \cite{mor_linear}.

\begin{prop}
\label{prop_inj_MGC}
For $g\geq 4$ and $p$ an odd prime, the restriction maps
\begin{align*}
res: & \;H^2(Sp_{2g}(\mathbb{Z});\mathbb{Z}/p) \rightarrow H^2(Sp_{2g}(\mathbb{Z},p);\mathbb{Z}/p), \\
res: & \;H^2(\mathcal{M}_{g,1};\mathbb{Z}/p) \rightarrow H^2(\mathcal{M}_{g,1}[p];\mathbb{Z}/p),
\end{align*}
are injective homomorphisms.
\end{prop}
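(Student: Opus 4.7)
The plan is to apply the seven-term exact sequence of Theorem \ref{teo_seven_terms} to each of the extensions
\[
1 \to Sp_{2g}(\mathbb{Z},p) \to Sp_{2g}(\mathbb{Z}) \to Sp_{2g}(\mathbb{Z}/p) \to 1, \qquad 1 \to \mathcal{M}_{g,1}[p] \to \mathcal{M}_{g,1} \to Sp_{2g}(\mathbb{Z}/p) \to 1
\]
with trivial coefficients $M=\mathbb{Z}/p$. Writing $Q = Sp_{2g}(\mathbb{Z}/p)$ and $N$ for the kernel of each projection, the kernel of $res$ is exactly $H^2(G;\mathbb{Z}/p)_1$, which fits in the exact segment
\[
H^1(N;\mathbb{Z}/p)^Q \xrightarrow{tr} H^2(Q;\mathbb{Z}/p) \xrightarrow{inf} H^2(G;\mathbb{Z}/p)_1 \xrightarrow{\rho} H^1(Q;H^1(N;\mathbb{Z}/p)).
\]
The strategy is to show the two ``outer'' terms of this segment vanish, which forces $H^2(G;\mathbb{Z}/p)_1 = 0$.

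First I would compute $H^2(Q;\mathbb{Z}/p) = H^2(Sp_{2g}(\mathbb{F}_p); \mathbb{F}_p)$ and show it is zero. Since $Sp_{2g}(\mathbb{F}_p)$ is perfect for $g \geq 2$, so that $H_1(Sp_{2g}(\mathbb{F}_p);\mathbb{Z}) = 0$, the universal coefficients theorem (Theorem \ref{uct}) reduces this to the computation of the Schur multiplier $H_2(Sp_{2g}(\mathbb{F}_p); \mathbb{Z})$, which is classically known to be $\mathbb{Z}/2$ for $g \geq 2$ and $p$ odd; since $\gcd(2,p)=1$, its $\mathbb{Z}/p$-reduction vanishes, giving $H^2(Sp_{2g}(\mathbb{F}_p);\mathbb{F}_p) = 0$. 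This immediately yields $\operatorname{im}(inf) = 0$, so by exactness $\rho$ becomes injective on $H^2(G;\mathbb{Z}/p)_1$, and the whole problem reduces to showing $\operatorname{im}(\rho) = 0$.

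The main obstacle is the vanishing of $H^1(Q; H^1(N;\mathbb{Z}/p))$. For this I would first identify $H^1(N;\mathbb{Z}/p)$ using the universal coefficients theorem together with the Lee-Szczarba and Putman computations of $H_1(N;\mathbb{Z})$ that appear in the proofs of Propositions \ref{prop_iso_M[d],Sp[d]} and \ref{prop_iso_M[d],sp(Z/d)}: in the symplectic case this is $\mathfrak{sp}_{2g}(\mathbb{F}_p)^*$, and in the mapping class case it is built from $\mathfrak{sp}_{2g}(\mathbb{F}_p)^*$ together with a summand coming from $(\extp^3 H_p)^*$ via the Johnson homomorphism $\tau_1^Z$. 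Using the non-degenerate $Q$-invariant bilinear forms on $\mathfrak{sp}_{2g}(\mathbb{F}_p)$ (the trace form, non-degenerate since $p$ is odd) and on $\extp^3 H_p$ (induced from the symplectic pairing $\omega$), these $Q$-modules are self-dual, so one is reduced to the vanishing of
\[
H^1(Sp_{2g}(\mathbb{F}_p); \mathfrak{sp}_{2g}(\mathbb{F}_p)) \qquad \text{and} \qquad H^1(Sp_{2g}(\mathbb{F}_p); \extp^3 H_p).
\]
Both $\mathfrak{sp}_{2g}(\mathbb{F}_p)$ (the adjoint representation) and $\extp^3 H_p$ (a polynomial representation of the standard one) are rational modules for the algebraic group $Sp_{2g}$, so their $H^1$ over the finite group $Sp_{2g}(\mathbb{F}_p)$ vanishes in the stable range $g \geq 4$ by the classical stability/vanishing theorems for modular cohomology of finite Chevalley groups in defining characteristic (Cline-Parshall-Scott-van der Kallen). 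Combining the two vanishings with the seven-term sequence gives $H^2(G;\mathbb{Z}/p)_1 = 0$ in both cases, proving injectivity of the restriction maps. The hypothesis $g \geq 4$ is exactly what is needed for both the identification of the summand $\extp^3 H_p$ (via Putman's result used in Proposition \ref{prop_iso_M[d],Sp[d]}) and the stable vanishing of $H^1$.
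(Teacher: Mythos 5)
Your high-level strategy is sound: applying the seven-term sequence of Theorem~\ref{teo_seven_terms} to each extension with trivial coefficients $\mathbb{Z}/p$, and forcing $H^2(G;\mathbb{Z}/p)_1=0$ by annihilating the two neighboring terms, is exactly what is needed, and your identification of $H^1(N;\mathbb{Z}/p)$ in both cases is correct. For the mapping class group you also take a genuinely different route from the paper: rather than applying the seven-term sequence directly to $1\to\mathcal{M}_{g,1}[p]\to\mathcal{M}_{g,1}\to Sp_{2g}(\mathbb{Z}/p)\to 1$, the paper compares the five-term sequences of the two columns $\mathcal{T}_{g,1}\to\mathcal{M}_{g,1}[p]\to Sp_{2g}(\mathbb{Z},p)$ and $\mathcal{T}_{g,1}\to\mathcal{M}_{g,1}\to Sp_{2g}(\mathbb{Z})$, shows $\mathrm{inf}\colon H^2(Sp_{2g}(\mathbb{Z},p);\mathbb{Z}/p)\to H^2(\mathcal{M}_{g,1}[p];\mathbb{Z}/p)$ is injective and $\mathrm{inf}\colon H^2(Sp_{2g}(\mathbb{Z});\mathbb{Z}/p)\to H^2(\mathcal{M}_{g,1};\mathbb{Z}/p)$ is an isomorphism, and then deduces injectivity of the restriction from the resulting commutative square; that extra isomorphism is used again later in the chapter, so the diagram route is not just an accident of presentation.

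However, there is a real gap in your justification of the key vanishing $H^1(Sp_{2g}(\mathbb{F}_p);\mathfrak{sp}_{2g}(\mathbb{F}_p))=0$. The Cline--Parshall--Scott--van der Kallen generic cohomology theorem compares the cohomology of $G(\mathbb{F}_q)$ with rational cohomology of the algebraic group only when $q$ is \emph{large} relative to the weights of the module; it is a statement about stability in the Frobenius twist, not in the rank $g$, so ``$g\ge 4$'' is not the relevant hypothesis and the result does not apply for a fixed odd prime such as $p=3$ or $p=5$. The ingredient the paper actually uses is a concrete computation of A.~Putman (Theorem~G of the cited paper), namely $H_1(Sp_{2g}(\mathbb{Z}/L);\mathfrak{sp}_{2g}(\mathbb{Z}/L))=0$ for $g\ge 3$ and $4\nmid L$, combined with the duality of Lemma~\ref{lema_duality_homology}; your self-duality reduction then gives the $H^1$-vanishing you want. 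Once this is patched, the rest of your argument goes through, and in fact your appeal to CPS--vdK for the $\extp^3 H_p$-summand is unnecessary anyway: since $-\mathrm{Id}\in Sp_{2g}(\mathbb{F}_p)$ is central and acts as $-1$ on $\extp^3 H_p$, Lemma~\ref{lem_cen_kill} kills $H^*(Sp_{2g}(\mathbb{F}_p);(\extp^3 H_p)^*)$ outright. (A minor factual slip: the Schur multiplier of $Sp_{2g}(\mathbb{F}_p)$ for $g\ge 2$, $p$ odd is trivial, not $\mathbb{Z}/2$, though this does not change the conclusion $H^2(Sp_{2g}(\mathbb{F}_p);\mathbb{F}_p)=0$ since $p$ is odd.)
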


\begin{proof}
Consider the short exact sequence
$$
\xymatrix@C=7mm@R=10mm{1 \ar@{->}[r] & Sp_{2g}(\mathbb{Z},p) \ar@{->}[r] & Sp_{2g}(\mathbb{Z}) \ar@{->}[r]^{r_p} & Sp_{2g}(\mathbb{Z}/p) \ar@{->}[r] & 1 },$$
where the surjectivity of $r_p$ was been proved by  M. Newmann and J. R. Smart in Theorem 1 in \cite{newman}.
Taking the 5-term exact sequence associated to the above short exact sequence we get the exact sequence
$$
\xymatrix@C=6mm@R=10mm{0 \ar@{->}[r] & H^1(Sp_{2g}(\mathbb{Z}/p);\mathbb{Z}/p) \ar@{->}[r]^-{\text{inf}} & H^1(Sp_{2g}(\mathbb{Z});\mathbb{Z}/p) \ar@{->}[r]^-{\text{res}} & H^1(Sp_{2g}(\mathbb{Z},p);\mathbb{Z}/p)^{Sp_{2g}(\mathbb{Z}/p)}  \ar@{->}[r]^-{\text{tr}} & }
$$
$$
\xymatrix@C=6mm@R=10mm{  H^2(Sp_{2g}(\mathbb{Z}/p);\mathbb{Z}/p) \ar@{->}[r] & H^2(Sp_{2g}(\mathbb{Z});\mathbb{Z}/p)_1 \ar@{->}[r] & H^1(Sp_{2g}(\mathbb{Z}/p);H^1(Sp_{2g}(\mathbb{Z},p);\mathbb{Z}/p)).}
$$
By Lemma 3.7 and Theorem 3.8 in \cite{Putman}, we know that $H_2(Sp_{2g}(\mathbb{Z}/p);\mathbb{Z})=0,$ $H_1(Sp_{2g}(\mathbb{Z}/p);\mathbb{Z})=0.$
Then, by Universal coefficients Theorem, we have that
\begin{align*}
H^2(Sp(\mathbb{Z}/p);\mathbb{Z}/p)\cong & Hom(H_2(Sp(\mathbb{Z}/p);\mathbb{Z}),\mathbb{Z}/p)\oplus \text{Ext}^1_\mathbb{Z}(H_1(Sp(\mathbb{Z}/p);\mathbb{Z}),\mathbb{Z}/p) \\
= & Hom(0,\mathbb{Z}/p)\oplus \text{Ext}^1_\mathbb{Z}(0,\mathbb{Z}/p)=0.
\end{align*}
Moreover, by Universal coefficients Theorem we have that
\begin{align*}
H^1(Sp_{2g}(\mathbb{Z},p);\mathbb{Z}/p)= & Hom(H_1(Sp_{2g}(\mathbb{Z},p),\mathbb{Z}/p)\oplus \text{Ext}^1_\mathbb{Z}(H_0(Sp_{2g}(\mathbb{Z},p);\mathbb{Z}),\mathbb{Z}/p) \\
= & (\mathfrak{sp}_{2g}(\mathbb{Z}/p))^*\oplus \text{Ext}^1_\mathbb{Z}(\mathbb{Z},\mathbb{Z}/p)=(\mathfrak{sp}_{2g}(\mathbb{Z}/p))^*.
\end{align*}
Then
$H^1(Sp_{2g}(\mathbb{Z}/p);H^1(Sp_{2g}(\mathbb{Z},p);\mathbb{Z}/p))=
H^1(Sp_{2g}(\mathbb{Z}/p);(\mathfrak{sp}_{2g}(\mathbb{Z}/p))^*).$

By Lemma \eqref{lema_duality_homology} we have that
$$H^1(Sp_{2g}(\mathbb{Z}/p);(\mathfrak{sp}_{2g}(\mathbb{Z}/p))^*)\cong
(H_1(Sp_{2g}(\mathbb{Z}/p);\mathfrak{sp}_{2g}(\mathbb{Z}/p)))^*.$$
By Theorem G in \cite{Putman},
for $g\geq 3$ and $L\geq 2$ such that $4\nmid L,$
$$H_1(Sp_{2g}(\mathbb{Z}/L);\mathfrak{sp}_{2g}(\mathbb{Z}/L))=0.$$
Thus $H^1(Sp_{2g}(\mathbb{Z}/p);(\mathfrak{sp}_{2g}(\mathbb{Z}/p))^*)=0,$
and as a consequence we get that $$H^2(Sp_{2g}(\mathbb{Z});\mathbb{Z}/p)_1=0.$$
Therefore we get an injective homomorphism 
$$res:\;H^2(Sp_{2g}(\mathbb{Z});\mathbb{Z}/p)\rightarrow H^2(Sp_{2g}(\mathbb{Z},p);\mathbb{Z}/p).$$

Consider the commutative diagram
$$
\xymatrix@C=7mm@R=10mm{1 \ar@{->}[r] & \mathcal{T}_{g,1} \ar@{=}[d] \ar@{->}[r] & \mathcal{M}_{g,1}[p] \ar@{->}[r] \ar@{^{(}->}[d] & Sp_{2g}(\mathbb{Z},p) \ar@{^{(}->}[d] \ar@{->}[r] & 1 \\
1 \ar@{->}[r] & \mathcal{T}_{g,1} \ar@{->}[r] & \mathcal{M}_{g,1} \ar@{->}[r] & Sp_{2g}(\mathbb{Z}) \ar@{->}[r] & 1,}$$
where the rows are short exact sequences.
By naturality of the 5-term exact sequence, we get the commutative diagram
$$
\xymatrix@C=7mm@R=10mm{0 \ar@{->}[r] & H^1(Sp_{2g}(\mathbb{Z});\mathbb{Z}/p) \ar@{->}[d]^-{res} \ar@{->}[r]^-{\text{inf}} & H^1(\mathcal{M}_{g,1};\mathbb{Z}/p) \ar@{->}[d]^-{res} \ar@{->}[r]^-{\text{res}} & H^1(\mathcal{T}_{g,1};\mathbb{Z}/p)^{Sp_{2g}(\mathbb{Z})} \ar@{->}[d]^-{res} \ar@{->}[r] & \\
0 \ar@{->}[r] & H^1(Sp_{2g}(\mathbb{Z},p);\mathbb{Z}/p) \ar@{->}[r]^-{\text{inf}} & H^1(\mathcal{M}_{g,1}[p];\mathbb{Z}/p) \ar@{->}[r]^-{\text{res}} & H^1(\mathcal{T}_{g,1};\mathbb{Z}/p)^{Sp_{2g}(\mathbb{Z},p)} \ar@{->}[r] & }
$$
$$
\xymatrix@C=7mm@R=10mm{ \ar@{->}[r] & H^2(Sp_{2g}(\mathbb{Z});\mathbb{Z}/p) \ar@{->}[d]^-{res} \ar@{->}[r]^-{\text{inf}} & H^2(\mathcal{M}_{g,1};\mathbb{Z}/p) \ar@{->}[d]^-{res} \\
 \ar@{->}[r] & H^2(Sp_{2g}(\mathbb{Z},p);\mathbb{Z}/p) \ar@{->}[r]^-{\text{inf}} & H^2(\mathcal{M}_{g,1}[p];\mathbb{Z}/p). }
$$
Next we prove that $inf: H^2(Sp_{2g}(\mathbb{Z},p);\mathbb{Z}/p)\rightarrow H^2(\mathcal{M}_{g,1}[p];\mathbb{Z}/p)$ is injective. By exactness of the 5-term exact sequence, it is enough to prove that the homomorphism $res: H^1(\mathcal{M}_{g,1}[p],\mathbb{Z}/p) \rightarrow H^1(\mathcal{T}_{g,1},\mathbb{Z}/p)$ is surjective.

By Universal coefficients Theorem we have isomorphisms
\begin{align*}
& H^1(\mathcal{M}_{g,1}[p],\mathbb{Z}/p)\cong  Hom(H_1(\mathcal{M}_{g,1}[p]),\mathbb{Z}/p),\\
& H^1(\mathcal{T}_{g,1},\mathbb{Z}/p)^{Sp_{2g}(\mathbb{Z},p)}\cong  Hom(H_1(\mathcal{T}_{g,1}),\mathbb{Z}/p)^{Sp_{2g}(\mathbb{Z},p)}\cong Hom(\extp^3 H_p,\mathbb{Z}/p),\\
& H^1(Sp_{2g}(\mathbb{Z},p);\mathbb{Z}/p)\cong  Hom(H_1(Sp_{2g}(\mathbb{Z},p)),\mathbb{Z}/p)\cong Hom(\mathfrak{sp}_{2g}(\mathbb{Z}/p),\mathbb{Z}/p).
\end{align*}

By Theorem 5.12 in \cite{coop} we have a split extension of $\mathbb{Z}/p$-modules
$$\xymatrix@C=7mm@R=10mm{0 \ar@{->}[r] & \extp^3 H_p \ar@{->}[r] & H_1(\mathcal{M}_{g,1}[p])\ar@{->}[r] & \mathfrak{sp}_{2g}(\mathbb{Z}/p)\ar@{->}[r] & 0.}$$
Then, applying the contravariant functor $Hom(\; , \mathbb{Z}/p),$ we get a short exact sequence of $\mathbb{Z}/p$-modules
$$\xymatrix@C=6mm@R=10mm{0 \ar@{->}[r] & Hom(\mathfrak{sp}_{2g}(\mathbb{Z}/p), \mathbb{Z}/p) \ar@{->}[r] &  Hom(H_1(\mathcal{M}_{g,1}[p]), \mathbb{Z}/p)\ar@{->}[r] &  Hom(\extp^3 H_p, \mathbb{Z}/p)\ar@{->}[r] & 0.}$$
Therefore the homomorphism $res: H^1(\mathcal{M}_{g,1}[p],\mathbb{Z}/p) \rightarrow H^1(\mathcal{T}_{g,1},\mathbb{Z}/p)$ is surjective.

Next we show that $inf: H^2(Sp_{2g}(\mathbb{Z});\mathbb{Z}/p)\rightarrow H^2(\mathcal{M}_{g,1};\mathbb{Z}/p)$ is an isomorphism.
By Universal coefficients Theorem we have that
$$H^1(\mathcal{T}_{g,1},\mathbb{Z}/p)^{Sp_{2g}(\mathbb{Z})}\cong  Hom(H_1(\mathcal{T}_{g,1}),\mathbb{Z}/p)^{Sp_{2g}(\mathbb{Z})}\cong Hom(\extp^3 H_p,\mathbb{Z}/p)^{Sp_{2g}(\mathbb{Z})}=0.$$
Thus, by exactness of the 5-term exact sequence, the map $inf: H^2(Sp_{2g}(\mathbb{Z});\mathbb{Z}/p)\rightarrow H^2(\mathcal{M}_{g,1};\mathbb{Z}/p)$ is injective.
Moreover, by the Universal coefficients Theorem, Theorem 5.1 in \cite{Putman} and Theorem 5.8 in \cite{farb}, we get that $H^2(Sp_{2g}(\mathbb{Z});\mathbb{Z}/p)\cong \mathbb{Z}/p$ and $H^2(\mathcal{M}_{g,1};\mathbb{Z}/p)\cong \mathbb{Z}/p.$
Hence the map $inf$ is an isomorphism.

As a consequence, we have the following commutative diagram:
\begin{equation}\label{diag_com_cocycle}
\xymatrix@C=7mm@R=10mm{ H^2(Sp_{2g}(\mathbb{Z});\mathbb{Z}/p) \ar@{^{(}->}[d]^{res} \ar@{->}[r]^-{inf}_-{\cong} & H^2(\mathcal{M}_{g,1};\mathbb{Z}/p) \ar@{->}[d]^{res} \\
H^2(Sp_{2g}(\mathbb{Z},p);\mathbb{Z}/p) \ar@{^{(}->}[r]^-{inf} & H^2(\mathcal{M}_{g,1}[p];\mathbb{Z}/p).}
\end{equation}
Therefore, the homomorphism
$$res:\;H^2(\mathcal{M}_{g,1};\mathbb{Z}/p) \rightarrow H^2(\mathcal{M}_{g,1}[p];\mathbb{Z}/p)$$
is also injective.
\end{proof}

\begin{defi}
\label{def_sym_power}
Denote by $S^2(\extp^2H_p)$ the submodule of $\extp^2H_p\otimes \extp^2H_p$ generated by all elements of the form:
\begin{enumerate}[i)]
\item $(a\wedge b)\otimes (a\wedge b),$ also denoted by $(a\wedge b)^{\otimes 2},$
\item $(a\wedge b)\otimes (c\wedge d) +(c\wedge d)\otimes(a\wedge b),$ also denoted by $a\wedge b\leftrightarrow c\wedge d.$
\end{enumerate}
(where $a,b,c,d\in H_p).$
\end{defi}

\begin{rem}
Observe that in Definition \eqref{def_sym_power}, if $p$ is an odd prime,
the elements of the form $ii)$ generate $S^2(\extp^2H_p)$ since
$(a\wedge b)^{\otimes 2}=(p+1)(a\wedge b)^{\otimes 2}=\frac{p+1}{2} (a\wedge b\leftrightarrow a\wedge b)$
\end{rem}

In Theorem 3.1 in \cite{mor1}, S. Morita considered the 2-cocycle associated to following central extension
\begin{equation}
\label{eq_exact_seq_of rho3}
\xymatrix@C=10mm@R=13mm{0 \ar@{->}[r] & Im(\tau_2) \ar@{->}[r] & \rho_3(\mathcal{T}_{g,1}) \ar@{->}[r]^{\tau_1} & \extp^3 H \ar@{->}[r] & 1 }
\end{equation}
and he determined the explicit corresponding map of $Hom(\extp^2(\extp^3H); Im(\tau_2))$ as $\pi \circ \chi$ where $\chi \in Hom(\extp^2(\extp^3H); S^2(\extp^2H))$ is given by
\begin{align*}
\chi(\xi \wedge \eta)= & -(a\cdot d)b\wedge c \leftrightarrow e\wedge f  -(a\cdot e)b\wedge c \leftrightarrow f\wedge d  -(a\cdot f)b\wedge c \leftrightarrow d\wedge e \\
& -(b\cdot d)c\wedge a \leftrightarrow e\wedge f -(b\cdot e)c\wedge a \leftrightarrow f\wedge d  -(b\cdot f)c\wedge a \leftrightarrow d\wedge e \\
& -(c\cdot d)a\wedge b \leftrightarrow e\wedge f  -(c\cdot e)a\wedge b \leftrightarrow f\wedge d  -(c\cdot f)a\wedge b \leftrightarrow d\wedge e,
\end{align*}
whith $\xi=a\wedge b\wedge c,$ $\eta=d\wedge e\wedge f \in \extp^3H$
$(a,b,c,d,e,f\in H).$
And $\pi$ is the following natural projection map of $Hom(S^2(\extp^2H),H\otimes \mathcal{L}_3)$ given by
$$\pi(a\wedge b \leftrightarrow c\wedge d)= a\otimes [b,[c,d]]-b\otimes [a,[c,d]]+c\otimes [d,[a,b]]-d\otimes [c,[a,b]].$$
We will denote by $\chi_p\in Hom(\extp^2(\extp^3H_p); Im(\tau^Z_2))$ and $\pi_p\in Hom(S^2(\extp^2H_p),H_p\otimes \mathcal{L}^Z_3)$ the reduction modulo $p$ of the
respective functions $\chi$ and $\pi.$

As a direct consequence of Section 5 in \cite{mor1}, we have that
\begin{prop}
\label{prop_ker_pi}
The kernel of $\pi_p: S^2(\extp^2H_p)\rightarrow H_p\otimes \mathcal{L}^Z_3$ is generated by the elements of the form
$$a\wedge b\leftrightarrow c\wedge d -a\wedge c\leftrightarrow b\wedge d + a\wedge d \leftrightarrow b\wedge c.$$

\end{prop}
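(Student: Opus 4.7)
The plan is to combine a direct verification with the integer-level result of Morita \cite{mor1}. The proposition is the mod-$p$ analogue of a statement proved integrally by S.\ Morita in Section 5 of \cite{mor1}, and the essential content passes to coefficients in $\mathbb{Z}/p$ provided we control the image of $\pi$.

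First I would check directly that every element $r(a,b,c,d):=a\wedge b\leftrightarrow c\wedge d - a\wedge c\leftrightarrow b\wedge d + a\wedge d\leftrightarrow b\wedge c$ lies in $\ker \pi_p$. Expanding $\pi_p$ on the three summands and collecting the terms with a fixed first tensor factor (say $a$), one obtains
$$a\otimes \bigl([b,[c,d]]-[c,[b,d]]+[d,[b,c]]\bigr),$$
which vanishes in $\mathcal{L}^Z_3$ by the Jacobi identity in the free Lie algebra on $H_p$ (valid over $\mathbb{Z}/p$ for any prime since Jacobi is a $\pm 1$ identity). The same cancellation applies to the $b$-, $c$-, and $d$-factors, so $r(a,b,c,d)\in \ker \pi_p$.

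Next I would invoke Morita's integral computation. In Section 5 of \cite{mor1} he proves the corresponding statement for the map $\pi:S^2(\extp^2 H)\to H\otimes \mathcal{L}_3$: the kernel is generated by the Jacobi relators $r(a,b,c,d)$ over $\mathbb{Z}$. Granting this, we have an exact sequence of free abelian groups
$$0\longrightarrow K\longrightarrow S^2(\extp^2 H)\overset{\pi}{\longrightarrow} \mathrm{Im}(\pi)\longrightarrow 0,$$
where $K$ is the $\mathbb{Z}$-submodule generated by the $r(a,b,c,d)$.

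Finally, to pass from $\mathbb{Z}$ to $\mathbb{Z}/p$, I would tensor with $\mathbb{Z}/p$; the right-exactness of $-\otimes \mathbb{Z}/p$ together with the identification $\pi\otimes \mathbb{Z}/p=\pi_p$ shows that $K\otimes\mathbb{Z}/p$ surjects onto the kernel of $\pi_p$ provided the connecting $\mathrm{Tor}_1^{\mathbb{Z}}(\mathrm{Im}(\pi),\mathbb{Z}/p)$ vanishes, i.e.\ provided $\mathrm{Im}(\pi)$ is torsion-free. This is automatic because $\mathrm{Im}(\pi)$ is a subgroup of the free abelian group $H\otimes \mathcal{L}_3$ (recall $\mathcal{L}_3$ has a classical Hall basis and is free abelian). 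The main obstacle is therefore not the kernel passing to $\mathbb{Z}/p$, but rather correctly citing and interpreting Morita's integral result and confirming that for $p$ odd one really does obtain a generating set by the relators $r(a,b,c,d)$ in the symmetric product $S^2(\extp^2 H_p)$; in particular one should double-check the description of generators of $S^2(\extp^2 H_p)$ given in the Remark after Definition \ref{def_sym_power}, where the expression $(p+1)/2$ requires $p$ odd, so that the two presentations of $S^2(\extp^2 H_p)$ agree.
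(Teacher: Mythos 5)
Your verification that the Jacobi relators $r(a,b,c,d)$ lie in $\ker\pi_p$ is correct, and the overall strategy — cite Morita's integral computation from Section~5 of \cite{mor1} and descend to $\mathbb{Z}/p$ — is exactly what the paper's one-line attribution presupposes, so the direction is right. Your caveat about $p$ odd in the description of $S^2(\extp^2 H_p)$ is also well taken.

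However, there is a genuine gap in the base-change step: the $\mathrm{Tor}$ obstruction you compute is attached to the wrong module. Break the four-term exact sequence
$$0\longrightarrow K\longrightarrow S^2(\extp^2 H)\overset{\pi}{\longrightarrow} H\otimes\mathcal{L}_3\longrightarrow \mathrm{coker}(\pi)\longrightarrow 0$$
into two short exact sequences through $\mathrm{Im}(\pi)$ and tensor with $\mathbb{Z}/p$. Freeness of $\mathrm{Im}(\pi)$ (your observation) gives that $K\otimes\mathbb{Z}/p$ \emph{injects} into $S^2(\extp^2 H)\otimes\mathbb{Z}/p$ and surjects onto $\ker\alpha$, where $\alpha\colon S^2(\extp^2 H)\otimes\mathbb{Z}/p\to\mathrm{Im}(\pi)\otimes\mathbb{Z}/p$ is the induced surjection. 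But $\pi_p=\beta\circ\alpha$, where $\beta\colon\mathrm{Im}(\pi)\otimes\mathbb{Z}/p\to (H\otimes\mathcal{L}_3)\otimes\mathbb{Z}/p$ is induced by the inclusion, and $\ker\beta\cong\mathrm{Tor}_1^{\mathbb{Z}}(\mathrm{coker}(\pi),\mathbb{Z}/p)$. Hence
$$0\longrightarrow \overline{K}\longrightarrow \ker\pi_p\longrightarrow \mathrm{Tor}_1^{\mathbb{Z}}(\mathrm{coker}(\pi),\mathbb{Z}/p)\longrightarrow 0,$$
where $\overline{K}$ is the image of $K\otimes\mathbb{Z}/p$. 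So what you actually need is that $\mathrm{coker}(\pi)$ has no $p$-torsion, and freeness of $\mathrm{Im}(\pi)$ says nothing about that — quotients of free abelian groups need not be torsion-free. The needed fact is plausibly true: a short computation using the Jacobi identity shows $\mathrm{Im}(\pi)$ lands in the kernel of the bracket map $H\otimes\mathcal{L}_3\to\mathcal{L}_4$, and if Morita's Section~5 in fact identifies $\mathrm{Im}(\pi)$ with that whole kernel, then $\mathrm{coker}(\pi)$ embeds in the free abelian group $\mathcal{L}_4$ and the obstruction vanishes. But that identification is exactly the ingredient you must extract from \cite{mor1} and cite explicitly; as written, the argument is incomplete at precisely the point where the descent from $\mathbb{Z}$ to $\mathbb{Z}/p$ could fail.
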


\begin{prop} \label{prop_hom_2sym}
Let $p$ be an odd prime, then $Hom(S^2(\extp^2H_p),\mathbb{Z}/p)^{\mathcal{AB}_{g,1}}\cong (\mathbb{Z}/p)^3$ with a basis
\begin{align*}
d_1(a\wedge b\leftrightarrow c \wedge d)= & \omega(a,b)\omega(c,d),\\
d_2(a\wedge b\leftrightarrow c \wedge d)= & \omega(a,c)\omega(b,d)-\omega(a,d)\omega(b,c),\\
d_3(a\wedge b\leftrightarrow c \wedge d)= & \varpi(a,c)\varpi(b,d)-\varpi(a,d)\varpi(b,c),
\end{align*}
where $\omega$ is the intersection form and $\varpi$ is the form associated to the matrix $\left(\begin{smallmatrix}
0 & Id \\
Id & 0
\end{smallmatrix}\right).$
\end{prop}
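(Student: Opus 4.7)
The plan is to identify $\mathrm{Hom}(S^2(\extp^2 H_p), \mathbb{Z}/p)$ with the space of symmetric bilinear forms $\extp^2 H_p \times \extp^2 H_p \to \mathbb{Z}/p$ (the correspondence $b \mapsto (x \leftrightarrow y \mapsto 2b(x,y))$ being a bijection because $p$ is odd) and then classify those that are invariant under $\mathcal{AB}_{g,1}$. By Lemma~\eqref{lem_B[p]} and the isomorphism $\phi^{AB}$ of \eqref{sym_mat_AB}, this action factors through $SL_g^{\pm}(\mathbb{Z}/p)$ acting on $H_p = A_p \oplus B_p$ by $G \cdot a = Ga$ for $a \in A_p$ and $G \cdot b = {}^{t}G^{-1} b$ for $b \in B_p$.

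First I would verify the easy direction. Invariance of $d_1$ and $d_2$ under $\mathcal{AB}_{g,1}$ is immediate from the $Sp_{2g}(\mathbb{Z}/p)$-invariance of $\omega$. For $d_3$, the direct computation $\varpi(Ga,{}^{t}G^{-1}b) = (Ga)^{T}({}^{t}G^{-1}b) = a^{T}b = \varpi(a,b)$, together with the fact that $\varpi$ vanishes on the Lagrangians $A_p \times A_p$ and $B_p \times B_p$ (both preserved by $\mathcal{AB}_{g,1}$), gives the invariance. For linear independence, I would evaluate each $d_i$ on the test elements $T_1 = (a_1 \wedge b_1) \leftrightarrow (a_2 \wedge b_2)$, $T_2 = (a_1 \wedge b_2) \leftrightarrow (a_2 \wedge b_1)$, and $T_3 = (a_1 \wedge a_2) \leftrightarrow (b_1 \wedge b_2)$, obtaining the matrix $\left(\begin{smallmatrix} 1 & 0 & 0 \\ 0 & 1 & 1 \\ 0 & -1 & 1 \end{smallmatrix}\right)$, of determinant $2$, which is invertible in $\mathbb{Z}/p$.

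For the hard direction I would decompose $\extp^2 H_p = U \oplus V \oplus W$ with $U = \extp^2 A_p$, $V = A_p \otimes B_p$, and $W = \extp^2 B_p$, inducing the splitting
\begin{equation*}
S^2(\extp^2 H_p) = S^2(U) \oplus S^2(V) \oplus S^2(W) \oplus (U \otimes V) \oplus (U \otimes W) \oplus (V \otimes W),
\end{equation*}
which is preserved by $SL_g^{\pm}(\mathbb{Z}/p)$. Following the template of the proofs of Lemmas~\eqref{lem_B_3-inv} and~\eqref{lema_elem_sp(Z/d)}, I would apply transpositions in $\mathfrak{S}_g$, diagonal sign-change matrices, and elementary matrices $\mathrm{Id}+E_{ij}$ to any $SL_g^{\pm}(\mathbb{Z}/p)$-invariant $d$, deriving linear relations among its values on basis elements. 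These should force $d$ to vanish on $S^2(U)$, $S^2(W)$, $U \otimes V$, and $V \otimes W$, to be determined by exactly two parameters on $S^2(V) \cong S^2(\mathrm{End}(A_p))$ (matching the classical $\mathrm{tr}(X)\mathrm{tr}(Y)$ and $\mathrm{tr}(XY)$ invariants), and by one parameter on $U \otimes W$ (matching the canonical contraction $\extp^2 A_p \otimes \extp^2 B_p \to \mathbb{Z}/p$ induced by $\omega$). Combined with the easy direction, this yields the claimed isomorphism.

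The main obstacle will be the combinatorial case analysis in the hard direction: each of the six summands demands its own list of test matrices, and some vanishing statements are delicate. In particular, for $g=4$ the wedge map $\extp^2 A_p \otimes \extp^2 A_p \to \extp^4 A_p \cong \mathbb{Z}/p$ yields a nontrivial $SL_g(\mathbb{Z}/p)$-invariant form on $S^2(U)$, and only the combination of a determinant $-1$ element of $SL_g^{\pm}(\mathbb{Z}/p)$, which multiplies this Pfaffian-like form by $-1$, with the hypothesis that $p$ is odd, kills it; the same analysis applies to $S^2(W)$. For $g \neq 4$ the analogous vanishing is easier because $\extp^4 A_p$ carries no $SL_g$-invariants at all. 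Some care is also needed in handling $S^2(V)$ in characteristic $p$, where $\mathrm{End}(A_p)$ does not split as a direct sum when $p \mid g$, but the two polynomial invariants $\mathrm{tr}(X)\mathrm{tr}(Y)$ and $\mathrm{tr}(XY)$ remain linearly independent in all characteristics.
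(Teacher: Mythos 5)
Your approach is genuinely different from the paper's. The paper proves this by brute force: it applies specific matrices $\left(\begin{smallmatrix} G & 0 \\ 0 & {}^tG^{-1}\end{smallmatrix}\right)$ (transpositions, sign changes, elementary matrices) directly to the basis elements $c_i\wedge c'_j\leftrightarrow d_k\wedge d'_l$ of $S^2(\extp^2 H_p)$, deriving a long list of vanishings and identifications until only the images of the three test elements $b_1\wedge b_2\leftrightarrow a_1\wedge a_2$, $b_1\wedge a_2\leftrightarrow a_1\wedge b_2$, $a_1\wedge b_1\leftrightarrow a_2\wedge b_2$ remain free; the upper bound of $3$ then falls out, and the same three test elements give the linear independence of $d_1,d_2,d_3$ (you computed this part correctly, with the same determinant $\pm 2$). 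You instead organize the whole argument around the $\mathcal{AB}_{g,1}$-stable decomposition $\extp^2 H_p = \extp^2 A_p \oplus (A_p\otimes B_p)\oplus \extp^2 B_p$ and the induced six-term splitting of $S^2$, then invoke classical $SL_g^\pm$-invariant theory on each piece ($\mathrm{tr}(X)\mathrm{tr}(Y)$, $\mathrm{tr}(XY)$ on $S^2(A_p\otimes B_p)$, the contraction on $\extp^2 A_p\otimes\extp^2 B_p$, and vanishing elsewhere). This buys conceptual clarity and locates the three invariants structurally; the paper's route is longer but entirely self-contained and explicit.

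The gap is exactly where you flagged it: the "hard direction" is a plan, not a proof, and the reductions it delegates to classical invariant theory are not free in this setting. You are working with the finite group $SL_g^\pm(\mathbb{Z}/p)$ over $\mathbb{Z}/p$, not $GL_g$ over a field of characteristic zero, so the assertion that $S^2(\mathfrak{gl}_g(\mathbb{Z}/p))$ carries exactly two invariant symmetric forms, and that $U\otimes V$, $V\otimes W$, $S^2(U)$, $S^2(W)$ carry none, needs an argument in modular invariant theory (or the same explicit matrix manipulations the paper uses, now run on each summand). The $g=4$ Pfaffian subtlety you identified is handled correctly by the determinant-$(-1)$ element and $p$ odd, but a complete proof would still have to rule out other exotic invariants for small $(g,p)$; the paper's computation sidesteps this entirely because it never appeals to a classification of modular invariants. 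If you carry out the reductions explicitly on each summand, your argument would close; as written it rests on facts that are only cited, not established, for this coefficient ring.
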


\begin{proof}
We first show that every element $f\in Hom(S^2(\extp^2H_p),\mathbb{Z}/p)^{\mathcal{AB}_{g,1}}$ is completely determined by its value on the following elements:
$$\{b_1\wedge b_2 \leftrightarrow a_1 \wedge a_2,\; b_1\wedge a_2 \leftrightarrow a_1\wedge b_2,\; a_1\wedge b_1\leftrightarrow a_2\wedge b_2\}.$$

\begin{itemize}
\item Let $i,j,k,l$ be pairwise distinct sub-indexes.
Consider the element
$\phi=\left(\begin{smallmatrix}
G & 0 \\
0 & {}^tG^{-1}
\end{smallmatrix}\right) \in Sp_{2g}(\mathbb{Z})$
with $G=(1,i)(2,j)(3,k)(4,l)\in \mathfrak{S}_g.$ Then we have that
$$f(c_i\wedge c'_j\leftrightarrow d_k\wedge d'_l)=f(\phi\cdot (c_i\wedge c'_j\leftrightarrow d_k\wedge d'_l))=f(c_1\wedge c'_2\leftrightarrow d_3\wedge d'_4).$$
Consider the element
$\phi=\left(\begin{smallmatrix}
G & 0 \\
0 & {}^tG^{-1}
\end{smallmatrix}\right) \in Sp_{2g}(\mathbb{Z})$
with $G=(1,2)\in \mathfrak{S}_g.$ Then we have that
\begin{align*}
f(c_1\wedge c_2\leftrightarrow d_3\wedge d'_4)= & f(\phi\cdot(c_1\wedge c_2\leftrightarrow d_3\wedge d'_4))= f(c_2\wedge c_1\leftrightarrow d_3\wedge d'_4) = \\
= & -f(c_1\wedge c_2\leftrightarrow d_3\wedge d'_4).
\end{align*}
Thus $f(c_1\wedge c_2\leftrightarrow d_3\wedge d'_4)=0.$

Consider the element
$\phi=\left(\begin{smallmatrix}
G & 0 \\
0 & {}^tG^{-1}
\end{smallmatrix}\right) \in Sp_{2g}(\mathbb{Z})$
with $G\in GL_g(\mathbb{Z}),$ with $1$'s at the diagonal and position $(2,1),$ and $0$'s elsewhere. Then we have that
\begin{align*}
f(a_1\wedge b_1\leftrightarrow d_3\wedge d'_4)= & f(\phi\cdot(a_1\wedge b_1\leftrightarrow d_3\wedge d'_4))= \\
= & f(a_1\wedge b_1\leftrightarrow d_3\wedge d'_4)+f(a_2\wedge b_1\leftrightarrow d_3\wedge d'_4).
\end{align*}
Thus $f(a_2\wedge b_1\leftrightarrow d_3\wedge d'_4)=0.$

Consider the element
$\phi=\left(\begin{smallmatrix}
G & 0 \\
0 & {}^tG^{-1}
\end{smallmatrix}\right) \in Sp_{2g}(\mathbb{Z})$
with $G=(1,2)\in \mathfrak{S}_g.$ Then we get that
$$f(a_1\wedge b_2\leftrightarrow d_3\wedge d'_4)=f(\phi\cdot(a_1\wedge b_2\leftrightarrow d_3\wedge d'_4))=f(a_2\wedge b_1\leftrightarrow d_3\wedge d'_4)=0.$$
Therefore $f(c_i\wedge c'_j\leftrightarrow d_k\wedge d'_l)=0$ with $i,j,k,l$ pairwise distinct.

\item Suppose that there are two sub-indexes equal and the other are different.
Then we have to study the image of $f\in Hom(S^2(\extp^2H_p),\mathbb{Z}/p)^{\mathcal{AB}_{g,1}}$ on the elements of the form:
\begin{align*}
c_i\wedge c'_i\leftrightarrow d_j\wedge d'_k, & \qquad c_i\wedge c'_j\leftrightarrow d_i\wedge d'_k, \qquad c_i\wedge c'_j\leftrightarrow d_k\wedge d'_i, \\
c_j\wedge c'_i\leftrightarrow d_i\wedge d'_k, & \qquad c_j\wedge c'_i\leftrightarrow d_k\wedge d'_i, \qquad c_j\wedge c'_k\leftrightarrow d_i\wedge d'_i.
\end{align*}
But observe that
$$c_i\wedge c'_i\leftrightarrow d_j\wedge d'_k=d_j\wedge d'_k \leftrightarrow c_i\wedge c'_i,$$
$$c_i\wedge c'_j\leftrightarrow d_i\wedge d'_k=-c_i\wedge c'_j\leftrightarrow d'_k\wedge d_i=c'_j\wedge c_i\leftrightarrow d'_k\wedge d_i=-c'_j\wedge c_i\leftrightarrow d_i\wedge d'_k.$$
Thus, it is enough to study image of $f\in Hom(S^2(\extp^2H_p),\mathbb{Z}/p)^{\mathcal{AB}_{g,1}}$ on the elements of the form:
$$c_i\wedge c'_i\leftrightarrow d_j\wedge d'_k, \qquad c_i\wedge c'_j\leftrightarrow d_i\wedge d'_k.$$

Consider the element
$\phi=\left(\begin{smallmatrix}
G & 0 \\
0 & {}^tG^{-1}
\end{smallmatrix}\right) \in Sp_{2g}(\mathbb{Z})$
with $G=(1i)(2j)(3k)\in \mathfrak{S}_g.$ Then we get that
\begin{align*}
f(c_i\wedge c'_i\leftrightarrow d_j\wedge d'_k)= & f(\phi\cdot (c_i\wedge c'_i\leftrightarrow d_j\wedge d'_k))=f(c_1\wedge c'_1\leftrightarrow d_2\wedge d'_3), \\
f(c_i\wedge c'_j\leftrightarrow d_i\wedge d'_k)= & f(\phi\cdot (c_i\wedge c'_j\leftrightarrow d_i\wedge d'_k))=f(c_1\wedge c'_2\leftrightarrow d_1\wedge d'_3).
\end{align*}
Consider the element
$\phi=\left(\begin{smallmatrix}
G & 0 \\
0 & {}^tG^{-1}
\end{smallmatrix}\right) \in Sp_{2g}(\mathbb{Z})$
with $G\in GL_g(\mathbb{Z}),$ with $1$'s at the diagonal and position $(2,3),$ and $0$'s elsewhere. Then we have that

\begin{align*}
f(c_1\wedge c'_1\leftrightarrow a_3\wedge b_3)= & f(\phi\cdot(c_1\wedge c'_1\leftrightarrow a_3\wedge b_3))= f(c_1\wedge c'_1\leftrightarrow (a_2+a_3)\wedge b_3)= \\
= & f(c_1\wedge c'_1\leftrightarrow a_2\wedge b_3)+f(c_1\wedge c'_1\leftrightarrow a_3\wedge b_3), \\
f(c_1\wedge a_3\leftrightarrow d_1\wedge b_3)= & f(\phi\cdot(c_1\wedge a_3\leftrightarrow d_1\wedge b_3))=f(c_1\wedge (a_2+a_3)\leftrightarrow d_1\wedge b_3)= \\
= & f(c_1\wedge a_2\leftrightarrow d_1\wedge b_3)+f(c_1\wedge a_3\leftrightarrow d_1\wedge b_3).
\end{align*}
Thus,
\begin{align*}
f(c_1\wedge c'_1\leftrightarrow a_2\wedge b_3)=0, \quad &
f(c_1\wedge a_2\leftrightarrow d_1\wedge b_3)=0.
\end{align*}
Taking the element
$\phi=\left(\begin{smallmatrix}
G & 0 \\
0 & {}^tG^{-1}
\end{smallmatrix}\right) \in Sp_{2g}(\mathbb{Z})$
with $G=(23)\in\mathfrak{S}_g$ we also get that 
\begin{align*}
f(c_1\wedge c'_1\leftrightarrow b_2\wedge a_3)=0, \quad &
f(c_1\wedge b_2\leftrightarrow d_1\wedge a_3)=0.
\end{align*}
Moreover taking the same element we have that
\begin{align*}
f(c_1\wedge c'_1\leftrightarrow d_2\wedge d_3)= &f(\phi\cdot(c_1\wedge c'_1\leftrightarrow d_2\wedge d_3))=f(c_1\wedge c'_1\leftrightarrow d_3\wedge d_2)= \\
= & -f(c_1\wedge c'_1\leftrightarrow d_2\wedge d_3).
\end{align*}
Thus $f(c_1\wedge c'_1\leftrightarrow d_2\wedge d_3)=0.$ Therefore $f(c_1\wedge c'_1\leftrightarrow d_2\wedge d'_3)=0.$

Then, it is enough to check the value of $f(c_1\wedge a_2\leftrightarrow d_1\wedge a_3),$ and $f(c_1\wedge b_2\leftrightarrow d_1\wedge b_3).$ 

Consider the element
$\phi=\left(\begin{smallmatrix}
G & 0 \\
0 & {}^tG^{-1}
\end{smallmatrix}\right) \in Sp_{2g}(\mathbb{Z})$
with $G\in GL_g(\mathbb{Z}),$ with $1$'s at the diagonal and position $(2,3),$ and $0$'s elsewhere. Then we have that
\begin{align*}
f(c_1\wedge a_3 \leftrightarrow c'_1 \wedge a_3)= & f(\phi\cdot (c_1\wedge a_3 \leftrightarrow c'_1 \wedge a_3)) =\\
= & f(c_1\wedge (a_2+a_3) \leftrightarrow c'_1 \wedge (a_2+a_3))= \\
= & f(c_1\wedge a_2 \leftrightarrow c'_1 \wedge a_2)+f(c_1\wedge a_2 \leftrightarrow c'_1 \wedge a_3)+ \\
 & +f(c_1\wedge a_3 \leftrightarrow c'_1 \wedge a_2)+f(c_1\wedge a_3 \leftrightarrow c'_1 \wedge a_3).
\end{align*}
Thus $f(c_1\wedge a_2 \leftrightarrow c'_1 \wedge a_2)=-f(c_1\wedge a_2 \leftrightarrow c'_1 \wedge a_3)-f(c_1\wedge a_3 \leftrightarrow c'_1 \wedge a_2).$

Now using the element
$\phi=\left(\begin{smallmatrix}
G & 0 \\
0 & {}^tG^{-1}
\end{smallmatrix}\right) \in Sp_{2g}(\mathbb{Z})$
with $G=(2,3)\in \mathfrak{S}_g$ we have that $f(c_1\wedge a_2 \leftrightarrow c'_1 \wedge a_3)=f(c_1\wedge a_3 \leftrightarrow c'_1 \wedge a_2).$
And as a consequence we get:
$$f(c_1\wedge a_2 \leftrightarrow c'_1 \wedge a_2)=-2f(c_1\wedge a_2 \leftrightarrow c'_1 \wedge a_3).$$
Consider the element
$\phi=\left(\begin{smallmatrix}
G & 0 \\
0 & {}^tG^{-1}
\end{smallmatrix}\right) \in Sp_{2g}(\mathbb{Z}),$
with $G\in GL_g(\mathbb{Z}),$ with $1$'s at the diagonal and position $(2,3),$ and $0$'s elsewhere. Then we get that
$$f(c_1\wedge a_2 \leftrightarrow c'_1 \wedge a_3)=f(c_1\wedge a_2 \leftrightarrow c'_1 \wedge (a_2+a_3)).$$
Thus $f(c_1\wedge a_2 \leftrightarrow c'_1 \wedge a_2)=0,$ and therefore
$f(c_1\wedge a_2 \leftrightarrow c'_1 \wedge a_3)=0.$
Similarly switching $G$ by ${}^tG^{-1}$ and the $a$'s by $b$'s, we also get that
$f(c_1\wedge b_2 \leftrightarrow c'_1 \wedge b_3)=0.$

\item Suppose that three indexes of $i,j,k,l$ are equal and the other one is different. Then taking a suitable element
$\phi=\left(\begin{smallmatrix}
G & 0 \\
0 & {}^tG^{-1}
\end{smallmatrix}\right) \in Sp_{2g}(\mathbb{Z})$
with $G\in\mathfrak{S}_g,$ we get that
$$f(c_i\wedge c'_i\leftrightarrow d_i\wedge d'_j)=f(c_1\wedge c'_1\leftrightarrow d_1\wedge d'_2).$$
Consider the element
$\phi=\left(\begin{smallmatrix}
G & 0 \\
0 & {}^tG^{-1}
\end{smallmatrix}\right) \in Sp_{2g}(\mathbb{Z})$
with $G\in GL_g(\mathbb{Z})$ with $1$'s at the diagonal and position $(2,3),$ and $0$'s elsewhere. Then we get that
$$f(c_1\wedge c'_1\leftrightarrow d_1\wedge d'_3)=f(c_1\wedge c'_1\leftrightarrow d_1\wedge d'_3)+f(c_1\wedge c'_1\leftrightarrow d_1\wedge d'_2).$$
Thus $f(c_1\wedge c'_1\leftrightarrow d_1\wedge d'_2)=0.$
\end{itemize}

Summarizing, an element $f\in Hom(S^2(\extp^2H_p),\mathbb{Z}/p)^{\mathcal{AB}_{g,1}}$ is completely determined by the image of $f$ on the following elements:
$$c_i\wedge c'_j\leftrightarrow d_i\wedge d'_j,\quad
c_i\wedge c'_i\leftrightarrow d_j\wedge d'_j,\quad
c_i\wedge c'_i\leftrightarrow d_i\wedge d'_i.$$
In particular if we take the element
$\phi=\left(\begin{smallmatrix}
G & 0 \\
0 & {}^tG^{-1}
\end{smallmatrix}\right) \in Sp_{2g}(\mathbb{Z})$
with $G=(1,i)(2,j)\in \mathfrak{S}_g.$ Then we get that an element $f\in Hom(S^2(\extp^2H_p),\mathbb{Z}/p)^{\mathcal{AB}_{g,1}}$ is completely determined by the image of f on the following elements:
$$c_1\wedge c'_2\leftrightarrow d_1\wedge d'_2,\quad
c_1\wedge c'_1\leftrightarrow d_2\wedge d'_2,\quad
c_1\wedge c'_1\leftrightarrow d_1\wedge d'_1.$$
\begin{itemize}
\item Now take the element
$\phi=\left(\begin{smallmatrix}
G & 0 \\
0 & {}^tG^{-1}
\end{smallmatrix}\right) \in Sp_{2g}(\mathbb{Z})$
with $G\in GL_g(\mathbb{Z}),$ with $1$'s at the diagonal and position $(2,3),$ and $0$'s elsewhere. Then we get that
\begin{align*}
f(c_1\wedge a_3 \leftrightarrow c'_1 \wedge a_3)= & f(\phi\cdot (c_1\wedge a_3 \leftrightarrow c'_1 \wedge a_3)) =\\
= & f(c_1\wedge (a_2+a_3) \leftrightarrow c'_1 \wedge (a_2+a_3))= \\
= & f(c_1\wedge a_2 \leftrightarrow c'_1 \wedge a_2)+f(c_1\wedge a_2 \leftrightarrow c'_1 \wedge a_3)+ \\
 & +f(c_1\wedge a_3 \leftrightarrow c'_1 \wedge a_2)+f(c_1\wedge a_3 \leftrightarrow c'_1 \wedge a_3)= \\
 = & f(c_1\wedge a_2 \leftrightarrow c'_1 \wedge a_2)+f(c_1\wedge a_3 \leftrightarrow c'_1 \wedge a_3),
\end{align*}
since by above relations we know that $f(c_1\wedge a_3 \leftrightarrow c'_1 \wedge a_2)=f(c_1\wedge a_2 \leftrightarrow c'_1 \wedge a_3)=0.$
Thus we get that 
$$f(c_1\wedge a_2 \leftrightarrow c'_1 \wedge a_2)=f(a_2\wedge c_1 \leftrightarrow a_2 \wedge c'_1)=0.$$
Analogously, we get that $f(c_1\wedge b_2 \leftrightarrow c'_1 \wedge b_2)=f(b_2\wedge c_1 \leftrightarrow b_2 \wedge c'_1)=0.$

Therefore the only elements of the form $c_1\wedge c'_2\leftrightarrow d_1\wedge d'_2$ with $f(c_1\wedge c'_2\leftrightarrow d_1\wedge d'_2)\neq 0$ are
$$b_1\wedge b_2 \leftrightarrow a_1 \wedge a_2, \qquad b_1\wedge a_2 \leftrightarrow a_1\wedge b_2.$$

\item Now observe that the elements of the form $c_1\wedge c'_1\leftrightarrow d_2\wedge d'_2$ are:
\begin{align*}
a_1\wedge b_1\leftrightarrow a_2\wedge b_2, &\quad b_1\wedge a_1\leftrightarrow a_2\wedge b_2, \\
a_1\wedge b_1\leftrightarrow b_2\wedge a_2, &\quad b_1\wedge a_1\leftrightarrow b_2\wedge a_2.
\end{align*}
Moreover we have that
\begin{align*}
&f(a_1\wedge b_1\leftrightarrow a_2\wedge b_2)=- f(b_1\wedge a_1\leftrightarrow a_2\wedge b_2)= \\
& =-f(a_1\wedge b_1\leftrightarrow b_2\wedge a_2)= f(b_1\wedge a_1\leftrightarrow b_2\wedge a_2).
\end{align*}

\item Now observe that the elements of the form $c_1\wedge c'_1\leftrightarrow d_1\wedge d'_1$ are:
$$ a_1\wedge b_1\leftrightarrow a_1\wedge b_1,\quad a_1\wedge b_1\leftrightarrow b_1\wedge a_1. $$
Moreover we have that
$f(a_1\wedge b_1\leftrightarrow a_1\wedge b_1)=-f( a_1\wedge b_1\leftrightarrow b_1\wedge a_1).$

Consider the element
$\phi=\left(\begin{smallmatrix}
G & 0 \\
0 & {}^tG^{-1}
\end{smallmatrix}\right) \in Sp_{2g}(\mathbb{Z})$
with $G\in GL_g(\mathbb{Z}),$ with $1$'s at the diagonal and position $(1,2),$ and $0$'s elsewhere. By the previous relations, we have that
\begin{align*}
0=f(a_1\wedge b_2 \leftrightarrow a_1 \wedge b_2)= & f(\phi\cdot (a_1\wedge b_2 \leftrightarrow a_1 \wedge b_2))= \\
= &f((a_1+a_2)\wedge (-b_1+b_2) \leftrightarrow (a_1+a_2) \wedge (-b_1+b_2))= \\
=& f(a_1\wedge b_1 \leftrightarrow a_1\wedge b_1)-f(a_1\wedge b_1 \leftrightarrow a_2\wedge b_2)+ \\
& -f(a_1\wedge b_2 \leftrightarrow a_2\wedge b_1)-f(a_2\wedge b_1 \leftrightarrow a_1\wedge b_2)+ \\
& -f(a_2\wedge b_2 \leftrightarrow a_1\wedge b_1)+f(a_2\wedge b_2 \leftrightarrow a_2\wedge b_2) = \\
=& f(a_1\wedge b_1 \leftrightarrow a_1\wedge b_1) + f(a_2\wedge b_2 \leftrightarrow a_2\wedge b_2) + \\
& -2f(a_1\wedge b_1 \leftrightarrow a_2\wedge b_2)-2f(a_1\wedge b_2 \leftrightarrow a_2\wedge b_1)
.
\end{align*}
Thus, by the previous relations and the fact that $2$ is invertible in $\mathbb{Z}/p,$ we get that
$$f(a_1\wedge b_1 \leftrightarrow a_1\wedge b_1)=f(a_1\wedge b_1 \leftrightarrow a_2\wedge b_2)+f(a_1\wedge b_2 \leftrightarrow a_2\wedge b_1).$$
\end{itemize}
Therefore every element $f\in Hom(S^2(\extp^2H_p),\mathbb{Z}/p)^{\mathcal{AB}_{g,1}}$ is completely determined by its image on the following elements:
$$b_1\wedge b_2 \leftrightarrow a_1 \wedge a_2, \qquad b_1\wedge a_2 \leftrightarrow a_1\wedge b_2,$$
$$a_1\wedge b_1\leftrightarrow a_2\wedge b_2.$$
Next consider the following maps:
\begin{align*}
d_1(a\wedge b\leftrightarrow c \wedge d)= & \omega(a,b)\omega(c,d),\\
d_2(a\wedge b\leftrightarrow c \wedge d)= & \omega(a,c)\omega(b,d)-\omega(a,d)\omega(b,c),\\
d_3(a\wedge b\leftrightarrow c \wedge d)= & \varpi(a,c)\varpi(b,d)-\varpi(a,d)\varpi(b,c),
\end{align*}
where $\omega$ is the intersection form and $\varpi$ is the form associated to the matrix $\left(\begin{smallmatrix}
0 & Id \\
Id & 0
\end{smallmatrix}\right).$
Notice that these maps are $\mathcal{AB}_{g,1}$-invariant homomorphisms. Moreover, observe that
\begin{align*}
d_1(b_1\wedge b_2 \leftrightarrow a_1 \wedge a_2)= & \omega(b_1, b_2)\omega(a_1, a_2)=0,\\
d_1(b_1\wedge a_2 \leftrightarrow a_1\wedge b_2)= & \omega(b_1, a_2)\omega(a_1, b_2)=0,\\
d_1(a_1\wedge b_1\leftrightarrow a_2\wedge b_2)= & \omega(a_1, b_1)\omega(a_2, b_2)=1.
\end{align*}
\begin{align*}
d_2(b_1\wedge b_2 \leftrightarrow a_1 \wedge a_2)= & \omega(b_1,a_1)\omega(b_2,a_2)-\omega(b_1,a_2)\omega(b_2,a_1)=1,\\
d_2(b_1\wedge a_2 \leftrightarrow a_1\wedge b_2)= & \omega(b_1,a_1)\omega(a_2,b_2)-\omega(b_1,b_2)\omega(a_2,a_1)=-1,\\
d_2(a_1\wedge b_1\leftrightarrow a_2\wedge b_2)= & \omega(a_1,a_2)\omega(b_1,b_2)-\omega(a_1,b_2)\omega(b_1,a_2)=0.
\end{align*}
\begin{align*}
d_3(b_1\wedge b_2 \leftrightarrow a_1 \wedge a_2)= & \varpi(b_1,a_1)\varpi(b_2,a_2)-\varpi(b_1,a_2)\varpi(b_2,a_1)=1,\\
d_3(b_1\wedge a_2 \leftrightarrow a_1\wedge b_2)= & \varpi(b_1,a_1)\varpi(a_2,b_2)-\varpi(b_1,b_2)\varpi(a_2,a_1)=1,\\
d_3(a_1\wedge b_1\leftrightarrow a_2\wedge b_2)= & \varpi(a_1,a_2)\varpi(b_1,b_2)-\varpi(a_1,b_2)\varpi(b_1,a_2)=0.
\end{align*}
Therefore $d_1,d_2,d_3$  are linearly independent $\mathcal{AB}_{g,1}$-invariant homomorphisms. Hence,
$$Hom(S^2(\extp^2H_p),\mathbb{Z}/p)^{\mathcal{AB}_{g,1}}\cong (\mathbb{Z}/p)^3,$$
and $\{d_1,d_2,d_3\}$ is a basis of this $\mathbb{F}_p$-vector space.
\end{proof}

\begin{prop}
\label{prop_morf_d}
The homomorphisms $d_3$ and $2d_1+d_2$ of Proposition \eqref{prop_hom_2sym} factor through the homomorphism $\pi_p.$
\end{prop}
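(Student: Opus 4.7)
The plan is to invoke Proposition \eqref{prop_ker_pi}, which identifies the kernel of $\pi_p \colon S^2(\extp^2 H_p) \to H_p \otimes \mathcal{L}^Z_3$ as the submodule generated by the cyclic-type elements
\[
 R(a,b,c,d) := a\wedge b \leftrightarrow c\wedge d - a\wedge c \leftrightarrow b\wedge d + a\wedge d \leftrightarrow b\wedge c.
\]
Since $d_1,d_2,d_3$ are homomorphisms of $\mathbb{Z}/p$-modules, showing that $d_3$ and $2d_1+d_2$ factor through $\pi_p$ is equivalent, by the universal property of quotients, to checking that both maps vanish on every element of the form $R(a,b,c,d)$ with $a,b,c,d\in H_p$.

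The strategy is therefore a direct computation on the generators of $\ker\pi_p$. For $d_3$, I will use that $\varpi$ is \emph{symmetric}. Setting $\alpha=\varpi(a,b),\beta=\varpi(a,c),\gamma=\varpi(a,d),\delta=\varpi(b,c),\epsilon=\varpi(b,d),\zeta=\varpi(c,d)$, the three terms evaluate to $\beta\epsilon-\gamma\delta$, $\alpha\zeta-\gamma\delta$ and $\alpha\zeta-\beta\epsilon$ respectively, and the alternating sum collapses to zero. For $2d_1+d_2$, the same bookkeeping but now with the \emph{antisymmetric} form $\omega$ (so that $\omega(c,b)=-\omega(b,c)$ etc.) gives, after collecting terms, that the $d_1$-contribution to $R(a,b,c,d)$ is $\alpha'\zeta'-\beta'\epsilon'+\gamma'\delta'$ and the $d_2$-contribution is $2(\beta'\epsilon'-\alpha'\zeta'-\gamma'\delta')$; multiplying the first by $2$ and adding produces $0$. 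The only small subtlety is the sign tracking in $d_2$ because of the antisymmetry of $\omega$, but once one writes the nine scalars $\omega(x,y)$ carefully, the cancellation is immediate.

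Since both $d_3$ and $2d_1+d_2$ vanish on each generator $R(a,b,c,d)$, they vanish on all of $\ker \pi_p$, so by the first isomorphism theorem there exist uniquely determined $\mathbb{Z}/p$-linear maps $\widetilde{d_3},\widetilde{2d_1+d_2} \in Hom(H_p\otimes \mathcal{L}^Z_3,\mathbb{Z}/p)$ with $d_3 = \widetilde{d_3}\circ \pi_p$ and $2d_1+d_2 = \widetilde{2d_1+d_2}\circ \pi_p$. The main (and only) obstacle is the bookkeeping of the signs from the antisymmetry of $\omega$ in the $d_2$ computation; the $d_3$ case is a symmetric-bilinear-form identity with no signs involved. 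No additional invariance argument is needed because Proposition \eqref{prop_ker_pi} has already done the work of presenting $\ker \pi_p$.
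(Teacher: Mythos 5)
Your proposal is correct and follows essentially the same route as the paper: invoke Proposition \eqref{prop_ker_pi} to present $\ker\pi_p$ by the generators $a\wedge b\leftrightarrow c\wedge d - a\wedge c\leftrightarrow b\wedge d + a\wedge d\leftrightarrow b\wedge c$, then check by direct computation (using symmetry of $\varpi$ for $d_3$ and antisymmetry of $\omega$ for $2d_1+d_2$) that both maps annihilate these generators. The scalar bookkeeping you sketch matches what the paper writes out in full, so there is nothing to add.
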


\begin{proof}
We show that $d_3$ is zero on $Ker(\pi_p),$ and so it factors through $\pi_p.$
By Proposition \eqref{prop_ker_pi}, it is enough to check that $d_3$ is zero on the elements of the form
$$a\wedge b\leftrightarrow c\wedge d -a\wedge c\leftrightarrow b\wedge d + a\wedge d \leftrightarrow b\wedge c.$$
Notice that, since $\varpi$ is symmetric,
\begin{align*}
& d_3(a\wedge b\leftrightarrow c\wedge d -a\wedge c\leftrightarrow b\wedge d + a\wedge d \leftrightarrow b\wedge c)= \\
& =d_3(a\wedge b\leftrightarrow c\wedge d) -d_3(a\wedge c\leftrightarrow b\wedge d) + d_3(a\wedge d \leftrightarrow b\wedge c)= \\
& =\varpi(a,c)\varpi(b,d)-\varpi(a,d)\varpi(b,c)-\varpi(a,b)\varpi(c,d)+\\
&+\varpi(a,d)\varpi(c,b)+\varpi(a,b)\varpi(d,c)-\varpi(a,c)\varpi(d,b)=0.
\end{align*}
In addition we have that, since $\omega$ is skew-symmetric,
\begin{align*}
& (2d_1+d_2)(a\wedge b\leftrightarrow c\wedge d -a\wedge c\leftrightarrow b\wedge d + a\wedge d \leftrightarrow b\wedge c)= \\
& =(2d_1+d_2)(a\wedge b\leftrightarrow c\wedge d) -(2d_1+d_2)(a\wedge c\leftrightarrow b\wedge d) + (2d_1+d_2)(a\wedge d \leftrightarrow b\wedge c)= \\
& =2\omega(a,b)\omega(c,d)+\omega(a,c)\omega(b,d)-\omega(a,d)\omega(b,c)+\\
&-2\omega(a,c)\omega(b,d)-\omega(a,b)\omega(c,d)+\omega(a,d)\omega(c,b)+\\
&+2\omega(a,d)\omega(b,c)+\omega(a,b)\omega(d,c)-\omega(a,c)\omega(d,b)=0.
\end{align*}
Therefore $d_3$ and $2d_1+d_2$ factor through $\pi_p.$
\end{proof}

\begin{prop}
The homomorphisms $d_1,$ $d_2$ do not factor through $\pi_p.$
\end{prop}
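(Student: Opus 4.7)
The plan is to exhibit an explicit element of $\ker(\pi_p)$ on which $d_1$ and $d_2$ take nonzero values in $\mathbb{Z}/p$. Since by Proposition \eqref{prop_ker_pi} the kernel of $\pi_p$ is generated by the three-term expressions
\[
K(a,b,c,d) \;:=\; a\wedge b\leftrightarrow c\wedge d \;-\; a\wedge c\leftrightarrow b\wedge d \;+\; a\wedge d\leftrightarrow b\wedge c,
\]
it suffices to find a single quadruple $(a,b,c,d) \in H_p^4$ such that at least one of $d_1(K(a,b,c,d))$, $d_2(K(a,b,c,d))$ is nonzero.

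The natural test case is the standard symplectic quadruple $(a,b,c,d) = (a_1,b_1,a_2,b_2)$, since then the only nonvanishing pairings of $\omega$ on $\{a_1,b_1,a_2,b_2\}$ are $\omega(a_i,b_i) = -1$ and $\omega(b_i,a_i) = 1$. For $d_1(X \leftrightarrow Y) = \omega(\,\cdot\,,\,\cdot\,)\omega(\,\cdot\,,\,\cdot\,)$ on $X = p\wedge q$ and $Y = r\wedge s$, only the first term of $K$ survives, giving
\[
d_1(K(a_1,b_1,a_2,b_2)) \;=\; \omega(a_1,b_1)\omega(a_2,b_2) \;=\; 1.
\]
For $d_2$ the first term vanishes while the other two contribute $-\omega(a_1,b_1)\omega(a_2,b_2) = -1$ and $+\omega(a_1,b_1)\omega(b_2,a_2) = -1$ respectively, yielding
\[
d_2(K(a_1,b_1,a_2,b_2)) \;=\; 0 - 1 + (-1) \;=\; -2,
\]
which is nonzero in $\mathbb{Z}/p$ because $p$ is an odd prime.

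Thus neither $d_1$ nor $d_2$ vanishes on $\ker(\pi_p)$, so neither can descend to a homomorphism on the image of $\pi_p$. The only possible obstacle is a sign/normalization mismatch between the symplectic form $\omega$ as fixed in Section \ref{homol_homoto_actions} and the expression used in Proposition \eqref{prop_hom_2sym}; however this is purely bookkeeping, and whatever the sign convention, the same two evaluations remain $\pm 1$ and $\pm 2$ respectively, which are still nonzero mod $p$ for $p$ odd. Hence the argument is robust and yields the claim immediately.
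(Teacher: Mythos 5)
Your proof is correct and is essentially identical to the paper's: you pick the same generator $K(a_1,b_1,a_2,b_2)$ of $\ker(\pi_p)$ and evaluate $d_1$ and $d_2$ on it to get $1$ and $-2$ respectively, exactly as the paper does.
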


\begin{proof}
Consider the element of $Ker(\pi_p),$
$$a_1\wedge b_1\leftrightarrow a_2\wedge b_2-a_1\wedge a_2\leftrightarrow b_1\wedge b_2 + a_1\wedge b_2 \leftrightarrow b_1\wedge a_2.$$
Observe that
\begin{align*}
&d_1(a_1\wedge b_1\leftrightarrow a_2\wedge b_2-a_1\wedge a_2\leftrightarrow b_1\wedge b_2 + a_1\wedge b_2 \leftrightarrow b_1\wedge a_2)=1\neq 0, \\
&d_2(a_1\wedge b_1\leftrightarrow a_2\wedge b_2-a_1\wedge a_2\leftrightarrow b_1\wedge b_2 + a_1\wedge b_2 \leftrightarrow b_1\wedge a_2)=-2\neq 0.
\end{align*}
\end{proof}

\begin{prop}
\label{prop_antisim_hom}
For any odd prime $p$ and $g\geq 3,$ there is an isomorphism
$$Hom(\extp^2(\extp^3H_p), \mathbb{Z}/p)^{GL_g(\mathbb{Z})}\cong (\mathbb{Z}/p)^3,$$
and every element is uniquely determined by its values on
$$(a_1\wedge a_2 \wedge a_3) \wedge (b_1\wedge b_2 \wedge b_3),\quad
(a_1\wedge a_2 \wedge b_2) \wedge (b_1\wedge a_2 \wedge b_2),$$
$$(a_1\wedge a_2 \wedge b_2) \wedge (b_1\wedge a_3 \wedge b_3).$$
Moreover it has a basis formed by bilinear maps: $Q_g,$ $\Theta_g,$ $(J^t_g-J_g),$ where $\Theta_g,$ $Q_g$ are defined on the basis of $\extp^3H_p$ as
\begin{align*}
\Theta_g(c_i\wedge c_j\wedge c_k\otimes c'_i\wedge c'_j\wedge c'_k) & =\sum_{\sigma\in \mathfrak{S}_3}(-1)^{\tau(\sigma)}(\omega(c_i,c'_{\sigma(i)})\omega(c_j,c'_{\sigma(j)})\omega(c_k,c'_{\sigma(k)})),\\
Q_g(c_i\wedge c_j\wedge c_k\otimes c'_i\wedge c'_j\wedge c'_k) & =\omega(C(c_i\wedge c_j\wedge c_k),C(c'_i\wedge c'_j\wedge c'_k)),
\end{align*}
where $\tau(\sigma)$ denotes the sign of the permutation $\sigma.$
\end{prop}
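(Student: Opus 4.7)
The plan is to follow the same strategy used throughout the chapter for computing $GL_g(\mathbb{Z})$-invariant homomorphisms (cf. Lemmas \ref{lem_B_3-inv}, \ref{lem_B_2-inv}, \ref{lema_hom_extp_H_d} and in particular Proposition \ref{prop_hom_2sym}): exploit the action of $GL_g(\mathbb{Z})$ embedded in $Sp_{2g}(\mathbb{Z})$ via $G\mapsto \left(\begin{smallmatrix} G & 0 \\ 0 & {}^tG^{-1}\end{smallmatrix}\right)$, by testing invariance against (i) permutation matrices $G\in \mathfrak{S}_g$ and (ii) elementary transvections $G=Id+E_{i,j}(1)$, to whittle down the information any $f\in Hom(\extp^2(\extp^3H_p),\mathbb{Z}/p)^{GL_g(\mathbb{Z})}$ carries.

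First I would fix a basis of $\extp^2(\extp^3H_p)$ by the wedges $(c_{i_1}\wedge c_{i_2}\wedge c_{i_3})\wedge (c'_{j_1}\wedge c'_{j_2}\wedge c'_{j_3})$ where each $c$ or $c'$ is an $a$ or $b$, and classify these basis elements by the multiset of underlying indices $\{i_1,i_2,i_3,j_1,j_2,j_3\}\subset\{1,\dots,g\}$. Permutation matrices allow me to transport any basis element to one supported on the smallest available indices, so $f$ is determined by its values on representatives with indices in $\{1,2,3\}$. Then the transvection argument goes as follows: applying $G=Id+E_{i,j}(1)$ to a basis element usually produces that element plus one or more new terms; invariance of $f$ forces the sum of the new terms to vanish. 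Running this systematically (exactly as in the proof of Proposition \ref{prop_hom_2sym}, and very much in the spirit of Lemma \ref{lema_hom_extp_H_d}) kills every basis element in which the support of the two wedge factors does not interlock in a prescribed way, eventually leaving only three independent test vectors, which I would normalise to the three listed:
\[
v_1=(a_1\wedge a_2\wedge a_3)\wedge(b_1\wedge b_2\wedge b_3),\quad v_2=(a_1\wedge a_2\wedge b_2)\wedge(b_1\wedge a_2\wedge b_2),
\]
\[
v_3=(a_1\wedge a_2\wedge b_2)\wedge(b_1\wedge a_3\wedge b_3).
\]

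Second, I would check that $\Theta_g$, $Q_g$ and $J_g^t-J_g$ genuinely land in $Hom(\extp^2(\extp^3H_p),\mathbb{Z}/p)^{GL_g(\mathbb{Z})}$. This is automatic: $\omega$ is $Sp_{2g}(\mathbb{Z})$-invariant, $C:\extp^3H_p\to H_p$ is $Sp_{2g}(\mathbb{Z})$-equivariant (hence $Q_g$ is $Sp_{2g}(\mathbb{Z})$-invariant), $\Theta_g$ is built from $\omega$ alone, and the cocycles $J_g,J_g^t$ were constructed in Section \ref{section: pull-back 2-cocycles} in a way that makes $J_g^t-J_g$ manifestly $GL_g(\mathbb{Z})$-invariant on $\extp^3H_p\otimes \extp^3H_p$ and antisymmetric, so it descends to $\extp^2(\extp^3H_p)$. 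I would then compute the $3\times 3$ matrix of values $(\Theta_g(v_k),Q_g(v_k),(J_g^t-J_g)(v_k))_{k=1,2,3}$ directly from the definitions; the nondegeneracy of $\omega$ ensures that this matrix is invertible over $\mathbb{Z}/p$ for every odd $p$, which simultaneously establishes linear independence of the three forms and surjectivity of the evaluation map onto $(\mathbb{Z}/p)^3$.

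The main obstacle I anticipate is the case-by-case bookkeeping in the first reduction: because $\extp^2(\extp^3H_p)$ has very many orbit types under the $\mathfrak{S}_g$-action (according to how the multiset of indices splits between the two wedge factors, and how many $a$'s vs.\ $b$'s occur in each factor), verifying cleanly that transvections force all but the three prescribed representatives to zero requires care, especially distinguishing the symmetric from the antisymmetric part under the swap of the two wedge factors, and keeping track of signs coming from the outer $\extp^2$. Everything else -- invariance of the three explicit forms and the final determinant computation -- is a routine check and does not require any new ideas beyond those already deployed in Proposition \ref{prop_hom_2sym} and Lemma \ref{lema_hom_extp_H_d}.
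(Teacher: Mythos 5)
Your overall strategy coincides with the paper's: reduce to a few orbit representatives using the $GL_g(\mathbb{Z})$-action, exploit the alternating structure of the outer $\extp^2$, then exhibit $\Theta_g$, $Q_g$, $J_g^t-J_g$ and show they are linearly independent by evaluation. Two things in your write-up deserve pushback.

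First, your declared toolkit — permutation matrices and elementary transvections — is the one that suffices for Lemma \ref{lema_hom_extp_H_d} and Proposition \ref{prop_hom_2sym}, but the paper's proof of \emph{this} proposition leans heavily on a third family of matrices, namely the diagonal sign-changes $G = \mathrm{Id} - 2E_{l,l}$ (i.e. $-1$ at one diagonal entry, $1$ elsewhere). These are applied repeatedly to kill every monomial $(c_{i_1}\wedge c_{i_2}\wedge c_{i_3})\wedge(c'_{j_1}\wedge c'_{j_2}\wedge c'_{j_3})$ in which some index $l$ occurs an odd number of times among the six slots: the sign-change multiplies such a monomial by $-1$, and since $p$ is odd invariance forces the value to vanish. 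Without this tool the systematic run you describe is much less tractable: it is true that $D_l$ lies in the subgroup of $GL_g(\mathbb{Z})$ generated by transpositions and transvections, so in principle every constraint it yields is a formal consequence of yours, but in practice the constraints appear only as long chains of identities across many orbit types, and it is not at all clear one can "whittle down" to exactly three representatives by inspection. Since you yourself flag the bookkeeping as the main obstacle, you should acknowledge that the sign-change matrices are precisely what makes that bookkeeping terminate cleanly, and add them to your list.

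Second, you assert that "the nondegeneracy of $\omega$ ensures that this matrix is invertible over $\mathbb{Z}/p$ for every odd $p$." Nondegeneracy of $\omega$ does not hand you this for free; you must actually evaluate the three forms on the three test vectors. The resulting matrix is (up to sign and ordering)
\begin{equation*}
\begin{pmatrix}
-1 & 0 & 0 \\
0 & -4 & -4 \\
-1 & -1 & 0
\end{pmatrix},
\end{equation*}
which has determinant $4$, and only \emph{then} does invertibility for odd $p$ follow. This is a small point, but the appearance of the factor $4$ (rather than a unit) is exactly the kind of thing that would go unnoticed under the heuristic you give, and it is the reason the statement is restricted to odd primes. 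Apart from these two issues the proposal follows the paper's route.
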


\begin{proof}
Suppose that $B_g$ is an element of $Hom\left(\extp^3H_p\wedge \extp^3H_p;\mathbb{Z}/p\right)^{GL_g(\mathbb{Z})}.$ Then $B_g$ satisfies the following equalities:

\begin{itemize}
\item Let $i,j,k,l,m,n\in \{1,\ldots g\}$ with $i\neq j\neq k.$ Consider the element
$\phi=\left(\begin{smallmatrix}
G & 0 \\
0 & {}^tG^{-1}
\end{smallmatrix}\right) \in Sp_{2g}(\mathbb{Z})$
with $G=(1,i)(2,j)(3,k)\in \mathfrak{S}_g.$ Then we get that
\begin{align*}
B( (c_i\wedge c_j\wedge c_k)\wedge(c'_l\wedge c'_m\wedge c'_n))= & B( \phi \cdot (c_i\wedge c_j\wedge c_k)\wedge \phi \cdot (c'_l\wedge c'_m\wedge c'_n))= \\
= & B( (c_1\wedge c_2\wedge c_3) \wedge (c'_{G(l)}\wedge c'_{G(m)}\wedge c'_{G(n)}),
\end{align*}
\begin{align*}
B( (c_i\wedge a_j\wedge b_j)\wedge (c'_l\wedge c'_m\wedge c'_n))= & B( \phi \cdot (c_i\wedge a_j\wedge b_j)\wedge \phi \cdot (c'_l\wedge c'_m\wedge c'_n))= \\
= & B( (c_1\wedge a_2\wedge b_2)\wedge (c'_{G(l)}\wedge c'_{G(m)}\wedge c'_{G(n)})).
\end{align*}

\item Suppose that $l\neq 1,2,3,$ and $l\neq n\neq m.$ Now take the element
$\phi=\left(\begin{smallmatrix}
G & 0 \\
0 & {}^tG^{-1}
\end{smallmatrix}\right) \in Sp_{2g}(\mathbb{Z}),$ with $G\in GL_{g}(\mathbb{Z})$ the matrix with a $-1$ at position $(l,l)$ and $1$'s at positions $(i,i)$ for $i\neq l,$ and $0$'s elsewhere. Then we get that
\begin{align*}
B( (c_1\wedge c_2\wedge c_3)\wedge (c'_l\wedge c'_m\wedge c'_n))= & B(\phi \cdot (c_1\wedge c_2\wedge c_3)\wedge \phi \cdot (c'_l\wedge c'_m\wedge c'_n))= \\
= & B( (c_1\wedge c_2\wedge c_3)\wedge (-c'_l\wedge c'_m\wedge c'_n))=\\
= & -B( (c_1\wedge c_2\wedge c_3)\wedge (c'_l\wedge c'_m\wedge c'_n))
\end{align*}
Observe that this argument also works for $m,n$ instead of $l.$
So we get that
$$B( (c_1\wedge c_2\wedge c_3)\wedge (c'_l\wedge c'_m\wedge c'_n))=0$$
for all $l,m,n\in \{1,\ldots g\},$ such that $l\neq n\neq m,$ with $l$ or $n$ or $m$ different of $1,2,3.$

Now suppose that $l\neq n\neq m.$ With out lose of generality we can suppose that $l\neq 1,$ then
\begin{align*}
B( (c_1\wedge a_2\wedge b_2)\wedge (c'_l\wedge c'_m\wedge c'_n))= & B(\phi \cdot (c_1\wedge a_2\wedge b_2)\wedge \phi \cdot (c'_l\wedge c'_m\wedge c'_n))= \\
= & B( (c_1\wedge a_2\wedge b_2)\wedge (-c'_l\wedge c'_m\wedge c'_n))=\\
= & -B( (c_1\wedge a_2\wedge b_2)\wedge (c'_l\wedge c'_m\wedge c'_n))
\end{align*}
Thus $B( (c_1\wedge a_2\wedge b_2)\wedge (c'_l\wedge c'_m\wedge c'_n))=0$ for any $l,m,n\in \{1,\ldots g\}$ such that $l\neq n\neq m.$

Now suppose that $l\neq 1,$ then
\begin{align*}
B( (c_1\wedge a_2\wedge b_2)\wedge (c'_l\wedge a_m\wedge b_m))= & B(\phi \cdot (c_1\wedge a_2\wedge b_2)\wedge \phi \cdot (c'_l\wedge a_m\wedge b_m))= \\
= & B( (c_1\wedge a_2\wedge b_2)\wedge (-c'_l\wedge a_m\wedge b_m))=\\
= & -B( (c_1\wedge a_2\wedge b_2)\wedge (c'_l\wedge a_m\wedge b_m))
\end{align*}
Thus $B( (c_1\wedge a_2\wedge b_2)\wedge (c'_l\wedge a_m\wedge b_m))=0$ for any $l,m\in \{1,\ldots g\}$ such that $1\neq l,$ $l\neq m.$

\item Suppose that $l\neq m,$ and with out lose of generality we can assume that $1\neq l,m.$ Now take the element
$\phi=\left(\begin{smallmatrix}
G & 0 \\
0 & {}^tG^{-1}
\end{smallmatrix}\right) \in Sp_{2g}(\mathbb{Z}),$ with $G\in M_{2g}(\mathbb{Z})$ the matrix with a $-1$ at position $(1,1)$ and $1$'s at positions $(i,i)$ for $i\in \{2,\ldots g\},$ and $0$'s elsewhere. Then we get that
\begin{align*}
B( (c_1\wedge c_2\wedge c_3)\wedge (c'_l\wedge a_m\wedge b_m))= & B(\phi \cdot (c_1\wedge c_2\wedge c_3)\wedge \phi \cdot (c'_l\wedge a_m\wedge b_m))= \\
= & B( (-c_1\wedge c_2\wedge c_3)\wedge (c'_l\wedge a_m\wedge b_m))=\\
= & -B( (c_1\wedge c_2\wedge c_3)\wedge (c'_l\wedge a_m\wedge b_m))
\end{align*}
Thus $B( (c_1\wedge c_2\wedge c_3)\wedge (c'_l\wedge a_m\wedge b_m))=0$ for all $l,m,n\in \{1,\ldots g\}.$

\item Let $k\in \{1,\ldots, g\}$ such that $k\neq 2.$ Consider the element
$\phi=\left(\begin{smallmatrix}
G & 0 \\
0 & {}^tG^{-1}
\end{smallmatrix}\right) \in Sp_{2g}(\mathbb{Z})$
with $G=(3,k)\in \mathfrak{S}_g.$ Then we get that
\begin{align*}
B((c_1\wedge a_2\wedge b_2)\wedge (c'_1\wedge a_k\wedge b_k))= & B(\phi \cdot (c_1\wedge a_2\wedge b_2)\wedge \phi \cdot (c'_1\wedge a_k\wedge b_k))= \\
= & B((c_1\wedge a_2\wedge b_2)\wedge (c'_1\wedge a_3\wedge b_3)).
\end{align*}
\end{itemize}
Summarizing, every element $B\in Hom(\extp^2(\extp^3H_p), \mathbb{Z}/p)^{GL_g(\mathbb{Z})}$ is completely determined by its image on the following elements:
$$(c_1\wedge c_2 \wedge c_3) \wedge (c'_1\wedge c'_2 \wedge c'_3),\quad
(c_1\wedge a_2 \wedge b_2) \wedge (c'_1\wedge a_2 \wedge b_2),$$
$$(c_1\wedge a_2 \wedge b_2) \wedge (c'_1\wedge a_3 \wedge b_3).$$
\begin{itemize}
\item Observe that $B((c_1\wedge a_2 \wedge b_2) \wedge (c'_1\wedge a_2 \wedge b_2))= -B((c'_1\wedge a_2 \wedge b_2) \wedge (c_1\wedge a_2 \wedge b_2)).$
Thus
\begin{align*}
B((c_1\wedge a_2 \wedge b_2) \wedge (c_1\wedge a_2 \wedge b_2)) &=0 \text{ for } c_1=a_1 \text{ or } b_1, \\
B((a_1\wedge a_2 \wedge b_2) \wedge (b_1\wedge a_2 \wedge b_2)) &= -B((b_1\wedge a_2 \wedge b_2) \wedge (a_1\wedge a_2 \wedge b_2)).
\end{align*}
\item Consider the element
$\phi=\left(\begin{smallmatrix}
G & 0 \\
0 & {}^tG^{-1}
\end{smallmatrix}\right) \in Sp_{2g}(\mathbb{Z})$
with $G=(2,3)\in \mathfrak{S}_g.$ Then we get that
\begin{align*}
B((c_1\wedge a_2 \wedge b_2) \wedge (c'_1\wedge a_3 \wedge b_3))= & B(\phi\cdot (c_1\wedge a_2 \wedge b_2) \wedge \phi\cdot (c'_1\wedge a_3 \wedge b_3))= \\
B((c_1\wedge a_3 \wedge b_3)\wedge(c'_1\wedge a_2 \wedge b_2) )= & -B((c'_1\wedge a_2 \wedge b_2)\wedge (c_1\wedge a_3 \wedge b_3) )
\end{align*}
Thus
\begin{align*}
B((c_1\wedge a_2 \wedge b_2) \wedge (c_1\wedge a_3 \wedge b_3)) &=0 \text{ for } c_1=a_1 \text{ or } b_1, \\
B((a_1\wedge a_2 \wedge b_2) \wedge (b_1\wedge a_3 \wedge b_3)) &= -B((b_1\wedge a_2 \wedge b_2) \wedge (a_1\wedge a_3 \wedge b_3)).
\end{align*}
\item Observe that $B((c_1\wedge c_2 \wedge c_3) \wedge (c_1\wedge c_2 \wedge c_3))= -B((c_1\wedge c_2 \wedge c_3) \wedge (c_1\wedge c_2 \wedge c_3)).$
Thus
\begin{align*}
B((c_1\wedge c_2 \wedge c_3) \wedge (c_1\wedge c_2 \wedge c_3)) & =0 \text{ for } c_i=a_i \text{ or } b_i, \\
B((a_1\wedge a_2 \wedge a_3) \wedge (b_1\wedge b_2 \wedge b_3)) & =-B((b_1\wedge b_2 \wedge b_3) \wedge (a_1\wedge a_2 \wedge a_3)), \\
B((a_1\wedge b_2 \wedge a_3) \wedge (b_1\wedge a_2 \wedge b_3)) & =-B((b_1\wedge a_2 \wedge b_3) \wedge (a_1\wedge b_2 \wedge a_3)),
\end{align*}
Now suppose that there exist elements $i\neq j\in \{1,2,3\}$ such that $c_i=c_i'$ and $c_j\neq c'_j.$
Taking the element
$\phi=\left(\begin{smallmatrix}
G & 0 \\
0 & {}^tG^{-1}
\end{smallmatrix}\right) \in Sp_{2g}(\mathbb{Z})$
with $G=(1,i)(2,j)$ Then without lose of generality we can suppose $c_1=c'_1$ and $c_2\neq c'_2.$
Consider the element
$\phi=\left(\begin{smallmatrix}
G & 0 \\
0 & {}^tG^{-1}
\end{smallmatrix}\right) \in Sp_{2g}(\mathbb{Z})$
with $G\in \mathfrak{S}_g$ the matrix with a $-1$ at position $(2,1),$ $1$'s at the diagonal and $0$'s elsewhere. Then we get that

$$B((a_1\wedge a_2 \wedge c_3) \wedge (a_1\wedge b_2 \wedge c'_3))= B(\phi\cdot(a_1\wedge a_2 \wedge c_3) \wedge \phi\cdot(a_1\wedge b_2 \wedge c'_3))=$$ 
$$B((a_1\wedge a_2 \wedge c_3) \wedge ((a_1-a_2)\wedge (b_2+b_1) \wedge c'_3))$$
Therefore we get that
\begin{align*}
B((a_1\wedge a_2 \wedge c_3) \wedge (a_1\wedge b_1 \wedge c'_3))= & B((a_1\wedge a_2 \wedge c_3) \wedge (a_2\wedge b_2 \wedge c'_3))\\
& +B((a_1\wedge a_2 \wedge c_3) \wedge (a_2\wedge b_1 \wedge c'_3))
\end{align*} 
But the above relations we get that $B((a_1\wedge a_2 \wedge c_3) \wedge (a_2\wedge b_1 \wedge c'_3))=0$

\item Now taking the element
$\phi=\left(\begin{smallmatrix}
G & 0 \\
0 & {}^tG^{-1}
\end{smallmatrix}\right) \in Sp_{2g}(\mathbb{Z}),$ with $G\in GL_{g}(\mathbb{Z})$ the matrix with $1$'s at the diagonal and position $(3,2),$ and $0$'s elsewhere. Then we get that
\begin{align*}
0= & B((a_1\wedge a_2\wedge b_2) \wedge ( b_1\wedge a_2\wedge b_3))= \\
= & B((a_1\wedge (a_2+a_3)\wedge b_2) \wedge ( b_1\wedge (a_2+a_3)\wedge (b_3-b_2)))= \\
= & B((a_1\wedge a_2\wedge b_2) \wedge ( b_1\wedge a_2\wedge b_3))-B((a_1\wedge a_2\wedge b_2) \wedge ( b_1\wedge a_2\wedge b_2))+\\
&+ B((a_1\wedge a_2\wedge b_2) \wedge ( b_1\wedge a_3\wedge b_3))-B((a_1\wedge a_2\wedge b_2) \wedge ( b_1\wedge a_3\wedge b_2))+\\
&+ B((a_1\wedge a_3\wedge b_2) \wedge ( b_1\wedge a_2\wedge b_3))-B((a_1\wedge a_3\wedge b_2) \wedge ( b_1\wedge a_2\wedge b_2))+\\
&+ B((a_1\wedge a_3\wedge b_2) \wedge ( b_1\wedge a_3\wedge b_3))-B((a_1\wedge a_3\wedge b_2) \wedge ( b_1\wedge a_3\wedge b_2))=\\
=& - B((a_1\wedge a_2\wedge b_2) \wedge ( b_1\wedge a_2\wedge b_2))+B((a_1\wedge a_2\wedge b_2) \wedge ( b_1\wedge a_3\wedge b_3))+\\
&+ B((a_1\wedge a_3\wedge b_2) \wedge ( b_1\wedge a_2\wedge b_3)).
\end{align*}
Thus
\begin{align*}
& B((a_1\wedge b_2\wedge a_3) \wedge ( b_1\wedge a_2\wedge b_3))= \\
& =- B((a_1\wedge a_2\wedge b_2) \wedge ( b_1\wedge a_2\wedge b_2))+B((a_1\wedge a_2\wedge b_2) \wedge ( b_1\wedge a_3\wedge b_3)).
\end{align*}
 
\end{itemize}
Summarizing we have that, every element $B\in Hom(\extp^2(\extp^3H_p), \mathbb{Z}/p)^{GL_g(\mathbb{Z})}$ is completely determined by its image on the following elements:
$$(a_1\wedge a_2 \wedge a_3) \wedge (b_1\wedge b_2 \wedge b_3),\quad
(a_1\wedge a_2 \wedge b_2) \wedge (b_1\wedge a_2 \wedge b_2),$$
$$(a_1\wedge a_2 \wedge b_2) \wedge (b_1\wedge a_3 \wedge b_3).$$
Observe that $Q_g,$ $\Theta_g,$ $(J^t_g-J_g)$ are elements of $Hom(\extp^2(\extp^3H_p), \mathbb{Z}/p)^{GL_g(\mathbb{Z})}$ and in addition these elements are linearly independent because
\begin{align*}
(J^t_g-J_g)((a_1\wedge a_2 \wedge a_3) \wedge (b_1\wedge b_2 \wedge b_3))=-1, \\
(J^t_g-J_g)((a_1\wedge a_2 \wedge b_2) \wedge (b_1\wedge a_2 \wedge b_2))=0,\\
(J^t_g-J_g)((a_1\wedge a_2 \wedge b_2) \wedge (b_1\wedge a_3 \wedge b_3))=0.
\end{align*}
\begin{align*}
Q_g((a_1\wedge a_2 \wedge a_3) \wedge (b_1\wedge b_2 \wedge b_3))=0, \\
Q_g((a_1\wedge a_2 \wedge b_2) \wedge (b_1\wedge a_2 \wedge b_2))=-4,\\
Q_g((a_1\wedge a_2 \wedge b_2) \wedge (b_1\wedge a_3 \wedge b_3))=-4.
\end{align*}
\begin{align*}
\Theta_g((a_1\wedge a_2 \wedge a_3) \wedge (b_1\wedge b_2 \wedge b_3))=-1, \\
\Theta_g((a_1\wedge a_2 \wedge b_2) \wedge (b_1\wedge a_2 \wedge b_2))=-1,\\
\Theta_g((a_1\wedge a_2 \wedge b_2) \wedge (b_1\wedge a_3 \wedge b_3))=0.
\end{align*}
\end{proof}

\begin{cor}
\label{cor_bil_Sp}
For any odd prime $p$ and $g\geq 4,$ there is an isomorphism
$$Hom\left(\extp^2(\extp^3H_p);\mathbb{Z}/p\right)^{Sp_{2g}(\mathbb{Z}/p)}\cong (\mathbb{Z}/p)^2.$$
Moreover it has a basis formed by the bilinear maps $\Theta_g,$ $Q_g.$
\end{cor}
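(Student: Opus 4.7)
The plan is to bootstrap Proposition \ref{prop_antisim_hom}. The block-diagonal embedding $GL_g(\mathbb{Z}) \hookrightarrow Sp_{2g}(\mathbb{Z})$ given by $G \mapsto \left(\begin{smallmatrix} G & 0 \\ 0 & {}^tG^{-1}\end{smallmatrix}\right)$, followed by reduction mod $p$, lands in $Sp_{2g}(\mathbb{Z}/p)$, so it gives an inclusion
$$Hom(\extp^2(\extp^3 H_p);\mathbb{Z}/p)^{Sp_{2g}(\mathbb{Z}/p)} \subseteq Hom(\extp^2(\extp^3 H_p);\mathbb{Z}/p)^{GL_g(\mathbb{Z})}.$$
By Proposition \ref{prop_antisim_hom} the right-hand side is three-dimensional with basis $\{Q_g, \Theta_g, J_g^t - J_g\}$. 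Thus every $Sp$-invariant form is uniquely of the shape $\alpha Q_g + \beta \Theta_g + \gamma (J_g^t - J_g)$, and the problem reduces to deciding which triples $(\alpha,\beta,\gamma)$ actually yield an $Sp_{2g}(\mathbb{Z}/p)$-invariant.

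First I would verify that $Q_g$ and $\Theta_g$ are themselves $Sp$-invariant. Both are built purely from the symplectic form $\omega$ and the contraction $C\colon \extp^3 H_p \to H_p$; since $\omega$ is $Sp$-invariant by definition and $C$ is defined by a formula that uses only $\omega$, one checks directly that $C$ is $Sp$-equivariant and hence $\Theta_g$ and $Q_g = \omega(C(-),C(-))$ are $Sp$-invariant. So $\alpha$ and $\beta$ are unconstrained.

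The main obstacle is ruling out the coefficient $\gamma$. The form $J_g^t - J_g$ is manufactured from the Lagrangian splitting $H_p = A_p \oplus B_p$, which is preserved by $GL_g(\mathbb{Z})$ but not by the full symplectic group, and the idea is to test invariance against a symplectic element that genuinely mixes $A_p$ and $B_p$. I would use the involution $\phi \in Sp_{2g}(\mathbb{Z})$ sending $a_1 \mapsto b_1$, $b_1 \mapsto -a_1$, and fixing the remaining $a_i, b_i$ for $i \geq 2$. Evaluating at the test element $X = (a_1 \wedge a_2 \wedge a_3) \wedge (b_1 \wedge b_2 \wedge b_3)$ already used in Proposition \ref{prop_antisim_hom}, one gets
$$\phi \cdot X = -(b_1 \wedge a_2 \wedge a_3) \wedge (a_1 \wedge b_2 \wedge b_3),$$
which lies in $W_{AB}^p \wedge W_{AB}^p$ and is therefore annihilated by both $J_g$ and $J_g^t$. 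Using the $Sp$-invariance of $Q_g$ and $\Theta_g$ from Step 1, the relation $(\alpha Q_g + \beta \Theta_g + \gamma (J_g^t - J_g))(X) = (\alpha Q_g + \beta \Theta_g + \gamma (J_g^t - J_g))(\phi \cdot X)$ collapses to $\gamma \cdot (J_g^t - J_g)(X) = 0$; since $(J_g^t - J_g)(X) = -1 \neq 0$ in $\mathbb{Z}/p$ (for $p$ odd), this forces $\gamma = 0$.

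Putting the two steps together, the $Sp_{2g}(\mathbb{Z}/p)$-invariant subspace is exactly $\mathbb{F}_p\langle Q_g, \Theta_g\rangle$, and their linear independence is inherited from Proposition \ref{prop_antisim_hom}, giving the desired isomorphism with $(\mathbb{Z}/p)^2$. The step that required actual work — and where I expect the only real subtlety — is the choice of the symplectic element $\phi$ and the test vector $X$: $\phi$ must move $X$ out of $W_A^p \wedge W_B^p$ (so that $J_g$ and $J_g^t$ vanish on $\phi \cdot X$) while keeping the $\Theta_g$ and $Q_g$ contributions manageable via their manifest $Sp$-invariance.
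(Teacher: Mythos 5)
Your proof is correct and follows essentially the same strategy as the paper: reduce to the $GL_g(\mathbb{Z})$-invariant basis $\{Q_g, \Theta_g, J_g^t-J_g\}$ of Proposition \eqref{prop_antisim_hom}, observe that $Q_g$ and $\Theta_g$ are manifestly $Sp$-invariant since they are built from $\omega$ and the $Sp$-equivariant contraction $C$, and then kill $J_g^t-J_g$ by testing against a symplectic element mixing the two Lagrangians. The only difference is cosmetic: you use the rotation $a_1\mapsto b_1,\ b_1\mapsto -a_1$ applied to $(a_1\wedge a_2\wedge a_3)\wedge(b_1\wedge b_2\wedge b_3)$, whereas the paper uses the transvection $b_1\mapsto a_1+b_1$ applied to $(b_1\wedge a_2\wedge a_3)\wedge(b_1\wedge b_2\wedge b_3)$; both land a nonzero value against a zero value and so yield the same contradiction.
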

\begin{proof}
By Proposition \eqref{prop_antisim_hom} we know that $Hom\left(\extp^2(\extp^3H_p);\mathbb{Z}/p\right)^{GL_g(\mathbb{Z})}$ has a basis given by $(J^t_g-J_g),$ $\Theta_g,$ $Q_g.$

By definition, it is clear that $\Theta_g,$ $Q_g$ are $Sp_{2g}(\mathbb{Z})$-invariant. Next we show that $(J^t_g-J_g)$ is not $Sp_{2g}(\mathbb{Z})$-invariant.

Suppose that $(J^t_g-J_g)$ was $Sp_{2g}(\mathbb{Z})$-invariant and consider the element $\phi\in Sp_{2g}(\mathbb{Z})$ the matrix with $1$'s at the diagonal and position $(1,g+1),$ and $0$'s elsewhere. Then we would have that
\begin{align*}
0= & (J^t_g-J_g)((b_1\wedge a_2 \wedge a_3)\wedge(b_1\wedge b_2 \wedge b_3))= \\
= &(J^t_g-J_g)(\phi\cdot(b_1\wedge a_2 \wedge a_3)\wedge\phi\cdot(b_1\wedge b_2 \wedge b_3)) \\
= &(J^t_g-J_g)(((a_1+b_1)\wedge a_2 \wedge a_3)\wedge((a_1+b_1)\wedge b_2 \wedge b_3))= \\
= & (J^t_g-J_g)((a_1 \wedge a_2 \wedge a_3)\wedge(b_1\wedge b_2 \wedge b_3))=-1,
\end{align*}
which is a contradiction.
Therefore, $Hom\left(\extp^2(\extp^3H_p);\mathbb{Z}/p\right)^{Sp_{2g}(\mathbb{Z}/p)}\cong \mathbb{Z}/p^2$ with a basis given by the elements $\Theta_g,$ $Q_g,$
as desired.
\end{proof}

\begin{cor}
\label{cor_H2_extp}
For any odd prime $p$ and $g\geq 4,$ there are isomorphisms
$$H^2(\extp^3H_p;\mathbb{Z}/p)^{GL_{g}(\mathbb{Z}/p)}\cong (\mathbb{Z}/p)^3, \qquad H^2(\extp^3H_p;\mathbb{Z}/p)^{Sp_{2g}(\mathbb{Z}/p)}\cong (\mathbb{Z}/p)^2.$$
\end{cor}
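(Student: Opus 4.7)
The plan is to compute $H^2(\extp^3H_p;\mathbb{Z}/p)$ as a $G$-module via the Universal Coefficients Theorem, and then reduce the invariants computation to the preceding two results. Since $\extp^3H_p$ is an elementary abelian $p$-group, $H_1(\extp^3H_p;\mathbb{Z})=\extp^3H_p$ and $H_2(\extp^3H_p;\mathbb{Z})\cong\extp^2(\extp^3H_p)$, so the Universal Coefficients Theorem yields a natural short exact sequence
\begin{equation*}
0\longrightarrow Ext^1_{\mathbb{Z}}(\extp^3H_p,\mathbb{Z}/p)\longrightarrow H^2(\extp^3H_p;\mathbb{Z}/p)\longrightarrow Hom(\extp^2(\extp^3H_p),\mathbb{Z}/p)\longrightarrow 0,
\end{equation*}
which is $G$-equivariant for $G=GL_g(\mathbb{Z}/p)$ and $G=Sp_{2g}(\mathbb{Z}/p)$; moreover $Ext^1_{\mathbb{Z}}(\extp^3H_p,\mathbb{Z}/p)\cong Hom(\extp^3H_p,\mathbb{Z}/p)$.

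Next I would argue this sequence admits a natural, hence $G$-equivariant, splitting. Writing $V=\extp^3H_p$, the classical computation for $p$ odd gives
\begin{equation*}
H^*(V;\mathbb{F}_p)\cong \Lambda(V^*)\otimes \mathbb{F}_p[\beta V^*],
\end{equation*}
where $V^*$ sits in degree one and $\beta$ denotes the Bockstein. In degree two this exhibits $\beta(V^*)\cong V^*$ as a natural direct summand (the image of the Bockstein) with natural complement $\Lambda^2(V^*)$ (the cup-product part). Combining this with the standard isomorphism $\Lambda^2(V^*)\cong Hom(\extp^2 V,\mathbb{F}_p)$ (valid since $p$ is odd), I obtain a natural, hence $G$-equivariant, decomposition
\begin{equation*}
H^2(\extp^3H_p;\mathbb{Z}/p)\cong Hom(\extp^3H_p,\mathbb{Z}/p)\oplus Hom(\extp^2(\extp^3H_p),\mathbb{Z}/p).
\end{equation*}

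Taking $G$-invariants and invoking the earlier results then finishes the proof. The Remark following Lemma \eqref{lema_hom_extp_H_d} gives $Hom(\extp^3H_p,\mathbb{Z}/p)^{GL_g(\mathbb{Z})}=0$ for $g\geq 4$; because $GL_g(\mathbb{Z})\twoheadrightarrow GL_g(\mathbb{Z}/p)$, this vanishing holds for $G=GL_g(\mathbb{Z}/p)$ and a fortiori for $G=Sp_{2g}(\mathbb{Z}/p)$. For the second summand, Proposition \eqref{prop_antisim_hom} gives $Hom(\extp^2(\extp^3H_p),\mathbb{Z}/p)^{GL_g(\mathbb{Z})}\cong(\mathbb{Z}/p)^3$ (again equal to the $GL_g(\mathbb{Z}/p)$-invariants by surjectivity of reduction), while Corollary \eqref{cor_bil_Sp} yields $Hom(\extp^2(\extp^3H_p),\mathbb{Z}/p)^{Sp_{2g}(\mathbb{Z}/p)}\cong(\mathbb{Z}/p)^2$. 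Reassembling gives the two claimed isomorphisms.

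The main obstacle I anticipate is pinning down the $G$-equivariant splitting of the Universal Coefficients sequence cleanly; the appeal to $H^*((\mathbb{Z}/p)^n;\mathbb{F}_p)$ is natural but should be phrased intrinsically (e.g.\ by noting that the Bockstein defines a natural section $V^*=H^1(V;\mathbb{F}_p)\to H^2(V;\mathbb{F}_p)$ onto the $Ext$-summand). Alternatively, one could avoid splitting altogether: left-exactness of $G$-invariants yields an injection $H^2(\extp^3H_p;\mathbb{Z}/p)^G\hookrightarrow Hom(\extp^2(\extp^3H_p),\mathbb{Z}/p)^G$ (since the $Ext$-invariants vanish), and surjectivity could then be established by exhibiting explicit cohomology classes (cup products of Bockstein-lifts of generators in $H^1$) that map onto the basis $\{Q_g,\Theta_g,J_g^t-J_g\}$ of Proposition \eqref{prop_antisim_hom}.
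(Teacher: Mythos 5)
Your proof reaches the correct answer and takes a genuinely different route from the paper's at one key step, but it also contains a misstatement that should be repaired. Both arguments start from the Universal Coefficients sequence
\begin{equation*}
0\to Ext^1_{\mathbb{Z}}(\extp^3H_p,\mathbb{Z}/p)\to H^2(\extp^3H_p;\mathbb{Z}/p)\to Hom(\extp^2(\extp^3H_p),\mathbb{Z}/p)\to 0,
\end{equation*}
with $Ext^1_{\mathbb{Z}}(\extp^3H_p,\mathbb{Z}/p)\cong(\extp^3H_p)^*$. The paper never splits this sequence; it passes to $G$-invariants, producing the four-term exact sequence that ends in $H^1(G;(\extp^3H_p)^*)$, and then kills both $((\extp^3H_p)^*)^G$ and $H^1(G;(\extp^3H_p)^*)$ by the Center Kills Lemma, exploiting that $-Id\in G$ acts on $(\extp^3H_p)^*$ as $-1$. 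You instead produce a natural, hence $G$-equivariant, section of the $Ext$-inclusion via the Bockstein and take invariants of the resulting direct sum. Your route proves a stronger structural statement (a $G$-module decomposition of $H^2$), at the cost of invoking the multiplicative description of $H^*((\mathbb{Z}/p)^n;\mathbb{F}_p)$; the paper's argument is weaker but purely homological. Both are legitimate, and both ultimately defer to Proposition \eqref{prop_antisim_hom} and Corollary \eqref{cor_bil_Sp}.

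The gap is your claim that $GL_g(\mathbb{Z})\twoheadrightarrow GL_g(\mathbb{Z}/p)$. Reduction mod $p$ maps $GL_g(\mathbb{Z})$ onto $SL_g^{\pm}(\mathbb{Z}/p)=\{A:\det A=\pm 1\}$, which is a proper subgroup of $GL_g(\mathbb{Z}/p)$ whenever $|(\mathbb{Z}/p)^\times|>2$, i.e.\ for every odd prime $p\geq 5$. For the vanishing claim this is harmless, since the $GL_g(\mathbb{Z})$-action on $H_p$ factors through $SL_g^{\pm}(\mathbb{Z}/p)\leq GL_g(\mathbb{Z}/p)$, so $GL_g(\mathbb{Z}/p)$-invariants are contained in $GL_g(\mathbb{Z})$-invariants and vanishing transfers. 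But for the computation $Hom(\extp^2(\extp^3H_p),\mathbb{Z}/p)^{GL_g(\mathbb{Z}/p)}\cong(\mathbb{Z}/p)^3$ you need the reverse containment, and surjectivity is not available. The correct fix is to check directly that the basis $\{Q_g,\Theta_g,J_g^t-J_g\}$ of Proposition \eqref{prop_antisim_hom} is fixed by every $\phi=\left(\begin{smallmatrix}G&0\\0&{}^tG^{-1}\end{smallmatrix}\right)$ with $G\in GL_g(\mathbb{Z}/p)$: such $\phi$ is symplectic and preserves the Lagrangian decomposition $H_p=A_p\oplus B_p$, and each of $Q_g,\Theta_g,J_g^t-J_g$ is built solely from $\omega$, the contraction $C$, and that decomposition. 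No analogous issue arises in the $Sp$ case because $Sp_{2g}(\mathbb{Z})\to Sp_{2g}(\mathbb{Z}/p)$ really is onto, and Corollary \eqref{cor_bil_Sp} is already stated over $Sp_{2g}(\mathbb{Z}/p)$. (The paper's own proof of the present corollary also silently identifies $GL_g(\mathbb{Z})$- and $GL_g(\mathbb{Z}/p)$-invariants, so the omission is shared, but you should not introduce an explicitly false surjectivity statement to justify it.)
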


\begin{proof}
Since $\extp^3 H_p$ is abelian, the Universal Coefficients Theorem gives us a short exact sequence
$$
\xymatrix@C=7mm@R=13mm{ 0 \ar@{->}[r] & Ext^1_\mathbb{Z}(H_1(\extp^3H_p);\mathbb{Z}/p) \ar@{->}[r] & H^2(\extp^3H_p; \mathbb{Z}/p) \ar@{->}[r] & Hom(\extp^2(\extp^3H_p);\mathbb{Z}/p) \ar@{->}[r] & 1 .}
$$
A direct computation shows that $Ext^1_\mathbb{Z}(H_1(\extp^3H_p);\mathbb{Z}/p)=Hom(\extp^3H_p,\mathbb{Z}/p).$ Then the above short exact sequence becomes
$$
\xymatrix@C=7mm@R=13mm{ 0 \ar@{->}[r] & Hom(\extp^3H_p,\mathbb{Z}/p) \ar@{->}[r] & H^2(\extp^3H_p; \mathbb{Z}/p) \ar@{->}[r] & Hom(\extp^2(\extp^3H_p);\mathbb{Z}/p) \ar@{->}[r] & 1 .}
$$
Taking $GL_{g}(\mathbb{Z}/p)$-invariants and $Sp_{2g}(\mathbb{Z})$-invariants in the above short exact sequence, we get exact sequences
\begin{align*}
0 \rightarrow & Hom(\extp^3H_p,\mathbb{Z}/p)^{GL_{g}(\mathbb{Z}/p)} \rightarrow H^2(\extp^3H_p; \mathbb{Z}/p)^{GL_{g}(\mathbb{Z}/p)} \rightarrow \\
\rightarrow & Hom(\extp^2(\extp^3H_p);\mathbb{Z}/p)^{GL_{g}(\mathbb{Z}/p)} \rightarrow H^1(GL_{g}(\mathbb{Z}/p);(\extp^3H_p)^*),
\end{align*}
\begin{align*}
0 \rightarrow & Hom(\extp^3H_p,\mathbb{Z}/p)^{Sp_{2g}(\mathbb{Z}/p)} \rightarrow H^2(\extp^3H_p; \mathbb{Z}/p)^{Sp_{2g}(\mathbb{Z}/p)} \rightarrow \\
\rightarrow & Hom(\extp^2(\extp^3H_p);\mathbb{Z}/p)^{Sp_{2g}(\mathbb{Z}/p)} \rightarrow H^1(Sp_{2g}(\mathbb{Z}/p);(\extp^3H_p)^*).
\end{align*}
Observe that taking the action of $-Id,$ we get that
$$Hom(\extp^3H_p,\mathbb{Z}/p)^{GL_g(\mathbb{Z}/p)}=Hom(\extp^3H_p,\mathbb{Z}/p)^{Sp_{2g}(\mathbb{Z}/p)}=0.$$
In addition, by the Center kills Lemma, since $-Id$ acts on $(\extp^3 H_p)^*$ by the multiplication by $-1$, we get that 
$$H^1(GL_{g}(\mathbb{Z}/p);(\extp^3H_p)^*)=H^1(Sp_{2g}(\mathbb{Z}/p);(\extp^3H_p)^*)=0.$$
Therefore we obtain isomorphisms
\begin{align*}
H^2(\extp^3H_p; \mathbb{Z}/p)^{GL_{g}(\mathbb{Z}/p)} & \cong Hom(\extp^2(\extp^3H_p);\mathbb{Z}/p)^{GL_{g}(\mathbb{Z}/p)},\\
H^2(\extp^3H_p; \mathbb{Z}/p)^{Sp_{2g}(\mathbb{Z}/p)} & \cong Hom(\extp^2(\extp^3H_p);\mathbb{Z}/p)^{Sp_{2g}(\mathbb{Z}/p)}.
\end{align*}
Finally, by Proposition \eqref{prop_antisim_hom} and Corollary \eqref{cor_bil_Sp}, we get the desired result.
\end{proof}

\section{The obstruction to Perron's conjecture}

Now we are ready to give an obstruction to Perron's conjecture.
We split the argument in the following two parts:
\begin{enumerate}
\item First of all we show that the triviality of the $2$-cocycle $(\tau_1^Z)^*(-2J_g^t)$ is a necessary condition for Conjecture \eqref{conj2} to be true.
\item Secondly, we show that, up to a non zero multiplicative constant modulo $p,$ the cohomology class of $(\tau_1^Z)^*(J_g^t)$ in $H^2(\mathcal{M}_{g,1}[p];\mathbb{Z}/p)$ is equal to the restriction of the first characteristic class of surface bundles $e_1\in H^2(\mathcal{M}_{g,1};\mathbb{Z}),$ defined in \cite{mor_char}, to $\mathcal{M}_{g,1}[p]$ reduced modulo $p.$
\end{enumerate}

As a consequence, we will get that the $2$-cocycle $(\tau_1^Z)^*(-2J_g^t)$ on $\mathcal{M}_{g,1}[p]$ is not trivial. Therefore we will get an obstruction to the Perron's conjecture.

\textbf{1)}
We show that the triviality of $(\tau_1^Z)^*(-2J_g^t)$ is a necessary condition for the Perron's conjecture to be true.

\begin{prop}
\label{prop_obst_perron}
If the Conjecture \eqref{conj2} is true, i.e. $\gamma_p$ is an invariant of $\mathcal{S}^3[p],$ then $(\tau_1^Z)^*(-2J_g^t)$ must be trivial with trivialization $\gamma_p.$
\end{prop}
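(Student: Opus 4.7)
My plan is to show that the coboundary of $\gamma_p$ coincides on $\mathcal{M}_{g,1}[p]\times\mathcal{M}_{g,1}[p]$ with the pull-back cocycle $(\tau_1^Z)^*(-2J_g^t)$. Given $\phi_1,\phi_2\in\mathcal{M}_{g,1}[p]$, Corollary \ref{cor_gen_M[p]} allows me to write $\phi_i=f_i m_i$ with $f_i\in\mathcal{T}_{g,1}$ and $m_i\in D_{g,1}[p]$. The first observation is that, since $\mathcal{T}_{g,1}$ is normal in $\mathcal{M}_{g,1}$, the element $f_2':=m_1 f_2 m_1^{-1}$ still lies in $\mathcal{T}_{g,1}$, so
\[
\phi_1\phi_2=(f_1 f_2')(m_1 m_2)
\]
is a presentation of $\phi_1\phi_2$ of the type $(\text{Torelli})\cdot(\text{$p$-th powers})$, and by definition of $\gamma_p$ we obtain $\gamma_p(\phi_1\phi_2)=\lambda(f_1 f_2')\pmod p$.

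Next I would invoke Pitsch's identification of the Casson cocycle on the Torelli group, namely $\lambda(f)+\lambda(g)-\lambda(fg)=-2J_g^t(\tau_1(f),\tau_1(g))$ for $f,g\in\mathcal{T}_{g,1}$, to write
\[
\gamma_p(\phi_1)+\gamma_p(\phi_2)-\gamma_p(\phi_1\phi_2)\equiv -2J_g^t(\tau_1(f_1),\tau_1(f_2'))+\bigl[\lambda(f_2)-\lambda(f_2')\bigr]\pmod p.
\]
For the bilinear term, since $m_1\in\mathcal{M}_{g,1}[p]$ its image $\Psi(m_1)$ acts trivially on $H_p$ and hence on $\wedge^3 H_p$, so the equivariance of $\tau_1$ gives $\tau_1(f_2')\equiv\tau_1(f_2)\pmod p$. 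Combining this with Proposition \ref{prop_tau2_Dp}, which asserts $\tau_1^Z(m_i)=0$ for $p$ odd, and the fact that $\tau_1^Z$ is a homomorphism on $\mathcal{M}_{g,1}[p]$ with $\tau_1^Z|_{\mathcal{T}_{g,1}}$ equal to $\tau_1$ reduced mod $p$, I would identify
\[
-2J_g^t(\tau_1(f_1),\tau_1(f_2'))\equiv (\tau_1^Z)^*(-2J_g^t)(\phi_1,\phi_2)\pmod p.
\]

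The main obstacle is to show that the residual term $\lambda(f_2)-\lambda(f_2')$ vanishes modulo $p$. Writing $f_2'=[m_1,f_2]\cdot f_2$, I would argue that $[m_1,f_2]\in\mathcal{T}_{g,1}\cap D_{g,1}[p]$: it lies in $\mathcal{T}_{g,1}$ by normality, and it lies in $D_{g,1}[p]$ because $D_{g,1}[p]$ is normal in $\mathcal{M}_{g,1}$ (a conjugate of a $p$-th power of a Dehn twist is again such a power). Here I would use the hypothesis that Conjecture \ref{conj2} holds to derive Conjecture \ref{conj1}: applied to the two trivial decompositions $\tau=\tau\cdot 1=1\cdot\tau$ of any $\tau\in\mathcal{T}_{g,1}\cap D_{g,1}[p]$, well-definedness of $\gamma_p$ forces $\lambda(\tau)\equiv 0\pmod p$. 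Hence $\lambda([m_1,f_2])\equiv 0\pmod p$. Since $\tau_1^Z([m_1,f_2])$ also vanishes (as $\tau_1(f_2')-\tau_1(f_2)$ is already zero mod $p$), Pitsch's cocycle identity applied to the pair $([m_1,f_2],f_2)$ gives $\lambda(f_2')\equiv\lambda([m_1,f_2])+\lambda(f_2)\equiv\lambda(f_2)\pmod p$. Assembling these pieces yields
\[
\gamma_p(\phi_1)+\gamma_p(\phi_2)-\gamma_p(\phi_1\phi_2)\equiv (\tau_1^Z)^*(-2J_g^t)(\phi_1,\phi_2)\pmod p,
\]
which is precisely the statement that $(\tau_1^Z)^*(-2J_g^t)$ is trivial and $\gamma_p$ is one of its trivializations.
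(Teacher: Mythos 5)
Your proof is correct, and it reaches the paper's conclusion by a slightly longer route. The paper normalizes the product by conjugating $m_1$ through $f_2$, writing $\phi_1\phi_2 = (f_1 f_2)\cdot\bigl((f_2^{-1}m_1 f_2)m_2\bigr)$, so the Torelli factor is $f_1 f_2$; then $\gamma_p(\phi_1\phi_2)=\lambda(f_1 f_2)\bmod p$ and Pitsch's cocycle identity closes the computation in one step. You instead conjugate $f_2$ through $m_1$, producing the Torelli factor $f_1 f_2'$ with $f_2'=m_1 f_2 m_1^{-1}$, which leaves you to dispose of the residual $\lambda(f_2)-\lambda(f_2')$. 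Your way of doing this is sound: you note $[m_1,f_2]\in\mathcal{T}_{g,1}\cap D_{g,1}[p]$ by normality of both subgroups, extract Conjecture \eqref{conj1} from the well-definedness of $\gamma_p$ by comparing the decompositions $\tau=\tau\cdot 1 = 1\cdot\tau$, and then apply Pitsch's identity once more together with $\tau_1^Z([m_1,f_2])\equiv 0$. What your route buys is an explicit argument that Conjecture \eqref{conj2} implies Conjecture \eqref{conj1}, a direction the paper only mentions (it records Perron's assertion of the converse); what the paper buys with its choice of decomposition is a computation that never needs this auxiliary vanishing.
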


\begin{proof}
We show that given $\varphi_1,\varphi_2\in \mathcal{M}_{g,1}[p]$ such that $\varphi_1=f_1\circ m_1,$ and $\varphi_2=f_2\circ m_2$ with $f_i\in \mathcal{T}_{g,1}$ and $m_i\in D_{g,1}[p]$ for $i=1,2,$ then the following equality holds
$$(\tau_1^Z)^*(-2J_g^t)(\varphi_1,\varphi_2)= \gamma_p(\varphi_1)+\gamma_p(\varphi_2)-\gamma_p(\varphi_1\varphi_2)$$
Observe that
\begin{align*}
(\tau_1^Z)^*(-2J_g^t)(\varphi_1,\varphi_2)= & (\tau_1^Z)^*(-2J_g^t)(f_1m_1,f_2m_2)= 
-2J_g^t(\tau_1^Z(f_1m_1),\tau_1^Z(f_2m_2))= \\
= & -2J_g^t(\tau_1^Z(f_1),\tau_1^Z(f_2))=-2J_g^t(\tau_1(f_1),\tau_1(f_2)) \;(\text{mod }p),
\end{align*}
\begin{align*}
\gamma_p(\varphi_1)+\gamma_p(\varphi_2)-\gamma_p(\varphi_1\varphi_2)= &
\gamma_p(f_1m_1)+\gamma_p(f_2m_2)-\gamma_p(f_1m_1f_2m_2)= \\
= & \gamma_p(f_1)+\gamma_p(f_2)-\gamma_p(f_1f_2(f_2^{-1}m_1f_2)m_2)= \\
= & \gamma_p(f_1)+\gamma_p(f_2)-\gamma_p(f_1f_2).
\end{align*}
On the other hand, in \cite{pitsch}, W. Pitsch proved that $\tau_1^*(-2J_g^t)$ is a trivial $2$-cocycle with trivialization the Casson invariant. In other words, he proved that
for every $f_1,f_2\in \mathcal{T}_{g,1}$ the following equality holds
$$-2J_g^t(\tau_1(f_1),\tau_1(f_2))= \lambda (f_1)+\lambda(f_2)- \lambda(f_1f_2).$$
Therefore
$$-2J_g^t(\tau_1(f_1),\tau_1(f_2))= \lambda(f_1)+\lambda(f_2)- \lambda(f_1f_2)\;(\text{mod }p),$$
and as a consequence,
$$(\tau_1^Z)^*(-2J_g^t)(\varphi_1,\varphi_2)= \gamma_p(\varphi_1)+\gamma_p(\varphi_2)-\gamma_p(\varphi_1\varphi_2).$$
\end{proof}

\textbf{2)} We show that the $2$-cocycle $(\tau_1^Z)^*(J^t_g)$ is not trivial in $H^2(\mathcal{M}_{g,1}[p];\mathbb{Z}/p).$ First of all we rewrite the $2$-cocycle $(\tau_1^Z)^*(J^t_g)$ as
$$(\tau_1^Z)^*(J^t_g)=(\tau_1^Z)^*\left(J^t_g+\frac{1}{48}Q_g\right) -\frac{1}{48}(\tau_1^Z)^*(Q_g).$$
We will see the following two facts:
\begin{enumerate}[i)]
\item The cohomology class of $(\tau_1^Z)^*(J^t_g+\frac{1}{48}Q_g)$ is zero in $H^2(\mathcal{M}_{g,1}[p];\mathbb{Z}/p).$
\item The cohomology class of $(\tau_1^Z)^*(Q_g)$ is the restriction of the first characteristic class of surface bundles $e_1\in H^2(\mathcal{M}_{g,1};\mathbb{Z})$ to $\mathcal{M}_{g,1}[p]$ reduced modulo $p.$ Therefore, the cohomology class of $(\tau_1^Z)^*(Q_g)$ is not zero in $H^2(\mathcal{M}_{g,1}[p];\mathbb{Z}/p).$
\end{enumerate}

\textbf{i)} 
Consider the short exact sequence
\begin{equation*}
\xymatrix@C=7mm@R=10mm{0 \ar@{->}[r] & \mathcal{I}_{g,1}^Z(2) \ar@{->}[r] & \mathcal{M}_{g,1}[p] \ar@{->}[r]^-{\tau_1^Z} & \extp^3 H_p \ar@{->}[r] & 0.}
\end{equation*}
Recall that $\mathcal{I}_{g,1}^Z(k)=Ker(\rho^Z_k)$ and $\rho^Z_{k+1}$ restricted to $\mathcal{I}_{g,1}^Z(k)$ is $\tau^Z_k.$

Since $\mathcal{I}_{g,1}^Z(3)\subset \mathcal{I}_{g,1}^Z(2)\subset \mathcal{M}_{g,1}[p],$ then $Im(\tau^Z_2)\cong \frac{\mathcal{I}^Z_{g,1}(2)}{\mathcal{I}_{g,1}^Z(3)}$ and $\rho^Z_3(\mathcal{M}_{g,1}[p])\cong \frac{\mathcal{M}_{g,1}[p]}{\mathcal{I}_{g,1}^Z(3)}.$
Therefore taking the quotient by $\mathcal{I}_{g,1}^Z(3)$ in the above short exact sequence, we get another short exact sequence
\begin{equation}
\label{eq_exact_seq_modp}
\xymatrix@C=7mm@R=10mm{0 \ar@{->}[r] & Im(\tau_2^Z) \ar@{->}[r] & \rho_3^Z(\mathcal{M}_{g,1}[p]) \ar@{->}[r]^-{\psi_2^Z} & \extp^3 H_p \ar@{->}[r] & 0, }
\end{equation}
which is central by Corollary \eqref{cor_cent_ext_IA}.
By Section V.3 in \cite{brown}, we know that there is an associated $2$-cocycle $c\in H^2(\extp^3H_p;Im(\tau_2^Z))$ for the central extension \eqref{eq_exact_seq_modp} which is defined by
$c(x,y)=s(x)s(y)s(xy)^{-1}$ where $s: \extp^3H_p \rightarrow \rho_2^Z(\mathcal{M}_{g,1}[p])$ is a set theoretic section.
Let $f=-6d_3-2d_2-4d_1,$ by Proposition \eqref{prop_morf_d} we know that $f$ factors through $\pi_p.$ Denote by $\overline{f}$ the homomorphism satisfying that
$f=\pi_p\circ \overline{f}.$

Consider the short exact sequence obtained in the proof of Corollary \eqref{cor_H2_extp}:
$$
\xymatrix@C=7mm@R=10mm{ 0 \ar@{->}[r] & Hom(\extp^3H_p,\mathbb{Z}/p) \ar@{->}[r]^-{i} & H^2(\extp^3H_p; \mathbb{Z}/p) \ar@{->}[r]^-{\theta} & Hom(\extp^2(\extp^3H_p);\mathbb{Z}/p) \ar@{->}[r] & 1 .}
$$
Next we show that $[J^t_g+\frac{1}{48}Q_g]=[\overline{f}_*(c)].$
\begin{prop}
\label{prop_equality_theta}
The following equality holds $\theta(J^t_g+\frac{1}{48}Q_g)=\theta(\frac{1}{24}\overline{f}_*(c)).$
\end{prop}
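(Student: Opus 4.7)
The plan is to interpret $\theta$ concretely via the standard identification $H_2(\extp^3H_p;\mathbb{Z})\cong \extp^2(\extp^3H_p)$ coming from the abelianity of $\extp^3H_p$: for any $2$-cocycle $b$ on $\extp^3H_p$, the homomorphism $\theta(b):\extp^2(\extp^3H_p)\rightarrow \mathbb{Z}/p$ is given on decomposables by the antisymmetrization $\theta(b)(x\wedge y)=b(x,y)-b(y,x)$. In particular, since $Q_g$ is already antisymmetric in its two arguments while $J_g^t$ is not, I will first unfold $\theta(J_g^t+\tfrac{1}{48}Q_g)$ into an explicit homomorphism on $\extp^2(\extp^3H_p)$ belonging to $Hom(\extp^2(\extp^3H_p),\mathbb{Z}/p)^{GL_g(\mathbb{Z})}$.

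For the right-hand side I will exploit Morita's computation recalled just before Proposition \eqref{prop_ker_pi}: the cohomology class $c\in H^2(\extp^3H_p;Im(\tau^Z_2))$ associated to the central extension \eqref{eq_exact_seq_modp} has image under $\theta$ equal to $\pi_p\circ\chi_p$ (this is the mod-$p$ reduction of Theorem 3.1 in \cite{mor1}, whose proof proceeds by computing $s(x)s(y)s(xy)^{-1}$ on commutator expressions lifting the canonical generators of $\extp^3H_p$). By naturality of $\theta$ with respect to coefficient change, it then follows that
\[
\theta\!\left(\tfrac{1}{24}\overline{f}_*(c)\right)=\tfrac{1}{24}\,\overline{f}\circ \pi_p\circ\chi_p=\tfrac{1}{24}\,f\circ\chi_p=\tfrac{1}{24}\,(-6d_3-2d_2-4d_1)\circ\chi_p.
\]
Therefore the proposition reduces to the explicit equality of homomorphisms on $\extp^2(\extp^3H_p)$,
\[
\theta(J_g^t)+\tfrac{1}{48}\theta(Q_g)=\tfrac{1}{24}(-6d_3-2d_2-4d_1)\circ\chi_p.
\]

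To verify this equality I will use Proposition \eqref{prop_antisim_hom}: both sides lie in $Hom(\extp^2(\extp^3H_p),\mathbb{Z}/p)^{GL_g(\mathbb{Z})}$, so they coincide as soon as they agree on the three test vectors
\[
(a_1\wedge a_2\wedge a_3)\wedge(b_1\wedge b_2\wedge b_3),\quad (a_1\wedge a_2\wedge b_2)\wedge(b_1\wedge a_2\wedge b_2),\quad (a_1\wedge a_2\wedge b_2)\wedge(b_1\wedge a_3\wedge b_3).
\]
The left-hand side evaluates trivially: only the pairings of elements in $W_A^p$ with elements in $W_B^p$ contribute to $J_g^t$, and $Q_g$ is computed directly from the $Sp_{2g}$-equivariant contraction $C$. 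On the right-hand side I will plug each test vector into the explicit nine-term formula for $\chi_p$ recalled before Proposition \eqref{prop_ker_pi}, which produces an element of $S^2(\extp^2H_p)$; then evaluate the three bilinear maps $d_1,d_2,d_3$ of Proposition \eqref{prop_hom_2sym} using the intersection form $\omega$ and the symmetric form $\varpi$. The three resulting scalar identities fix the coefficients $-\tfrac{1}{4},-\tfrac{1}{12},-\tfrac{1}{6}$ appearing in the right-hand side.

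The main obstacle is purely computational: the $\chi_p$-image of each of the three test vectors is a sum of nine terms of type $u\wedge v\leftrightarrow w\wedge z$ in $S^2(\extp^2H_p)$, and one has to keep careful track of the signs coming from the antisymmetry $u\wedge v\leftrightarrow w\wedge z=-v\wedge u\leftrightarrow w\wedge z$ when applying the three symmetric pairings $d_1,d_2,d_3$. Once this bookkeeping is done the identity is forced termwise. As a sanity check, the combination $-6d_3-2d_2-4d_1$ is exactly the one that kills $\ker(\pi_p)$ by Proposition \eqref{prop_morf_d}, which is why $\overline{f}$ is well-defined and why only this specific linear combination can produce the symmetric contraction $Q_g$ together with the antisymmetrized intersection pairing $J_g^t-J_g$.
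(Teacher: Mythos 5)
Your overall strategy matches the paper's: identify $\theta(c)$ with $\pi_p\circ\chi_p$, note that both $\theta(J_g^t+\tfrac{1}{48}Q_g)=J_g^t-J_g+\tfrac{1}{24}Q_g$ and $\tfrac{1}{24}\,\overline{f}\circ\pi_p\circ\chi_p$ lie in $Hom(\extp^2(\extp^3H_p),\mathbb{Z}/p)^{\mathcal{AB}_{g,1}}$, and then reduce by Proposition \eqref{prop_antisim_hom} to comparing values on the three test vectors. The antisymmetrization description of $\theta$, the use of the nine-term formula for $\chi_p$, and the final scalar bookkeeping are all handled as in the paper.

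There is, however, a genuine gap in the step $\theta(c)=\pi_p\circ\chi_p$, which you justify by calling it ``the mod-$p$ reduction of Theorem 3.1 in \cite{mor1}.'' Morita's Theorem 3.1 concerns the Torelli extension $0\to Im(\tau_2)\to\rho_3(\mathcal{T}_{g,1})\to\extp^3 H\to 1$, whereas the cocycle $c$ in this proposition is attached to \eqref{eq_exact_seq_modp}, whose middle term is $\rho_3^Z(\mathcal{M}_{g,1}[p])$, a strictly larger group. The two extensions are not simply related by reduction mod $p$, and the computation $\theta(c)(x\wedge y)=[s(x),s(y)]$ requires lifting $x,y\in\extp^3H_p$ to elements of $\mathcal{M}_{g,1}[p]$, not of $\mathcal{T}_{g,1}$. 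The missing ingredient is the argument that one may always choose the lifts inside $\mathcal{T}_{g,1}$: one must invoke $\mathcal{M}_{g,1}[p]=\mathcal{T}_{g,1}\cdot D_{g,1}[p]$ together with Proposition \eqref{prop_tau2_Dp} (so that the $D_{g,1}[p]$-factor is killed by $\rho_3^Z$), then use $[\mathcal{T}_{g,1},\mathcal{T}_{g,1}]<\mathcal{K}_{g,1}$ to identify $\rho_3^Z([\xi,\eta])$ with $\tau_2^Z([\xi,\eta])$, which finally equals the mod-$p$ reduction of $\tau_2([\xi,\eta])=\pi(\chi(x\wedge y))$ by Morita's theorem. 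Without this bridge the identity $\theta(c)=\pi_p\circ\chi_p$ is asserted rather than proved, and it is precisely this step for which Proposition \eqref{prop_tau2_Dp} was established.
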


\begin{proof}
Observe that $\theta$ and $\overline{f}_*$ commutes so we have that $\theta(\overline{f}_*(c))=\overline{f}_*(\theta(c)).$ Now we show that $\theta(c)=\pi_p\circ \chi_p.$
Recall that by definition of $c$ we know that $c(x,y)=s(x)s(y)s(xy)^{-1}$ where $s$ is a theoretic section of the central extension \eqref{eq_exact_seq_modp}. So we have that
$$\theta(c)(x\wedge y)=c([x|y])-c([y|x])=[s(x),s(y)].$$
Since $s(x),s(y)\in \rho_3^Z(\mathcal{M}_{g,1}[p]),$ we know that there exist elements $\xi, \eta \in \mathcal{M}_{g,1}[p]$ such that $\rho_3^Z(\xi)=x, \rho_3^Z(\eta)=y$, in particular, since $\mathcal{M}_{g,1}[p]=\mathcal{T}_{g,1}D_{g,1}[p] ,$ by Proposition \eqref{prop_tau2_Dp} without lose of generality we can suppose that $\xi, \eta \in \mathcal{T}_{g,1}.$ So $\theta(c)(x\wedge y)=\rho_3^Z([\xi,\eta]).$ But we know that $[\mathcal{T}_{g,1},\mathcal{T}_{g,1}]<\mathcal{K}_{g,1},$ then $\rho_3^Z([\xi,\eta])=\tau_2^Z([\xi,\eta])=\tau_2([\xi,\eta]) \;(\text{ mod }p).$ By Theorem 3.1 in \cite{mor1}, we know that $\tau_2([\xi,\eta])=\pi(\chi(\tau_1(\xi)\wedge \tau_1(\eta)))=\pi(\chi(x\wedge y)),$ and by definition we have that $\pi(\chi(x\wedge y))$ mod $p$ is equal to $\pi_p(\chi_p(x\wedge y)).$ Thus $\theta(\frac{1}{24}\overline{f}_*(c))=\frac{1}{24}\overline{f}_*(\pi_p\circ \chi_p).$

Moreover we have that $\overline{f},$ $\pi_p$ and $\chi_p$ are $\mathcal{AB}_{g,1}$-equivariant, so in particular $\overline{f}_*(\pi_p\circ \chi_p)$ is an element of $Hom(\extp^2(\extp^3H_p),\mathbb{Z}/p)^{\mathcal{AB}_{g,1}}.$
On the other hand we have that $\theta(J^t_g+\frac{1}{48}Q_g)=J^t_g-J_g+\frac{1}{24}Q_g$ and this is also an element of $Hom(\extp^2(\extp^3H_p),\mathbb{Z}/p)^{\mathcal{AB}_{g,1}}.$
So applying Proposition \eqref{prop_antisim_hom} we get that it is enough to check that these two elements take the same values on the following elements
$$(a_1\wedge a_2 \wedge a_3) \wedge (b_1\wedge b_2 \wedge b_3),\quad
(a_1\wedge a_2 \wedge b_2) \wedge (b_1\wedge a_2 \wedge b_2),$$
$$(a_1\wedge a_2 \wedge b_2) \wedge (b_1\wedge a_3 \wedge b_3).$$
One can check that:
\begin{align*}
&\frac{1}{24}\overline{f}((\pi_p\circ \chi_p)((a_1\wedge a_2 \wedge a_3) \wedge (b_1\wedge b_2 \wedge b_3)))=\\
&= \frac{1}{24}\overline{f}((\pi_p( a_2 \wedge a_3 \leftrightarrow b_2 \wedge b_3 + a_3\wedge a_1 \leftrightarrow b_3 \wedge b_1 + a_1\wedge a_2 \leftrightarrow b_1 \wedge b_2 )= \\
&= \frac{1}{8}f(a_1\wedge a_2 \leftrightarrow b_1 \wedge b_2)=\frac{1}{8}(-6-2+0)=-1,
\end{align*}
\begin{align*}
&\frac{1}{24}\overline{f}((\pi_p\circ \chi_p)((a_1\wedge a_2 \wedge b_2) \wedge (b_1\wedge a_2 \wedge b_2)))=\\
&= \frac{1}{24}\overline{f}((\pi_p( a_2 \wedge b_2 \leftrightarrow a_2 \wedge b_2 + b_2\wedge a_1 \leftrightarrow b_1 \wedge a_2 - a_1\wedge a_2 \leftrightarrow b_2 \wedge b_1 )= \\
&= \frac{1}{24}[f( a_2 \wedge b_2 \leftrightarrow a_2 \wedge b_2) +f( b_2\wedge a_1 \leftrightarrow b_1 \wedge a_2) - f(a_1\wedge a_2 \leftrightarrow b_2 \wedge b_1 )]= \\
&= \frac{1}{24}[(6-2-4)+(6-2+0)-(6+2+0)]=-\frac{1}{6},
\end{align*}
\begin{align*}
&\frac{1}{24}\overline{f}((\pi_p\circ \chi_p)((a_1\wedge a_2 \wedge b_2) \wedge (b_1\wedge a_3 \wedge b_3)))=\\
&=\frac{1}{24}f(a_2\wedge b_2 \leftrightarrow a_2 \wedge b_3)=-\frac{1}{6},
\end{align*}

\begin{align*}
(J^t_g-J_g+\frac{1}{24}Q_g)&((a_1\wedge a_2 \wedge a_3) \wedge (b_1\wedge b_2 \wedge b_3))=-1, \\
(J^t_g-J_g+\frac{1}{24}Q_g)&((a_1\wedge a_2 \wedge b_2) \wedge (b_1\wedge a_2 \wedge b_2))=-\frac{1}{6},\\
(J^t_g-J_g+\frac{1}{24}Q_g)&((a_1\wedge a_2 \wedge b_2) \wedge (b_1\wedge a_3 \wedge b_3))=-\frac{1}{6}.
\end{align*}
\end{proof}

By Proposition \eqref{prop_equality_theta}, the cohomology class of the $2$-cocycle $J^t_g+\frac{1}{48}Q_g-\overline{f}_*(c)$ corresponds to the image of an element $h\in Hom(\extp^3H_p,\mathbb{Z}/p)$ under $i.$
In \cite{McLane}, S. MacLane described how to get this element $h.$
In particular he established the following natural isomorphism:
$$\nu: Ext(T;G)\cong Hom(T; G/pG) \qquad (pT=0);$$
In his result, the isomorphism $\nu$ may be described as follows. Write each element E of Ext(T,G) as a short exact sequence
$$\xymatrix@C=10mm@R=13mm{0 \ar@{->}[r] & G \ar@{->}[r] & B \ar@{->}[r]^-{\mu} & T \ar@{->}[r] & 0 }.$$
The corresponding homomorphism $\nu E: T\rightarrow G/pG$ is then given as follows:
to each $t\in T$ choose a representative $b(t)\in B$ with $\mu b(t)=0$ and then set $(\nu E)(t)=pb(t)+pG.$
Now we apply this in our case. Let $G_g=J^t_g+\frac{1}{48}Q_g-\overline{f}_*(c)$ we take the central abelian extension associated to $G_g:$
$$\xymatrix@C=10mm@R=13mm{0 \ar@{->}[r] & \mathbb{Z}/p \ar@{->}[r]^-{j} & \mathbb{Z}/p\times_{G_g} \extp^3H_p \ar@{->}[r]^-{\mu} & \extp^3H_p \ar@{->}[r] & 0 }.$$
Then the corresponding homomorphism $h: \extp^3H_p\rightarrow \mathbb{Z}/p$ is given by $h(x)=j^{-1}((0,x)^p).$

\begin{prop}
The homomorphism $h\in Hom(\extp^3H_p,\mathbb{Z}/p)$ described above is zero.
\end{prop}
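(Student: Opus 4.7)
The plan is to prove that $h$ is equivariant under the natural action of $GL_g(\mathbb{Z})$ on $\extp^3H_p$ (with trivial action on $\mathbb{Z}/p$), and then to invoke the remark following Lemma~\ref{lema_hom_extp_H_d}, which yields $Hom(\extp^3H_p,\mathbb{Z}/p)^{GL_g(\mathbb{Z})}=0$ for $g\geq 4$. Combining these two facts forces $h=0$.

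The key input is the naturality of the MacLane isomorphism
\[\nu:Ext(\extp^3H_p,\mathbb{Z}/p)\xrightarrow{\sim}Hom(\extp^3H_p,\mathbb{Z}/p)\]
recalled just before the proposition: any morphism of abelian extensions with identity on the kernel and an automorphism $\alpha$ on the quotient transports $\nu$-images by $\alpha$. Hence, if the class $[G_g]$ on the left-hand side is invariant under a group $\Gamma$ acting on $\extp^3H_p$, then $h=\nu([G_g])$ must be $\Gamma$-equivariant. Since the action of $\mathcal{AB}_{g,1}$ on $\extp^3H_p$ factors through the surjection $\mathcal{AB}_{g,1}\twoheadrightarrow GL_g(\mathbb{Z})$ of Lemma~\ref{lem_AB}, it will suffice to check that $[G_g]\in Ext(\extp^3H_p,\mathbb{Z}/p)$ is $\mathcal{AB}_{g,1}$-invariant.

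To prove this I would treat the three summands of $G_g=J_g^t+\frac{1}{48}Q_g-\overline{f}_*(c)$ separately. The bilinear pieces $J_g^t$ and $Q_g$ are visibly $Sp_{2g}(\mathbb{Z})$-invariant, being written purely through the symplectic intersection form $\omega$; this is recorded in the discussion preceding Proposition~\ref{prop_antisim_hom}. For the remaining term involving $\overline{f}_*(c)$, the cocycle $c$ classifies the central extension
\[0\to Im(\tau_2^Z)\to\rho_3^Z(\mathcal{M}_{g,1}[p])\to\extp^3H_p\to 0\]
of \eqref{eq_exact_seq_modp}, obtained by quotienting $\mathcal{M}_{g,1}[p]$ by the characteristic subgroup $\mathcal{I}_{g,1}^Z(3)$; conjugation by elements of $\mathcal{AB}_{g,1}$ preserves each of the three terms, so $[c]\in H^2(\extp^3H_p;Im(\tau_2^Z))$ is $\mathcal{AB}_{g,1}$-fixed. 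The coefficient map $\overline{f}$ is in turn $\mathcal{AB}_{g,1}$-equivariant because $f=-6d_3-2d_2-4d_1$ is a linear combination of the $Sp_{2g}(\mathbb{Z})$-invariant forms produced in Proposition~\ref{prop_hom_2sym} and $\pi_p$ is manifestly equivariant. Consequently $\overline{f}_*[c]$, and hence $[G_g]$, is $\mathcal{AB}_{g,1}$-invariant, and the plan closes.

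The step I expect to require the most care is the invariance of the class $[c]$: any set-theoretic section chosen to write down a representative of $c$ need not itself be $\mathcal{AB}_{g,1}$-equivariant, so one must verify that two conjugate choices of section produce representatives differing only by a coboundary. This is a formal verification using that $\mathcal{AB}_{g,1}$ normalizes both $\mathcal{M}_{g,1}[p]$ and $\mathcal{I}_{g,1}^Z(3)$ inside $\mathcal{M}_{g,1}$, and once it is spelled out carefully the remaining ingredients are just the collection of equivariance statements already established earlier in the chapter.
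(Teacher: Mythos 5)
Your argument is correct but takes a genuinely different route from the paper's proof. The paper works directly from MacLane's explicit formula $h(x)=j^{-1}((0,x)^p)$: it first uses bilinearity of $J_g^t+\frac{1}{48}Q_g$ together with $\sum_{i=1}^{p-1}i\equiv 0\pmod p$ to reduce to showing $\sum_{i=1}^{p-1}c(ix,x)=0$, then lifts $x$ to an element $\xi\in\mathcal{T}_{g,1}$ expressed as a product of BP-maps and proves $\rho_3^Z(\xi^p)=0$ by an explicit induction on the number of BP-factors, expanding $p$-th powers via commutator identities and invoking Proposition~\ref{prop_tau2_Dp} and the Morita relation $\tau_2^Z([\xi,\eta])=(\pi_p\circ\chi_p)(\tau_1^Z(\xi)\wedge\tau_1^Z(\eta))$. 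You instead observe that every ingredient of $G_g$ is $\mathcal{AB}_{g,1}$-invariant, deduce by naturality of the universal coefficients sequence and the MacLane isomorphism that the resulting $h$ lies in $Hom(\extp^3H_p,\mathbb{Z}/p)^{GL_g(\mathbb{Z})}$, and conclude from the remark after Lemma~\ref{lema_hom_extp_H_d} that this group is trivial. Your route is shorter and more structural, bypassing all the $p$-th-power manipulations, whereas the paper's computation needs no invariance input and is somewhat more self-contained. Two small remarks: you claim $J_g^t$ is ``visibly $Sp_{2g}(\mathbb{Z})$-invariant''; in fact it is not (by Corollary~\ref{cor_bil_Sp}, $J_g^t-J_g$ fails to be $Sp_{2g}(\mathbb{Z})$-invariant), only $GL_g(\mathbb{Z})$-invariant, which is the invariance you actually need, so this is just a misstatement rather than a gap. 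Also note that the vanishing of $Hom(\extp^3H_p,\mathbb{Z}/p)^{GL_g(\mathbb{Z})}$ is established for $g\geq 4$; that restriction is harmless here since Proposition~\ref{prop_inj_MGC}, which the final obstruction rests on, already requires $g\geq 4$.
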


\begin{proof}
If we expand the expression $(0,x)^p,$ since $J^t_g+\frac{1}{48}Q_g$ is a bilinear map, we get that
\begin{align*}
j^{-1}((0,x)^p)= & j^{-1}((G_g(x,x)+G_g(2x,x)+\cdots +G_g((p-1)x,x),0)) \\
=& \sum_{i=1}^{p-1} \overline{f}_*(c)(ix,x) +\sum_{i=1}^{p-1} (J^t_g-\frac{1}{48}Q_g)(ix,x)\\
=& \sum_{i=1}^{p-1} \overline{f}_*(c)(ix,x) +\sum_{i=1}^{p-1}i ((J^t_g-\frac{1}{48}Q_g)(x,x))\\
=& \sum_{i=1}^{p-1} \overline{f}_*(c)(ix,x)= \overline{f}(\sum_{i=1}^{p-1} c(ix,x)).
\end{align*}
Thus, it is enough to prove that $\sum_{i=1}^{p-1} c(ix,x)$ is zero.
Recall that $c$ is the $2$-cocycle associated to the central extension
$$\xymatrix@C=10mm@R=13mm{0 \ar@{->}[r] & Im(\tau_2^Z) \ar@{->}[r] & \rho_3^Z(\mathcal{M}_{g,1}[p]) \ar@{->}[r]^-{\psi_2^Z} & \extp^3 H_p \ar@{->}[r] & 1 .}$$
Observe that if we take $(0,x)\in Im(\tau_2^Z)\times_c \extp^3H_p$ and we expand the $p^{th}$-power of $(0,x)$ we get that
$$(0,x)^p=(\sum_{i=1}^{p-1}c(ix,x),0).$$
Hence, it is enough to show that $(0,x)^p\in Im(\tau_2^Z)\times_c \extp^3H_p$ is zero. Since $[c]$ is the cohomology class associated to the central extension \eqref{eq_exact_seq_modp}, there is an isomorphism 
$\varphi:Im(\tau_2^Z)\times_c \extp^3H_p \cong \rho^Z_3(\mathcal{M}_{g,1}[p]),$ so there is an element $\tilde{x}\in\rho^Z_3(\mathcal{M}_{g,1}[p])$ such that $(0,x)^p=\tilde{x}^p.$ As a consequence, there is an element $\xi \in \mathcal{M}_{g,1}[p]$ such that $\varphi(\rho^Z_3(\xi^p))=(0,x)^p.$ Moreover, by Proposition \eqref{prop_tau2_Dp}, we know that $\rho^Z_3(D_{g,1}[p])=\tau^Z_2(D_{g,1}[p])=0$. Therefore, without lose of generality, we can assume that $\xi\in \mathcal{T}_{g,1}.$

Now we show that $\rho^Z_3(\xi^p)=0.$
Since $\xi\in \mathcal{T}_{g,1},$ we can express $\xi^p$ as a product of BP-maps, i.e.
$$\xi^p=((T_{\beta_1}T^{-1}_{\beta'_1})(T_{\beta_2}T^{-1}_{\beta'_2})\cdots (T_{\beta_l}T^{-1}_{\beta'_l}))^p.$$
To prove that $\rho^Z_3(\xi^p)=0$ we proceed by induction on the number $l$ of BP-maps.

First of all, observe that given $f,g\in \mathcal{T}_{g,1}$ we have that
\begin{equation}
\label{eq_torelli_power_p}
\begin{aligned}
(fg)^p= & (fg)^{p-1}gf[f^{-1},g^{-1}]=(fg)^{p-2}fg^2f[f^{-1},g^{-1}]=\\
= & (fg)^{p-2}g^2f[f^{-1},g^{-2}]f[f^{-1},g^{-1}]=\\
& \vdots  \\
= & g^pf[f^{-1},g^{-p}]f[f^{-1},g^{-(p-1)}]\cdots f[f^{-1},g^{-2}]f[f^{-1},g^{-1}] \\
= & g^p\left(\prod_{k=1}^{p-3}f[f^{-1},g^{k-p-1}]\right)f^2[f^{-1},g^{-2}]\big[[f^{-1},g^{-2}]^{-1},f^{-1}\big][f^{-1},g^{-1}]\\
& \vdots  \\
= & g^pf^p[f^{-1},g^{-p}]\prod_{k=2}^{p}\big[[f^{-1},g^{k-p-2}]^{-1},f^{k-p-1}\big][f^{-1},g^{k-p-1}].
\end{aligned}
\end{equation}

Now take $l=1.$ In this case, since $\tau_2^Z(D_{g,1}[p])=0,$ we have that
$$\rho_3^Z\left( (T_{\beta_1}T^{-1}_{\beta'_1})^p\right)=\rho_3^Z(T_{\beta_1}^pT^{-p}_{\beta'_1})=\tau_2^Z(T_{\beta_1}^pT^{-p}_{\beta'_1}))=0.$$ 
Suppose that the assertion is true for $l-1$ and we will prove it for $l.$
So assume that
$$\xi=(T_{\beta_1}T^{-1}_{\beta'_1})(T_{\beta_2}T^{-1}_{\beta'_2})\cdots (T_{\beta_l}T^{-1}_{\beta'_l})=fg,$$
with $f=T_{\beta_1}T^{-1}_{\beta'_1}$ and $g=(T_{\beta_2}T^{-1}_{\beta'_2})\cdots (T_{\beta_l}T^{-1}_{\beta'_l}).$
Now using the formula \eqref{eq_torelli_power_p} and the induction's hypothesis we have that
\begin{align*}
\rho_3^Z((fg)^p)= & \rho_3^Z([f^{-1},g^{-p}])\prod_{k=2}^{p}\rho_3^Z(\big[[f^{-1},g^{k-p-2}]^{-1},f^{k-p-1}\big])\rho_3^Z([f^{-1},g^{k-p-1}]) \\
= & \tau_2^Z([f^{-1},g^{-p}])+\sum_{k=2}^{p}(\tau_2^Z(\big[[f^{-1},g^{k-p-2}]^{-1},f^{k-p-1}\big])+\tau_2^Z([f^{-1},g^{k-p-1}]))
\end{align*}
But, by construction, we know that $\tau_2^Z([\xi,\eta])=(\pi_p\circ\chi_p)(\tau_1^Z(\xi)\wedge \tau_1^Z(\eta)).$
Then,
\begin{align*}
\rho_3^Z((fg)^p)= & \sum_{k=1}^{p}(p+1-k)(\pi_p\circ\chi_p)(\tau_1^Z(f)\wedge \tau_1^Z(g))= \\
= & \left(\sum_{k=1}^{p}k\right)(\pi_p\circ\chi_p)(\tau_1^Z(f)\wedge \tau_1^Z(g))=0 \;(\text{ mod } p)
\end{align*}
Thus $\rho^Z_3(\xi^p)=0,$ and therefore the homomorphism $h$ is zero.

\end{proof}

Therefore we get that $[J^t_g+\frac{1}{48}Q_g]=[\overline{f}_*(c)]$ and as a consequence
$[(\tau_1^Z)^*(J^t_g+\frac{1}{48}Q_g)]=[(\tau_1^Z)^*(\overline{f}_*(c))].$

Thus it is enough to prove the following proposition

\begin{prop}
The cohomology class of the $2$-cocycle $(\tau_1^Z)^*(\overline{f}_*(c))$ is zero.
\end{prop}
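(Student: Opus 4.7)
The plan is to exhibit a factorization of $\tau_1^Z$ through the canonical projection of the central extension \eqref{eq_exact_seq_modp} and then to invoke Lemma \eqref{lem-cocy-pull-back} on that very extension. Under the identifications $Im(\tau_2^Z)\cong \mathcal{I}_{g,1}^Z(2)/\mathcal{I}_{g,1}^Z(3)$, $\rho_3^Z(\mathcal{M}_{g,1}[p])\cong \mathcal{M}_{g,1}[p]/\mathcal{I}_{g,1}^Z(3)$ and $\extp^3H_p\cong \mathcal{M}_{g,1}[p]/\mathcal{I}_{g,1}^Z(2)$ used in the paragraph preceding \eqref{eq_exact_seq_modp}, the surjection $\psi_2^Z$ is literally the further projection $\mathcal{M}_{g,1}[p]/\mathcal{I}_{g,1}^Z(3)\twoheadrightarrow \mathcal{M}_{g,1}[p]/\mathcal{I}_{g,1}^Z(2)$. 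Composing it with the quotient map $\rho_3^Z|_{\mathcal{M}_{g,1}[p]}\colon \mathcal{M}_{g,1}[p]\twoheadrightarrow \mathcal{M}_{g,1}[p]/\mathcal{I}_{g,1}^Z(3)$, which is a group homomorphism, one recovers exactly $\tau_1^Z$, so that
\[
\tau_1^Z \;=\; \psi_2^Z\circ \rho_3^Z\big|_{\mathcal{M}_{g,1}[p]}.
\]

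Granting this factorization, the rest is a one-line naturality argument. Lemma \eqref{lem-cocy-pull-back} applied to \eqref{eq_exact_seq_modp} yields $(\psi_2^Z)^*(c)=0$ in $H^2(\rho_3^Z(\mathcal{M}_{g,1}[p]);Im(\tau_2^Z))$, since $c$ is by construction the extension class of \eqref{eq_exact_seq_modp} and $\psi_2^Z$ is the corresponding projection. Pushing forward along $\overline{f}$ and pulling back along $\rho_3^Z|_{\mathcal{M}_{g,1}[p]}$, the naturality of the change-of-coefficients map $\overline{f}_*$ combined with the factorization above then gives
\[
(\tau_1^Z)^*\bigl(\overline{f}_*(c)\bigr)
\;=\; (\rho_3^Z)^*\bigl(\overline{f}_*((\psi_2^Z)^*(c))\bigr)
\;=\; (\rho_3^Z)^*(\overline{f}_*(0))
\;=\; 0,
\]
which is the desired vanishing in $H^2(\mathcal{M}_{g,1}[p];\mathbb{Z}/p)$.

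The only real obstacle -- and it is a mild one -- is the book-keeping around the identification $\rho_2^Z(\mathcal{M}_{g,1}[p])\cong \extp^3H_p$ used to write down \eqref{eq_exact_seq_modp}. One must check that this identification is precisely the surjection $\tau_1^Z$ (and not some twist of it by an automorphism of $\extp^3H_p$), so that the equality $\tau_1^Z=\psi_2^Z\circ\rho_3^Z$ holds on the nose rather than merely up to an isomorphism of the base. This is immediate from the description of the Zassenhaus mod $p$ Johnson homomorphism as the restriction of $\rho_{k+1}^Z$ to $\mathcal{I}_{g,1}^Z(k)$ recalled in Section \ref{sec_johnson mod p}; once it is unpacked, everything else is a formal consequence of the functoriality of group cohomology and therefore carries essentially no geometric content.
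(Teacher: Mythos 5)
Your proof is correct and follows essentially the same route as the paper's: both rely on the factorization $\tau_1^Z=\psi_2^Z\circ\rho_3^Z$, the commutation of $(\psi_2^Z)^*$ with $\overline{f}_*$, and the application of Lemma \eqref{lem-cocy-pull-back} to the central extension \eqref{eq_exact_seq_modp} to obtain $(\psi_2^Z)^*(c)=0$. The paper records the naturality step via the explicit commutative diagram \eqref{diag-triv-cocy}, which you state in words, but the content is identical.
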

\begin{proof}
Recall that we have the following central extension
\begin{equation}
\label{ext_rho3}
\xymatrix@C=10mm@R=13mm{0 \ar@{->}[r] & Im(\tau_2^Z) \ar@{->}[r] & \rho_3^Z(\mathcal{M}_{g,1}[p]) \ar@{->}[r]^-{\psi_{2}^Z} & \extp^3 H_p \ar@{->}[r] & 1 .}
\end{equation}
Notice that $(\psi_{2}^Z)^*$ commutes with $f_*$ Moreover, on $\mathcal{M}_{g,1}[p],$ $\psi_{2}^Z \circ \rho_3^Z=\tau_1^Z.$ Then we have a commutative diagram
\begin{equation}
\label{diag-triv-cocy}
\xymatrix@C=10mm@R=13mm{H^2(\extp^3H_p; Im(\tau_2^Z)) \ar@{->}[d]^{f_*} \ar@{->}[r]^-{(\psi_{2}^Z)^*} &  H^2(\rho_3^Z(\mathcal{M}_{g,1}[p]);Im(\tau_2^Z)) \ar@{->}[d]^{f_*} \\
H^2(\extp^3H_p; \mathbb{Z}/p)\ar@{->}[d]^{(\tau_1^Z)^*} \ar@{->}[r]^-{(\psi_{2}^Z)^*} & H^2(\rho_3^Z(\mathcal{M}_{g,1}[p]);\mathbb{Z}/p) \ar@{->}[d]^{(\rho_3^Z)^*} \\
H^2(\mathcal{M}_{g,1}[p];\mathbb{Z}/p) \ar@{=}[r] & H^2(\mathcal{M}_{g,1}[p];\mathbb{Z}/p)}
\end{equation}
As a consequence, we have that $[(\tau_1^Z)^*(\overline{f}_*(c))]=[(\rho_3^Z)^*\overline{f}_*(\psi_{2}^Z)^*(c)].$
On the other hand, applying Lemma \eqref{lem-cocy-pull-back} to the central extension \eqref{ext_rho3} we get that $[(\psi_{2}^Z)^*(c)]$ is zero. Therefore $[(\tau_1^Z)^*(\overline{f}_*(c))]$ is zero.
\end{proof}

\textbf{ii)} We show that the cohomology class of the $2$-cocycle $(\tau_1^Z)^*(Q_g)$ is not zero.

In \cite{mor_ext}, S. Morita gave a crossed homomorphism
$k: \mathcal{M}_{g,1}\rightarrow \frac{1}{2}\extp^3 H$ (denoted by $\tilde{k}$ in \cite{mor_ext}) that extends the first Johnson homomorphism $\tau_1:\mathcal{T}_{g,1}\rightarrow \extp^3 H.$
Moreover, in \cite{mor}, he defined a $2$-cocycle $\varsigma_g$ on $\mathcal{M}_{g,1}$ (denoted by $c$ in \cite{mor}) given by
$$\varsigma_g(\phi, \psi)=\omega((k\circ C)(\phi),(k\circ C)(\psi^{-1})),$$
where $\omega$ is the intersection form and $C$ is the contraction map.

Now, if we restrict the $2$-cocycle $\varsigma_g$ on $\mathcal{M}_{g,1}[p]$ and take values in $\mathbb{Z}/p$ we have the $2$-cocycle
\begin{align*}
&\omega((\tau_1^Z\circ C)(\phi),(\tau_1^Z\circ C)(\psi^{-1}))=\omega((\tau_1^Z\circ C)(\phi),-(\tau_1^Z\circ C)(\psi))= \\
& = -\omega((\tau_1^Z\circ C)(\phi),(\tau_1^Z\circ C)(\psi))=-(\tau_1^Z)^*(Q_g)(\phi, \psi).
\end{align*}
Because $k$ restricted to $\mathcal{M}_{g,1}[p]$ with values in $\mathbb{Z}/p$ is also an extension of the first Johnson homomorphism modulo $p$ and by Section \eqref{sec_ext_jonh_hom} we know that this coincide with $\tau_1^Z.$

In \cite{harer} J. Harer proved that the second cohomology group $H^2(\mathcal{M}_{g,1};\mathbb{Z})$ of $\mathcal{M}_{g,1}$ is an infinite cyclic group (for $g\geq 3$) generated by the first Chern class
$$c_1\in H^2(\mathcal{M}_{g,1};\mathbb{Z}).$$
Now let $e_1\in H^2(\mathcal{M}_{g,1};\mathbb{Z})$ the first characteristic class of surface bundles defined in \cite{mor_char}.
S. Morita proved that
$$e_1=12c_1,$$
and that $\varsigma_g$ is a $2$-cocycle which represents $e_1.$ 
Therefore, the image of $\varsigma_g$ is $12\mathbb{Z}$ and so, for $p>3,$ the cohomology class of $\varsigma_g$ reduced modulo $p$ is not zero.
As a consequence, since the restriction map $res:\; H^2(\mathcal{M}_{g,1};\mathbb{Z}/p)\rightarrow H^2(\mathcal{M}_{g,1}[p];\mathbb{Z}/p)$
is injective, we have that the cohomology class of $(\tau_1^Z)^*(Q_g)$ is not zero.

\newpage

\bibliography{biblio}{}
\bibliographystyle{abbrv}

\newpage

\end{document}